\documentclass[11pt,reqno]{amsart}
\usepackage{fullpage}

\usepackage{amsfonts,amssymb,amsmath}
\usepackage{mathrsfs}
\usepackage{dsfont}
\usepackage{mathtools}
\usepackage{amsthm}
\usepackage{mathtools}
\usepackage{leftindex,tensor}
\usepackage[OT1]{fontenc}
\usepackage{MnSymbol}
\usepackage{manfnt}
\usepackage[all,cmtip]{xy}
\usepackage{braket}

\usepackage{tikz-cd}
\tikzset{%
   symbol/.style={%
        draw=none,
        every to/.append style={%
            edge node={node [sloped, allow upside down, auto=false]{$#1$}}}
    }
}

\usepackage[pagebackref,hyperindex,linktocpage=true]{hyperref}
\hypersetup{
    colorlinks,
    linkcolor={red!50!black},
    citecolor={blue!50!black},
    urlcolor={blue!80!black}
}

\newtheorem{theo}{Theorem}[section]
\newtheorem{theo-def}{Theorem-Definition}[section]
\newtheorem{lem}[theo]{Lemma}
\newtheorem{lem-def}[theo]{Lemma-Definition}
\newtheorem{prop}[theo]{Proposition}
\newtheorem{cor}[theo]{Corollary}
\newtheorem*{claim}{Claim}

\theoremstyle{definition}
\newtheorem{defi}[theo]{Definition}
\newtheorem{ex}[theo]{Example}
\newtheorem{rqe}[theo]{Remark}

\numberwithin{equation}{section}

\def\a{\alpha}

\def\w{\omega}

\def\fM{\mathfrak{M}}
\def\p{\mathfrak{p}}

\def\L{\mathcal{L}}

\def\k{\mathds{k}}

\def\I{\mathcal{I}}
\def\J{\mathcal{J}}
\def\L{\mathcal{L}}
\def\M{\mathcal{M}}
\def\O{\mathcal{O}}
\def\A{\mathcal{A}}
\def\W{\mathcal{W}}

\def\NN{\mathbb{N}}
\def\ZZ{\mathbb{Z}}

\def\GG{\mathbb{G}}
\def\QQ{\mathbb{Q}}
\def\AA{\mathbb{A}}

\def\bfB{\mathbf{B}}
\def\bff{\mathbf{f}}
\def\bfO{\mathbf{O}}
\def\bfU{\mathbf{U}}

\def\sI{\mathsf{I}}

\def\sM{\mathsf{M}}
\def\sP{\mathsf{P}}
\def\sQ{\mathsf{Q}}

\def\ab{\mathrm{ab}}
\def\ad{\mathrm{ad}}
\def\Ad{\mathrm{Ad}}
\def\der{\mathrm{der}}
\def\mod{\mathrm{mod}}
\def\gp{\mathrm{gp}}
\def\pos{\mathrm{pos}}
\def\la{\lambda}
\def\La{\Lambda}

\DeclareMathOperator{\As}{As}
\DeclareMathOperator{\End}{End}
\DeclareMathOperator{\GL}{GL}
\DeclareMathOperator{\gr}{gr}
\DeclareMathOperator{\Ind}{Ind}
\DeclareMathOperator{\Mat}{Mat}

\DeclareMathOperator{\Spec}{Spec}
\DeclareMathOperator{\Supp}{Supp}
\DeclareMathOperator{\Hom}{Hom}
\DeclareMathOperator{\Rep}{Rep}

\newcommand{\epic}{\twoheadrightarrow}
\newcommand{\into}{\hookrightarrow}

\title{Reductive monoids over general base}

\author{Jingren Chi}
\email{jrenchi@amss.ac.cn}
\address{Morningside Center of Mathematics and State Key Laboratory of Mathematical Sciences, Academy of Mathematics and Systems Science, Chinese
Academy of Sciences, Beijing 100190, China;}

\author{Simon Jacques}
\email{simon.jacques@amss.ac.cn}
\address{State Key Laboratory of Mathematical Sciences, Academy of Mathematics and Systems Science, Chinese
Academy of Sciences, Beijing 100190, China;}

\date{}

\begin{document}
\begin{abstract}
    We develop a theory of affine algebraic monoids over general base schemes whose unit groups are split reductive groups. Our main result is a classification theorem for such objects, generalizing works of Vinberg and Rittatore over a field. As applications, we obtain combinatorial descriptions and normality properties of orbit closures, prove a Steinberg-type theorem on adjoint quotients of reductive monoids over general base schemes, and construct finite type integral models of the Vinberg monoids. A main tool in our construction is Lusztig's theory of modified quantum groups and their canonical bases.
\end{abstract}
\maketitle

\tableofcontents

\section{Introduction}
\subsection{Backgrounds and motivations}
The theory of reductive monoids over a field, especially the Vinberg monoids, has been playing an increasingly foundational role in a number of areas, including geometric representation theory (\cite{Drinfeld-Gaitsgory,Bouthier-Springer,Chi-KV,Wang-FL,BZv-Ganev}), automorphic representation theory (\cite{BNS-arc,Ngo-PS,Ngo-Hankel}) and number theory (\cite{XiaoZhu2019,Zhu2020}), to name a few. It is a natural outgrowth of the theory of reductive algebraic groups and affine toric varieties, aiming to achieve the construction and classification of algebraic monoids with reductive unit groups and study the geometric properties of orbits under the unit groups.\par 
This theory has been established over any algebraically closed fields, mainly through the works of Putcha, Renner, Vinberg and Rittatore. The early studies of general linear algebraic monoids over a field were primarily conducted in the works of Putcha and Renner (see \cite{Putcha}, \cite{Renner} and the references therein). For reductive monoids, there has been (at least) three approaches to its classification problem. First, according to Renner's ``extension principle" (see \cite[\S5.1]{Renner} and references therein), the normal reductive monoids over a field are uniquely determined by a cone in the weight lattice that is stable under the Weyl group action. Second,  Vinberg \cite{Vinberg1995} systematically studied reductive monoids over an algebraically closed field of characteristic zero. His classification is in terms of certain submonoid of the dominant weight monoid. Besides, another contribution of Vinberg is to realize the importance of the abelianisation map and single out the class of very flat reductive monoids that have good geometric and categorical properties: roughly speaking, they can all be constructed from a distinguished monoid, the \emph{universal enveloping monoid} (also known as the \emph{Vinberg monoid}). Later, Rittatore \cite{RittatoreThese,RittatoreMonoidArticle,RittatoreVeryflat} generalized the results of Vinberg to arbitrary algebraically closed base field and established the link with Luna-Vust theory of spherical varieties. His classification is in terms of colored cones in the coweight lattice, which is in some sense dual to the viewpoint of Renner and Vinberg.\par 
Our goal in this paper is to establish a theory of reductive monoids over arbitrary base schemes and generalize the above-mentioned works. First let us introduce the basic definitions.\par 
Fix a base scheme $S$. 
\begin{defi}\label{def:monoid-scheme}
    A \emph{monoid scheme} over $S$ is a scheme $\M$ over $S$, equipped with 
    \begin{itemize}
        \item the \emph{unit section} $e\colon S\to\M$, which is a section of the structure morphism $\M\to S$ and
        \item the \emph{composition law}, or \emph{product morphism} $\pi_\M\colon\M\times_S\M\to\M$, which is a morphism of $S$-schemes
    \end{itemize}
    that satisfies the usual associativity and unit axioms. The \emph{unit group} of $\M$, denoted $\M^\times$, is the group valued functor defined by $\M^\times(S')\coloneqq\M(S')^\times$, the group of invertible elements in the monoid $\M(S')^\times$, for any $S$-scheme $S'$.
\end{defi}
Simply speaking, this notion is obtained by relaxing the condition on the existence of inverse in the definition of a group scheme. If we further relax the condition on the existence of the unit section, we obtain the notion of a \emph{semi-group scheme}.\par 
The basic example of the monoid schemes we are interested in is the multiplicative monoid of $n$ by $n$ matrices (simply called the \emph{matrix monoids}): 
\[\Mat_{n,S}\coloneqq\Spec\O_S[X_{ij};1\le i,j\le n],\]
whose unit group is the open subscheme of invertible $n$ by $n$ matrices:
\[\quad\Mat_{n,S}^\times=\GL_{n,S}\coloneqq\Spec\O_S[X_{ij};1\le i,j\le n][\det(X_{ij})^{-1}].\]
We will study generalizations of matrix monoids, whose most important feature is that the unit group is a reductive group scheme. 
\begin{defi}
    A \emph{reductive monoid scheme} over $S$ (or simply, a \emph{reductive monoid over $S$}) is a monoid scheme $\M$ over $S$ such that the unit group $\M^\times$ is a reductive group scheme\footnote{For us, a reductive group scheme is always assumed to be fiberwise connected over the base.} over $S$ and the structure morphism $\M\to S$ is affine flat, of finite presentation, and all its geometric fibers are integral and normal.
\end{defi}
In particular when $S=\Spec(k)$, where $k$ is a field, our definition is slightly more restrictive than the usual definition and coincides with the notion of an ``irreducible normal reductive monoid" in the sense of \cite{RittatoreMonoidArticle}. The first basic question we want to address is the following: 
\[\textit{How to classify reductive monoids over a general base scheme?}
\]
The general classification of reductive group schemes is in terms of (based) root data, and is one of the major achievements in \cite{SGA3-3}. So the remaining task is to classify reductive monoids whose unit group is isomorphic to a fixed reductive group scheme $G$. In this paper we will give a complete answer to the question above in the case where the unit group $G$ is a split Chevalley group scheme.\par
Fix a split maximal torus $T\subset G$ and a Borel subgroup $B\subset G$ containing $T$. Let $W\coloneqq N_G(T)/T$ be the Weyl group. Let $X\coloneqq X^*(T)$ and $Y\coloneqq X_*(T)$ be respectively the weight and the coweight lattice. Let $X^+\subset X$ be the submonoid consisting of dominant weights (with respect to $B$). Also, we have the partial order $\le$ on $X$ determined by the positive roots. As in the classical theory of toric varieties and the theories of Renner, Vinberg and Rittatore, one expects that the extra parameter (besides the root datum) needed for the classification should be certain cone in $X_\QQ\coloneqq X\otimes_\ZZ\QQ$, or other combinatorial objects that carry the same information.\par 
Let us recall Vinberg's approach to the classification problem. Assume for the moment that $S=\Spec(k)$ where $k$ is an algebraically closed field of characteristic $0$. The starting point is the algebraic version of the Peter-Weyl theorem which describes the $G\times G$-module structure on $k[G]$ (for the left and right regular representation):
\begin{equation}\label{eq:Peter-Weyl}
    k[G]\cong\bigoplus_{\la\in X^+}V_\la\otimes V_\la^*
\end{equation}
where $V_\la$ is the irreducible $G$-module with highest weight $\la$ and $V_\la^*\coloneqq\Hom_k(V_\la,k)$ is the dual module. For a reductive monoid $\M$ with unit group $G$, its coordinate ring $k[\M]$ is a sub-bialgebra of $k[G]$ and in particular a $G\times G$-submodule. So there is a subset $\L=\L(\M)\subset X^+$ such that 
\[k[\M]\cong\bigoplus_{\la\in\L}V_\la\otimes V_\la^*.\]
Then one shows that $\L$ is a \emph{weight monoid} in the sense of Definition \ref{def:weight-monoid}. This means that it is a saturated downward closed submonoid of $X^+$ that generates $X$ as an abelian group. Here ``downward closed" means that for any weights $\la,\mu\in X^+$ such that $\mu\le\la$, if $\la\in\L$ then $\mu\in\L$; the condition that $\L$ is saturated in $X^+$ means that $n\la\in\L$ implies $\la\in\L$ for any positive integer $n$ and any dominant $\la\in X^+$, and follows from the normality of $\M$.\par  
The approach of Rittatore is to regard $\M$ as an affine embedding of the group $G$, now viewed as a homogeneous space for $G\times G$, and apply the general theory of spherical embeddings in this setting. This theory is first developed by Luna and Vust \cite{Luna-Vust} over an algebraically closed field of characteristic zero, and generalized by Knop \cite{Knop-LunaVust} to algebraically closed fields of arbitrary characteristic. In particular, Rittatore's approach works in arbitrary characteristic and, unlike Vinberg's work, his classification of reductive monoids is in terms of cones in $Y_\QQ$ (which turns out to be dual to the cone $\QQ_{\ge0}\L$ in $X_\QQ$, see \S\ref{sec:comparison}).\par 
Finally, the approach of Renner is conceptually the most elegant in our opinion. The basic principle is that a reductive monoid $\M$ with unit group $G$ is uniquely determined by the closure of the maximal torus $T$ in it, which is an affine toric variety corresponding to a $W$-stable cone in $X_\QQ$. This is a more canonical invariant of the reductive monoid since it only depends on the underlying root system, whereas the weight monoid $\L$ in Vinberg's approach and the colored cone in Rittatore's approach also depend on the extra choice of simple roots. However, it is not hard to show that this $W$-stable cone is precisely $W\cdot\QQ_{\ge0}\L$ and we have $\L=W\cdot\QQ_{\ge0}\L\cap X$, see Corollary \ref{cor:W-stable-cone}. This clarifies the relation with Vinberg's theory. \par
In this paper we will mainly adopt Vinberg's viewpoint since it makes the construction of (the coordinate rings of) reductive monoids more straightforward. In the same spirit of the classification of reductive group schemes in \cite{SGA3-3}, our classification of reductive monoids has two major steps: existence and uniqueness.\par 
For the existence, we need to construct a reductive monoid explicitly from the datum of a weight monoid $\L\subset X^+$ (or equivalently the corresponding $W$-stable convex polyhedral cone in $X_\QQ$). Of course, it suffices to do the construction over $\Spec(\ZZ)$ (and then base change to arbitray schemes). To achieve this we apply Lusztig's machinery of the (dual) canonical bases (cf. \cite{Lusztig-IntroQuantumGroup}) and build on his construction of the Chevalley group schemes in \cite{LusztigGroupScheme}. Since the canonical bases are ``quantum" in nature, we actually construct quantizations of the reductive monoids, i.e. objects over $\ZZ[v,v^{-1}]$ that specialize to the integral models when $v\mapsto1$.\par 
For the uniqueness, we need to show that all reductive monoids over a scheme $S$ with given unit group can be constructed in this way. In particular, this shows that, just like reductive groups, reductive monoids admit no nontrivial deformation. This step is more technical but also relies on the theory of canonical bases.

\subsection{Results and strategies}
Let the notations be as before. Let $B^-\subset G$ be the Borel subgroup opposite to $B$. Let $U$ (resp. $U^-$) be the unipotent radical of $B$ (resp. $B^-$). To simplify notations, any base change of the group schemes $G,T,U$ etc. will be denoted by the same symbol. 
\subsubsection{The classification theorem}
Choose a geometric point $s\colon\Spec(k)\to S$ where $k$ is an algebraically closed field. Let $\M$ be a reductive monoid over $S$ with unit group $G$. Then the invariant subring $k[\M_s]^{U^-\times U}$ is a $T\times T$-submodule of $k[G]^{U^-\times U}$. It is well known that the $T\times T$-weights of $k[G]^{U^-\times U}$ are $\{(-\la,\la)\in X\oplus X\mid \la\in X^+\}$ and each weight space has dimension one. Actually $k[G]^{U^-\times U}$ is isomorphic to the monoid algebra $k[X^+]$. In \S\ref{sec:reps-and-invariants} we will revisit and generalize these facts using the dual canonical basis (see Corollary \ref{cor:U-inv-O(P)}). Then we can define 
\[\L_s\coloneqq\{\la\in X^+\mid k[\M_s]^{U^-\times U}\text{ has nonzero }(-\la,\la)\text{ weight space}\}.\]
Now we can state our first main result.
\begin{theo}[Corollary \ref{cor:independance L} and Theorem \ref{theo:monoid-classification}]\label{theo:classification-intro}
    Assume that the base scheme $S$ is connected. Then the set $\L_s$ is independent of the choice of the geometric point $s\in S$. We call it the \emph{weight monoid of $\M$} and denote it by $\L(\M)\coloneqq\L_s$. Moreover, the map $\M\mapsto\L(\M)$ defines a bijection between the following sets:
    \begin{itemize}
        \item isomorphism classes of reductive monoids over $S$ with unit group $G$;
        \item weight monoids in $X^+$ (see Definition \ref{def:weight-monoid}).
    \end{itemize}
\end{theo}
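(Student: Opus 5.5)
The plan has three parts: a structure theorem for $\O(\M)$ as a $G\times G$-submodule of $\O(G)$, existence via Lusztig's dual canonical basis, and uniqueness by exhibiting every $\M$ as the construction of the existence step.

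\emph{Structure of $\O(\M)$ and independence of $s$.} I would work locally, $S=\Spec R$ affine, and use the dual canonical basis description of $\O(G)$ over $\ZZ$. This gives a filtration of $\O(G)$ by $G\times G$-stable $R$-submodules indexed by dominant weights. The subquotients are $\nabla(\la)\otimes\nabla(-w_0\la)$, with $\nabla(\la)$ the induced (dual Weyl) module. The $U^-\times U$-invariants are $R[X^+]$, spanned by the dual canonical basis elements $c_\la$ of weight $(-\la,\la)$ (Corollary \ref{cor:U-inv-O(P)}).

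Since $\M$ is flat, affine and integral with $G$ open dense fiberwise, the comultiplication makes $\O(\M)\subset\O(G)$ a sub-bialgebra, hence a $G\times G$-stable subalgebra. I would then show $\O(\M)^{U^-\times U}$ is a $T\times T$-graded direct summand of $R[X^+]$, spanned by the $c_\la$ with $\la$ in some set $\L$ that is locally constant on $S$. The key input is that each graded piece of $\O(\M)^{U^-\times U}$ is a subsheaf of a rank one free module, and its formation commutes with base change. For this I would use that $U^-\times U$-invariants commute with base change for modules with good filtration. I would establish a good filtration for $\O(\M)$ by showing that $\O(\M)=\bigoplus_{\la\in\L}(\text{piece of }\O(G))$ is the submodule generated by the $c_\la$, $\la\in\L$. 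This uses the argument that a $G\times G$-submodule of $\O(G)$ containing $c_\la$ contains the whole $\la$-block together with all lower blocks, where normality/saturation and the downward closure come in. Connectedness of $S$ then gives independence of $s$.

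\emph{Existence.} For a weight monoid $\L$, define over $\ZZ$ (even over $\ZZ[v,v^{-1}]$ via the modified quantum group $\dot{U}$) the submodule $\O(\M_\L)\subset\O(G)$ spanned by the dual canonical basis elements in blocks $\la\in\L$. That it is a subalgebra follows from positivity properties of products of dual canonical basis elements. Lusztig's results show that the product of elements in blocks $\la,\mu$ lies in blocks $\le\la+\mu$, and this lands in $\L$ by downward closure and the monoid property. It is a subcoalgebra because the blocks are two-sided cells, closed under comultiplication. Flatness holds because the module is free. Normality of geometric fibers and $\M^\times=G$ follow by reducing, through the standard argument, to the torus closure $\overline{T}$, the toric variety of the saturated cone $W\cdot\QQ_{\ge0}\L$ (Corollary \ref{cor:W-stable-cone}). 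Over a field one can alternatively invoke Rittatore's results, since the fiber is the monoid with the same $\L$. Finite presentation follows from finite generation of $\L$.

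\emph{Uniqueness.} By the first part, $\O(\M)$ and $\O(\M_{\L}\times S)$ coincide as subalgebras of $\O(G_S)$, so $\M\cong\M_\L\times S$ compatibly with $G$.

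\emph{Main obstacle.} The hardest step is showing that $\O(\M)$ contains full blocks over a general base, with no characteristic-zero complete reducibility. The issue is that $\O(\M)$ might a priori be a non-split $G\times G$-stable lattice. My first attempt would be to pass fiberwise, using Rittatore and Vinberg's fiber classification together with flatness. I would then apply a Nakayama-type argument, using that $\O(\M)\otimes k(s)\to\O(G_s)$ is injective, so that $\O(\M)$ is saturated in $\O(G)$.
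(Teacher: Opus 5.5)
Your proposal has genuine gaps: the key step of your direct route is false, and your fibrewise fallback cannot handle nilpotents.

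\textbf{The direct route.} Your uniqueness rests on the claim that a $G\times G$-submodule of $\O(G)$ containing $c_\la=1_\la^*$ contains the whole $\la$-piece together with all lower pieces, i.e.\ ${}_k\bfO_{\le\la}$. This is false.
\begin{itemize}
\item In characteristic $0$, the $G\times G$-span of $1_\la^*$ is $V_\la\otimes V_\la^*$, which misses every lower piece.
\item Over an algebraically closed field of characteristic $p>0$, the translates of the lowest-weight form $\eta_\la^*\in{}_k\La_\la^*$ only span $L(\la)^*$. So the span of $1_\la^*$ is the space of matrix coefficients of the simple module $L(\la)$. Its dimension is $(\dim L(\la))^2<(\dim{}_k\La_\la)^2=\#\dot{\bfB}[\la]$ as soon as ${}_k\La_\la$ is not simple.
\end{itemize}
Over a field, what actually pins down $k[\M]$ is Grosshans' theorem. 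A normal $G\times G$-stable subalgebra of $k[G]$ is integral over the subalgebra generated by its $U^-\times U$-invariants. Hence $k[\M]$ and ${}_k\bfO(\L(\M))$ coincide, since both are normal with invariants $k[\L(\M)]$.

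That argument needs $\L(\M)$ to be a weight monoid, so that ${}_k\bfO(\L(\M))$ is a normal reductive monoid at all. You treat downward closedness as an input, but it must be proved. The paper proves it with the $G\times G$-invariant valuations $v_\la$ (Lemmas \ref{lem:val-coweight} and \ref{lem:ideal-downward-closed}); finite generation and $\L^{\gp}=X$ also need arguments (Proposition \ref{prop:LM is closed saturated}). Your existence sketch is also thin at integrality/normality of the geometric fibres and at finite presentation. The paper gets these from the associated graded ring (Corollaries \ref{cor:grO-description}, \ref{cor:grO-finite} and Theorem \ref{theo:O(L)-normal}).

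\textbf{The fibrewise fallback.} This does not close the gap over a general base.

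First, your argument for independence of $s$ is circular. You deduce it from base change of invariants for modules with a good filtration, which presupposes the block decomposition of $\O(\M)$ over $S$ that you are trying to prove. The paper argues directly from flatness of $\O_S[G]/\O_S[\M]$ (Lemma \ref{lem:quotient-flat}). Over a ring $\k$ of weak dimension $\le1$ with connected spectrum, $\k[G]^{(-\la,\la)}/\k[\M]^{(-\la,\la)}\cong\k/I$ embeds in a flat module, hence is flat, forcing $I\in\{0,\k\}$. The same flatness gives base change for the invariants (Lemma \ref{lem:B-eigenspace}). Specializations are then reached through dominating valuation rings (Proposition \ref{prop:L(W)}).

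Second, your Nakayama-type comparison with fibres works over reduced noetherian bases. A flat module injects into the product of its fibres at the minimal primes; this is the paper's total-ring-of-fractions step. No such argument can handle nilpotents, however. Two submodules with flat cokernel that agree modulo the nilradical need not coincide: take $R\cdot(1,\epsilon)$ and $R\cdot(1,0)$ in $R^2$, with $R=k[\epsilon]/(\epsilon^2)$.

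The missing idea is infinitesimal rigidity of $G\times G$-stable submodules with flat quotient, Proposition \ref{prop:nilpotent-deformation}. Once the reduced case gives $\k[\M]\subset{}_\k\bfO(\L)+\mathfrak{N}\cdot\k[G]$, the argument runs as follows.
\begin{itemize}
\item For $\la\in X^+\setminus\L$, the $(-\la,\la)$-weight space of $\k[\M]$ lies in $\mathfrak{N}\cdot\k[G]$, and flatness kills it.
\item Suppose an element of $\k[\M]$ has a nonzero coefficient on some $\dot{\bfB}[\mu]^*$ with $\mu\notin\L$, and take $\mu$ maximal. Applying $\rho_l(S^{-1}(b_1^-1_\mu))\rho_r(1_\mu\sigma(b_2)^+)$ moves it into such a weight space, with nonzero value at $1_\mu$. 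This is a contradiction.
\end{itemize}
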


\subsubsection{Construction by canonical bases}
Let us explain the construction of reductive monoid scheme with given weight monoid $\L$. We may assume that $S=\Spec(\ZZ)$. Let $\dot{\bfU}$ be Lusztig's modified quantized enveloping algebra over $\QQ(v)$ associated to the based root datum of $(G,B,T)$. Let $\dot{\bfB}$ be the canonical bases of $\dot{\bfU}$ and let \[\dot{\bfB}^*\subset\Hom_{\QQ(v)}(\dot{\bfU},\QQ(v))\] 
be the dual canonical bases.\par 
Let $\A\coloneqq\ZZ[v,v^{-1}]$ and let ${}_\A\bfO$ be the free $\A$-submodule of $\Hom_{\QQ(v)}(\dot{\bfU},\QQ(v))$ spanned by $\dot{\bfB}^*$. Then ${}_\A\bfO$ is a (non-commutative) Hopf $\A$-algebra and the specialization ${}_\ZZ\bfO\coloneqq{}_\A\bfO\otimes_\A\ZZ$ (where $v\mapsto 1$) is a commutative Hopf algebra and it is shown in \cite{LusztigGroupScheme} that the affine group scheme $\Spec({}_\ZZ\bfO)$ is isomorphic to $G$. 
The canonical bases has a partition into the so-called ``two-sided cells":
\[\dot{\bfB}=\bigsqcup_{\la\in X^+}\dot{\bfB}[\la]\]
where each $\dot{\bfB}[\la]$ is a finite subset. Let $\dot{\bfB}^*=\bigsqcup\limits_{\la\in X^+}\dot{\bfB}[\la]^*$ be the corresponding partition of the dual canonical bases, which we view as a replacement of the Peter-Weyl decomposition \eqref{eq:Peter-Weyl} over a general base scheme. We remark that this partition does \emph{not} match with the decomposition \eqref{eq:Peter-Weyl} in characteristic zero, but see \S\ref{sec:comparison} for their relation.\par 
For any subset $P\subset X^+$, we define ${}_\A\bfO(P)$ to be the $\A$-submodule of ${}_\A\bfO$ spanned by elements in the set $\bigsqcup\limits_{\la\in P}\dot{\bfB}[\la]^*$ and define ${}_\ZZ\bfO(P)\coloneqq{}_\A\bfO(P)\otimes_\A\ZZ$ to be the specialization along $v\mapsto1$. An important step in the proof of the classification theorem is the following:
\begin{theo}[Theorem \ref{theo:M(L)-def}]
    For any weight monoid $\L\subset X^+$, the set ${}_\ZZ\bfO(\L)$ is a finitely generated commutative $\ZZ$-algebra and $\M(\L)_\ZZ\coloneqq\Spec({}_\ZZ\bfO(\L))$ is a reductive monoid over $\Spec(\ZZ)$ with unit group $G$. 
\end{theo}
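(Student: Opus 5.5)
The plan is to work at the quantum level over $\A$ first and specialize to $v\mapsto1$ at the end, using Lusztig's results on the multiplication and comultiplication of ${}_\A\bfO$ in the dual canonical basis.

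\emph{Step 1: ${}_\A\bfO(\L)$ is a subcoalgebra and a subalgebra.} The comultiplication of ${}_\A\bfO$ is dual to the multiplication on $\dot{\bfU}$. The two-sided cell structure says that $\dot{\bfU}[\ge\la]$, the span of $\dot{\bfB}[\mu]$ for $\mu\ge\la$ in the cell order (dominance order on $X^+$), is a two-sided ideal of ${}_\A\dot{\bfU}$. Dually, for a downward closed $P\subset X^+$, ${}_\A\bfO(P)$ is the annihilator of the ideal spanned by the cells outside $P$. Hence it is a subcoalgebra: $\Delta({}_\A\bfO(P))\subset{}_\A\bfO(P)\otimes{}_\A\bfO(P)$.

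The product on ${}_\A\bfO$ is dual to the coproduct $\Delta$ of $\dot{\bfU}$, that is, to tensor products of based modules. I would invoke Lusztig's result that a product of dual canonical basis elements in cells $\la$ and $\mu$ lies in the span of cells $\nu\le\la+\mu$. Since $\L$ is a submonoid and downward closed, ${}_\A\bfO(\L)$ is a subalgebra containing $1\in\dot{\bfB}[0]^*$. The key input is the matrix-coefficient description of ${}_\A\bfO(\le\la)$ via the based modules $V(\mu)$ with $\mu\le\la$.

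\emph{Step 2: specialization and basic properties.} Since ${}_\A\bfO(\L)$ is an $\A$-direct summand spanned by basis vectors, ${}_\ZZ\bfO(\L)\subset{}_\ZZ\bfO=\ZZ[G]$ is a saturated sub-bialgebra that is free over $\ZZ$ and commutative. So $\M(\L)_\ZZ$ is a flat affine monoid scheme with a $G\times G$-equivariant map $G\to\M(\L)_\ZZ$.

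For finite generation, I would pick finitely many generators $\la_1,\dots,\la_r$ of the monoid $\L$ (Gordan's lemma) and show that the finitely many elements of $\bigsqcup_i\dot{\bfB}[\le\la_i]^*$ generate. This uses an inductive filtration argument on cells: the leading part of a product of top-cell elements in cells $\la$ and $\mu$ hits cell $\la+\mu$ surjectively, via the Cartan product $V(\la)\otimes V(\mu)\to V(\la+\mu)$, which is surjective on canonical bases integrally. This is also where I would verify that ${}_\ZZ\bfO(\L)[f^{-1}]=\ZZ[G]$ for a suitable semi-invariant $f$ of weight $(-\la,\la)$ with $\la$ in the interior of $\L$, as in Corollary~\ref{cor:U-inv-O(P)}. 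That identifies the unit group with $G$, using that $\L$ generates $X$ and that the units of the monoid are detected in the open $G\times G$-orbit.

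\emph{Step 3: geometric fibers integral and normal.} Integrality follows from ${}_k\bfO(\L)\subset k[G]$, which is compatible with base change by freeness. For normality on each fiber, I would pass to $U^-\times U$-invariants. By Corollary~\ref{cor:U-inv-O(P)}, $k[\M(\L)]^{U^-\times U}\cong k[\L]$, which is normal because $\L$ is saturated. Normality of the whole algebra then follows from the standard principle that normality passes from $U^-\times U$-invariants (Popov/Grosshans in arbitrary characteristic). Alternatively, one can use that $k[\M]$ is the intersection of $k[G]$ with a toric condition on $T$-closure via Renner's principle.

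I expect Steps 2 and 3 in positive and mixed characteristic to be the main obstacle: finite generation integrally, and fiberwise normality, which requires the invariant theory to hold over every geometric fiber.
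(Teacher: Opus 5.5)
Your Step 1 and the finite-generation half of Step 2 follow the paper's route (Lemma \ref{lem:filtration-O-multiplicative}, Proposition \ref{prop:downward-closed}, Corollary \ref{cor:grO-finite}), and integrality of the fibers via ${}_k\bfO(\L)\subset k[G]$ is fine.

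\textbf{The gap: normality of the fibers in Step 3.} You infer normality of $A={}_k\bfO(\L)$ from normality of $A^{U^-\times U}\cong k[\L]$. That principle holds in characteristic $0$, where it rests on the exact horospherical contraction $\gr A\cong(A^U\otimes k[G/U])^T$. In characteristic $p$ one only has an embedding of $\gr A$ into this ring, and it is onto when $A$ has a good filtration. Without such a hypothesis the principle is false. For example, take $\mathrm{SL}_2$ acting on $k[x,y]$ in characteristic $p$, and let $A=k\oplus(kx^p+ky^p)\oplus\bigoplus_{n\ge2}k[x,y]_{np}$. This algebra is $G$-stable and $A^U=k[x^p]$ is normal. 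Yet $x^{p-1}y=x^{2p-1}y/x^p$ lies in $\mathrm{Frac}(A)$ and satisfies $(x^{p-1}y)^2\in A$, but is not in $A$. So you must show the contraction is exact for ${}_k\bfO(\L)$, and this is the core of the paper's argument. Proposition \ref{prop:grO-decription}, with Lemma \ref{lem:ideal-in-grO}, identifies $\gr({}_k\bfO(\L))$ with the \emph{entire} degree-zero part of ${}_k\bfO_{G/U}(\L)\otimes{}_k\bfO_{U^-\backslash G}(\L)$. Equivalently, it is the $T\times T$-invariants of $k[\L]\otimes k[G/U]\otimes k[U^-\backslash G]$, which is normal. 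Normality then lifts from $\gr$ to the filtered ring by Lemma \ref{lem:filtered-ring-normal} (see Theorem \ref{theo:O(L)-normal}). Your ``leading part'' claim in Step 2 is half of this isomorphism. Proving the full ring isomorphism for $\gr$, or importing a good filtration on ${}_k\bfO(\L)$ from elsewhere, is what closes the gap.

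\textbf{A second, fixable error: the localization in Step 2.} If $\la$ lies in the interior of $\QQ_{\ge0}\L$, then $1_\la^*$ is not a unit of $\ZZ[G]$: its nonvanishing locus is the big cell $B^-B$. Hence ${}_\ZZ\bfO(\L)[f^{-1}]=\ZZ[G][f^{-1}]$ is the coordinate ring of the big cell, not $\ZZ[G]$. To get $\ZZ[G]$ you need $\la\in\L\cap X_{\ab}$, so that $1_\la^*$ is a character of $G$. You also need $\la$ strictly positive on every non-root facet of $\QQ_{\ge0}\L$. Such $\la$ exists by the argument of Lemma \ref{lem:L-X-der+surj}, which uses that $K_\L$ contains the negative roots (Lemma \ref{lem:K-L-properties}).

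With that choice your idea works and is a nice alternative to the paper's argument. For $n\gg0$ one has $1_{n\la}^*\cdot{}_\ZZ\bfO_{\le\mu}\subset{}_\ZZ\bfO_{\le n\la+\mu}\subset{}_\ZZ\bfO(\L)$, which shows directly over $\ZZ$ that $G=D(1_\la^*)$ is open in $\M$. Since $1_\la^*$ is group-like, you also get $\M^\times\subset D(1_\la^*)=G$. The paper instead proves birationality from $\L^{\gp}=X$, deduces $G=\M^\times$ on geometric fibers, and obtains openness from Demazure--Gabriel via Lemma \ref{lem:quotient-flat}.
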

For a general base scheme $S$, we define $\M(\L)_S$ to be the base change of $\M(\L)_\ZZ$ to $S$. Then we can show that the assignment $\L\mapsto\M(\L)_S$ is the inverse of the map $\M\mapsto\L(\M)$ in Theorem \ref{theo:classification-intro}.

\subsubsection{Realization in matrix monoids}
The general definition of $\M(\L)_S$ outlined above uses heavy machinery and seems very abstract, even for the simplest example of matrix monoids. However, with some assumptions on the base scheme, we will have very concrete and explicit realizations of $\M(\L)$ in terms of the matrix monoids.\par 
Choose a set of generators $\la_1,\dotsc,\la_r$ of the weight monoid $\L$. For simplicity let us assume here that the base $S=\Spec(\k)$ is an affine scheme. For each $1\le i\le r$, we have the Weyl module ${}_\k\La_{\la_i}$, which is a finite free $\k$-module equipped with an algebraic representation of $G$ with highest weight $\la_i$. Altogether we obtain a morphism of monoid schemes
\[\rho\colon G\to\prod_{1\le i\le r}\End({}_\k\La_{\la_i})\]
where the right hand side is a product of matrix monoids.
\begin{theo}[Proposition \ref{prop:lifting hatU module to hatUM module} and Theorem \ref{theo:embed-G-in-matrix}]
    Suppose that $\k$ is a normal integral domain. Then $\rho$ is an immersion of schemes and $\M(\L)_S$ is isomorphic to the normalization of the closure of $\rho(G)$. 
\end{theo}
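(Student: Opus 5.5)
The plan is to compare coordinate rings inside $\k[G]={}_\k\bfO$. Let $\L$ be generated by $\la_1,\dotsc,\la_r$, let $\overline{\rho(G)}$ be the schematic closure of $\rho(G)$ in $\prod_i\End({}_\k\La_{\la_i})$, and write $\M(\L)_\k=\Spec({}_\k\bfO(\L))$. Since $\M(\L)_\k$ is normal, and its unit group $G$ sits inside it as a dense open, the statement will follow from three facts:
\begin{itemize}
\item[(a)] $\rho$ factors through $\M(\L)_\k$, i.e.\ matrix coefficients land in ${}_\k\bfO(\L)$;
\item[(b)] the induced map $\M(\L)_\k\to\overline{\rho(G)}$ is finite and birational;
\item[(c)] $\rho$ is an immersion.
\end{itemize}
Granting these, $\M(\L)_\k$ is normal, and finite and birational over $\overline{\rho(G)}$. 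Hence it is the normalization of $\overline{\rho(G)}$.

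For (a), I use the lifting result (Proposition \ref{prop:lifting hatU module to hatUM module}). The Weyl module ${}_\k\La_{\la_i}$, viewed as a $\dot{\bfU}$-module, has a based filtration whose canonical-basis elements all lie in cells $\dot{\bfB}[\mu]$ with $\mu\le\la_i$. Its matrix coefficients are therefore $\k$-linear combinations of elements of $\dot{\bfB}[\mu]^*$ with $\mu\le\la_i$. Since $\L$ is downward closed, these lie in ${}_\k\bfO(\L)$. Equivalently, the $G$-action extends to a comodule structure over ${}_\k\bfO(\L)$. This gives an algebra map $\rho^\sharp$ from the polynomial ring on the matrix entries to ${}_\k\bfO(\L)$, and hence a monoid morphism $\M(\L)_\k\to\prod_i\End({}_\k\La_{\la_i})$ extending $\rho$.

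For (b) and (c), let $R$ be the image of $\rho^\sharp$. I want to show that ${}_\k\bfO(\L)$ is integral over $R$. The key input is the $U^-\times U$-invariants: by Corollary \ref{cor:U-inv-O(P)}, ${}_\k\bfO(\L)^{U^-\times U}\cong\k[\L]$. The highest-weight matrix coefficient of ${}_\k\La_{\la_i}$ lies in $R$ and gives the basis element $e^{\la_i}$. So $R^{U^-\times U}\supseteq\k[e^{\la_1},\dotsc,e^{\la_r}]=\k[\L]$, and $R^{U^-\times U}={}_\k\bfO(\L)^{U^-\times U}$. To pass from invariants to the whole ring, I would argue fibrewise over geometric points of $\Spec\k$ and then use flatness. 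Over an algebraically closed field, Grosshans/Popov theory shows that a $G\times G$-equivariant inclusion of finitely generated algebras which is an isomorphism on $U^-\times U$-invariants is finite. The finite generation needed here is supplied by Theorem \ref{theo:M(L)-def}.

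Birationality is immediate once (c) holds, because both schemes contain $G$ as a dense open. For (c), I would argue as for a faithful representation. The $\la_i$ generate $X$ as a group, so the restriction of $\rho$ to $T$ is a closed immersion into its image torus. One can also see this through the big cell $U^-TU$, where $\rho$ is injective on points and on tangent spaces, since generating $X$ makes the sum of representations faithful on root subgroups. Combining this with $G\times G$-equivariance shows that $\rho$ is a monomorphism of finite presentation and unramified. By homogeneity, its image is open in its closure. Since $G$ is affine and smooth, I then conclude that $\rho$ is an immersion.

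The main obstacle is the finiteness step in (b) over a general normal domain $\k$ rather than over a field. The first thing I would try is to check integrality over the fraction field and at each residue field, using the cell filtration to compare ranks. If that fails, I would replace $R$ by its normalization $\tilde R$ inside ${}_\k\bfO(\L)\otimes\mathrm{Frac}(\k)$ and prove $\tilde R={}_\k\bfO(\L)$ by a Hartogs-type argument in codimension one on $\Spec\k$, using that both rings are flat and normal.
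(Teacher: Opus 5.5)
Your step (a), the computation $R^{U^-\times U}={}_\k\bfO(\L)^{U^-\times U}=\k[\L]$, and the closing logic (normal, finite and birational over $\overline{\rho(G)}$ implies normalization) are sound. They agree with the paper's Proposition \ref{prop:lifting hatU module to hatUM module} and Proposition \ref{prop:closure-in-matrix-algebra}.

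\textbf{Gap in (b).} The genuine gap is the step you flag yourself: integrality of ${}_\k\bfO(\L)$ over $R$ when $\k$ is not a field. Your first route, ``fibrewise over geometric points, then flatness,'' does not work. $R$ is only the image of a polynomial ring in ${}_\k\bfO(\L)$, so you know neither that $R$ is $\k$-flat nor that $R\otimes_\k\kappa(s)\to{}_\k\bfO(\L)\otimes_\k\kappa(s)$ is injective. More basically, integrality on every fibre does not formally imply integrality over $\k$. Already over a DVR you would have to rule out vertical components of the special fibre of $\Spec\widetilde R$ along which elements of ${}_\k\bfO(\L)$ have poles. Your Hartogs fallback faces exactly this problem at every height-one prime. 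It also needs $\k$ noetherian, which the statement does not assume.

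The missing input is van der Kallen's extension of Grosshans' theorem to an arbitrary commutative base ring, \cite[Theorem 6]{Kallen2014}. It says ${}_\k\bfO(\L)$ is integral over the smallest $G\times G$-stable subalgebra containing ${}_\k\bfO(\L)^{U^-\times U}$. Your $R$ is $G\times G$-stable and contains $\k[\L]$, so integrality follows directly over $\k$, with no passage through fibres. The paper applies this with $\k[\widetilde G]$ in place of $R$, where $\widetilde G$ is the normalization of the closure. It first notes $\k[\widetilde G]\subset{}_\k\bfO(\L)$ because $\M(\L)$ is normal, and then gets equality from integral closedness of $\k[\widetilde G]$.

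\textbf{Weaknesses in (c).} Step (c) is only a sketch, though it is repairable.
\begin{itemize}
\item Faithfulness on each root subgroup does not give injectivity of $\rho$ on $U^-TU$. If $u^-tu^+$ lies in the kernel, applying it to $\eta_{\la_i}$ gives $\la_i(t)=1$ and $u^-\eta_{\la_i}=\eta_{\la_i}$. What you then need is that this forces $u^-=1$ for all test rings, including non-reduced ones. That is Proposition \ref{prop:unip-element-trivial}, proved from the coproduct structure constants via Lemma \ref{lem:h_c-identity}. Over geometric fibres you could instead combine your tangent-space injectivity with the structure theory of normal subgroups, but some such argument must actually be given.
\item Over a general base, ``image open in its closure by homogeneity'' does not show that a monomorphism is an immersion. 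The paper instead uses that a monomorphism from a reductive group scheme into a finitely presented separated group scheme is a closed immersion, \cite[XVI, 1.5(a)]{SGA3VIIIaXVIII}.
\end{itemize}
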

\subsubsection{Adjoint quotients}
Let $\L\subset X^+$ be a weight monoid and let $\M=\M(\L)_S$ be the corresponding reductive monoid over $S$. Let $W=N_G(T)/T$ be the Weyl group and let $W\cdot\L\coloneqq\bigcup_{w\in W}w(\L)$. Then $W\cdot\L$ is a saturated submonoid of $X$ that generates $X$, and the classification theorem in the special case of tori says that it determines the reductive monoid $\M_T\coloneqq\Spec(\O_S[W\cdot\L])$ over $S$ with unit group $T$. Here $\O_S[W\cdot\L]$ is the sheaf of monoid algebras for $W\cdot\L$ with coefficients in $\O_S$.
\begin{theo}[Theorem \ref{theo:properties AM MT} and Theorem \ref{theo:Chevalley iso}]\label{theo:adjoint-quotient-intro}
    The scheme $\M_T$ is the schematic closure of $T$ in $\M$ and the natural embedding $\M_T\into\M$ induces an isomorphism between GIT quotients 
    \[\M_T//W\cong\M//\Ad(G)\]
    where $\Ad(G)$ denotes the adjoint action of $G$ on $\M$. 
\end{theo}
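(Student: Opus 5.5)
We need to prove Theorem \ref{theo:adjoint-quotient-intro} in two parts: first that $\M_T$ is the schematic closure of $T$ in $\M$, then the Chevalley-type isomorphism $\M_T//W\cong\M//\Ad(G)$. Since $\M=\M(\L)_S$ is a base change of $\M(\L)_\ZZ$, the plan is to work over $\ZZ$ and then deduce the statement over an arbitrary $S$. That deduction needs some care: schematic closure and invariants do not commute with arbitrary base change. So over $\ZZ$ we will aim for statements that are stable under base change, namely that the relevant rings and restriction maps are free or split.

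\textbf{Closure of $T$.} Restriction of functions along $T\subset G$ gives a map ${}_\ZZ\bfO\to\ZZ[X]$. We show that it sends ${}_\ZZ\bfO(\L)$ onto $\ZZ[W\cdot\L]$.

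For the inclusion of the image in $\ZZ[W\cdot\L]$, note that a matrix coefficient in $\dot{\bfB}[\la]^*$ restricted to $T$ only involves weights of the corresponding module. These weights lie in $\mathrm{conv}(W\la)\cap(\la+\ZZ\Phi)$, hence in $W\cdot\QQ_{\ge0}\L\cap X=W\cdot\L$ by Corollary \ref{cor:W-stable-cone}.

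For surjectivity, each extremal weight $w\la$ with $\la\in\L$ is hit, since the diagonal coefficient of an extremal weight vector restricts to the monomial $e^{w\la}$. To reach every $\mu\in W\cdot\L$, we write $\mu=w\la$ with $\la$ dominant; downward closedness forces $\la\in\L$.

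This identifies the scheme-theoretic image of $T\to\M(\L)_\ZZ$ with $\Spec\ZZ[W\cdot\L]$. Because $\ZZ[W\cdot\L]$ is $\ZZ$-free and the surjection is split at the level of the monomial bases, the statement survives flat base change. For arbitrary $S$ we additionally use that $\M_T\to S$ is flat with integral fibres and fibrewise schematically dense.

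\textbf{Chevalley isomorphism.} Restriction gives a map $\O(\M)^{\Ad G}\to\O(\M_T)^W$. For injectivity, the conjugates of $T$ cover a dense open set of regular semisimple elements in $G$, and $G$ is schematically dense in $\M$ (fibrewise dense and flat).

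For surjectivity, we exhibit a basis of $\ZZ[W\cdot\L]^W$: for $\la\in\L$ take the orbit sums $m_\la=\sum_{\mu\in W\la}e^\mu$, or more conveniently the characters $\chi_\la$ of the Weyl modules ${}_\ZZ\La_\la$. These have unitriangular expansion in the $m_\mu$ with $\mu\le\la$, and all such $\mu$ lie in $\L$, again by downward closedness. So the $\chi_\la$ with $\la\in\L$ form a $\ZZ$-basis of $\ZZ[W\cdot\L]^W$. Each $\chi_\la$ lifts: the trace function $g\mapsto\mathrm{tr}(g\mid{}_\ZZ\La_\la)$ is a sum of diagonal matrix coefficients, hence lies in ${}_\ZZ\bfO(P)$ for $P$ the set of dominant weights $\le\la$. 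It therefore lies in ${}_\ZZ\bfO(\L)$ and is $\Ad$-invariant.

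This proves the isomorphism over $\ZZ$, and in fact over any base. Invariants of the $W$-action on the free module $\O_S[W\cdot\L]$ have a basis of orbit sums, which is compatible with base change. On the $G$-side we get a sandwich: the images of the traces span $\O(\M_T)^W$, while injectivity bounds $\O(\M)^{\Ad G}$ from above. Hence both sides agree over any affine base, which is enough to conclude.

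\textbf{Main obstacle.} The hardest step is injectivity over a general base. Over a field, $G$-invariant functions vanishing on $T$ vanish on $\M$ because $\bigcup_g gTg^{-1}$ is dense, but over non-reduced or non-normal $S$ density arguments must be done schematically. The plan here is to use that $\O(\M)$ is $\O_S$-flat with a basis compatible with the cell filtration. We reduce to the case where $S$ is the spectrum of a field by flatness and a Nakayama-type argument on each finite free filtration piece ${}_\ZZ\bfO(P)$ with $P$ finite. This uses that invariants of a finite free module form the kernel of a map of free modules whose cokernel we should show is torsion-free; controlling that cokernel is exactly where good filtrations by Weyl and dual Weyl modules would be invoked.
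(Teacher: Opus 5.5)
Your treatment of the closure of $T$ and of surjectivity is sound and matches the paper. Restriction to $T$ sends $1_\zeta^*\mapsto e^\zeta$ and kills every other element of $\dot{\bfB}^*$, so by Corollary \ref{cor:B[la]-W-la} the image of ${}_\k\bfO(\L)$ is $\k[W\cdot\L]$ over any ring $\k$. The characters $\chi_\la$, $\la\in\L$, extend to $\M$ and restrict to a unitriangular basis of $\k[W\cdot\L]^W$.

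The genuine gap is injectivity of $\k[\M]^{\Ad(G)}\to\k[W\cdot\L]^W$ when $\k$ is not reduced, which is exactly the step you leave open. Set-theoretic density of the conjugates of $T$, even combined with schematic density of $G$ in $\M$, only shows that an invariant $f$ with $f|_T=0$ vanishes on the underlying reduced scheme. Your fallback plan does not close this either. Invariants do not commute with base change, and reducing to residue fields only proves that the coefficients of $f$ in the dual canonical basis are nilpotent. The torsion-freeness of a cokernel that you say ``we should show'' is precisely the missing content.

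The paper closes this step without any good-filtration argument. Since $\k[\M]={}_\k\bfO(\L)$ is a $\k$-direct summand of $\k[G]$, spanned by part of the dual canonical basis, invariance in $\k[\M]$ and in $\k[G]$ coincide. Hence $\k[\M]^{\Ad(G)}=\k[G]^{\Ad(G)}\cap\k[\M]$. Injectivity then follows from Lee's extension of Steinberg's theorem \cite{LeeAdjointQuotient}: restriction $\k[G]^{\Ad(G)}\xrightarrow{\sim}\k[T]^W$ is an isomorphism over an arbitrary ring $\k$.

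If you want to keep your density approach, you need flatness, not just density. Let $T^{\mathrm{rs}}\subset T$ be the open locus where $\alpha(t)\neq1$ for all roots $\alpha$. The conjugation map $c\colon G\times T^{\mathrm{rs}}\to G$ has differential $(X,Y)\mapsto(\Ad(t^{-1})-1)X+Y$ on every geometric fibre, and this is surjective. So $c$ is smooth, hence faithfully flat onto an open subscheme $G^{\mathrm{rs}}$. This open is fibrewise dense in $\M$, and since $\M\to S$ is flat of finite presentation, $G^{\mathrm{rs}}$ is universally schematically dense in $\M$. For an invariant $f$ with $f|_T=0$ you get $c^*f=0$, hence $f|_{G^{\mathrm{rs}}}=0$ and therefore $f=0$.

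With injectivity available over every affine base, your sandwich argument gives the isomorphism $\M_T//W\cong\M//\Ad(G)$ together with its compatibility with base change.
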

The adjoint GIT quotient of a semisimple group over a field was first described by Steinberg \cite{Steinberg-regular}. Later Lee \cite{LeeAdjointQuotient} generalized Steinberg's result to reductive group schemes over a general base scheme. Our result is a further generalization of Lee's result.
\subsubsection{Abelianisations and the Vinberg monoids}
Let $G_{\der}$ be the derived group of $G$ and let $G_{\ab}\coloneqq G/G_{\der}$ be the quotient torus. Then $X_{\ab}\coloneqq X^*(G_{\ab})$ is a saturated subgroup of $X$ with quotient $X/X_{\ab}\cong X^*(T_{\der})$ where $T_{\der}\coloneqq T\cap G_{\der}$ is a split maximal torus of $G_{\der}$. 
\begin{theo}[Theorem \ref{theo:properties AM MT}]
    Let $\M$ be a reductive monoid over $S$ with weight monoid $\L\coloneqq\L(\M)$. Then the \emph{abelianisation of $\M$}, which is defined to be the GIT quotient $A_{\M}\coloneqq\M//G_{\der}\times G_{\der}$ where $G_{\der}$ acts by left and right multiplication, is the reductive monoid over $S$ with unit group $G_{\ab}$ and weight monoid $\L_{\ab}\coloneqq\L\cap X_{\ab}$. In other words, we have 
    \[A_{\M}\coloneqq\M//G_{\der}\times G_{\der}\cong\Spec \O_S[\L_{\ab}].\]
\end{theo}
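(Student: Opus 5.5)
By the classification theorem (Theorem \ref{theo:classification-intro}) we may assume $\M=\M(\L)_S$ is the base change of $\M(\L)_\ZZ=\Spec({}_\ZZ\bfO(\L))$. We may also assume $S=\Spec(\k)$ is affine, since the GIT quotient by the reductive group $G_{\der}\times G_{\der}$ is formed Zariski-locally on $S$. The plan is to compute the invariants $\k[\M]^{G_{\der}\times G_{\der}}$ directly inside $\k[G]$. The point is that these invariants are already understood for the group: $\k[G]^{G_{\der}\times G_{\der}}=\k[G_{\ab}]=\k[X_{\ab}]$. Hence $\k[\M]^{G_{\der}\times G_{\der}}=\k[\M]\cap\k[X_{\ab}]$ inside $\k[G]$, which uses the flatness of $\M$ and the density of $G$ in $\M$ fibrewise. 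The statement therefore reduces to showing
\[\k[\M]\cap\k[X_{\ab}]=\k[\L_{\ab}]\quad\text{inside }\k[G].\]

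First I identify which dual canonical basis elements are characters of $G_{\ab}$. A weight $\la\in X_{\ab}$ is orthogonal to all coroots, so the two-sided cell $\dot{\bfB}[\la]$ consists of the single element $1_\la$. Its dual corresponds to the character $\la$ of $G$, which factors through $G_{\ab}$. Conversely, for $\la\notin X_{\ab}$ the span of $\dot{\bfB}[\la]^*$ is a matrix-coefficient space of dimension $>1$ with no nonzero $G_{\der}\times G_{\der}$-invariant. Specialising at $v=1$ then gives
\[{}_\ZZ\bfO(P)\cap\ZZ[X_{\ab}]=\ZZ[P\cap X_{\ab}]\]
for any $P\subset X^+$, provided $\ZZ[X_{\ab}]\subset{}_\ZZ\bfO$ is spanned by $\{\dot{\bfB}[\la]^*:\la\in X_{\ab}\}$. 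Taking $P=\L$ yields the identity over $\ZZ$.

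Next I pass to general $\k$, where invariants need not commute with base change. Here I use that ${}_\ZZ\bfO(\L)$ has a filtration by the cells, compatible with the based-module / good-filtration structure established earlier. The associated graded pieces are ${}_\ZZ\bfO(\{\la\})$, the $\ZZ$-forms of $\Lambda_\la\otimes\Lambda_\la^*$-type bimodules, with $G\times G$-invariant filtration steps (cf.\ Corollary \ref{cor:U-inv-O(P)}). By the standard vanishing $H^1(G_{\der}\times G_{\der},-)=0$ for modules with good filtration, taking $G_{\der}\times G_{\der}$-invariants is exact on this filtration and commutes with arbitrary base change $\ZZ\to\k$. Moreover, the invariants of a graded piece are nonzero exactly when $\la$ restricts trivially to $T_{\der}$, i.e.\ $\la\in X_{\ab}$, in which case they are free of rank one spanned by the character. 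Summing over the pieces gives
\[\k[\M]^{G_{\der}\times G_{\der}}=\bigoplus_{\la\in\L_{\ab}}\k\cdot\la=\k[\L_{\ab}].\]

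Finally, I check that the result is a reductive monoid with unit group $G_{\ab}$ and weight monoid $\L_{\ab}$. This requires that $\L_{\ab}$ be a weight monoid for the torus $G_{\ab}$: it is saturated because $\L$ is, and it generates $X_{\ab}$ because a rational multiple of any $\mu\in X_{\ab}$ lies in the cone $\QQ_{\ge0}\L$, which has full dimension. The monoid structure on $A_\M$ is inherited, since the comultiplication of $\k[\M]$ restricts to invariants.

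I expect the main obstacle to be the base-change and exactness step in arbitrary characteristic. It requires the cell filtration of ${}_\ZZ\bfO(\L)$ to be a good filtration with graded pieces $\nabla(\la)\otimes\nabla(\la^*)$-type. I would try to extract this from Lusztig's results on based modules and the earlier section on representations and invariants.
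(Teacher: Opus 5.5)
Your argument is correct, and your third step (good filtrations, $H^1$-vanishing over $\ZZ$, base change) is unnecessary: the ``main obstacle'' does not arise. Your first two steps already work over an arbitrary commutative ring $\k$, for the following reasons.
\begin{itemize}
\item $\k[\M]^{G_{\der}\times G_{\der}}=\k[\M]\cap\k[G]^{G_{\der}\times G_{\der}}$, because $\k[G_{\der}\times G_{\der}]$ is $\k$-flat.
\item $\k[G]^{G_{\der}\times G_{\der}}=\k[G_{\ab}]$ over any $\k$, because $G\to G_{\ab}$ is a $G_{\der}$-torsor.
\item $\k[G_{\ab}]$ is free on the characters $\chi\in X_{\ab}$, whose pullbacks to $G$ are the grouplike elements $1_\chi^*$. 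Here $\dot{\bfB}[\chi]=\{1_\chi\}$, since $\bfB(\chi)=\{1\}$.
\end{itemize}
Now ${}_\k\bfO$ is free on $\dot{\bfB}^*$, and both ${}_\k\bfO(\L)$ and $\k[G_{\ab}]$ are spans of subsets of this basis. So their intersection is the span of $\{1_\chi^*:\chi\in\L_{\ab}\}$, which is $\k[\L_{\ab}]$ as an algebra by Lemma \ref{lem:filtration-O-multiplicative}(ii). Your remark that the span of $\dot{\bfB}[\la]^*$ has no invariants for $\la\notin X_{\ab}$ is not meaningful as stated, since a single cell does not span a $G\times G$-submodule; but it is not needed.

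The paper argues differently. By Lemma \ref{lem:G-invariant} it replaces $G_{\der}\times G_{\der}$-invariants by $U^-T_{\der}\times UT_{\der}$-invariants. Corollary \ref{cor:U-inv-O(P)} then gives $({}_\k\bfO(\L))^{U^-\times U}=\bigoplus_{\la\in\L}\k\,1_\la^*$ over any $\k$, and $T_{\der}\times T_{\der}$-invariance of the weight $(-\la,\la)$ selects $\la\in X_{\ab}$. Both routes avoid any base-change issue. Yours needs only descent along $G\to G_{\ab}$ and the one-element cells $\dot{\bfB}[\chi]$. The paper's reuses the $U^-\times U$-invariant computation it needs anyway for the classification. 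Your Step 3 would instead import good-filtration theory over $\ZZ$ that the paper never develops.

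One side step is wrongly justified. It is false that a positive multiple of every $\mu\in X_{\ab}$ lies in $\QQ_{\ge0}\L$: take $G=\GG_m$, $\L=\NN$, $\mu=-1$. Full-dimensionality of $\QQ_{\ge0}\L$ alone also does not make $\QQ_{\ge0}\L\cap X_{\ab,\QQ}$ span $X_{\ab,\QQ}$; downward closedness is what is needed. Here is a fix.
\begin{itemize}
\item For $x\in C_\L=\QQ_{\ge0}\L$, write $x=x_{\ab}+x'$ with $x_{\ab}\in X_{\ab,\QQ}$ and $x'\in\QQ^{\Delta}$.
\item Then $x'$ is dominant, hence lies in $\QQ_{\ge0}^{\Delta}$.
\item Since $K_\L$ is a convex cone containing all negative roots, $x_{\ab}\in K_\L$. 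Also $x_{\ab}\in C$, so $x_{\ab}\in K_\L\cap C=C_\L$ by Lemma \ref{lem:K-L-properties}.
\item Thus the projection of the spanning cone $C_\L$ lands in $C_\L\cap X_{\ab,\QQ}$, which therefore spans $X_{\ab,\QQ}$.
\item Finally $\L_{\ab}=C_\L\cap X_{\ab}$ is saturated, and finitely generated by Lemma \ref{lem:monoid-cone}. It generates $X_{\ab}$: write $\mu=(\mu+n\nu)-n\nu$ with $\nu\in\L_{\ab}$ interior and $n\gg0$.
\end{itemize}
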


\begin{defi}
    Let $\M$ be a reductive monoid scheme over $S$. The \emph{abelianisation map} of $\M$ is the natural map $\alpha_\M\colon\M\to A_\M$ to the GIT quotient. We say that $\M$ is \emph{very flat} if $\alpha_\M$ is flat with integral geometric fibers.  
\end{defi}

The most important example of a very flat reductive monoid is the Vinberg monoid. To introduce it, we assume for simplicity that $G$ is semisimple and the base scheme is $S=\Spec(\ZZ)$. Let $Z_G$ be the center of $G$ and let $G_+\coloneqq (T\times G)/Z_G$ where $Z_G$ embeds anti-diagonally. Then $G_+$ is a split reductive group scheme over $\Spec(\ZZ)$ with split maximal torus $T_+\coloneqq(T\times T)/Z_G$ and Borel subgroup $B_+\coloneqq(T\times B)/Z_G$. Define
\[\L^{\mathrm{Vin}}\coloneqq\{(\mu,\la)\in X^*(T_+)\subset X\oplus X\mid \la\in X_+,\mu\ge\la\}.\]
One can easily check that $\L^{\mathrm{Vin}}$ is a weight monoid in $X^*(T_+)_+$. As a consequence of Theorem \ref{theo:classification-intro}, we get:
\begin{defi}
    The \emph{Vinberg monoid} for $G$ is the reductive monoid scheme over $\Spec(\ZZ)$ with unit group $G_+$ defined by
    \[\mathrm{Vin}_G\coloneqq\M(\L^{\mathrm{Vin}}).\]
\end{defi}
After base change, we get the Vinberg monoid over any base scheme, in particular over any field. One finds in the literature several different ways to define the Vinberg monoid over a field: \cite{Vinberg1995,RittatoreVeryflat,Ngo-PS,XiaoZhu2019,Ganev}, to name a few. In \S\ref{sec:comparison} we will show that they all coincide with our definition.\par 
Besides, let us highlight two works in the literature that also construct integral models of reductive monoids. In \cite{Zhu2020}, Zhu generalizes the approach in \cite{XiaoZhu2019} and constructs the Vinberg monoid over $\ZZ$ without using the canonical bases. On the other hand, our construction of the integral models is closer to the work of Bao and Song~\cite{BaoSongAffineEmbeddings}, in which they construct integral models for affine embeddings of symmetric spaces using the theory of canonical bases for quantum symmetric pairs developed by Bao and Wang~\cite{Bao-Wang}. However, as far as we know, the finiteness properties of the integral models and the full classification theorem (especially the uniqueness part) are not addressed in these works.
\subsubsection{Orbit closures}
Once we have the classification theorem available, the next problem we address is to describe the $G\times G$ orbit closures in a reductive monoid. Let $\M$ be a reductive monoid over $S$ with unit group $G$ and weight monoid $\L\coloneqq\L(\M)$. So we have $\O_S[\M]={}_\ZZ\bfO(\L)\otimes_\ZZ\O_S$.\par 
For any ideal $\J\subset\L$ (see Appendix \ref{sec:appendix-monoid}, especially Definition \ref{def:monoid-ideal}, for related notions), one can show (using properties of the canonical bases) that ${}_\ZZ\bfO(\J)\subset{}_\ZZ\bfO(\L)$ is an ideal. Then we get the following closed subscheme:
\[\M(\L/\J)\coloneqq\Spec\ \O_S\otimes_\ZZ({}_\ZZ\bfO(\L)/{}_\ZZ\bfO(\J))\subset\M.\]
\begin{theo}[Theorem \ref{theo:parametrization GxG orbits}]
    With notations as above, for any prime ideal $\J\subset\L$ the scheme $\M(\L/\J)$ is flat over $S$ with integral normal geometric fibers. Suppose moreover that the base scheme $S$ is connected. Then the assignment $\J\mapsto\M(\L/\J)$ defines an order-reversing bijection between the following pre-ordered sets:
    \begin{itemize}
        \item[(i)] downward closed prime ideals of $\L$;
        \item[(ii)] $G\times G$-stable closed subschemes of $\M$ that are flat with integral geometric fibers over $S$.
    \end{itemize}
    In particular, the geometric $S$-fibers of the schemes in (ii) are automatically normal. 
\end{theo}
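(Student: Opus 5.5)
The plan is to reduce everything to the combinatorics of the dual canonical basis, using that ${}_\ZZ\bfO(\L)$ is a free $\ZZ$-module with basis $\bigsqcup_{\la\in\L}\dot{\bfB}[\la]^*$ and that ${}_\ZZ\bfO(\J)$ is an ideal spanned by a subset of this basis. The quotient ${}_\ZZ\bfO(\L)/{}_\ZZ\bfO(\J)$ is therefore free over $\ZZ$ with basis indexed by the cells $\dot{\bfB}[\la]^*$, $\la\in\L\setminus\J$. Consequently $\M(\L/\J)$ is flat over $\Spec(\ZZ)$, and its formation commutes with base change. Flatness over $S$ follows at once, and the fibre over a geometric point $\Spec(k)$ is $\Spec\bigl(k\otimes({}_\ZZ\bfO(\L)/{}_\ZZ\bfO(\J))\bigr)$.

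For integrality and normality of the geometric fibres, I would pass to $U^-\times U$-invariants, which are controlled by Corollary \ref{cor:U-inv-O(P)}. The invariants of $k\otimes{}_\ZZ\bfO(\L)$ form the monoid algebra $k[\L]$, and I expect the invariants of the quotient to be $k[\L]/k[\J]=k[\L\setminus\J]$. Here the multiplication kills products landing in $\J$. Since $\J$ is prime, $\L\setminus\J$ is a face-type submonoid: it is saturated, and when $\J$ is downward closed it is again a weight monoid for a smaller lattice. So $k[\L\setminus\J]$ is a normal domain. I would then invoke the standard transfer principle (Grosshans/Popov type, valid in any characteristic) that a $G\times G$-algebra is reduced, integral or normal if and only if its $U^-\times U$-invariants are; this should already be available from the earlier sections used to prove the normality statements for $\M(\L)$ itself. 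Applying it fibrewise gives integral normal geometric fibres. This step is where I expect the main obstacle: the transfer of normality in positive characteristic for a non-reduced-a-priori quotient, together with the claim that taking invariants commutes with the quotient by ${}_\ZZ\bfO(\J)$ over $k$. The latter needs a good-filtration/exactness argument, which I would extract from the cell filtration of the dual canonical basis, since the cells give a filtration by Weyl-type bimodules.

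For the bijection, order-reversal is clear from $\J\subset\J'\Rightarrow {}_\ZZ\bfO(\J)\subset{}_\ZZ\bfO(\J')$, and injectivity follows by reading off $\L\setminus\J$ from the $U^-\times U$-invariants. For surjectivity, let $Z\subset\M$ be $G\times G$-stable, closed, flat, with integral geometric fibres. Its ideal is a $G\times G$-stable ideal $I$. I would define $\J_s$ as the set of $\la$ such that the $(-\la,\la)$ weight vector of $k[\M_s]^{U^-\times U}$ vanishes on $Z_s$. Integrality of $Z_s$ makes $\J_s$ a prime ideal of $\L$, and $G\times G$-stability plus the highest-weight structure makes it downward closed, in the sense that its complement is closed under passing down along $\le$. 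Flatness and connectedness of $S$ show that $\J_s$ is locally constant, exactly as in the proof of Corollary \ref{cor:independance L}. Finally, a $G\times G$-stable ideal is generated by the cells it meets, again by the cell filtration and highest-weight arguments. Hence $I_s={}_k\bfO(\J)$ fibrewise, and $Z=\M(\L/\J)$ by flatness together with Nakayama. Automatic normality is then inherited from the first part.
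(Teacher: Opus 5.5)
\textbf{Normality.} Your flatness argument is fine, modulo noting that ${}_\ZZ\bfO(\J)$ is an ideal only because $\J$ is downward closed (Lemma \ref{lem:filtration-O-multiplicative}). Integrality of the fibres also does transfer from $U^-\times U$-invariants, which Corollary \ref{cor:U-inv-exact} identifies with $k[\L\setminus\J]$. The gap is in the normality step. It relies on a transfer principle that is not in the earlier sections: Theorem \ref{theo:O(L)-normal} is proved through the associated graded, not through invariants. The principle is also false in positive characteristic without a good-filtration hypothesis. Take $\mathrm{SL}_2$ in characteristic $2$ acting linearly on $k[x,y]$, with $U$ fixing $x$. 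The $G$-stable subalgebra $A=k[x^2,y^2]+\bigoplus_{n\ge4\text{ even}}k[x,y]_n$ has $A^U=k[x^2]$, which is normal, yet $xy=x^3y/x^2$ is integral over $A$ and not in $A$. What does work, and what the paper does, is to filter ${}_k\bfO(\L)/{}_k\bfO(\J)$ by cells. Its associated graded is $\gr({}_k\bfO(\L\setminus\J))$. By Corollary \ref{cor:grO-description} this is the $(0,0)$-graded part of $k[\L\setminus\J]\otimes k[G/U]\otimes k[U^-\backslash G]$, a normal domain because the face $\L\setminus\J$ is saturated. Lemma \ref{lem:filtered-ring-normal} then lifts normality to the quotient.

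\textbf{Surjectivity.} Order reversal, injectivity and local constancy are fine. The other three steps are asserted rather than proved, and two of them are false as stated.

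(a) Downward closedness of $\J_s$ does not follow from $G\times G$-stability and highest weights. In characteristic $0$ the $G\times G$-module generated by $1^*_\la$ never contains $1^*_\mu$ for $\mu<\la$, so primality and geometry must enter. The paper attaches a $G\times G$-invariant valuation $v_\gamma$ to a cocharacter $\gamma$ with $\gamma(0)$ in the open orbit of $V(\p)$. It then uses the formula $v_\gamma(\zeta_\chi)=r\langle\gamma^\dagger,\chi\rangle$ with $\gamma^\dagger$ antidominant (Lemmas \ref{lem:val-coweight} and \ref{lem:ideal-downward-closed}).

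(b) A $G\times G$-stable ideal is \emph{not} spanned by the cells it meets. In $k[\Mat_n]$ with $\mathrm{char}\,k=p$ and $n\ge2$, the ideal $(x_{ij}^p)$ is $\GL_n\times\GL_n$-stable. It contains $1^*_{p\varepsilon_1}=(1^*_{\varepsilon_1})^p$, yet its degree-$p$ part has dimension $n^2<\#\dot\bfB[p\varepsilon_1]$. For primes with integral quotient the equality $\p={}_k\bfO(\J)$ does hold, but proving it is the core of the theorem. Over a perfect field, the paper shows that $\p^{U^-\times U}\to(\p/\p\cap{}_k\bfO(\J))^{U^-\times U}$ is onto. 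This uses Kallen's power-surjectivity (Lemma \ref{lem:power-surjective}), extraction of $p$-power roots, saturation of $\J$, and primality of ${}_k\bfO(\J)$. Since nonzero unipotent modules have nonzero invariants, this gives $\p\subseteq{}_k\bfO(\J)$, and the reverse inclusion is symmetric.

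(c) Equal fibres plus flatness does not give equality of closed subschemes: $\{t=0\}$ and $\{t=\epsilon\}$ in $\mathbb{A}^1$ over $k[\epsilon]/(\epsilon^2)$ are a counterexample. Over a reduced base you need the total-ring-of-fractions argument. Over a non-reduced base you need the equivariant rigidity of Proposition \ref{prop:nilpotent-deformation}, which first forces an inclusion through the $(-\la,\la)$-weight spaces before any Nakayama-type step applies.
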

Here we say that an ideal $\J\subset\L$ is \emph{prime} if the complement $\L\setminus\J$ is a submonoid of $\L$, and we say that $\J$ is downward closed if for any $\la,\mu\in\L$ with $\mu\le\la$ and $\la\in\J$ we have $\mu\in\J$.\par
In particular, when working over an algebraically closed field (of any characteristic), we recover the classical result on the normality of orbit closures of reductive monoids (see for example \cite[Theorem 3 (5)]{Vinberg1995} in the characteristic zero case). 

\subsection{Organization of the article}
The article can be roughly divided into two parts: the first part, consisting of the four sections \S\ref{sec:quantum-canonical-bases}-\S\ref{sec:reps-and-invariants}, is ``quantum" and its main goal is to prepare the machinery for the construction of integral models of reductive monoids; the second part, consisting of the four sections \S\ref{sec:reductive-monoid-construction}-\S\ref{sec:very-flat}, is ``classical" (i.e. specialize $v\mapsto1$ in ``quantum objects") and contains our main results on reductive monoids.\par 
In Section \S\ref{sec:quantum-canonical-bases} we set up notations on root data and reductive groups, review basic facts on Lusztig's modified quantum groups and canonical bases. In Section \S\ref{sec:Lusztig-group-scheme} we first review Lusztig's construction of Chevalley groups over $\ZZ$ using the dual canonical bases, and then establish some basic functorial properties of the (dual) canonical bases. Then Section \S\ref{sec:finite-coordinate-ring} is devoted to the proof of finite generation of the quantized integral coordinate rings of reductive monoids. In Section \S\ref{sec:reps-and-invariants} we use the dual canonical bases to study the (quantization of the) subalgebra of unipotent invariants in the coordinate algebra, and establish a technical result used during the proof of the classification theorem. In Section \S\ref{sec:reductive-monoid-construction} we study basic properties of reductive monoids, prove the existence part of the classification theorem and the generalization of Steinberg's theorem on adjoint quotients. Then in Section \S\ref{sec:classification} we establish the uniqueness part and hence finish the proof of the classification theorem. Section \S\ref{sec:orbit} is devoted to the study of $G\times G$ orbits in a reductive monoid. Finally in Section \S\ref{sec:very-flat} we study the important class of very flat reductive monoid, especially the Vinberg monoid, and compare our construction with other approaches in the literature. In Appendix \ref{sec:appendix-monoid} we review some basic facts on (abstract) commutative monoids and monoid algebras following the book \cite{Ogus-Log}.

\subsection{Notations and conventions}
The symbol $\NN\coloneqq\ZZ_{\ge0}$ denotes the set of \emph{non-negative} integers. We will often use the Kronecker delta symbol: $\delta_{x,y}=1$ if $x=y$ and $\delta_{x,y}=0$ if $x\ne y$.\par 
Following Lusztig \cite{Lusztig-IntroQuantumGroup}, we let $v$ be an indeterminate and let $\mathcal{A}\coloneqq\ZZ[v, v^{-1}]$ be the Laurent polynomial ring with coefficients in $\ZZ$. Its fraction field is $\QQ(v)$. We will usually use the letter $R$ to denote a general commutative $\A$-algebra. On the other hand, we will usually use $\k$ (resp. $k$) to denote a general commutative ring (resp. field) that is equipped with an $\A$-algebra structure via $v\mapsto1$ (so this is supposed to be a ``classical" object, instead of a general ``quantum" object that we denote by $R$). The rings $\ZZ$ and $\QQ$ are always viewed as $\A$-algebras by mapping $v$ to $1$. All commutative rings (or $\A$-algebras) are assumed to be associative and unital unless specified otherwise. Since we will use Lusztig's modified quantized enveloping algebras, non-commutative or non-unital rings do occur in the article.\par
For a commutative ring $\k$ and a scheme $\mathcal{X}$ defined over $\k$, we let $\k[\mathcal{X}]$ denote the $\k$-algebra of global sections of the structure sheaf of $\mathcal{X}$. More generally, for a scheme $\mathcal{X}$ over a general base scheme $S$, we let $\O_S[\mathcal{X}]$ denote the structure sheaf of $\O_S$-algebras of $\mathcal{X}$.\par
We will frequently use the notion of \emph{scheme-theoretic image}, which we refer to \cite[\href{https://stacks.math.columbia.edu/tag/01R5}{Section 01R5}]{Stacks} for the precise definition. Let us just remind that for a morphism between schemes $f\colon \mathcal{X}\to\mathcal{Y}$, its scheme theoretic image is by definition a \emph{closed} subscheme of the target scheme $\mathcal{Y}$, and if $\mathcal{X}$ is reduced then it is the reduced induced scheme structure on the set theoretic closure $\overline{f(\mathcal{X})}$ (see \cite[\href{https://stacks.math.columbia.edu/tag/056B}{Lemma 056B}]{Stacks}). If $f$ is an immersion, we also say \emph{scheme-theoretic closure, or schematic closure} instead of scheme-theoretic image.
\subsection*{Acknowledgment}
The research of J.C. was supported by National Key R\&D Program of China No.2023YFA1009701, National Natural Science Foundation of China (Grant No.12288201, No.12231001), CAS Project for Young Scientists in Basic Research (Grant No.YSBR-033).

\section{Quantized enveloping algebras and canonical bases}\label{sec:quantum-canonical-bases}
\subsection{Review of basic definitions}
\subsubsection{Based root data}\label{subsubsec:based-root-data}
We first recall the standard notion of root data. For our purposes, we will only consider finite type root data.
\begin{defi}\label{def:based-root-data}
   A \emph{based root datum} is a quintuple $\Psi=(\Delta, Y, X,\{\alpha_i,i\in\Delta\}, \{\alpha_i^\vee,i \in\Delta\})$ consisting of:
\begin{itemize}
    \item a finite set $\Delta$;
    \item two finitely generated free abelian groups $X, Y$ and a perfect pairing $\langle \cdot, \cdot \rangle\colon Y \times X \rightarrow \mathbb{Z}$;
    \item an embedding $\Delta\into X$ ($i\mapsto\alpha_i$) and an embedding $\Delta\into Y$ ($i \mapsto \alpha_i^\vee$);
\end{itemize} 
such that the matrix $A=(a_{ij})_{i,j\in\Delta}$, with entries $a_{ij}\coloneqq\langle\alpha_i^\vee,\alpha_j\rangle$ for all $i,j\in\Delta$, is a Cartan matrix.\footnote{We require $A$ to be a Cartan matrix (instead of only a generalized Cartan matrix) since our root data are of finite type. Recall that a Cartan matrix is symmetrizable and positive definite. So the images of the embeddings $\Delta\into X$ and $\Delta\into Y$ are linearly independent. Thus a based root datum is both ``$X$-regular and $Y$-regular" in the sense of \cite[\S2.2.2]{Lusztig-IntroQuantumGroup}.}
\end{defi}
For a based root datum $\Psi$ as above, the abelian group $X$ (resp. $Y$) is called the \emph{weight lattice} (resp. \emph{coweight lattice}). The \emph{root lattice}, denoted $\ZZ^\Delta$, is the free abelian group with bases $\Delta$ and viewed as subgroup of $X$. The \emph{positive root monoid} is the submonoid $X_{\pos}=\NN^\Delta\subset X$ generated by the simple roots $\Delta$. The set of \emph{dominant weights} of $\Psi$ is defined to be 
\[X^+\coloneqq \{ \mu \in X \mid \langle \alpha_i^\vee, \mu \rangle \geq 0, \forall i \in I \}.\]
It is a submonoid of the weight lattice $X$. Let $X^+_\pos$ be the submonoid of $X$ generated by the dominant weight monoid $X^+$ and the positive root monoid $X_\pos=\NN^\Delta$.\par
For \(\lambda, \lambda'\) in \(X\) we write \(\lambda \geq \lambda'\) or \(\lambda' \leq \lambda\) if \(\lambda - \lambda' \in \sum_{i\in\Delta} \NN\a_i\). The Weyl group $W=W(\Psi)$ is defined to be the finite subgroup of \(\operatorname{Aut}(Y)\) generated by the involutions \(s_i\colon y \mapsto y - \langle y, \a_i \rangle \a_i^\vee, \ (i \in\Delta)\), or equivalently the finite subgroup of \(\operatorname{Aut}(X)\) generated by the involutions \(s_i\colon x \mapsto x - \langle \a_i^\vee, x \rangle \a_i, \ (i \in\Delta)\). Let \(l\colon W \to \NN\) be the standard length function on \(W\) with respect to \(\{s_i \mid i \in\Delta\}\) and let \(w_0 \in W\) be the unique element such that \(l(w_0)\) is maximal.

\begin{defi}\label{def:bilinear-form}
    Let $\{d_i,i\in\Delta\}$ be relatively prime positive integers such that the matrix $(d_ia_{ij})_{i,j\in\Delta}$ is symmetric. There is a unique symmetric bilinear form on $\ZZ^\Delta$, denoted by $\nu,\nu'\mapsto\nu\cdot\nu'$, such that $i\cdot i=2d_i$ and $i\cdot j=d_ia_{ij}$ for all\footnote{Here, in accordance with the conventions in \cite{Lusztig-IntroQuantumGroup}, for simplicity we write $i\cdot j$ instead of $\alpha_i\cdot\alpha_j$} $i,j\in\Delta$. 
    For any $\nu=\sum_{i\in\Delta}\nu_i i\in\ZZ^\Delta$ and any $\la\in X$, we set:
    \[\nu\circ\la\coloneqq\sum_{i\in\Delta}\nu_i\langle\alpha_i^\vee,\la\rangle d_i=\sum_{i\in\Delta}\nu_i\langle\alpha_i^\vee,\la\rangle (i\cdot i/2),\quad\quad\mathrm{tr}(\nu)\coloneqq\sum_{i\in\Delta}\nu_i\in\ZZ.\]
    See \cite[22.1.3]{Lusztig-IntroQuantumGroup}.
\end{defi}
\begin{rqe}
    The bilinear form $\cdot$ on $\ZZ^\Delta$ gives rise to a Cartan datum in the sense of \cite[\S1.1.1]{Lusztig-IntroQuantumGroup}. However, the notion of Cartan datum in \emph{loc. cit.} is more flexible by allowing bilinear forms proportional to the one given above. This would be necessary when studying the quantum Frobenius homomorphism. The proportionality classes of Cartan data of finite type are in bijection with finite type Cartan matrices, and we have made a specific choice in each proportionality class that would be sufficient for our purpose.
\end{rqe}

\subsubsection{Notations on reductive groups}\label{sec:notation-reductive-group}
Fix a based root datum 
\[\Psi=(\Delta, Y, X, A, \{\alpha_i,i\in\Delta\}, \{\alpha_i^\vee,i \in\Delta\})\] 
for the rest of this section. Let us first introduce some notations on reductive groups that will be used throughout the article. We have a split reductive group scheme with pinning $(G,B,T,...)$ over $\Spec(\ZZ)$ with base root datum $\Psi$. Here $T\subset G$ is a split maximal torus and $B\subset G$ is a Borel subgroup containing $T$, all defined over $\Spec(\ZZ)$. By definition we have $X=X^*(T)$, $Y=X_*(T)$ and $W=N_G(T)/T$ is identified with the Weyl group of $T$ in $G$. We will review the construction of $G$ in \S\ref{sec:Lusztig-group-scheme}, following Lusztig's approach. Let $B^-\subset G$ be the Borel subgroup opposite to $B$. Let $U\subset B$ and $U^-\subset B^-$ be the unipotent radicals. For any scheme $S$, we get group schemes $G_S, T_S$ etc. over $S$ by base change. When $S=\Spec(\k)$ we also write $G_\k, T_\k$ etc. To simplify notations, we will often omit these subscripts when the base scheme is clear from the context. On the other hand, we will sometimes add a subscript $\Psi$ to emphasize the dependence of $G$ on the root data, typically when there are several different root data involved.\par 
We let $G_{\der}$ be the derived group of $G$, $Z_G$ be the center of $G$ and $Z\subset Z_G$ be the identity component of $Z_G$. Then $Z$ is also the maximal split torus in $Z_G$. Let $T_{\der}\coloneqq T\cap G_{\der}$, $B_{\der}\coloneqq B\cap G_{\der}$ and $Z_{\der}\coloneqq Z\cap G_{\der}$. Then $G_{\der}$ is a split semisimple group scheme over $S$ with maximal torus $T_{\der}$ and Borel subgroup $B_{\der}$. Let $G_{\ab}\coloneqq G/G_{\der}$ be the maximal abelian quotient of $G$. Then $G_{\ab}$ is a split torus over $S$ and we have canonical isomorphisms 
\[G_{\ab}\cong Z/Z_{\der}\cong T/T_{\der}.\] 
Let $G_{\ad}\coloneqq G/Z_G$ be the adjoint group of $G$ and let $T_{\ad}\coloneqq T/Z_G$. Then $X_{\ad}\coloneqq X^*(T_{\ad})$ is identified with the root lattice in $X$ and it contains the positive root monoid: $X_\pos=\NN^\Delta\subset X_\ad$.\par 
Let $X_Z\coloneqq X^*(Z)$, $X_{\ab}\coloneqq X^*(G_{\ab})$ and $X_{\der}\coloneqq X^*(T_{\der})$. Then we have natural embeddings $X_{\ab}\subset X^+\subset X$ and $X_{\ab}\subset X_Z$ that commute with the natural quotient map $X\to X_Z$ (induced by restricting characters along $Z\into T$). From the isomorphism above we get a canonical isomorphism $X_{\der}\cong X/X_{\ab}$. 

\subsubsection{Gaussian binomial coefficients}
We will define various algebras and modules associated to the based root datum $\Psi$ over the Laurent polynomial ring $\A=\ZZ[v,v^{-1}]$ and its fraction field $\QQ(v)$. First we review some standard notations.\par
For $i\in\Delta$ we set $v_i = v^{d_i}$ and we use the following notations
\[\begin{bmatrix}a\\ t\end{bmatrix}_i=\frac{\prod_{s=0}^{t-1}(v_i^{a-s}-v_i^{-a+s})}{\prod_{s=1}^t(v_i^s-v_i^{-s})},\quad a\in\ZZ,t\in\NN,\]
\[[n]_i\coloneqq\begin{bmatrix}n\\1\end{bmatrix}_i=\frac{v_i^n-v_i^{-n}}{v_i-v_i^{-1}},\quad n\in\ZZ,\]
\[[n]_i^!\coloneqq\prod_{s=1}^n[s]_i,\quad n\in\NN.\]
By convention we have
\[[0]_i=0,\quad[0]_i^!=1,\quad\begin{bmatrix}a\\ 0\end{bmatrix}_i=1,\quad\forall a\in\ZZ.\]
These elements, apriori in $\QQ(v)$, actually lie in $\A=\ZZ[v,v^{-1}]$ (see \cite[\S1.3]{Lusztig-IntroQuantumGroup}). Also, the subscript ``$i$" on the left-hand sides will be dropped in case $v_i$ is replaced by $v$ on the right-hand sides.

\subsection{The positive/negative half of quantized enveloping algebra}
Let \(\mathbf{f}\) be the associative \(\QQ(v)\)-algebra with \(1\) defined as in \cite[§1.2.5]{Lusztig-IntroQuantumGroup}, or equivalently by the generators $\{\theta_i,i\in\Delta\}$ and the quantum Serre relations
\begin{equation}\label{eq:quantum Serre relations}
    \sum_{\substack{p, p' \in \NN \\ p + p' = 1 - \langle\alpha_i^\vee, \alpha_j\rangle}} (-1)^{p'} \left( \frac{\theta_i^p}{[p]_i^!} \right) \theta_j \left( \frac{\theta_i^{p'}}{[p']_i^!} \right)=0,\quad\forall i\ne j\text{ in }\Delta.
\end{equation}
We have a direct sum decomposition \(\mathbf{f} = \bigoplus_{\nu \in \NN^\Delta} \mathbf{f}_{\nu}\) as $\QQ(v)$ vector space such that:
\begin{itemize}
    \item for each $\nu=\sum_{i\in I}\nu_ii\in\NN^\Delta$, the subspace $\bff_\nu$ is finite dimensional and its elements can be represented by words in $\{\theta_i,i\in\Delta\}$ in which \(\theta_i\) occurs $\nu_i$ times for all \(i \in\Delta\).
    \item we have $1\in\bff_0$, $\theta_i\in\bff_i,\forall i\in\Delta$ and for each $\nu,\nu'\in\NN^\Delta$, $\bff_\nu\cdot\bff_{\nu'}\subset\bff_{\nu+\nu'}$.  
\end{itemize}
If $x\in\bff_\nu$, we say that $x$ is homogeneous and write $|x|=\nu$.\par 
For \(i\in\Delta\), \(n \in \ZZ\) we set 
\[\theta_i^{(n)} = \begin{cases}
    \frac{\theta_i^n}{[n]_i^!} \in \mathbf{f} &\text{ if } n \geq 0,\\
    0 &\text{ if }n < 0.
\end{cases}\] 
Let \({}_{\mathcal{A}}\mathbf{f}\) be the unital \(\mathcal{A}\)-subalgebra of \(\mathbf{f}\) generated by the elements \(\theta_i^{(n)}\) with \(i \in\Delta,\ n \in \NN\) (cf. \cite[\S1.4.7]{Lusztig-IntroQuantumGroup}). We have \({}_{\mathcal{A}}\mathbf{f} = \bigoplus_{\nu \in \NN^\Delta} {}_{\A}\mathbf{f}_{\nu}\) where ${}_{\A}\mathbf{f}_{\nu}\coloneqq\mathbf{f}_{\nu}\cap {}_{\mathcal{A}}\mathbf{f}$. 

Let $\sigma\colon\bff\to\bff^{\mathrm{op}}$ be the unique $\QQ(v)$-algebra isomorphism such that $\sigma(\theta_i)=\theta_i$ for all $i\in\Delta$. Clearly $\sigma$ restricts to an $\A$-algebra isomorphism $\sigma\colon{}_\A\mathbf{f}\to{}_\A\mathbf{f}^{\mathrm{op}}$. The \emph{bar involution} is the unique $\QQ$-algebra involution ${\bar{}}\colon\bff\to\bff$ that sends $v$ to $v^{-1}$ and fixes $\theta_i$ for all $i\in\Delta$.
\begin{defi}
    Let $r\colon\bff\to\bff\otimes\bff$ be the unique $\QQ(v)$-linear map such that 
    \[r(\theta_i)=1\otimes\theta_i+\theta_i\otimes1,\quad\forall i\in\Delta\]
    and more generally for any sequence $i_1,\dotsc,i_n$ in $\Delta$ defining a monomial $x=\theta_{i_1}\theta_{i_2}\dotsm\theta_{i_n}\in\bff$,
    \[r(x)=1\otimes x+x\otimes 1+\sum (\prod_{\substack{a\in[1,s],c\in[1,t]\\ j_a>k_c}} v^{i_{j_a}\cdot i_{k_c}})\theta_{i_{j_1}}\dotsm\theta_{i_{j_s}}\otimes\theta_{i_{k_1}}\dotsm\theta_{i_{k_t}}\]
    where the sum runs over all nonempty subsets $j_1<\dotsm<j_s$ of $[1,n]$ with nonempty complement $k_1<\dotsm<k_t$ ($s+t=n$, $s,t\ge1$).\par 
    Let $\bar{r}\colon\bff\to\bff\otimes\bff$ be the unique $\QQ(v)$-linear map given by the same formula as $r$ except $v$ is replaced by $v^{-1}$. For details, see \cite[1.2.6, 1.2.12]{Lusztig-IntroQuantumGroup} and \cite[3.8]{Lusztig-Infinity}.
\end{defi}

Let \(\mathbf{B}\) be the canonical $\QQ(v)$-basis of \(\mathbf{f}\) defined in \cite[§14.4]{Lusztig-IntroQuantumGroup}. It is also an \(\mathcal{A}\)-basis of \({}_{\mathcal{A}}\mathbf{f}\) (by \cite[14.2.3(a)]{Lusztig-IntroQuantumGroup}). Let $\bfB_\nu\coloneqq\mathbf{f}_\nu\cap\bfB$ for any $\nu\in\NN^\Delta$. Then $\bfB=\bigsqcup_{\nu\in\NN^\Delta}\bfB_\nu$ and by \cite[14.4.3(c)]{Lusztig-IntroQuantumGroup} we have $\sigma(\bfB_\nu)=\bfB_\nu$.\par 
Given $i\in\Delta$ and $n\ge0$, we let $\bfB_{i;\ge n}\coloneqq\bfB\cap\theta_i^n\bff$ and ${}^\sigma\bfB_{i;\ge n}\coloneqq\bfB\cap\bff\theta_i^n$. Let $\bfB_{i;n}\coloneqq\bfB_{i;\ge n}\backslash\bfB_{i,\ge n+1}$ and ${}^\sigma\bfB_{i;n}\coloneqq{}^\sigma\bfB_{i;\ge n}\backslash{}^\sigma\bfB_{i;\ge n+1}$. \par 
For any $\la\in X^+$, we define
\[\bfB(\la)\coloneqq\bigcap_{i\in\Delta}\left(\bigcup_{0\le n\le\langle i,\la\rangle}{}^\sigma\bfB_{i;n}\right).\]
\begin{lem}\label{lem:B-union}
    Let $\L\subset X^+$ be a subset with the following property: for any $N\ge0$ there exists $\la\in\L$ such that $\langle i,\la\rangle\ge N$ for all $i\in\Delta$. Then we have $\bfB=\bigcup_{\la\in\L}\bfB(\la)$. 
\end{lem}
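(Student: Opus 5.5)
One inclusion is immediate: each $\bfB(\la)$ is by definition a subset of $\bfB$, so $\bigcup_{\la\in\L}\bfB(\la)\subset\bfB$. For the converse I will show that every $b\in\bfB$ lies in $\bfB(\la)$ once all the pairings $\langle i,\la\rangle$ are large enough, with a bound depending only on $b$.

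First I will attach to each $b\in\bfB$ and each $i\in\Delta$ a finite integer. Since $\bfB$ is a disjoint union of the $\bfB_\nu$, the element $b$ is homogeneous, say $b\in\bfB_\nu$ with $\nu=\sum_j\nu_j j$. Every element of $\bff\theta_i^n$ is a combination of words containing $\theta_i$ at least $n$ times. Such an element therefore lies in $\bigoplus_{\nu'}\bff_{\nu'}$ with $\nu'_i\ge n$. As $b\ne0$ lies in $\bff_\nu$, this gives $b\notin\bff\theta_i^n$ for $n>\nu_i$. On the other hand $b\in\bff\theta_i^0=\bff$. So the sets ${}^\sigma\bfB_{i;\ge n}$ form a decreasing chain, starting at $\bfB$ for $n=0$ and not containing $b$ for $n>\nu_i$. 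Hence there is a unique $n_i(b)\in[0,\nu_i]$ with $b\in{}^\sigma\bfB_{i;n_i(b)}$, namely the largest $n$ with $b\in{}^\sigma\bfB_{i;\ge n}$.

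Next I will use the hypothesis on $\L$ with $N\coloneqq\max_{i\in\Delta}n_i(b)$; one could also simply take $N=\mathrm{tr}(\nu)$. This gives some $\la\in\L$ with $\langle i,\la\rangle\ge N\ge n_i(b)$ for all $i\in\Delta$. Then for every $i$ we have $b\in{}^\sigma\bfB_{i;n_i(b)}$ with $0\le n_i(b)\le\langle i,\la\rangle$, and so $b\in\bfB(\la)$.

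There is no real obstacle here. The only point needing care is that $n_i(b)$ is well defined and finite, and this follows from the grading argument in the first step.
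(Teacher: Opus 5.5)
Your proposal is correct and follows the same route as the paper: attach to each $b$ and each $i\in\Delta$ the unique $n$ with $b\in{}^\sigma\bfB_{i;n}$, then use the hypothesis on $\L$ to find $\la$ dominating all these integers. The one addition is your grading argument showing this $n$ exists and is at most $\nu_i$, a fact the paper simply takes for granted when it writes the partition $\bfB=\bigsqcup_{n\ge0}{}^\sigma\bfB_{i;n}$.
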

\begin{proof}
    For any $i\in\Delta$ we have a partition $\bfB=\bigsqcup_{n\ge0}{}^\sigma\bfB_{i;n}$. Thus for any $b\in\bfB$ and $i\in\Delta$ there is an integer $m(b,i)\ge0$ such that $b\in{}^\sigma\bfB_{i;m(b,i)}$. By assumption there exists $\la\in\L$ such that $\langle i,\la\rangle\ge m(b,i)$ for all $i\in\Delta$ and then we have $b\in\bfB(\la)$.
\end{proof}
By \cite[14.2.3]{Lusztig-IntroQuantumGroup}, the canonical bases $\bfB$ is characterized up to sign by the bar involution and a non-degenerate bilinear form $(\cdot,\cdot)\colon\bff\times\bff\to\QQ(v)$ (defined in \cite[1.2.3, 1.2.5]{Lusztig-IntroQuantumGroup}) as follows:
\begin{equation}\label{eq:pairing characterizing B}
    \pm\bfB=\{x\in{}_\A\bff\mid \bar{x}=x, (x,x)\in1+v^{-1}\ZZ[[v^{-1}]]\cap\QQ(v)\}.
\end{equation}

\subsection{The modified quantized enveloping algebra}
The Drinfeld-Jimbo quantized enveloping algebra $\bfU$ (associated to the based root datum $\Psi$) is a $\QQ(v)$-algebra generated by two copies of $\bff$ (denoted by $\bfU^+$ and $\bfU^-$), together with the group algebra of the \emph{coweight} lattice $Y$. See \cite[\S3]{Lusztig-IntroQuantumGroup} for the precise definition, which we do not reproduce here since we will not use it. Instead we will need Lusztig's modified form of $\bfU$, denoted $\dot{\bfU}$, which still contains the two copies ($\bfU^{+},\bfU^-$) of $\bff$, but with the group algebra of $Y$ in $\bfU$ replaced by the algebra of functions with finite support on the \emph{weight} lattice $X$ (i.e. the $\QQ(v)$-algebra $\bigoplus_{\zeta\in X}\QQ(v)1_\zeta$ with basis $\{1_\zeta,\zeta\in X\}$). The algebra $\dot{\bfU}$ is simpler than $\bfU$ in several aspects: it captures only the most essential representations of $\bfU$ (the so-called ``type $1$" weight modules); also one can construct its integral model (an $\A$-algebra ${}_{\A}\dot{\mathbf{U}}$ such that ${}_{\A}\dot{\mathbf{U}}\otimes_\A\QQ(v)=\dot{\bfU}$) more easily.\par 

Following \cite[\S31.1.3]{Lusztig-IntroQuantumGroup}, we define the \(\A\)-algebra  \({}_{\A}\dot{\mathbf{U}}\) (without using $\bfU$) as follows. It is generated by the symbols \( x^{+}1_{\zeta}x^{\prime -} \), \( x^{-}1_{\zeta}x^{\prime +} \) with \( x \in {}_{\A}\mathbf{f}_{\nu} \), \( x^{\prime} \in {}_{\A}\mathbf{f}_{\nu^{\prime}} \) for various \(\nu, \nu^{\prime}\) and \(\zeta \in X\); these symbols are subject to the following relations (if \( x \) or \( x' \) in \( x^+ 1_\zeta x^{\prime -} \) or \( x^- 1_\zeta x^{\prime +} \) is \( 1 \), we omit writing it):
\begin{itemize}
    \item The maps \(_{\A}\mathbf{f} \rightarrow {}_{\A}\dot{\mathbf{U}}\), \( x \mapsto x^{\pm}1_{\zeta} \) are \(\A\)-linear for any \(\zeta \in X\);
    \item For any $m, n\in\NN$, $\zeta \in X$ and $i \neq j$ in $\Delta$, 
    \[\theta^{(n)+}_{i}1_{\zeta}\theta^{(m)-}_j = \theta^{(m)-}_j1_{\zeta+m\alpha_i+n\alpha_j}\theta^{(n)+}_i;\]
    \item  For any \( m, n \in \NN \), \(\zeta \in X\) and \( i \in \Delta\),
    \[\theta^{(n)\pm}_{i}1_{\mp\zeta}\theta^{(m)\mp}_{i} = \sum_{t\in \NN}\begin{bmatrix} m + n - \langle \a_i^\vee, \zeta \rangle \\ t \end{bmatrix}_{i} \theta^{(m-t)\mp}_{i}1_{\mp\zeta \pm (n+m-t)\a_i} \theta^{(n-t)\pm}_{i};\]
    \item For any \( x \in {}_{\A}\mathbf{f}_{\nu} \) and \( \zeta \in X \),
    \[ x^\pm 1_\zeta = 1_{\zeta \pm \nu} x^\pm;\]
    \item  For any \( x, x' \in {}_{\A}\mathbf{f} \) and \( \zeta \in X \),
    \[(x^\pm 1_\zeta)(1_{\zeta'} x^{\prime \mp}) = \delta_{\zeta, \zeta'} x^\pm 1_\zeta x^{\prime \mp};\]
    \item  For any \( x \in {}_{\A}\mathbf{f}_{\nu} \), \( x' \in {}_{\A}\mathbf{f} \) and \( \zeta \in X \),
    \[(x^\pm 1_\zeta)(1_{\zeta'} x^{\prime \pm}) = \delta_{\zeta, \zeta'} 1_{\zeta \pm \nu} (x x')^\pm.\]
\end{itemize}
 The algebra \({}_{\A}\dot{\mathbf{U}}\) does not have a unit element in general; instead, it has a family of idempotents \( 1_\lambda \) (\( \lambda \in X \)) such that $1_\lambda 1_{\lambda'} = \delta_{\lambda, \lambda'} 1_\lambda$ for any $\lambda, \lambda' \in X$. They induce a decomposition
\begin{equation}\label{eq:U-dot-decomposition}
     {}_{\A}\dot{\mathbf{U}} = \bigoplus_{\lambda, \lambda' \in X} 1_\lambda ({}_{\A}\dot{\mathbf{U}}) 1_{\lambda'}.
\end{equation}
From the defining relations above we see that $1_\lambda ({}_{\A}\dot{\mathbf{U}}) 1_{\lambda'}\ne0$ only if $\la-\la'\in\ZZ^\Delta$. \par
There are two natural ${}_{\A}\mathbf{f}\otimes{}_{\A}\mathbf{f}^{op}$-module structures on ${}_{\A}\dot{\mathbf{U}}$ giving rise to two isomorphisms of \( \A \)-modules: 
\[{}_{\A}\mathbf{f} \otimes  \A[X] \otimes{}_{\A}\mathbf{f} \to {}_{\A}\dot{\mathbf{U}}, \quad x \otimes  \lambda \otimes  x' \mapsto x^{\pm} 1_\lambda x^{\prime \mp}.\]
Moreover, the $\A$-algebra ${}_{\A}\dot{\mathbf{U}}$ has extra structures making it similar to a Hopf algebra (but not exactly due to the lack of a unit). The \emph{co-product} on $ {}_{\A}\dot{\mathbf{U}}$ is a homomorphism of (non-unital) algebras
\begin{equation}\label{eq:U-dot-coproduct}
    \Delta\colon {}_{\A}\dot{\mathbf{U}}\to\prod_{\la_1,\la_1',\la_2,\la_2'\in X}1_{\la_1} {}_{\A}\dot{\mathbf{U}}1_{\la_1'}\otimes_\A 1_{\la_2} {}_{\A}\dot{\mathbf{U}}1_{\la_2'}
\end{equation}
defined as in \cite[\S23.1.5, \S23.2.3]{Lusztig-IntroQuantumGroup}. Here the right hand side contains the tensor product ${}_{\A}\dot{\mathbf{U}}\otimes_\A {}_{\A}\dot{\mathbf{U}}$ as a subalgebra. We recall that in \emph{loc. cit.} Lusztig uses the co-product on the (usual) quantum enveloping algebra $\bfU$ to define maps
\[\Delta_{\la_1,\la_1';\la_2,\la_2'}\colon\;\;{}_{\la_1+\la_2}\dot{\bfU}_{\la_1'+\la_2'}\to{}_{\la_1}\dot{\bfU}_{\la_1'}\otimes{}_{\la_2}\dot{\bfU}_{\la_2'},\quad\forall\la_1,\la_2,\la_1',\la_2'\in X\]
which preserve the $\A$-structures and combine into the homomorphism $\Delta$ above. As explained in \cite[\S3.9]{Lusztig-Infinity}, the co-product $\Delta$ can also be defined directly (without recourse to $\bfU$): it is uniquely characterized by the following requirements:
\begin{enumerate}
    \item For any homogeneous elements $x\in\bff$ with $r(x)=\sum_j(x_j\otimes x_j')$, $\bar{r}(x)=\sum_k({}_kx\otimes{}_kx')$ where $x_j,x_j',{}_kx,{}_kx'\in\bff$ are homogeneous, we have (see Definition \ref{def:bilinear-form} for the notations)
    \begin{align}\label{eq:coproduct-formula}
        \Delta(x^+)&=\sum_{j;\la\in X}v^{|x_j'|\circ\la}x_j^+1_\la\otimes (x_j')^+, \\
        \Delta(x^-)&=\sum_{k;\la\in X}v^{-|{}_kx|\circ\la}({}_kx)^-\otimes1_\la({}_kx')^-.
    \end{align}
    \item For any $\la\in X$ we have
    \[\Delta(1_\la)=\sum_{\la'\in X}1_{\la'}\otimes 1_{\la-\la'}.\]
\end{enumerate}

The \emph{co-unit} of ${}_{\A}\dot{\mathbf{U}}$ is the unique $\A$-algebra homomorphism $\epsilon\colon{}_{\A}\dot{\mathbf{U}}\to\A$ sending $1_0$ to $1$ and all other generators $x^+1_\zeta x'^-$ to $0$. The \emph{antipode} of ${}_\A\dot{\bfU}$ is an $\A$-algebra isomorphism $S:{}_\A\dot{\bfU}\to{}_A\dot{\bfU}^{\mathrm{op}}$ given explicitly in \cite[23.1.7]{Lusztig-IntroQuantumGroup}.

We recall the following isomorphisms (see \cite[\S23.1.6, §31.1.4]{Lusztig-IntroQuantumGroup}):
\begin{itemize}
    \item The $\A$-algebra isomorphism $\sigma\colon{}_\A\dot{\bfU}\to{}_A\dot{\bfU}^{\mathrm{op}}$ defined by
\[\sigma(\theta_i^+)=\theta_i,\quad \sigma(\theta_i^-)=\theta_i^-,\quad \sigma(1_\lambda)=1_{-\lambda}.\]
    \item The $\A$-algebra automorphism $\dot{\omega}\colon {}_{\A}\dot{\mathbf{U}} \to {}_{\A}\dot{\mathbf{U}}$ defined by
\[\omega(\theta_i^+)=\theta_i^-,\quad\omega(\theta_i^-)=\theta_i^+,\quad\omega(1_\lambda)=1_{-\lambda}.\]
   \item The \emph{antipode}, an $\A$-algebra isomorphism $S\colon{}_\A\dot{\bfU}\to{}_A\dot{\bfU}^{\mathrm{op}}$ given explicitly in \cite[23.1.7]{Lusztig-IntroQuantumGroup}. We only note that $S$ differs from $\sigma$ by a factor which is the product of a sign and a power of $v$. 
\end{itemize}
We have $\sigma^2=1$ and $\omega^2=1$ but $S^{-1}\ne S$. 

\begin{defi}
    For any $\A$-algebra $R$, we denote ${}_{R}\dot{\mathbf{U}}\coloneqq{}_{\A}\dot{\mathbf{U}}\otimes_{\A} R $. When $R=\QQ(v)$ we simply write $\dot{\bfU}={}_{\QQ(v)}\dot{\bfU}$.
\end{defi}

Let \(\dot{\mathbf{B}}\) be the canonical basis of \({}_{\A}\dot{\mathbf{U}}\) defined in \cite[25.2]{Lusztig-IntroQuantumGroup}. Let us review its constructions and basic properties. 
\begin{defi}\label{def:B-dot}
    For any $\eta,\zeta\in X$, denote ${}_\eta\dot{\bfB}_\chi\coloneqq\dot{\bfB}\cap 1_\eta\dot{\bfU}1_\zeta$. Then by the construction of $\dot{\bfB}$ we have a partition:
    \[\dot{\bfB}=\bigsqcup_{\eta,\zeta\in X}{}_\eta\dot{\bfB}_\zeta.\]
    We recall from \cite[\S24.3.1]{LusztigGroupScheme} the partial order on the set $\bfB\times\bfB$: we say $(b_1,b_1')\le(b_2,b_2')$ if $\mathrm{tr}|b_1|-\mathrm{tr}|b_1'|=\mathrm{tr}|b_2|-\mathrm{tr}|b_2'|$ and either of the following two conditions hold:
    \begin{itemize}
        \item $\mathrm{tr}|b_1|<\mathrm{tr}|b_2|$ (and hence also $\mathrm{tr}|b_1'|<\mathrm{tr}|b_2'|$), or
        \item $b_1=b_2$ and $b_2=b_2'$.
    \end{itemize}
    The canonical basis $\dot{\bfB}$ is in bijection with $\bfB\times\bfB\times X$: an element of $\dot{\bfB}$ corresponding to $(b_1,b_2,\zeta)$ is denoted by $b_1\diamondsuit_\zeta b_2$, and it equals to $b_1^+b_2^-1_\zeta$ plus an $\A$-linear combination of elements of the form $c_1^+c_2^-1_\zeta$ with $c_1,c_2\in\bfB$ and $(c_1,c_2)<(b_1,b_2)$. Then we have $b_1\diamondsuit_\zeta b_2\in{}_\eta\dot{\bfB}_\zeta$ where $\eta=\zeta-\mathrm{tr}|b_2|+\mathrm{tr}|b_1|$. \par 
    The partial order on $\bfB\times\bfB$ defined above induces a partial order on $\dot{\bfB}$ (also denoted by ``$\le$"):
    \[b_1\diamondsuit_\zeta b_2\le c_1\diamondsuit_\chi c_2\quad \text{ if and only if }\quad \zeta=\chi\text{ and }(b_1,b_2)\le(c_1,c_2).\]
\end{defi}
For any $\A$-algebra $R$ and any \(b \in \dot{\mathbf{B}}\), we shall denote the element \(b \otimes_{\mathcal{A}}1\in {}_{\mathcal{A}}\dot{\mathbf{U}}\otimes_{\A}R={}_R\dot{\mathbf{U}}\) also by \(b\). Then \(\dot{\bfB}\) can also be viewed also as an $R$-basis of \({}_R\dot{\mathbf{U}}\). \par

The structure constants of ${}_{\A}\dot{\mathbf{U}}$ with respect to the bases $\dot{\bfB}$ are the family of elements $m_{ab}^c,\hat{m}^{ab}_c\in\A$ for any $a,b,c\in\dot{\bfB}$ defined by:
\[ab=\sum_{c\in\dot{\bfB}}m_{ab}^c c,\quad\forall a,b\in\dot{\bfB},\]
\[\Delta (c)=\sum_{a,b\in\dot{\bfB}}\hat{m}_c^{ab}a\otimes b,\forall c\in\dot{\bfB}.\]
We refer to \cite[\S25.4]{Lusztig-IntroQuantumGroup} for a list of identities satisfied by the structure constants. We record some finiteness properties.
\begin{lem}\label{lem:structure-constant-finite}
\begin{enumerate}
    \item[(i)] For any $c\in\dot{\bfB}$ the set $\{(a,b)\in\dot{\bfB}\times\dot{\bfB}\mid m_{ab}^c\ne0\}$ is finite.
    \item[(ii)] For any $a,b\in\dot{\bfB}$, the sets $\{c\in\dot{\bfB}\mid\hat{m}_c^{ab}\ne0\}$ and $\{c\in\dot{\bfB}\mid m_c^{ab}\ne0\}$ are both finite.
\end{enumerate}
\end{lem}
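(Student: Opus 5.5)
Both finiteness statements will be reduced to the fact that $\dot{\bfB}$ is partitioned into finite two-sided cells $\dot{\bfB}=\bigsqcup_{\la\in X^+}\dot{\bfB}[\la]$ (Lusztig, \cite[\S29]{Lusztig-IntroQuantumGroup}). This partition is recalled in the introduction, where each $\dot{\bfB}[\la]$ is finite: it is indexed by pairs of canonical basis elements of the based module $\Lambda_\la$. We use two properties from \emph{loc. cit.}:
\begin{itemize}
\item[(a)] For each $\la$, the span of $\bigsqcup_{\la'\ge\la}\dot{\bfB}[\la']$ is a two-sided ideal of ${}_\A\dot{\bfU}$. This ideal acts by zero on every $\Lambda_\mu$ with $\mu\not\ge\la$.
\item[(b)] Every $c\in\dot{\bfB}[\la]$ acts nontrivially on $\Lambda_\la$.
\end{itemize}
We also use the elementary fact that for fixed $\la\in X^+$ the set $\{\mu\in X^+\mid \mu\le\la\}$ is finite. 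Indeed, $\la-\mu\in\NN^\Delta$ together with dominance of $\mu$ bounds the coefficients of $\la-\mu$.

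For (i), fix $c\in\dot{\bfB}[\la_c]$ and suppose $m_{ab}^c\neq0$ with $a\in\dot{\bfB}[\la_a]$ and $b\in\dot{\bfB}[\la_b]$. By (a), the product $ab$ lies in the ideal spanned by the cells $\dot{\bfB}[\la']$ with $\la'\ge\la_a$, and also in the ideal spanned by the cells with $\la'\ge\la_b$. Since these spans have bases given by subsets of $\dot{\bfB}$, the element $c$ can only occur in $ab$ if $\la_c\ge\la_a$ and $\la_c\ge\la_b$. Hence $\la_a$ and $\la_b$ range over the finite set $\{\mu\in X^+\mid\mu\le\la_c\}$. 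Because each cell is finite, only finitely many pairs $(a,b)$ occur.

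For (ii), fix $a\in\dot{\bfB}[\la_a]$ and $b\in\dot{\bfB}[\la_b]$. The statement names two sets, indexed by the coefficients $\hat m_c^{ab}$ and $m_c^{ab}$. Both carry the index pattern of the coproduct ($c$ below, $a,b$ above). Only $\hat m_c^{ab}$ has been defined, as the coefficient of $a\otimes b$ in $\Delta(c)$, so I read $m_c^{ab}$ as this same coproduct constant. The two sets then coincide, and it suffices to show that $\{c\mid \hat m_c^{ab}\ne0\}$ is finite. I dualize. Let $a^*,b^*,c^*$ be the dual basis functionals on $\dot{\bfU}$; these are elements of $\dot{\bfB}^*$. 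Define the product on functionals by $(fg)(u)\coloneqq(f\otimes g)(\Delta u)$. Then
\[a^*b^*=\sum_{c}\hat m_c^{ab}\,c^*,\]
so $\hat m_c^{ab}\ne0$ exactly when $c^*$ occurs in $a^*b^*$. Now I use (a) and (b) to identify the span of $\bigsqcup_{\mu\le\la}\dot{\bfB}[\mu]^*$. It is the space of functionals that vanish on the ideal spanned by the cells $\not\le\la$, and it is also the span of the matrix coefficients of the modules $\Lambda_\mu$ with $\mu\le\la$. In particular $a^*$ is a matrix coefficient of $\Lambda_{\la_a}$ and $b^*$ is a matrix coefficient of $\Lambda_{\la_b}$. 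Hence $a^*b^*$ is a matrix coefficient of the tensor product $\Lambda_{\la_a}\otimes\Lambda_{\la_b}$, made into a module through $\Delta$. Every weight of this module is $\le\la_a+\la_b$. I then argue that any $u$ in the cell ideal for $\la'\not\le\la_a+\la_b$ acts by zero on this module, so $a^*b^*$ lies in the span of the cells $\dot{\bfB}[\mu]^*$ with $\mu\le\la_a+\la_b$. That is a finite set of dual basis elements, which proves (ii).

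The main obstacle is this last vanishing: showing that the cell ideal for $\la'\not\le\la_a+\la_b$ kills $\Lambda_{\la_a}\otimes\Lambda_{\la_b}$. Property (a) only controls simple based modules $\Lambda_\mu$. I would handle this by appealing to Lusztig's theory of based modules (\cite[27.3, 29.1]{Lusztig-IntroQuantumGroup}). The tensor product $\Lambda_{\la_a}\otimes\Lambda_{\la_b}$ is a based module, so it has a filtration by based submodules whose subquotients are of the form $\Lambda_\mu$ with $\mu\le\la_a+\la_b$. The ideal acts by zero on each subquotient, hence nilpotently on the tensor product. Since the ideal is spanned by basis elements and is idempotent, so that it equals its own square, it in fact acts by zero.
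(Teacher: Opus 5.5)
Your part (i) is correct. It is the same cell-ideal argument the paper uses in the proof of Proposition~\ref{prop:downward-closed}; for the lemma itself the paper simply cites \cite[Lemma 1.8]{LusztigGroupScheme}.

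In (ii) you misread the second set. $m_c^{ab}$ is a typo for $m_{ab}^c$, so the claim is that for fixed $a,b$ only finitely many $c$ have $m_{ab}^c\ne0$. This is immediate because $ab\in{}_\A\dot{\bfU}$ is a finite $\A$-linear combination of elements of $\dot{\bfB}$, which is the paper's one-line argument. By merging this set with the $\hat m$-set you left that claim unaddressed.

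For the $\hat m$-set the paper cites \cite[Lemma 1.16]{LusztigGroupScheme}. Your route aims at the sharper bound $\la_c\le\la_a+\la_b$, which is the paper's Lemma~\ref{lem:filtration-O-multiplicative}(i). The outline is sound, but two steps fail as written.

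First, the claim that $a^*$ is a matrix coefficient of $\Lambda_{\la_a}$ is false in general. By \cite[29.1.6]{Lusztig-IntroQuantumGroup} the images of $\dot{\bfB}[\la_a]$ form a basis of $\End(\Lambda_{\la_a})$. Hence the matrix coefficients of $\Lambda_{\la_a}$ and the span of $\dot{\bfB}[\la_a]^*$ have the same dimension, and an inclusion would force equality. But the character of $\Lambda_{\la_a}$ is nonzero on $1_\mu\in\dot{\bfB}[\mu]$ (Corollary~\ref{cor:B[la]-W-la}) for any dominant weight $\mu<\la_a$ of $\Lambda_{\la_a}$.

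What is true is that $a^*$ is a linear combination of matrix coefficients of the $\Lambda_\mu$ with $\mu\le\la_a$. Proving this requires that the joint annihilator of these modules is exactly the span of the cells $\dot{\bfB}[\nu]$ with $\nu\not\le\la_a$. That uses the linear independence in 29.1.6, not just your properties (a) and (b). Since $\mu+\mu'\le\la_a+\la_b$, the rest of your argument survives this correction.

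Second, your closing step asserts without proof that the span $J$ of the cells $\dot{\bfB}[\nu]$ with $\nu\not\le\la_a+\la_b$ satisfies $J=J^2$. This is not a formal consequence of $J$ being spanned by basis elements. Indeed, $\beta_\nu(b_1,b_2)$ agrees with the product $(b_1^-1_\nu)(1_\nu\sigma(b_2)^+)$ of two elements of $\dot{\bfB}[\nu]$ only modulo $\dot{\bfU}[>\nu]$. Proving $J\subseteq J^2$ this way therefore leads to a regress through ever higher cells.

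You do not need idempotence. It suffices to work over $\QQ(v)$, since $\hat m_c^{ab}\in\A$. There, finite-dimensional unital $\dot{\bfU}$-modules are semisimple, so $\Lambda_\mu\otimes\Lambda_{\mu'}$ is a direct sum of modules $\Lambda_\nu$ with $\nu\le\mu+\mu'$, each killed by $J$. This vanishing is exactly \cite[Lemma 4.21]{Lusztig-Infinity}, which the paper invokes for Lemma~\ref{lem:filtration-O-multiplicative}.

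Finally, if you only want finiteness you can skip the cell bound altogether. Once $a^*$ and $b^*$ are combinations of matrix coefficients of finite-dimensional unital modules $M$ and $N$, $\hat m_c^{ab}\ne0$ forces $c(M\otimes N)\ne0$. By Lemma~\ref{lem:uni-mod-finite}, only finitely many $c\in\dot{\bfB}$ have this property.
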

\begin{proof}
    (i) This is \cite[Lemma 1.8]{LusztigGroupScheme}.\par 
    (ii) The statement for $\hat{m}^{ab}_c$ is \cite[Lemma 1.16]{LusztigGroupScheme}. The statement for $m_{ab}^c$ is clear since any element of ${}_\A\dot{\bfU}$ (in particular $ab$) can be written as a finite $\A$-linear combination of elements in $\dot{\bfB}$. 
\end{proof}

The canonical basis $\dot{\mathbf{B}}$ admits a partition indexed by the dominant weights \cite[§29.1.1]{Lusztig-IntroQuantumGroup}
$$\dot{\mathbf{B}}=\bigsqcup_{\lambda \in X^+} \dot{\mathbf{B}}[\lambda]$$ such that each component is a finite set (by \cite[\S29.1.6]{Lusztig-IntroQuantumGroup}), called the \emph{two-sided cells} of $\dot{\bfB}$. For any subset $P\subset X^+$, we denote 
$$\dot{\mathbf{B}}[P]\coloneqq\bigsqcup_{\lambda\in P} \dot{\mathbf{B}}[\lambda]$$
and we let $\dot{\bfU}[P]$ be the subspace of $\dot{\bfU}$ spanned by elements in $\dot{\bfB[P]}$. In particular, for any $\la\in X^+$, we have subsets
\[X^+_{\ge\la}\coloneqq\{\mu\in X^+\mid \mu\ge\la\},\quad X^+_{>\la}\coloneqq\{\mu\in X^+\mid \mu>\la\}\]
and we denote
\[\dot{\bfU}[\ge\la]\coloneqq\dot{\bfU}[X^+_{\ge\la}],\quad\dot{\bfU}[>\la]\coloneqq\dot{\bfU}[X^+_{>\la}].\]
\begin{prop}\label{prop:B-dot-involution}
    For any $\lambda\in X^+$, we have $\sigma(\dot{\bfB}[\lambda])=\dot{\bfB}[-w_0(\lambda)]$ and $\omega(\dot{\bfB}[\lambda])=\dot{\bfB}[-w_0(\lambda)]$. Here $w_0$ is the longest element in the Weyl group. 
\end{prop}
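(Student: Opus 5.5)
We plan to prove this using Lusztig's description of the two-sided cells through based modules and the behaviour of $\sigma$ and $\omega$ on the simple modules $\La_\la$. Recall from \cite[\S29.1]{Lusztig-IntroQuantumGroup} that $\dot{\bfU}[\ge\la]$ and $\dot{\bfU}[>\la]$ are two-sided ideals. The quotient $\dot{\bfU}[\ge\la]/\dot{\bfU}[>\la]$ is identified, as a $\dot{\bfU}$-bimodule with basis, with ${}^\omega\La_\la\otimes\La_\la$ and its basis $\{b^-\xi_{-\la}^{*}\diamondsuit b'^+\eta_\la\}$. Equivalently, $\dot{\bfB}[\la]$ is characterized as the set of $b\in\dot{\bfB}$ that act nonzero on $\La_\la$ but act by zero on every $\La_\mu$ with $\mu\not\ge\la$. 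More precisely, $\dot{\bfB}[\ge\la]$ is the set of $b$ annihilating all $\La_\mu$ with $\mu\not\ge\la$, by \cite[29.1.2, 29.2]{Lusztig-IntroQuantumGroup}. So it suffices to show two things: $\sigma$ and $\omega$ permute $\dot{\bfB}$, and they transform this annihilation criterion with $\la$ replaced by $-w_0\la$.

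The first step is to show $\sigma(\dot{\bfB})=\dot{\bfB}$ and $\omega(\dot{\bfB})=\dot{\bfB}$. This is \cite[26.3.2]{Lusztig-IntroQuantumGroup} for $\sigma$; for $\omega$ we would cite \cite[25.2.?]{Lusztig-IntroQuantumGroup}, or deduce it from the characterization of $\dot{\bfB}$ by the bar involution and the bilinear form, both of which are compatible with $\omega$ and $\sigma$.

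The second step handles $\omega$. For a $\dot{\bfU}$-module $M$, the twist ${}^\omega M$ (where $u$ acts as $\omega(u)$) of $\La_\mu$ is a simple integrable module. Its highest weight is $-w_0\mu$, because $\omega$ exchanges $\bfU^\pm$ and negates weights: the lowest weight $w_0\mu$ of $\La_\mu$ becomes a highest weight $-w_0\mu$. Then $\omega(b)$ kills $\La_\mu$ iff $b$ kills ${}^\omega\La_\mu\cong\La_{-w_0\mu}$. Since $\mu\mapsto-w_0\mu$ is an order-preserving bijection of $X^+$, we get $\omega(\dot{\bfB}[\ge\la])=\dot{\bfB}[\ge-w_0\la]$, and likewise for $>$. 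Subtracting gives $\omega(\dot{\bfB}[\la])=\dot{\bfB}[-w_0\la]$.

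The third step handles $\sigma$, an anti-automorphism, for which we use duals instead of twists. For a finite-dimensional module $M$, $M^*$ with $u\cdot f=f\circ\sigma(u)$ is a left module, because $\sigma$ reverses products. Also, $\sigma(1_\la)=1_{-\la}$ negates weights, so $(\La_\mu)^{*,\sigma}$ is simple integrable with highest weight $-w_0\mu$. Moreover, $u$ acts as zero on $M$ iff $\sigma(u)$ acts as zero on $M^{*,\sigma}$, since the action of $\sigma(u)$ on $M^{*,\sigma}$ is the transpose of that of $u$. The same bookkeeping as in the second step then yields $\sigma(\dot{\bfB}[\la])=\dot{\bfB}[-w_0\la]$. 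Alternatively, one can note that $\sigma\omega$ is an anti-automorphism fixing $\bfU^\pm$ generators and preserving $1_\la$, and deduce the $\sigma$ case from the $\omega$ case.

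The main obstacle is the first step together with making the annihilation criterion for cells precise. Lusztig defines cells via the filtration by the ideals $\dot{\bfU}[\ge\la]$ and proves these are spanned by subsets of $\dot{\bfB}$. We must make sure that this description is phrased so that it transforms cleanly, and that the identification of highest weights under twisting and duality is stated integrally rather than only over $\QQ(v)$. Working over $\QQ(v)$ suffices, since the cells are defined there.
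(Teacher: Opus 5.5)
Your Steps 2 and 3 are fine as far as they go. The annihilation criterion for $\dot{\bfB}[\ge\la]$, the identifications ${}^\omega\La_\mu\cong\La_{-w_0(\mu)}$ and $(\La_\mu)^{*,\sigma}\cong\La_{-w_0(\mu)}$, and the fact that $-w_0$ is an order-preserving involution of $X^+$ together give a self-contained substitute for the cell statement \cite[29.3.1(a)]{Lusztig-IntroQuantumGroup} that the paper cites.

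The genuine gap is in Step 1, and it is a sign. The characterization in Proposition \ref{prop:bilinear pairing characterizing dotB} describes $\pm\dot{\bfB}$, not $\dot{\bfB}$. So compatibility of $\sigma,\omega$ with the bar involution and with $(\cdot,\cdot)$ yields only $\sigma(\dot{\bfB})\subset\pm\dot{\bfB}$ and $\omega(\dot{\bfB})\subset\pm\dot{\bfB}$. This is also all that Lusztig's book gives: the paper states explicitly that the result is proved there only up to signs, so the references you half-remember do not close the gap either. Because the annihilation criterion cannot see signs, Steps 2 and 3 then give only $\sigma(\dot{\bfB}[\la])\subset\pm\dot{\bfB}[-w_0(\la)]$ and the analogous statement for $\omega$, which is weaker than what is claimed. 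You call Step 1 the main obstacle, but you never identify the sign as the actual difficulty.

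The paper closes the gap with an extra input. It uses Kashiwara's theorem $\sigma(\dot{\bfB})=\dot{\bfB}$ \cite[Theorem 4.3.2(ii)]{Kashiwara-modified}, combined with the cell statement. For $\omega$, it uses that $\sigma\omega$ preserves each $\dot{\bfB}[\la]$ (from \cite{Lusztig-Infinity}), so $\omega(\dot{\bfB}[\la])=\sigma(\dot{\bfB}[\la])$.

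If you want to avoid Kashiwara, you need your own sign-fixing argument, for example a leading-term comparison. The ingredients are:
\begin{itemize}
\item $b_1\diamondsuit_\zeta b_2\in b_1^+b_2^-1_\zeta+(\text{lower})$;
\item $\sigma(\bfB)=\bfB$;
\item rewriting $y^-1_\mu x^+$ as a combination of the $x_1^+1y_1^-$ gives $x^+1_{\mu'}y^-$ with coefficient exactly $1$, plus terms of strictly smaller degree.
\end{itemize}
It follows that $\sigma(b_1\diamondsuit_\zeta b_2)$ has leading term $+\sigma(b_1)^+\sigma(b_2)^-1_\chi$, so it cannot lie in $-\dot{\bfB}$. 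Similarly, $\omega(b_1\diamondsuit_\zeta b_2)$ has leading term $+b_2^+b_1^-1_{\chi'}$.

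A minor point: your alternative remark about $\sigma\omega$ is inaccurate. It does not fix the generators; it interchanges $\theta_i^+$ and $\theta_i^-$ while fixing each $1_\la$.
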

\begin{proof}
    In \cite{Lusztig-IntroQuantumGroup}, this result is proved up to signs. The more refined statement will follow by combining a result of Kashiwara, as explained in \cite[3.8]{Lusztig-semifield}. More precisely, by \cite[Theorem 4.3.2(ii)]{Kashiwara-modified} we have $\sigma(\dot{\bfB})=\dot{\bfB}$ (note that in \emph{loc.cit.} the involution $\sigma$ is denoted by ``$*$"). Then by \cite[29.3.1(a)]{Lusztig-IntroQuantumGroup} we get that $\sigma(\dot{\bfB}[\lambda])=\dot{\bfB}[-w_0(\lambda)]$. On the other hand, by \cite[Lemma 4.14, Proposition 4.4]{Lusztig-Infinity}, we have $\sigma\omega\dot{\bfB}[\lambda]=\dot{\bfB}[\lambda]$ (in \emph{loc. cit.}, the composition $\sigma\omega$ is denoted by ``$\#$"). Then it follows that $\omega(\dot{\bfB}[\lambda])=\dot{\bfB}[-w_0(\lambda)]$. See also \cite[3.16(a)]{Lusztig-semifield} for the last equality.
\end{proof}

We recall the following parametrization of the two-sided cells $\dot{\bfB}[\la]$.
\begin{prop}[see \cite{Lusztig-Infinity} Proposition 4.4]\label{prop:parametrize-B[la]}
    For any $\la\in X^+$, there is a unique bijection 
    \[\beta_\la\colon\bfB(\la)\times\bfB(\la)\to\dot{\bfB}[\la]\] 
    satisfying
    \[\beta_\la(b_1,b_2)-b_1^-1_\la\sigma(b_2)^+\in\dot{\bfU}[>\la],\quad\forall b_1,b_2\in\bfB(\la).\]
\end{prop}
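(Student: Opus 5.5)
We would prove this by going through Lusztig's description of the cell $\dot{\bfB}[\la]$ as the set of canonical basis elements of $\dot{\bfU}[\ge\la]$ that survive in the quotient $\dot{\bfU}[\ge\la]/\dot{\bfU}[>\la]$, and then identifying this quotient with $\Lambda_\la\otimes{}^\omega\Lambda_\la$-type objects (based modules). First, recall from \cite[\S29.1]{Lusztig-IntroQuantumGroup} that $\dot{\bfU}[\ge\la]$ and $\dot{\bfU}[>\la]$ are two-sided ideals spanned by subsets of $\dot{\bfB}$. Hence $\dot{\bfU}[\ge\la]/\dot{\bfU}[>\la]$ has basis the image of $\dot{\bfB}[\la]$. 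Next, use \cite[29.2]{Lusztig-IntroQuantumGroup}: this quotient is isomorphic, as a $\dot{\bfU}$-bimodule, to ${}^\omega\Lambda_\la\otimes\Lambda_\la$ (or the analogous tensor of a lowest and a highest weight module). The isomorphism sends $b_1^-1_\la\sigma(b_2)^+$ to $b_1^-\eta\otimes\sigma(b_2)^+\xi$-type pure tensors, and it carries the canonical basis of the cell to the canonical basis $\{b_1^-\eta_\la\}\times\{\cdots\}$ of the based module.

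The key input is the identification of the canonical basis of $\Lambda_\la$: by \cite[14.4.11]{Lusztig-IntroQuantumGroup}, $\{b^-\eta_\la : b\in\bfB\}\setminus\{0\}$ is the canonical basis of $\Lambda_\la$. Moreover, $b^-\eta_\la\neq0$ exactly when $b\in\bfB(\la)$, using the description of $\bfB(\la)$ via ${}^\sigma\bfB_{i;n}$ with $n\le\langle i,\la\rangle$ (this is the $\sigma$-twisted form of Lusztig's criterion; $\sigma$ enters because of the right multiplication convention). With this, define $\beta_\la(b_1,b_2)$ to be the unique element of $\dot{\bfB}[\la]$ whose image in the quotient corresponds to the basis element indexed by $(b_1,b_2)$. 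The triangularity of $b_1\diamondsuit_\la b_2$ recalled in Definition \ref{def:B-dot}, together with the fact that lower terms $c_1^-1_\la\sigma(c_2)^+$ either vanish in the quotient or are expressible through strictly smaller pairs, gives the congruence $\beta_\la(b_1,b_2)\equiv b_1^-1_\la\sigma(b_2)^+$ modulo $\dot{\bfU}[>\la]$. Bijectivity follows because both sides are bases of the same quotient. Uniqueness is then immediate: two elements of $\dot{\bfB}[\la]$ congruent modulo $\dot{\bfU}[>\la]$ have the same image in a quotient where $\dot{\bfB}[\la]$ stays linearly independent, so they coincide.

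The main obstacle is the middle step: showing that the quotient map sends canonical basis elements of the cell to exactly the pure tensors of canonical bases with no extra lower-order corrections, and that the correction terms lie in $\dot{\bfU}[>\la]$ rather than merely in $\dot{\bfU}[\ge\la]$. For this we would rely on \cite[Proposition 4.4]{Lusztig-Infinity}, or on Lusztig's \cite[25.2.1, 29.1.3]{Lusztig-IntroQuantumGroup} characterization of $\dot{\bfB}$ by bar-invariance and an asymptotic condition. We would check that $b_1^-1_\la\sigma(b_2)^+$ is bar-invariant modulo $\dot{\bfU}[>\la]$ and satisfies the required almost-orthonormality, which forces it to agree with a canonical basis element modulo the higher cells.
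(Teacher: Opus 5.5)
The paper does not prove this statement; it quotes it from \cite[Proposition 4.4]{Lusztig-Infinity}. At the step you yourself call the main obstacle, you fall back on that same proposition. So your proposal agrees with the paper only in that both rest on the citation. As a self-contained argument it has a genuine gap at exactly that step.

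Several parts are correct. Since $1_\la\in\dot{\bfU}[\ge\la]$, each $b_1^-1_\la\sigma(b_2)^+$ lies in $\dot{\bfU}[\ge\la]$. The quotient $\dot{\bfU}[\ge\la]/\dot{\bfU}[>\la]\cong\mathrm{End}(\La_\la)$ has two bases: the image of $\dot{\bfB}[\la]$, and the images of the $b_1^-1_\la\sigma(b_2)^+$ with $b_1,b_2\in\bfB(\la)$, which act as rank-one operators. Uniqueness and bijectivity are formal once existence is known. But existence is the whole content: you must show the transition matrix between these two bases is a permutation matrix. Defining $\beta_\la(b_1,b_2)$ as ``the element of $\dot{\bfB}[\la]$ whose image corresponds to $(b_1,b_2)$'' simply assumes this.

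Neither of your substitutes for the citation works.

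The triangularity of Definition~\ref{def:B-dot} is with respect to the elements $c_1^+c_2^-1_\zeta$, with right weight $\zeta=\la-|b_2|$ rather than $\la$. It does not see the cell filtration. Already for $\mathrm{SL}_2$, $\la=1$, $b_1=b_2=\theta$, one has
\[\theta^-1_1\theta^+=\theta^+1_{-3}\theta^-+1_{-1}=\theta\diamondsuit_{-1}\theta+1\diamondsuit_{-1}1,\]
where $\theta\diamondsuit_{-1}\theta\in\dot{\bfB}[3]$ and $1_{-1}\in\dot{\bfB}[1]$. So $\beta_1(\theta,\theta)=1_{-1}$ comes from the ``lower-order'' term. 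The canonical element whose leading term is indexed by $(\theta,\theta)$ lies in a higher cell, and the correction term $1_{-1}$ is not in $\dot{\bfU}[>1]$.

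The same example rules out the almost-orthonormality route inside $\dot{\bfU}$. The element $b_1^-1_\la\sigma(b_2)^+$ is bar-invariant on the nose and lies in ${}_\A\dot{\bfU}$. If its norm were in $1+v^{-1}\ZZ[[v^{-1}]]$, it would therefore be $\pm$ a canonical basis element. Here, by almost orthonormality of $\dot{\bfB}$, its norm lies in $2+v^{-1}\ZZ[[v^{-1}]]$.

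What you actually need is an almost-orthonormality or lattice statement on the quotient $\dot{\bfU}[\ge\la]/\dot{\bfU}[>\la]$ itself. It must hold simultaneously for the images of $\dot{\bfB}[\la]$ and of the $b_1^-1_\la\sigma(b_2)^+$. Nothing in your outline supplies this, and it is precisely the nontrivial input of Lusztig's proposition.

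Finally, the bimodule structure on the quotient is an outer tensor product of a left and a right module. It is not the $\dot{\bfU}$-module ${}^\omega\La_\la\otimes\La_\la$ formed with the coproduct. The canonical basis of the latter, whose lifts are the sets $\dot{\bfB}_{\la,\la'}$, generally involves correction terms. So the claim ``pure tensors, no corrections'' cannot be imported from that setting.
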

Similar to $\bfB$, the canonical base $\dot{\bfB}$ can be characterized (up to sign) by certain inner product on $\dot{\bfU}$. To explain this we introduce more notations. For any $x\in\dot\bfU$, denote $x^\#\coloneqq\sigma\omega(x)=\omega\sigma(x)$ so that $x\mapsto x^\#$ defines an isomorphism of $\A$-algebras ${}_\A\dot{\bfU}\to{}_\A\dot{\bfU}^{\mathrm{op}}$ that is easily seen to preserve the decomposition \ref{eq:U-dot-decomposition}. For any $\la,\la'\in X$ with $\nu\coloneqq\la-\la'\in\ZZ^\Delta$ and any $x\in1_\lambda {}_{\A}\dot{\mathbf{U}} 1_{\lambda'}$, let $\rho(x)=v^{\nu\circ(\la+\la')/2}x^\#$. This defines an $\A$-algebra isomorphism $\rho\colon{}_\A\dot{\bfU}\to{}_\A\dot{\bfU}^{\mathrm{op}}$ such that $\rho^2=1$. See \cite[\S3.7]{Lusztig-Infinity} and compare \cite[19.1.1]{Lusztig-IntroQuantumGroup} (in the latter reference the involution $\rho$ is defined for the usual quantized enveloping algebra $\bfU$). 

\begin{prop}\label{prop:bilinear pairing characterizing dotB}
    There is a unique symmetric bilinear pairing $(\cdot,\cdot)\colon\dot{\bfU}\times\dot{\bfU}\to\QQ(v)$ satisfying:
    \begin{enumerate}
        \item The decomposition \eqref{eq:U-dot-decomposition} is orthogonal with respect to $(\cdot,\cdot)$;
        \item $(ux,y)=(x,\rho(u)y)$ for all $x,y,u\in\dot{\bfU}$;
        \item For all $x,x'\in\bff$ and $\la\in X$ we have $(x^-1_\la,x'^-1_\la)=(x,x')$.
    \end{enumerate}
    Moreover, we have
    \[\pm\dot{\bfB}=\{\beta\in{}_\A\dot{\bfU}\mid \bar{\beta}=\beta,(\beta,\beta)\in1+v^{-1}\QQ[[v]]\cap\QQ(v)\}.\]
\end{prop}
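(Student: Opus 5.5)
This is essentially Lusztig's result (the inner product characterization of $\dot{\bfB}$ in \cite[\S26.3]{Lusztig-IntroQuantumGroup} and \cite[\S3.7]{Lusztig-Infinity}). I would prove it by reduction to the corresponding statements there, in three steps: existence and uniqueness of the pairing, symmetry, and the characterization of $\pm\dot{\bfB}$.

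\emph{Uniqueness.} Every element of $1_\la\dot{\bfU}1_{\la'}$ is a sum of elements $x^+y^-1_{\la'}$ with $x,y\in\bff$. Applying property (2) with $u=x^+1_{\la}$ reduces the pairing $(x^+y^-1_{\la'},z)$ to a pairing of the form $(y^-1_{\la'},\rho(x^+1_\la)z)$. By property (1) it suffices to consider $z\in 1_{\la'}\dot{\bfU}1_{\la'}$ after this move. The right-hand entry is again a sum of elements $z_1^-1_{\la'}z_2^+$. We then move $z_2^+$ across: by (2), since $\rho$ is an anti-involution exchanging $\theta_i^+$ with $\theta_i^-$ up to powers of $v$, this becomes a pairing of two elements of the form $(\ast)^-1_\mu$ with the same $\mu$. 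Property (3) evaluates this. So the pairing is determined by (1)--(3).

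\emph{Existence.} I would invoke \cite[26.1.2]{Lusztig-IntroQuantumGroup}, which constructs the form via the modules $\La_\la\otimes{}^\omega\La_{\la'}$ and a limit procedure. Alternatively, \cite[\S3.7]{Lusztig-Infinity} defines it directly with exactly properties (1)--(3) for the $\rho$ above. Symmetry follows from uniqueness: the form $(x,y)\mapsto(y,x)$ again satisfies (1) and (3), and it satisfies (2) because $\rho^2=1$. The one point needing care is matching our normalization of $\rho$ (the factor $v^{\nu\circ(\la+\la')/2}$) with Lusztig's conventions, so that (2) holds on the nose and not up to a unit.

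\emph{Characterization of $\pm\dot{\bfB}$.} The main input is almost-orthonormality \cite[26.3.1]{Lusztig-IntroQuantumGroup}: for $b,b'\in\dot{\bfB}$,
\[(b,b')\in\delta_{b,b'}+v^{-1}\ZZ[[v^{-1}]]\cap\QQ(v).\]
Since $\dot{\bfB}$ is bar-invariant, this gives the inclusion $\subseteq$. For the converse, take a bar-invariant $\beta=\sum_b c_b b$ with $c_b\in\A$; bar-invariance of $\beta$ and of $\dot{\bfB}$ gives $\bar c_b=c_b$. Let $N$ be the largest $v$-degree occurring among the $c_b$. Expanding $(\beta,\beta)=\sum_b c_b^2$ modulo lower terms, the leading coefficient at $v^{2N}$ is a positive sum of squares. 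Hence $(\beta,\beta)\in 1+v^{-1}\ZZ[[v^{-1}]]$ forces $N=0$ and exactly one $c_b$ nonzero, with $c_b=\pm1$ (the $c_b$ being bar-invariant, their top and bottom degrees are symmetric), as in \cite[14.2.2]{Lusztig-IntroQuantumGroup}.

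The displayed statement writes $\QQ[[v]]$; given the convention in \eqref{eq:pairing characterizing B}, this should read $v^{-1}\ZZ[[v^{-1}]]$, and I would treat it that way. I expect the main obstacle to be the convention-matching for $\rho$ and the signs/powers of $v$ in (2), not the positivity argument, which is routine.
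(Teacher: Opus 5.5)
Your proposal is correct and follows the paper's route. The paper gives no argument of its own and states this as a recollection from Lusztig (with $\rho$ normalized as in \cite[\S3.7]{Lusztig-Infinity}), which is what your appeal to \cite[26.1.2, 26.3.1]{Lusztig-IntroQuantumGroup} supplies. You are also right that $1+v^{-1}\QQ[[v]]$ must be read as $1+v^{-1}\ZZ[[v^{-1}]]\cap\QQ(v)$: taken literally it would admit, e.g., $\beta=2\cdot 1_\la$, with $(\beta,\beta)=4$.

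One small repair is needed in your uniqueness sketch, because property (2) only transports \emph{left} factors. Take $u=x^+1_{\la'-|y|}$ rather than $x^+1_\la$. Then expand the second entry, which lies in $1_{\la'-|y|}\dot{\bfU}1_{\la'}$, as a sum of terms $z_1^+z_2^-1_{\la'}$, and move $z_1^+$ rather than $z_2^+$. This produces a multiple of $((\sigma(z_1)y)^-1_{\la'},z_2^-1_{\la'})$, to which (3) applies.
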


\subsection{Highest weight modules}
Recall the following notion from \cite[3.4.1, 31.1.5]{Lusztig-IntroQuantumGroup}:

\begin{defi}\label{defi:unital Udotmodule}
    Let $R$ be a commutative $\A$-algebra. A ${}_{R}\dot{\mathbf{U}}$-module $M$ is said to be \emph{unital} if for any $m\in M$, we have $1_{\lambda}\cdot m=0$ for all but finitely many $\lambda\in X$, and $\sum_{\lambda\in X}1_{\lambda}\cdot m=m$. Let ${}_R\dot{\bfU}\text{-}\mod$ denote the category of \emph{unital} ${}_R\dot{\bfU}$-modules (we will not consider non-unital ${}_R\dot{\bfU}$ modules) and let ${}_R\dot{\bfU}\text{-}\mod^{ft}$ be its full subcategory consisting of modules that are finitely generated as $R$-modules. 
\end{defi}

Let us recall the construction of the basic building blocks of the category ${}_R\dot{\bfU}\text{-}\mod$.\par
Let $\lambda \in X^+$ be a dominant integral weight. Let $\Lambda_\lambda$ be the $\QQ(v)$-vector space $\mathbf{f}/T_\lambda$ where $T_\lambda\coloneqq\sum_{i\in\Delta} \mathbf{f}\theta_i^{\langle\a_i^{\vee},\lambda\rangle+1}$. Let $\eta_\lambda = 1 + T_\lambda\in\La_\la$. The modified form ${}_{\A}\dot{\mathbf{U}}$ acts naturally on $\Lambda_\lambda$ through $\theta_i^+1_{\lambda'}\cdot \eta_\lambda = 0$ for $i\in\Delta, \lambda' \in X$ and $x^{-}1_{\lambda'}\cdot \eta_\lambda = \delta_{\lambda,\lambda'} x + T_\lambda$ for $x \in \mathbf{f}, \lambda' \in X$. Let
$${}_{\A}\Lambda_{\lambda} \coloneqq{}_{\A}\dot{\mathbf{U}}\cdot \eta_\lambda\subset\La_\la$$
be the ${}_{\A}\dot{\mathbf{U}}$-submodule of $\La_\la$ generated by the highest weight vector $\eta_\la$. For any $\A$-algebra $R$, denote
${}_{R}\Lambda_{\lambda}\coloneqq{}_{\A}\Lambda_{\lambda}\otimes_{\A}R$. We recall some basic properties of $\La_\la$. 
\begin{theo}\label{theo:Weyl module}\cite{Lusztig-IntroQuantumGroup}
Let $R$ be a commutative $\A$-algebra.
\begin{enumerate}
    \item The $\QQ(v)$-vector space $T_\lambda$ is a coordinate space for $\mathbf{B}$ and the map $b\mapsto b\eta_\la$ defines a bijection from $\bfB(\la)$ onto a basis of $\La_\la$. 
    \item The module ${}_R\Lambda_{\lambda}$ is free as an $R$-module and belongs to the category ${}_R\dot{\bfU}\text{-}\mod^{ft}$.
    \item The weights of ${}_R\Lambda_{\lambda}$ lie in the set $\{\mu\in X\mid w_0(\lambda)\leq \mu\leq \lambda\}$ and the highest weight space the rank one free $R$-module with basis $\eta_\la$.
\end{enumerate}
\end{theo}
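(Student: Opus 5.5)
The plan is to work over $\QQ(v)$ first, using the structure of ${}_\A\bff$ and its canonical basis, and then descend to $\A$ and base change to $R$.

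For (1), I would use Lusztig's result on the compatibility of $\bfB$ with the left ideals $\bff\theta_i^n$ (\cite[14.3.2]{Lusztig-IntroQuantumGroup}). It says that ${}^\sigma\bfB_{i;\ge n}$ is a $\QQ(v)$-basis of $\bff\theta_i^n$. This is the $\sigma$-twisted form of the statement that $\bfB_{i;\ge n}$ spans $\theta_i^n\bff$, and $\sigma(\bfB)=\bfB$ makes the two equivalent. It follows that
\[T_\la=\sum_i\bff\theta_i^{\langle\a_i^\vee,\la\rangle+1}\]
is spanned by $\bigcup_i{}^\sigma\bfB_{i;\ge\langle\a_i^\vee,\la\rangle+1}$. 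Its complement in $\bfB$ is exactly $\bfB(\la)$. Hence $T_\la$ is a coordinate subspace, and the images $b\eta_\la$ for $b\in\bfB(\la)$ form a basis of $\La_\la=\bff/T_\la$. The fact that a sum of coordinate subspaces is again coordinate is formal once each summand is spanned by a subset of $\bfB$.

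For (2), I would identify ${}_\A\La_\la$ with the $\A$-span of $\{b\eta_\la: b\in\bfB(\la)\}$.

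One inclusion is the easy one. Since $x^-1_\la\eta_\la=x+T_\la$, the submodule ${}_\A\dot{\bfU}\eta_\la$ contains ${}_\A\bff\cdot\eta_\la$. Because $\bfB$ is an $\A$-basis of ${}_\A\bff$, this contains all $b\eta_\la$.

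For the reverse inclusion, I would use the triangular decomposition ${}_\A\dot{\bfU}=\mathrm{span}\{x^-1_\zeta x'^+\}$. Applied to $\eta_\la$, any term with $x'\in{}_\A\bff_\nu$, $\nu\ne0$, kills $\eta_\la$. So
\[{}_\A\dot{\bfU}\eta_\la=({}_\A\bff+T_\la)/T_\la.\]
This is the image of ${}_\A\bff$, and by (1) it is free on $\{b\eta_\la\}$, since the $b\in\bfB\setminus\bfB(\la)$ map to $0$.

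Freeness over $R$ then follows by base change. The module ${}_R\La_\la$ has finite rank because $\bfB(\la)$ is finite: the weights are bounded, and $\La_\la$ is the finite dimensional irreducible module. Unitality holds because each $b\eta_\la$ is a weight vector of weight $\la-|b|$, so $\sum_\zeta 1_\zeta$ acts as the identity on it.

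For (3), each $b\eta_\la$ has weight $\la-|b|\le\la$, and the weight $\la$ occurs only for $b=1$. The lower bound $w_0\la\le\mu$ comes from the finite dimensionality of $\La_\la$ over $\QQ(v)$. There I would invoke Lusztig's result that $\La_\la$ is integrable, hence its character is $W$-invariant (\cite[5.2.7]{Lusztig-IntroQuantumGroup}). Since all weights are $\le\la$, the $W$-invariance forces every weight $\mu$ to satisfy $w_0\mu\le\la$, i.e.\ $\mu\ge w_0\la$ after applying $w_0$. Passing to $R$ does not change the weights, because ${}_R\La_\la$ has the same weight basis.

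The main obstacle is the coordinate-space statement in (1). It rests entirely on Lusztig's deep compatibility of $\bfB$ with the ideals $\bff\theta_i^n$, which I would cite rather than reprove.
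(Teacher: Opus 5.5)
Your proposal is correct and follows the same route as the paper, which simply cites Lusztig: (1) is \cite[Theorem 14.4.11]{Lusztig-IntroQuantumGroup}, whose proof is your argument (the $\sigma$-twist of the compatibility of $\bfB$ with $\theta_i^n\bff$, plus the fact that a sum of subspaces spanned by subsets of $\bfB$ is a coordinate subspace). (2) is deduced from (1) together with \cite[3.4.1, 31.1.5]{Lusztig-IntroQuantumGroup}, which is your triangular-decomposition argument, and (3) comes from the construction; the only cosmetic difference is that for the bound $w_0(\lambda)\le\mu$ you use $W$-invariance of weights \cite[5.2.7]{Lusztig-IntroQuantumGroup} where the paper points to \cite[18.1.1]{Lusztig-IntroQuantumGroup}.
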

\begin{proof}
    The assertion (1) is \cite[Theorem 14.4.11]{Lusztig-IntroQuantumGroup}. Together with \cite[3.4.1, 31.1.5]{Lusztig-IntroQuantumGroup}, it implies (2). The assertion (3) is a consequence of the construction of $\Lambda_{\lambda}$, and is also described in \cite[18.1.1]{Lusztig-IntroQuantumGroup}.
\end{proof}

The following result gives a characterization of the two-sided cells $\dot{\mathbf{B}}[\lambda]$ in terms of the modules $\Lambda_{\lambda}$.
\begin{lem}\label{lem:characterization subset Blambda}
    For any $\lambda\in X^+$, an element $b\in \dot{\mathbf{B}}$ lies in $\dot{\bfB}[\la]$ if and only if the following conditions are satisfied:
    \begin{enumerate}
        \item  $b\cdot\Lambda_{\lambda}\neq 0$, and 
        \item if $b\cdot\Lambda_{\tau}\neq 0$ for some $\tau \in X^+$, then $\tau\geq\lambda$.
    \end{enumerate}
    In particular,  we have 
 \begin{equation}\label{eq:Blambda inside leq lambda}
     \Set{b\in \dot{\mathbf{B}}|b\cdot\Lambda_{\lambda}\neq 0}\subset \dot{\mathbf{B}}[\leq \lambda].
 \end{equation}
\end{lem}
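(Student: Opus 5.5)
We deduce the lemma from Lusztig's description of the two-sided cells through the filtration of $\dot{\bfU}$ by annihilators of the modules $\La_\tau$ \cite[\S29.1--29.2]{Lusztig-IntroQuantumGroup}. The key input is \cite[29.1.1--29.1.3]{Lusztig-IntroQuantumGroup}. For any $\la\in X^+$, the subspace $\dot{\bfU}[\not\ge\la]$ spanned by $\bigsqcup_{\mu\in X^+,\mu\not\ge\la}\dot{\bfB}[\mu]$ is a two-sided ideal. It coincides with the set of $u\in\dot{\bfU}$ that annihilate $\La_\tau$ for every $\tau\in X^+$ with $\tau\ge\la$ (in Lusztig's formulation, $\dot{\bfU}[\ge\la]$ or its complement is cut out by the vanishing of the actions on $\La_\tau$). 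We take these statements for granted and match them with conditions (1) and (2).

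First we prove the inclusion \eqref{eq:Blambda inside leq lambda}, i.e. if $b\in\dot{\bfB}[\mu]$ and $b\La_\la\ne0$ then $\mu\le\la$. By Lusztig's construction (see \cite[29.1.2]{Lusztig-IntroQuantumGroup}), the span of $\dot{\bfB}[\not\le\la]$ is exactly the annihilator of $\La_\la$ intersected with the span of basis elements, and this is based. Concretely, \cite[25.2.1, 27.1.7]{Lusztig-IntroQuantumGroup} says that $b\eta$ for $b\in\dot{\bfB}$ is either zero or a canonical basis element of $\La_\la\otimes{}^\omega\La_{\la'}$. Lusztig defines the order on cells precisely by saying that $b\in\dot{\bfB}[\le\la]$ iff $b$ acts nonzero on some $\La_\tau$ with $\tau\le\la$, and this set is then an ideal. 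So if $b\La_\la\ne0$, then $b\in\dot{\bfB}[\le\la]$, i.e. $\mu\le\la$.

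Next, the ``only if'' direction. Take $b\in\dot{\bfB}[\la]$. By Proposition \ref{prop:parametrize-B[la]}, $b=\beta_\la(b_1,b_2)\equiv b_1^-1_\la\sigma(b_2)^+$ modulo $\dot{\bfU}[>\la]$. Elements of $\dot{\bfU}[>\la]$ kill $\La_\la$ by the inclusion just proved, since such $\mu>\la$ are not $\le\la$. Choose $b_2$ so that $\sigma(b_2)^+$ sends the lowest-type vector $b_2^-\eta_\la$ (nonzero by Theorem \ref{theo:Weyl module}(1), as $b_2\in\bfB(\la)$) to a nonzero multiple of $\eta_\la$. Then $b_1^-$ produces $b_1\eta_\la\ne0$. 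Verifying this nonvanishing requires pairing with the form of Proposition \ref{prop:bilinear pairing characterizing dotB}; alternatively, use $b_2=1$, for which $\beta_\la(b_1,1)\eta_\la=b_1\eta_\la\ne0$, and then use the fact that the ideal property gives the same annihilator for all elements of the cell. This proves (1). Condition (2) is again the inclusion \eqref{eq:Blambda inside leq lambda} applied to $\tau$: since $b\La_\tau\ne0$ forces $\la\le\tau$.

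Finally, the ``if'' direction. Let $b\in\dot{\bfB}[\mu]$ satisfy (1) and (2). Condition (1) and the inclusion \eqref{eq:Blambda inside leq lambda} give $\mu\le\la$. By the ``only if'' direction applied to $\mu$, we have $b\La_\mu\ne0$, so (2) gives $\mu\ge\la$. Hence $\mu=\la$.

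The main obstacle is the inclusion \eqref{eq:Blambda inside leq lambda}, i.e. the compatibility between the cell order and annihilators. We expect this to be a direct citation of \cite[29.1.2--29.1.3]{Lusztig-IntroQuantumGroup}, with care needed over whether Lusztig's order uses $\le$ or $\ge$ conventions.
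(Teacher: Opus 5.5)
Your skeleton is sound, and like the paper (which simply cites \cite[29.1.3, 29.1.4]{Lusztig-IntroQuantumGroup}) you rely on Lusztig's \S29.1. Once the inclusion \eqref{eq:Blambda inside leq lambda} is known, and also that every $b\in\dot{\bfB}[\la]$ acts nonzero on $\La_\la$, condition (2) and the ``if'' direction follow exactly as you say. The problem is that the inclusion, which carries essentially all the content, is never established.

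Your stated key input has the inequalities reversed and is false. Consider the span of the cells $\dot{\bfB}[\mu]$ for which $\mu\ge\la$ fails. In general it is not a two-sided ideal: by Proposition \ref{prop:downward-closed}, a union of cells spans an ideal only when the complementary index set is downward closed, and $X^+_{\ge\la}$ is not. Nor does it kill the $\La_\tau$ with $\tau\ge\la$. For $\mathrm{SL}_2$ and $\la=\alpha$ we have $1_0\in\dot{\bfB}[0]$ and $0\ge\alpha$ fails, yet $1_0\La_\alpha\neq0$. Taken literally, your input would give $\mu\ge\la$ whenever $b\in\dot{\bfB}[\mu]$ and $b\La_\la\neq0$, which is the opposite of \eqref{eq:Blambda inside leq lambda}. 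Your second justification is circular. The statement ``$b\in\dot{\bfB}[\le\la]$ iff $b$ acts nonzero on some $\La_\tau$ with $\tau\le\la$'' is not Lusztig's definition; it is a restatement of the lemma itself.

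The fix is to use the actual definition. $\dot{\bfU}[\ge\la]$ is the set of $u\in\dot{\bfU}$ acting as zero on every $\La_{\la'}$ for which $\la'\ge\la$ fails. It is a two-sided ideal spanned by $\dot{\bfB}\cap\dot{\bfU}[\ge\la]$, and $\dot{\bfB}[\la]$ consists of the basis elements lying in $\dot{\bfU}[\ge\la]$ but not in $\dot{\bfU}[>\la]$. Then (2) says exactly that $b\in\dot{\bfU}[\ge\la]$, and the inclusion takes one line: $b\in\dot{\bfB}[\mu]\subset\dot{\bfU}[\ge\mu]$ together with $b\La_\la\neq0$ forces $\la\ge\mu$.

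For (1), your first route can be made to work, but not as written. The element $b_2$ is determined by $b=\beta_\la(b_1,b_2)$, so there is nothing to choose. Since $\dot{\bfU}[>\la]$ kills $\La_\la$, $b$ acts as $b_1^-1_\la\sigma(b_2)^+$. You therefore need $\sigma(b_2)^+$ to be nonzero on the weight space $\la-|b_2|$ of $\La_\la$. This follows from the nondegeneracy of the contravariant ($\rho$-adjoint) form on the simple module $\La_\la$: under it, $\sigma(b_2)^+$ is adjoint, up to a power of $v$, to $b_2^-$, and $b_2\eta_\la\neq0$. The form to use here is this one, not the form on $\dot{\bfU}$ from Proposition \ref{prop:bilinear pairing characterizing dotB}.

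The alternative via $b_2=1$ does not work. The ideal property says nothing about two elements of the same cell killing the same modules. Passing from $\beta_\la(b_1,1)$ to $\beta_\la(b_1,b_2)$ requires exactly the nonvanishing of $\sigma(b_2)^+$ that you were trying to avoid.
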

\begin{proof}
    This is a consequence of \cite[Lemma 29.1.3, 29.1.4]{Lusztig-IntroQuantumGroup}. 
\end{proof}
\begin{cor}\label{cor:B[la]-W-la}
    For any $\la\in X^+$, the set $\{\mu\in X|1_{\mu}\in \dot{\mathbf{B}}[\lambda]\}$ coincides with $W\cdot\la$, the Weyl group orbit of $\la$. 
\end{cor}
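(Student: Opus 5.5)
We have to show that for $\mu\in X$, the idempotent $1_\mu$ (which lies in $\dot{\bfB}$ by Lusztig's construction) belongs to $\dot{\bfB}[\la]$ if and only if $\mu\in W\cdot\la$. Since the two-sided cells partition $\dot{\bfB}$, each $1_\mu$ lies in exactly one cell. So it suffices to show that $1_\mu\in\dot{\bfB}[\la_\mu]$, where $\la_\mu$ is the unique dominant weight in $W\cdot\mu$. The plan is to verify the two conditions of Lemma \ref{lem:characterization subset Blambda} for $b=1_\mu$ and $\la=\la_\mu$.

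First note that $1_\mu\cdot\La_\tau\ne0$ holds if and only if $\mu$ is a weight of $\La_\tau$, because $1_\mu$ acts as the projection onto the $\mu$-weight space. Condition (1) asks that $\mu$ be a weight of $\La_{\la_\mu}$. Write $\mu=w(\la_\mu)$. Over $\QQ(v)$ the module $\La_{\la_\mu}$ is the irreducible module of highest weight $\la_\mu$, and its character is $W$-invariant. Concretely, the action of Lusztig's braid group operators $T_i$ on integrable modules (\cite[\S5.2, \S41]{Lusztig-IntroQuantumGroup}) permutes weight spaces according to $W$. Alternatively, one can argue directly: if $\mu'$ is a weight with $n=\langle\a_i^\vee,\mu'\rangle>0$, then $\theta_i^{(n)-}$ sends the $\mu'$-weight space isomorphically onto the $s_i\mu'$-weight space, by $\mathfrak{sl}_2$-theory. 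Since $\la_\mu$ is a weight of $\La_{\la_\mu}$ (the highest weight vector $\eta_{\la_\mu}$), so is $\mu$.

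Condition (2) asks that whenever $\mu$ is a weight of $\La_\tau$ with $\tau\in X^+$, we have $\tau\ge\la_\mu$. By $W$-invariance of the weights of $\La_\tau$, the weight $\la_\mu=w^{-1}\mu$ is also a weight of $\La_\tau$. Theorem \ref{theo:Weyl module}(3) then gives $\la_\mu\le\tau$, as required. Lemma \ref{lem:characterization subset Blambda} now yields $1_\mu\in\dot{\bfB}[\la_\mu]$. Hence $1_\mu\in\dot{\bfB}[\la]$ if and only if $\la_\mu=\la$, that is, if and only if $\mu\in W\cdot\la$.

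The only step requiring genuine input is the $W$-invariance of the set of weights of $\La_\tau$ over $\QQ(v)$. I expect this to be the main point, but it is standard: it follows from the $\mathfrak{sl}_2$ (that is, $\dot{\bfU}_i$) representation theory on integrable modules, as in \cite[5.2.7]{Lusztig-IntroQuantumGroup}. A secondary check is that $1_\mu$ really belongs to $\dot{\bfB}$. This holds since $1_\mu=1\diamondsuit_\mu 1$ in the parametrization of Definition \ref{def:B-dot}, with $(1,1)$ minimal for the partial order.
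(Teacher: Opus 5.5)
Your proof is correct and is essentially the paper's argument. Both rest on the same ingredients: Lemma \ref{lem:characterization subset Blambda}, the fact that $1_\mu$ acts as the projector onto the $\mu$-weight space, the $W$-invariance of weights \cite[Proposition 5.2.7]{Lusztig-IntroQuantumGroup}, and Theorem \ref{theo:Weyl module}(3). The only difference is organizational: you locate the cell containing each $1_\mu$ and use disjointness of the partition, whereas the paper fixes $\la$ and shows the set is $W$-stable with $\la$ as its only dominant member.
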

\begin{proof}
    For any $\mu\in X$, the element $1_{\mu}$ acts on a $\dot{\bfU}$-module as the projection onto the $\mu$-weight space. Thus by Lemma \ref{lem:characterization subset Blambda},  we have $1_{\mu}\in\dot{\mathbf{B}}[\lambda]$ if and only if $\mu$ is a weight for $\Lambda_{\lambda}$ and whenever it is a weight for $\Lambda_{\tau}$ for some $\tau\in X^+$ then $\tau\geq\lambda$.
    Since the Weyl group $W$ permutes the weights of any integrable (in particular, finite dimensional) $\dot{\bfU}$-module (see \cite[Proposition 5.2.7]{Lusztig-IntroQuantumGroup}), the set  $\{\mu\in X|1_{\mu}\in \dot{\mathbf{B}}[\lambda]\}$ is a union of $W$-orbits. According to the description of the weights of $\La_\la$ (see Theorem \ref{theo:Weyl module}), the only dominant weight in this set is $\lambda$. Since any $W$-orbit in $X$ meets $X^+$, we see that the set is precisely the $W$-orbit of $\la$.
\end{proof}

\begin{lem}\label{lem:B(lambda)-characterization}
    Let $\lambda\in X^+$ and $b\in\bf{B}$. The following are equivalent:
    \begin{enumerate}
        \item[(i)] $b\in\bf{B}(\lambda)$;
        \item[(ii)] $b^-1_\lambda\in\dot{\bf{B}}[\lambda]$;
        \item[(iii)] $1_\lambda\sigma(b)^+\in\dot{\bf{B}}[\lambda]$;
        \item[(iv)] $b^+1_{-\lambda}\in\dot{\bf{B}}[-w_0(\lambda)]$.
    \end{enumerate}
\end{lem}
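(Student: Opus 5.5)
The plan is to prove (i)$\Leftrightarrow$(ii) directly, using the parametrization of the two-sided cell in Proposition \ref{prop:parametrize-B[la]} together with the module characterization in Lemma \ref{lem:characterization subset Blambda}. The remaining equivalences will then be transported along the involutions $\omega$ and $\sigma$ via Proposition \ref{prop:B-dot-involution}.

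First I record that $b^-1_\la\in\dot{\bfB}$ for every $b\in\bfB$. This is part of Lusztig's construction in \cite[25.2]{Lusztig-IntroQuantumGroup}: the elements $b\diamondsuit_\zeta 1$, and symmetrically $1\diamondsuit_\zeta b$, reduce to $b^{\pm}1_\zeta$. I will cite this fact rather than reprove it.

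For (i)$\Rightarrow$(ii), note that $1\in\bfB(\la)$ trivially. Proposition \ref{prop:parametrize-B[la]} then gives an element $\beta_\la(b,1)\in\dot{\bfB}[\la]$ with $\beta_\la(b,1)-b^-1_\la\in\dot{\bfU}[>\la]$. Both $b^-1_\la$ and $\beta_\la(b,1)$ lie in the basis $\dot{\bfB}$, and $\beta_\la(b,1)\notin\dot{\bfB}[>\la]$. Hence if they were distinct, the relation would be a nontrivial linear dependence among elements of $\dot{\bfB}$, which is impossible. So $b^-1_\la=\beta_\la(b,1)\in\dot{\bfB}[\la]$.

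For (ii)$\Rightarrow$(i), Lemma \ref{lem:characterization subset Blambda}(1) gives $b^-1_\la\cdot\La_\la\neq0$. Since $1_\la$ projects $\La_\la$ onto its $\la$-weight space, which is spanned by $\eta_\la$ (Theorem \ref{theo:Weyl module}(3)), this means $b\eta_\la=b+T_\la\neq0$. By Theorem \ref{theo:Weyl module}(1), $T_\la$ is a coordinate space for $\bfB$ and $\bfB(\la)$ maps bijectively onto a basis of $\La_\la$. Therefore $T_\la$ is spanned by $\bfB\setminus\bfB(\la)$, and $b\notin T_\la$ forces $b\in\bfB(\la)$.

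For the remaining equivalences, $\omega(b^-1_\la)=b^+1_{-\la}$ and $\omega(\dot{\bfB}[\la])=\dot{\bfB}[-w_0\la]$ by Proposition \ref{prop:B-dot-involution}, which gives (ii)$\Leftrightarrow$(iv). Next, $\sigma$ is an anti-automorphism with $\sigma(x^+)=\sigma(x)^+$ and $\sigma(1_{-\la})=1_\la$, so $\sigma(b^+1_{-\la})=1_\la\sigma(b)^+$. Since $\sigma(\dot{\bfB}[-w_0\la])=\dot{\bfB}[\la]$, this gives (iv)$\Leftrightarrow$(iii).

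The main point needing care is the first step, namely that $b^-1_\la$ is itself a canonical basis element. This should be checked against Lusztig's definition of $b_1\diamondsuit_\zeta b_2$ in the degenerate case where one of $b_1,b_2$ equals $1$. Once that is confirmed, everything else is formal.
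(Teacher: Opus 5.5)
Your proof is correct and follows the paper's structure: (ii)$\Rightarrow$(i) via the non-vanishing of $b^-1_\la$ on $\La_\la$, hence $b\eta_\la\neq0$; and (ii)$\Leftrightarrow$(iii)$\Leftrightarrow$(iv) via Proposition \ref{prop:B-dot-involution}. The only difference is in (i)$\Rightarrow$(ii), where the paper simply cites \cite[Lemma 4.5]{Lusztig-Infinity}. You instead derive it from Proposition \ref{prop:parametrize-B[la]} applied to $\beta_\la(b,1)$, together with the fact that $b^-1_\la\in\dot{\bfB}$. That fact does hold: for $1\diamondsuit_\zeta b$ there are no pairs $(c_1,c_2)<(1,b)$, and the paper itself uses it via \cite[25.2.6]{Lusztig-IntroQuantumGroup}. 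So your derivation is a valid self-contained replacement for the citation.
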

\begin{proof}
The implications (i)$\Rightarrow$(ii) and (i)$\Rightarrow$(iii) follow from \cite[Lemma 4.5]{Lusztig-Infinity}. \par 
(ii)$\Rightarrow$(i): By \cite[29.1.6]{Lusztig-IntroQuantumGroup}, the element $b^-1_\lambda\in\dot{\bf{B}}[\lambda]$ does not act by zero on $\Lambda_\lambda$. This implies that $b\eta_\lambda\ne0$ and hence $b\in\bfB(\lambda)$. \par 
Finally the equivalences (ii)$\Leftrightarrow$(iii)$\Leftrightarrow$(iv) follow from Proposition \ref{prop:B-dot-involution}. 
\end{proof}

Besides the two-sided cells, there is another family of finite subsets of $\dot{\bfB}$ that will be useful. For any $\A$-algebra $R$ and ${}_R\dot{\bfU}$-module $M$, we let ${}^\omega M$ be the ${}_R\dot{\bfU}$-module with the same underlying $R$-module as $M$ and with $u\in{}_R\dot{\bfU}$ acting on ${}^\omega M$ by the operator $\omega(u)$ on $M$. Recall from \cite[\S1.9]{LusztigGroupScheme} that for any $\lambda,\lambda'\in X^+$ there is a unique subset $\dot{\bf{B}}_{\lambda,\lambda'}\subset\dot{\bf{B}}$ mapping bijectively to a basis of ${}^\omega\Lambda_\lambda\otimes \Lambda_{\lambda'}$ under the map $a\mapsto a(\xi_{-\lambda}\otimes \eta_{\lambda'})$. We have an explicit description of these sets in the following special cases: 
\[\dot{\bfB}_{\lambda,0}=\{b^+1_{-\lambda};b\in\bfB(\lambda)\},\quad\dot{\bfB}_{0,\lambda}=\{b^-1_{\lambda};b\in\bfB(\lambda)\}=\omega(\dot{\bfB}_{\lambda,0}).\]
(See \cite[\S3.4]{Lusztig-IntroQuantumGroup}. In the notation of \emph{loc.cit.}, $\bfB_\la=\bfB_\la'=\bfB(\la)$.)\par 
 By \cite[Lemma 4.5]{Lusztig-Infinity} we have $\dot{\bfB}_{\lambda,0}\subset\sigma(\dot{\bfB}[\lambda])=\dot{\bfB}[-w_0(\lambda)]$ and $\dot{\bfB}_{0,\lambda}\subset\dot{\bfB}[\lambda]$. In particular for any $\lambda,\lambda'\in X^+$ with $\lambda\ne\lambda'$ we have
\[\dot{\bfB}_{\lambda,0}\cap\dot{\bfB}_{\lambda',0}=\varnothing,\quad\dot{\bfB}_{0,\lambda}\cap\dot{\bfB}_{0,\lambda'}=\varnothing.\]
We first use the finite sets $\dot{\bfB}_{\la,\la'}$ to prove the following lemma, which is a slight generalization of \cite[1.10(a)]{LusztigGroupScheme}.
\begin{lem}\label{lem:uni-mod-finite}
    Let $M$ be a unital ${}_R\dot{\bfU}$-module. Then for any $m\in M$, the set $\{b\in\dot{\bfB},bm\ne0\}$ is finite. Moreover, if $M$ is finitely generated as $R$-module, then the set $\{b\in\dot{\bfB},bM\ne0\}$ is finite.
\end{lem}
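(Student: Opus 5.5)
The plan follows Lusztig's argument for \cite[1.10(a)]{LusztigGroupScheme}: reduce to a single weight vector, then use the sets $\dot{\bfB}_{\la,\la'}$ together with the finiteness of structure constants (Lemma \ref{lem:structure-constant-finite}).

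\textbf{First assertion.} Since $M$ is unital, $m=\sum_{\zeta}1_\zeta m$ is a finite sum, so it suffices to treat $m=1_\zeta m$ for a single $\zeta\in X$. Any $b\in\dot{\bfB}$ with $bm\ne0$ must then satisfy $b1_\zeta\ne0$, that is, $b\in\bigsqcup_\eta{}_\eta\dot{\bfB}_\zeta$. This set is still infinite, so we need the structure of $\dot{\bfU}1_\zeta$.

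Choose $\la,\la'\in X^+$ with $\la'-\la=\zeta$. The map $\dot{\bfU}1_\zeta\to{}^\omega\La_\la\otimes\La_{\la'}$, $u\mapsto u(\xi_{-\la}\otimes\eta_{\la'})$, is the key tool. By \cite[23.3]{Lusztig-IntroQuantumGroup} and \cite[1.9]{LusztigGroupScheme}, the left ideal generated by $1_\zeta$ is the union over such pairs of the spaces spanned by $\dot{\bfB}_{\la,\la'}$ modulo the kernel. More usefully, as $(\la,\la')$ grows, every $b\in\dot{\bfB}1_\zeta$ eventually lies in $\dot{\bfB}_{\la,\la'}$.

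The key step is a factorization. I would write $1_\zeta=\sum_{c}m_{c,1_\zeta}^{1_\zeta}\,c\cdots$; more concretely, I would use that each $b\in\dot{\bfB}1_\zeta$ can be written as $b=b\cdot1_\zeta$ and expand the product $b'\cdot b''$ via Lemma \ref{lem:structure-constant-finite}(i). The finiteness I actually need is the following: the element $m$ generates a $\dot{\bfU}$-submodule that is a quotient of $\dot{\bfU}1_\zeta$. I would therefore reduce to showing that $\{b\in\dot{\bfB}: b\,1_\zeta m\ne0\}$ is finite by exhibiting some $u=\sum a_c c\in\dot{\bfU}1_\zeta$, a finite combination, with $um=m$. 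Then
\[
bm=bum=\sum_{c}a_c\sum_{d}m_{bc}^d\,dm .
\]
This identity alone does not bound the set of $b$. The correct mechanism, following Lusztig, is to use the coproduct finiteness of Lemma \ref{lem:structure-constant-finite}(ii) combined with the fact that a unital module generated by a weight vector has only finitely many weights along each root string up to integrability. Here lies a gap: an arbitrary unital module is not integrable, so weights need not be bounded.

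I therefore expect the main obstacle to be exactly this: avoiding any integrability assumption. The approach I would try first is to pass to the universal case. The map $\dot{\bfU}1_\zeta\to M$, $u\mapsto um$, means that $bm\ne0$ forces $b1_\zeta\ne0$ in $\dot{\bfU}$. That condition holds for infinitely many $b$, so a purely universal argument fails unless the statement intends $M$ to lie in the integrable category, as in \cite[1.10]{LusztigGroupScheme}. In that case $m$ generates a finitely generated integrable module. Such a module is a quotient of a finite sum of modules ${}^\omega\La_\la\otimes\La_{\la'}$, since $\dot{\bfU}1_\zeta$ modulo the annihilator of an integrable vector factors through one of these by \cite[23.3.2]{Lusztig-IntroQuantumGroup}. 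One then concludes using the finiteness of $\dot{\bfB}\cap\{b: b({}^\omega\La_\la\otimes\La_{\la'})\ne0\}$. That set lies in the finitely many cells $\dot{\bfB}[\mu]$ with $\mu\le$ bounded weights, by Lemma \ref{lem:characterization subset Blambda} applied to each irreducible constituent, and each cell is finite.

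\textbf{Second assertion.} If $M$ is finitely generated over $R$, choose generators $m_1,\dots,m_n$. Then $bM\ne0$ forces $bm_j\ne0$ for some $j$, so the set $\{b\in\dot{\bfB}: bM\ne0\}$ is contained in the finite union of the finite sets from the first assertion.
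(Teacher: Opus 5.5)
Once you drop the abandoned factorization attempt, your argument is exactly the paper's proof: reduce to $m\in 1_\zeta M$, produce a morphism ${}^\omega_R\La_\la\otimes_R{}_R\La_{\la'}\to M$ sending $\xi_{-\la}\otimes\eta_{\la'}$ to $m$ with $\la'-\la=\zeta$, deduce finiteness, and take the union over finitely many $R$-generators. The paper gets the finiteness slightly more directly, from $\{b\in\dot{\bfB}: bm\ne0\}\subset\dot{\bfB}_{\la,\la'}$, whereas you pass through the simple constituents over $\QQ(v)$, which needs the $\A$-lattice to come back to $R$.

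Your caveat is also correct. Producing that morphism forces $m$ to be killed by $\theta_i^{(n)\pm}1_\zeta$ for $n\gg0$, i.e. it requires $M$ to be integrable. The paper's statement omits this hypothesis, but its proof (via Lusztig's argument for Proposition 31.2.7 of his book) tacitly uses it. Indeed, the statement fails for the unital module $M={}_R\dot{\bfU}1_\zeta$ with $m=1_\zeta$ and $R\ne0$: every $\theta_i^{(n)-}1_\zeta\in\dot{\bfB}$, $n\ge0$, satisfies $bm\ne0$.
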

\begin{proof}
    For the readers' convenience, let us reproduce the argument of \cite[1.10(a)]{LusztigGroupScheme}. Since $1_\zeta m=0$ for all but finitely many $\zeta\in X$, treating each weight components of $m$ separately, we may assume that $m\in 1_\zeta M$ for some $\zeta\in X$. By the argument in the proof of \cite[Proposition 31.2.7]{Lusztig-IntroQuantumGroup}, we can find $\la,\la'\in X^+$ such that $\la-\la'=\zeta$ and a morphism $f\colon{}^\omega_R\La_{\la}\otimes_{R}{}_R\La_{\la'}\to M$ such that $f(\xi_{-\la}\otimes\eta_\la)=m$. Then the set $\{b\in\dot{\bfB},bm\ne0\}$ is contained in the finite set $\dot{\bfB}_{\la,\la'}$. The second statement follows from the first by taking union of the annihilating set for finitely many generators of the $R$-module $M$. 
\end{proof}

The following lemma contains special cases of results in \cite[\S1.15]{LusztigGroupScheme}, and will be used to prove Proposition \ref{prop:unip-element-trivial}.
\begin{lem}\label{lem:h_c-identity}
    Let $\la,\la'\in X^+$. For each $c\in\dot{\bfB}_{0,\la+\la'}$, there exists a function 
    \[h_c\colon\dot{\bfB}_{0,\la}\times\dot{\bfB}_{0,\la'}\to\A\] 
    satisfying the following properties:
    \begin{itemize}
        \item If $c\ne1$, then $h_c(1,1)=0$;
        \item For any $c,c'\in\dot{\bfB}_{0,\la+\la'}$ the following identity holds
        \[\sum_{a\in\dot{\bfB}_{0,\la},b\in\dot{\bfB}_{0,\la'}}h_c(a,b)\hat{m}_{c'}^{a,b}=\delta_{c,c'},\quad\forall c,c'\in\dot{\bfB}_{0,\la+\la'}.\]
    \end{itemize}
\end{lem}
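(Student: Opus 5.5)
The plan is to realize $\dot{\bfB}_{0,\la+\la'}$ inside the tensor product ${}_\A\La_\la\otimes{}_\A\La_{\la'}$ and read off $h_c$ from a splitting of the coproduct.

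By definition, $c\mapsto c\,\eta_{\la+\la'}$ is a bijection from $\dot{\bfB}_{0,\la+\la'}$ onto an $\A$-basis of ${}_\A\La_{\la+\la'}$. Likewise $a\mapsto a\eta_\la$ and $b\mapsto b\eta_{\la'}$ give $\A$-bases of ${}_\A\La_\la$ and ${}_\A\La_{\la'}$. So $\{a\eta_\la\otimes b\eta_{\la'}\}$ is an $\A$-basis of ${}_\A\La_\la\otimes_\A{}_\A\La_{\la'}$.

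Consider the $\dot{\bfU}$-linear map $\iota\colon\La_{\la+\la'}\to\La_\la\otimes\La_{\la'}$ sending $\eta_{\la+\la'}$ to $\eta_\la\otimes\eta_{\la'}$; it exists because the target vector is a highest weight vector of weight $\la+\la'$. Using the coproduct on the basis, we get
\[\iota(c'\eta_{\la+\la'})=c'(\eta_\la\otimes\eta_{\la'})=\sum_{a,b}\hat{m}^{a,b}_{c'}\,a\eta_\la\otimes b\eta_{\la'}.\]
Since $a\eta_\la$ can be nonzero only for $a\in\dot{\bfB}_{0,\la}$ (the image of $\dot{\bfB}_{0,\la}$ is a basis, and other elements of $\dot{\bfB}$ kill $\eta_\la$, cf. \cite[1.15]{LusztigGroupScheme}), the sum reduces to $a\in\dot{\bfB}_{0,\la}$ and $b\in\dot{\bfB}_{0,\la'}$. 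The required identity then says exactly that $\iota$, defined over $\A$, has an $\A$-linear left inverse
\[p\colon{}_\A\La_\la\otimes{}_\A\La_{\la'}\to{}_\A\La_{\la+\la'},\qquad p(a\eta_\la\otimes b\eta_{\la'})=\sum_c h_c(a,b)\,c\,\eta_{\la+\la'}.\]
In other words, $\iota({}_\A\La_{\la+\la'})$ must be an $\A$-direct summand of the tensor product.

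This direct summand property is the main obstacle, and I would get it from Lusztig's based modules. Choose the lattice complement spanned by the canonical basis elements of $\La_\la\otimes\La_{\la'}$ not lying in the image of $\dot{\bfB}_{0,\la+\la'}$. Lusztig's theory of based modules \cite[27.3]{Lusztig-IntroQuantumGroup} gives that the submodule generated by canonical basis elements of weight $\ge$ some level is spanned by part of the basis. In particular the $\A$-span of $\dot{\bfB}_{0,\la+\la'}(\eta_\la\otimes\eta_{\la'})$, i.e. $\iota({}_\A\La_{\la+\la'})$, is spanned by a subset of the canonical basis $\{a\diamond b\}$ of the tensor product. That basis is related unitriangularly to $\{a\eta_\la\otimes b\eta_{\la'}\}$, so the complement is spanned by the remaining basis elements. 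Let $p$ be the projection to $\iota({}_\A\La_{\la+\la'})$ along this complement, composed with $\iota^{-1}$. Expanding $p$ in the basis $\{c\,\eta_{\la+\la'}\}$ defines $h_c$, and $p\circ\iota=\mathrm{id}$ gives the second property.

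For the first property, note $p(\eta_\la\otimes\eta_{\la'})=\eta_{\la+\la'}$, because $\eta_\la\otimes\eta_{\la'}=\iota(\eta_{\la+\la'})$. The element $1=1_{\la+\la'}$ is the member of $\dot{\bfB}_{0,\la+\la'}$ corresponding to $\eta_{\la+\la'}$. Hence $h_c(1,1)=\delta_{c,1}$, which gives $h_c(1,1)=0$ for $c\ne1$.
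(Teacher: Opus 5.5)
Your proposal is correct and is essentially the paper's proof. The paper also uses the map $\tau\colon\La_{\la+\la'}\to\La_\la\otimes\La_{\la'}$ with $\eta_{\la+\la'}\mapsto\eta_\la\otimes\eta_{\la'}$ and takes its splitting from Lusztig's theory of based modules (\cite[27.1.7]{Lusztig-IntroQuantumGroup}). It defines $h_c$ by expanding a left inverse $\psi$ in the bases, and reads off both properties from $\psi(\eta_\la\otimes\eta_{\la'})=\eta_{\la+\la'}$ and from $\psi\tau=1$ combined with $\tau(c\eta_{\la+\la'})=\sum\hat m_c^{ab}\,a\eta_\la\otimes b\eta_{\la'}$.

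One step should be stated more carefully. Knowing only that the top component $M[\ge\la+\la']$ is spanned over $\QQ(v)$ by part of the basis gives a direct summand for the saturation $\iota(\La_{\la+\la'})\cap({}_\A\La_\la\otimes_\A{}_\A\La_{\la'})$, not for $\iota({}_\A\La_{\la+\la'})$ itself. What you actually need is that $\iota$ carries the canonical basis of $\La_{\la+\la'}$ into the canonical basis of the tensor product. This is what the based-module identification of $M[\ge\la+\la']$ with $\La_{\la+\la'}$ supplies.
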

\begin{proof}
Let $\tau\colon\Lambda_{\lambda+\lambda'}\to\Lambda_\lambda\otimes \Lambda_{\lambda'}$ be the unique $\dot{\bfU}$-module homomorphism such that $\tau(\eta_{\lambda+\lambda'})=\eta_\la\otimes \eta_{\la'}$. Then we have the following commutative diagram:
\[\xymatrix{
\hat{\bfU}\ar[r]^{\hat{\Delta}}\ar[d] & \hat{\bfU}\hat{\otimes}\hat{\bfU}\ar[d] \\
\La_{\la+\la'}\ar[r]^{\tau} & \La_\la\otimes \La_{\la'}
}\]
where the left (resp. right) vertical map sends $u\in\hat{\bfU}$ (resp. $u\in\hat{\bfU}\hat{\otimes}\hat{\bfU}$) to $u\cdot\eta_{\la+\la'}$ (resp. $u\cdot(\eta_\la\otimes \eta_{\la'})$).\par 
By \cite[27.1.7]{Lusztig-IntroQuantumGroup}, $\tau$ is a split injection and hence we can find a linear map 
\[\psi\colon\La_\la\otimes \La_{\la'}\to\La_{\la+\la'}\]
such that $\psi\tau=1$. For each $c\in\dot{\bfB}_{0,\la+\la'}$, there exists a function $h_c\colon\dot{\bfB}_{0,\la}\times\dot{\bfB}_{0,\la'}\to\A$ such that 
\[\psi(a\eta_\la\otimes  b\eta_{\la'})=\sum_{c\in\dot{\bfB}_{0,\la+\la'}}h_c(a,b)c\eta_{\la+\la'}.\]
Since $\psi(\eta_\la\otimes \eta_{\la'})=\psi\tau(\eta_{\la+\la'})=\eta_{\la+\la'}$, we see that for any $c\in\dot{\bfB}_{0,\la+\la'}$ with $c\ne1$ we have $h_c(1,1)=0$. \par 
The commutative diagram above implies that
\[\tau(c\eta_{\la+\la'})=\sum_{a\in\dot{\bfB}_{0,\la},b\in\dot{\bfB}_{0,\la'}}\hat{m}_c^{ab}a\eta_{\la}\otimes  b\eta_{\la'}.\]
Therefore the identity $\psi\tau=1$ implies that 
\[\sum_{a\in\dot{\bfB}_{0,\la},b\in\dot{\bfB}_{0,\la'}}h_c(a,b)\hat{m}_{c'}^{a,b}=\delta_{c,c'},\quad\forall c,c'\in\dot{\bfB}_{0,\la+\la'}.\]
\end{proof}

\section{Lusztig's construction of the Chevalley group schemes}\label{sec:Lusztig-group-scheme}
In this section we first review Lusztig's construction of the quantized coordinate ring of the split reductive group scheme from \S\ref{sec:notation-reductive-group}. Then we study the canonical filtration on the quantized coordinate ring determined by the two-sided cells of the canonical bases. Finally in \S\ref{subsec:functorial-properties} we prove some basic functorial properties of the canonical bases.\par 
We keep the notations from \S\ref{sec:quantum-canonical-bases}. In particular, we fix a based root datum $\Psi$ as in \S\ref{subsubsec:based-root-data}. 

\subsection{Dual canonical bases and quantized coordinate rings}
\begin{defi}
    For any \(a \in \dot{\mathbf{B}}\) we define an $\A$-linear map $a^*\colon{}_{\A}\dot{\mathbf{U}}\to\A$ by $a' \mapsto \delta_{a,a'}$ for all $a' \in \dot{\mathbf{B}}$. In particular,  $1_0^*=\epsilon$ is the counit of ${}_\A\dot{\bfU}$. Then $\dot{\bfB}^*\coloneqq\{a^*, a\in\dot{\bfB}\}$ is an $\A$-linearly independent subset of $\mathrm{Hom_{\A-mod}}({}_\A\dot{\mathbf{U}},\A)$ called the \emph{dual canonical basis}.\par 
    For any $\la\in X^+$, denote $\bfB[\la]^*\coloneqq\{b^*, b\in\dot{\bfB}[\la]\}$. Then we get a partition
    \[\dot{\bfB}^*=\bigsqcup_{\la\in X^+}\dot{\bfB}[\la]^*.\]
    More generally, for any subset $P\subset X^+$, we denote 
    \[\dot{\bfB}[P]^*\coloneqq\bigsqcup_{\la\in P}\dot{\bfB}[\la]^*.\]
\end{defi}

Let ${}_\A\bfO$ be the free $\A$-submodule of $\mathrm{Hom_{\A-mod}}({}_\A\dot{\mathbf{U}},\A)$ spanned by $\dot{\bfB}^*$. It inherits a natural Hopf algebra structure from ${}_\A\dot{\bfU}$ with 
\begin{itemize}
    \item unit element $1_0^*$;
    \item multiplication rule (well-defined by Lemma \ref{lem:structure-constant-finite})
    \[a^*b^*=\sum_{c\in\dot{\bfB}}\hat{m}_c^{ab}c^*,\quad\forall a,b\in\dot{\bfB};\]
    \item co-multiplication rule (well-defined by Lemma \ref{lem:structure-constant-finite})
    \[c^*\mapsto\sum_{a,b\in\dot{\bfB}}m_{ab}^c a^*\otimes b^*,\quad\forall c\in\dot{\bfB};\]
    \item co-unit ${}_\A\bfO\to\A$ defined by 
    \[\varphi\mapsto\sum_{\zeta\in X}\varphi(1_\zeta),\quad\forall\varphi\in{}_\A\bfO;\]
    \item anti-pode $S^*$, which is defined to be the dual of the antipode $S$ of ${}_\A\dot{\bfU}$.
\end{itemize}
For any commutative $\A$-algebra $R$ we define ${}_R\bfO\coloneqq{}_\A\bfO\otimes_\A R$. Then ${}_R\bfO$ is a Hopf $R$-algebra and it is commutative if and only if $v\mapsto1$ in $R$. When $R=\ZZ$ and $v\mapsto1$, we simply write $\bfO\coloneqq{}_\ZZ\bfO$. Consider the following subsets of $\dot{\mathbf{B}}$ (see \cite[25.2.6]{Lusztig-IntroQuantumGroup}):
\[\dot{\mathbf{B}}^{\ge0}\coloneqq\Set{b^+1_{\lambda}| b\in {\mathbf{B}},\lambda\in X},\quad\dot{\mathbf{B}}^{\le0}\coloneqq\Set{b^-1_{\lambda}| b\in {\mathbf{B}}, \lambda\in X}\]
and the corresponding subset of the dual canonical bases $\dot{\bfB}^*$:
\[\dot{\mathbf{B}}^{\ge0,*}\coloneqq\{b^*,b\in\dot{\mathbf{B}}^{\ge0}\},\quad \dot{\mathbf{B}}^{\le0,*}\coloneqq\{b^*,b\in\dot{\mathbf{B}}^{\le0}\}.\]
Let ${}_R\bfO^{\ge0}$ (resp. ${}_R\bfO^{\le0}$) be the $R$-submodule of ${}_R\bfO$ spanned by $\dot{\mathbf{B}}^{\ge0,*}$ (resp. $\dot{\mathbf{B}}^{\le0,*}$). We can identify the intersection ${}_R\bfO^{\ge0}\cap{}_R\bfO^{\le0}$ with the group algebra $R[X]$ (for each $\chi\in X$, the element $e^\chi\in R[X]$ is identified with $1_\chi^*\in{}_R\bfO$). Then we have natural projections
\[\pi^{\ge0}\colon{}_R\bfO\epic{}_R\bfO^{\ge0},\quad\pi^{\le0}\colon{}_R\bfO\epic{}_R\bfO^{\le0},\quad\pi^0\colon{}_R\bfO\to R[X].\]
According to \cite[\S3.5]{LusztigGroupScheme}, there are natural Hopf $R$-algebra structures on ${}_R\bfO^{\ge0}$ and ${}_R\bfO^{\le0}$ such that the maps $\pi^{\ge0}$, $\pi^{\le0}$ and $\pi^0$ are surjective homomorphism of Hopf $R$-algebras. 

\begin{defi}
    Similar to $\dot{\bfB}^*$, we define the dual canonical bases $\bfB^*\subset\mathrm{Hom_{\A\text{-}\mod}}({}_\A\bff,\A)$ and let ${}_\A\bff^\circ$ be the free $\A$-submodule of $\mathrm{Hom}_{\A\text{-}\mod}({}_\A\bff,\A)$ spanned by $\bfB^*$. As in \cite[\S3.8]{LusztigGroupScheme}, the Hopf algebra structure on ${}_\A\bff$ induces a Hopf algebra structure on ${}_\A\bff^\circ$. For any commutative $\A$-algebra $R$ we denote ${}_R\bff\coloneqq{}_\A\bff\otimes_\A R$, which is a Hopf $R$-algebra and is commutative if $v\mapsto1$ in $R$.  
\end{defi}

We have natural projections
\[\pi^{>0},\pi^{<0}\colon{}_R\bfO\epic{}_R\bff^\circ\]
defined by $\pi^{>0}((b^+1_\la)^*)=b^*$ (resp. $\pi^{<0}((b^-1_\la)^*)=b^*$) for all $b\in\bfB,\la\in X$. Clearly $\pi^{>0}$ (resp. $\pi^{<0}$) factors through $\pi^{\ge0}$ (resp. $\pi^{\le0}$).\par
Now let $\k$ be a commutative ring, viewed as $\A$-algebra via $v\mapsto1$. The maps $\pi^{>0}$ and $\pi^{<0}$ are homomorphism of $\k$-Hopf algebras by \eqref{eq:coproduct-formula} and induce $\k$-algebra isomorphisms
\[{}_\k\bfO^{\ge0}\cong{}_\k\bff^\circ\otimes_\k\k[X],\quad{}_\k\bfO^{\le0}\cong{}_\k\bff^\circ\otimes_\k\k[X].\]
By the above discussions $G\coloneqq\Spec({}_\k\bfO)$ is an affine flat group scheme over $\k$ and we have the following closed subgroup schemes of $G$: 
\begin{align*}
    T\coloneqq \Spec(\k[X]),\  B&\coloneqq\Spec({}_\k\bfO^{\ge0}),\ U\coloneqq\Spec({}_\k\bff^\circ),\\
    B^-&\coloneqq\Spec({}_\k\bfO^{\le0}),\ U^-\coloneqq\Spec({}_\k\bff^\circ)
\end{align*}
where $U$ (resp. $U^-$) embeds into $G$ via $\pi^{>0}$ (resp. $\pi^{<0}$). Moreover, we have $B=TU$ and $B^-=TU^-$.  

\begin{theo}\cite{LusztigGroupScheme}\label{theo:Chevalley-Lusztig group scheme}
    The group scheme $G\coloneqq\mathrm{Spec}({}_\k\bfO)$ is a split Chevalley group scheme over $\k$ associated to the based root datum $\Psi$. Moreover, $T$ is a maximal torus and $B$ is a Borel subgroup of $G$ that form part of the pinning of $G$.
\end{theo}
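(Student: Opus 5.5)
This theorem is Lusztig's, and the plan is to reconstruct his argument in \cite{LusztigGroupScheme}. The route is through the fibers: we first show that $\Spec({}_\k\bfO)$ is a smooth affine group scheme with connected reductive fibers. We then identify its root datum, and conclude with the SGA3 characterization of split reductive groups.

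\textbf{Step 1: reduce to $\k=\ZZ$.} Because ${}_\k\bfO=\bfO\otimes_\ZZ\k$, it suffices to treat $\k=\ZZ$. The ring $\bfO$ is a commutative Hopf algebra, free over $\ZZ$ with basis $\dot{\bfB}^*$, so $G$ is flat and affine.

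\textbf{Step 2: the torus and the big cell.} The projections $\pi^{\ge0}$ and $\pi^{\le0}$ are Hopf surjections, and we have the identifications ${}_\k\bfO^{\ge0}\cong{}_\k\bff^\circ\otimes\k[X]$ and $\bfO^{\le0}\cong\bff^\circ\otimes\k[X]$. Hence $T\cong\Spec\k[X]$ is a split torus with character group $X$, and $U$ and $U^-$ are closed subgroups. Since $\bff^\circ$ is the graded dual of the enveloping-type algebra ${}_\k\bff$ with PBW-type bases, it is a polynomial ring, so $U$ and $U^-$ are isomorphic to affine spaces of dimension $|R^+|$.

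Next we show that multiplication $U^-\times T\times U\to G$ is an open immersion onto the nonvanishing locus of a suitable element. The candidate is the product of the matrix coefficients $(1_{\la}\text{-extremal})^*$ attached to the highest weight vectors $\eta_\la$ of $\La_\la$, for $\la$ regular dominant. This is the key step. I would prove it by showing that the localization of $\bfO$ at that function is isomorphic to $\bff^\circ\otimes\k[X]\otimes\bff^\circ$. The tool is the triangular structure of $\dot{\bfB}$: each element $b_1\diamondsuit_\zeta b_2$ equals $b_1^+b_2^-1_\zeta$ plus lower terms, which gives the dual factorization. Granting this, each fiber $G_s$ contains a dense open subset isomorphic to $U^-\times T\times U$.

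\textbf{Step 3: fibers are reductive with root datum $\Psi$.} Smoothness of $G\to\Spec\ZZ$ follows from flatness together with smooth big cells whose translates cover $G$. To get the covering, use the Weyl group representatives $\dot s_i$, built from the $\mathrm{SL}_2$-triples given by $\theta_i^{(n)\pm}$. Each fiber is then connected and smooth. It contains the split torus $T$, and the adjoint action of $T$ on $\mathrm{Lie}(U)$ has weights the positive roots. Each simple root $\alpha_i$ yields a homomorphism $\mathrm{SL}_2\to G$ whose coroot is $\alpha_i^\vee$. The unipotent radical of $G_{\bar s}$ is normalized by $T$ and meets the big cell in a $T$-stable subgroup of $U^-U$. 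Stability under the Weyl group representatives forces this subgroup to be trivial, so each fiber is reductive. Its root datum is $\Psi$: $X=X^*(T)$, the roots are the $T$-weights, and the coroots are the $\alpha_i^\vee$. Moreover $B=TU$ is a Borel subgroup and the $\mathrm{SL}_2$ maps supply a pinning.

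\textbf{Step 4: conclude.} By SGA3, a smooth affine group scheme over $\ZZ$ with connected reductive fibers and a split maximal torus is the split Chevalley group attached to its root datum.

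\textbf{Main obstacle.} I expect the open-cell isomorphism in Step 2 to be the hardest part. It requires controlling the multiplication in $\bfO$, namely the coproduct constants $\hat m_c^{ab}$, modulo the ideal cut out by the highest weight coefficient. I would attack it through the cell filtration $\dot{\bfU}[\ge\la]$ and the parametrization of $\dot{\bfB}[\la]$ by $\bfB(\la)\times\bfB(\la)$ from Proposition \ref{prop:parametrize-B[la]}.
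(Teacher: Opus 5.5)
The paper does not prove this statement; it imports it from \cite{LusztigGroupScheme}. Your architecture (big cell, reductive fibers, then the SGA3 isomorphism theorem) is the standard one, and Step 2 is sound with the paper's tools. Injectivity of ${}_\k\bfO\to{}_\k\bff^\circ\otimes\k[X]\otimes{}_\k\bff^\circ$ follows from the triangular basis $\{b^-1_\zeta b'^+\}$ of ${}_\k\dot{\bfU}$, so each ${}_\k\bfO$ is a domain. Surjectivity after inverting $1_\la^*$ comes from Proposition \ref{prop:U-invariant}: $(1_{n\la}\sigma(b)^+)^*$ restricts on the big cell to $1\otimes(\text{a character})\otimes\sigma(b)^*$. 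Combine this with Lemma \ref{lem:B-union}, and argue symmetrically for the other factors.

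The genuine gap is in Step 3: you never prove that $\bfO$ is a finitely generated $\ZZ$-algebra. You need this to treat the fibers as algebraic groups (so that unipotent radicals and Borel subgroups make sense), and again for the final appeal to SGA3.

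You obtain finite type and smoothness only from the claim that the translates $\dot w\Omega$ cover $G$, i.e.\ that the functions $\dot w\cdot 1_\la^*$ generate the unit ideal of $\bfO$. ``Use the Weyl group representatives'' gives no mechanism for this. The usual proof of the covering is the Bruhat decomposition of $G_{\bar s}$, which presupposes that $G_{\bar s}$ is a reductive group with Borel subgroup $B$. That is exactly what you establish only afterwards, so as ordered the argument is circular. Note also that finite generation of the localization $\bfO[f^{-1}]$ says nothing about $\bfO$ itself.

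The repair is to prove finite generation first and independently. Corollary \ref{cor:grO-finite} with $P=X^+$ does this using only the cell filtration, Lemma \ref{lem:h_c-identity} and Proposition \ref{prop:grO-decription}, none of which depend on the theorem. Then each $G_{\bar s}$ is an integral affine algebraic group containing a smooth open subscheme, hence smooth by homogeneity. Smoothness over $\ZZ$ then follows from flatness, finite presentation and the fibral criterion, with no covering needed; the covering can be recovered afterwards from Bruhat.

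Your reductivity step also needs a precise formulation. $U^-U$ is not a subgroup, and you have not excluded a torus component in $R_u(G_{\bar s})\cap\Omega$. A clean route is to use the root subgroups and the $\mathrm{SL}_2$'s, conjugated by the $\dot w$, to show that $B$ and $B^-$ are Borel subgroups. Then $R_u(G_{\bar s})\subset U\cap U^-=\{1\}$.
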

An important ingredient in Lusztig's proof is the following object:
\begin{defi}\label{defi:hatU}
    For any commutative $\A$-algebra $R$, the \emph{(quantized) distribution algebra} for $\Psi$ is the $R$-module ${}_R\hat{\bfU}\coloneqq\mathrm{Hom}_{\A\text{-}\mod}({}_\A\bfO,R)$. More concretely, ${}_R\hat{\bfU}$ consists of infinite formal sums $\sum_{a\in \dot{\bfB}}\xi_aa$ with $\xi_a\in R$. If $R=\QQ(v)$, we simply write $\hat{\bfU}\coloneqq{}_{\QQ(v)}\hat{\bfU}$. 
\end{defi}
The Hopf algebra structure of ${}_R\bfO$ defines an $R$-algebra structure on ${}_R\hat{\bfU}$ and we have a natural embedding of $R$-algebras ${}_{R}\dot{\bfU}\hookrightarrow {}_R\hat{\bfU}$. The coproduct \eqref{eq:U-dot-coproduct} for ${}_R\dot{\bfU}$ extends uniquely to an $R$-algebra homomorphism
\[\hat{\Delta}\colon {}_R\hat{\bfU}\to {}_R\hat{\bfU}\hat{\otimes}{}_R\hat{\bfU}\]
where the completed tensor product ${}_R\hat{\bfU}\hat{\otimes}{}_R\hat{\bfU}$ consists of (infinite) formal sums $\sum_{b,c\in\dot{\bfB}}r_{bc}(b\otimes c)$ where the coefficients $r_{bc}\in R$. The co-unit $\varepsilon$ and anti-pode $S$ extends uniquely to $\hat{\varepsilon}\colon {}_R\hat{\bfU}\to R$ and $\hat{S}\colon {}_R\hat{\bfU}\to {}_R\hat{\bfU}$.\par 

The canonical basis $\dot{\bfB}$ gives rise to the linear topology on ${}_R\hat{\bfU}$ in which the $R$-submodules $\{\prod_{b\in\dot{\bfB}\setminus F}Rb\}$, where $F$ runs over all finite subsets of $\dot{\bfB}$, form a basis of open neighborhoods of $0$. Under the identification ${}_R\hat{\bfU}=\mathrm{Hom}_{\mathrm{Mod}_R}({}_R\bfO,R)$, this is the weak* topology, i.e. the coarsest topology on ${}_R\hat{\bfU}$ such that for any $f\in{}_R\bfO$, the $R$-linear map 
\[\mathrm{ev}_f\colon{}_R\hat{\bfU}\to R,\quad\xi\mapsto\xi(f)\]
is continuous, where $R$ is equipped with the discrete topology. \par
By a \emph{continuous ${}_R\hat{\bfU}$-module}, we mean a ${}_R\hat{\bfU}$-module $M$ such that the scalar multiplication map ${}_R\hat{\bfU}\times M\to M$ is continuous when $M$ is equipped with the discrete topology. It is easy to see that this condition is equivalent to requiring that the annihilator of any element $m\in M$ is open in ${}_R\hat{\bfU}$. Let ${}_R\hat{\bfU}\text{-}\mod$ be the category of \emph{continuous} ${}_R\hat{\bfU}$-modules and let ${}_R\hat{\bfU}\text{-}\mod^{ft}$ be the full subcategory consisting of modules that are finitely generated $R$-modules.
\begin{prop}\label{prop:uni-mod-equiv}
    The forgetful functor induces equivalences of categories 
    \[{}_R\hat{\bfU}\text{-}\mod\simeq{}_R\dot{\bfU}\text{-}\mod,\quad {}_R\hat{\bfU}\text{-}\mod^{ft}\simeq{}_R\dot{\bfU}\text{-}\mod^{ft}.\]
\end{prop}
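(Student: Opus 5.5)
The plan is to construct mutually inverse functors and check that they preserve morphisms. The forgetful functor comes from restricting along the algebra embedding ${}_R\dot{\bfU}\into{}_R\hat{\bfU}$. We must first check that it lands in unital modules. Let $M$ be a continuous ${}_R\hat{\bfU}$-module and $m\in M$. The annihilator of $m$ is open, so it contains $\prod_{b\in\dot{\bfB}\setminus F}Rb$ for some finite $F\subset\dot{\bfB}$. In particular $1_\zeta m=0$ whenever $1_\zeta\notin F$, so only finitely many $1_\zeta m$ are nonzero. The unit of ${}_R\hat{\bfU}$ is the counit of ${}_R\bfO$, namely the formal sum $\sum_{\zeta\in X}1_\zeta$. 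Split it as $\sum_{1_\zeta\in F}1_\zeta$ plus a tail lying in the annihilator of $m$. This gives $m=\sum_\zeta 1_\zeta m$, so $M$ is unital.

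For the inverse functor, start with a unital ${}_R\dot{\bfU}$-module $M$. For $\xi=\sum_a\xi_a a\in{}_R\hat{\bfU}$ and $m\in M$, define
\[\xi\cdot m\coloneqq\sum_{a\in\dot{\bfB}}\xi_a\,(am).\]
By Lemma \ref{lem:uni-mod-finite} this sum is finite. The annihilator of $m$ then contains $\prod_{b\notin F_m}Rb$, where $F_m=\{b\mid bm\ne0\}$ is finite, so continuity is automatic.

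The main step is associativity, $(\xi\xi')m=\xi(\xi'm)$. The product in ${}_R\hat{\bfU}$ is dual to the coproduct of ${}_R\bfO$, which gives
\[(\xi\xi')_c=\sum_{a,b}m_{ab}^c\,\xi_a\xi'_b,\]
a finite sum by Lemma \ref{lem:structure-constant-finite}(i). So $(\xi\xi')m=\sum_{c\in F_m}\sum_{a,b}m_{ab}^c\xi_a\xi'_b\,cm$. On the other side, $\xi(\xi'm)=\sum_{a,b}\xi_a\xi'_b\,a(bm)$, and only finitely many terms are nonzero: $b$ ranges over $F_m$, and for each such $b$, $a$ ranges over $F_{bm}$.

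To compare the two, reduce to a finite-dimensional quotient. Truncate $\xi,\xi'$ to finite sums $\xi_0,\xi'_0\in{}_R\dot{\bfU}$ that agree with them on all the finitely many relevant indices $a,b$. Those indices are the ones appearing on either side, enlarged so that every pair $(a,b)$ with $m_{ab}^c\ne0$ for some $c\in F_m$ is included; this uses finiteness in Lemma \ref{lem:structure-constant-finite}(i). Both sides are unchanged by the truncation, and for $\xi_0,\xi'_0$ associativity holds in ${}_R\dot{\bfU}$. One must check that no other products $ab$ contribute components $c\in F_m$; this is exactly the finiteness just used. This bookkeeping is the main obstacle, though it is routine.

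The unit acts as the identity precisely because $M$ is unital. Morphisms match: a ${}_R\dot{\bfU}$-linear map commutes with each finite sum $\xi\cdot m$, so it is ${}_R\hat{\bfU}$-linear. Conversely, the two constructions are inverse. Given a continuous $\hat{\bfU}$-module, the truncation argument shows that $\xi m=\xi_0 m$ for a suitable finite truncation $\xi_0$, using openness of the annihilator. Hence the continuous action is determined by its restriction to ${}_R\dot{\bfU}$ and agrees with the formula above.

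The finite-type statement follows at once, because the underlying $R$-module is unchanged by either functor.
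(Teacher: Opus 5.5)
Your proof is correct and takes the same route as the paper. In one direction, openness of annihilators means only finitely many $1_\zeta$ act nontrivially on each vector, which gives unitality; in the other, Lemma \ref{lem:uni-mod-finite} lets the ${}_R\dot{\bfU}$-action extend to a continuous ${}_R\hat{\bfU}$-action. You also check associativity of the extension and its uniqueness (hence full faithfulness), which the paper leaves implicit, and your pointwise argument is the accurate one: the paper's single finite $X_M\subset X$ with $1_\zeta M=0$ for $\zeta\notin X_M$ can fail for modules not finitely generated over $R$, e.g.\ $\bigoplus_{\la\in X^+}{}_R\La_\la$.
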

\begin{proof}
    Let $M$ be a continuous ${}_R\hat{\bfU}$-module. Since the action map ${}_R\hat{\bfU}\to\mathrm{End}_R(M)$ is continuous, the inverse image of $0$ is an open subset of ${}_R\hat{\bfU}$ containing $0$. Thus all but finitely many elements in $\dot{\bfB}$ acts by $0$ on $M$. In particular, there is a finite subset $X_M\subset X$ such that $1_\zeta M=0$ for all $\zeta\notin X_M$. Therefore the finitely many idempotents $1_\la$ for $\la\in X_M$ gives a decomposition $M=\oplus_{\la\in X_M}1_\la M$. Thus $M$ is a unital ${}_R\dot{\bfU}$-module.\par 
    Conversely let $M$ be a unital ${}_R\dot{\bfU}$-module. By Lemma \ref{lem:uni-mod-finite}, the ${}_R\dot{\bfU}$-action on $M$ extends to a continuous ${}_R\hat{\bfU}$-action.
\end{proof}
Following \cite{LusztigGroupScheme} we define the following sub-algebras of ${}_R\hat{\bfU}$:
\begin{align}\label{eq:subalgebra-U-hat}
    {}_R\hat{\bfU}^{\ge0}&\coloneqq\{\sum_{\la\in X,b\in\bfB}\xi_{\la,b}b^+1_\la\mid\xi_{\la,b}\in R\},\quad{}_R\hat{\bfU}^{+}\coloneqq\{\sum_{b\in\bfB,\la\in X}\xi_bb^+1_\la\mid \xi_b\in R\},\\
    {}_R\hat{\bfU}^{\le0}&\coloneqq\{\sum_{\la\in X,b\in\bfB}\xi_{\la,b}b^-1_\la\mid\xi_{\la,b}\in R\},\quad{}_R\hat{\bfU}^{-}\coloneqq\{\sum_{b\in\bfB,\la\in X}\xi_bb^-1_\la\mid \xi_b\in R\},\\
    \quad{}_R\hat{\bfU}^0&\coloneqq{}_R\hat{\bfU}^{\ge0}\cap{}_R\hat{\bfU}^{\le0}=\{\sum_{\la\in X}\xi_\la 1_\la\mid\xi_\la\in R\}.
\end{align}

For a commutative $\A$-algebra $\k$ with $v\mapsto1$, the group of $\k$-valued points $G(\k)$ is identified with a subgroup of the multiplicative monoid of ${}_\k\hat{\bfU}$:
\begin{equation}\label{eq:be in G}
    G(\k)=\{\xi\in{}_\k\hat{\bfU}\mid\hat{\Delta}(\xi)=\xi\otimes\xi\text{ and }\hat{\varepsilon}(\xi)=1\}.
\end{equation} 
Under this identification, the inverse map on $G(\k)$ is given by $\hat{S}$ and we have a description of the following subgroups:
\[T(\k)=G(\k)\cap{}_\k\hat{\bfU}^0,\quad B(\k)=G(\k)\cap{}_\k\hat{\bfU}^{\ge0},\quad U(\k)=G(\k)\cap{}_\k\hat{\bfU}^{+},\]
\[B^-(\k)=G(\k)\cap{}_\k\hat{\bfU}^{\le0},\quad U^-(\k)=G(\k)\cap{}_\k\hat{\bfU}^{-}.\]
The following result will be used in the proof of Theorem \ref{theo:embed-G-in-matrix}.
\begin{prop}\label{prop:unip-element-trivial}
    Let $\k$ be a commutative $\A$-algebra with $v\mapsto1$. Let $x\in U^-(\k)=G(\k)\cap\hat{\bfU}^{-}(\k)$ be an element which we write in the form
    \[x=\sum_{b\in\bfB,\la\in X}x_b(b^-1_\la),\quad x_b\in\k.\]
    Let $\la_1,\dotsc,\la_N\in X^+\setminus\{0\}$ be elements that generate the abelian group $X$. Suppose that for any $i$ and any $b\in\bfB(\la_i)$ we have $x_b=0$. Then we have $x=1$. In other words, $x_b=0$ for all $b\in\bfB\setminus\{1\}$. 
\end{prop}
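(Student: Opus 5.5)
The plan is to show that the set of dominant weights $\la$ for which $x$ has trivial coefficients on $\bfB(\la)$ is a submonoid of $X^+$, and then to apply Lemma \ref{lem:B-union} to it. The multiplicativity will come from the group-like property of $x$ combined with Lemma \ref{lem:h_c-identity}.

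Since $1\in\bfB(\la)$ for every $\la$, and $\hat\varepsilon(x)=1$ forces $x_1=1$, I read the hypothesis as saying $x_b=0$ for $b\in\bfB(\la_i)\setminus\{1\}$. So I set
\[\L_x\coloneqq\{\la\in X^+\mid x_b=\delta_{b,1}\text{ for all } b\in\bfB(\la)\}.\]
By assumption $\la_1,\dotsc,\la_N\in\L_x$, and clearly $0\in\L_x$. The key step is to show that $\L_x$ is closed under addition.

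Write $x$ as a functional on ${}_\k\bfO$, so that $x_b=x((b^-1_\la)^*)$. Recall $\dot{\bfB}_{0,\la}=\{b^-1_\la\mid b\in\bfB(\la)\}$.
\begin{itemize}
\item The condition $\hat\Delta(x)=x\otimes x$ from \eqref{eq:be in G} says that $x$ is multiplicative on ${}_\k\bfO$.
\item Since $a^*b^*=\sum_c\hat m_c^{ab}c^*$, multiplicativity gives, for $a\in\dot{\bfB}_{0,\la}$ and $b\in\dot{\bfB}_{0,\la'}$,
\[\sum_{c\in\dot{\bfB}}\hat m_c^{ab}\,x(c^*)=x(a^*)\,x(b^*).\]
\item I will check that only $c\in\dot{\bfB}_{0,\la+\la'}$ contribute here, or at least that the remaining terms drop out after pairing with $h_{c'}$. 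This should follow from the weight decomposition: $x$ only sees elements of the form $b^-1_\mu$, and $\hat m_c^{ab}$ with $a,b$ of this shape corresponds to $\tau(c\,\eta_{\la+\la'})$, as in the proof of Lemma \ref{lem:h_c-identity}.
\item Now fix $c'\in\dot{\bfB}_{0,\la+\la'}$, multiply by $h_{c'}(a,b)$, and sum over $a,b$. Lemma \ref{lem:h_c-identity} then gives
\[x(c'^*)=\sum_{a,b}h_{c'}(a,b)\,x(a^*)\,x(b^*).\]
\item If $\la,\la'\in\L_x$, then $x(a^*)x(b^*)=\delta_{a,1}\delta_{b,1}$, so $x(c'^*)=h_{c'}(1,1)$. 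This is $0$ for $c'\ne1$, again by Lemma \ref{lem:h_c-identity}. Hence $\la+\la'\in\L_x$.
\end{itemize}

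It remains to check the hypothesis of Lemma \ref{lem:B-union} for $\L_x$, i.e. that $\L_x$ contains weights with $\langle\alpha_i^\vee,\la\rangle$ arbitrarily large for every $i\in\Delta$.
\begin{itemize}
\item Fix $i\in\Delta$. Since the $\la_j$ generate $X$ and the pairing $Y\times X\to\ZZ$ is perfect with $\alpha_i^\vee\neq0$, some $\la_j$ satisfies $\langle\alpha_i^\vee,\la_j\rangle\ne0$.
\item By dominance that value is $\ge1$. Hence $\mu\coloneqq\sum_j\la_j$ satisfies $\langle\alpha_i^\vee,\mu\rangle\ge1$ for all $i$.
\item Therefore the multiples $n\mu\in\L_x$ satisfy $\langle\alpha_i^\vee,n\mu\rangle\ge n$ for all $i$, which is what Lemma \ref{lem:B-union} requires.
\end{itemize}
Lemma \ref{lem:B-union} then gives $\bfB=\bigcup_{\la\in\L_x}\bfB(\la)$. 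Since $x_b$ depends only on $b$ and not on the weight index, we conclude $x_b=0$ for every $b\ne1$, i.e. $x=1$.

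I expect the main obstacle to be the bookkeeping in the multiplicativity step. One has to confirm that pairing the relation $\sum_c\hat m_c^{ab}x(c^*)=x(a^*)x(b^*)$ against $h_{c'}$ isolates exactly $x(c'^*)$, with no stray contributions from $c\notin\dot{\bfB}_{0,\la+\la'}$. I would handle this by passing to the action on $\La_\la\otimes\La_{\la'}$: $x$ fixes $\eta_\la\otimes\eta_{\la'}$, and then applying $\tau$ and $\psi$ from that proof shows that $x$ fixes $\eta_{\la+\la'}$.
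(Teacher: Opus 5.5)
Your proof is correct and follows the paper's argument. The paper also uses $\hat\Delta(x)=x\otimes x$ together with the functions $h_c$ of Lemma~\ref{lem:h_c-identity} to propagate the vanishing from $\bfB(\la)$ and $\bfB(\la_i)$ to $\bfB(\la+\la_i)$, organised as an induction over the monoid generated by the $\la_i$ rather than your closure-under-addition formulation, and then concludes with Lemma~\ref{lem:B-union}. The bookkeeping point you flag is resolved exactly as you suggest, and the paper asserts it without comment: $\tau(c\,\eta_{\la+\la'})=\sum_{a,b}\hat m_c^{ab}\,a\eta_\la\otimes b\eta_{\la'}$, so $\hat m_c^{ab}\neq0$ with $a\in\dot{\bfB}_{0,\la}$, $b\in\dot{\bfB}_{0,\la'}$ forces $c\in\dot{\bfB}_{0,\la+\la'}$.
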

\begin{proof}
    Since $x\in G(\k)$, we have $x_{1}=1$ and $\hat{\Delta}(x)=x\otimes  x$. Since
    \[x\otimes  x=\sum_{a,b\in\bfB}\sum_{\la,\mu\in X}x_ax_b(a^-1_\la\otimes  b^-1_\mu),\]
    we must have 
    \[\hat{\Delta}(x)=\sum_{c\in\bfB,\nu\in X}x_c(c^-1_\nu)=\sum_{a,b,c\in\bfB}\sum_{\la,\mu,\nu\in X} x_c\hat{m}_{c^-1_\nu}^{a^-1_\la,b^-1_\mu}a^-1_\la\otimes  b^-1_\mu\]
    and then we deduce that for any $a,b\in\bfB$ and $\la,\mu\in X$,
    \[x_ax_b=\sum_{c\in\bfB}\sum_{\nu\in X}x_c\hat{m}_{c^-1_\nu}^{a^-1_\la,b^-1_\mu}.\]
    Let $\L\subset X^+$ be the submonoid generated by the elements $\la_1,\dotsc,\la_N$. By Lemma \ref{lem:B-union} we have $\bfB=\bigcup_{\la\in\L}\bfB(\la)$ (the condition in the Lemma is satisfied by our assumption).\par  
    We prove by induction on $\la$ that for any $b\in\bfB(\la)\setminus\{1\}$ we have $x_b=0$. This will imply that $x=1$. The case $\la=0$ is clear since $\bfB(0)=\{1\}$. Suppose the statement is true for $\la$ and fix any $1\le i\le N$. We take $\mu=\la_i$ in the equality above and let $a\in\bfB(\la)$, $b\in\bfB(\la_i)$. Then we have $\hat{m}_{c^-1_\nu}^{a^-1_\la,b^-1_{\la_i}}=0$ unless $\nu=\la+\la_i$ and $c\in\bfB(\la+\la_i)$. Consequently, we get that
    \[x_ax_b=\sum_{c\in\bfB(\la+\la_i)}x_c\hat{m}_{c^-1_{\la+\la_i}}^{a^-1_\la,b^-1_{\la_i}}.\]
    For any element $d\in\bfB(\la+\la_i)\setminus\{1\}$, we get
    \begin{equation*}
        \begin{split}
            &\sum_{a\in\bfB(\la),b\in\bfB(\la_i)}h_{d^-1_{\la+\la_i}}(a^-1_\la,b^-1_{\la_i})x_ax_b \\&=\sum_{c\in\bfB(\la+\la_i)}x_c\sum_{a\in\bfB(\la),b\in\bfB(\la_i)}h_{d^-1_{\la+\la_i}}(a^-1_\la,b^-1_{\la_i})\hat{m}_{c^-1_{\la+\la_i}}^{a^-1_\la,b^-1_{\la_i}}\\
            &=x_{d}.
        \end{split}
    \end{equation*}
    By assumption and induction hypothesis, we have $x_ax_b=0$ unless $a=b=1$. Moreover, since $d\ne1$ we get by Lemma \ref{lem:h_c-identity} that
    \[x_d=h_{d^-1_{\la+\la_i}}(1,1)=0.\]
    This finishes the inductive argument, and we conclude that $x_b=0$ for all $b\ne1$. 
\end{proof}

\subsection{The canonical filtration}
Recall from \S\ref{subsubsec:based-root-data} that $X^+_\pos$ denotes the submonoid of $X$ generated by the dominant weight monoid $X^+$ and the positive root monoid $X_\pos=\NN^\Delta$.
\begin{defi}\label{def:O-le-lambda}
    Let $R$ be a commutative $\A$-algebra. For any $\la\in X^+_\pos$, let ${}_R\bfO_{\le\la}$ be the $R$-submodule of ${}_R\bfO$ spanned by elements in $\bigcup\limits_{\mu\in X^+,\mu\le\la}\dot{\bfB}[\mu]^*$. Similarly, let ${}_R\bfO_{<\la}$ be the $R$-submodule of ${}_R\bfO$ spanned by elements in $\bigcup\limits_{\mu\in X^+,\mu<\la}\dot{\bfB}[\mu]^*$.
    In the classical situation, i.e. when $R=\k$ is an $\A$-algebra with $v\mapsto1$, we also use the more familiar notation $\k[G]_{\le\la}={}_\k\bfO_{\le\la}$ and $\k[G]_{<\la}={}_\k\bfO_{<\la}$. 
\end{defi}

\begin{lem}\label{lem:filtration-O-multiplicative}
    For any commutative $\A$-algebra $R$ and any elements $\lambda,\mu\in X^+_\pos$ we have: 
    \begin{enumerate}
        \item[(i)] ${}_R\mathbf{O}_{\leq\lambda}\cdot{}_R\mathbf{O}_{\leq\mu}\subset{}_R\mathbf{O}_{\leq\lambda+\mu}$;
        \item[(ii)] If $\la,\mu\in X^+$ are dominant weights, then $1_\la^*\cdot 1_\mu^*=1_{\la+\mu}^*$.
    \end{enumerate}
    Consequently, we have an $R$-algebra embedding $R[X^+]\into{}_R\bfO$ sending $e^\la$ to $1_\la^*$. Here $R[X^+]$ is the monoid algebra of $X^+$ with $R$-basis $\{e^\la,\la\in X^+\}$. 
\end{lem}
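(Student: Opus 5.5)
The plan is to treat the lemma over $\QQ(v)$ first, via matrix coefficients of the modules $\La_\tau$, and then descend to $\A$ and an arbitrary $R$. The descent costs nothing. The structure constants $\hat m_c^{ab}$ lie in $\A$, and both ${}_R\bfO_{\le\la}$ and the product of basis elements are defined through the fixed basis $\dot\bfB^*$ and its partition into cells. So (i) and (ii) are statements about which $\hat m_c^{ab}$ are nonzero, and what they are equal to, as elements of $\A\subset\QQ(v)$. Hence it suffices to prove them for ${}_\A\bfO\subset\bfO_{\QQ(v)}$ and then tensor with $R$.

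\textbf{Step 1: a description of $\bfO_{\le\la}$ over $\QQ(v)$.} Let $P_\la\coloneqq\{\tau\in X^+\mid\tau\le\la\}$. I will show that $\bfO_{\le\la}$ is the space of matrix coefficients $u\mapsto\langle\phi,u m\rangle$ with $m\in\La_\tau$, $\phi\in\La_\tau^*$ and $\tau\in P_\la$. Equivalently, I will show that the orthogonal complement $\dot\bfU[X^+\setminus P_\la]$ equals the common annihilator in $\dot\bfU$ of all $\La_\tau$ with $\tau\in P_\la$.
\begin{itemize}
\item The inclusion of $\dot\bfU[X^+\setminus P_\la]$ in the annihilator follows from Lemma \ref{lem:characterization subset Blambda}. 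If $b\in\dot\bfB[\nu]$ and $b\La_\tau\neq0$, then $\tau\ge\nu$, so $\tau\le\la$ forces $\nu\le\la$.
\item For the reverse inclusion I will use Lusztig's cell filtration \cite[\S29]{Lusztig-IntroQuantumGroup}. The quotients $\dot\bfU[\ge\nu]/\dot\bfU[>\nu]$ are isomorphic to $\La_\nu\otimes{}^\omega\La_\nu$-type bimodules, and the elements of $\dot\bfB[\nu]$ act linearly independently on $\La_\nu$ modulo $\dot\bfU[>\nu]$. A descending induction on $\nu\in P_\la$ then shows that an element killing every $\La_\tau$ with $\tau\in P_\la$ has no component along $\dot\bfB[P_\la]$.
\end{itemize}
I expect this second inclusion to be the main obstacle, and it is where I would rely most heavily on \cite[29.1--29.2]{Lusztig-IntroQuantumGroup}.

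\textbf{Step 2: proof of (i).} By Step 1, the product of a matrix coefficient of $\La_\tau$ with one of $\La_{\tau'}$, where $\tau\le\la$ and $\tau'\le\mu$, is a matrix coefficient of $\La_\tau\otimes\La_{\tau'}$, because the product in $\bfO$ is dual to $\Delta$. All weights of $\La_\tau\otimes\La_{\tau'}$ are $\le\tau+\tau'\le\la+\mu$. This tensor product is semisimple over $\QQ(v)$, with summands $\La_\rho$ for dominant $\rho\le\la+\mu$. Its matrix coefficients therefore lie in $\bfO_{\le\la+\mu}$ by Step 1. Nothing here requires $\la$ or $\mu$ to be dominant: only the set $P_\la$ enters, and $\tau+\tau'\le\la+\mu$ holds for $\la,\mu\in X^+_\pos$.

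\textbf{Step 3: proof of (ii) and of the embedding.} For $\la\in X^+$, I will first identify $1_\la^*$ with the highest weight matrix coefficient $c_\la(u)\coloneqq$ the coefficient of $\eta_\la$ in $u\eta_\la$. By weights, $c_\la(b)=0$ unless $b\in1_\la\dot\bfU1_\la$. For $b\in\dot\bfB$, Lusztig's compatibility of $\dot\bfB$ with $\La_\la$ \cite[25.2]{Lusztig-IntroQuantumGroup} gives that $b\eta_\la$ is either $0$ or an element of the canonical basis $\bfB(\la)\eta_\la$. It equals $\eta_\la$ only when $b=1_\la$. Hence $c_\la=1_\la^*$.

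The product $1_\la^*1_\mu^*$ is then the matrix coefficient of $\La_\la\otimes\La_\mu$ at $\eta_\la\otimes\eta_\mu$. The map $\tau$ of Lemma \ref{lem:h_c-identity} sends $\eta_{\la+\mu}$ to $\eta_\la\otimes\eta_\mu$ and is split injective. The $(\la+\mu)$-weight space of $\La_\la\otimes\La_\mu$ is one-dimensional and spanned by $\eta_\la\otimes\eta_\mu$. Together these identify that matrix coefficient with $c_{\la+\mu}=1_{\la+\mu}^*$.

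Finally, $e^\la\mapsto1_\la^*$ is multiplicative by (ii). It sends $e^0$ to the unit $1_0^*$, and it is injective because the $1_\la^*$ are distinct elements of the basis $\dot\bfB^*$. This gives the embedding $R[X^+]\into{}_R\bfO$.
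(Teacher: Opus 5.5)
Your proof is correct, but it is organized differently from the paper's. For (i), the paper works directly with structure constants. It takes a counterexample $\hat m_c^{ab}\neq0$ with $\la(c)\not\le\la(a)+\la(b)$ and with $(\la(a),\la(b))$ minimal. It then quotes \cite[Lemma 4.21]{Lusztig-Infinity} to get that $\Delta(c)$ kills $\La_{\la(a)}\otimes\La_{\la(b)}$. Minimality lets it discard all terms of $\Delta(c)$ outside $\dot\bfB[\la(a)]\otimes\dot\bfB[\la(b)]$. It concludes from the linear independence of the images of these two cells in $\End(\La_{\la(a)}\otimes\La_{\la(b)})$.

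You pass instead to the dual picture. Your Step 1, which identifies $\bfO_{\le\la}$ over $\QQ(v)$ with the matrix coefficients of the $\La_\tau$ for $\tau\le\la$, does the work of that minimality argument. The vanishing that the paper imports from Lusztig's Lemma 4.21, you derive yourself from complete reducibility of $\La_\tau\otimes\La_{\tau'}$ over $\QQ(v)$ together with Lemma \ref{lem:characterization subset Blambda}.

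Both arguments rest on the same input from \cite[29.1]{Lusztig-IntroQuantumGroup}, namely that $\dot\bfB[\nu]$ maps to a basis of $\End(\La_\nu)$. Your version makes the meaning of the filtration transparent: it is the quantum counterpart of the comparison with the canonical filtration in \S\ref{sec:comparison}. It costs you semisimplicity over $\QQ(v)$ and the descent step. The paper's version stays with $\A$-coefficients throughout.

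One small slip: the induction in your second bullet of Step 1 must run upward from the minimal elements of $P_\la$, not downward. On $\La_\nu$, every cell $\dot\bfB[\nu']$ with $\nu'\le\nu$ may act nontrivially, so you can only isolate $\dot\bfB[\nu]$ once the smaller cells have been handled.

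For (ii), the paper deduces the formula from (i). Any $c$ with $\hat m_c^{1_\la,1_\mu}\neq0$ lies in $1_{\la+\mu}\dot\bfU1_{\la+\mu}\cap\dot\bfB[\le\la+\mu]$, and Proposition \ref{prop:parametrize-B[la]} then forces $c=1_{\la+\mu}$. Your argument is independent of (i) and equally valid: it identifies $1_\la^*$ with the highest-weight matrix coefficient via the compatibility of $\dot\bfB$ with $\La_\la$, then uses the map $\La_{\la+\mu}\into\La_\la\otimes\La_\mu$. You do not actually need that map to be split. It suffices that it sends $\eta_{\la+\mu}$ to $\eta_\la\otimes\eta_\mu$ and that the $(\la+\mu)$-weight space is one-dimensional.
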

\begin{proof}
    We may and do assume that $R=\A$.\par  
    (i) For any $a\in\dot{\bf{B}}$, we let $\lambda(a)$ denote the unique element $\lambda\in X^+$ such that $a\in\dot{\bf{B}}[\lambda]$. We need to show that for any $a,b,c\in\dot{\bf{B}}$ such that $\hat{m}_c^{ab}\ne0$, we have $\lambda(c)\le\lambda(a)+\lambda(b)$. Suppose on the contrary that there exists such a triple $a,b,c$ with $\hat{m}_c^{ab}\ne0$ but $\lambda(c)\nleq\lambda(a)+\lambda(b)$. We may assume that among all such triples with $c$ fixed, the pair $(\lambda(a),\lambda(b))$ is minimal. In other words, we assume that if $x,y\in\dot{\bf{B}}$ satisfy the following conditions:
    \begin{itemize}
        \item $\hat{m}_c^{xy}\ne0$ and $\lambda(c)\nleq \lambda(x)+\lambda(y)$,
        \item $\lambda(x)\le\lambda(a)$ and $\lambda(y)\le\lambda(b)$.
    \end{itemize}
    Then we have $\lambda(x)=\lambda(a)$ and $\lambda(y)=\lambda(b)$. \par 
    Since $\lambda(c)\nleq\lambda(a)+\lambda(b)$, by \cite[Lemma 4.21]{Lusztig-Infinity} we get that $\Delta(c)=\sum_{x,y\in\dot{\bf{B}}}\hat{m}_c^{xy}x\otimes  y$ acts by $0$ on $\Lambda_{\lambda(a)}\otimes \Lambda_{\lambda(b)}$. If a summand $x\otimes  y$ with $\hat{m}_c^{xy}\ne0$ does not act by $0$ on $\Lambda_{\lambda(a)}\otimes \Lambda_{\lambda(b)}$, then by \cite[\S29.1.3(d)]{Lusztig-IntroQuantumGroup} we have $\lambda(x)\le\lambda(a)$ and $\lambda(y)\le\lambda(b)$ and therefore by our assumption above, $\lambda(x)=\lambda(a)$ and $\lambda(y)=\lambda(b)$. This means that the sum
    \[\sum_{x\in\dot{\bf{B}}[\lambda(a)],y\in\dot{\bf{B}[\lambda(b)]}}\hat{m}_c^{xy}x\otimes  y\]
    acts by $0$ on $\Lambda_{\lambda(a)}\otimes \Lambda_{\lambda(b)}$.\par 
    From \cite[\S29.1.6]{Lusztig-IntroQuantumGroup} we see that under the natural map $\dot{\bf{U}}[\ge\lambda(a)]\to\mathrm{End}(\Lambda_{\lambda(a)})$, the set $\dot{\bf{B}}[\lambda(a)]$ maps to a basis (and similarly for $\lambda(b)$). Thus, the image of the nonzero summands above in $\mathrm{End}(\Lambda_{\lambda(a)}\otimes \Lambda_{\lambda(b)})$ are linearly independent and hence all vanish. Then $a\otimes  b$ would also act by $0$ on $\Lambda_{\lambda(a)}\otimes \Lambda_{\lambda(b)}$ since $\hat{m}_c^{ab}\ne0$ by assumption. This contradicts with the fact that $a$ (resp. $b$) does not act by $0$ on $\Lambda_{\lambda(a)}$ (resp. $\Lambda_{\lambda(b)}$).\par 
    (ii) For any $c\in\dot{\bfB}$ with $\hat{m}_c^{1_\la,1_\mu}\ne0$, we must have $c\in 1_{\la+\mu}\dot\bfU 1_{\la+\mu}\cap\dot{\bfB}[\le\la+\mu]$ by (i) and the definition of co-multiplication for $\dot{\bfU}$. From Proposition \ref{prop:parametrize-B[la]} we deduce that the only possibility is $c=1_{\la+\mu}$ with $\hat{m}_{1_{\la+\mu}}^{1_\la,1_\mu}=1$. Therefore $1_\la^*1_\mu^*=1_{\la+\mu}^*$.
\end{proof}
As a consequence we can prove the following result, which will be used in the study of very flat reductive monoids in \S\ref{sec:very-flat}.
\begin{lem}\label{lem:B-dot-X-ab}
    Let $\chi\in X_{ab}$. In the notation of Definition \ref{def:B-dot} we have
    \[1_\chi^*\cdot(b_1\diamondsuit_{\zeta-\chi}b_2)^*=(b_1\diamondsuit_{\zeta}b_2)^*,\quad\forall b_1,b_2\in\bfB,\forall\zeta\in X.\]
    In particular $1_0^*$ is the multiplicative unit. Moreover, for any $\la\in X^+$ 
    the map $b^*\mapsto 1_\chi^*b^*$ defines a canonical bijection:
    \[\dot{\bfB}[\la]^*\xrightarrow{\sim}\dot{\bfB}[\la+\chi]^*.\]
\end{lem}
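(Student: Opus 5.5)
The identity says that multiplication by $1_\chi^*$ in ${}_\A\bfO$ is the transpose of a shift automorphism of ${}_\A\dot{\bfU}$, so I would prove it by writing down that shift and checking it permutes $\dot{\bfB}$ in the expected way. Throughout, $\chi\in X_{\ab}$ means $\langle\alpha_i^\vee,\chi\rangle=0$ for all $i\in\Delta$.

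\emph{Step 1: the shift automorphism.} Define $t_\chi\colon{}_\A\dot{\bfU}\to{}_\A\dot{\bfU}$ on the generators by $x^{\pm}1_\zeta x'^{\mp}\mapsto x^{\pm}1_{\zeta+\chi}x'^{\mp}$. Every defining relation of ${}_\A\dot{\bfU}$ depends on $\zeta$ only through shifts of $\zeta$ by root-lattice elements and through the pairings $\langle\alpha_i^\vee,\zeta\rangle$. Both are unchanged by $\zeta\mapsto\zeta+\chi$, so $t_\chi$ is a well-defined algebra automorphism with inverse $t_{-\chi}$. It commutes with the bar involution. For the pairing of Proposition~\ref{prop:bilinear pairing characterizing dotB}, note that $t_\chi$ commutes with $\sigma$ and $\omega$ up to replacing $\chi$ by $-\chi$. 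Moreover the twist factor $v^{\nu\circ(\la+\la')/2}$ in $\rho$ is unchanged, because $\nu\circ\chi=\sum_i\nu_i\langle\alpha_i^\vee,\chi\rangle d_i=0$. Hence $(t_\chi x,t_\chi y)=(x,y)$, by the uniqueness in that proposition. Consequently $t_\chi$ maps $\pm\dot{\bfB}$ to itself. The triangularity description of Definition~\ref{def:B-dot} fixes the signs: the leading term $b_1^+b_2^-1_\zeta$ goes to $b_1^+b_2^-1_{\zeta+\chi}$, and the lower terms $c_1^+c_2^-1_\zeta$ go to lower terms. This gives
\[t_\chi(b_1\diamondsuit_\zeta b_2)=b_1\diamondsuit_{\zeta+\chi}b_2 .\]

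\emph{Step 2: $1_\chi^*$ is the character $\chi$.} Since $\langle\alpha_i^\vee,\chi\rangle=0$ for all $i$, we have $\bfB(\chi)=\{1\}$, so $\La_\chi$ is free of rank one, spanned by $\eta_\chi$ of weight $\chi$. By Proposition~\ref{prop:parametrize-B[la]} and Corollary~\ref{cor:B[la]-W-la}, $\dot{\bfB}[\chi]=\{1_\chi\}$. Every other $b\in\dot{\bfB}$ acts by $0$ on $\La_\chi$, by Lemma~\ref{lem:characterization subset Blambda} and \eqref{eq:Blambda inside leq lambda}. Hence $u\cdot\eta_\chi=1_\chi^*(u)\eta_\chi$ for all $u\in{}_\A\dot{\bfU}$, so $1_\chi^*$ is an algebra homomorphism ${}_\A\dot{\bfU}\to\A$.

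\emph{Step 3: multiplication by $1_\chi^*$.} For $f\in{}_\A\bfO$ we have $(1_\chi^*f)(u)=(1_\chi^*\otimes f)(\Delta u)$. The map $\Phi\colon u\mapsto(1_\chi^*\otimes\mathrm{id})\Delta(u)$ is an algebra homomorphism, because it is the composite of $\Delta$ with the homomorphism $1_\chi^*\otimes\mathrm{id}$. I would check $\Phi=t_{-\chi}$ on generators using \eqref{eq:coproduct-formula} and $\Delta(1_\la)=\sum 1_{\la'}\otimes1_{\la-\la'}$.
\begin{itemize}
\item $1_\chi^*$ kills $x_j^+1_\la$ whenever $|x_j|\neq0$, since such elements lie in an off-diagonal block.
\item For $x_j=1$, only $\la=\chi$ survives, and the factor $v^{|x|\circ\chi}$ equals $1$. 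So $\Phi(x^+1_\zeta)=x^+1_{\zeta-\chi}$, and $\Phi$ acts in the same way on $x^-1_\zeta$ and on $1_\zeta$.
\end{itemize}
I expect this bookkeeping of weight indices and $v$-powers to be the only delicate point. Combining with Step 1, $1_\chi^*f=f\circ t_{-\chi}$, and therefore
\[1_\chi^*\cdot(b_1\diamondsuit_{\zeta-\chi}b_2)^*=(b_1\diamondsuit_{\zeta}b_2)^* .\]
Taking $\chi=0$, the map $t_0$ is the identity, so $1_0^*$ is the multiplicative unit.

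\emph{Step 4: the bijection on cells.} By Step 3, $b^*\mapsto1_\chi^*b^*$ is the bijection of $\dot{\bfB}^*$ induced by $t_\chi$, with inverse induced by $t_{-\chi}$, so it remains to see that $t_\chi$ maps $\dot{\bfB}[\la]$ into $\dot{\bfB}[\la+\chi]$. There is an isomorphism $\La_\mu\otimes\La_\chi\cong\La_{\mu+\chi}$, obtained either from the rank-one factor $\La_\chi$ or from the identification of the highest weight modules. Under it, $b$ acts on $\La_\mu\otimes\La_\chi$ as $t_{-\chi}(b)$ acts on $\La_\mu$; this is Step 3 read module-theoretically. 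Hence $b\cdot\La_{\mu+\chi}\neq0$ if and only if $t_{-\chi}(b)\cdot\La_\mu\neq0$. Since $\mu\mapsto\mu+\chi$ is an order-preserving bijection of $X^+$, the characterization of Lemma~\ref{lem:characterization subset Blambda} transports, and gives $t_\chi(\dot{\bfB}[\la])=\dot{\bfB}[\la+\chi]$.
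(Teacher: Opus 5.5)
Your proof is correct, and its core computation is the same as the paper's: isolate the part of the coproduct whose first tensor factor is $1_\chi$, and observe that the $v$-power is trivial because $\langle\alpha_i^\vee,\chi\rangle=0$.

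The organization differs in two places.

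\emph{Coproduct step.} The paper computes $\Delta$ directly on the monomials $b_1^-b_2^+1_\zeta$ and then simply asserts $(1_\chi^*\otimes\mathrm{id})\Delta(b_1\diamondsuit_\zeta b_2)=b_1\diamondsuit_{\zeta-\chi}b_2$. Passing from monomials to canonical basis elements tacitly requires that the triangular expansion coefficients of $b_1\diamondsuit_\zeta b_2$ are unchanged under $\zeta\mapsto\zeta-\chi$, which is exactly your Step~1. So your shift automorphism $t_\chi$, shown to preserve $\dot{\bfB}$ via the bilinear-form characterization, supplies a justification the paper leaves implicit. Your Step~2 ($1_\chi^*$ is a character, via the rank-one module $\La_\chi$) then lets you check $\Phi=t_{-\chi}$ on generators rather than on monomials.

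\emph{Cell bijection.} The paper uses a two-line sandwich: $1_\chi^*b^*\in\dot{\bfB}[\le\la+\chi]^*$ by Lemma~\ref{lem:filtration-O-multiplicative}, and $1_{-\chi}^*1_\chi^*=1_0^*$ forces equality. You instead transport the characterization of Lemma~\ref{lem:characterization subset Blambda} through the identification of $\La_{\mu+\chi}$ with $\La_\mu$ twisted by $t_{-\chi}$.

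The paper's route is shorter given the filtration lemma. Yours is more self-contained, and it yields the conceptual statement that multiplication by $1_\chi^*$ is the transpose of an algebra automorphism of ${}_\A\dot{\bfU}$.

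A few small points to tidy up:
\begin{itemize}
\item In Step~4, the action of $b$ on $\La_\mu\otimes\La_\chi$ is governed by $(\mathrm{id}\otimes1_\chi^*)\Delta$, not by the $(1_\chi^*\otimes\mathrm{id})\Delta$ of Step~3. The mirror computation also gives $t_{-\chi}$. Alternatively, use $\La_\chi\otimes\La_\mu$, or simply note $T_{\mu+\chi}=T_\mu$, so that $\La_{\mu+\chi}=\bff/T_\mu$ with the $t_{-\chi}$-twisted action.
\item The sign-fixing in Step~1 should be spelled out. If $t_\chi(b_1\diamondsuit_\zeta b_2)=\epsilon\,(d_1\diamondsuit_{\zeta+\chi}d_2)$, comparing leading terms gives $(b_1,b_2)\le(d_1,d_2)\le(b_1,b_2)$, hence equality and $\epsilon=1$.
\item In Step~2 you also need that no dominant $\mu<\chi$ exists, so that $\dot{\bfB}[\le\chi]=\{1_\chi\}$. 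This holds because $\La_\chi$ has the single weight $\chi$.
\end{itemize}
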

\begin{proof}
    Take any $b_1,b_2\in\bfB$ and $\zeta\in X$ such that $b_1^+b_2^-1_\zeta\in 1_\eta\dot{\bfU}1_\zeta$. We write
    \[\bar{r}(b_1)=\sum_{j=1}^{m_1}{}_kx\otimes{}_kx',\quad r(b_2)=\sum_{j=1}^{m_2}y_j\otimes y_j'\]
    where $m_1,m_2\in\ZZ_{\ge1}$, ${}_kx,{}_kx',y_j,y_j'$ are all in $\bfB$, and we require that ${}_kx=1$ (resp. $y_j=1$) if and only if $k=1$ (resp. $j=1$). Then we have ${}_1x'=b_1$, $y_1'=b_2$ and
    \[\Delta(b_1^-)=\sum_{k=1}^{m_1}\sum_{\la_1\in X}v^{-|{}_kx|\circ\la_1}({}_kx)^-\otimes 1_{\la_1}({}_kx')^-,\]
    \[\Delta(b_2^+1_\zeta)=\sum_{j=1}^{m_2}\sum_{\la_2\in X}v^{|y_j'|\circ\la_2}y_j^+1_{\la_2}\otimes(y_j')^+1_{\zeta-\la_2}.\]
    Combined together we get
    \[\Delta(b_1^-b_2^+1_\zeta)=\sum_{k=1}^{m_1}\sum_{j=1}^{m_2}\sum_{\la_1,\la_2\in X}v^{|y_j'|\circ\la_2-|{}_kx|\circ\la_1}({}_kx)^-y_j^+1_{\la_2}\otimes 1_{\la_1}({}_kx')^-(y_j')^+1_{\zeta-\la_2}.\]
    We are interested in the component where ${}_kx=y_j=1$ and $\la_2=\chi\in X_{ab}$. The first identity only holds for the index $k=j=1$. From Definition \ref{def:bilinear-form} we see that $|y_j'|\circ\chi=0$ for all $j$. So the summands above in which the first factor is $1_\chi$ consist of the single term $1_\chi\otimes b_1^-b_2^+1_{\zeta-\chi}$ (and the coefficient is $1$). \par 
    Now for the element $b_1\diamondsuit_\zeta b_2\in\dot{\bfB}$ we write 
    \[\Delta(b_1\diamondsuit_\zeta b_2)=\sum_{b\in\dot{\bfB}}b\otimes\varphi(b),\quad\varphi(b)\in\hat{\bfU}.\]
    By the discussion above we deduce that 
    \[\varphi(1_\chi)=b_1\diamondsuit_{\zeta-\chi}b_2,\quad \chi\in X_{ab}. \]
    Therefore we conclude that
    \[1_\chi^*\cdot(b_1\diamondsuit_{\zeta-\chi}b_2)^*=(b_1\diamondsuit_{\zeta}b_2)^*,\quad\forall b_1,b_2\in\bfB,\forall\zeta\in X.\]
    In particular we have $1_\chi^*\cdot b^*\in\dot{\bfB}^*$ and $1_0^*\cdot b^*=b^*$ for all $b\in\dot{\bfB}$, so $1_0^*$ is the identity in $\bfO$. If $b\in\dot{\bfB}[\la]$, then we have $1_\chi^*\cdot b^*\in\dot{\bfB}[\le\la+\chi]^*$ by Lemma \ref{lem:filtration-O-multiplicative}. Moreover since $1_\chi^*1_{-\chi}^*=1_0^*$ is the identity by Lemma \ref{lem:filtration-O-multiplicative}, we must have $1_\chi^*\cdot b^*\in\dot{\bfB}[\la+\chi]^*$.
\end{proof}

\subsection{Some basic functorial properties}\label{subsec:functorial-properties}
Suppose we have a decomposition of the simple roots $\Delta=\Delta_1\sqcup\Delta_2$ such that $\langle\alpha_i^\vee,\alpha_j\rangle=0$ for any $i\in\Delta_1, j\in\Delta_2$. For $k=1,2$, let $\bff^{(k)}$ (resp. ${}_\A\bff^{(k)}$) be the $\QQ(v)$-algebra (resp. $\A$-algebra) defined in the same way as $\bff$ (resp. ${}_\A\bff$) with $\Delta$ replaced by $\Delta_k$. Let $\bfB^{(k)}$ be the canonical bases of $\bff^{(k)}$ and let $(\cdot,\cdot)_k\colon\bff^{(k)}\times\bff^{(k)}\to\QQ(v)$ be the bilinear form that characterizes $\pm\bfB^{(k)}$ as \eqref{eq:pairing characterizing B} above. 
\begin{prop}\label{prop:decomposition f=f1f2}
    We have a canonical isomorphism of $\QQ(v)$-algebras $\bff\cong\bff^{(1)}\otimes_{\QQ(v)}\bff^{(2)}$ and $\A$-algebras ${}_\A\bff\cong{}_\A \bff^{(1)}\otimes_\A{}_\A\bff^{(2)}$, under which $\bfB$ is mapped bijectively onto $\bfB^{(1)}\otimes\bfB^{(2)}$ and the bilinear form factorizes as $(\cdot,\cdot)=(\cdot,\cdot)_1\otimes(\cdot,\cdot)_2$. In other words, we have 
    \begin{itemize}
        \item any element $b\in\bfB$ can be written uniquely as $b=b_1\otimes b_2$ where $b_1\in\bfB^{(1)}$ and $b_2\in\bfB^{(2)}$,
        \item for any elements $x,x'\in\bff^{(1)}$ and $y,y'\in\bff^{(2)}$, we have $(xy,x'y')=(x,x')_1(y,y')_2$.
    \end{itemize}
\end{prop}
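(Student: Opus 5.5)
The plan is to build the algebra isomorphism from the presentation, then check it respects the integral forms, the bilinear form, and finally the canonical basis via the characterization \eqref{eq:pairing characterizing B}.

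\emph{Algebra isomorphism.} Define $\Phi\colon\bff^{(1)}\otimes\bff^{(2)}\to\bff$ by $x\otimes y\mapsto xy$. In the opposite direction, send $\theta_i\mapsto\theta_i\otimes1$ for $i\in\Delta_1$ and $\theta_j\mapsto1\otimes\theta_j$ for $j\in\Delta_2$. For this to respect the quantum Serre relations \eqref{eq:quantum Serre relations} we need three things:
\begin{itemize}
\item relations among indices inside one $\Delta_k$ hold trivially;
\item for $i\in\Delta_1$, $j\in\Delta_2$ we have $a_{ij}=0$, so the Serre relation reads $\theta_i\theta_j-\theta_j\theta_i=0$;
\item this commutativity holds in the tensor product algebra.
\end{itemize}
Hence $\theta_i$ and $\theta_j$ commute in $\bff$, so $\Phi$ is an algebra homomorphism. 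The two maps are mutually inverse on generators.

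For the $\A$-forms, the divided powers $\theta_i^{(n)}$ correspond on both sides, so $\Phi$ restricts to ${}_\A\bff^{(1)}\otimes_\A{}_\A\bff^{(2)}\cong{}_\A\bff$. Both sides are generated by the same elements and are free over $\A$, the left side with basis $\bfB^{(1)}\otimes\bfB^{(2)}$. Injectivity follows from the $\QQ(v)$ statement.

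\emph{Bilinear form.} We use that $(\cdot,\cdot)$ on $\bff$ is characterized (Lusztig 1.2.3, 1.2.5) by three properties:
\begin{itemize}
\item $(1,1)=1$ and $(\theta_i,\theta_j)=\delta_{ij}(1-v_i^{-2})^{-1}$;
\item $(x,y'y'')=(r(x),y'\otimes y'')$;
\item $(xx',y)=(x\otimes x',r(y))$, where the form on $\bff\otimes\bff$ is the product form.
\end{itemize}
Key step: $r$ is compatible with the decomposition. Since $i\cdot j=0$ for $i\in\Delta_1$, $j\in\Delta_2$, the twisted multiplication on $\bff\otimes\bff$ has trivial twist between the two parts. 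Hence $r(xy)=r(x)r(y)$ lands in the tensor product of the factorizations, with $r$ restricting to $r_k$ on $\bff^{(k)}$. Then $(x,x')_1(y,y')_2$ satisfies the defining properties, so it coincides with $(xy,x'y')$ by uniqueness. Alternatively, compute $(xy,x'y')=(x\otimes y,r(x'y'))$ by weight considerations: only the component of $r(x'y')$ in $\bff^{(1)}\otimes\bff^{(2)}$ of the right bidegree contributes, and that component is $x'\otimes y'$ with trivial $v$-power.

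\emph{Canonical basis.} The bar involution fixes $\theta_i$, so it is compatible with $\Phi$. For $b_1\in\bfB^{(1)}$ and $b_2\in\bfB^{(2)}$, the element $b_1b_2$ has three properties:
\begin{itemize}
\item it is bar-invariant;
\item it lies in ${}_\A\bff$;
\item $(b_1b_2,b_1b_2)=(b_1,b_1)_1(b_2,b_2)_2\in1+v^{-1}\ZZ[[v^{-1}]]$.
\end{itemize}
By \eqref{eq:pairing characterizing B}, $b_1b_2\in\pm\bfB$. The set $\{b_1b_2\}$ is an $\A$-basis of ${}_\A\bff$, and the $\pm\bfB$ are almost-orthonormal, so it coincides with $\bfB$ up to sign.

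The \emph{main obstacle} is fixing the sign. Here we use Lusztig's positivity-free normalization: $b\in\bfB_\nu$ satisfies $b\equiv\theta_{i_1}^{(a_1)}\cdots$ modulo lower terms in the crystal lattice. Equivalently, the image of $b$ in $\mathcal L(\infty)/v^{-1}\mathcal L(\infty)$ lies in the crystal $B(\infty)$ rather than $-B(\infty)$, and $B(\infty)$ is obtained from $1$ by Kashiwara operators $\tilde f_i$. These operators for $i\in\Delta_1$ act on the first factor and commute with those for $\Delta_2$, since $\theta_i$ commutes with $\bff^{(2)}$. Hence the crystal basis of $\bff$ is the product of the crystal bases, with sign $+$, and $b_1b_2\in\bfB$.

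Uniqueness of the factorization $b=b_1\otimes b_2$ is immediate from the tensor-basis property.
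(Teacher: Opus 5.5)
Your proof is correct. Up to the conclusion $b_1b_2\in\pm\bfB$ it runs parallel to the paper's: the Serre relations with $a_{ij}=0$ give the tensor decomposition, the form factorizes, and then \eqref{eq:pairing characterizing B} applies. For the form, you check that $(x,x')_1(y,y')_2$ satisfies the defining properties of \cite[1.2.3]{Lusztig-IntroQuantumGroup}, using $r(xy)=r(x)r(y)$ with trivial twist because $i\cdot j=0$. The paper instead inducts on $\mathrm{tr}|x|$ using ${}_ir$ and $(\theta_iy,x)=(\theta_i,\theta_i)(y,{}_ir(x))$. Both work.

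The real divergence is how the sign is fixed. The paper stays inside Lusztig's book. It uses the intrinsic normalization $\bfB_\nu=\bigcup\pi_{i,n}(\bfB_{\nu-ni}\cap\pm\bfB_{i;0})$ of \cite[14.4.2]{Lusztig-IntroQuantumGroup}, together with $\theta_i^{(n)}\bff\cap\bff^{(k)}=\theta_i^{(n)}\bff^{(k)}$, so that $\pi^{(k)}_{i,n}$ is the restriction of $\pi_{i,n}$. It then shows by induction on $|b_1|$ that $b_1b_2=\pi_{i,n}(b_1'b_2)\in\bfB$, applying this first with $b_2=1$ to get $\bfB^{(k)}\subset\bfB$.

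You instead go through Kashiwara's crystal $(\mathcal{L}(\infty),B(\infty))$ and the factorwise action of the Kashiwara operators. This is conceptually clean and gives $B(\infty)=B^{(1)}(\infty)\cdot B^{(2)}(\infty)$ as a bonus. However, it depends on the nontrivial comparison theorem that Lusztig's $\bfB$ reduces \emph{with the correct sign} onto $B(\infty)$ modulo $v^{-1}\mathcal{L}(\infty)$. You should cite that theorem rather than present it as an equivalent reformulation of Lusztig's normalization.

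Drop the sentence ``$b\equiv\theta_{i_1}^{(a_1)}\cdots$ modulo lower terms in the crystal lattice''. Read literally it is false in general: a bar-invariant divided-power monomial congruent to $b$ modulo $v^{-1}\mathcal{L}(\infty)$ would have to equal $b$, and not every canonical basis element is such a monomial.

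Finally, the factorwise action $\tilde f_i(xy)=(\tilde f_ix)y$ for $i\in\Delta_1$ (and $\tilde f_j(xy)=x\,\tilde f_jy$ for $j\in\Delta_2$) needs more than the commutation of $\theta_i$ with the other factor. It also needs the twisted derivation ${}_ir$ to kill the other factor, so that a string decomposition $x=\sum_n\theta_i^{(n)}x_n$ with ${}_ir(x_n)=0$ yields one for $xy$.
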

\begin{proof}
    For $k=1,2$, the natural embeddings $\bff^{(k)}\into\bff$ are clearly bi-algebra homomorphisms. Hence the restrictions of the pairing $(\cdot,\cdot)$ to $\bff^{(k)}$ coincides with $(\cdot,\cdot)_k$ by their defining properties in \cite[Proposition 1.2.3]{Lusztig-IntroQuantumGroup}. For any $i\in\Delta_1,j\in\Delta_2$ and $p,p'\in\NN$, the quantum Serre relation \eqref{eq:quantum Serre relations} becomes $\theta_i^{(p)}\theta_j^{(p')}=\theta_j^{(p')}\theta_i^{(p)}$. Thus the canonical embeddings $\bff^{(k)}\into\bff$ (for $k=1,2$) together with the multiplication on $\bff$ induce a $\QQ(v)$-algebra isomorphism $\bff^{(1)}\otimes_{\QQ(v)}\bff^{(2)}\cong\bff$. \par 
    For each $i\in\Delta$, recall from \cite[1.2.13]{Lusztig-IntroQuantumGroup} the $\QQ(v)$-linear map ${}_ir\colon\bff\to\bff$ satisfying the following properties:
    \begin{enumerate}
        \item[(i)] ${}_ir(1)=0$,
        \item[(ii)] ${}_ir(\theta_j)=\delta_{ij}$ for all $j\in\Delta$,
        \item[(iii)] ${}_ir(xy)={}_ir(x)\cdot y+v^{|x|\cdot i}x\cdot{}_ir(y)$ for all homogeneous $x,y\in\bff$,
        \item[(iv)] $(\theta_i y,x)=(\theta_i,\theta_i)(y,{}_ir(x))$ for all $x,y\in\bff$.
    \end{enumerate}
    Properties (i),(ii),(iii) uniquely characterize the map ${}_ir$. So if $i\in\Delta_k$ ($k=1,2$), the restriction of ${}_ir$ to $\bff^{(k)}$ is the similar map defined for $\bff^{(k)}$. Moreover, (iv) is a consequence of (i),(ii),(iii) and if $i\in\Delta_k$ ($k=1,2$), similar identity holds for all $x,y\in\bff^{(k)}$ with $(\cdot,\cdot)$ replaced by $(\cdot,\cdot)_k$. \par
    Next we show the identity
    \begin{equation}\label{eq:pairing-factorize}
        (xy,x'y')=(x,x')_1(y,y')_2,\quad\forall x,x'\in\bff^{(1)}, y,y'\in\bff^{(2)}.
    \end{equation}
    Clearly it suffices to show this under the assumption that $x,x',y,y'$ are all homogeneous. We use induction on $\mathrm{tr}(|x|)\in\NN$. Since homogeneous elements of different degrees are orthogonal under the pairings, we have $(y,x'y')=0$ if $x'\ne1$ and \eqref{eq:pairing-factorize} holds if $x=1$. Suppose \eqref{eq:pairing-factorize} has been proved for some homogeneous $x\in\bff^{(1)}$ and arbitrary $y,x',y'$. Take any $i\in\Delta_1$. By (ii) and (iii) we get that ${}_ir(x'y')={}_ir(x')y'$ for all $x'\in\bff^{(1)},y'\in\bff^{(2)}$. Then by (iv) and the induction hypothesis we get
    \begin{align*}
        (\theta_i xy,x'y')&=(\theta_i,\theta_i)(xy,{}_ir(x')y')=(\theta_i,\theta_i)_1(x,{}_ir(x'))_1(y,y')_2\\
        &=(\theta_ix,x')_1(y,y')_2.
    \end{align*}
    This finishes the proof of \eqref{eq:pairing-factorize}. By \cite[Theorem 14.2.3]{Lusztig-IntroQuantumGroup}, we deduce that $b_1b_2\in\pm\bfB$ for any $b_1\in\bfB^{(1)}$ and $b_2\in\bfB^{(2)}$. It remains to remove the sign ambiguities.
    \begin{claim}
        For any $b_2\in\bfB\cap\bfB_2$ and $b_1\in\bfB_1$, we have $b_1b_2\in\bfB$.
    \end{claim}
    \begin{proof}[Proof of Claim]
        Recall from \cite[Theorem 14.3.2]{Lusztig-IntroQuantumGroup} that for any $i\in\Delta$ and $n\in\NN$, there is a unique bijection $\pi_{i,n}\colon\pm\bfB_{i;0}\to\pm\bfB_{i;n}$ such that for any $b\in\pm\bfB_{i;0}$ we have $\pi_{i,n}(b)-\theta_i^{(n)}b\in\theta_i^{n+1}\bff$. If $i\in\Delta_k$ ($k=1,2$), then we have similar maps $\pi_{i,n}^{(k)}\colon\pm\bfB_{i;0}^{(k)}\to\pm\bfB_{i;n}^{(k)}$ defined for $\bff^{(k)}$. Since $\theta_i^{(n)}\bff\cap\bff^{(k)}=\theta_i^{(n)}\bff^{(k)}$, we see that the map $\pi_{i,n}^{(k)}$ coincides with the restriction of $\pi_{i,n}$. 
        Also recall from \cite[14.4.2]{Lusztig-IntroQuantumGroup} that for any $\nu=\sum_{i\in I}\nu_i i\in\NN^\Delta$ we have $\bfB_{\nu}=\bigcup_{i\in\Delta,0<n\le\nu_i}\pi_{i,n}(\bfB_{\nu-ni}\cap(\pm\bfB_{i;0}))$. Similar description holds for $\bfB^{(k)}$ ($k=1,2$).\par 
        We prove the claim by induction on $|b_1|\in\NN^{\Delta_1}$. If $b_1=1$ the claim is obvious. Now assume $|b_1|\ne0$. There exists $i\in\Delta_1$, $n\in\ZZ_{>0}$ and $b_1'\in\bfB^{(1)}_{|b_1|-ni}\cap(\pm\bfB^{(1)}_{i;0})$ such that $\pi_{i,n}^{(1)}(b_1')=b_1$. So we have $b_1'\notin\theta_i\bff$ and $b_1-\theta_i^{(n)}b_1'\in\theta_i^{n+1}\bff^{(1)}$. By induction hypothesis we have $b_1'b_2\in\bfB$. Clearly we also have $b_1'b_2\notin\theta_i\bff$ and hence $b_1'b_2\in\bfB_{|b_1|+|b_2|-ni}\cap(\pm\bfB_{i;0})$. Moreover, 
        \[b_1b_2-\theta_i^{(n)}b_1'b_2=(b_1-\theta_i^{(n)}b_1')b_2\in\theta_i^{n+1}\bff\]
        and therefore $b_1b_2=\pi_{i,n}(b_1'b_2)\in\bfB_{|b_1|+|b_2|}$. 
    \end{proof}
    Apply the claim to $b_2=1\in\bfB\cap\bfB^{(2)}$ we get that $\bfB^{(1)}\subset\bfB$. Similarly one shows that $\bfB^{(2)}\subset\bfB$. Then we apply the claim to any $b_2\in\bfB^{(2)}\subset\bfB$ and get a canonical embedding $\bfB^{(1)}\times\bfB^{(2)}\into\bfB$. This must be a bijection since both sides form a $\QQ(v)$-basis for $\bff$. 
\end{proof}

Let $\ZZ^{\Delta_2,\perp}\coloneqq\{\la\in X\mid\langle\alpha_j^\vee,\la\rangle=0,\forall j\in\Delta_2\}$. Then by assumption we have $\ZZ^{\Delta_1}\subset\ZZ^{\Delta_2,\perp}$. 
Let $X_1\subset X$ be a sub-lattice satisfying $\ZZ^{\Delta_1}\subset X_1\subset\ZZ^{\Delta_2,\perp}$ and let $Y_1\coloneqq\mathrm{Hom}_\ZZ(X_1,\ZZ)$. Then $\Delta_1,X_1,Y_1$ determines a root datum $\Psi_1$, from which we define the modified quantized enveloping algebra $\dot{\bfU}_{\Psi_1}$ with its canonical bases $\dot{\bfB}_{\Psi_1}$. The plus/negative part of $\dot{\bfU}_{\Psi_1}$ is the subalgebra $\bff^{(1)}\subset\bff$ associated to $\Delta_1$ as before. From the defining relations we see that there is a natural embedding of (non-unital) $\QQ(v)$-algebras 
\begin{equation}\label{eq:morphism iota}
    \iota\colon\dot{\bfU}_{\Psi_1}\into\dot{\bfU}.
\end{equation} 
Note however that in general $\iota$ does not preserve the co-multiplications. On the other hand, there is a unique projection $p\colon\dot{\bfU}\to\dot{\bfU}_{\Psi_1}$ such that $p\circ\iota$ is the identity on $\dot{\bfU}_{\Psi_1}$ and $p$ sends any $\theta_j^{\pm}$ ($j\in\Delta_2$) and any $1_\zeta$ ($\zeta\notin X_1$) to $0$.
\begin{lem}\label{lem:map p}
    The map $p$ is a homomorphism of non-unital $\QQ(v)$-algebras that is compatible with the co-product $\Delta_{\Psi_1}$ on $\dot{\bfU}_{\Psi_1}$ and $\Delta$ on $\dot{\bfU}$. In other words, for any $x\in\dot{\bfU}$ we have $(p\otimes p)(\Delta(x))=\Delta_{\Psi_1}(p(x))$. 
\end{lem}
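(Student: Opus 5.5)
We have to show that $p$ is multiplicative and that $(p\otimes p)\circ\Delta=\Delta_{\Psi_1}\circ p$. Both maps will be checked on the generators of $\dot{\bfU}$, using its presentation as an algebra with idempotents. The first step is to make $p$ explicit. Using the triangular decomposition $\dot{\bfU}\cong\bff\otimes\QQ(v)[X]\otimes\bff$, $x\otimes\zeta\otimes x'\mapsto x^+1_\zeta x'^-$, together with Proposition \ref{prop:decomposition f=f1f2}, write $\bff=\bff^{(1)}\otimes\bff^{(2)}$ and let $\epsilon_2\colon\bff^{(2)}\to\QQ(v)$ be the augmentation, which kills all positive-degree elements. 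Define
\[p\bigl((x_1x_2)^+1_\zeta (x_1'x_2')^-\bigr)=\epsilon_2(x_2)\,\epsilon_2(x_2')\,\mathbf{1}_{\zeta\in X_1}\,x_1^+1_\zeta x_1'^-,\qquad x_1,x_1'\in\bff^{(1)},\ x_2,x_2'\in\bff^{(2)}.\]
This is the unique linear map with $p\circ\iota=\mathrm{id}$ killing the monomials in which some $\theta_j^\pm$ with $j\in\Delta_2$ appears, or some $1_\zeta$ with $\zeta\notin X_1$ appears. It is well defined because the triangular decomposition is a linear isomorphism.

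Second, check multiplicativity. Let $I\subset\dot{\bfU}$ be the span of the triangular monomials killed by $p$. Then $\dot{\bfU}=\iota(\dot{\bfU}_{\Psi_1})\oplus I$, and $\iota$ is an algebra map, so it suffices to show that $I$ is a two-sided ideal.
\begin{itemize}
\item The relations involving $i\in\Delta_1$ and $j\in\Delta_2$ are pure commutations. Indeed $a_{ij}=0$, and $\theta_i^+$ commutes with $\theta_j^-$. So reordering a product into triangular form never eliminates a $\Delta_2$-letter.
\item The $[m+n-\langle\alpha_i^\vee,\zeta\rangle]$ relation for $i\in\Delta_2$ only produces monomials in which $\theta_i$ still appears or the weight changes by multiples of $\alpha_i$. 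In the surviving term with $t=m=n$, the element $\theta_i^{(n)+}1_{-\zeta}\theta_i^{(n)-}$ is itself already in $I$.
\item The key point is that $\zeta\in X_1$ is preserved when multiplying by elements of $\bff^{(1)}$, since $\ZZ^{\Delta_1}\subset X_1$. It is changed by the $\Delta_2$-letters, but those letters already lie in $I$.
\end{itemize}
Granting these, $I$ is an ideal. The case I expect to be subtle is a product $a\cdot b$ with $a,b\in I$ whose $\Delta_2$-parts cancel in triangular form, for example $\theta_j^+1_\zeta\theta_j^-$ rewritten via the $\mathfrak{sl}_2$ relation, which produces a term $[\,\cdot\,]1_{\zeta'}$. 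One must check that then $\zeta'\notin X_1$ or the coefficient vanishes. Here I would use $\zeta\in\ZZ^{\Delta_2,\perp}$-type constraints: for $\zeta\in X_1$ we have $\langle\alpha_j^\vee,\zeta\rangle=0$, and I would verify by hand that the surviving coefficient then vanishes or the weight leaves $X_1$. If this fails, I would instead verify that $p$ is multiplicative directly on pairs of generators.

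Third, check compatibility with coproducts. Both sides of $(p\otimes p)\circ\Delta=\Delta_{\Psi_1}\circ p$ are algebra maps (the left side by the previous step), so it suffices to check it on the generators $x^\pm$ with $x\in\bff^{(1)}$ or $x\in\bff^{(2)}$, and on $1_\la$.
\begin{itemize}
\item For $1_\la$: applying $p\otimes p$ to $\sum_{\la'}1_{\la'}\otimes1_{\la-\la'}$ keeps the terms with $\la',\la-\la'\in X_1$. This is exactly $\Delta_{\Psi_1}(1_\la)$ if $\la\in X_1$, and $0$ otherwise.
\item For $x\in\bff^{(1)}$: the map $r$ restricted to $\bff^{(1)}$ is $r^{(1)}$, and the exponent $|x_j'|\circ\la$ only involves $\Delta_1$-coordinates, so it agrees on $X_1$. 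Formula \eqref{eq:coproduct-formula} then matches after truncating $\la$ to $X_1$.
\item For $x=\theta_j$ with $j\in\Delta_2$: every term of $\Delta(\theta_j^\pm)$ contains $\theta_j$ in one of the two factors, so both sides are $0$.
\end{itemize}
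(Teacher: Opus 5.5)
You have a genuine gap in the multiplicativity of $p$. It is half of the statement, and your coproduct step depends on it through the ``both sides are algebra maps'' reduction.

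The bullets you give for the ideal property of $I$ are heuristics, not a proof.
\begin{itemize}
\item The first bullet's conclusion is false. For $j\in\Delta_2$, straightening $\theta_j^{(n)-}1_\zeta\theta_j^{(n)+}$ into $(+,-)$ form produces, at $t=n$, a scalar multiple of $1_{\zeta-n\alpha_j}$ in which every $\Delta_2$-letter has disappeared.
\item The second bullet checks the wrong thing. $\theta_j^{(n)+}1_{-\zeta}\theta_j^{(n)-}$ is already one of your basis monomials. What has to be shown is that the $(+,-)$-expansion of a product lies in $I$, and you defer exactly that check (``I would verify by hand'').
\item Your fallback, checking multiplicativity on pairs of generators, does not imply that a linear map is multiplicative.
\item For an arbitrary product $a\cdot b$ a term-by-term check is not straightforward. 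Several $\Delta_2$-letters can cancel in succession, passing through idempotents outside $X_1$ with non-vanishing intermediate coefficients. In type $A_2$ with $\Delta_2=\Delta$ and $\mu\in X_1$, straightening $\theta_1^-\theta_2^-\theta_2^+\theta_1^+1_\mu$ passes through the coefficient $[\langle\alpha_2^\vee,\mu+\alpha_1\rangle]_2=[-1]_2\neq0$.
\end{itemize}

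The cleanest fix avoids straightening altogether. Define $p$ by your formula on the generators $x^{\pm}1_\zeta x'^{\mp}$ of the presentation of ${}_\A\dot{\bfU}$ recalled from \cite[31.1.3]{Lusztig-IntroQuantumGroup}, tensored with $\QQ(v)$, and check the defining relations.
\begin{itemize}
\item Relations involving only $\Delta_1$ are those of $\dot{\bfU}_{\Psi_1}$; use $\ZZ^{\Delta_1}\subset X_1$ and that $\bff\to\bff^{(1)}$ is an algebra map.
\item Mixed relations are killed on both sides.
\item The only relation needing thought is the one for $\theta_j^{(n)\pm}1_{\mp\zeta}\theta_j^{(m)\mp}$ with $j\in\Delta_2$. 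Every term on both sides is killed except, when $m=n\ge1$, the term $\begin{bmatrix}\pm\langle\alpha_j^\vee,\mu\rangle\\ n\end{bmatrix}_j1_\mu$. This term is killed by $p$ because either $\mu\notin X_1$ or $\langle\alpha_j^\vee,\mu\rangle=0$. This is the one place where $X_1\subset\ZZ^{\Delta_2,\perp}$ is used.
\end{itemize}
The resulting homomorphism is the identity on $\iota(\dot{\bfU}_{\Psi_1})$ and kills your complementary monomials, so it is your $p$.

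Alternatively, check $gI\subset I$ and $Ig\subset I$ only for single generators $g$, using \cite[3.1.6]{Lusztig-IntroQuantumGroup}. Then at most one $\Delta_2$-letter cancels, with coefficient $[\langle\alpha_j^\vee,\zeta\rangle]_j=0$.

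With multiplicativity in place, your generator check of the coproduct goes through, with two adjustments. Extend $p\otimes p$ componentwise to the completed target of $\Delta$; this is harmless because $p$ respects the weight decomposition. And test on $x^\pm1_\zeta$ rather than on $x^\pm$.

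The paper takes a different route for the coproduct. It verifies the identity directly on the basis $x^-1_\zeta\sigma(y)^+$, using the explicit formula of \cite[Lemma 3.10]{Lusztig-Infinity} together with the fact that $\bff\to\bff^{(1)}$ is a Hopf algebra map. That route does not need multiplicativity of $p$ first. The paper leaves multiplicativity implicit.
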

\begin{proof}
    The natural projection $\bff\to\bff^{(1)}$ is clearly a morphism of Hopf algebras. We note that elements of the form $x^-1_\zeta\sigma(y)^+$ for $x,y\in\bff$ and $\zeta\in X$ form a $\QQ(v)$-basis of $\dot{\bfU}$. Then the statement follows easily from \cite[Lemma 3.10]{Lusztig-Infinity}, which indicates that the co-multiplication on $\dot{\bfU}$ (resp. $\dot{\bfU}_{\Psi_1}$) is uniquely determined by that on $\bff$ (resp. $\bff^{(1)}$). 
\end{proof}

\begin{prop}\label{prop:iota}
    For each $\la\in X_1^+$, the embedding $\iota$ maps $\dot{\bfB}_{\Psi_1}[\la]$ bijectively onto $\dot{\bfB}[\la]$.
\end{prop}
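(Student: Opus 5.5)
Since $\la\in X_1^+$ and $X_1\subset\ZZ^{\Delta_2,\perp}$, we have $\langle\alpha_j^\vee,\la\rangle=0$ for all $j\in\Delta_2$. Hence in $\bff$ the set $\bfB(\la)$ consists of those $b=b_1b_2$ (via Proposition \ref{prop:decomposition f=f1f2}) with $b_2$ not divisible on the right by any $\theta_j$, $j\in\Delta_2$. That forces $b_2=1$. We thus expect $\bfB(\la)=\bfB^{(1)}(\la)$, the latter computed in $\bff^{(1)}$ for $\Psi_1$. Checking this needs that ${}^\sigma\bfB_{j;n}$ for $j\in\Delta_2$ is compatible with the factorization, i.e. $b_1b_2\in\bff\theta_j^n$ iff $b_2\in\bff^{(2)}\theta_j^n$, which follows from the tensor decomposition $\bff\cong\bff^{(1)}\otimes\bff^{(2)}$. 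So both cells are in bijection, via Proposition \ref{prop:parametrize-B[la]}, with the same set $\bfB^{(1)}(\la)\times\bfB^{(1)}(\la)$. It remains to show that $\iota$ intertwines the two parametrizations $\beta_\la^{\Psi_1}$ and $\beta_\la$.

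For this I would use the characterization of $\pm\dot{\bfB}$ in Proposition \ref{prop:bilinear pairing characterizing dotB}. First, $\iota$ commutes with the bar involution and preserves $\A$-forms, both clear from generators. Second, I would compare the pairing of $\iota(x),\iota(y)$ in $\dot{\bfU}$ with the pairing of $x,y$ in $\dot{\bfU}_{\Psi_1}$. The pairing is determined by properties (1)–(3): orthogonality, adjointness for $\rho$, and normalization on $x^-1_\la$. Now $\iota$ intertwines $\rho$, since $\nu\circ(\la+\la')$ only involves $d_i$ for $i\in\Delta_1$ and $\#$ is defined on generators. Also $(x^-1_\zeta,x'^-1_\zeta)=(x,x')_1$ for $x,x'\in\bff^{(1)}$ by Proposition \ref{prop:decomposition f=f1f2}. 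So the pullback of $(\cdot,\cdot)$ along $\iota$ satisfies the defining properties and equals $(\cdot,\cdot)_{\Psi_1}$ by uniqueness. It follows that $\iota(\dot{\bfB}_{\Psi_1})\subset\pm\dot{\bfB}$. This is the main obstacle. The subtle point is that $\iota$ is non-unital and the adjointness property (2) involves $u\in\dot{\bfU}$ not in the image, so uniqueness must be re-run inside $\dot{\bfU}_{\Psi_1}$ alone. I would redo Lusztig's uniqueness argument: the elements $x^-1_\zeta\sigma(y)^+$ span, and their pairings reduce via $\rho$ to pairings in $\bff^{(1)}$.

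With that in hand, I would take $b=\beta^{\Psi_1}_\la(b_1,b_2)$. Then $\iota(b)$ lies in $\pm\dot{\bfB}$ and equals $b_1^-1_\la\sigma(b_2)^+$ plus $\iota(\dot{\bfU}_{\Psi_1}[>\la])$. The leading-term property fixes the sign, so $\iota(b)\in\dot{\bfB}$, and $\iota(b)\equiv b_1^-1_\la\sigma(b_2)^+$ modulo $\iota(\dot{\bfU}_{\Psi_1}[>\la])$. To conclude $\iota(b)\in\dot{\bfB}[\la]$, I would argue by downward induction on $\la$ within $X_1^+$. Take $\mu\in X_1^+$ with $\mu>\la$; this is a well-founded induction, because $\iota(b)$ acts nonzero only on finitely many $\Lambda_\tau$. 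One caveat: cells $\dot{\bfB}_{\Psi_1}[\mu]$ with $\mu-\la\in\NN^{\Delta_1}$ have $\mu\in X_1$ automatically, and the induction gives $\iota(\dot{\bfB}_{\Psi_1}[\mu])\subset\dot{\bfB}[\mu]$. Hence $\iota(b)-b_1^-1_\la\sigma(b_2)^+\in\dot{\bfU}[>\la]$, and uniqueness in Proposition \ref{prop:parametrize-B[la]} gives $\iota(b)=\beta_\la(b_1,b_2)$.

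Alternatively, one can avoid the induction via Lemma \ref{lem:characterization subset Blambda}. For $\tau\in X^+$, the $\dot{\bfU}$-module $\Lambda_\tau$ restricted along $\iota$ contains $\Lambda^{\Psi_1}_{\tau'}$-type pieces, and one compares the annihilation conditions directly. Bijectivity then follows because both sides correspond to $\bfB^{(1)}(\la)^2$.
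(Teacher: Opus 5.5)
Your identification $\bfB(\la)=\bfB^{(1)}(\la)$ is correct. It gives $|\dot{\bfB}_{\Psi_1}[\la]|=|\dot{\bfB}[\la]|$; the paper gets the same count from $\La_\la$ being the same simple module for both root data. Your pairing argument is the paper's first step. The subtlety you flag there is not real: $\iota$ is an algebra homomorphism intertwining $\rho$, so the pulled-back pairing satisfies property (2) for every $u\in\dot{\bfU}_{\Psi_1}$, which is all that uniqueness for $\Psi_1$ needs.

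The genuine gap is the step showing that $\iota(b)$ lies in the cell $\dot{\bfB}[\la]$. Your downward induction on $\la$ is not well-founded. When $\Delta_1\neq\varnothing$, the set $\{\mu\in X_1^+\mid \mu>\la\}$ has no maximal elements (add the sum of the positive roots of $\Delta_1$), so there is nowhere to start. The justification you give is also false: $1_\la\in\dot{\bfB}_{\Psi_1}[\la]$ by Corollary \ref{cor:B[la]-W-la}, yet $\iota(1_\la)=1_\la$ acts nonzero on $\La_\tau$ for each of the infinitely many dominant $\tau\ge\la$. Restricting to the finitely many cells occurring in the expansion of a single $b$ does not rescue it. Each of those cells in turn needs the statement for still higher cells, and nothing bounds the resulting chain. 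The same circularity affects your sign-fixing if it uses the leading term of Proposition \ref{prop:parametrize-B[la]} for $\Psi$. Fix the sign instead with the triangularity of $b_1\diamondsuit_\zeta b_2$ in Definition \ref{def:B-dot}, which $\iota$ visibly preserves because $\bfB^{(1)}\subset\bfB$.

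What is missing is the inclusion $\iota(\dot{\bfU}_{\Psi_1}[\ge\mu])\subset\dot{\bfU}[\ge\mu]$ for $\mu\in X_1^+$. Equivalently, $\iota(x)\La_\tau=0$ whenever $x\in\dot{\bfU}_{\Psi_1}[\ge\mu]$ and $\tau\not\ge\mu$. With this in hand, the proof closes:
\begin{itemize}
\item The inclusion gives $\iota(b)\in\dot{\bfB}[\ge\la]$.
\item Since $\La_\la$ restricted along $\iota$ is $\La^{\Psi_1}_\la$, we have $\iota(b)\La_\la\neq0$, so $\iota(b)\in\dot{\bfB}[\le\la]$ by Lemma \ref{lem:characterization subset Blambda}.
\item Injectivity of $\iota$ and your count then give the bijection.
\end{itemize}
Your closing alternative points in this direction but leaves exactly this step unargued.

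The paper proves the inclusion by enlarging $X$ and $X_1$ so that $X=X_1\oplus X_2$. Then $\dot{\bfU}=\dot{\bfU}_{\Psi_1}\otimes\dot{\bfU}_{\Psi_2}$ and $\La_{\la'}=\La_{\la_1'}\otimes\La_{\la_2'}$. If $\iota(x)\La_{\la'}\neq0$, then $x\La_{\la_1'}\neq0$, so $\la_1'\ge\mu$. Also $\La_{\la_2'}$ has a nonzero zero weight space, so $\la_2'\ge0$. Hence $\la'\ge\mu$, and \cite[Lemma 29.1.3]{Lusztig-IntroQuantumGroup} applies.

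You can also avoid the enlargement. The sum of the $X_1$-weight spaces of $\La_\tau$ is a finite-dimensional integrable $\dot{\bfU}_{\Psi_1}$-module. Over $\QQ(v)$ it is therefore a direct sum of simple modules $\La^{\Psi_1}_{\tau'}$, each $\tau'$ a weight of $\La_\tau$, so $\tau'\le\tau$. Since $\iota(x)$ kills the remaining weight spaces, $\iota(x)\La_\tau\neq0$ forces $x\La^{\Psi_1}_{\tau'}\neq0$ for some such $\tau'$. Hence $\mu\le\tau'\le\tau$.
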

\begin{proof}
    It is clear that the embedding $\iota$ respects the orthogonal decomposition \eqref{eq:U-dot-decomposition} for $\dot{\bfU}_{\Psi_1}$ and $\dot{\bfU}$. Also one easily checks that the involutions $\sigma,\omega,\rho$ for $\dot{\bfU}$ restricts along $\iota$ to similar involutions defined for $\dot{\bfU}_{\Psi_1}$. Then one sees that the pairing $(\cdot,\cdot)$ on $\dot{\bfU}$ restricts along $\iota$ to the similarly defined pairing on $\dot{\bfU}_{\Psi_1}$. Thus we have $\iota(\dot{\bfB}_{\Psi_1})\subset\pm\dot{\bfB}$. Since the canonical base for $\bff^{(1)}$ is a subset of $\bfB$ by Proposition \ref{prop:decomposition f=f1f2}, we get that $\iota(\dot{\bfB}_{\Psi_1})\subset\dot{\bfB}$ by Proposition \ref{prop:parametrize-B[la]}.\par 
    Next we show that $\iota(\dot{\bfU}_{\Psi_1}[\le\la])\subset\dot{\bfU}[\le\la]$ for any $\la\in X_1^+$. Let $\{\omega_i\in\QQ^\Delta,i\in\Delta\}$ be the fundamental weights (i.e. $\langle\alpha_i^\vee,\omega_j\rangle=\delta_{ij}$ for all $i,j\in\Delta$). We can choose a $\ZZ$-lattice $X_0\subset\{\chi\in X_\QQ\mid\langle\alpha_i^\vee,\chi\rangle=0,\forall i\in\Delta\}$ such that $X^+\subset X_0\oplus\bigoplus_{i\in\Delta}\NN\omega_i$ and $X_1^+\subset X_0\oplus\bigoplus_{i\in\Delta_1}\NN\omega_i$. To show the inclusion, after enlarging $X$ and $X_1$ we may assume that $X_1=X_0\oplus\bigoplus_{i\in\Delta_1}\ZZ\omega_i$ and $X=X_1\oplus X_2$ where $X_2\coloneqq\bigoplus_{j\in\Delta_2}\ZZ\omega_j$. Let $\Psi_2$ be the root data determined by $(X_2, \Delta_2)$. Then we have $\dot{\bfU}=\dot{\bfU}_{\Psi_1}\otimes\dot{\bfU}_{\Psi_2}$. Any $\la'\in X^+$ is written uniquely as $\la'=\la_1'+\la_2'$ with $\la_1'\in X_1^+$, $\la_2'\in X_2^+$, and we have $\La_{\la'}=\La_{\la_1'}\otimes\La_{\la_2'}$ where (for $k=1,2$) $\La_{\la_k'}$ is the simple highest weight module for $\dot{\bfU}_{\Psi_k}$ with highest weight $\la_k'$. Then for any $x\in\dot{\bfU}_{\Psi_1}[\ge\la]$ such that $\iota(x)$ acts by a nonzero map on $\La_{\la'}$, $x$ acts by a nonzero map on $\La_{\la_1'}$ and the zero weight space of $\La_{\la_2'}$ is nonzero. This implies that $\la'\ge\la$ and hence $\iota(x)\in\dot{\bfU}[\ge\la]$ by \cite[Lemma 29.1.3]{Lusztig-IntroQuantumGroup}.\par 
    Finally we note that for any $\la\in X_1^+$, the simple module with highest weight $\la$ for $\dot{\bfU}$ and $\dot{\bfU}_{\Psi_1}$ coincide. So the sets $\dot{\bfB}_{\Psi_1}[\la]$ and $\dot{\bfB}[\la]$ have the same cardinality, which is the $\QQ(v)$-dimension of $\La_\la$. We have already proved that $\iota(\dot{\bfB}_{\Psi_1}[\la])\subset\dot{\bfB}[\le\la]$. Thus $\iota$ must induce a bijection between the sets $\dot{\bfB}_{\Psi_1}[\la]$ and $\dot{\bfB}[\la]$. 
\end{proof}

Now we consider two based root data with the same set of simple roots and Cartan matrix: 
\[\Psi=(\Delta,X,Y,...),\quad\Psi'=(\Delta,X',Y',...)\]
and let $G,G'$ be the corresponding Chevalley group schemes (over a common fixed base scheme $S$). Let $\dot{\bfU}'$ (resp. $\dot{\bfB}'$) be the modified quantized enveloping algebra associated to $\Psi'$ (resp. its canonical bases). 

\begin{lem}\label{lem:B-dot-functorial}
    Let $\varphi\colon G'\to G$ be a homomorphism of group schemes corresponding to a morphism of root data $f\colon X\to X'$ (which restricts to the identity on the root lattice $\ZZ^\Delta$). Let $\varphi^*\colon\O_S[G]\to\O_S[G']$ be the induced homomorphism on coordinate rings. Then we have:
    \begin{enumerate}
        \item For any $b_1,b_2\in\bfB$ and any $\zeta\in X$ we have 
        \[\varphi^*((b_1\diamondsuit_\zeta b_2)^*)=(b_1\diamondsuit_{f(\zeta)}b_2)^*.\]
        \item For any $\la\in X^+$ and $b_1,b_2\in\bfB(\la)$, in the notation of Proposition \ref{prop:parametrize-B[la]} we have
        \[\varphi^*(\beta_\la(b_1,b_2)^*)=\beta_{f(\la)}(b_1,b_2)^*.\]
        In particular, $\varphi^*$ induces a canonical bijection $\dot{\bfB}[\la]^*\xrightarrow{\sim}\dot{\bfB}'[f(\la)]^*$.
    \end{enumerate}
\end{lem}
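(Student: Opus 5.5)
The plan is to realize $\varphi^*$ as the transpose of an algebra map between modified quantum groups, and then check that this map permutes canonical bases in the stated way. The morphism $f\colon X\to X'$ restricts to the identity on $\ZZ^\Delta$, so there is a natural map of non-unital algebras $\tilde f\colon{}_\A\dot{\bfU}'\to{}_\A\dot{\bfU}$ in the opposite direction. It is given by $x^+1_{\zeta'}x'^-\mapsto\sum_{\zeta\in X,\,f(\zeta)=\zeta'}x^+1_\zeta x'^-$, with values in the completion $\hat{\bfU}$ when the fibre of $f$ is infinite.

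First I would check that $\tilde f$ respects the defining relations. The relations involving $\langle\alpha_i^\vee,\zeta\rangle$ are compatible because $\langle\alpha_i^\vee,\zeta\rangle=\langle\alpha_i^\vee,f(\zeta)\rangle$, which holds since $\varphi$ is a morphism of root data and the coroots correspond. Next I would check that $\tilde f$ is compatible with the coproducts \eqref{eq:coproduct-formula}. This uses that $\nu\circ\la$ only depends on the pairings $\langle\alpha_i^\vee,\la\rangle$, together with $\Delta(1_{\zeta'})=\sum 1_{\zeta_1'}\otimes1_{\zeta'-\zeta_1'}$ pulling back correctly. The resulting extension $\hat{\tilde f}\colon{}_\k\hat{\bfU}'\to{}_\k\hat{\bfU}$ then preserves the group-like elements described in \eqref{eq:be in G}. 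I would identify it with $\varphi$ on points, using Lusztig's construction in Theorem \ref{theo:Chevalley-Lusztig group scheme} and the fact that $\varphi$ is determined by $f$ through the pinning.

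It then follows that $\varphi^*(a^*)=a^*\circ\tilde f$ for $a\in\dot{\bfB}$. Part (1) therefore reduces to the identity
\[\tilde f(b_1\diamondsuit_{\zeta'}b_2)=\sum_{f(\zeta)=\zeta'}b_1\diamondsuit_\zeta b_2 .\]
To prove it, I would use the characterization of $b_1\diamondsuit_\zeta b_2$ in \cite[25.2]{Lusztig-IntroQuantumGroup}: it is the unique bar-invariant element equal to $b_1^+b_2^-1_\zeta$ plus lower terms with coefficients in $v^{-1}\ZZ[v^{-1}]$. The map $\tilde f$ commutes with the bar involution and sends $c_1^+c_2^-1_{\zeta'}$ to $\sum c_1^+c_2^-1_\zeta$. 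Applying $\tilde f$ to $b_1\diamondsuit_{\zeta'}b_2$ and projecting to the $1_\zeta$-component therefore gives an element with exactly these characterizing properties for $\zeta$. Uniqueness identifies it with $b_1\diamondsuit_\zeta b_2$, which yields (1) after dualizing.

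For (2), Proposition \ref{prop:parametrize-B[la]} expresses $\beta_\la(b_1,b_2)$ as $b_1^-1_\la\sigma(b_2)^+$ modulo $\dot{\bfU}[>\la]$. I need to see that $\tilde f$ sends $\beta_{f(\la)}(b_1,b_2)$ to $\beta_\la(b_1,b_2)$ plus other $\diamondsuit$-elements. By (1) in its $\diamondsuit$ form, the images of canonical basis elements are sums of canonical basis elements, so it suffices to show that the image of $\dot{\bfU}'[>f(\la)]$ lies in the closure of $\dot{\bfU}[>\la']$ over $\la'$ with $f(\la')=f(\la)$. For this I would use Lemma \ref{lem:characterization subset Blambda}: an element acts nontrivially on $\La_\tau$ exactly when $\tilde f$ of it acts nontrivially on $\La_{f(\tau)}$, because pulling back along $\varphi$ identifies $\La_{f(\tau)}$ with $\La_\tau$. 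This is the step I expect to be the main obstacle. It is delicate because $f$ may be non-injective on $X^+$, so the order comparison must be carried out fibrewise, and $f(\la)$ must actually be dominant. Dominance does hold, since the pairings with the $\alpha_i^\vee$ are preserved. Once (2) is established, the bijection $\dot{\bfB}[\la]^*\xrightarrow{\sim}\dot{\bfB}'[f(\la)]^*$ follows, because both cells are indexed by $\bfB(\la)\times\bfB(\la)$ and $\bfB(\la)=\bfB(f(\la))$.
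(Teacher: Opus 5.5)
The gap is in your proof of (1). The characterization you quote is false. You say $b_1\diamondsuit_\zeta b_2$ is the unique bar-invariant element equal to $b_1^+b_2^-1_\zeta$ plus a $v^{-1}\ZZ[v^{-1}]$-combination of lower terms $c_1^+c_2^-1_\zeta$. But the bar involution of $\dot{\bfU}$ already fixes every $c_1^+c_2^-1_\zeta$ with $c_1,c_2\in\bfB$. So a bar-invariant element of that shape has bar-invariant coefficients, and requiring them to lie in $v^{-1}\ZZ[v^{-1}]$ forces them to vanish. Your conditions therefore single out $b_1^+b_2^-1_\zeta$ itself, which in general is not in $\dot{\bfB}$.

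For instance, take $SL_2$ and $n=\langle\alpha_i^\vee,\zeta\rangle\ge1$. The defining relations give $\theta_i^+\theta_i^-1_\zeta=\theta_i^-\theta_i^+1_\zeta+[n]1_\zeta$, and both $\theta_i^-\theta_i^+1_\zeta$ and $1_\zeta$ lie in $\dot{\bfB}$ by the rank-one description in \cite{Lusztig-IntroQuantumGroup}. Hence $\theta_i\diamondsuit_\zeta\theta_i=\theta_i^+\theta_i^-1_\zeta-[n]1_\zeta$, whose lower coefficient is bar-invariant and depends on $\zeta$. So your uniqueness step does not identify the projected element with $b_1\diamondsuit_\zeta b_2$.

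What (1) really needs, and what the paper's appeal to Definition~\ref{def:B-dot} tacitly uses, is this: the coefficients of $b_1\diamondsuit_\zeta b_2$ on the elements $c_1^+c_2^-1_\zeta$ depend on $\zeta$ only through $(\langle\alpha_i^\vee,\zeta\rangle)_{i\in\Delta}$. You can get this from Lusztig's actual definition \cite[25.2]{Lusztig-IntroQuantumGroup}. There $b_1\diamondsuit_\zeta b_2$ is the unique element of $\dot{\bfU}1_\zeta$ sending $\xi_{-\la}\otimes\eta_{\la'}$ (of weight $\zeta$) to the corresponding canonical basis element of the based module ${}^\omega\La_\la\otimes\La_{\la'}$, for all sufficiently dominant $\la,\la'$. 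Regarded as a $\dot{\bfU}'$-module via your coproduct-compatible $\tilde f$, this based module is ${}^\omega\La'_{f(\la)}\otimes\La'_{f(\la')}$ with the same canonical basis, since its construction involves only $\bff$ and the pairings with the $\alpha_i^\vee$. Uniqueness then gives $\tilde f(b_1\diamondsuit_{f(\zeta)}b_2)1_\zeta=b_1\diamondsuit_\zeta b_2$, and the rest of your argument for (1) goes through.

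Granting (1), your argument for (2) is correct but differs from the paper's. The paper writes $\varphi^*(\beta_\la(b_1,b_2)^*)=\beta_{\mu'}(c_1,c_2)^*$ using (1). It then evaluates on the leading monomials $c_1^-1_{\mu'}\sigma(c_2)^+$ and $b_1^-1_{f(\la)}\sigma(b_2)^+$, and the triangularity of Proposition~\ref{prop:parametrize-B[la]} alone gives $\mu'\le f(\la)$ and $f(\la)\le\mu'$.

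Your fibrewise comparison also works. Suppose $c\in\dot{\bfB}[\nu]$ occurs in $\tilde f(c')$ with $c'\in\dot{\bfB}'[\nu']$ and $\nu'>f(\la)$. The summands of $\tilde f(c')$ lie in distinct $\dot{\bfU}1_\zeta$, so Lemma~\ref{lem:characterization subset Blambda} gives $f(\nu)\ge\nu'$. Then $\mu\coloneqq\nu-(f(\nu)-f(\la))$ is dominant with $f(\mu)=f(\la)$ and $\mu<\nu$. Moreover $\beta_\la(b_1,b_2)$ survives with coefficient $1$, because $\la>\mu$ is impossible inside a fibre of $f$. Mind the direction: $\tilde f(u)$ acts on $\La_\tau$ as $u$ acts on $\La'_{f(\tau)}$, not the reverse. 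Your route needs this identification of the restricted Weyl module, which the paper's evaluation argument avoids.
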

\begin{proof}
    (1) We identify the positive/negative parts of the quantized enveloping algebras for $G$ and $G'$. The natural homomorphism $\varphi_*\colon\hat{\bfU}_{G'}\to\hat{\bfU}_G$ sends $1_{\zeta'}$, where $\zeta'\in X'$, to the element 
    \[1_{f^{-1}(\zeta')}\coloneqq\sum_{\zeta\in f^{-1}(\zeta')}1_\zeta.\]
    (We adopt the convention that a sum over an empty set is defined to be $0$.) More generally, from the definition of canonical basis elements recalled in Definition \ref{def:B-dot} we deduce that
    \[\varphi_*(b_1\diamondsuit_{\zeta'} b_2)=\sum_{\zeta\in f^{-1}(\zeta')}b_1\diamondsuit_\zeta b_2\]
    and this immediately implies the result on the dual canonical basis.\par 
    (2) Let $\la'=f(\la)$. By (1) we have $\varphi^*(\beta_\la(b_1,b_2)^*)\in\dot{\bfB}'^*$. So there exists $\mu'\in X'^+$ and $c_1,c_2\in\bfB(\mu')$ such that $\varphi^*(\beta_\la(b_1,b_2)^*)=\beta_{\mu'}(c_1,c_2)^*$. Since $c_11_{\mu'}\sigma(c_2)^+-\beta_{\mu'}(c_1,c_2)\in\dot{\bfB}'[>\mu']$, we have
    \[\beta_\la(b_1,b_2)^*(\varphi_*(c_11_{\mu'}\sigma(c_2)^+))=1.\]
    Since $$\varphi_*(c_11_{\mu'}\sigma(c_2)^+)=\sum_{\mu\in f^{-1}(\mu')}c_11_{\mu}\sigma(c_2)^+$$ and each summand satisfies $c_11_{\mu}\sigma(c_2)^+-\beta_\mu(c_1,c_2)\in\dot{\bfB}[>\mu]$,  
    there exists $\mu\in X^+$ with $f(\mu)=\mu'$ such that $\la\ge\mu$ and thus $\la'=f(\la)\ge f(\mu)=\mu'$.
    On the other hand, we have
    \[\varphi_*(b_11_{\la'}\sigma(b_2)^+)=\sum_{\xi\in f^{-1}(\la')}b_11_{\xi}\sigma(b_2)^+\]
    which implies that
    \[\beta_{\mu'}(c_1,c_2)^*(b_11_{\la'}\sigma(b_2)^+)=\varphi^*(\beta_\la(b_1,b_2)^*) (b_11_{\la'}\sigma(b_2)^+)=1\]
    and hence $\mu'\ge\la'$ by similar reasoning. So we conclude that $\mu'=\la'$ and then the equality above implies that $c_1=b_1$, $c_2=b_2$ and we are done. 
\end{proof}

\section{Finite generation of quantized coordinate rings}\label{sec:finite-coordinate-ring}
In this section our goal is to show that certain subalgebras of the quantized coordinate ring are finitely generated and normal. These include the quantized coordinate rings of reductive monoids constructed later in \S\ref{sec:reductive-monoid-construction}.\par 
We let $R$ be a commutative $\A$-algebra (in which the image of $v$ is not necessarily $1$). Fix a based root data $\Psi$ as in \S\ref{subsubsec:based-root-data} and keep the notations from \S\ref{sec:quantum-canonical-bases} and \S\ref{sec:Lusztig-group-scheme}. 

\subsection{The associated graded algebra}
\begin{defi}\label{defi:filtration O}
    For any subset $P\subset X^+$, let ${}_R\bfO(P)$ be the $R$ -submodule of ${}_R\bfO$ spanned by the elements
    \[\{b^*\mid b\in\bigcup_{\la\in P}\dot{\bfB}[\la]\}.\] 
\end{defi}

\begin{prop}\label{prop:downward-closed}
    Let $P\subset X^+$ be a subset. The following are equivalent:
    \begin{enumerate}
        \item[(i)] ${}_R\bfO(P)$ is a sub-coalgebra of ${}_R\bfO$;
        \item[(ii)] ${}_R\dot{\bfU}[X^+\setminus P]$ is a two-sided ideal of ${}_R\dot{\bfU}$;
        \item[(iii)] $P$ is downward closed in $X^+$, i.e. for all $\la,\mu\in X^+$ with $\mu\le\la$, if $\la\in P$, then $\mu\in P$.
    \end{enumerate}
\end{prop}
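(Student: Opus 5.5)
The plan is to prove (i)$\Leftrightarrow$(ii) by duality, and then (ii)$\Leftrightarrow$(iii) using Lusztig's description of the two-sided cells. We may take $R=\A$: the structure constants $m_{ab}^c$ lie in $\A$, and all three conditions are statements about which of them vanish, so they are insensitive to base change.

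For (i)$\Leftrightarrow$(ii), the comultiplication on ${}_R\bfO$ is $c^*\mapsto\sum_{a,b}m_{ab}^c a^*\otimes b^*$. Because ${}_R\bfO\otimes{}_R\bfO$ is free on $\dot{\bfB}^*\otimes\dot{\bfB}^*$, the submodule ${}_R\bfO(P)$ is a subcoalgebra exactly when the following holds: for every $c\in\dot{\bfB}[P]$ and all $a,b$ with $m_{ab}^c\ne0$, both $a$ and $b$ lie in $\dot{\bfB}[P]$. Taking the contrapositive, if $a\notin\dot{\bfB}[P]$ or $b\notin\dot{\bfB}[P]$, then $ab$ has no component along $\dot{\bfB}[P]$. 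This says $ab\in{}_R\dot{\bfU}[X^+\setminus P]$ whenever one of the factors lies in $\dot{\bfB}[X^+\setminus P]$. Since $\dot{\bfB}$ is an $R$-basis and multiplication is bilinear, that is precisely the statement that ${}_R\dot{\bfU}[X^+\setminus P]$ is a two-sided ideal.

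For (iii)$\Rightarrow$(ii), assume $P$ is downward closed, so $Q\coloneqq X^+\setminus P$ is upward closed. Then ${}_R\dot{\bfU}[Q]=\sum_{\la\in Q}{}_R\dot{\bfU}[\ge\la]$. By \cite[29.1.3]{Lusztig-IntroQuantumGroup}, each $\dot{\bfU}[\ge\la]$ is a two-sided ideal: it is the set of elements acting by zero on every $\La_\tau$ with $\tau\not\ge\la$, consistent with Lemma \ref{lem:characterization subset Blambda}, and its $\A$-form is spanned by the canonical basis elements it contains. A sum of two-sided ideals is again a two-sided ideal, which gives (ii).

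For (ii)$\Rightarrow$(iii), suppose $\la\in P$ and $\mu\in X^+$ with $\mu\le\la$, and assume for contradiction that $\mu\notin P$. Then $1_\mu\in\dot{\bfB}[\mu]$ by Corollary \ref{cor:B[la]-W-la}, so $1_\mu$ lies in the ideal ${}_R\dot{\bfU}[X^+\setminus P]$. Since $\mu\le\la$ and $\La_\la$ has all its weights between $w_0\la$ and $\la$, the weight $\mu$ occurs in $\La_\la$ (the standard saturation property of dominant weights below a highest weight). Consequently some element $x\in 1_\la\dot{\bfU}1_\mu$, for instance a monomial in the $\theta_i^{(n)+}$, satisfies that $x1_\mu$ acts nonzero on $\La_\la$ and sends some $\mu$-weight vector to $\eta_\la$. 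The ideal then contains $x1_\mu y$ for suitable $y$; take $y\in 1_\mu\dot{\bfU}1_\la$ with $x1_\mu y$ acting on $\eta_\la$ by a nonzero multiple of $\eta_\la$. Hence the ideal contains an element acting nonzero on $\La_\la$ through the highest weight space, and we may replace that element by $1_\la\cdot(\text{it})\cdot 1_\la$.

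Now expand this element in $\dot{\bfB}[X^+\setminus P]$. Every basis element $b$ acting nonzero on $\La_\la$ lies in $\dot{\bfB}[\le\la]$ by \eqref{eq:Blambda inside leq lambda}. The main obstacle is to exclude contributions that act nonzero on $\La_\la$ without coming from $\dot{\bfB}[\la]$ itself. To handle this, I would show that the element, being in the ideal, can be further multiplied to land in $\dot{\bfB}[\la]$. The cleanest route is to use that $1_\la$ generates: the element acts on $\eta_\la$ by a scalar $s\ne0$, so modulo $\dot{\bfU}[>\la]$ its image in $\End(\La_\la)$ is nonzero. By \cite[29.1.6]{Lusztig-IntroQuantumGroup}, $\dot{\bfU}[\ge\la]/\dot{\bfU}[>\la]\cong\End(\La_\la)$ with basis $\dot{\bfB}[\la]$, and the element $1_\la$ lies in $\dot{\bfB}[\la]$. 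I expect to conclude that $\la\in X^+\setminus P$ by comparing the $1_\la$-coefficient, working over $\QQ(v)$ if necessary. This contradicts $\la\in P$, so $\mu\in P$.
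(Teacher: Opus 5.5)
\textbf{Verdict: gap in (ii)$\Rightarrow$(iii).} Your arguments for (i)$\Leftrightarrow$(ii) and for (iii)$\Rightarrow$(ii) are fine. The first is duality with respect to the basis $\dot{\bfB}^*$. The second uses that an upward closed $X^+\setminus P$ gives a sum of the two-sided ideals $\dot{\bfU}[\ge\la]$. Both are essentially the paper's; the paper derives (iii)$\Rightarrow$(i) from the same ideal property.

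The direction (ii)$\Rightarrow$(iii) is not finished; it ends with ``I expect to conclude''. Two things go wrong.

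\begin{enumerate}
\item The route through $\dot{\bfU}[\ge\la]/\dot{\bfU}[>\la]\cong\End(\La_\la)$ applies only to elements of $\dot{\bfU}[\ge\la]$. Your $z=x1_\mu y$ is only known to lie in $\dot{\bfU}[X^+\setminus P]$. Ruling out contributions from lower cells is exactly the obstacle you flagged and left open. Over $\QQ(v)$ it can be closed as follows. A basis element $b\in\dot{\bfB}[\la']\cap 1_\la\dot{\bfU}1_\la$ acts nonzero on $\La_{\la'}$ by Lemma \ref{lem:characterization subset Blambda}, and it vanishes off the $\la$-weight space, so $\la\le\la'$. Combined with \eqref{eq:Blambda inside leq lambda}, only $\la'=\la\in P$ could act nonzero on $\La_\la$. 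Hence $z$ kills $\La_\la$, a contradiction.
\item Your reduction to $R=\A$ is invalid in this direction. Hypothesis (ii) over $R$ only says that certain $m_{ab}^c$ vanish in $R$; that does not make them vanish in $\A$. Nothing in your construction keeps the scalar $s$ nonzero in $R$. For example, $\theta_i^{+}1_{\la-\alpha_i}\theta_i^{-}$ acts on $\eta_\la$ by $[\langle\alpha_i^\vee,\la\rangle]_i$. Under $v\mapsto1$ in characteristic $p$ this is $0$ whenever $p\mid\langle\alpha_i^\vee,\la\rangle$.
\end{enumerate}

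The paper's argument avoids both issues, and you should use it instead.
\begin{itemize}
\item Over $\QQ(v)$, $\dot{\bfB}[\la]$ maps onto a basis of $\End(\La_\la)$. Each of its elements lies in a single block $1_\zeta\dot{\bfU}1_{\zeta'}$, whose image consists of maps from the $\zeta'$-weight space to the $\zeta$-weight space.
\item Since $\mu$ is a weight of $\La_\la$, the block with $\zeta=\zeta'=\mu$ is nonzero. So some $c\in\dot{\bfB}[\la]$ lies in $1_\mu\dot{\bfU}1_\mu$.
\item If $\mu\notin P$, then $1_\mu\in\dot{\bfB}[\mu]$ (Corollary \ref{cor:B[la]-W-la}) lies in the ideal. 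Hence so does $c=c\cdot 1_\mu$.
\item But $c$ is a basis vector of ${}_R\dot{\bfU}$ indexed by $\la\in P$, a contradiction.
\end{itemize}
The only structure constant used is $1$, so this works over every nonzero $R$ with no base change.
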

\begin{proof}
    (i)$\Rightarrow$(ii): Suppose that ${}_R\bfO(P)$ is a sub-coalgebra. This means that \[\Delta_\bfO({}_R\bfO(P))\subset{}_R\bfO(P)\otimes_{R}{}_R\bfO(P)\] 
    where $\Delta_\bfO$ is the comultiplication for ${}_R\bfO$. Since ${}_R\dot{\bfU}[X^+\setminus P]$ is the orthogonal complement of ${}_R\bfO(P)$ in ${}_R\dot{\bfU}$ (under the canonical pairing between ${}_R\dot{\bfU}$ and ${}_R\bfO$), we deduce from the inclusion above that ${}_R\dot{\bfU}[X^+\setminus P]$ is a two-sided ideal.\par
    (ii)$\Rightarrow$(iii): Let $\la,\mu\in X^+$ be such that $\mu\le\la$ and $\la\in P$. Then $\mu$ is a weight of $\La_\la$. Since the image of $\dot{\bfB}[\la]$ under the natural map $\dot{\bfU}\to\mathrm{End}_{\QQ(v)}(\La_\la)$ forms a $\QQ(v)$-basis and each element of $\dot{\bfB}$ lies in a double coset $1_\zeta\dot{\bfU}1_{\zeta'}$ for some $\zeta,\zeta'\in X$, there exists an element $c\in\dot{\bfB}[\la]\cap 1_\mu\dot{\bfU}1_\mu$. If $\mu\notin P$, then $1_\mu\in{}_R\dot{\bfU}[X^+\setminus P]$ and hence $c=c1_\mu=1_\mu c\in{}_R\dot{\bfU}[X^+\setminus P]$ by condition (ii). But this contradicts with the assumption that $\la\in P$. Therefore we must have $\mu\in P$. \par 
    (iii)$\Rightarrow$(i): Let $\la\in P$ and $c\in\dot{\bfB}[\la]$. For any $\la',\la''\in X^+$, $a\in\dot{\bfB}[\la']$ and $b\in\dot{\bfB}[\la'']$ such that $m_{ab}^c\ne0$, we have $\la\ge\la'$ and $\la\ge\la''$ since $ab\in{}_R\dot{\bfU}[\ge\la']\cap{}_R\dot{\bfU}[\ge\la'']$. By (iii), this implies that $\la'\in P$ and $\la''\in P$. Therefore $\Delta_\bfO(c)\in{}_R\bfO(P)\otimes_R{}_R\bfO(P)$. So ${}_R\bfO(P)$ is a sub-coalgebra. 
\end{proof}
\begin{ex}
    Let $\lambda\in X^+_\pos$ (see \S\ref{subsubsec:based-root-data}) and define
    \[X^+_{\le\la}\coloneqq\{\mu\in X^+\mid\mu\le\la\},\quad X^+_{<\la}\coloneqq\{\mu\in X^+\mid\mu<\la\}.\]
    Then we get the $R$-module ${}_R\bfO(X^+_{\le\la})={}_R\mathbf{O}_{\leq\lambda}$ and ${}_R\bfO(X^+_{<\la})={}_R\mathbf{O}_{<\lambda}$ from Definition \ref{def:O-le-lambda}. It is clear that the sets $X^+_{\le\la}$ and $X^+_{<\la}$ are both downward closed in $X^+$. So we see that ${}_R\bfO(X^+_{\le\la})$ and ${}_R\bfO(X^+_{<\la})$ are sub-coalgebras of ${}_R\bfO$. 
\end{ex}

\begin{defi}\label{def:grO}
    For any subset $P\subset X^+$, we define the associated graded module for ${}_R\bfO(P)$ to be 
    \[\gr({}_R\bfO(P))\coloneqq\bigoplus_{\la\in P}{}_R\bfO_{\le\la}/{}_R\bfO_{<\la}.\]
    In particular when $P=X^+$, the resulting $R$-module inherits an $R$-algebra structure by Lemma \ref{lem:filtration-O-multiplicative} and we get the associated graded ring
    \[\gr({}_R\bf{O})=\bigoplus_{\lambda\in X^+}{}_R\bf{O}_{\le\lambda}/{}_R\bf{O}_{<\lambda}.\] 
    Moreover, since ${}_R\bfO(X^+_{\le\la})$ and ${}_R\bfO(X^+_{<\la})$ are sub-coalgebras of ${}_R\bfO$, the ring $\gr({}_R\bfO)$ inherits a bi-algebra structure from ${}_R\bfO$. Note however that $\gr({}_R\bfO)$ does not have a co-unit (the co-unit on ${}_R\bfO$ does not induce one on $\gr({}_R\bfO)$).
\end{defi}

\begin{prop}\label{Prop:O(P)-subalgebra}
    Let $P\subset X^+$ be a subset. Then ${}_R\bfO(P)$ is a subalgebra of ${}_R\bfO$ if and only if $P$ is a submonoid of $X^+$. Moreover, in this case $\mathrm{gr}({}_R\bfO(P))$ inherits a ring structure and becomes a subalgebra of $\mathrm{gr}({}_R\bfO)$. 
\end{prop}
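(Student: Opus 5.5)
The plan is to prove the two implications separately, then deduce the statement about $\gr$ from the filtration compatibility in Lemma \ref{lem:filtration-O-multiplicative}(i). Throughout I may assume $R=\A$, since ${}_R\bfO(P)={}_\A\bfO(P)\otimes_\A R$ and the structure constants $\hat m_c^{ab}$ lie in $\A$. Some care is needed even here, because a structure constant that is nonzero in $\A$ may vanish after base change.

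\emph{Only if.} Suppose ${}_R\bfO(P)$ is a subalgebra. It contains the unit, and the unit is $1_0^*\in\dot{\bfB}[0]^*$ by Lemma \ref{lem:B-dot-X-ab} and Corollary \ref{cor:B[la]-W-la}. Since ${}_R\bfO(P)$ is spanned by a subset of the basis $\dot\bfB^*$, we get $0\in P$. Now let $\la,\mu\in P$. By Corollary \ref{cor:B[la]-W-la} we have $1_\la\in\dot{\bfB}[\la]$ and $1_\mu\in\dot{\bfB}[\mu]$. Lemma \ref{lem:filtration-O-multiplicative}(ii) then gives $1_\la^*1_\mu^*=1_{\la+\mu}^*$, which lies in $\dot{\bfB}[\la+\mu]^*$. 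Again because ${}_R\bfO(P)$ is spanned by basis vectors, this forces $\la+\mu\in P$. Hence $P$ is a submonoid. This direction is clean and survives any base change, since the coefficient involved is $1$.

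\emph{If.} Suppose $P$ is a submonoid. I would need to show that $\hat m_c^{ab}\neq0$ with $a\in\dot\bfB[\la]$, $b\in\dot\bfB[\mu]$ and $\la,\mu\in P$ forces $c\in\dot\bfB[\nu]$ for some $\nu\in P$. Lemma \ref{lem:filtration-O-multiplicative}(i) only gives $\nu\le\la+\mu$. If $P$ were downward closed this would suffice. In general, however, one needs finer information about which cells occur in $\Delta(c)$. I would try to obtain it by letting $\Delta(c)$ act on $\La_\la\otimes\La_\mu$, as in the proof of Lemma \ref{lem:filtration-O-multiplicative}. One would then combine this with \cite[29.1.3, 29.1.6]{Lusztig-IntroQuantumGroup} and the parametrization $\beta_\nu$ of Proposition \ref{prop:parametrize-B[la]}, in order to show that $\nu$ is forced to equal $\la+\mu$ or to lie in a set controlled by $P$.

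\emph{Main obstacle.} I expect this step to fail as literally stated unless further hypotheses on $P$ are imposed. A classical sanity check for $\mathrm{SL}_2$ (so $v=1$, $X^+=\NN$) shows this. The coordinate functions $a,d$ lie in the cell $\dot\bfB[1]^*$. The relation $ad=1+bc$ already places a cell-$0$ term in a product of two cell-$1$ elements. For $P=\{0,2,3,4,\dotsc\}$ one finds $a^2d^3=d+2bcd+b^2c^2d$, whose first term lies in cell $1\notin P$. If this computation is right, the "if" direction needs $P$ to be downward closed in addition, which is the case for the weight monoids used later. Under that extra hypothesis the proof is immediate from Lemma \ref{lem:filtration-O-multiplicative}(i).

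\emph{The graded statement.} This is independent of the issue above. The grading of $\gr({}_R\bfO)$ is multiplicative by Lemma \ref{lem:filtration-O-multiplicative}(i), so the product of the $\la$- and $\mu$-pieces lands in the $(\la+\mu)$-piece. Hence $\bigoplus_{\la\in P}{}_R\bfO_{\le\la}/{}_R\bfO_{<\la}$ is closed under multiplication, and contains the unit, exactly when $P$ is a submonoid. It is therefore a subalgebra of $\gr({}_R\bfO)$.
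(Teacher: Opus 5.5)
Your analysis is correct. The failure you found in the \emph{if} direction is a genuine defect of the statement and of the paper's proof, not of your argument. The paper settles that direction by saying it follows from Lemma~\ref{lem:filtration-O-multiplicative}. But part (i) of that lemma only puts $\dot{\bfB}[\la]^*\cdot\dot{\bfB}[\mu]^*$ inside ${}_R\bfO_{\le\la+\mu}$, and that space lies in ${}_R\bfO(P)$ only when $P$ is also downward closed.

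Your $\mathrm{SL}_2$ counterexample $P=\{0,2,3,4,\dots\}$ is valid. However, your expansion of $a^2d^3$ tacitly assumes two things: that $d^3$ has no cell-$1$ component, and that $bcd$ and $b^2c^2d$ contribute nothing cancelling $d$. A cleaner certificate avoids any knowledge of the $\mathrm{SL}_2$ dual canonical basis. By Corollary~\ref{cor:B[la]-W-la}, $1_2^*\in\dot{\bfB}[2]^*$ and $1_{-3}^*\in\dot{\bfB}[3]^*$, while $1_{-1}\in\dot{\bfB}[1]$. Classically, with $B$ upper triangular, $1_2^*$ and $1_{-3}^*$ are your $a^2$ and $d^3$. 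Since $\Delta(1_{-1})=\sum_{\la'}1_{\la'}\otimes 1_{-1-\la'}$, the coefficient of $1_{-1}^*$ in $1_2^*\cdot 1_{-3}^*$ is $\hat{m}^{1_2,1_{-3}}_{1_{-1}}=1$. Hence the product leaves ${}_R\bfO(P)$ for every nonzero $R$. The same computation shows more generally that ${}_R\bfO(P)$ can be a subalgebra only if the dominant conjugate of $\la+\mu$ lies in $P$ for all $\la,\mu\in W\cdot P$.

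Your \emph{only if} argument and your treatment of $\mathrm{gr}$ are the paper's, and the latter indeed needs only that $P$ is a submonoid. If one adds the hypothesis that $P$ is downward closed, your one-line proof via Lemma~\ref{lem:filtration-O-multiplicative}(i) is exactly the argument the paper intends. That corrected version is all that is used later: weight monoids (Definition~\ref{def:weight-monoid}) are downward closed, and Corollary~\ref{cor:grO-finite} assumes downward closedness explicitly.
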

\begin{proof}
    The unit of ${}_R\bfO$ is $1_0^*$ and for any $\la,\mu\in P\subset X^+$, we have $1_\la^*\cdot1_\mu^*=1_{\la+\mu}^*$ in ${}_R\bfO$ by Lemma \ref{lem:filtration-O-multiplicative}. Thus if ${}_R\bfO(P)$ is a subalgebra then $\la+\mu\in P$ and $0\in P$ so that $P$ is a submonoid of $X^+$. \par 
    If $P$ is a submonoid of $X^+$, then it follows from Lemma \ref{lem:filtration-O-multiplicative} that ${}_R\bfO(P)$ is a subalgebra and induces an algebra structure on $\mathrm{gr}({}_R\bfO(P))$, making it a subalgebra of $\mathrm{gr}({}_R\bfO)$. 
\end{proof}
\begin{lem}\label{lem:from Xplus to NN}[See \cite{Popov-contraction} p.324]
    There exists a homomorphism of additive monoids $h\colon X^+\to\NN$ such that $h(\la)<h(\mu)$ whenever $\la<\mu$.
\end{lem}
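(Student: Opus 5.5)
The plan is to build $h$ as the restriction to $X^+$ of a suitable linear functional, namely pairing with a strictly dominant coweight. The only genuine difficulty is that $X^+$ may contain a nontrivial subgroup $X_{\ab}$ of characters vanishing on all coroots. On $X_{\ab}$ any homomorphism to $\NN$ must vanish, so the functional has to be built to kill it.

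First, recall the structure of $X^+$. For $\la,\mu\in X^+$, the relation $\la<\mu$ means $\mu-\la\in\NN^\Delta\setminus\{0\}$. Set
\[\rho^\vee\coloneqq\tfrac12\sum_{\alpha>0}\alpha^\vee\in Y_\QQ.\]
It satisfies $\langle\rho^\vee,\alpha_i\rangle=1$ for every $i\in\Delta$. Pairing with $\rho^\vee$ therefore sends any $\nu=\sum\nu_i\alpha_i\in\NN^\Delta\setminus\{0\}$ to $\mathrm{tr}(\nu)>0$. However, $\langle\rho^\vee,\cdot\rangle$ need not be nonnegative or integral on $X^+$, so I will correct it.

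Choose a positive integer $N$ with $N\rho^\vee\in Y$; for instance $N=2$ works. Define $h_0(\la)\coloneqq\langle N\rho^\vee,\la\rangle$. This is an additive map $X\to\ZZ$ with $h_0(\mu)-h_0(\la)=N\,\mathrm{tr}(\mu-\la)>0$ whenever $\la<\mu$. To get nonnegativity on $X^+$, write $\rho^\vee$ in terms of fundamental coweights:
\[\rho^\vee=\sum_i\varpi_i^\vee,\]
where $\varpi_i^\vee\in Y_\QQ$ pairs to $\delta_{ij}$ with the $\alpha_j$ and vanishes on $X_{\ab}\otimes\QQ$. This uses that $X_\QQ=\QQ^\Delta\oplus(X_{\ab})_\QQ$ and that $\rho^\vee$ lies in the span of the coroots, hence kills $X_{\ab}$. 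Each $\varpi_i^\vee$ is a nonnegative rational combination of the simple coroots, since the inverse Cartan matrix has positive entries.

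Dominant weights pair nonnegatively with the simple coroots. Hence $\langle\varpi_i^\vee,\la\rangle\ge0$ for every $\la\in X^+$. I would define it via the coefficient expansion: writing $\varpi_i^\vee=\sum_j c_{ij}\alpha_j^\vee$ with $c_{ij}\ge0$, we get $\langle\varpi_i^\vee,\la\rangle=\sum_j c_{ij}\langle\alpha_j^\vee,\la\rangle\ge0$. Thus $h_0(X^+)\subset\NN$, because $h_0$ is integer-valued and nonnegative there.

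So I take $h\coloneqq h_0|_{X^+}$. It is additive, lands in $\NN$, and is strictly increasing along $<$. The step most likely to need care is the positivity check. It requires that $\rho^\vee$ be a nonnegative rational combination of simple coroots, which is standard, and that the pairing reduces to $\langle\alpha_j^\vee,\la\rangle\ge0$. No deeper input from the earlier sections is needed.
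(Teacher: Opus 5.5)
Your proof is correct. The function $h(\la)=\langle 2\rho^\vee,\la\rangle$, with $2\rho^\vee=\sum_{\alpha>0}\alpha^\vee\in Y$, is integer-valued, additive and nonnegative on $X^+$, and $h(\mu)-h(\la)=2\,\mathrm{tr}(\mu-\la)>0$ whenever $\la<\mu$. The paper gives no argument of its own here and simply cites Popov, so your proposal supplies the standard self-contained proof.

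One simplification: each positive coroot is a nonnegative integer combination of simple coroots, so $\langle 2\rho^\vee,\la\rangle=\sum_{\alpha>0}\langle\alpha^\vee,\la\rangle\ge0$ for $\la\in X^+$ directly. The detour through fundamental coweights and the inverse Cartan matrix is therefore unnecessary. Your remark that $\langle\rho^\vee,\cdot\rangle$ need not be nonnegative on $X^+$ is inaccurate: only integrality can fail, which is why the factor $2$ is needed.
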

Fix a homomorphism $h\colon X^+\to\NN$ as above. Let $P\subset X^+$ be a submonoid. For any $n\in\NN$, denote $P_{\le n}\coloneqq\{\la\in X^+,h(\la)\le n\}$ and ${}_R\bfO_{\le n}(P)\coloneqq{}_R\bfO(P_{\le n})$. For $n\in\ZZ_{<0}$ we let ${}_R\bfO_{\le n}(P)=0$. Then ${}_R\bfO(P)$ becomes a $\ZZ$-filtered $R$-algebra and we can form the associated graded ring
\[\mathrm{gr}_h({}_R\bfO(P))\coloneqq\bigoplus_{n\in\NN}{}_R\bfO_{\le n}(P)/{}_R\bfO_{\le n-1}(P).\]
The following lemma follows easily from definitions.
\begin{lem}\label{lem:grh}
    We have a canonical isomorphism of $R$-algebras $\mathrm{gr}({}_R\bfO(P))\cong\mathrm{gr}_h({}_R\bfO(P))$.
\end{lem}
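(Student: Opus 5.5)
The plan is to build the isomorphism degree by degree. On both sides, the natural candidate is the direct sum, over $\la\in P$, of the span of the images of $\dot{\bfB}[\la]^*$.

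First, the additive identification. For each $n\in\NN$ the module ${}_R\bfO_{\le n}(P)$ is free with basis $\bigsqcup_{\la\in P,\,h(\la)\le n}\dot{\bfB}[\la]^*$. Here $P_{\le n}$ should be read as $\{\la\in P\mid h(\la)\le n\}$. Hence the quotient
\[{}_R\bfO_{\le n}(P)/{}_R\bfO_{\le n-1}(P)\]
is free with basis the images of $\dot{\bfB}[\la]^*$ for $\la\in P$ with $h(\la)=n$. Similarly, ${}_R\bfO_{\le\la}/{}_R\bfO_{<\la}$ is free with basis the images of $\dot{\bfB}[\la]^*$. Sending the class of $b^*$ ($b\in\dot{\bfB}[\la]$) in the $\la$-summand of $\gr({}_R\bfO(P))$ to its class in degree $h(\la)$ of $\gr_h$ therefore gives an isomorphism of graded $R$-modules. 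The grading on the left is pushed forward along $h$. This map is well defined because both sides are described by the same basis.

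Next, multiplicativity. Take $a\in\dot{\bfB}[\la]$ and $b\in\dot{\bfB}[\mu]$ with $\la,\mu\in P$. By Lemma \ref{lem:filtration-O-multiplicative}, $a^*b^*=\sum_c\hat m_c^{ab}c^*$, where every $c$ with $\hat m_c^{ab}\ne0$ lies in $\dot{\bfB}[\nu]$ for some $\nu\le\la+\mu$. Since $P$ is a submonoid, $\la+\mu\in P$. Then:

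\begin{itemize}
\item In $\gr({}_R\bfO(P))$ the product is the class of $a^*b^*$ modulo ${}_R\bfO_{<\la+\mu}$, i.e. $\sum_{c\in\dot{\bfB}[\la+\mu]}\hat m_c^{ab}\bar c^*$.
\item In $\gr_h$ the product is the class modulo ${}_R\bfO_{\le h(\la)+h(\mu)-1}(P)$.
\end{itemize}

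The key point is that the terms $c\in\dot{\bfB}[\nu]$ with $\nu<\la+\mu$ satisfy $h(\nu)<h(\la+\mu)=h(\la)+h(\mu)$, by Lemma \ref{lem:from Xplus to NN} and additivity of $h$. So these terms vanish in $\gr_h$ as well. The remaining terms are exactly those with $\nu=\la+\mu$, and the two products agree.

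There is one subtlety to check. A priori some $c$ could lie in $\dot{\bfB}[\nu]$ with $\nu\notin P$ but still $\nu\le\la+\mu$. This does not matter, because ${}_R\bfO(P)$ is a subalgebra by Proposition \ref{Prop:O(P)-subalgebra}, so all such coefficients vanish. Also, the filtration ${}_R\bfO_{\le n}(P)$ is multiplicative by the same argument, so $\gr_h$ is indeed a ring. Finally, units match, since $1_0^*$ lies in degree $0$ on both sides.

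I expect no real obstacle. The only step requiring care is the strict inequality $h(\nu)<h(\la+\mu)$ for $\nu<\la+\mu$, which is exactly what Lemma \ref{lem:from Xplus to NN} provides.
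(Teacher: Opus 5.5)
Your argument is correct and is the routine basis comparison the paper has in mind when it says the lemma ``follows easily from definitions'': both sides are free on the images of $\dot{\bfB}[\la]^*$, $\la\in P$, and the additivity and strict monotonicity of $h$ remove exactly the terms with $\nu<\la+\mu$ in $\mathrm{gr}_h$. The one step to state more carefully is your vanishing of coefficients on $\dot{\bfB}[\nu]^*$ with $\nu\notin P$. This is automatic when $P$ is downward closed (as in Corollary~\ref{cor:grO-finite}), but Proposition~\ref{Prop:O(P)-subalgebra} does not actually deliver it for an arbitrary submonoid. For example, for $G=\mathrm{SL}_2$ and $P=3\NN\omega_1$, the product $1_{3\omega_1}^*1_{-3\omega_1}^*$ has coefficient $3$ on $(\theta_1^-1_{2\omega_1}\theta_1^+)^*\in\dot{\bfB}[2\omega_1]^*$. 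So both the lemma and your proof should be read under the hypothesis that ${}_R\bfO(P)$ is a subalgebra, which is needed anyway for $\mathrm{gr}_h({}_R\bfO(P))$ to be a ring.
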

For each $b\in\dot{\bf{B}}[\lambda]$, we denote the image of $b^*\in{}_R\bf{O}_{\le\lambda}$ in $\mathrm{gr}({}_R\bf{O})$ by $\bar{b^*}$. Then the elements $\{\bar{b^*},b\in\dot{\bf{B}}\}$ form a bases of $\mathrm{gr}({}_R\bf{O})$. For each $\la,\mu\in X^+$, $a\in\dot{\bf{B}}[\lambda]$ and $b\in\dot{\bf{B}}[\mu]$, the product of $\bar{a^*}$ and $\bar{b^*}$ in $\mathrm{gr}({}_R\bf{O})$ is given by
\[\bar{a^*}\bar{b^*}=\sum_{c\in\dot{\bf{B}}[\lambda+\mu]}\hat{m}_c^{ab}\bar{c^*}.\]

\subsection{Finiteness theorems}
\begin{defi}\label{def:G/U}
    Let ${}_R\mathbf{O}_{G/U}$ be the $R$-submodule of ${}_R\bf{O}$ spanned by the elements
    \[\bigcup_{\lambda\in X^+}\{b^*, b\in\dot{\bf{B}}_{0,\lambda}\}=\bigcup_{\la\in X^+}\{(b^-1_\la)^*, b\in\bfB(\la)\}.\]
    Let ${}_R\mathbf{O}_{U^-\backslash G}$ be the $R$-submodule of ${}_R\bf{O}$ spanned by the elements
    \[\bigcup_{\lambda\in X^+}\{b^*, b\in\sigma(\dot{\bf{B}}_{\lambda,0})\}=\bigcup_{\la\in X^+}\{(1_\la\sigma(b^+))^*,b\in\bfB(\la)\}.\]
    For any subset $P\subset X^+$, we denote
    \[{}_R\bfO_{G/U}(P)\coloneqq{}_R\bfO_{G/U}\bigcap{}_R\bfO(P),\quad{}_R\mathbf{O}_{U^-\backslash G}(P)\coloneqq{}_R\mathbf{O}_{U^-\backslash G}\bigcap{}_R\bfO(P)\]
\end{defi}
The notations used here will be justified in Proposition \ref{prop:U-invariant}. 
\begin{prop}\label{prop:basic-affine-space}
    Let $P$ be a submonoid of $X^+$. 
    \begin{enumerate}
        \item[(i)] The $R$-modules ${}_R\bfO_{G/U}(P)$ and ${}_R\bfO_{U^-\backslash G}(P)$ are $P$-graded subalgebras of ${}_R\bfO$.
        \item[(ii)] There is a natural embedding of $R$-algebras ${}_R\bfO_{G/U}(P)\into R[P]\otimes_R{}_R\bfO_{G/U}$ whose image is the degree $0$ subalgebra with respect to the $X$-grading that assigns $\la-\mu$ to the element $1_\la^*\otimes(b^-1_\mu)^*$, for any $\la\in P,\mu\in X^+$ and $b\in\bfB(\mu)$. Similarly, ${}_R\bfO_{U^-\backslash G}(P)$ is isomorphic to the degree $0$ subalgebra of $R[P]\otimes_R{}_R\bfO_{U^-\backslash G}$
    \end{enumerate}
\end{prop}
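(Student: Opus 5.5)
The plan is to identify the spanning elements of ${}_R\bfO_{G/U}$ with matrix coefficients of highest weight vectors, and then read the multiplication off the tensor product of Weyl modules. First, by Lemma \ref{lem:B(lambda)-characterization} we have $b^-1_\la\in\dot{\bfB}[\la]$ for every $b\in\bfB(\la)$. Hence
\[
{}_R\bfO_{G/U}(P)=\bigoplus_{\la\in P}\ \bigoplus_{b\in\bfB(\la)}R\,(b^-1_\la)^*,
\]
a free $R$-module with a natural $P$-grading that puts $(b^-1_\la)^*$ in degree $\la$. The analogous statement holds for ${}_R\bfO_{U^-\backslash G}(P)$, using $1_\la\sigma(b)^+\in\dot{\bfB}[\la]$.

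Next I will use the defining property of $\dot{\bfB}_{0,\la}$ recalled before Lemma \ref{lem:uni-mod-finite}. The map $a\mapsto a\eta_\la$ sends $\dot{\bfB}_{0,\la}$ bijectively onto an $\A$-basis of ${}_\A\La_\la$ and kills every other element of $\dot{\bfB}$; the vanishing is part of \cite[\S1.9]{LusztigGroupScheme}, or equivalently \cite[25.2]{Lusztig-IntroQuantumGroup}. It follows that for all $u\in{}_\A\dot{\bfU}$, the value $(b^-1_\la)^*(u)$ is the coefficient of $b\eta_\la$ in $u\eta_\la$. Now take $a\in\bfB(\la)$ and $b\in\bfB(\mu)$, with $\la,\mu\in P$. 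The product $(a^-1_\la)^*(b^-1_\mu)^*$, evaluated at $u$, is the coefficient of $a\eta_\la\otimes b\eta_\mu$ in $\Delta(u)(\eta_\la\otimes\eta_\mu)$.

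Let $\tau\colon\La_{\la+\mu}\to\La_\la\otimes\La_\mu$ be the $\dot{\bfU}$-linear map with $\tau(\eta_{\la+\mu})=\eta_\la\otimes\eta_\mu$, as in the proof of Lemma \ref{lem:h_c-identity}. Then $\Delta(u)(\eta_\la\otimes\eta_\mu)=\tau(u\eta_{\la+\mu})$. Expanding $u\eta_{\la+\mu}=\sum_{c\in\bfB(\la+\mu)}(c^-1_{\la+\mu})^*(u)\,c\eta_{\la+\mu}$ and applying $\tau$ (via the identity $\tau(c\eta_{\la+\mu})=\sum\hat m^{xy}_{c}\,x\eta_\la\otimes y\eta_\mu$ from that proof) gives
\[
(a^-1_\la)^*(b^-1_\mu)^*=\sum_{c\in\bfB(\la+\mu)}\hat m_{c^-1_{\la+\mu}}^{a^-1_\la,\,b^-1_\mu}\,(c^-1_{\la+\mu})^*.
\]
Since $P$ is a submonoid, $\la+\mu\in P$, so the product stays in ${}_R\bfO_{G/U}(P)$ and lies in degree $\la+\mu$. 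The unit $1_0^*$ lies in degree $0$ (Lemma \ref{lem:B-dot-X-ab}). Together these prove (i) for $G/U$.

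The main obstacle is making rigorous that no other basis element $c$ contributes, i.e.\ that $\hat m_c^{a^-1_\la,b^-1_\mu}\neq0$ forces $c=c'^-1_{\la+\mu}$ with $c'\in\bfB(\la+\mu)$. This is exactly what the matrix-coefficient identification provides, and it is the same fact already used in the proof of Proposition \ref{prop:unip-element-trivial}. I will verify it carefully over $\A$ before base changing to $R$. For ${}_R\bfO_{U^-\backslash G}(P)$ I will either run the same argument with right modules and lowest weight vectors, or transport the result by $\sigma$ using Proposition \ref{prop:B-dot-involution}. The latter requires checking that $\sigma$ swaps the two families of spanning elements and interacts well with $\Delta$ (up to flipping tensor factors).

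For (ii), I will define the map by $(b^-1_\mu)^*\mapsto e^\mu\otimes(b^-1_\mu)^*$ for $\mu\in P$ and $b\in\bfB(\mu)$. This element has $X$-degree $\mu-\mu=0$. The degree-$0$ part of $R[P]\otimes_R{}_R\bfO_{G/U}$ is spanned by the elements $e^\la\otimes(b^-1_\mu)^*$ with $\la=\mu\in P$, so the map is a bijection of bases onto the degree-$0$ subalgebra. It is multiplicative because the product formula above lands in $G/U$-degree $\la+\mu$ and $e^\la e^\mu=e^{\la+\mu}$. The unit goes to $e^0\otimes1_0^*$. The case of $U^-\backslash G$ is identical.
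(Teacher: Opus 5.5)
Your argument is correct and is essentially the paper's proof. The paper gets (i) by citing \cite[1.14(a)]{LusztigGroupScheme} for the fact that $a^*b^*$ is a combination of $c^*$ with $c\in\dot{\bfB}_{0,\la+\mu}$, and your computation with $\tau\colon\La_{\la+\mu}\to\La_\la\otimes\La_\mu$ rederives exactly this. That computation also already settles your ``main obstacle'': two elements of ${}_\A\bfO$ that agree as functionals on ${}_\A\dot{\bfU}$ have the same coordinates in $\dot{\bfB}^*$. Part (ii) is done in the paper exactly as you propose.

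One slip concerns ${}_R\bfO_{U^-\backslash G}(P)$. The transport must use $\omega\sigma$, as the paper does, not $\sigma$. Indeed $\sigma(b^-1_\la)=1_{-\la}\sigma(b)^-$ lies in $\dot{\bfB}^{\le0}$, whereas $\omega\sigma(b^-1_\la)=1_\la\sigma(b)^+$. Moreover, $\omega\sigma$ respects $\Delta$ only up to swapping the tensor factors and applying the bar involution to the coefficients. This is harmless here, because only the vanishing of the $\hat{m}$'s has to be transported. Your right-module/lowest-weight variant also works.
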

\begin{proof}
    We only need to prove the statements for ${}_R\bfO_{G/U}(P)$ since then the case of ${}_R\bfO_{U^-\backslash G}(P)$ would follow by applying the involution $\omega\sigma$.\par 
    (i) Recall that $\dot{\bfB}_{0,0}=\dot{\bfB}[0]=\{1_0\}$. Since $0\in P$, the subset ${}_R\bfO_{G/U}(P)$ contains the multiplicative unit $1_0^*$. For any 
    $a\in\dot{\bfB}_{0,\la}, b\in\dot{\bfB}_{0,\la'}$ with $\la,\la'\in P$,  by \cite[1.14(a)]{LusztigGroupScheme} we see that the product $a^*b^*$ (in ${}_R\bfO$) is an $R$-linear combination of elements $c^*$ with $c\in\dot{\bfB}_{0,\la+\la'}$. This implies that
    ${}_R\bfO_{G/U}(P)$ is a subalgebra of ${}_R\bfO$ and moreover it is a $P$-graded algebra, in which the $\la$-component is the $R$-linear span of $\{b^*,b\in\dot{\bfB}_{0,\la}\}$. \par 
    (ii) We define the sought-for embedding by sending $(b^-1_\la)^*$ with $\la\in P$ and $b\in\bfB(\la)$ to the element $e^\la\otimes(b^-1_\la)^*$. This is an $R$-algebra homomorphism by Lemma \ref{lem:filtration-O-multiplicative} and  \cite[1.14(a)]{LusztigGroupScheme}, and it clearly satisfies all the requirements.
\end{proof}

\begin{lem}\label{lem:ideal-in-grO}
    Let $P\subset X^+$ be a submonoid. Let $I_P\subset\mathrm{gr}({}_R\bfO(P))$ be the $R$-submodule spanned by $\bigcup\limits_{\lambda\in P}\{\bar{b^*},b\in\dot{\bf{B}}[\lambda]\setminus\dot{\bf{B}}_{0,\lambda}\}$. Then $I_P$ is a two-sided ideal of $\rm{gr}({}_R\bfO(P))$ and we have a canonical ring isomorphism $\rm{gr}({}_R\bfO(P))/I_P\cong{}_R\bfO_{G/U}(P)$ sending $\bar{b^*}$ to ${b^*}$, for any $b\in\bigcup\limits_{\la\in P}\dot{\bfB}_{0,\la}$. 
\end{lem}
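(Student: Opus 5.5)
The plan is to work entirely with the basis $\{\bar{b^*}\}$ of $\mathrm{gr}({}_R\bfO(P))$ and the product formula
\[\bar{a^*}\bar{b^*}=\sum_{c\in\dot{\bfB}[\la+\mu]}\hat{m}_c^{ab}\bar{c^*},\qquad a\in\dot{\bfB}[\la],\ b\in\dot{\bfB}[\mu],\]
and to compare this product with the one in ${}_R\bfO_{G/U}(P)$. We may assume $R=\A$, since everything is defined by base change from free $\A$-modules with the same bases. Consider the $R$-linear map
\[\phi\colon\mathrm{gr}({}_R\bfO(P))\to{}_R\bfO_{G/U}(P),\qquad \bar{b^*}\mapsto b^*\ \text{ if } b\in\dot{\bfB}_{0,\la},\qquad \bar{b^*}\mapsto 0\ \text{ if } b\in\dot{\bfB}[\la]\setminus\dot{\bfB}_{0,\la}.\]
This map is well defined and surjective, because $\dot{\bfB}_{0,\la}\subset\dot{\bfB}[\la]$ and the sets $\dot{\bfB}_{0,\la}$ are pairwise disjoint. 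Its kernel is exactly $I_P$. It therefore suffices to show that $\phi$ is a ring homomorphism: $I_P$ is then automatically a two-sided ideal, and $\phi$ induces the claimed isomorphism. Unitality is clear, since $\dot{\bfB}[0]=\dot{\bfB}_{0,0}=\{1_0\}$.

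For multiplicativity we check three cases for basis elements $a\in\dot{\bfB}[\la]$ and $b\in\dot{\bfB}[\mu]$ with $\la,\mu\in P$.

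\emph{Case 1: $a\in\dot{\bfB}_{0,\la}$ and $b\in\dot{\bfB}_{0,\mu}$.} By \cite[1.14(a)]{LusztigGroupScheme}, as used in Proposition \ref{prop:basic-affine-space}, the product $a^*b^*$ in ${}_R\bfO$ is a combination of $c^*$ with $c\in\dot{\bfB}_{0,\la+\mu}\subset\dot{\bfB}[\la+\mu]$. So all of its coefficients already sit in the top graded piece, and the product computed in $\mathrm{gr}$ agrees with the product in ${}_R\bfO$. Hence $\phi(\bar{a^*}\bar{b^*})=a^*b^*$.

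\emph{Case 2: at least one of $a,b$ lies outside $\dot{\bfB}_{0,\cdot}$, say $a\notin\dot{\bfB}_{0,\la}$.} We must show $\hat{m}_c^{ab}=0$ for every $c\in\dot{\bfB}_{0,\la+\mu}$. Write $c=d^-1_{\la+\mu}$ with $d\in\bfB(\la+\mu)$. The coefficients $\hat{m}_c^{ab}$ with $c\in\dot{\bfB}_{0,\la+\mu}$ describe $\tau(c\eta_{\la+\mu})$, where $\tau\colon\La_{\la+\mu}\to\La_\la\otimes\La_\mu$ is as in the proof of Lemma \ref{lem:h_c-identity}. Only those $a,b$ occur whose images in the tensor product are nonzero basis vectors, namely $a=a'^-1_\la$ and $b=b'^-1_\mu$ with $a'\in\bfB(\la)$ and $b'\in\bfB(\mu)$.

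To make this rigorous, I would use the defining property $\Delta(c)=\sum\hat{m}_c^{xy}x\otimes y$ together with the explicit formula \eqref{eq:coproduct-formula} for $\Delta(d^-)$ and $\Delta(1_\zeta)$. Since $c=d^-1_{\la+\mu}$ lies in $\dot{\bfU}^{\le0}$, $\Delta(c)$ is a combination of terms $x^-1_{\zeta}\otimes y^-1_{\zeta'}$ with $x,y\in\bfB$ and $\zeta+\zeta'=\la+\mu$. Expanding each $x^-1_\zeta$ in $\dot{\bfB}$ gives elements of $\dot{\bfB}^{\le0}$, i.e.\ of the form $x^-1_\zeta$, because $x^-1_\zeta\in\dot{\bfB}$ for $x\in\bfB$.

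Here we need the graded constraint $a\in\dot{\bfB}[\la]$ and $b\in\dot{\bfB}[\mu]$. Such elements $x^-1_\zeta\in\dot{\bfB}[\la]$ with $\zeta+\zeta'=\la+\mu$ and $y^-1_{\zeta'}\in\dot{\bfB}[\mu]$ force $\zeta=\la$ and $\zeta'=\mu$. The reason is that $x^-1_\zeta$ acts nonzero on $\La_\la$, so $\zeta\le\la$; similarly $\zeta'\le\mu$; and the two sum to $\la+\mu$. Lemma \ref{lem:B(lambda)-characterization} then gives $x\in\bfB(\la)$, so $a\in\dot{\bfB}_{0,\la}$, and likewise $b\in\dot{\bfB}_{0,\mu}$.

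\emph{Case 3: $c\notin\dot{\bfB}_{0,\la+\mu}$.} In this situation $\bar{c^*}\in I_P$ and $\phi$ kills it, so there is nothing to check.

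Cases 2 and 3 together show that $\phi(\bar{a^*}\bar{b^*})=0$ whenever $\bar{a^*}\in I_P$ or $\bar{b^*}\in I_P$. Combined with Case 1, this proves that $\phi$ is multiplicative.

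The main obstacle is the vanishing in Case 2, namely that $\Delta(c)$ for $c\in\dot{\bfB}_{0,\la+\mu}$ has no components in $\dot{\bfB}[\la]\times\dot{\bfB}[\mu]$ outside $\dot{\bfB}_{0,\la}\times\dot{\bfB}_{0,\mu}$. The delicate points are the weight bookkeeping $\zeta\le\la$, $\zeta'\le\mu$ and the claim that $\Delta(\dot{\bfU}^{\le0})$ expands within $\dot{\bfB}^{\le0}\otimes\dot{\bfB}^{\le0}$. I would justify the latter from the fact that $\bfB$ is an $\A$-basis of ${}_\A\bff$ and $b^-1_\zeta\in\dot{\bfB}$.
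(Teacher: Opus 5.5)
Your proof is correct and takes essentially the paper's approach. The key step is the vanishing of $\hat m_c^{ab}$ for $c=d^-1_{\lambda+\mu}\in\dot{\bfB}_{0,\lambda+\mu}$ unless $a\in\dot{\bfB}_{0,\lambda}$ and $b\in\dot{\bfB}_{0,\mu}$; you prove it as the paper does, by expanding $\Delta(d^-1_{\lambda+\mu})$ through $\bar r(d)$, using $\zeta\le\lambda$, $\zeta'\le\mu$, $\zeta+\zeta'=\lambda+\mu$ and Lemma \ref{lem:B(lambda)-characterization}. Packaging this as multiplicativity of the explicit map $\phi$, with Case 1 resting on \cite[1.14(a)]{LusztigGroupScheme}, only makes explicit the identification of the quotient ring that the paper leaves implicit, and your rigorous version does not depend on the looser heuristic via $\tau(c\eta_{\lambda+\mu})$.
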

\begin{proof}
    To show that $I_P$ is a two-sided ideal of $\rm{gr}({}_R\bfO(P))$, we may assume that $R=\A$ and it suffices to prove the following statement: for any $\lambda',\lambda''\in P$ with $\lambda=\lambda'+\lambda''$, any $a\in\dot{\bf{B}}[\lambda']$, $b\in\dot{\bf{B}}[\lambda'']$ and $c\in\dot{\bf{B}}_{0,\lambda}$, if either $a\notin\dot{\bfB}_{0,\la'}$ or $b\notin\dot{\bf{B}}_{0,\lambda''}$, then  $\hat{m}_c^{ab}=0$. Indeed, this translates to the statement that if either $\bar{a^*}\in I_P$ or $\bar{b^*}\in I_P$, then $\bar{a^*}\bar{b^*}\in I_P$, which then implies that $I_P$ is a two-sided ideal of $\rm{gr}({}_R\bfO(P))$. In the following proof for ease of typesetting, we will also denote $\hat{m}(c,a,b)\coloneqq\hat{m}_c^{ab}$.\par 
    We can write $c=c_1^-1_\lambda$ where $c_1\in\bf{B}(\lambda)$. Following \cite[\S4.26]{Lusztig-Infinity} we write
    \[\Bar{r}(c_1)=\sum_{c_{11},c_{12}\in\bf{B}}p(c_1,c_{11},c_{12})c_{11}\otimes  c_{12}\]
    where $p(c_1,c_{11},c_{12})\in\ZZ[v,v^{-1}]$ and vanishes for all but finitely many pairs $c_{11},c_{12}$. Then by \emph{loc.cit.} Lemma 3.10 we get            
    \[\Delta(c)=\Delta(c_1^-1_\lambda)=\sum_{\substack{c_{11},c_{12}\in\bf{B}\\ p(c_1,c_{11},c_{12})\ne0}}\sum_{\substack{\lambda',\lambda''\in X \\ \lambda'+\lambda''=\lambda}} \hat{m}(c,c_{11}^-1_{\lambda'},c_{12}^-1_{\lambda''})\cdot(c_{11}^-1_{\lambda'})\otimes (c_{12}^-1_{\lambda''})\]
    where the structure constant $\hat{m}(c,c_{11}^-1_{\lambda'},c_{12}^-1_{\lambda''})$ equals to $p(c_1,c_{11},c_{12})$ times some explicit power of $v$.\par 
    We are interested in terms in the sum where $a=c_{11}^-1_{\lambda'}\in\dot{\bf{B}}[\lambda_1]$, $b=c_{12}^-1_{\lambda''}\in\dot{\bf{B}}[\lambda_2]$ with $\lambda_1,\lambda_2\in P$ and $\lambda_1+\lambda_2=\lambda$. Then $c_{11}^-1_{\la'}$ (resp. $c_{12}^-1_{\la''}$) acts nontrivially on $\La_{\la_1}$ (resp. $\La_{\la_2}$), hence $\lambda'\le\lambda_1$ and $\lambda''\le\lambda_2$. On the other hand, since $\lambda'+\lambda''=\lambda_1+\lambda_2$, we must have $\lambda'=\lambda_1\in X^+$ and $\lambda''=\lambda_2\in X^+$. By Lemma \ref{lem:B(lambda)-characterization} we get that $c_{11}\in\bf{B}(\lambda')$ and $c_{12}\in\bf{B}(\lambda'')$, and hence $a=c_{11}^-1_{\lambda'}\in\dot{\bf{B}}_{0,\lambda'}$ and $b=c_{12}^-1_{\lambda''}\in\dot{\bf{B}}_{0,\lambda''}$. This finishes the proof.
\end{proof}

Let $P\subset X^+$ be a submonoid. Define an $R$-linear map 
\begin{equation}\label{eq:phi}
    \varphi_P\colon\rm{gr}({}_R\bfO(P))\to{}_R\bfO_{G/U}(P)\otimes_R{}_R\bfO_{U^-\backslash G}(P)
\end{equation}
by the formula
\[\varphi_P(\bar{c^*})=\sum_{a\in\dot{\bf{B}}_{0,\lambda}, b\in\sigma(\dot{\bf{B}}_{\lambda,0})}m_{ab}^{c} a^*\otimes  b^*,\quad\forall c\in\dot{\bf{B}}[\lambda],\lambda\in P.\]

The tensor product ${}_R\bfO_{G/U}(P)\otimes_R{}_R\bfO_{U^-\backslash G}(P)$ has the structure of $X$-graded algebra where the grading is defined as follows: for $\la,\mu\in P$ and $b_1\in\bfB(\la)$, $b_2\in\bfB(\mu)$, the element $(b_1^-1_\la)^*\otimes(1_\mu\sigma(b_2)^+)^*$ has degree $\la-\mu\in X$. 

\begin{prop}\label{prop:grO-decription}
    The map $\varphi_P$ in \eqref{eq:phi} induces an $R$-algebra isomorphism from $\mathrm{gr}({}_R\bfO(P))$ onto the degree $0$ subalgebra of 
    \[{}_R\bfO_{G/U}(P)\otimes_R{}_R\bfO_{U^-\backslash G}(P),\]
    with respect to the $X$-grading defined above. 
\end{prop}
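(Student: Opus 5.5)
We will check, in order, that $\varphi_P$ is multiplicative, that its image lies in degree $0$, and that it is bijective onto the degree $0$ part. The main tool is Proposition~\ref{prop:parametrize-B[la]}, which gives the bijection $\beta_\la\colon\bfB(\la)\times\bfB(\la)\to\dot{\bfB}[\la]$ with leading term $b_1^-1_\la\sigma(b_2)^+$.

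\emph{Structural interpretation and multiplicativity.} The formula for $\varphi_P(\bar{c^*})$ is the comultiplication $\Delta_\bfO(c^*)=\sum m_{ab}^c a^*\otimes b^*$, followed by projection onto $\dot{\bfB}_{0,\la}^*\otimes\sigma(\dot{\bfB}_{\la,0})^*$. Comultiplication is an algebra map on ${}_R\bfO$, since ${}_R\bfO$ is a bialgebra.
\begin{itemize}
\item Since ${}_R\bfO(X^+_{\le\la})$ and ${}_R\bfO(X^+_{<\la})$ are sub-coalgebras (Proposition~\ref{prop:downward-closed}), the comultiplication induces a map on $\gr$.
\item Composing with the quotient $\gr\otimes\gr\to(\gr/I_P)\otimes(\gr/I'_P)$, where $I'_P$ is the analogue of $I_P$ obtained by applying $\omega\sigma$, and using Lemma~\ref{lem:ideal-in-grO} to identify these quotients with ${}_R\bfO_{G/U}(P)$ and ${}_R\bfO_{U^-\backslash G}(P)$, we get a ring map.
\item We must check that this composite equals $\varphi_P$. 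This requires that for $c\in\dot{\bfB}[\la]$, any nonzero $m_{ab}^c$ with $a\in\dot{\bfB}_{0,\mu}$ and $b\in\sigma(\dot{\bfB}_{\mu',0})$ in the top graded piece forces $\mu=\mu'=\la$. This should follow because $ab\in\dot{\bfU}[\ge\mu]\cap\dot{\bfU}[\ge\mu']$, which gives $\la\ge\mu,\mu'$, together with the fact that the terms with $\mu<\la$ die in $\gr$.
\end{itemize}

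\emph{Degree $0$.} Each $c\in\dot{\bfB}[\la]$ lies in some $1_\eta\dot{\bfU}1_\zeta$. If $m_{ab}^c\neq0$ with $a=a_1^-1_\la$ and $b=1_\mu\sigma(b_2)^+$, then the idempotents must match in the product $ab$. This gives $\la=\mu$, so the term $a^*\otimes b^*$ has degree $\la-\mu=0$.

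\emph{Bijectivity onto degree $0$.} The degree $0$ part has $R$-basis $(b_1^-1_\la)^*\otimes(1_\la\sigma(b_2)^+)^*$ for $\la\in P$ and $b_1,b_2\in\bfB(\la)$. Using Proposition~\ref{prop:basic-affine-space}(i), these are exactly the products over a common $\la$, and no cancellation across different $\la$ occurs. We claim that
\[\varphi_P\bigl(\overline{\beta_\la(b_1,b_2)^*}\bigr)=(b_1^-1_\la)^*\otimes(1_\la\sigma(b_2)^+)^*+(\text{other terms}),\]
with a triangular structure. Here the coefficient $m^{\beta_\la(c_1,c_2)}_{a,b}$ for $a=b_1^-1_\la$ and $b=1_\la\sigma(b_2)^+$ is the coefficient of $\beta_\la(c_1,c_2)$ in the product $b_1^-1_\la\sigma(b_2)^+$. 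By Proposition~\ref{prop:parametrize-B[la]}, this product equals $\beta_\la(b_1,b_2)$ modulo $\dot{\bfU}[>\la]$. Hence within $\dot{\bfB}[\la]$ the coefficient is $\delta_{(b_1,b_2),(c_1,c_2)}$. The matrix of $\varphi_P$ on the $\la$-piece is therefore the identity, so $\varphi_P$ is an isomorphism of $R$-modules onto degree $0$, and hence an algebra isomorphism.

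\emph{Main obstacle.} The delicate step is the identification of $\varphi_P$ with the composite ring map. This needs the vanishing of the cross terms and the compatibility of the ideals $I_P$ with the coproduct on $\gr$. I would handle both using Lemma~\ref{lem:ideal-in-grO} and \cite[1.14(a)]{LusztigGroupScheme}.
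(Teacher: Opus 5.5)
Your proposal is correct and is the paper's proof in more conceptual form. The paper checks multiplicativity by expanding $\varphi_P(\bar{c^*})\varphi_P(\bar{d^*})$ and $\varphi_P(\bar{c^*}\bar{d^*})$ in structure constants, using three inputs: the identity \eqref{eq:structure constants} (multiplicativity of $\hat\Delta$), the filtration bounds (iii)--(vi) from Lemma~\ref{lem:filtration-O-multiplicative}, and Lemma~\ref{lem:ideal-in-grO} for (vii). These are exactly the facts that make your factorization of $\varphi_P$ a composite of ring maps: first the coproduct on $\gr({}_R\bfO(P))$, then the quotients by $I_P$ and by its $\omega\sigma$-analogue. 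The one thing to state explicitly is that the induced coproduct on $\gr$ is multiplicative, and this is where Lemma~\ref{lem:filtration-O-multiplicative} enters. Conversely, the cross terms vanish by construction, and no further compatibility of $I_P$ with the coproduct is needed, so the ``main obstacle'' you flag is not one.

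Your bijectivity step is the same as the paper's. It is actually diagonal rather than merely triangular: Proposition~\ref{prop:parametrize-B[la]} gives $\varphi_P(\overline{\beta_\la(c_1,c_2)^*})=(c_1^-1_\la)^*\otimes(1_\la\sigma(c_2)^+)^*$ exactly.
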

\begin{proof}
    We may assume that $R=\A$. First we show that $\varphi_P$ is a ring homomorphism. Let $c\in\dot{\bf{B}}[\lambda]$ and $d\in\dot{\bf{B}}[\lambda']$ with $\lambda,\lambda'\in P$. One calculates that 
    \begin{equation}
        \begin{split}
            \varphi_P(\bar{c^*})\varphi_P(\bar{d^*})&=(\sum_{a\in\dot{\bf{B}}_{0,\lambda}, b\in\sigma(\dot{\bf{B}}_{\lambda,0})}m_{ab}^{c} a^*\otimes  b^*)(\sum_{e\in\dot{\bf{B}}_{0,\lambda'}, f\in\sigma(\dot{\bf{B}}_{\lambda',0})}m_{ef}^{d} e^*\otimes  f^*)\\
            &=\sum_{\substack{a\in\dot{\bf{B}}_{0,\lambda}, b\in\sigma(\dot{\bf{B}}_{\lambda,0})\\ e\in\dot{\bf{B}}_{0,\lambda'}, f\in\sigma(\dot{\bf{B}}_{\lambda',0})}}m_{ab}^c m_{ef}^d a^*e^*\otimes  b^*f^*\\
            &=\sum_{\substack{a\in\dot{\bf{B}}_{0,\lambda}, b\in\sigma(\dot{\bf{B}}_{\lambda,0})\\ e\in\dot{\bf{B}}_{0,\lambda'}, f\in\sigma(\dot{\bf{B}}_{\lambda',0})}}\sum_{z\in\dot{\bf{B}}_{0,\lambda+\lambda'},w\in\sigma(\dot{\bf{B}}_{\lambda+\lambda',0})}m_{ab}^cm_{ef}^d\hat{m}_z^{ae}\hat{m}_w^{bf}z^*\otimes  w^*,
        \end{split}
    \end{equation}
    and
    \begin{equation}
        \begin{split}
            \varphi_P(\bar{c^*}\bar{d^*})&=\varphi_P(\sum_{x\in\dot{\bf{B}}[\lambda+\lambda']}\hat{m}_x^{cd}\bar{x^*})\\&=\sum_{x\in\dot{\bf{B}}[\lambda+\lambda']}\sum_{z\in\dot{\bf{B}}_{0,\lambda+\lambda'},w\in\sigma(\dot{\bf{B}}_{\lambda+\lambda',0})}\hat{m}_x^{cd}m_{zw}^x z^*\otimes  w^*.
        \end{split}
    \end{equation}
    For fixed $c,d,z,w\in\dot{\bf{B}}$, the structure constants satisfy the equation
    \begin{equation}\label{eq:structure constants}
\sum_{a,b,e,f\in\dot{\bf{B}}}m_{ab}^cm_{ef}^d\hat{m}^{ae}_z\hat{m}^{bf}_w=\sum_{x\in\dot{\bf{B}}}\hat{m}_x^{cd}m_{zw}^x
    \end{equation}
    which follows from the fact that $\hat{\Delta}\colon\hat{\bf{U}}\to\hat{\bfU}\hat{\otimes}\hat{\bfU}$ is a ring homomorphism. 
    In our situation, the elements satisfy further constraints:
    \begin{enumerate}
        \item[(i)] $c\in\dot{\bf{B}}[\lambda]$ and $d\in\dot{\bf{B}}[\lambda']$;
        \item[(ii)] $z\in\dot{\bf{B}}_{0,\lambda+\lambda'}\subset\dot{\bf{B}}[\lambda+\lambda']$ and $w\in\sigma(\dot{\bf{B}}_{\lambda+\lambda',0})\subset\dot{\bf{B}}[\lambda+\lambda']$.
    \end{enumerate}
    Now from (i) and Lemma \ref{lem:filtration-O-multiplicative} we get:
    \begin{enumerate}
        \item[(iii)]  $\hat{m}_x^{cd}\ne0$ only if $x\in\dot{\bf{B}}[\le(\lambda+\lambda')]$;
    \end{enumerate}
    From (ii) we get:
    \begin{enumerate}
        \item[(iv)] $m_{zw}^x\ne0$ only if $x\in\dot{\bf{B}}[\ge(\lambda+\lambda')]$.
    \end{enumerate}
    On the other hand, (i) implies that: 
    \begin{enumerate}
        \item[(v)] $m_{ab}^cm_{ef}^d\ne0$ only if $a,b\in\dot{\bf{B}}[\le\lambda]$ and $e,f\in\dot{\bf{B}}[\le\lambda']$.
    \end{enumerate}
    Combining (v) and (ii), we get that:
    \begin{enumerate}
        \item[(vi)]  $m_{ab}^cm_{ef}^d\hat{m}^{ae}_z\hat{m}^{bf}_w\ne0$ only if $a,b\in\dot{\bf{B}}[\lambda]$ and $e,f\in\dot{\bf{B}}[\lambda']$.
    \end{enumerate}
    Combining (vi) and Lemma \ref{lem:ideal-in-grO} we get that: 
    \begin{enumerate}
        \item[(vii)] $m_{ab}^cm_{ef}^d\hat{m}^{ae}_z\hat{m}^{bf}_w\ne0$ only when the following are all satisfied:
        \[a\in\dot{\bf{B}}_{0,\lambda}, b\in\sigma(\dot{\bf{B}}_{\lambda,0}), e\in\dot{\bf{B}}_{0,\lambda'}, f\in\sigma(\dot{\bf{B}}_{\lambda',0}).\]
    \end{enumerate}
    Finally, combining (iii), (iv), (vii) and \eqref{eq:structure constants} we conclude that \[\varphi_P(\bar{c^*})\varphi_P(\bar{d^*})=\varphi_P(\bar{c^*}\bar{d^*})\] 
    and this shows that $\varphi_P$ is a ring homomorphism.\par 
    Now let $\la\in P$ and $c_1,c_2\in\bfB(\la)$. Let $c\coloneqq\beta_\la(c_1,c_2)\in\dot{\bfB}[\la]$, where the map $\beta_\la$ is defined in Proposition \ref{prop:parametrize-B[la]}. Then we have $c-c_11_\la\sigma(c_2)^+\in\dot{\bfU}[>\la]$ and hence
    \[\varphi(\bar{c^*})=(c_1^-1_\lambda)^*\otimes (1_\lambda\sigma(c_2)^+)^*.\]
    By Proposition \ref{prop:parametrize-B[la]} we see that $\varphi$ is injective and its image is the $\A$-submodule spanned by the set
    \[\bigcup_{\lambda\in P}\{a^*\otimes  b^*, (a,b)\in\dot{\bf{B}}_{0,\lambda}\times\sigma(\dot{\bf{B}}_{\lambda,0})\}.\]
    This is precisely the degree $0$ subalgebra of ${}_\A\bfO_{G/U}(P)\otimes_\A{}_\A\bfO_{U^-\backslash G}(P)$.
\end{proof}

\begin{cor}\label{cor:grO-description}
    Let $P\subset X^+$ be a submonoid. Then the $R$-algebra 
    \[\mathrm{gr}({}_R\bfO(P))\] 
    is isomorphic to the degree $(0,0)$ subalgebra of the $X\oplus X$-graded $R$-algebra
    \[R[P]\otimes{}_R\bfO_{G/U}\otimes{}_R\bfO_{U^-\backslash G}\]
    in which the grading is defined as follows: for any $\la\in P$, $\mu,\nu\in X^+$ and $b\in\bfB(\la)$, $b'\in\bfB(\nu)$, the element $e^\la\otimes (b^-1_\mu)*\otimes (1_\nu\sigma(b')^+)^*$ has degree $(\la-\mu,\la-\nu)$. Here $R[P]$ is the monoid algebra of $P$ with $R$-basis $\{e^\la,\la\in P\}$.
\end{cor}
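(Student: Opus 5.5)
The plan is to combine Proposition \ref{prop:grO-decription} with Proposition \ref{prop:basic-affine-space}(ii), applied to both tensor factors. By Proposition \ref{prop:grO-decription}, $\mathrm{gr}({}_R\bfO(P))$ is identified with the degree $0$ part of ${}_R\bfO_{G/U}(P)\otimes_R{}_R\bfO_{U^-\backslash G}(P)$. Here the element $(b_1^-1_\la)^*\otimes(1_\mu\sigma(b_2)^+)^*$ has degree $\la-\mu$, for $\la,\mu\in P$.

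Next, Proposition \ref{prop:basic-affine-space}(ii) embeds ${}_R\bfO_{G/U}(P)$ into $R[P]\otimes_R{}_R\bfO_{G/U}$ via $(b^-1_\la)^*\mapsto e^\la\otimes(b^-1_\la)^*$. Its image is the degree-$0$ part for the grading $e^\la\otimes(b^-1_\mu)^*\mapsto\la-\mu$. Similarly, ${}_R\bfO_{U^-\backslash G}(P)$ embeds into $R[P]\otimes_R{}_R\bfO_{U^-\backslash G}$. Tensoring these gives an algebra embedding
\[{}_R\bfO_{G/U}(P)\otimes_R{}_R\bfO_{U^-\backslash G}(P)\into R[P]\otimes R[P]\otimes{}_R\bfO_{G/U}\otimes{}_R\bfO_{U^-\backslash G}.\]
This uses that all modules involved are free over $R$ on subsets of dual canonical basis elements, so tensor products of these embeddings remain injective. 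The image is spanned by $e^\la\otimes e^\mu\otimes(b^-1_\la)^*\otimes(1_\mu\sigma(b')^+)^*$. Imposing the degree-$0$ condition of Proposition \ref{prop:grO-decription} forces $\la=\mu$. Hence $\mathrm{gr}({}_R\bfO(P))$ is spanned by the images of the elements $e^\la\otimes e^\la\otimes(b^-1_\la)^*\otimes(1_\la\sigma(b')^+)^*$ with $\la\in P$ and $b,b'\in\bfB(\la)$.

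To land in $R[P]\otimes{}_R\bfO_{G/U}\otimes{}_R\bfO_{U^-\backslash G}$ with a single factor $R[P]$, I will instead define the map directly. Send $\overline{\beta_\la(b,b')^*}$, which corresponds under $\varphi_P$ to $(b^-1_\la)^*\otimes(1_\la\sigma(b')^+)^*$, to
\[e^\la\otimes(b^-1_\la)^*\otimes(1_\la\sigma(b')^+)^*.\]
This map is the composite of $\varphi_P$, the above embedding, and the multiplication $R[P]\otimes R[P]\to R[P]$, restricted to the diagonal. It is a ring homomorphism. The reason is that on the span of diagonal terms, the restriction of $e^\la\otimes e^\mu\mapsto e^{\la+\mu}$ is multiplicative, and products of diagonal terms are again diagonal, since both ${}_R\bfO_{G/U}$ and ${}_R\bfO_{U^-\backslash G}$ are $X^+$-graded by Proposition \ref{prop:basic-affine-space}(i).

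The map is injective because distinct triples $(\la,b,b')$ give distinct basis tensors. Its image is exactly the span of the elements $e^\la\otimes(b^-1_\mu)^*\otimes(1_\nu\sigma(b')^+)^*$ with $\la=\mu=\nu$ and $\la\in P$. By the definition of the grading in the statement, this is precisely the degree $(0,0)$ part.

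The main subtlety I expect is the bookkeeping of multiplicativity. The product in ${}_R\bfO_{G/U}$ of basis elements of weights $\mu$ and $\mu'$ lies in the span of weight $\mu+\mu'$ elements, by \cite[1.14(a)]{LusztigGroupScheme}, and the analogous statement holds for $U^-\backslash G$. This ensures the degree-$(0,0)$ part is a subalgebra and that the identification respects products. Everything else is a direct transcription of Proposition \ref{prop:grO-decription}.
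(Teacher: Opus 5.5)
Your proposal is correct and is essentially the paper's proof: your map $\overline{\beta_\la(b,b')^*}\mapsto e^\la\otimes(b^-1_\la)^*\otimes(1_\la\sigma(b')^+)^*$ is exactly the paper's composite $\iota_P\circ\varphi_P$, and its image is the degree $(0,0)$ part. One small slip: on the diagonal you need $e^\la\otimes e^\la\mapsto e^\la$, which is not the restriction of the multiplication $R[P]\otimes R[P]\to R[P]$ (that restriction gives $e^{2\la}$ and lands in degree $(\la,\la)$); however, $e^\la\otimes e^\la\mapsto e^\la$ is still a ring isomorphism from the diagonal subring $\bigoplus_{\la\in P}R\,(e^\la\otimes e^\la)$ onto $R[P]$, so your multiplicativity argument goes through unchanged.
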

\begin{proof}
    This is a combination of Proposition \ref{prop:grO-decription} and Proposition \ref{prop:basic-affine-space}. We have a canonical $R$-algebra embedding
    \[\iota_P\colon\mathrm{Im}(\varphi_P)=({}_R\bfO_{G/U}(P)\otimes_R{}_R\bfO_{U^-\backslash G}(P))_0\to R[P]\otimes{}_R\bfO_{G/U}\otimes{}_R\bfO_{U^-\backslash G}\]
    defined on homogeneous elements by 
    \[\iota_P((b_1^-1_\la)^*\otimes(1_\la\sigma(b_2)^+)^*)=e^\la\otimes(b_1^-1_\la)^*\otimes(1_\la\sigma(b_2)^+)^*,\quad b_1,b_2\in\bfB(\la), \la\in P.\]
    The image of $\iota_P$ is the degree $(0,0)$ subalgebra and the composition $\iota_P\circ\varphi_P$ gives the desired algebra isomorphism.  
\end{proof}

\begin{cor}\label{cor:grO-finite}
    Let $P$ be a finitely generated submonoid of $X^+$ that is downward closed. Then the $R$-algebras ${}_R\bfO_{G/U}(P)$, ${}_R\bfO_{U^-\backslash G}(P)$, $\mathrm{gr}({}_R\bfO(P))$ and ${}_R\bfO(P)$ are all finitely generated. 
\end{cor}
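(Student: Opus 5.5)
We argue in four steps: first ${}_R\bfO_{G/U}$ and ${}_R\bfO_{U^-\backslash G}$, then their $P$-parts, then $\mathrm{gr}({}_R\bfO(P))$, and finally ${}_R\bfO(P)$ by lifting generators from the graded ring. Throughout we may take $R=\A$, since all these modules are free with bases from $\dot{\bfB}^*$ and their formation commutes with base change; finite generation over $\A$ then passes to any $R$.

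\emph{Step 1.} We show ${}_\A\bfO_{G/U}$ is finitely generated. Choose dominant weights $\la_1,\dots,\la_r$ generating $X^+$ as a monoid (for instance a $\ZZ$-basis of $X_{\ab}$ and its negatives, together with lifts of the fundamental weights). We claim the finite set $\{(b^-1_{\la_i})^*, b\in\bfB(\la_i)\}$ generates ${}_\A\bfO_{G/U}$. By Proposition~\ref{prop:basic-affine-space}(i) this algebra is $X^+$-graded, with $\la$-component spanned by $\dot{\bfB}_{0,\la}^*$. It therefore suffices to show that multiplication maps $({}_\A\bfO_{G/U})_\la\otimes({}_\A\bfO_{G/U})_{\la'}$ onto $({}_\A\bfO_{G/U})_{\la+\la'}$. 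By construction this multiplication is the dual of the map $\tau\colon{}_\A\La_{\la+\la'}\to{}_\A\La_\la\otimes{}_\A\La_{\la'}$, via the identity $\tau(c\eta_{\la+\la'})=\sum\hat m_c^{ab}a\eta_\la\otimes b\eta_{\la'}$ established in the proof of Lemma~\ref{lem:h_c-identity}. Since $\tau$ is split injective over $\A$ (\cite[27.1.7]{Lusztig-IntroQuantumGroup}, as used there), its dual is surjective, and the functions $h_c$ give explicit preimages. By induction on $\la$ in the monoid, every graded piece is then generated by products of the pieces in degrees $\la_i$. Applying $\omega\sigma$ gives the same conclusion for ${}_\A\bfO_{U^-\backslash G}$.

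\emph{Step 2.} Let $P$ be finitely generated. By Proposition~\ref{prop:basic-affine-space}(ii), ${}_\A\bfO_{G/U}(P)$ is the degree-$0$ part of $\A[P]\otimes{}_\A\bfO_{G/U}$ for an $X$-grading coming from a diagonalizable torus action, namely $\mathrm{Spec}\,\A[X]$. The algebra $\A[P]\otimes{}_\A\bfO_{G/U}$ is finitely generated over the noetherian ring $\A$ by Step~1. By the standard invariant-theory fact that invariants of a split torus acting on a finitely generated algebra over a noetherian ring form a finitely generated algebra, the degree-$0$ part is finitely generated. One can also argue directly: the degree-$0$ part is spanned by monomials indexed by the lattice points of a rational cone, and Gordan's lemma applies. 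The same argument handles ${}_\A\bfO_{U^-\backslash G}(P)$, and, via Corollary~\ref{cor:grO-description} (a degree-$(0,0)$ part for an $X\oplus X$-grading), also $\mathrm{gr}({}_\A\bfO(P))$.

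\emph{Step 3.} We lift to ${}_\A\bfO(P)$. Use the $\NN$-filtration from Lemma~\ref{lem:from Xplus to NN} together with the identification $\mathrm{gr}\cong\mathrm{gr}_h$ of Lemma~\ref{lem:grh}. Here downward closedness of $P$ is needed: it guarantees that ${}_\A\bfO(P_{\le n})$ is exactly the span of the relevant cells, so the filtration on ${}_\A\bfO(P)$ is the restriction of the one on ${}_\A\bfO$. The filtration is exhaustive and bounded below, with ${}_\A\bfO_{\le-1}=0$. Lift finitely many homogeneous generators of $\mathrm{gr}_h$ to elements $b^*$. A standard induction on $n$ then shows that these lifts generate each filtered piece ${}_\A\bfO_{\le n}(P)$, and hence generate ${}_\A\bfO(P)$.

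\emph{Main obstacle.} The main difficulty is the surjectivity in Step~1 over $\A$ rather than over $\QQ(v)$: we need the integral splitting of $\tau$ and its compatibility with the bases $\dot{\bfB}_{0,\la}$. The proof of Lemma~\ref{lem:h_c-identity} already provides exactly this splitting, so we expect Step~1 to go through as described.
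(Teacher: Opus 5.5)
Your proof is correct, and Step~1 rests on the same key input as the paper, namely the integral splitting in Lemma~\ref{lem:h_c-identity}. The organization differs, though.

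\emph{The paper's route.} It works only with generators $\la_1,\dots,\la_m$ of $P$. It applies Lemma~\ref{lem:h_c-identity} directly to get generators of ${}_R\bfO_{G/U}(P)$ from the duals of $\bigcup_j\dot{\bfB}_{0,\la_j}$, and symmetrically for $U^-\backslash G$. It then reads off explicit generators of $\mathrm{gr}({}_R\bfO(P))$ from Proposition~\ref{prop:grO-decription} and lifts them along the $h$-filtration.

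\emph{Your route.} You prove finite generation of all of ${}_\A\bfO_{G/U}$. You then obtain ${}_\A\bfO_{G/U}(P)$ and $\mathrm{gr}({}_\A\bfO(P))$ as degree-zero parts of finitely generated graded algebras (Proposition~\ref{prop:basic-affine-space}(ii), Corollary~\ref{cor:grO-description}), and conclude by Gordan's lemma.

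\emph{What each buys.} The paper aims at explicit generators; yours produces none, but it is more robust. The explicit set the paper names for $\mathrm{gr}$ and for ${}_R\bfO(P)$ is the duals of $\bigcup_j\dot{\bfB}_{0,\la_j}\cup\bigcup_j\sigma(\dot{\bfB}_{\la_j,0})$, and it is too small.
\begin{itemize}
\item Take $G=\mathrm{SL}_2$ and $P=X^+=\NN\omega$. The set is $1_\omega^*$, $(\theta_i^-1_\omega)^*$, $(1_\omega\theta_i^+)^*$, all in degree $\omega$.
\item The degree-$\omega$ piece of $\mathrm{gr}$ has rank $\#\dot{\bfB}[\omega]=4$, so the element $\overline{1_{-\omega}^*}$ is not reached.
\item Correspondingly, three matrix coefficients of ${}_\ZZ\La_\omega$ do not generate $\ZZ[\mathrm{SL}_2]$.
\end{itemize}
The direct approach is repaired by taking the full cells $\bigcup_j\dot{\bfB}[\la_j]^*$. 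One then notes that multiplication $({}_R\bfO_{G/U})_\mu\otimes({}_R\bfO_{G/U})_\nu\to({}_R\bfO_{G/U})_{\mu+\nu}$ is surjective for \emph{all} $\mu,\nu\in X^+$. Hence the same decomposition $\la=\sum_k\la_{j_k}$ can be used on both tensor factors, so the diagonal $\bigoplus_\la({}_R\bfO_{G/U})_\la\otimes({}_R\bfO_{U^-\backslash G})_\la$ is generated in the degrees $\la_j$. For a general diagonal of this Segre type that fails, and your Gordan argument never needs it.

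Two small corrections to your write-up.
\begin{itemize}
\item In Step~1, the parenthetical generating set for $X^+$ is wrong in general. Fundamental weights lie in $X_{\der}$ only when $G_{\der}$ is simply connected, and $X_{\der}^+$ need not be free. Either cite Gordan's lemma for $X^+=C\cap X$, or run Step~1 directly with generators of $P$; the latter makes the first half of Step~2 unnecessary.
\item In Step~3, you locate the use of downward closedness in the wrong place. That ${}_\A\bfO(P_{\le n})$ is the intersection of ${}_\A\bfO(P)$ with the span of all cells $\la$ satisfying $h(\la)\le n$ holds for any $P$, since everything is spanned by subsets of one basis. Downward closedness is needed so that ${}_\A\bfO(P)$ is closed under multiplication. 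A product of cells $\la,\mu\in P$ involves only cells $\nu\le\la+\mu$ (Lemma~\ref{lem:filtration-O-multiplicative}), and these lie in $P$ only because $P$ is downward closed. This is what makes ${}_\A\bfO(P)$ a filtered algebra with associated graded $\mathrm{gr}({}_\A\bfO(P))$.
\end{itemize}
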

\begin{proof}
    Let $\lambda_1,\dotsc,\lambda_m$ be a set of generators of the monoid $P$. Let $\Gamma_P^+=\cup_{1\le j\le m}\dot{\bf{B}}_{0,\lambda_j}$ and $\Gamma_P^-=\cup_{1\le j\le m}\sigma(\dot{\bf{B}}_{\lambda_j,0})$ (both are finite sets). \par    
    For any $\lambda',\lambda''\in P$ and any $c\in\dot{\bf{B}}_{0,\lambda}$ where $\lambda=\lambda'+\lambda''$, by Lemma \ref{lem:h_c-identity} there exists a function $h_c\colon\dot{\bf{B}}_{0,\lambda'}\times\dot{\bf{B}}_{0,\lambda''}\to\ZZ[v^{\pm1}]$ such that the following equality holds in ${}_\A\bf{O}$:
    \[\sum_{a\in\dot{\bf{B}}_{0,\lambda'},b\in\dot{\bf{B}}_{0,\lambda''}}h_c(a,b)a^*b^*=c^*.\]
    From this we deduce by induction that $\{b^*,b\in\Gamma_P^+\}$ is a generating set for the $R$-algebra ${}_R\bfO_{G/U}(P)$. Similarly (or by applying the involution $\omega\sigma$), we see that $\{b^*,b\in\Gamma_P^-\}$ is a generating set for the $R$-algebra ${}_R\bfO_{U^-\backslash G}(P)$. \par 
    Then by Proposition \ref{prop:grO-decription} we see that the finite set $\{\bar{b^*},b\in\Gamma_P^+\}\cup\{\bar{c^*},c\in\Gamma_P^-\}$ generates the algebra $\rm{gr}({}_R\bfO(P))$. Lifting generators, and using induction thanks to Lemma \ref{lem:from Xplus to NN} and Lemma \ref{lem:grh} we deduce that the finite set $\{b^*,b\in\Gamma_P^+\}\cup\{c^*,c\in\Gamma_P^-\}$ generates the $R$-algebra ${}_R\bfO(P)$. 
\end{proof}

\subsection{Normality}
\begin{lem}\label{lem:normal}
    Let $A$ be a normal integral domain and $X$ a scheme over $\Spec(A)$. Suppose that the structure morphism $X\to\Spec(A)$ is flat and of finite presentation, and all geometric fibers are integral normal. Then the ring of global functions $\Gamma(X,\O_X)$ is a normal integral domain. 
\end{lem}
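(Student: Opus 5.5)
The plan is to first show that $X$ itself is an integral normal scheme, and then pass to global sections. Write $K$ for the fraction field of $A$ and $f\colon X\to\Spec(A)$ for the structure morphism.

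\emph{Step 1: $X$ is normal.} Since $f$ is flat and of finite presentation with geometrically normal fibers, $f$ is a normal morphism in the sense of the Stacks project. Then \cite[\href{https://stacks.math.columbia.edu/tag/033C}{Lemma 033C}]{Stacks} (a flat morphism with normal fibers over a normal base has normal source, the base being locally Noetherian), or its finite presentation variant obtained by writing $A$ as a filtered colimit of Noetherian normal rings, gives that every local ring $\O_{X,x}$ is a normal domain. The non-Noetherian case is the part I expect to need care. I would try to avoid the issue by using the more general statement that a flat morphism whose fibers are normal and whose base has normal local rings has source with normal local rings. This holds because normality is checked via Serre's criterion after reduction to the Noetherian case by absolute Noetherian approximation; alternatively one can cite the appropriate Stacks lemma on ascent of normality along flat morphisms with geometrically normal fibers.

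\emph{Step 2: $X$ is irreducible.} Flatness and finite presentation make $f$ open. Since $A$ is a domain, every point of $X$ generalizes to a point of the generic fiber $X_K$. Indeed, by going-down for flat morphisms, any irreducible component of $X$ dominates $\Spec(A)$, so its generic point lies over the generic point of $\Spec(A)$. The generic fiber $X_K$ is geometrically integral, hence irreducible. Therefore $X$ has a unique irreducible component, namely the closure of $X_K$. Combined with Step 1, a normal scheme that is irreducible is integral.

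\emph{Step 3: global sections.} For an integral scheme $X$, the ring $\Gamma(X,\O_X)$ is a domain and embeds into the function field $K(X)$. Let $g\in K(X)$ be integral over $\Gamma(X,\O_X)$. Then on each affine open $\Spec(B)\subset X$, the element $g$ is integral over $B$. Since $B$ is a normal domain with fraction field $K(X)$, we get $g\in B$. These local sections agree in $K(X)$, so they glue to a global section, and hence $g\in\Gamma(X,\O_X)$. Thus $\Gamma(X,\O_X)$ is normal. Here we need $X$ nonempty to have a domain; if $X$ is empty the statement is vacuous or conventionally excluded.

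The main obstacle is Step 1 in the absence of Noetherian hypotheses on $A$. Since the applications in the paper have $A$ Noetherian (for instance $\ZZ$, or the local rings of $S$ reduced to finite type over $\ZZ$), I would first prove the lemma for Noetherian $A$ and remark that the general case follows by approximation.
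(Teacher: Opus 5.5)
Your proposal is correct and is essentially the paper's argument. The paper obtains normality of $X$ in one step from \cite[11.3.13]{EGAIV8a15}, which is precisely the finite-presentation, non-Noetherian ascent of normality along flat morphisms with geometrically normal fibers that your Step~1 is reaching for, so your Noetherian-approximation detour is unnecessary; it then concludes with \cite[Lemma 0358]{Stacks}, whose proof is your Step~3. Your Step~2 is a small but genuine addition: that lemma requires $X$ to be integral, and the paper applies it without checking irreducibility, which your argument (lifting generalizations along the flat map $f$, plus irreducibility of the generic fiber $X_K$) supplies.
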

\begin{proof}
    By \cite[11.3.13]{EGAIV8a15} the scheme $X$ is normal. Then we conclude by \cite[Lemme 8.8.6.1]{EGAII1a8} or \cite[\href{https://stacks.math.columbia.edu/tag/0358}{Lemma 0358}]{Stacks}.
\end{proof}

\begin{lem}\label{lem:filtered-ring-normal}
    Let $A=\bigcup_{n\in\ZZ}A_n$ be a filtered ring with $A_n\subset A_{n+1}$ for all $n\in\ZZ$ and $A_n=0$ for all $n<0$. Suppose that $\mathrm{gr}(A)=\oplus_{n\in\ZZ}A_n/A_{n+1}$ is a noetherian normal integral domain. Then $A$ is a normal integral domain. 
\end{lem}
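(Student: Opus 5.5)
The plan is the standard "leading term" argument for filtered rings, in two steps: first show that $A$ is a domain, then show that it is integrally closed. Note that the statement writes $A_n/A_{n+1}$, but with an increasing filtration the intended graded pieces are $\mathrm{gr}_n(A)=A_n/A_{n-1}$; this is what I use. For $0\ne x\in A$ let $\deg x$ be the least $n$ with $x\in A_n$; this exists because $A=\bigcup A_n$ and $A_n=0$ for $n<0$. Let $\sigma(x)\in A_{\deg x}/A_{\deg x-1}$ be its nonzero image, the symbol of $x$. The filtration is exhaustive and separated, so these are well defined.

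\emph{Domain.} For $x,y\ne0$, the product $\sigma(x)\sigma(y)$ is the image of $xy$ in $A_{\deg x+\deg y}/A_{\deg x+\deg y-1}$. Since $\mathrm{gr}(A)$ is a domain, this image is nonzero. Hence $xy\ne0$, $\deg(xy)=\deg x+\deg y$ and $\sigma(xy)=\sigma(x)\sigma(y)$.

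\emph{Normality.} Let $K=\mathrm{Frac}(A)$ and take $z=a/s\in K$ integral over $A$. I would extend the degree function to a valuation-like function on $K$ by $\deg(a/s)=\deg a-\deg s$. This is well defined and additive by the domain step. Since $\deg$ on $A$ takes values in $\NN$, the easy direction of the reduction is fine. The goal is to show $z\in A$. I expect this to be the main obstacle: integral elements of $K$ need not have integral symbols in a naive sense.

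The standard route I would follow (Bourbaki, Comm. Alg. V or Zariski–Samuel) uses the completeness of the filtration in the weak sense available here. Since $A_{-1}=0$, any descending process on degrees terminates after finitely many steps.

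\emph{Conductor.} Because $\mathrm{gr}(A)$ is noetherian, $A$ is noetherian; the standard proof lifts homogeneous generators of $\mathrm{gr}(I)$ for an ideal $I$, and the induction on degree terminates because degrees are $\ge0$. So the integral element $z$ lies in a finitely generated $A$-module $A[z]$, and there is $0\ne d\in A$ with $dz^n\in A$ for all $n$.

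\emph{Integral symbol.} Write $z=a/s$. I claim $\sigma(a)/\sigma(s)$ lies in $\mathrm{Frac}(\mathrm{gr}A)$ and is almost integral over $\mathrm{gr}A$. Indeed, $\sigma(d)\sigma(a)^n=\sigma(da^n)$, and $da^n=s^n\cdot(\text{element of }A)$, so
\[\sigma(d)\bigl(\sigma(a)/\sigma(s)\bigr)^n\in\mathrm{gr}A.\]
Since $\mathrm{gr}A$ is noetherian, almost integral implies integral. Since it is normal, $\sigma(a)/\sigma(s)\in\mathrm{gr}A$, i.e. $\sigma(a)=\sigma(s)\sigma(b)$ for some homogeneous $b\in A$. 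Then $a-sb$ has strictly smaller degree than $a$, and $z-b=(a-sb)/s$ is still integral over $A$ and has smaller degree. Iterating, the degree eventually drops below $\deg s$; at that point $\deg(a')<\deg s$ forces $a'=0$, because otherwise the same argument would give $\sigma(a')\in\sigma(s)\,\mathrm{gr}A$, impossible by degree. Thus $z\in A$.

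The delicate points are well-definedness of the ``almost integral'' step, where $d$ must be fixed once, and termination. Termination uses $A_n=0$ for $n<0$.
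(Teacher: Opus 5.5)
Your proof is correct and follows the paper's argument. The symbol map is multiplicative, which gives the domain property. For an integral $z=a/s$ you pick $d\neq 0$ with $dz^n\in A$ for all $n$, and observe that $\sigma(d)\bigl(\sigma(a)/\sigma(s)\bigr)^n\in\mathrm{gr}(A)$. The almost-integral criterion over the noetherian normal domain $\mathrm{gr}(A)$ (the paper cites Stacks Lemma 00GX for this) puts $\sigma(a)/\sigma(s)$ in $\mathrm{gr}(A)$; you lift it to some $b\in A$ and descend on the degree of the numerator, exactly as the paper does.

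The detour through noetherianity of $A$ is unnecessary but harmless: integrality of $z$ over the domain $A$ already gives the common denominator $d=s^{m-1}$, where $m$ is the degree of an integral equation for $z$.
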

\begin{proof}
    For any $a\in A$, let $h(a)\coloneqq\min\{n\in\NN,a\in A_n\}$ and let $\bar{a}\in\mathrm{gr}(A)$ be the image of $a$ in $A_{h(a)}/A_{h(a)-1}$. Then $\bar{a}=0$ if and only if $a=0$. Moreover, one easily verifies that for any $a,b\in A$ we have $\overline{ab}=\bar{a}\bar{b}$.\par  
    Suppose there exists $a,b\in A$ with $ab=0$. Then $\bar{a}\bar{b}=0$ and since $\mathrm{gr}(A)$ is an integral domain we have either $\bar{a}=0$ or $\bar{b}=0$ and hence either $a=0$ or $b=0$. This shows that $A$ is an integral domain.\par
    Let $\alpha=\frac{x}{y}\in\mathrm{Frac}(A)$ be integral over $A$. Then there exists $c\in A$ such that $a_n\coloneqq c\alpha^n\in A$ for all $n\ge0$ by \cite[\href{https://stacks.math.columbia.edu/tag/00GX}{Lemma 00GX}]{Stacks}. Then we have $\bar{c}(\bar{x}/\bar{y})^n\in\mathrm{gr}(A)$ for all $n\ge0$. Since $\mathrm{gr}(A)$ is noetherian and integrally closed, we get that $\bar{x}/\bar{y}\in\mathrm{gr}(A)$, again by \cite[\href{https://stacks.math.columbia.edu/tag/00GX}{Lemma 00GX}]{Stacks}. Therefore we can find $a\in A$ such that $h(x-ya)<h(x)$. Replacing $\alpha$ by $\alpha-a$ and repeat the above argument, we finally conclude that $\alpha\in A$ and hence $A$ is integrally closed. 
\end{proof}
\begin{theo}\label{theo:O(L)-normal}
    Let $\k$ be a noetherian normal integral domain, viewed as $\A$-algebra by $v\mapsto1$. Let $\L$ be a saturated submonoid of $X^+$. Then the $\k$-algebras ${}_k\bfO_{G/U}(\L)$, ${}_k\bfO_{U^-\backslash G}(\L)$, $\mathrm{gr}({}_\k\bfO(\L))$ and ${}_\k\bfO(\L)$ are integrally closed integral domains.
\end{theo}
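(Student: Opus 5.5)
The plan is to establish normality first for the two "basic affine space" algebras ${}_\k\bfO_{G/U}$ and ${}_\k\bfO_{U^-\backslash G}$ (with $P=X^+$). From there, Proposition \ref{prop:basic-affine-space} and Corollary \ref{cor:grO-description} give the graded pieces as degree-zero parts, and Lemma \ref{lem:filtered-ring-normal} lifts normality from $\mathrm{gr}$ to ${}_\k\bfO(\L)$. The involution $\omega\sigma$ exchanges the two basic affine algebras, so it suffices to treat ${}_\k\bfO_{G/U}$.

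\textbf{Step 1 (normality of ${}_\k\bfO_{G/U}$).} By Theorem \ref{theo:Chevalley-Lusztig group scheme}, $G_\k=\Spec({}_\k\bfO)$ is smooth over $\k$ with connected, hence integral normal, geometric fibers. Lemma \ref{lem:normal} then shows that ${}_\k\bfO$ is a normal domain. I will identify ${}_\k\bfO_{G/U}$ with the right $U$-invariants ${}_\k\bfO^{U}$ (the justification promised in Proposition \ref{prop:U-invariant}). For a group scheme acting on a normal domain $A$ one has $A^U=A\cap\mathrm{Frac}(A)^U$, where invariance of a fraction $f/g$ is tested after clearing a common denominator. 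This intersection is integrally closed, since any element of $\mathrm{Frac}(A^U)$ integral over $A^U$ lies in $A$ by normality of $A$. If I want to avoid the forward reference, the fallback is to argue directly: the closure of $G/U$ in its affine hull is flat over $\k$ with normal integral geometric fibers (the classical result over an algebraically closed field, together with base change of the dual-canonical-basis description), and then apply Lemma \ref{lem:normal} again. I expect this step to be the main obstacle, because it is where the comparison with the classical invariant theory over fields enters.

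\textbf{Step 2 (the graded and basic pieces for $\L$).} Since $\L$ is saturated in $X^+$ and $X^+$ is saturated in $X$, $\L$ is a finitely generated (Gordan) saturated monoid. Hence $\k[\L]$ is a finitely generated normal domain, and its spectrum has normal integral geometric fibers over $\k$. The products $\Spec(\k[\L]\otimes{}_\k\bfO_{G/U})$ and $\Spec(\k[\L]\otimes{}_\k\bfO_{G/U}\otimes{}_\k\bfO_{U^-\backslash G})$ are flat of finite type over $\k$, with geometric fibers that are products of normal varieties, hence integral normal; finite generation comes from Corollary \ref{cor:grO-finite} applied to $P=X^+$. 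By Lemma \ref{lem:normal} these tensor products are normal domains. The degree-zero part of an $X$-graded (resp. $X\oplus X$-graded) normal domain $A$ is again a normal domain, because it equals $A\cap\mathrm{Frac}(A)_0$. Applying this to Proposition \ref{prop:basic-affine-space}(ii) and to Corollary \ref{cor:grO-description} yields normality of ${}_\k\bfO_{G/U}(\L)$, ${}_\k\bfO_{U^-\backslash G}(\L)$ and $\mathrm{gr}({}_\k\bfO(\L))$. Moreover $\mathrm{gr}$ is noetherian, as the invariants of a diagonalizable group acting on a finitely generated algebra over the noetherian ring $\k$.

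\textbf{Step 3 (lifting to ${}_\k\bfO(\L)$).} Fix $h$ as in Lemma \ref{lem:from Xplus to NN}. This gives an $\NN$-indexed exhaustive filtration ${}_\k\bfO_{\le n}(\L)$, and by Lemma \ref{lem:grh} its associated graded ring is $\mathrm{gr}({}_\k\bfO(\L))$. That ring is a noetherian normal domain by Step 2, so Lemma \ref{lem:filtered-ring-normal} shows that ${}_\k\bfO(\L)$ is an integrally closed integral domain. This completes the four assertions of the theorem.
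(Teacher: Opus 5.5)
Your proposal is correct and is essentially the paper's proof. The common route is: normality of ${}_\k\bfO_{G/U}$ and ${}_\k\bfO_{U^-\backslash G}$; then degree-zero parts of tensor products with $\k[\L]$ (the paper uses Proposition~\ref{prop:grO-decription} where you use the equivalent Corollary~\ref{cor:grO-description}); then Lemmas~\ref{lem:grh} and~\ref{lem:filtered-ring-normal}.

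In Step~1 the paper simply applies Lemma~\ref{lem:normal} to the smooth quasi-affine scheme $G/U$, whose global sections are ${}_\k\bfO_{G/U}$. So your invariant-theoretic detour and the fallback are unnecessary, though the detour is valid because $b\otimes 1$ is a non-zero-divisor in ${}_\k\bfO\otimes_\k\k[U]$. Conversely, your Step~2 supplies two points the paper leaves implicit: why the tensor products are normal domains, and why $\mathrm{gr}({}_\k\bfO(\L))$ is noetherian.

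One correction: saturation does not give finite generation, since Gordan's lemma needs a rational polyhedral cone. For $\mathrm{SL}_3$, the set $\{a\omega_1+b\omega_2 : a,b\in\NN,\ b\le\sqrt{2}\,a\}$ is a saturated submonoid of $X^+$ that is not finitely generated. You need finite generation of $\L$ for Lemma~\ref{lem:monoid-algebra-property}, for finite presentation in Lemma~\ref{lem:normal}, and for noetherianity of $\mathrm{gr}$, so it has to be stated as a hypothesis. The paper's proof assumes it tacitly as well, and all of its applications are to finitely generated monoids.
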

\begin{proof}
    Since the schemes $G/U^+$ and $U^-\backslash G$ are both smooth over $\k$ and $\k$ is normal, the algebras $\k[G/U]$ and $\k[U^-\backslash G]$ are normal integral domains by Lemma \ref{lem:normal}.\par 
    Due to Lemma \ref{lem:monoid-algebra-property}, the monoid algebra $\k[\L]$ is integrally closed. By Proposition \ref{prop:basic-affine-space}, the $\k$-algebra ${}_\k\bfO_{G/U}(\L)$ is the degree $0$ subalgebra of the $X$-graded integrally closed $\k$-algebra $\k[\L]\otimes_\k\k[G/U]$, hence is integrally closed. Similarly ${}_\k\bfO_{U^-\backslash G}(\L)$ is integrally closed. \par 
    By Proposition \ref{prop:grO-decription}, the $\k$-algebra $\mathrm{gr}({}_\k\bfO(\L))$ is isomorphic to the degree $0$ subalgebra of the $X$-graded integrally closed algebra ${}_\k\bfO_{G/U}(\L)\otimes_\k{}_\k\bfO_{U^-\backslash G}(\L)$, hence is itself integrally closed. Finally by Lemma \ref{lem:grh} and Lemma \ref{lem:filtered-ring-normal} we deduce that ${}_\k\bfO(\L)$ is integrally closed. 
\end{proof}

\section{Representations and invariants}\label{sec:reps-and-invariants}
In this section, we fix a commutative base ring $\k$ which is viewed as an $\A$-algebra through $v\mapsto1$.
\subsection{Review on algebraic representations}
Let $H$ be an affine algebraic group over $\k$ with coordinate ring $\k[H]$, which is a finitely generated Hopf $\k$-algebra. The \emph{distribution algebra of $H$} is the $\k$-algebra ${}_\k\hat{\bfU}(H)\coloneqq\Hom_\k(\k[H],\k)$, in which the multiplication is induced by the co-product on $\k[H]$, and equipped with a co-unit map $\varepsilon\colon{}_\k\hat{\bfU}(H)\to\k$ defined by evaluation at $1\in\k[H]$. Let ${}_\k\hat{\bfU}(H)_+\coloneqq\ker(\varepsilon)$ be the \emph{augmentation ideal}. When $H$ is a split reductive group, this recovers the classical limit ($v\mapsto 1$) of Definition \ref{defi:hatU}. As explained there, we equip ${}_\k\hat{\bfU}(H)$ with the weak* topology.\par
Let $\Rep_H$ be the category of algebraic representations of $H$ on $\k$-modules. By definition, this is the category of $\k[H]$-comodules. In other words, an object in $\Rep_H$ is a $\k$-module $M$ equipped with a $\k$-linear \emph{co-action} map 
\[\mu_M\colon M\to \k[H]\otimes_\k M\] 
satisfying the following two conditions:
\begin{itemize}
    \item $(\mathrm{Id}_{\k[H]}\otimes\mu_M)\circ\mu_M=(\Delta_{\k[H]}\otimes\mathrm{Id}_M)\circ\mu_M$,
    \item $(\epsilon_{\k[H]}\otimes\mathrm{Id}_M)\circ\mu_M=\mathrm{Id}_M$.
\end{itemize}
Here $\Delta_{\k[H]}$ (resp. $\epsilon_{\k[H]}$) denotes the co-product (resp. co-unit) of the Hopf algebra $\k[H]$.
These conditions ensure that $\mu_M$ induces a ring homomorphism ${}_\k\hat{\bfU}(H)\to\mathrm{End}_{\k}(M)$ sending an element $\xi\in{}_\k\hat{\bfU}(H)=\mathrm{Hom}_{\k}(\k[H],\k)$ to the $\k$-linear endomorphism $(\xi\otimes\mathrm{\mathrm{Id}_M})\circ\mu_M$ of $M$. For any $m\in M$, there exists a finitely generated $\k$-submodule $V_m\subset\k[H]$ such that $\mu_M(m)\in V_m\otimes_\k M$ (see for example \cite[VI$_\text{B}$, Lemme 11.8]{SGA3-1}). Thus the annihilator of $m$ in ${}_\k\hat{\bfU}(H)$ contains the open (under the weak* topology) subset $\{\xi\in{}_\k\hat{\bfU}(H),\xi(V_m)=0\}$. This shows that $M$ is a continuous ${}_\k\hat{\bfU}(H)$-module and so we obtain a natural functor
\[\Rep_H\to{}_\k\hat{\bfU}(H)\text{-}\mod.\]
Now suppose that $\k[H]$ is a free $\k$-module, and choose a basis $\Phi$. For any $\varphi\in\Phi$, let $\varphi^*\in{}_\k\hat{\bfU}(H)$ be the element whose value equals to $1$ at $\varphi$ and equals to $0$ at any other element in $\Phi$. Then for any continuous ${}_\k\hat{\bfU}$-module $M$, we can define a $\k$-linear map 
\[\mu_M\colon M\to \k[H]\otimes_\k M,\quad\mu_M(m)\coloneqq\sum_{\varphi\in\Phi}\varphi\otimes \varphi^*\cdot m\]
where the sum is finite by continuity. This gives $M$ the structure of a $\k[H]$-comodule. In this way we get a functor 
\[{}_\k\hat{\bfU}(H)\text{-}\mod\to\Rep_H.\]
The two functors defined above both commute with the forgetful functor to the category of $\k$-modules, and are easily seen to be inverse to each other. In summary we get:
\begin{prop}\label{prop:G-rep-equiv}
    Let $H$ be an affine algebraic group over $\k$ such that $\k[H]$ is a free $\k$-module. Then the functors defined above induce equivalence of categories
    \[\Rep_H\simeq{}_\k\hat{\bfU}\text{-}\mod.\]
\end{prop}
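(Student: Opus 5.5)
The plan is to build two functors explicitly, check that each lands in the claimed category, and then check that they are mutually inverse. Both functors commute with the forgetful functor to $\k$-modules, so on morphisms the only issue is that a $\k$-linear map compatible with one structure is compatible with the other. Throughout I use the identification ${}_\k\hat{\bfU}(H)=\Hom_\k(\k[H],\k)$ and fix a $\k$-basis $\Phi$ of $\k[H]$, with dual elements $\varphi^*$.

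\emph{Step 1: from comodules to continuous modules.} Let $(M,\mu_M)$ be a comodule and let $\xi$ act by $(\xi\otimes\mathrm{Id}_M)\circ\mu_M$.
\begin{itemize}
\item Coassociativity $(\mathrm{Id}\otimes\mu_M)\mu_M=(\Delta_{\k[H]}\otimes\mathrm{Id})\mu_M$ translates into $(\xi\xi')\cdot m=\xi\cdot(\xi'\cdot m)$, because the product on ${}_\k\hat{\bfU}(H)$ is dual to $\Delta_{\k[H]}$.
\item The counit axiom says that the unit $\epsilon_{\k[H]}$ of ${}_\k\hat{\bfU}(H)$ acts as the identity.
\item Continuity is the local finiteness already recorded: $\mu_M(m)\in V_m\otimes M$ with $V_m$ finitely generated. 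Since $\Phi$ is a basis, $V_m$ lies in the span of a finite set $F\subset\Phi$, and every $\xi$ vanishing on $F$ kills $m$. This annihilator neighborhood is open in the weak* topology.
\end{itemize}

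\emph{Step 2: from continuous modules to comodules.} Given a continuous module $M$, set $\mu_M(m)=\sum_{\varphi}\varphi\otimes\varphi^*m$.
\begin{itemize}
\item First I check that the sum is finite. The annihilator of $m$ is open, so it contains a basic neighborhood $\{\xi:\xi(F)=0\}$ for some finite $F\subset\Phi$. Hence $\varphi^*m=0$ whenever $\varphi\notin F$.
\item Next I need the key identity $\xi\cdot m=\sum_\varphi\xi(\varphi)\,\varphi^*m$ for every $\xi$, not just finite combinations of the $\varphi^*$. To prove it, write $\xi=\sum_{\varphi\in F}\xi(\varphi)\varphi^*+\xi'$ with $\xi'$ vanishing on $F$. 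Then $\xi'$ lies in the annihilator of $m$, which gives the identity. This is exactly where continuity is used.
\item With the key identity, coassociativity reduces to checking $\varphi^*\psi^*=\sum_\chi c_\chi\,\chi^*$, where $\Delta(\chi)=\sum c^{\varphi\psi}_\chi\,\varphi\otimes\psi$ and $c_\chi$ denotes $c^{\varphi\psi}_\chi$. This holds by the definition of the product on the dual, evaluated on basis elements; the expansion is an infinite sum, which converges in the weak* topology and acts correctly on $m$ by the key identity.
\item The counit axiom reduces to $\sum_\varphi\epsilon(\varphi)\varphi^*=\epsilon$, which acts as $1$.
\end{itemize}

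\emph{Step 3: the two functors are inverse.}
\begin{itemize}
\item Starting from a comodule, the reconstructed coaction is $\sum\varphi\otimes(\varphi^*\otimes\mathrm{Id})\mu_M(m)$. This equals $\mu_M(m)$ because expanding in the basis $\Phi$ is the identity on $\k[H]\otimes M$.
\item Starting from a module, the reconstructed action of $\xi$ is $\sum\xi(\varphi)\varphi^*m=\xi m$, again by the key identity.
\item Morphisms correspond: a linear map commuting with all $\xi$ commutes with $\mu_M$, since $\mu_M$ is built from the $\varphi^*$. Conversely, a comodule map commutes with every $(\xi\otimes\mathrm{Id})\mu$.
\end{itemize}

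I expect the main obstacle to be the key identity in Step 2, that $\xi\cdot m$ is computed by the finite expansion. It is the only point where continuity, and not merely the algebra structure, is needed, and it underlies both the coassociativity check and the inverse check in Step 3.
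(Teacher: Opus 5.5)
Your proposal is correct and is the paper's argument: the paper defines exactly these two functors, $\xi\mapsto(\xi\otimes\mathrm{Id}_M)\circ\mu_M$ in one direction and $\mu_M(m)=\sum_{\varphi\in\Phi}\varphi\otimes\varphi^*m$ in the other, and simply asserts that they are mutually inverse, while you write out the verifications, the essential one being $\xi m=\sum_\varphi\xi(\varphi)\varphi^*m$, which comes from the openness of annihilators. The one point to tidy is the order in the convolution product on $\Hom_\k(\k[H],\k)$: with $(\xi\xi')(f)=(\xi\otimes\xi')(\Delta f)$ and coactions $M\to\k[H]\otimes_\k M$ satisfying $(\mathrm{Id}\otimes\mu_M)\mu_M=(\Delta\otimes\mathrm{Id})\mu_M$, one actually gets $(\xi\xi')m=\xi'(\xi m)$, so one of the two conventions must use $\Delta^{\mathrm{op}}$ (the paper leaves this implicit too), and with that fixed your checks go through as written, up to the corresponding reordering of $\varphi^*\psi^*$ in Step 2.
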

Now we fix a based root datum $\Psi$ and the associated split reductive group scheme $(G,B,T)$ over $\k$ as in \S\ref{sec:notation-reductive-group}. Combining Proposition \ref{prop:G-rep-equiv} and Proposition \ref{prop:uni-mod-equiv} we obtain equivalences of categories
\[\Rep_G\simeq{}_\k\hat{\bfU}\text{-}\mod\simeq{}_\k\dot{\bfU}\text{-}\mod\]
in which all functors commute with the natural forgetful functor to $\k\text{-}\mod^X$, the category of $X$-graded $\k$-modules (which is nothing but $\Rep_T$). 
This provides the link between the representation theory of quantum enveloping algebras and reductive algebraic groups. Let us recall some basic facts from the latter theory. \par 
We have the induction functor $\Ind_B^G\colon \Rep_B\to\Rep_G$ defined by 
\[\Ind_B^GM=(\k[G]\otimes_\k M)^B,\quad\forall M\in\Rep_B\]
where $B$ (resp. $G$) acts on $\k[G]$ by right (resp. left) regular representation.
\begin{lem}\label{lem:G-invariant}
    For each $G$-module $N$ that is flat over $\k$, the natural map $\k[G]\otimes_\k N\to N$ defined by $f\otimes n\mapsto f(1)n$ induces an isomorphism of $G$-modules $\Ind_B^GN\cong N$, where on the left hand side $N$ is viewed as a $B$-module (via the natural forgetful functor $\mathrm{Res}_G\to\Rep_B$). Moreover, we have $N^B=N^G$. 
\end{lem}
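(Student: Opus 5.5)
The plan is to reduce the statement to the tensor identity for induction together with Frobenius reciprocity. Recall that for a $B$-module $M$ and a $G$-module $N$ there is a natural $G$-isomorphism
\[
\k[G]\otimes_\k N\xrightarrow{\sim}\k[G]\otimes_\k N
\]
which untwists the action. On the source, $G$ acts by left translation on $\k[G]$ and trivially on $N$, while $B$ acts diagonally (right translation on $\k[G]$ and restriction on $N$). On the target, $G$ acts diagonally and $B$ acts only by right translation on $\k[G]$. In terms of functors of points, the map sends $f\otimes n$, viewed as a function $g\mapsto f(g)n$, to the function $g\mapsto g^{-1}\cdot(f(g)n)$. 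Concretely one writes this with the coaction $\mu_N$ and the antipode $S^*$, and checks it is an isomorphism with inverse given by $g\mapsto g\cdot(\cdot)$.

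Taking $B$-invariants on both sides gives
\[
\Ind_B^G N\cong(\k[G]^B)\otimes_\k N,
\]
as $G$-modules with diagonal action on the right. Here one needs that $B$-invariants commute with $-\otimes_\k N$ when $B$ acts trivially on $N$. This holds because $N$ is flat over $\k$: the invariants are the kernel of the map $x\mapsto\mu(x)-1\otimes x$, and tensoring with a flat module preserves kernels.

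It then remains to prove $\k[G]^B=\k$, meaning that right-$B$-invariant functions on $G$ are constant. I expect this to be the main obstacle over a general ring. The route I would try first is the big cell. The multiplication map $U^-\times B\to G$ is an open immersion whose image $\Omega$ is fiberwise dense. A function $f$ that is right $B$-invariant restricts on $\Omega$ to a function pulled back from $U^-$. Since $\Omega$ is invariant under left $U^-$-translation and $f$ need not be, this alone does not finish. Instead I would use the dual-canonical-basis description of \S\ref{sec:finite-coordinate-ring}. There, $\k[G]^{U}$ should be spanned by the elements $(1_\la\sigma(b)^+)^*$ or $(b^-1_\la)^*$ (to be established in Proposition \ref{prop:U-invariant}), and the $T$-weight of each such element under right translation is $\la$. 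Hence the $B$-invariants are spanned by the $\la=0$ part, namely $1_0^*=1$. If that proposition is not yet available at this stage, I would instead argue fiberwise. Since $\k[G]$ is free over $\k$ with a $B$-stable basis filtration, the invariants commute with base change to each residue field, and over a field $\k[G]^B=k[G/B]=k$ because $G/B$ is proper and geometrically connected. One then passes from fibers to $\k$ using $\k[G]/\k$ free and the fact that $\k\to\k[G]$ has a $\k$-linear retraction given by evaluation at $1$.

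Combining these gives $\Ind_B^GN\cong N$. Tracing the isomorphism shows that it is induced by $f\otimes n\mapsto f(1)n$, because evaluation at $1$ untwists trivially. Finally, $N^G=(\Ind_B^GN)^G$. By Frobenius reciprocity with the trivial module, $\Hom_G(\k,\Ind_B^GN)=\Hom_B(\k,N)$, which gives $N^G\cong N^B$ compatibly with the inclusion $N^G\subset N^B$. Hence $N^B=N^G$.
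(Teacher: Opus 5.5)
Your argument has the same structure as the paper's proof. Both use the untwisting isomorphism of $B\times G$-modules (the paper's $\varphi_N$, informally $f\mapsto(x\mapsto x\cdot f(x))$), then flatness of $N$ to pull $N$ out of the $B$-invariants, then $\k[G]^B=\k$, and finally Frobenius reciprocity for $N^B=N^G$. Your check that the resulting identification $N^G\cong N^B$ is the inclusion is a welcome addition.

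You have the two directions of the untwisting swapped. With the actions you specify (on the source, $(b\cdot F)(g)=b\cdot F(gb)$), the $B$-equivariant map is $F\mapsto(g\mapsto g\cdot F(g))$, with no antipode. The formula $g\mapsto g^{-1}\cdot F(g)$ is its inverse. This is harmless, since evaluation at $1$ is unaffected.

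The only real divergence is the step $\k[G]^B=\k$.
\begin{itemize}
\item The paper disposes of it by saying that $G/B$ is proper. Implicitly this means that $f_*\O_{G/B}=\O$ holds universally for the proper flat morphism $G/B\to\Spec\k$, which has geometrically integral fibres.
\item Your primary route computes it inside the paper's own machinery. By Proposition~\ref{prop:U-invariant} together with Lemma~\ref{lem:invariant-description}, $\k[G]^U={}_\k\bfO_{G/U}$ is free on the elements $(b^-1_\la)^*$. Each of these has right $T$-weight $\la$, because $\rho_r(1_\mu)(b^-1_\la)^*=\delta_{\mu\la}(b^-1_\la)^*$. Since $\bfB(0)=\{1\}$, the weight-zero part is $\k\cdot1_0^*=\k$.
\end{itemize}
This route works over an arbitrary $\k$ and is not circular, since neither cited result uses the present lemma. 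It also avoids any cohomology-and-base-change input. The paper's route is shorter but relies on external geometry.

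Your fiberwise fallback is not justified as written. Formation of $B$-invariants need not commute with the non-flat base change to residue fields, and ``free with a $B$-stable filtration'' does not supply that. The final passage from fibres back to $\k$ would also need more than the retraction given by evaluation at $1$. You do not need this fallback, so drop it.
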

\begin{proof}
    Let $N_1=N$ (resp. $N_2=N$) be the $B\times G$-module on which $G$ (resp. $B$) acts trivially and $B$ (resp. $G$) acts as usual. Consider the composition
    \[\varphi_N\colon\k[G]\otimes_\k N_1\xrightarrow{\mathrm{Id}_{\k[G]}\otimes\mu_N}\k[G]\otimes_\k\k[G]\otimes_\k N_1\xrightarrow{m\otimes\mathrm{Id}_N}\k[G]\otimes_\k N_2\]
    where $\mu_N$ is the co-action map and $m$ is the multiplication map. Informally, if we view elements in $\k[G]\otimes_\k N$ as maps $f\colon G(\k)\to N$, then $\varphi_N(f)(x)=x\cdot f(x)$ for all $x\in G(\k)$. If we view $\k[G]$ as a $B\times G$-module where $B$ (resp. $G$) acts by right (resp. left) regular representation, then $\varphi_N$ is an isomorphism of $B\times G$-modules. Taking $B$-invariants, we get an isomorphism of $G$-modules
    \[\Ind_B^GN=(\k[G]\otimes_\k N_1)^B\cong(\k[G]\otimes_\k N_2)^B=\k[G]^B\otimes_k N=N\]
    where the middle equality follows from the $\k$-flatness of $N$ and the last equality follows from the fact that $G/B$ is a proper $\k$-scheme. Moreover, taking $G$-invariants we get
    \[N^B=(\Ind_B^GN)^G=N^G.\]
\end{proof}
For any $\lambda \in X^+$, we define 
$$S_{\lambda}\coloneqq\mathrm{Ind_{B^-}^G(\k_{\lambda}})\simeq \mathrm{Ind_{B}^G(\k_{w_0(\lambda)}})\simeq (\k_{w_0(\lambda)}\otimes  \k[G])^B,$$
$$W_{\lambda}\coloneqq(S_{-w_0(\lambda)})^{*}\simeq\mathrm{Ind_{B}^G(\k_{-\lambda}})^{*}$$ 
to be respectively the \emph{Schur module} with highest weight $\lambda$ and the \emph{Weyl module} with highest weight $\lambda$.\par
For any $G$-module $M$, we define 
\[M^{(\lambda)}\coloneqq(M^U)^{\lambda}\simeq \Hom_{B}(\k_{\lambda},M)\simeq (\k_{\lambda}\otimes  M)^B.\]

Recall the universal property of Weyl modules (see \cite[Proposition 21]{Kallen2010} which generalizes \cite[II 2.13]{Jantzen2003} stated over fields): if $\lambda$ is dominant, we have natural isomorphisms of $T$-modules 
\[M^{(\lambda)}\simeq \Hom_G(\k,M\otimes S_{-w_0(\lambda)})\simeq  \Hom_G(W_{\lambda},M).\]

\begin{prop}\label{prop:Lambda is Weyl}
    For any $\lambda \in X^+$, we have a natural isomorphism of $G$-modules
    $$W_{\lambda}\xrightarrow{\sim} {}_{\k}\Lambda_{\lambda}.$$ As a consequence, the Weyl modules are endowed with a continuous ${}_{\k}\hat{\bfU}$-module structure and a unital ${}_{\k}\dot{\bfU}$-module structure. 
\end{prop}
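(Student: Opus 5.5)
We plan to use the universal property of the Weyl module recalled just above: for any $G$-module $M$ we have $\Hom_G(W_\la,M)\simeq M^{(\la)}$. First we note that ${}_\k\La_\la$ is a $G$-module. By Theorem \ref{theo:Weyl module}, ${}_\k\La_\la$ is a unital ${}_\k\dot{\bfU}$-module that is free of finite rank over $\k$. Proposition \ref{prop:uni-mod-equiv} and Proposition \ref{prop:G-rep-equiv} then make it an object of $\Rep_G$, and the underlying $T$-module is its weight decomposition. The vector $\eta_\la$ spans the $\la$-weight space, which is free of rank one. It is killed by all $\theta_i^{(n)+}1_{\la'}$ with $n\geq 1$, and hence by every element of $\hat{\bfU}^+$ without constant term. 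Under \eqref{eq:be in G}, the elements of $U(\k)$, and more precisely the coaction of $\k[U]={}_\k\bff^\circ$, therefore fix $\eta_\la$. So $\eta_\la\in({}_\k\La_\la)^{(\la)}$, and the universal property produces a $G$-map $\phi\colon W_\la\to{}_\k\La_\la$ sending the canonical highest weight generator to $\eta_\la$. This map is natural and compatible with base change in $\k$.

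The next step is to show that $\phi$ is an isomorphism. Both modules are finite free $\k$-modules, and both are compatible with base change: $W_\la$ is compatible because $\Ind_B^G$ commutes with base change for dominant weights (Kempf vanishing over $\ZZ$, cf. Jantzen), and ${}_\k\La_\la$ is compatible by definition. It therefore suffices to treat $\k=\ZZ$. A map between finite free $\ZZ$-modules is an isomorphism as soon as it becomes one after tensoring with every residue field. So we may take $\k$ to be a field $k$, where we know the following:
\begin{itemize}
\item $W_\la$ is generated by its highest weight vector, since it is the dual of the induced module, whose socle is simple.
\item $\dim W_\la$ is given by the Weyl character formula.
\item ${}_k\La_\la$ has basis $\{b\eta_\la\}_{b\in\bfB(\la)}$ by Theorem \ref{theo:Weyl module}, and its character is the specialization of the character of $\La_\la$ over $\QQ(v)$. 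That character is again the Weyl character, by Lusztig's theory.
\end{itemize}
Since ${}_k\La_\la={}_k\dot{\bfU}\eta_\la$ is generated by $\eta_\la$, the map $\phi$ is surjective. The two modules have equal dimensions, so $\phi$ is an isomorphism.

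The main obstacle is the step that identifies the abstract $U$-invariance in $\Rep_G$ with annihilation by the positive part of $\hat{\bfU}$. This amounts to checking that under Proposition \ref{prop:G-rep-equiv} the $B$-subgroup scheme $\Spec({}_\k\bfO^{\ge0})$ acts through ${}_\k\hat{\bfU}^{\ge0}$. We would handle it by using the projection $\pi^{>0}$: the $U$-coaction on $\eta_\la$ is $\sum_b\pi^{>0}((b^+1_\la)^*)\otimes b^+1_\la\eta_\la$, and only $b=1$ contributes. A secondary point is the base-change compatibility of $W_\la$. If we prefer to avoid Kempf vanishing, we can instead work directly over $\k$ using the freeness of both sides together with the surjectivity argument, comparing ranks via the $\QQ$ case.

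The final assertion then follows by transporting the ${}_\k\dot{\bfU}$-structure of ${}_\k\La_\la$ along $\phi$. Continuity of the resulting ${}_\k\hat{\bfU}$-action is Proposition \ref{prop:uni-mod-equiv}.
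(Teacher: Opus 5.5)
Your proof is correct, but it establishes injectivity differently from the paper. The paper also reduces to $\k=\ZZ$ by base change, citing van der Kallen for the compatibility of $S_\lambda$ and $W_\lambda$ with base change. It obtains $\varphi$ from the universal property and gets surjectivity exactly as you do: $\varphi$ is ${}_\ZZ\hat{\bfU}$-equivariant and $\eta_\lambda$ generates ${}_\ZZ\Lambda_\lambda$. For injectivity, however, it simply notes that $\varphi\otimes\QQ$ is a nonzero map out of the simple module $W_\lambda\otimes_\ZZ\QQ$, and that $W_\lambda$ is torsion free. No characters are computed.

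You instead pass to every residue field and compare dimensions, which needs two extra inputs:
\begin{itemize}
\item the Weyl dimension formula for $W_\lambda\otimes k$ in positive characteristic (Kempf vanishing);
\item Lusztig's theorem that $\Lambda_\lambda$ over $\QQ(v)$ has the Weyl character, so that $\#\bfB(\lambda)$ equals the Weyl dimension.
\end{itemize}
Both are standard, so the argument stands. The characteristic-$p$ fibres are superfluous, though. A surjection between finite free $\ZZ$-modules of equal rank is already bijective, and equality of ranks can be read off over $\QQ$ alone, for example via the paper's simplicity argument. Your fallback of ``working directly over $\k$'' does not really avoid Kempf vanishing either: freeness of $W_\lambda$ over $\k$ with the correct rank is the same base-change statement.

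Your proposal does make explicit something the paper leaves implicit. You check, via the coaction $\sum_b \pi^{>0}((b^+1_\lambda)^*)\otimes b^+1_\lambda\eta_\lambda$, that $\eta_\lambda$ lies in $({}_\k\Lambda_\lambda)^{(\lambda)}$. That is exactly what is needed to invoke the universal property.
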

\begin{proof}
     Note that for any $\k$-algebra $A$ we have isomorphisms of $G_A$-modules (see \cite[Proposition 16]{Kallen2010}):
    \[S_{\lambda}\otimes_\k A\cong\mathrm{Ind_{B_A^-}^{G_A}(A_{\lambda}}),\quad W_{\lambda}\otimes_\k A\cong \Hom_A(S_{-w_0(\lambda)}\otimes_\k A,A).\]
    Thus we may assume that $\k=\ZZ$. According to the universal property of Weyl modules, we have a unique $G$-module homomorphism $\varphi\colon  W_{\lambda}\to {}_{\ZZ}\Lambda_{\lambda}$ sending a highest weight vector $v_{\lambda}$ of $W_{\lambda}$ to $\eta_{\lambda}$. It is injective since $W_{\lambda}\otimes_\ZZ\QQ$ is simple and $W_{\lambda}$ is free of finite type. By Proposition \ref{prop:G-rep-equiv}, $W_\la$ has the structure of ${}_\ZZ\hat{\bfU}$-module such that the map $\varphi$ is ${}_\ZZ\hat{\bfU}$-equivariant. Since $\eta_{\lambda}$ generates ${}_{\ZZ}\Lambda_{\lambda}$ as a ${}_\ZZ\hat{\bfU}$-module, we see that $\varphi$ is surjective and hence an isomorphism.
\end{proof}
Because of this result, we will also refer to ${}_\k\La_\la$ as the Weyl module of highest weight $\la$.  
\subsection{Invariants}
\begin{defi}
    Let $A$ be an $R$-algebra equipped with a ring homomorphism $\varepsilon\colon A\to R$ (for instance $A$ could be a Hopf algebra, or more generally a coalgebra, with counit $\varepsilon$). Let $M$ be an $A$-module. The submodule of $A$-invariants in $M$ is defined by
    \[H^0(A,M)\coloneqq\mathrm{Hom}_{A\text{-}\mathrm{mod}}(R,M)=\{m\in M\mid am=0,\;\forall a\in\ker(\varepsilon)\}\]
    where $R$ is viewed as an $A$-module via $\varepsilon$. 
\end{defi}
Let $H$ be an affine algebraic group over $\k$ with coordinate ring $\k[H]$ and distribution algebra ${}_\k\hat{\bfU}(H)=\mathrm{Hom}_{\k}(\k[H],\k)$. Let ${}_\k\hat{\bfU}(H)_+\coloneqq\{\xi\in{}_\k\hat{\bfU}(H),\xi(1)=0\}$ be the augmentation ideal. Let $M\in\Rep_H$ be a $\k[H]$-comodule with co-action map $\mu_M\colon M\to\k[H]\otimes_\k M$. The submodule of $H$-invariants in $M$ is defined by
\[M^H\coloneqq\ker(\mu_M-1\otimes\mathrm{Id}_M)=\{m\in M\mid\mu_M(m)=1\otimes m\}.\]
Recall that the co-action map $\mu_M$ induces a ${}_\k\hat{\bfU}(H)$-module structure on $M$.
\begin{lem}\label{lem:invariant-description}
    Suppose that $\k[H]$ is a free $\k$-module. Then for any $\k[H]$-comodule we have
    \[M^H=H^0({}_\k\hat{\bfU}(H),M)\coloneqq\Set{m\in M\mid \xi\cdot m=0,\forall\xi\in{}_\k\hat{\bfU}(H)_+}.\]
\end{lem}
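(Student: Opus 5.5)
We fix a $\k$-basis $\Phi$ of $\k[H]$ and its dual family $\{\varphi^*\}_{\varphi\in\Phi}\subset{}_\k\hat{\bfU}(H)$, as in the construction preceding Proposition \ref{prop:G-rep-equiv}. Every element of ${}_\k\hat{\bfU}(H)$ is then an infinite formal sum $\sum_\varphi \xi_\varphi\varphi^*$ with $\xi_\varphi\in\k$. Because the module $M$ is continuous, such an element acts on a given $m$ through only finitely many terms. Moreover the co-action is recovered as $\mu_M(m)=\sum_{\varphi}\varphi\otimes\varphi^*\cdot m$. Freeness of $\k[H]$ is needed precisely to have this dual family and this reconstruction formula; the formula follows from the definition $\xi\cdot m=(\xi\otimes\mathrm{Id}_M)\mu_M(m)$ together with the fact that $\k[H]\otimes_\k M=\bigoplus_\varphi \varphi\otimes M$.

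For the inclusion $M^H\subset H^0$, let $m$ satisfy $\mu_M(m)=1\otimes m$. Then for every $\xi$ we get $\xi\cdot m=(\xi\otimes\mathrm{Id})(1\otimes m)=\xi(1)m$. This vanishes whenever $\xi$ lies in the augmentation ideal.

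For the reverse inclusion, suppose $\xi\cdot m=0$ for all $\xi$ with $\xi(1)=0$. The first step is to expand the unit in the chosen basis, $1=\sum_\varphi c_\varphi\varphi$, a finite sum. Then every $\xi$ decomposes as $\xi(1)\varepsilon+(\xi-\xi(1)\varepsilon)$, where $\varepsilon$ is evaluation at $1$ and the second summand lies in the augmentation ideal. Applying this to $\xi=\varphi^*$ gives $\varphi^*\cdot m=\varphi^*(1)\,\varepsilon\cdot m=c_\varphi\,\varepsilon\cdot m$. The second comodule axiom says $\varepsilon$ acts as the identity, so $\varphi^*\cdot m=c_\varphi m$. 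Hence
\[\mu_M(m)=\sum_\varphi\varphi\otimes c_\varphi m=\Bigl(\sum_\varphi c_\varphi\varphi\Bigr)\otimes m=1\otimes m,\]
that is, $m\in M^H$.

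I expect no real obstacle. The only points to check carefully are the reconstruction formula for $\mu_M$ from the $\varphi^*$-action, which uses freeness and the finiteness of $\mu_M(m)$ as an element of $\k[H]\otimes_\k M$, and the identification of the counit of ${}_\k\hat{\bfU}(H)$ with the identity operator on $M$.
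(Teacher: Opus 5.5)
Your proof is correct and follows essentially the paper's route. The forward inclusion is identical. For the converse, both arguments read off the coefficients of $\mu_M(m)$ by pairing with functionals dual to a basis, using that augmentation-ideal elements kill $m$ and that the counit $\epsilon_{\k[H]}$ acts as the identity.

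Your splitting $\xi=\xi(1)\epsilon_{\k[H]}+(\xi-\xi(1)\epsilon_{\k[H]})$ is actually a tidier way to handle the unit than the paper's, which implicitly arranges for $1$ to be one of the $\varphi_i$. One notational point: your $\varepsilon$ is the counit $\epsilon_{\k[H]}$ of $\k[H]$, i.e. the unit element of ${}_\k\hat{\bfU}(H)$. It is not the paper's co-unit $\varepsilon\colon{}_\k\hat{\bfU}(H)\to\k$, so your closing reference to ``the counit of ${}_\k\hat{\bfU}(H)$'' should be reworded accordingly.
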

\begin{proof}
    If an element $m\in M$ satisfies $\mu_M(m)=1\otimes m$, then for any element $\xi\in{}_\k\hat{\bfU}(H)_+$ we have $\xi\cdot m=\xi(1)m=0$. This shows that $M^H\subset H^0({}_\k\hat{\bfU}(H),M)$.\par 
    Conversely suppose $m\in H^0({}_\k\hat{\bfU}(H),M)$. We can write $\mu_M(m)=\sum\varphi_i\otimes m_i$ where $\varphi_i\in\k[H]$ are linearly independent. For any index $i$ such that $\varphi_i\ne1$, we can find $\xi_i\in{}_\k\hat{\bfU}(H)_+$ such that $\xi_i(\varphi_j)=\delta_{ij}$ for all $j$, from which we get $0=\xi_i\cdot m=m_i$. So we must have $\mu_M(m)=1\otimes m$ and this proves the inclusion in the other direction. 
\end{proof}

\begin{ex}
Let $H=T$ be a split torus over $\k$ with character lattice $X=X^*(T)$. Then $\k[T]=\k[X]$ is the group algebra of $X$ and ${}_\k\hat{\bfU}(T)=\prod_{\la\in X}\k 1_\la$ where $\{1_\la,\la\in X\}$ are mutually orthogonal idempotents. The augmentation ideal is ${}_\k\hat{\bfU}(T)_+=\prod_{\la\in X\setminus0}\k 1_\la$.\par 
The category $\Rep_T$ is canonically equivalent to the category $\k\text{-}\mod^X$ of $X$-graded $\k$-modules. Indeed, for any $M\in\Rep_T$, we have a decomposition $M=\bigoplus\limits_{\la\in X}M_\la$ where $M_\la\coloneqq1_\la M$ and conversely a $\k$-module with such a decomposition labeled by $X$ is naturally a ${}_\k\hat{\bfU}(T)$-module on which $1_\la$ acts as the projector to the $\la$ component.\par 
For any $M\in\Rep_T$ we have
\[M^T=\{m\in M\mid 1_\la m=0,\;\forall\la\in X\setminus0\}=M_0.\]
More generally, for any $\zeta\in X$ we let $\k_\zeta=\k$ with the ${}_\k\hat{\bfU}(T)$-module structure in which $1_\la$ acts by $\delta_{\la\zeta}$, then we have
$(M\otimes_\k \k_{-\la})^T=M_\la$.
\end{ex}
Now we consider the the left (resp. right) regular representations on $M=\k[H]$: 
\[\rho_l,\rho_r\colon{}_\k\hat{\bfU}(H)\to\mathrm{End}_{\k\text{-}\mod}(\k[H]).\]
The co-action map for the left regular representation is 
\[\mu_l\coloneqq(S_{\k[H]}\otimes\mathrm{Id})\circ\Delta_{\k[H]}\colon\k[H]\to\k[H]\otimes_\k\k[H]\]
where $S_{\k[H]}$ is the antipode of $\k[H]$. On the other hand, the co-action map for the right regular representation is 
\[\mu_r={}^t\Delta_{\k[H]}\coloneqq\mathrm{sw}\circ\Delta_{\k[H]}\colon\k[H]\to\k[H]\otimes_\k\k[H]\]
where $\mathrm{sw}$ is the $\k$-linear automorphism of $\k[H]\otimes_\k\k[H]$ switching the two factors.\par 
Then for any $\xi,\xi'\in{}_\k\hat{\bfU}(H)$ and $\varphi\in\k[H]$, we have
\[\langle\rho_l(\xi)\cdot\varphi,\xi'\rangle=\langle\varphi,S(\xi)\xi'\rangle,\quad\langle\rho_r(\xi)\cdot\varphi,\xi'\rangle=\langle\varphi,\xi'\xi\rangle.\]
Now let us review some classical results from invariant theory. First we have the following finiteness theorem:
\begin{theo}\label{theo:Kallen-finiteness}
    Let $k$ be an algebraically closed field, $G$ be a reductive group over $k$ and $U\subset G$ be the unipotent radical of a Borel subgroup of $G$. Let $A$ be a finitely generated commutative $k$-algebra on which $G$ acts. Then the $k$-algebra of invariants $A^G$ and $A^U$ are finitely generated. 
\end{theo}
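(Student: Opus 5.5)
The plan is to treat $A^G$ first and then reduce $A^U$ to it by the classical transfer principle. We may assume $G$ connected reductive over the algebraically closed field $k$, with rational action on $A$.

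\emph{Step 1: $A^G$ is finitely generated.} I would appeal to geometric reductivity (Haboush's theorem; Mumford's conjecture). It says that for any finite-dimensional $G$-module $V$ and nonzero invariant $v\in V^G$, some power $v^{\otimes n}$ (or $v^{p^n}$ in characteristic $p$) lies outside the image of the invariant hyperplanes. Nagata's argument then gives finite generation. Concretely:
\begin{itemize}
\item first show $A^G$ is finitely generated whenever $A$ is a finitely generated graded algebra with $A_0=k$, by induction on Krull dimension via Hilbert's argument;
\item for invariant ideals $I\subset A$, show $(A/I)^G$ is integral over the image of $A^G$ (Nagata's lemma);
\item handle the general case by embedding a finite-dimensional generating $G$-submodule $V\subset A$, so that $A$ is a $G$-equivariant quotient of $\mathrm{Sym}(V)$;
\item combine the integrality with the finite generation of $\mathrm{Sym}(V)^G$.
\end{itemize}
Since this is a classical theorem, in the paper I would simply cite it (Haboush, Nagata; or \cite{Kallen2010}-style references, where van der Kallen treats general bases) rather than reprove it.

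\emph{Step 2: transfer from $U$ to $G$.} There is a $G$-equivariant isomorphism $A^U\cong (A\otimes_k k[G/U])^G$, where $k[G/U]=k[G]^U$ is taken with respect to the right $U$-action. It comes from $(A\otimes k[G])^{G}\cong A$ (Lemma \ref{lem:G-invariant}-style argument: $f\mapsto f(1)$, with inverse $a\mapsto(g\mapsto g^{-1}a)$), followed by taking right $U$-invariants on both sides, which commute since the actions commute. It therefore suffices to know that $k[G/U]$ is a finitely generated $k$-algebra. Then $A\otimes_k k[G/U]$ is finitely generated with a rational $G$-action, and Step 1 gives finite generation of $A^U$.

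\emph{Step 3: finite generation of $k[G/U]$.} This follows from the results already proved. By Corollary \ref{cor:U-inv-O(P)} (or directly, Proposition \ref{prop:U-invariant}), $k[G/U]$ identifies with ${}_k\bfO_{G/U}={}_k\bfO_{G/U}(X^+)$. Corollary \ref{cor:grO-finite} with $P=X^+$ (finitely generated and downward closed) shows this algebra is finitely generated. Alternatively, one can use the classical fact that $k[G/U]=\bigoplus_{\la\in X^+}S_\la$ is generated by the finitely many $S_{\varpi}$ for a generating set of $X^+$, via surjectivity of multiplication of Schur modules.

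The main obstacle is Step 1, which in positive characteristic genuinely requires Haboush's theorem. I would cite it rather than reprove it. A secondary care point is checking that the two commuting actions in the transfer isomorphism have been set up with the correct sides, so that the invariants indeed match.
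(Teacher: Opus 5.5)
Your argument is correct. For $A^G$ it coincides with the paper, which simply cites \cite[Theorem A.1.0]{GIT} (Haboush--Nagata).

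For $A^U$ the two routes differ. The paper quotes \cite[Theorem 9]{Grosshans1992} as a black box. You instead reduce to the $G$-case via the transfer isomorphism $A^U\cong(A\otimes_k k[G]^U)^G$, which the paper records as Lemma~\ref{lem:transfer-principle}. You then supply the one remaining input, finite generation of $k[G]^U$, from the paper's own canonical-basis results:
\begin{itemize}
\item Proposition~\ref{prop:U-invariant}, together with Lemma~\ref{lem:invariant-description}, identifies $k[G]^U$ (for the right $U$-action) with ${}_k\bfO_{G/U}={}_k\bfO_{G/U}(X^+)$;
\item Corollary~\ref{cor:grO-finite} with $P=X^+$ shows this algebra is finitely generated.
\end{itemize}
This is not circular, since neither \S\ref{sec:finite-coordinate-ring} nor Proposition~\ref{prop:U-invariant} uses the theorem.

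What your route buys is that the only external input is geometric reductivity. Finite generation of the basic affine space comes for free from the dual canonical basis, over any base ring. The cost is a few more lines than the paper's bare citation.

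One cosmetic point: your formulation of geometric reductivity is garbled. The usual statement is that for every finite-dimensional $G$-module $V$ and every $0\ne v\in V^G$ there is a homogeneous $f\in\mathrm{Sym}^n(V^*)^G$ with $n\ge1$ and $f(v)\ne0$. This is harmless here, since you cite the theorem rather than reprove it.
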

\begin{proof}
    See \cite[Theorem A.1.0]{GIT} for the finite generation of $A^G$ and \cite[Theorem 9]{Grosshans1992} for the finite generation of $A^U$. 
\end{proof}
It is worth mentioning that this result has been generalized to any noetherian base ring by Kallen \cite[Theorem 3, Lemma 25]{Kallen2010}, although we will not use the more general version. Let us review some other results of Kallen that we will use.
\begin{lem}\label{lem:transfer-principle}
    Let $\k$ be a commutative ring, $G$ be a reductive group over $\k$ and $U\subset G$ the unipotent radical of a Borel subgroup of $G$. Suppose $G$ acts on a $\k$-algebra $A$. Let $M$ be a $G$-module. Then we have $M^U=(M\otimes_\k \k[G/U])^G$
\end{lem}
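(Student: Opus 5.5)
We want to prove the transfer principle $M^U\cong(M\otimes_\k\k[G/U])^G$. The approach is to write $\k[G/U]$ as the $U$-invariants of $\k[G]$ under the right regular representation, $\k[G/U]=\k[G]^U$, so that
\[(M\otimes_\k\k[G/U])^G=\bigl((M\otimes_\k\k[G])^{U}\bigr)^G.\]
Here $U$ acts only on the $\k[G]$ factor through the right regular action, and $G$ acts diagonally, on $M$ and on $\k[G]$ through the left regular action. The first equality requires that $(-)^U$ commute with $M\otimes_\k-$. This is clear when $M$ is flat over $\k$. In general I would argue with the explicit comodule description, writing $\k[G]^U$ as a kernel and using that $U$ acts trivially on $M$ in this setup.

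Next I apply the untwisting isomorphism $\varphi_M$ from the proof of Lemma \ref{lem:G-invariant}, with $B$ replaced by $U$, or more simply by $G\times U$. The map $\varphi_M(f)(x)=x^{-1}f(x)$, or the analogous composite of $\mu_M$, the antipode and multiplication, is an isomorphism of $G\times U$-modules
\[M\otimes\k[G]\ \text{(diagonal $G$, right $U$ on $\k[G]$)}\;\cong\; M\otimes\k[G]\ \text{($G$ on $\k[G]$ only, $U$ diagonal)}.\]
On the right-hand side, $G$-invariants for the left regular action on $\k[G]$ equal $M\otimes\k[G]^G=M\otimes\k=M$. This step again uses only the comodule identity $\k[G]^{G}=\k$ tensored with $M$, which is the statement that $\k[G]$ is coinduced and holds for arbitrary $M$ by a direct cobar computation. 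After untwisting, the residual $U$ action on $M$ is the restricted action. Taking $U$-invariants, and noting that $G$- and $U$-invariants commute because the two actions commute, we obtain $M^U$.

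The main obstacle is bookkeeping rather than depth: checking that the twisted map intertwines the correct actions, with left and right sides and antipodes arranged properly, and handling non-flat $M$. For the latter I would work throughout with the kernel description $M^H=\ker(\mu_M-1\otimes\mathrm{Id})$. I would use that $\k[G]$ is $\k$-free, which follows from Lusztig's dual canonical basis, so that $\k[G]\otimes M$ behaves like an induced comodule and $(\k[G]\otimes M)^G\cong M$ holds for every $M$ via $f\otimes m\mapsto f(1)m$.
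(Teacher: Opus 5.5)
You have the standard architecture, and most of the steps are formal and valid for every $\k$-module $M$. The untwisting $\Phi\mapsto\bigl(x\mapsto x^{-1}\Phi(x)\bigr)$ does turn (diagonal $G$, right $U$ on $\k[G]$) into (left regular $G$ on $\k[G]$ only, diagonal $U$). For the left regular action, $(M\otimes_\k\k[G])^G=M\otimes 1$; apply the counit to see this. And $V^{G\times U}=V^G\cap V^U$. The paper itself gives no argument here and only cites van der Kallen.

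\textbf{The gap.} It is your first equality, $M\otimes_\k\k[G]^U=(M\otimes_\k\k[G])^U$ for right $U$-invariants, when $M$ is not flat. Writing $\k[G]^U$ as a kernel and noting that $U$ acts trivially on $M$ does not prove it. The functor $M\otimes_\k-$ does not preserve kernels, and invariants need not commute with it even when the action on the coefficients is trivial and the module is free. For example, let $\GG_a$ over $\ZZ$ act on $\ZZ^2$ through $t\mapsto\begin{pmatrix}1&pt\\0&1\end{pmatrix}$. The invariants are $\ZZ e_1$, but after $-\otimes_\ZZ\ZZ/p$ every vector is invariant.

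Your last paragraph does not close this either. The ``induced comodule'' property concerns the left regular $G$-action, and for the right $U$-action $\k[G]$ is not a regular $U$-comodule. Without this step you only get an injection $(M\otimes_\k\k[G/U])^G\into M^U$. Surjectivity is exactly the claim that $\mu_M(m)\in\k[G]^U\otimes_\k M$ for $m\in M^U$. Non-flat $M$ really do occur, e.g.\ $M=A/J$ in Lemma \ref{lem:power-surjective}(2).

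\textbf{The fix.} You need a non-formal property of $\k[G]$ as a right $U$-module. In the split setting of the paper, the dual canonical basis supplies it: run the proof of Proposition \ref{prop:U-invariant}, in its $\omega$-twisted form for $\rho_r({}_\k\hat{\bfU}^+)$, with coefficients in $M$.
\begin{enumerate}
\item View $\varphi\in M\otimes_\k{}_\k\bfO$ as a finitely supported $M$-valued function on $\dot{\bfB}$. By Lemma \ref{lem:invariant-description}, which needs no flatness of the comodule, $U$-invariance means $\varphi(x\eta)=0$ for all $\eta$ in the augmentation ideal of ${}_\k\hat{\bfU}^+$.
\item Let $\la$ be maximal among the $\mu\in X^+$ with $\varphi|_{\dot{\bfB}[\mu]}\ne0$. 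Then $\varphi$ kills ${}_\k\dot{\bfU}[>\la]$.
\item By invariance, $\varphi$ also kills $b^-1_\la\sigma(b')^+$ for $b'\ne1$. Hence $\varphi(\beta_\la(b,b'))=0$ for $b'\ne 1$.
\item Therefore $\varphi-\sum_{b\in\bfB(\la)}\varphi(b^-1_\la)\otimes(b^-1_\la)^*$ is still invariant and has smaller support. Induct on the support.
\end{enumerate}
This gives $(M\otimes_\k{}_\k\bfO)^U=M\otimes_\k{}_\k\bfO_{G/U}$ for every $M$. Injectivity of $M\otimes\k[G]^U\to M\otimes\k[G]$ holds because $\k[G]^U$ is spanned by part of a basis. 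With this in place your chain of identifications goes through. For non-split $G$, reduce to the split case by a faithfully flat \'etale extension of $\k$, which commutes with all the invariants involved.
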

\begin{proof}
    See \cite[Proof of Proposition 5]{Kallen2014}.
\end{proof}
To state the next result, recall that the \emph{characteristic exponent} $p$ of a field $k$ is defined to be:
\begin{itemize}
    \item $p=\mathrm{char}(k)$ if $\mathrm{char}(k)>0$,
    \item $p=1$ if $\mathrm{char}(k)=0$.
\end{itemize}
\begin{lem}\label{lem:power-surjective}
    Let $\k$ be a commutative ring, $G$ be a reductive group over $\k$ and $U\subset G$ the unipotent radical of a Borel subgroup of $G$. Suppose $G$ acts on a $\k$-algebra $A$ and $J\subset A$ is a $G$-stable ideal.
    \begin{enumerate}
        \item For any $x\in (A/J)^G$, there exists a positive integer $N$ such that $x^N$ lies in the image of $A^G\to (A/J)^G$. If moreover $\k$ is a field of characteristic exponent $p$, then $N$ can be chosen to be a power of $p$. 
        \item 
        (Similar statement for $U$-invariants) For any $x\in (A/J)^U$, there exists a positive integer $N$ such that $x^N$ lies in the image of $A^U\to (A/J)^U$. If moreover $\k$ is a field of characteristic exponent $p$, then $N$ can be chosen to be a power of $p$. 
    \end{enumerate}
\end{lem}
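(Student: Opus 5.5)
The plan is to prove (1) first and then deduce (2) from (1) with the transfer principle of Lemma \ref{lem:transfer-principle}. Part (1) is the property that Franjou and van der Kallen call \emph{power reductivity} of $G$ over $\k$.

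For (1), I would first reduce to a statement about modules. Let $x\in(A/J)^G$ and choose a lift $a\in A$. The subcomodule generated by $a$ lies in a finitely generated $\k$-submodule of $A$, by the local finiteness of comodules used in \S\ref{sec:reps-and-invariants}. So we obtain a finitely generated $G$-stable submodule $M\subset A$ with $a\in M$. Its image $\bar M\subset A/J$ contains the invariant line $\k x$.
\begin{itemize}
\item When $\k$ is a field, the statement is geometric reductivity (Haboush, in the form of \cite[Theorem A.1.0]{GIT} and its proof). Consider the surjection $\bar M^\vee\to\k$ dual to $\k x\into\bar M$. It yields a nonconstant homogeneous $G$-invariant $f$ in $\mathrm{Sym}^N(M)$ of degree $N=p^e$ with $f\mapsto x^N$ under $\mathrm{Sym}(M)\to A\to A/J$; the $p$-power degree comes from Frobenius twists in Haboush's argument. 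The image of $f$ in $A$ is then a $G$-invariant lifting $x^N$.
\item For a general commutative ring $\k$, I would not redo this. I would invoke the theorem of Franjou and van der Kallen that Chevalley group schemes over any ring are power reductive, which is exactly statement (1); see \cite{Kallen2010}, where Kallen uses it to prove finite generation over noetherian bases. The field case with $N$ a power of $p$ is the one recorded above.
\end{itemize}

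For (2), the plan is to replace $A$ by $A'\coloneqq A\otimes_\k\k[G/U]$ and $J$ by $J'\coloneqq J\otimes_\k\k[G/U]$. Here $G$ acts diagonally, and $J'$ is a $G$-stable ideal of the $\k$-algebra $A'$. Since $\k[G/U]$ is $\k$-flat (indeed free, by the canonical basis description of ${}_\k\bfO_{G/U}$ in Definition \ref{def:G/U} and Proposition \ref{prop:U-invariant}), we get $A'/J'\cong(A/J)\otimes_\k\k[G/U]$. Lemma \ref{lem:transfer-principle} then identifies
\[A^U\cong A'^G,\qquad (A/J)^U\cong (A'/J')^G.\]
The key point to check is that these identifications are ring isomorphisms and are natural in the $G$-algebra. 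I would take the isomorphism to be induced by evaluation at the base point of $G/U$, i.e. $a\otimes f\mapsto f(1)a$ on $G$-invariants, which is multiplicative and functorial. Under it the map $A^U\to(A/J)^U$ corresponds to $A'^G\to(A'/J')^G$. Applying (1) to $(A',J')$ produces the required $N$, with the same $p$-power refinement when $\k$ is a field.

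I expect the main obstacle to be step (1) over a general ring. Over a field it is Haboush's theorem, but over arbitrary $\k$ it rests on the substantial power-reductivity theorem, which I would cite rather than reprove. The second point needing care is verifying that the transfer isomorphism of Lemma \ref{lem:transfer-principle} is multiplicative and compatible with passing to the quotient by $J$.
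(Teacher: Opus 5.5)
Your proof is correct in outline and follows a different route from the paper's only at one point. For the general-ring case of (1) you cite Franjou--van der Kallen, as the paper does. For (2) you transfer via Lemma \ref{lem:transfer-principle}, as the paper does. Your checks that $\k[G/U]$ is flat and that the transfer isomorphism is the ring map ``evaluate at the base point'' (hence compatible with $A\to A/J$) are right, and they fill in what the paper leaves implicit.

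The difference is the refinement over a field that $N$ can be a power of the characteristic exponent.
\begin{itemize}
\item \textbf{Your route.} You read it off the refined form of Haboush's theorem, in which the invariant can be taken of $p$-power degree. This needs two extra inputs: a descent from $\bar k$ to $k$, which is harmless since invariants commute with the flat base change $k\to\bar k$; and, in characteristic $0$, linear reductivity rather than geometric reductivity, because there you need exactly $N=1$. You should state the characteristic-$0$ case separately.
\item \textbf{The paper's route.} It derives the refinement formally from the general power-surjectivity. Apply that result to $A[Y]\to(A/J)[Y]$ with $G$ acting trivially on $Y$. Lift $(x+Y)^N$ into $A^G[Y]$, write $N=np^m$ with $p\nmid n$, and read off the coefficient $n x^{p^m}$ of $Y^{(n-1)p^m}$. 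This needs nothing about how Haboush's invariant is built.
\end{itemize}

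One detail in your field case is wrong as written and needs fixing. The surjection $\bar M^\vee\to\k$ dual to $\k x\into\bar M$ is the wrong input: feeding it to geometric reductivity gives invariants in $\mathrm{Sym}(\bar M^\vee)$, not elements of $\mathrm{Sym}^N(M)$. Use instead the $G$-module $M'\coloneqq\{m\in M\mid \bar m\in\k x\}$ with its invariant linear form $\phi\colon M'\to\k x\cong\k$.
\begin{enumerate}
\item Apply Haboush to the $G$-module $M'^\vee$ with the invariant vector $\phi$. This gives $f\in\mathrm{Sym}^N(M')^G$ with $\mathrm{Sym}^N(\phi)(f)\neq0$.
\item The multiplication map $\mathrm{Sym}(M')\to A\to A/J$ factors through $\mathrm{Sym}(\k x)$. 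So the image of $f$ in $A^G$ lifts a nonzero scalar multiple of $x^N$, and you then rescale.
\end{enumerate}
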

\begin{proof}
    (1) For general ring $\k$, this follows from \cite[Proposition 4]{Kallen2014} and \cite[Theorem 12]{Kallen2010}. Next we assume that $\k=k$ is a field and prove the stronger statement. If $\mathrm{char}(k)=0$ this follows since taking $G$-invariants is an exact functor by complete reducibility. If $p=\mathrm{char}(k)>0$ the statement follows from the argument of \cite[proof of Proposition 41]{Kallen2010}, which we review for the convenience of the reader. Let $Y$ be an independent variable and let $G$ acts on the polynomial ring $k[Y]$ trivially. By the case of general base ring, the map $A[Y]^G\to(A/J)[Y]^G$ has the power surjective property. Take any element $x\in(A/J)^G$, then there exists $N>0$ such that $(x+Y)^N$ lifts to an element in $A[Y]^G=A^G[Y]$. Write $N=np^m$ where $n$ is prime to $p$ and $m\ge0$. Then we have $(x+Y)^N=(x^{p^m}+Y^{p^m})^n$ and the coefficient $nx^{p^m}$ of $Y^{(n-1)p^m}$ lifts to $A^G$. Since $n$ is invertible in $k$, we get that $x^{p^m}$ lifts to $A^G$.\par 
    (2) follows from (1) and Lemma \ref{lem:transfer-principle}.
\end{proof}

\subsection{Quantized coordinate ring of the basic affine space}
Fix a based root datum $\Psi$ and the associated split reductive group scheme $(G,B,T)$ over $\ZZ$ as in \S\ref{sec:notation-reductive-group}. Recall that the quotient $G/U$ is quasi-affine and its affine closure is the so-called \emph{basic affine space}:
\[\overline{G/U}^{\mathrm{aff}}\coloneqq\Spec(\ZZ[G]^U).\]
In this subsection we study the quantization of the basic affine space and prove some related technical results to be used later. Unlike the previous subsections, we now let $R$ be a general commutative $\A$-algebra (so the image of $v$ in $R$ is not necessarily $1$).\par
The following result describes the quantization of $\ZZ[G]^U$ in terms of the dual canonical basis, and provides justification to Definition \ref{def:G/U}.
\begin{prop}\label{prop:U-invariant}
    We have equalities of $R$-submodules of ${}_R\bfO$:
    \[H^0(\rho_r({}_R\hat{\bfU}^+),{}_R\bfO)={}_R\bfO_{G/U},\quad H^0(\rho_l({}_R\hat{\bfU}^-),{}_R\bfO)={}_R\bfO_{U^-\backslash G}.\]
\end{prop}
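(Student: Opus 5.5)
The plan is to prove the first equality and deduce the second by applying the involution $\omega\sigma$. By Proposition \ref{prop:B-dot-involution}, $\omega\sigma$ exchanges $\dot{\bfB}_{0,\la}$ with $\sigma(\dot{\bfB}_{\la,0})$. It also exchanges ${}_R\hat{\bfU}^+$ with ${}_R\hat{\bfU}^-$ and right with left regular actions, up to the antipode, which differs from $\sigma$ only by a sign and a power of $v$. First I reformulate invariance. By the formula $\langle\rho_r(\xi)\varphi,\xi'\rangle=\langle\varphi,\xi'\xi\rangle$ and continuity, $\varphi\in{}_R\bfO$ is killed by the augmentation ideal of ${}_R\hat{\bfU}^+$ if and only if $\varphi$ vanishes on the right ideal
\[J\coloneqq\sum_{\zeta\in X}\ \sum_{x\in{}_R\bff_\nu,\ \nu\ne0}{}_R\dot{\bfU}\,x^+1_\zeta.\]
The augmentation ideal is topologically spanned by the elements $\sum_\la b^+1_\la$ with $b\ne1$, and pairing with $\xi'$ sees only finitely many $\la$. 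So $H^0(\rho_r({}_R\hat{\bfU}^+),{}_R\bfO)=J^\perp\cap{}_R\bfO$. Since everything is compatible with the decomposition \eqref{eq:U-dot-decomposition}, I work one block ${}_R\dot{\bfU}1_\zeta$ at a time, and I may take $R=\A$ and then base change, because both sides are spanned by subsets of $\dot{\bfB}^*$ once the equality is established over $\A$.

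\emph{Inclusion ${}_R\bfO_{G/U}\subset J^\perp$.} Fix $\la\in X^+$ and $b\in\bfB(\la)$. The functional $(b^-1_\la)^*$ is supported on ${}_\A\dot{\bfU}1_\la$. The map ${}_\A\dot{\bfU}1_\la\to{}_\A\La_\la$, $u\mapsto u\eta_\la$, kills $J1_\la$, because $x^+1_\la\eta_\la=0$ for $|x|\ne0$. By Lusztig's compatibility of $\dot{\bfB}$ with this map (the description of $\dot{\bfB}_{0,\la}$ recalled above, \cite[\S3.4, 25.2]{Lusztig-IntroQuantumGroup}), its kernel is spanned by $\dot{\bfB}\cap\dot{\bfU}1_\la\setminus\dot{\bfB}_{0,\la}$, and $\dot{\bfB}_{0,\la}$ maps bijectively onto a basis. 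Hence $(b^-1_\la)^*$ vanishes on the kernel, and in particular on $J1_\la$.

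\emph{Inclusion $J^\perp\cap{}_R\bfO\subset{}_R\bfO_{G/U}$.} This is the step I expect to be the main obstacle. Let $\varphi\in{}_\A\bfO$ vanish on $J$. Since $\varphi$ involves finitely many cells, there is a finite downward closed $P\subset X^+$ with $\varphi\in{}_\A\bfO(P)$. By Proposition \ref{prop:downward-closed}, $\varphi$ then vanishes on the two-sided ideal $\dot{\bfU}[X^+\setminus P]$. So $\varphi$ factors through $\dot{\bfU}/\dot{\bfU}[X^+\setminus P]$, which by \cite[29.1.6]{Lusztig-IntroQuantumGroup} embeds into $\prod_{\mu\in P}\End(\La_\mu)$. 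Thus $\varphi$ is a sum of matrix coefficients $u\mapsto\langle f_\mu,u\,m_\mu\rangle$ on the modules $\La_\mu$.

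Restricting to the block $\dot{\bfU}1_\zeta$, vanishing on $J1_\zeta$ should force each relevant vector $m_\mu$ to be annihilated by all $\theta_i^{(n)+}$ ($n>0$), i.e. to be a highest-weight vector. Working over $\QQ(v)$, where $\La_\mu$ is simple, this forces $\zeta=\mu$ and $m_\mu\in\QQ(v)\eta_\mu$. Then $\varphi|_{\dot{\bfU}1_\zeta}$ factors through $u\mapsto u\eta_\zeta$ and, by the basis statement above, is a combination of the $(b^-1_\zeta)^*$ with $b\in\bfB(\zeta)$.

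The delicate point is to make the matrix-coefficient reduction rigorous: one has to exploit the dual of the surjection $\dot{\bfU}1_\zeta\to\bigoplus_\mu\End(\La_\mu)1_\zeta$ and identify the common annihilator of $J1_\zeta$ there. Finally, integrality is automatic. The coefficients of $\varphi$ against $\dot{\bfB}^*$ lie in $\A$, and the $(b^-1_\la)^*$ are themselves members of $\dot{\bfB}^*$, so the $\QQ(v)$-statement descends to $\A$ and then to every $R$.
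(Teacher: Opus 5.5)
Your proof of ${}_R\bfO_{G/U}\subset H^0(\rho_r({}_R\hat{\bfU}^+),{}_R\bfO)$ is fine. It uses the fact that $u\mapsto u\eta_\la$ kills exactly the span of $\dot{\bfB}\cap\dot{\bfU}1_\la\setminus\dot{\bfB}_{0,\la}$, which is a clean alternative to the paper's cell-inequality argument. (Note that $J$ is a left ideal, not a right ideal.) Your argument for the reverse inclusion can also be completed over $\QQ(v)$: write a functional on $\End(\La_\mu)$ as $A\mapsto\mathrm{tr}(A\Phi_\mu)$. Vanishing on $\End(\La_\mu)\,x^+$ gives $x^+\Phi_\mu=0$, so $\Phi_\mu$ has image in $\QQ(v)\eta_\mu$. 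This settles the case $R=\QQ(v)$ and hence $R=\A$.

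The genuine gap is the final step, ``and then to every $R$''. The module $H^0(\rho_r({}_R\hat{\bfU}^+),{}_R\bfO)$ is the kernel of an $R$-linear system whose coefficients $c^*(u\,x^+1_\zeta)$ lie in $\A$. Kernels do not commute with the non-flat base change $\A\to R$; compare $2x=0$ over $\ZZ$ and over $\mathbb{F}_2$. Knowing that every solution over $\A$ has zero coefficient on each $c\notin\bigcup_\la\dot{\bfB}_{0,\la}$ says nothing about solutions over, say, $R=\mathbb{F}_p$ with $v\mapsto1$. That is exactly the case used later, in Corollary \ref{cor:U-inv-O(P)} and Lemma \ref{lem:B-eigenspace}. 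Your earlier justification, that ``both sides are spanned by subsets of $\dot{\bfB}^*$'', is circular for the left-hand side.

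Nor can you simply rerun your argument over $R$. It rests on semisimplicity: the isomorphism $\dot{\bfU}/\dot{\bfU}[X^+\setminus P]\cong\prod_{\mu\in P}\End(\La_\mu)$, and the highest-weight vectors of $\La_\mu$ forming a single line. Both fail after specialization. For example, for $\mathrm{SL}_2$ over $\mathbb{F}_2$, the weight-zero vector $\theta^-1_\alpha\eta_\alpha\in{}_{\mathbb{F}_2}\La_\alpha$ is killed by all $\theta^{(n)+}$ with $n\ge1$. The case $n=1$ holds because $\theta^+\theta^-1_\alpha\eta_\alpha=[2]\eta_\alpha$, which vanishes at $v=1$ in characteristic $2$; the cases $n\ge2$ hold for weight reasons.

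What is needed is a unitriangular argument valid over every $R$ directly, which is what the paper gives (for the left $U^-$ version).

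\begin{itemize}
\item Take an invariant $\varphi\in{}_R\bfO$ and choose $\la$ maximal in $\Supp(\varphi)$. Then $\varphi$ vanishes on ${}_R\dot{\bfU}[>\la]$.
\item By Proposition \ref{prop:parametrize-B[la]}, $\beta_\la(b,b')\equiv b^-1_\la\sigma(b')^+=(b^-1_\la)\,\sigma(b')^+1_{\la-|b'|}$ modulo $\dot{\bfU}[>\la]$. For $b'\neq1$ this lies in your $J$.
\item Hence $\varphi(\beta_\la(b,b'))=0$ for $b'\neq1$, while $\beta_\la(b,1)=b^-1_\la$.
\item Subtract $\sum_{b\in\bfB(\la)}\varphi(b^-1_\la)(b^-1_\la)^*$, which is invariant by your first inclusion. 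This removes $\la$ from the support, and induction on $\#\Supp(\varphi)$ concludes.
\end{itemize}

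No descent from $\QQ(v)$ is needed.
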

\begin{proof}
    We only need to prove the equality for ${}_R\bfU^-$ (the other one will follow by applying the involution $\omega$). Let $\la\in X^+$ and $b\in\bfB(\la)$ so that $1_\la\sigma(b)^+\in\dot{\bfB}[\la]$. We claim that $(1_\la\sigma(b)^+)^*$ is left ${}_R\hat\bfU^-$-invariant. For this we may assume $R=\A$ and it suffices to show that for any $b'\in\bfB\backslash\{1\}$ and $x\in{}_{\zeta}\dot{\bfB}_{\chi}$ with $\zeta,\chi\in X$ (see Definition \ref{def:B-dot}), we have 
    \[(1_\la\sigma(b)^+)^*(b'^-1_{\zeta}x)=0.\] 
    Since $1_\la\sigma(b)^+\in 1_\la\dot{\bfU}1_{\la-\nu}$ where $\nu=|b|$, the identity above holds if $\zeta-|b'|\ne\la$. Now assume that $\zeta-|b'|=\la$. According to \cite[Proposition 25.2.6]{Lusztig-IntroQuantumGroup} we have $b'^-1_\zeta\in\dot{\bfB}$. Let $\mu\in X^+$ be the dominant weight such that $b'^-1_\zeta\in\dot{\bfB}[\mu]$. Then $b'^-1_\zeta$ does not act by zero on the Weyl module $\La_\mu$ and we have $\zeta\le\mu$. So we get inequalities
    \[\la\le\la+|b'|=\zeta\le\mu\]
    and since $b'\ne1$ we have strict inequality $\la<\mu$. Since $\dot{\bfU}[\ge\mu]$ is a two-sided ideal of $\dot{\bfU}$ we have $b'^-1_\zeta x\in\dot{\bfU}[\ge\mu]$ and hence $(1_\la\sigma(b)^+)^*(b'^-1_{\zeta}x)=0$. This proves the claim and consequently we verified the inclusion 
    \[{}_R\bfO(U^-\backslash G)\subset H^0(\rho_l({}_R\hat{\bfU}^-),{}_R\bfO).\]
    Now take any $\rho_l({}_R\hat{\bfU}^-)$-invariant element $\varphi\in{}_R\bfO$. Let $\Supp(\varphi)=\{\la_1,\dotsc,\la_n\}$ be the smallest subset of $X^+$ such that $\varphi(c)=0$ for any $c\in\dot{\bfB}\setminus\bigcup_{i=1}^n\dot{\bfB}[\la_i]$. After reordering, we may and do assume that $\la_i\nleq\la_j$ for any $i>j$. 
    We will show by induction on $n$ that $\varphi\in{}_R\bfO_{U^-\backslash G}$. According to Proposition \ref{prop:parametrize-B[la]}, any element of $\dot{\bfB}[\la_n]$ can be written as $\beta_{\la_n}(b,b')=b^-1_{\la_n}\sigma(b')^++\alpha$ where $\alpha\in{}_R\dot{\bfU}[>\la_n]$ and $b,b'\in \bfB(\la_n)$. By assumption $\varphi(\alpha)=0$. Since $\varphi$ is left ${}_R\hat{\bfU}^-$-invariant, we have $\varphi(b^-1_{\la_n}\sigma(b')^+)=0$ when $b\ne1$. 
    Let 
    \[\varphi'\coloneqq\varphi-\sum_{b'\in\bfB(\la_n)\setminus\{1\}}\varphi(1_{\la_n}\sigma(b')^+)(1_{\la_n}\sigma(b')^+)^*.\] 
    Then we have $\Supp(\varphi')=\Supp(\varphi)\setminus\{\la_n\}$
    and by what we have already shown $\varphi'$ is $\rho_l({}_R\hat{\bfU}^-)$-invariant. Therefore by induction hypothesis we get $\varphi'\in{}_R\bfO_{U^-\backslash G}$ and the same holds for $\varphi$.
\end{proof}
The following consequence will not be used in the sequel but might be of independent interest. It proves exactness of the functor of taking unipotent invariants in some situations. In the literature this kind of results are usually proved over a field using the existence of a good filtration. In our situation, the (dual) canonical basis is in some sense a finer structure than a good filtration and allows us to prove such results over general base ring. 

\begin{cor}\label{cor:U-inv-exact}
    Let $Q\subset P$ be subsets of $X^+$ such that ${}_R\bfO(Q)$ and ${}_R\bfO(P)$ are both $\rho_r({}_R\hat{\bfU})\times\rho_l({}_R\hat{\bfU})$-submodules of ${}_R\bfO$. Then the short exact sequence
    \[0\to{}_R\bfO(Q)\to{}_R\bfO(P)\to{}_R\bfO(P)/{}_R\bfO(Q)\to0\]
    remains exact after taking invariants under $\rho_r({}_R\hat{\bfU}^+)$, $\rho_l({}_R\hat{\bfU}^-)$ and also $\rho_r({}_R\hat{\bfU}^+)\times \rho_l({}_R\hat{\bfU}^-)$.
\end{cor}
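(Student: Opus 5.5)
The plan is to show directly that the invariants of the quotient are spanned by the images of invariant dual canonical basis elements, all of which already lie in ${}_R\bfO(P)$. Write $Q'\coloneqq P\setminus Q$. Since ${}_R\bfO(Q)$ and ${}_R\bfO(P)$ are spanned by subsets of $\dot{\bfB}^*$, the quotient ${}_R\bfO(P)/{}_R\bfO(Q)$ is free with basis the images of $\dot{\bfB}[Q']^*$. We identify it, as an $R$-module, with the span ${}_R\bfO(Q')$ of $\dot{\bfB}[Q']^*$, using the section ${}_R\bfO(Q')\subset{}_R\bfO(P)$. Left exactness of taking invariants is automatic, because $H^0$ is a $\Hom$ functor. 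So the whole content of the statement is surjectivity on invariants.

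We treat $\rho_l({}_R\hat{\bfU}^-)$ first. Let $\bar\varphi$ be an invariant of the quotient and let $\varphi\in{}_R\bfO(Q')$ be its lift. We then run the induction from the proof of Proposition \ref{prop:U-invariant} on $\Supp(\varphi)\subset Q'$.
\begin{itemize}
\item Choose $\la_n\in\Supp(\varphi)$ maximal, so that $\la_n\nleq\la_i$ for the other $i$.
\item For $b\in\bfB(\la_n)\setminus\{1\}$ and $b'\in\bfB(\la_n)$, compute $\varphi(\beta_{\la_n}(b,b'))$. By Proposition \ref{prop:parametrize-B[la]}, $\beta_{\la_n}(b,b')$ equals $b^-1_{\la_n}\sigma(b')^+$ modulo ${}_R\dot{\bfU}[>\la_n]$, and $\varphi$ vanishes on the error term.
\item Next we show $\varphi(b^-1_{\la_n}\sigma(b')^+)=0$. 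Invariance of $\bar\varphi$ says that $\rho_l(\xi)\varphi\in{}_R\bfO(Q)$ for $\xi$ in the augmentation ideal. Hence $\varphi(S(\xi)u)=\langle\rho_l(\xi)\varphi,u\rangle$ vanishes for every $u\in{}_R\dot{\bfU}[X^+\setminus Q]$.
\item The element $\sigma(b')^+$ lies in ${}_R\dot{\bfU}[X^+\setminus Q]$. Indeed, ${}_R\dot{\bfU}[X^+\setminus Q]$ is a two-sided ideal by the ideal hypothesis on ${}_R\bfO(Q)$ together with the argument of Proposition \ref{prop:downward-closed}, and after left multiplying by $1_{\la_n}$ the element lies in $\dot{\bfU}[\ge\la_n]$, where the cells $\mu$ that occur are not in $Q$.
\item Pairing with $u=1_{\la_n}\sigma(b')^+$ and $\xi=S^{-1}(b^-1_\zeta)$ therefore gives the vanishing.
\end{itemize}
Now subtract $\sum_{b'}\varphi(1_{\la_n}\sigma(b')^+)(1_{\la_n}\sigma(b')^+)^*$. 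This correction is genuinely invariant in ${}_R\bfO(P)$ by the first half of the proof of Proposition \ref{prop:U-invariant}, and it lies in ${}_R\bfO(Q')$. The difference has smaller support, and its image in the quotient is still invariant, so induction finishes. As a result, the invariants of the quotient are the image of ${}_R\bfO_{U^-\backslash G}(P)$.

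For $\rho_r({}_R\hat{\bfU}^+)$ the argument is symmetric, via $\omega$ or directly using $b^-1_{\la}$. For the product $\rho_r({}_R\hat{\bfU}^+)\times\rho_l({}_R\hat{\bfU}^-)$ we combine the two arguments. For a two-sided invariant, the same top-cell argument kills $\varphi(\beta_{\la_n}(b,b'))$ unless $b=b'=1$, so the correction term is $\varphi(1_{\la_n})1_{\la_n}^*$. This element is bi-invariant, since it lies in ${}_R\bfO_{G/U}\cap{}_R\bfO_{U^-\backslash G}$ by Definition \ref{def:G/U} and Proposition \ref{prop:U-invariant}.

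The main obstacle is the third and fourth steps above. There the invariance of $\bar\varphi$ holds only modulo ${}_R\bfO(Q)$, and we must check that the test element $1_{\la_n}\sigma(b')^+$ lies in the annihilator ideal ${}_R\dot{\bfU}[X^+\setminus Q]$. I would first try to use that ${}_R\bfO(Q)$ being a bi-submodule forces $Q$ to be an upward-closed complement in the appropriate sense, namely that ${}_R\dot{\bfU}[X^+\setminus Q]$ is a two-sided ideal. If that is not available, the fallback is to use maximality of $\la_n$ in $Q'$ and the fact that $\la_n\notin Q$.
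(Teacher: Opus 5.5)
Your proposal is correct and is essentially the paper's own proof: lift $\bar\varphi$ to ${}_R\bfO(P\setminus Q)$, induct on the support, use maximality of $\la_n$ together with invariance modulo ${}_R\bfO(Q)$ to kill $\varphi(\beta_{\la_n}(b,b'))$ for $b\neq 1$, and then subtract a correction lying in ${}_R\bfO_{U^-\backslash G}$. Your correction term is the right one: the sum must run over all $b'\in\bfB(\la_n)$, including $b'=1$, and in the bi-invariant case the correction is $\varphi(1_{\la_n})1_{\la_n}^*$.

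The step you flag as the main obstacle is in fact immediate. By Lemma \ref{lem:B(lambda)-characterization}(iii), the test element $1_{\la_n}\sigma(b')^+$ is itself a canonical basis element in $\dot{\bfB}[\la_n]$, and $\la_n\notin Q$. It therefore pairs to zero with all of ${}_R\bfO(Q)$, so no appeal to two-sided ideals or to downward closedness of $Q$ is needed.
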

\begin{proof}
    We prove exactness of $\rho_l({}_R\hat{\bfU}^-)$-invariants. It suffices to show right exactness. Take an element $\varphi\in{}_R\bfO(P\setminus Q)$ whose image $\bar{\varphi}$ in ${}_R\bfO(P)/{}_R\bfO(Q)$ is invariant under $\rho_l({}_R\hat{\bfU}^-)$. Let $\Supp(\varphi)=\{\la_1,\dotsc,\la_n\}$ be the smallest subset of $P\setminus Q$ such that $\varphi(c)=0$ for all $c\in\dot{\bfB}[\la]$ and any $\la\in X^+\setminus\Supp(\varphi)$. We will show by induction on $n$ that $\bar{\varphi}$ lies in the image of $H^0(\rho_l({}_R\hat{\bfU}^-),{}_R\bfO(P))$. \par
    After reordering we may and do assume that $\la_i\nleq\la_j$ for any $i>j$.  According to Proposition \ref{prop:parametrize-B[la]}, for any elements $b,b'\in\bfB(\la_n)$, we have $\beta_{\la_n}(b,b')=b^-1_{\la_n}\sigma(b')^++\alpha$ where $\alpha\in{}_R\dot{\bfU}[>\la_n]$. By assumption we have $\varphi(\alpha)=0$. If $b\ne1$, then $\rho_l(S^{-1}(b^-1_{\la_n}))\varphi\in{}_R\bfO(Q)$ by assumption (here $S$ is the antipode) and hence $\varphi(b^-1_{\la_n}\sigma(b')^+)=0$. Thus $\varphi(\beta_{\la_n}(b,b'))=0$ if $b\ne1$. 
    Let
    \[\varphi'\coloneqq\varphi-\sum_{b'\in\bfB(\la_n)\backslash\{1\}}\varphi(1_{\la_n}\sigma(b')^+)(1_{\la_n}\sigma(b')^+)^*.\] Then we have $\Supp(\varphi')=\Supp(\varphi)\backslash\{\la_n\}$ and the image of $\varphi'$ in ${}_R\bfO(P)/{}_R\bfO(Q)$ is invariant under $\rho_l({}_R\hat{\bfU}^-)$ by Proposition \ref{prop:U-invariant}. Therefore we conclude by induction hypothesis.\par 
    The exactness for $\rho_r({}_R\hat{\bfU}^+)$-invariants and $\rho_r({}_R\hat{\bfU}^+)\times\rho_l({}_R\hat{\bfU}^-)$-invariants are proved by similar arguments. In the former case, we replace the definition of $\varphi'$ above by 
    \[\varphi-\sum_{b\in\bfB(\la_n)\backslash\{1\}}\varphi(b^-1_{\la_n})(b^-1_{\la_n})^*,\] 
    while in the latter case it is replaced by $\varphi-\sum_{b\in\dot{\bfB}[\la_n]\backslash\{1_{\la_n}\}}\varphi(b)b^*$. 
\end{proof}

\begin{cor}\label{cor:U-inv-O(P)}
    Let $P\subset X^+$ be a subset that is downward closed. Then ${}_R\bfO(P)$ is a ${}_R\hat{\bfU}$-submodule of ${}_R\bfO$ under both left and right regular representation, and we have 
    \[H^0(\rho_r({}_R\hat{\bfU}^+)\times\rho_l({}_R\hat{\bfU}^-),{}_R\bfO(P))=\bigoplus_{\la\in P}R\cdot1_\la^*.\]
    Furthermore if $P$ is submonoid of $X^+$, then this space is a subalgebra isomorphic to the monoid algebra $R[P]$.
\end{cor}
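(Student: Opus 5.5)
The plan has three parts: stability of ${}_R\bfO(P)$ under both regular actions, identification of the doubly invariant elements, and the algebra statement.

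For stability, Proposition \ref{prop:downward-closed} shows that when $P$ is downward closed, ${}_R\dot{\bfU}[X^+\setminus P]$ is a two-sided ideal. Its orthogonal in ${}_R\bfO$ is exactly ${}_R\bfO(P)$. By the pairing formulas $\langle\rho_l(\xi)\varphi,\xi'\rangle=\langle\varphi,S(\xi)\xi'\rangle$ and $\langle\rho_r(\xi)\varphi,\xi'\rangle=\langle\varphi,\xi'\xi\rangle$, any $\varphi$ killing the ideal is sent by $\rho_l(\xi)$ and $\rho_r(\xi)$ to a functional that again kills the ideal. This holds first for $\xi\in{}_R\dot{\bfU}$, using that $S$ preserves the ideal: $S$ differs from $\sigma$ by signs and powers of $v$, and by Proposition \ref{prop:B-dot-involution} $\sigma$ maps cells to cells via $\la\mapsto-w_0\la$, which preserves $\le$ and dominance, so $S$ sends $\dot{\bfU}[\ge\la]$ to $\dot{\bfU}[\ge -w_0\la]$. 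It extends to $\xi\in{}_R\hat{\bfU}$ by continuity, since for fixed $\varphi$ only finitely many basis elements contribute.

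For the invariants, by Proposition \ref{prop:U-invariant} an element invariant under both actions lies in ${}_R\bfO_{G/U}\cap{}_R\bfO_{U^-\backslash G}\cap{}_R\bfO(P)$. Both spaces are spanned by subsets of the basis $\dot{\bfB}^*$, so their intersection is spanned by the common basis vectors. A vector $(b^-1_\la)^*$ with $b\in\bfB(\la)$ equals some $(1_\mu\sigma(b')^+)^*$ only if $b^-1_\la=1_\mu\sigma(b')^+$ in $\dot{\bfB}$. Comparing the PBW-type leading terms of Definition \ref{def:B-dot}, or simply weights and the triangular description, this forces $b=b'=1$ and $\la=\mu$. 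Hence the common vectors are $1_\la^*$ with $\la\in X^+$, and intersecting with ${}_R\bfO(P)$ leaves $\la\in P$, since $1_\la\in\dot{\bfB}[\la]$ by Corollary \ref{cor:B[la]-W-la}. Conversely each such $1_\la^*$ lies in both spaces, because $1_\la\in\dot{\bfB}_{0,\la}$ (take $b=1$) and $1_\la=1_\la\sigma(1)^+$. So the invariants are exactly $\bigoplus_{\la\in P}R\cdot1_\la^*$. The only subtle point is checking that a basis element cannot be simultaneously of the form $b^-1_\la$ and $1_\mu\sigma(b')^+$ unless it is an idempotent. I would handle this with the bijection $\bfB\times\bfB\times X\to\dot{\bfB}$: $b^-1_\la$ corresponds to $(1,b,\cdot)$-type data and $1_\mu\sigma(b')^+$ to $(\sigma(b'),1,\cdot)$, and the two coincide only when both components are $1$.

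Finally, if $P$ is a submonoid, Lemma \ref{lem:filtration-O-multiplicative}(ii) gives $1_\la^*1_\mu^*=1_{\la+\mu}^*$, and $1_0^*$ is the unit. The span is therefore a subalgebra, and $e^\la\mapsto1_\la^*$ is an isomorphism from $R[P]$ onto it, injective because the $1_\la^*$ are basis vectors.
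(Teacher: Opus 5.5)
Your proof is correct and is in substance the paper's argument. The only difference is in the stability step. The paper uses the sub-coalgebra form of Proposition~\ref{prop:downward-closed} and restricts the coaction maps $\mu_l=(S_\bfO\otimes\mathrm{Id})\circ\Delta_\bfO$ and $\mu_r$ to ${}_R\bfO(P)\to{}_R\bfO\otimes_R{}_R\bfO(P)$, which handles the whole ${}_R\hat{\bfU}$-action at once. You use the dual two-sided-ideal form, the pairing formulas, and a finiteness argument to pass from ${}_R\dot{\bfU}$ to ${}_R\hat{\bfU}$; the two are equivalent. The invariants and algebra parts are as in the paper. Your basis-intersection step in fact follows from weights alone: $b^-1_\la\in1_{\la-|b|}\dot{\bfU}1_\la$ and $1_\mu\sigma(b')^+\in1_\mu\dot{\bfU}1_{\mu-|b'|}$ force $|b|+|b'|=0$. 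This fills in a detail the paper leaves implicit.

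One sentence should be deleted: $S$ does \emph{not} preserve ${}_R\dot{\bfU}[X^+\setminus P]$ in general. Since $S$ agrees with $\sigma$ up to units, Proposition~\ref{prop:B-dot-involution} shows that $S$ carries this ideal onto ${}_R\dot{\bfU}[X^+\setminus(-w_0P)]$. For example, take $\mathrm{SL}_3$ and the downward closed set $P=\{\omega_1\}$. Then $1_{\omega_2}$ lies in the ideal, but $S(1_{\omega_2})$ is a unit multiple of $1_{-\omega_2}=1_{w_0\omega_1}\in\dot{\bfB}[\omega_1]$ (Corollary~\ref{cor:B[la]-W-la}), which does not. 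Fortunately the claim is not needed. For $\xi'$ in the ideal, $S(\xi)\xi'$ lies in the ideal simply because it is a left ideal, whatever $S(\xi)$ is. The same phenomenon is why the paper's $\mu_l$ only lands in ${}_R\bfO\otimes_R{}_R\bfO(P)$ rather than ${}_R\bfO(P)\otimes_R{}_R\bfO(P)$.
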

\begin{proof}
    By Proposition \ref{prop:downward-closed}, ${}_R\bfO(P)$ is a sub-coalgebra of ${}_R\bfO$. From this one deduce that $\mu_l=(S_\bfO\otimes\mathrm{Id})\circ\Delta_\bfO$ and $\mu_r={}^t\Delta_\bfO$ restricts to homomorphisms
    \[\mu_l,\mu_r\colon{}_R\bfO(P)\to{}_R\bfO\otimes_R{}_R\bfO(P)\]
    which means that ${}_R\bfO(P)$ is a ${}_R\hat{\bfU}$-submodule under both left an right regular representation. The description of invariants follows from Propositon \ref{prop:U-invariant}. The last statement follows from Lemma \ref{lem:filtration-O-multiplicative}.
\end{proof}
The following technical result will be used later in the proof of Theorem \ref{theo:monoid-classification}.
\begin{prop}\label{prop:nilpotent-deformation}
    Let $R$ be a commutative $\A$-algebra and let $I\subset R$ be a nilpotent ideal. Let $Q\subset P\subset X^+$ be two subsets such that ${}_R\bfO(Q)$ and ${}_R\bfO(P)$ are both $\rho_l({}_R\hat{\bfU})\times \rho_r({}_R\hat{\bfU})$-submodules of ${}_R\bfO$. Let $N\subset{}_R\bfO(P)$ be a $\rho_l({}_R\hat{\bfU})\times \rho_r({}_R\hat{\bfU})$-submodule satisfying the following conditions:
    \begin{enumerate}
        \item[(i)] ${}_R\bfO(P)/N$ is flat as $R$-module;
        \item[(ii)] $N\subset{}_R\bfO(Q)+I\cdot{}_R\bfO(P)$.
    \end{enumerate}
    Then we have $N\subset{}_R\bfO(Q)$. If moreover the following condition is also satisfied:
    \begin{enumerate}
        \item [(iii)] $N\otimes_R R/I={}_{R/I}\bfO(Q)$,
    \end{enumerate}
    then we have $N={}_R\bfO(Q)$.
\end{prop}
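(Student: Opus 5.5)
The plan is to show by induction on $k\ge1$ that
\[N\subset{}_R\bfO(Q)+I^k\cdot{}_R\bfO(P).\]
Since $I$ is nilpotent, this gives $N\subset{}_R\bfO(Q)$. The case $k=1$ is hypothesis (ii). Throughout I will use that ${}_R\bfO(Q)$ and ${}_R\bfO(P)$ are bimodules, so (by Proposition \ref{prop:downward-closed}, via orthogonality) the annihilator $J\coloneqq{}_R\hat{\bfU}[X^+\setminus Q]$ of ${}_R\bfO(Q)$ is a two-sided ideal of ${}_R\hat{\bfU}$. I also use that ${}_R\bfO(P)/{}_R\bfO(Q)$ is free on $\dot{\bfB}[P\setminus Q]^*$.

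\emph{Flatness input.} Set $M\coloneqq{}_R\bfO(P)/N$. Flatness (i) gives $\mathrm{Tor}_1^R(M,R/I^k)=0$, i.e. the map $N/I^kN\to{}_R\bfO(P)/I^k{}_R\bfO(P)$ is injective. Equivalently,
\[N\cap I^k\,{}_R\bfO(P)=I^kN.\]

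\emph{Inductive step.} Assume $N\subset{}_R\bfO(Q)+I^k{}_R\bfO(P)$. For $u\in J$, the operator $\rho_r(u)$ kills ${}_R\bfO(Q)$ and preserves $N$, so
\[\rho_r(u)N\subset N\cap I^k{}_R\bfO(P)=I^kN\subset I^k{}_R\bfO(Q)+I^{2k}{}_R\bfO(P).\]
Evaluating such an element at any $u'\in J$ kills the ${}_R\bfO(Q)$-part, hence $\varphi(u'u)\in I^{2k}\subset I^{k+1}$ for all $\varphi\in N$ and $u,u'\in J$.

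It remains to recover every coefficient $\varphi(c)$, for $c\in\dot{\bfB}[\la]$ with $\la\in P\setminus Q$, from values of this form $\varphi(u'u)$. I will do this by descending induction on $\la$ within the (finite) support of $\varphi$ modulo ${}_R\bfO(Q)$, handling maximal $\la$ first.
\begin{itemize}
\item By the cell structure (\cite[29.1.6]{Lusztig-IntroQuantumGroup} and Proposition \ref{prop:parametrize-B[la]}), ${}_R\dot{\bfU}[\ge\la]/{}_R\dot{\bfU}[>\la]$ maps onto $\mathrm{End}({}_R\La_\la)$. So I can choose $e\in{}_R\dot{\bfU}[\ge\la]\subset J$ with $ce\equiv c$ modulo ${}_R\dot{\bfU}[>\la]$; concretely, a finite sum of elements of $\dot{\bfB}[\la]$ lifting the identity.
\item This gives $\varphi(c)=\varphi(ce)-\varphi(y)$ with $y\in{}_R\dot{\bfU}[>\la]$.
\item The first term $\varphi(ce)$ has $c,e\in J$, so it lies in $I^{k+1}$ by the previous paragraph.
\item The error term $\varphi(y)$ involves only cells strictly above $\la$. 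On cells in $Q$ it is handled by the ${}_R\bfO(Q)$-part vanishing on $J$; on cells in $P\setminus Q$ above $\la$ it is handled by the descending induction.
\end{itemize}
Hence all coefficients of $\varphi$ on $\dot{\bfB}[P\setminus Q]$ lie in $I^{k+1}$, completing the induction. I expect this extraction step to be the main obstacle. The point to check carefully is that the lift $e$ can be chosen inside ${}_R\dot{\bfU}$, not merely over $\QQ(v)$, so that the cell-by-cell bookkeeping works integrally over $R$.

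\emph{Equality under (iii).} We now have $N\subset{}_R\bfO(Q)$. Consider $0\to N\to{}_R\bfO(Q)\to{}_R\bfO(Q)/N\to0$. The quotient ${}_R\bfO(Q)/N$ embeds into ${}_R\bfO(P)/N$. Base changing the inclusion $N\subset{}_R\bfO(Q)$ to $R/I$ and using (iii) together with flatness of ${}_R\bfO(P)/N$ (so that $N\otimes R/I\to{}_{R/I}\bfO(P)$ is injective with image ${}_{R/I}\bfO(Q)$), we get $({}_R\bfO(Q)/N)\otimes_R R/I=0$. Since $I$ is nilpotent, Nakayama's lemma for nilpotent ideals gives ${}_R\bfO(Q)/N=0$, that is, $N={}_R\bfO(Q)$.
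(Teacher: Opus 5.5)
\paragraph{The gap.} Your $I$-adic framework is sound: the Tor identity $N\cap I^k\,{}_R\bfO(P)=I^kN$, the annihilator ideal $J$, and the consequence $\varphi(u'u)\in I^{2k}$ for $\varphi\in N$ and $u,u'\in J$ all hold, and so does your treatment of (iii). The gap is exactly the step you flagged, and it fails as stated. It is not true that ${}_R\dot{\bfU}[\ge\la]/{}_R\dot{\bfU}[>\la]$ surjects onto $\mathrm{End}({}_R\La_\la)$. In general there is also no $e\in{}_R\dot{\bfU}[\ge\la]$ with $ce\equiv c$ modulo ${}_R\dot{\bfU}[>\la]$.

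To see this, use Proposition \ref{prop:parametrize-B[la]}. The element $\beta_\la(b_1,b_2)$ acts on ${}_\A\La_\la$ as the rank one operator $x\mapsto f_{b_2}(x)\,b_1\eta_\la$, where $f_{b_2}(x)$ is the $\eta_\la$-coefficient of $\sigma(b_2)^+x$. Consequently the quotient, which is free on $\dot{\bfB}[\la]$, has multiplication $\beta_\la(a_1,a_2)\beta_\la(b_1,b_2)\equiv f_{a_2}(b_1\eta_\la)\,\beta_\la(a_1,b_2)$.

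Now take $G=\mathrm{SL}_2$, $\la=\alpha_i$, and $c=\beta_\la(\theta_i,\theta_i)$. For $b\in\bfB(\la)=\{1,\theta_i,\theta_i^{(2)}\}$ one has $f_{\theta_i}(b\eta_\la)=\delta_{b,\theta_i}[2]$ and $f_b(\theta_i\eta_\la)=\delta_{b,\theta_i}[2]$. Hence both $ce$ and $ec$ lie in $[2]$ times the quotient for every $e$, while $c$ is a basis element. So no such $e$ exists once $[2]$ is not a unit in $R$. For instance, take $R=\mathbb{F}_2[\epsilon]/(\epsilon^2)$ with $v\mapsto 1$ and $I=(\epsilon)$, which is precisely the situation in which the proposition is used in Theorem \ref{theo:monoid-classification}.

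\paragraph{The repair.} Do not look for an idempotent; use Proposition \ref{prop:parametrize-B[la]} directly. Write $c=\beta_\la(b_1,b_2)$ with $b_1,b_2\in\bfB(\la)$. Then $c\equiv(b_1^-1_\la)(1_\la\sigma(b_2)^+)$ modulo ${}_R\dot{\bfU}[>\la]$, and both factors lie in $\dot{\bfB}[\la]\subset J$ by Lemma \ref{lem:B(lambda)-characterization}. Set $u'=b_1^-1_\la$ and $u=1_\la\sigma(b_2)^+$. Your identity $\varphi(c)=\varphi(u'u)-\varphi(y)$ with $y\in{}_R\dot{\bfU}[>\la]$ then holds, and the descending induction on cells goes through unchanged. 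The case ``cells of $Q$ above $\la$'' is vacuous, since $Q$ is downward closed. In the example above this reads $c\equiv\beta_\la(\theta_i,1)\,\beta_\la(1,\theta_i)$, which works because $f_1(\eta_\la)=1$.

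\paragraph{Comparison with the paper.} With this repair your proof is correct and genuinely different from the paper's.
\begin{itemize}
\item The paper first shows $N_{(-\la,\la)}=0$ for $\la\in P\setminus Q$. Since ${}_R\bfO(Q)_{(-\la,\la)}=0$, condition (ii) gives $N_{(-\la,\la)}\subset I\,{}_R\bfO(P)$. Flatness together with the Stacks Project lemma on lifting free bases along a nilpotent ideal then forces $N_{(-\la,\la)}=0$.
\item It then moves a nonzero coefficient of an arbitrary $\varphi\in N$ on a maximal bad cell $\mu$ into $N_{(-\mu,\mu)}$, via $\rho_l(S^{-1}(b_1^-1_\mu))\rho_r(1_\mu\sigma(b_2)^+)$. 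This amounts to evaluating $\varphi$ on the same factorization as above.
\item Your route replaces the weight-space argument and the free-lifting lemma by the Tor identity and an $I$-adic bootstrap, with $J$ killing the ${}_R\bfO(Q)$-component. It avoids weight bookkeeping, at the cost of a double induction (on $k$ and on cells).
\end{itemize}
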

\begin{proof}
    Since ${}_R\bfO(P)$ and ${}_R\bfO(Q)$ are preserved by both left and right regular action of ${}_R\hat{\bfU}$, they are sub-coalgebras of ${}_R\bfO$. Therefore $P$ and $Q$ are both downward closed subset of $X^+$ by Proposition \ref{prop:downward-closed}. For any $\la\in P\setminus Q$, the weight space ${}_R\bfO(Q)_{(-\la,\la)}=0$ since otherwise there would exist $\mu\in Q$ with $\dot{\bfB}[\mu]\cap 1_\la{}_R\dot{\bfU}1_\la\ne\varnothing$ and so $\la\le\mu$, contradicting the fact that $Q$ is downward closed. Hence by condition (ii) we have $N_{(-\la,\la)}\subset I\cdot{}_R\bfO(P)$. Since ${}_R\bfO(P)/N$ is $R$-flat, its direct summand $M\coloneqq{}_R\bfO(P)_{(-\la,\la)}/N_{(-\la,\la)}$ is also $R$-flat. In particular we have $\mathrm{Tor}_1^R(M,R/I)=0$. Let $\Omega\subset\dot{\bfB}^*$ be the subset that form an $R$-basis for ${}_R\bfO(P)_{(-\la,\la)}$. Then the image of elements of $\Omega$ in ${}_{R/I}\bfO$ form a basis of the free $R/I$-module $M/IM={}_{R/I}\bfO(P)_{(-\la,\la)}$. From \cite[\href{https://stacks.math.columbia.edu/tag/051H}{Lemma 051H}]{Stacks}, we deduce that $M$ is a free $R$-module with basis $\Omega$ and so we must have $N_{(-\la,\la)}=0$.\par 
    Now take any $\la_1,\la_2\in P$ and suppose there exists an element $0\ne\varphi\in N_{(-\la_1,\la_2)}\setminus{}_R\bfO(Q)$. Let $\mu\in P\setminus Q$ be a maximal element such that there exists $b\in\dot{\bfB}[\mu]\cap1_{\la_1}{}_R\dot{\bfU}1_{\la_2}$ with $\varphi(b)\ne0$. According to Proposition \ref{prop:parametrize-B[la]} we can write $b=\beta_\mu(b_1,b_2)$ with $b_1,b_2\in\bfB(\mu)$ and $\la_1+|b_1|=\la_2+|b_2|=\mu$. Let $\tilde{\varphi}\coloneqq\rho_l(S^{-1}(b_1^-1_{\mu}))\rho_r(1_{\mu}\sigma(b_2)^+)\cdot\varphi$. Then $\tilde\varphi\in N_{(-\mu,\mu)}$ and $\tilde{\varphi}(c)=\varphi(b_1^-1_{\mu}c1_{\mu}\sigma(b_2)^+)$ for all $c\in\dot{\bfB}$. Since $\beta_\mu(b_1,b_2)-b_1^-1_\mu b_2^+\in{}_R\dot{\bfU}[>\mu]$,  we get $\tilde{\varphi}(1_\mu)\ne0$ by the maximality of $\mu$. On the other hand we have $N_{(-\mu,\mu)}=0$ by the previous paragraph. This is a contradiction and we conclude that $N_{(-\la_1,\la_2)}\subset{}_R\bfO(Q)_{(-\la_1,\la_2)}$ for all $\la_1,\la_2\in Q$. Using the Weyl group action, we see that this inclusion holds for any $\la_1,\la_2\in X$ and then we deduce that $N\subset{}_R\bfO(Q)$.\par 
    Finally, we suppose moreover that condition (iii) is satisfied. In the short exact sequence of $R$-modules
    \[0\to{}_R\bfO(Q)/N\to{}_R\bfO(P)/N\to{}_R\bfO(P)/{}_R\bfO(Q)\to0\]
    the last term is clearly $R$-flat, and the middle term is $R$-flat by (i). So we deduce that ${}_R\bfO(Q)/N$ is also $R$-flat. Thus  condition (iii) and the short exact sequence 
    \[0\to N\to{}_R\bfO(Q)\to {}_R\bfO(Q)/N\to0\]
    implies that $({}_R\bfO(Q)/N)\otimes_RR/I=0$ and hence ${}_R\bfO(Q)=N+I\cdot{}_R\bfO(Q)$. We iterate this identity to get
    \begin{align*}
        {}_R\bfO(Q)=N+I\cdot{}_R\bfO(Q)&=N+I\cdot(N+I\cdot{}_R\bfO(Q))\\ &=N+I^2\cdot{}_R\bfO(Q)=\dotsm=N+I^m\cdot{}_R\bfO(Q)
    \end{align*}
    which holds for any integer $m\ge1$. Since $I$ is a nilpotent ideal, we conclude that $N={}_R\bfO(Q)$.
\end{proof}

\section{Reductive monoid schemes: constructions}\label{sec:reductive-monoid-construction}
In this section we first discuss some generalities on affine monoid schemes, and then prove the existence part of the main Theorem \ref{theo:classification-intro} by constructing the reductive monoid associated to any weight monoid in $X^+$. In \S\ref{subsec:adjoint-quotient} we prove the generalization of the Steinberg theorem for adjoint quotients of the reductive monoids we constructed. In the next section we will show that our constructions exhaust all the reductive monoids and so Theorem \ref{theo:adjoint-quotient-intro} will follow.
\subsection{Definitions and basic properties}
Recall the definition of monoid schemes from Definition \ref{def:monoid-scheme}. 
\begin{lem}\label{lem:quotient-flat}
    Let $\M$ be an affine flat monoid scheme of finite presentation over a scheme $S$. Suppose that the unit group $G=\M^\times$ is (represented by) a scheme that is flat over $S$ and the structure morphism $\M\to S$ has integral geometric fibers. Let $j\colon G=\M^\times\to\M$ be the natural morphism (i.e. the action map of $G$ at the unit element of $\M$). Then the following holds:
    \begin{enumerate}
        \item[(i)] $j$ is an open embedding;
        \item[(ii)] the natural homomorphism of $\O_S$-modules $\O_\M\to j_*\O_G$ is universally injective (i.e. injective after base change along any morphism $S'\to S$);
        \item[(iii)] the quotient $j_*\O_G/\O_\M$ is a flat $\O_S$-module. 
    \end{enumerate}
\end{lem}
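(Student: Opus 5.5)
We first prove (i) directly, and then obtain (ii) and (iii) from the fibrewise injectivity of $j^*$ together with the local flatness criterion.

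For (i), the first task is to identify $\M^\times$ with an open subscheme of $\M$. Consider the morphism
\[\Phi\colon\M\times_S\M\to\M\times_S\M,\qquad (x,y)\mapsto(xy,yx).\]
The unit group is the projection to the first factor of the fibre $\Phi^{-1}(e,e)$. By uniqueness of inverses, this projection $\M^\times\to\M$ is a monomorphism, and $\M^\times\to S$ is of finite presentation because $\M$ is. The plan is then:
\begin{itemize}
\item show that $j$ is fibrewise an open immersion;
\item use flatness of $G$ and of $\M$ over $S$ to deduce that $j$ is flat;
\item conclude that $j$ is an open immersion, since a flat monomorphism locally of finite presentation is an open immersion (EGA IV 17.9.1).
\end{itemize}

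Flatness of $j$ comes from the fibrewise criterion of flatness (EGA IV 11.3.10), given flatness on fibres. So everything reduces to the case $S=\Spec(k)$ with $k$ algebraically closed, where $\M$ is an integral affine monoid of finite type. There the standard argument applies. Embed $\M$ as a closed submonoid of some $\End(V)$; this is possible because $k[\M]$ is a union of finite-dimensional subcoalgebras. Then $\M^\times=\M\cap\GL(V)$: if $x\in\M$ is invertible in $\End(V)$, the closure of $\{x^n\}$ is a commutative closed submonoid. In that submonoid the closed subsets $x^n\overline{\{x^m\}}$ stabilize, so $x^{-1}$ lies in the closure and hence in $\M$. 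It follows that $\M^\times$ is the principal open $\{\det\ne0\}$ of $\M$.

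For (ii) and (iii), work affine-locally on $S=\Spec A$. Write $A[\M]\to A[G]$ for the map $j^*$; both rings are $A$-flat.

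By (i), for every point $s\in S$ the fibre map $\kappa(s)[\M_s]\to\kappa(s)[G_s]$ is injective: $G_s$ is a nonempty open subscheme of the integral scheme $\M_s$ (integrality descends from geometric fibres), so it is schematically dense.

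Apply the local flatness criterion for a map of flat modules that is injective on all fibres (Stacks 046Y, or EGA IV 11.3.7). This gives that $A[\M]\to A[G]$ is injective with $A$-flat cokernel, at least when $A$ is noetherian or when $A[G]$ is of finite presentation. Flatness of the cokernel $j_*\O_G/\O_\M$ is exactly (iii). Universal injectivity (ii) then follows, because a short exact sequence whose last term is flat stays exact after any base change.

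In the non-noetherian case, reduce via finite presentation to a noetherian base by standard limit arguments (EGA IV \S 8, 11.2.6). Alternatively, use the version of the criterion for finitely presented modules applied to filtered pieces, choosing $j$-compatible finitely presented $A$-submodules.

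The step I expect to be the main obstacle is (i): proving that $j$ is an open immersion over a general base before flatness of $j$ is known. In particular this requires handling the representability and finite-presentation issues of $\M^\times$. The remaining steps are formal applications of flatness criteria.
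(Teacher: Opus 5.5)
Your part (i) is the paper's argument. The fibrewise criterion EGA IV 17.9.5 that the paper invokes is exactly your combination of fibrewise flatness (11.3.10) with ``flat monomorphism of finite presentation $\Rightarrow$ open immersion'' (17.9.1). The only difference is that you prove the field case via $\End(V)$, where the paper cites Demazure--Gabriel or Rittatore.

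For (ii) and (iii), the tool you cite does not apply. EGA IV 11.3.7, and its finitely presented variant in the Stacks project, concern a map $u\colon M\to N$ of \emph{finite} (resp.\ finitely presented) modules over a local ring that is a local algebra over $A$. But $A[G]$ is not finite over the local rings of $A[\M]$ at points of $\M\setminus G$, nor over $A$. Finite presentation of $A[G]$ as an $A$-algebra is not the hypothesis that criterion requires. So neither case of your ``at least when $A$ is noetherian or $A[G]$ is of finite presentation'' is covered by the cited result. The ``filtered pieces'' alternative is too vague to check.

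The statement you need is nonetheless true for noetherian $A$, by a d\'evissage that needs no finiteness. Let $u\coloneqq j^*\colon M=A[\M]\to N=A[G]$ and $C\coloneqq N/u(M)$.
\begin{itemize}
\item For a prime $\mathfrak p\subset A$, $M/\mathfrak pM$ is flat over the domain $A/\mathfrak p$. It is therefore torsion-free and embeds in $M\otimes_A\kappa(\mathfrak p)$. Hence injectivity of $u\otimes\kappa(\mathfrak p)$ forces injectivity of $u\otimes A/\mathfrak p$.
\item Filter a finitely generated $Q$ with subquotients $A/\mathfrak p_i$. Flatness of $M$ and $N$ plus the snake lemma give injectivity of $u\otimes Q$, and filtered colimits extend this to all $Q$. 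This is (ii).
\item Then $\mathrm{Tor}_1^A(C,Q)=\ker(u\otimes Q)=0$, which is (iii).
\end{itemize}
Your limit reduction then handles general $A$: descend $\M$, its monoid structure, flatness of $\M$ and of $G$, and integrality of geometric fibres to a finitely generated $\ZZ$-algebra. With these two pieces your route is repaired.

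The paper avoids both the d\'evissage and the noetherian reduction. It gets universal injectivity directly from fibrewise injectivity via EGA IV 11.9.17, which holds over any base. Then (iii) is formal: since $A[G]$ is flat,
\[
\mathrm{Tor}_1^A(A/I,C)=\ker\bigl(A[\M]/I\,A[\M]\to A[G]/I\,A[G]\bigr)=0
\]
for every ideal $I$.
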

\begin{proof}
    (i) First we note that the formation of the unit group $\M^\times$ commutes with arbitrary base change by definition. Since $G$ is flat over $S$, to show that $j$ is an open embedding we may pass to geometric fibers over $S$ thanks to the fiberwise criterion \cite[Corollaire 17.9.5]{EGAIV16a23}. Then we conclude by \cite[II, \S2, Corollaire 3.6]{DemazureGabriel} or \cite[Theorem 1]{RittatoreMonoidArticle}. \par 
    (ii) The injectivity is clear after base change to geometric fibers over $S$ since the fibers are integral. Then we conclude by \cite[11.9.17]{EGAIV8a15}.\par 
    (iii) We may assume that $S=\Spec(R)$ is affine. Then $\M=\Spec(A)$, $G=\Spec(B)$ are both affine and $A$ is a subring of $B$ by (ii). For any ideal $I\subset R$, the induced map $A/IA\to B/IB$ is injective by (ii). Therefore $IA=A\cap IB$ (intersection inside $B$) and we deduce that $B/A$ is a flat $R$-module by \cite[Chapitre I, \S6 Corollaire, p.33]{BourbakiAC}.
\end{proof}

\begin{prop}\label{prop:monoid-scheme-equiv-condition}
    Let $\M\to S$ be a morphism of schemes that is affine flat and of finite presentation, and all geometric fibers are integral. Let $G$ be a flat group scheme over $S$. Then the following are equivalent:
    \begin{enumerate}
        \item[(i)] $\M$ is a monoid scheme whose unit group is isomorphic to $G$;
        \item[(ii)] The group scheme $G\times_S G$ acts on the $S$-scheme $\M$ and there is a $G\times_S G$-equivariant open embedding of $S$-schemes $G\hookrightarrow \M$.
    \end{enumerate}
\end{prop}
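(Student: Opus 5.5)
For (i)$\Rightarrow$(ii), I would first apply Lemma \ref{lem:quotient-flat}(i): the inclusion $j\colon G=\M^\times\to\M$ is an open embedding. Then I would define the action of $G\times_S G$ on $\M$ by $(g,h)\cdot m\coloneqq g\,m\,h^{-1}$, i.e. as the composite $G\times_S G\times_S\M\to\M\times_S\M\times_S\M\to\M$ built from $j$, the inverse of $G$ and $\pi_\M$ applied twice. Associativity and the unit axioms show this is an action. Since $j$ is a homomorphism of monoid functors, $j$ is equivariant for the analogous action of $G\times_S G$ on $G$ itself. This direction is formal.

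For (ii)$\Rightarrow$(i), I would let $j\colon G\into\M$ be the equivariant open embedding and set $e\coloneqq j\circ 1_G$. The product will come from extending the multiplication of $G$ to $\M$. The action gives morphisms $a\colon G\times_S\M\to\M$ with $a(g,m)=(g,1)\cdot m$, and $a'\colon\M\times_S G\to\M$ with $a'(m,h)=(1,h^{-1})\cdot m$. These agree with the multiplication $G\times G\to G$ after composing with $j$. The key point is that $G\times_S\M$ and $\M\times_S G$ are open in $\M\times_S\M$ and together cover a dense open subset. On each geometric fiber, $\M\times\M$ is integral, since products of integral schemes over an algebraically closed field are integral, so any two nonempty opens meet densely.

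To extend to all of $\M\times_S\M$ I would work on the coordinate ring. Assume $S=\Spec R$ is affine, with $\M=\Spec A$ and $G=\Spec B$. Since $G$ is affine and open in $\M$, the argument of Lemma \ref{lem:quotient-flat}(ii),(iii) shows that $A\subset B$, that $A\to B$ is universally injective, and that $B/A$ is $R$-flat. The action $\mu_A\colon A\to B\otimes_R A$ of $G\times 1$ is compatible with the comultiplication $\Delta_B\colon B\to B\otimes B$. The goal is to show $\Delta_B(A)\subset A\otimes_R A$.
\begin{itemize}
\item From the left action, $\Delta_B(A)\subset B\otimes A$.
\item From the right action, $\Delta_B(A)\subset A\otimes B$.
\item Because $B/A$ is $R$-flat, one has $(B\otimes A)\cap(A\otimes B)=A\otimes A$ inside $B\otimes B$. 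To see this, tensor the injection $A\to B$ with the flat module $B/A$ and use flatness of the quotients to compare kernels.
\end{itemize}
Hence $\Delta_B$ restricts to a coproduct on $A$. Associativity and the unit axioms then hold on $A$ because they hold on $B$ and $A\otimes A\otimes A\to B\otimes B\otimes B$ is injective by flatness. The counit is $e^*$.

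It remains to identify $\M^\times$ with $G$. Clearly $G\subset\M^\times$. For the converse, take $m\in\M(S')$ invertible with inverse $m'$, and check on geometric fibers that $m$ lands in the open set $G$. Over an algebraically closed field, $m\cdot\M\supset m\cdot G$ has full dimension and $m$ acts as an automorphism of the integral variety $\M$ preserving $G$. Alternatively one can argue via the determinant/character criterion as in \cite{RittatoreMonoidArticle}. Since $G$ is open, a point all of whose geometric fibers land in $G$ factors through $G$, so $\M^\times=G$.

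I expect the main obstacle to be this last identification $\M^\times=G$ over a general base, together with verifying that the intersection identity $(B\otimes A)\cap(A\otimes B)=A\otimes A$ holds without noetherian hypotheses. For the identification, I would reduce to fibers and there use that $\M\setminus G$ is a stable closed subset: a $G\times G$-stable ideal-like set that a unit cannot meet, since $u\,(u^{-1}x)=x$ would move $\M\setminus G$ onto points of $G$.
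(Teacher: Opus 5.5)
Your proposal follows the paper's proof essentially step for step. You reduce to the affine case and get $A\subset B$ with $B/A$ flat as in Lemma \ref{lem:quotient-flat}. Flatness of $B/A$ then gives $A\otimes A=(B\otimes A)\times_{B\otimes B}(A\otimes B)$, with no noetherian hypothesis needed, so the multiplication on $(G\times_S\M)\cup(\M\times_S G)$ extends to $\M\times_S\M$. Finally you identify $\M^\times=G$ fiberwise; the paper does this step by citing \cite[Proposition 1]{RittatoreMonoidArticle}.

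One small repair to your own fiberwise argument: saying that left multiplication by a unit $m$ ``preserves $G$'' presupposes the conclusion. Argue instead that, since $m$ acts by an automorphism of the fiber, $m\cdot G$ is a nonempty open subset of the irreducible fiber. It therefore meets $G$, which gives $mg=h$ for some $g,h\in G$, and hence $m=hg^{-1}\in G$.
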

\begin{proof}
    (i)$\Rightarrow$(ii): Clearly $G\times_S G$ acts on $\M$ by left and right multiplication. By Lemma \ref{lem:quotient-flat}, the natural $G\times_S G$-equivariant embedding $G\into\M$ is open and hence (ii) follows.\par 
    (ii)$\Rightarrow$(i): By gluing and affineness over $S$, we can assume that $S=\Spec C$, $G=\Spec B$, $\M=\Spec A$ are affine. From the assumption that $\M\to S$ has integral geometric fibers, using the same argument in the proof of Lemma \ref{lem:quotient-flat}, we deduce that the natural homomorphism $A\to B$ is injective and the quotient $B/A$ is a flat $C$-module. 
    Restricting the $G\times_S G$-action to the two factors we get morphisms $G\times_S \M\to \M$ and $\M\times_S G \to \M$, which together induces a morphism  $\Omega\coloneqq(G\times_S \M)\cup_{G\times_S G}(\M\times_S G)\to \M$. Here $G\times_S\M$ and $\M\times_S G$ are both open subschemes of $\M\times_S\M$ by assumption and $\Omega$ is their union.\par 
    We have the following commutative diagram of $C$-modules in which the lines are short exact sequences
    \[\xymatrix{0\ar[r] & A\otimes _C A\ar[r] \ar[d] & B\otimes _C A \ar[r]\ar[d] & B/A\otimes _C A \ar[r]\ar[d] & 0\\
          0\ar[r] & A\otimes_C B\ar[r] & B\otimes_C B \ar[r] &  B/A\otimes_C B \ar[r] & 0}\]
    Since $B/A$ is flat, the right vertical arrow is injective and we deduce that 
    \[A\otimes_CA=(B\otimes_CA)\times_{B\otimes_CB}(A\otimes_CB)\]
    where the fiber product is in the category of $C$-algebras. Then we deduce that 
    \[\Gamma(\Omega,\O_\Omega)=\Gamma(\M\times_S\M,\O_{\M\times_S\M}).\] 
    Consequently the morphism $\Omega\to\M$ defined above extends to a morphism $\M\times_S\M\to\M$ that defines a monoid scheme structure on $\M$. It remains to show that $G=\M^\times$. For any $S$-scheme $S'$, it is clear that $G(S')\subset\M^\times(S')$. Conversely, for any element $x\in\M^\times(S')$ corresponding to a morphism $x\colon S'\to\M$ over $S$, thanks to \cite[Proposition 1]{RittatoreMonoidArticle} we know that the morphism $x$ sends any point $s\in S'$ into $G\subset\M$. Therefore the morphism $x$ factors through the open subscheme $G\subset\M$ and we conclude that $G=\M^\times$.
\end{proof}

\begin{defi} 
Let $G$ be a reductive group scheme over $S$. We say that an $S$-scheme $\M$ is a \emph{reductive monoid scheme (or simply, reductive monoid) with unit group $G$} if 
    \begin{enumerate}
       \item[(i)] it is a monoid scheme over $S$ and its unit group is isomorphic to $G$ as group schemes over $S$;
       \item[(ii)] the structure morphism $\M\to S$ is affine, flat, of finite presentation, and has integral normal geometric fibers.
    \end{enumerate}
\end{defi}

\begin{rqe}\label{rqe:equivalence reductive monoid}
    This notion of reductive monoid scheme is preserved by base change. By Proposition \ref{prop:monoid-scheme-equiv-condition}, one can replace condition (i) in the above definition by the following condition:
    \begin{enumerate}
        \item[(i')] The group scheme $G\times_S G$ acts on the $S$-scheme $\M$ and there is a $G\times_S G$-equivariant open embedding of $S$-schemes $G\hookrightarrow \M$.
    \end{enumerate}
\end{rqe}

\begin{cor}
    Let $\M$ be a reductive monoid with unit group $G$. Then $\M$ is commutative if and only if $G$ is a torus.
\end{cor}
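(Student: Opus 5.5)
The direction ``$G$ a torus $\Rightarrow$ $\M$ commutative'' reduces to a density argument. Since $\M\to S$ is affine and flat, Lemma \ref{lem:quotient-flat}(ii) gives an injection $\O_S[\M]\into\O_S[G]$. This injection is compatible with the comultiplications induced by the product morphism $\pi_\M$ and by the group law of $G$, because $j$ is a monoid homomorphism.

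The same holds for $\M\times_S\M$. Indeed, $\O_S[\M]\otimes\O_S[\M]\to\O_S[G]\otimes\O_S[G]$ is injective: tensor the injection $\O_S[\M]\to\O_S[G]$ with the flat module $\O_S[\M]$, and then tensor $\O_S[\M]\to\O_S[G]$ with the flat module $\O_S[G]$. So $G\times_S G$ is schematically dense in $\M\times_S\M$.

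Now suppose $G$ is a torus. Then the two morphisms $\pi_\M$ and $\pi_\M\circ\mathrm{sw}\colon\M\times_S\M\to\M$ agree on $G\times_S G$. Since $\M$ is affine over $S$, both are determined by the induced maps $\O_S[\M]\to\O_S[\M]\otimes\O_S[\M]$. These maps agree after composing with the injection into $\O_S[G]\otimes\O_S[G]$, hence they are equal. Thus $\M$ is commutative.

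Conversely, suppose $\M$ is commutative. Then for every $S$-scheme $S'$, the group $\M^\times(S')=\M(S')^\times$ is the unit group of a commutative monoid, so it is abelian. Hence $G$ is a commutative reductive group scheme. A reductive group scheme whose geometric fibers are connected and commutative has geometric fibers that are tori. By \cite{SGA3-3}, a reductive group scheme all of whose geometric fibers are tori is a torus (a group of multiplicative type). Concretely, for the split $G$ fixed in this paper, commutativity forces the set of roots $\Delta$ to be empty, so $G=T$.

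The only point requiring care is the density step. It relies on the flatness of $\O_S[\M]$ and $\O_S[G]$ over $\O_S$, which is exactly what the definition of a reductive monoid and Lemma \ref{lem:quotient-flat} provide.
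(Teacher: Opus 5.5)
Your proof is correct and follows the paper's argument. For the nontrivial direction you transfer cocommutativity of $\O_S[G]$ to $\O_S[\M]$ through the injection $\O_S[\M]\otimes\O_S[\M]\into\O_S[G]\otimes\O_S[G]$; you obtain this injection from flatness of both coordinate rings, while the paper obtains it from the universal injectivity in Lemma \ref{lem:quotient-flat}(ii), and the converse direction is in both cases the fact that a commutative reductive group scheme is a torus.
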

\begin{proof}
    The direct implication is clear since the reductive group scheme $G$ is commutative if and only if it is a torus.\par 
    Conversely suppose that $G$ is a torus. By Lemma \ref{lem:quotient-flat} the natural homomorphism $\O_\M\to j_*\O_G$ is universally injective and hence the induced homomorphism $\O_\M\otimes\O_\M\to j_*\O_G\otimes j_*\O_G$ is injective. So if $\O_G$ is cocommutative, then $\O_\M$ is also cocommutative. In other words, the commutativity of $G$ implies the commutativity of $\M$.
\end{proof}

\begin{defi}\label{def:MH AM As} 
    Let $\M$ be a reductive monoid with unit group $G$ and let $G_{\der}$ be the derived group of $G$. The \emph{abelianisation} of $\M$ is defined to be the GIT quotient 
    \[A_{\M}\coloneqq\M//(G_{\der}\times G_{\der})=\Spec(\O_S[\M]^{G_{\der}\times G_{\der}}).\]
    By construction we have a natural morphism of $S$-schemes $\alpha_\M\colon\M\to A_{\M}$, called the \emph{abelianisation map}. 
\end{defi}

\begin{defi}\label{def:very flat}
    A reductive monoid $\M$ with unit group $G$ is said to be \emph{very flat} if the abelianisation map $\pi\colon\M\to A_{\M}$ is flat and all its geometric fibers are integral.
\end{defi}

\begin{defi}
    A monoid $\M$ over $S$ with unit group $G$ \emph{has a zero} if there exists a section $0_\M\colon S\to \M$ of the structure morphism $f\colon\M\to S$ such that the following diagrams commute:
    \[\xymatrix@C=2cm{
    \M\ar[r]^{0_\M\times\mathrm{Id}_\M}\ar[d]_f & \M\times_S\M\ar[d]^{\pi_\M}\\
    S\ar[r]^{0_\M} & \M,
    }\quad\quad\quad 
    \xymatrix@C=2cm{
    \M\ar[r]^{\mathrm{Id}_\M\times0_\M}\ar[d]_f & \M\times_S\M\ar[d]^{\pi_\M}\\
    S\ar[r]^{0_\M} & \M
    }\]
    where $\pi_\M$ is the product morphism for $\M$. More concretely, this means that for any $S$-scheme $S'$ and any element $x\in \M(S')$, we have $x\cdot0_{\M,S'}=0_{\M,S'}\cdot x=0_{\M,S'}$ in the monoid $\M(S')$, where $0_{\M,S'}\colon S'\to\M$ is the composition of the structure morphism $S'\to S$ with $0_\M$. If a zero $0_\M$ exists, then it is necessarily unique and we say that $0_\M$ is \emph{the zero} of $\M$. 
\end{defi}
For example all the matrix monoids have a zero (given by the zero matrices), while on the other hand the groups themselves do not have zero. We will see in Corollary \ref{cor:monoid-decomposition} that any reductive monoid scheme decomposes as an almost direct product of a reductive group and a reductive monoid with zero. 

\subsection{Construction of reductive monoid schemes}
We fix a based root datum $\Psi=(\Delta,Y,X,...)$ as in Definition \ref{def:based-root-data} and let $(G,B,T)$ be the associated split reductive group scheme over some base scheme $S$ as in \S\ref{sec:notation-reductive-group}.

\subsubsection{Weight monoids and cones}\label{sec:cones}
Recall that the simple roots define a partial order $\le$ on the weight lattice $X$. We extend it to a partial order $\le_\QQ$ on $X_\QQ\coloneqq X\otimes_\ZZ\QQ$ in the obvious way, i.e. $\mu\le_\QQ\la$ if and only if $\la-\mu\in\QQ_{\ge0}^\Delta=\sum_{i\in\Delta}\QQ_{\ge0}\alpha_i$.
\begin{defi}
    Let $P\subset X_\QQ$ be a subset. A subset $Q\subset P$ is said to be \emph{downward closed in $P$} if for any elements $\la,\mu\in P$ such that $\mu\le\la$ and $\la\in Q$, we have $\mu\in Q$.
\end{defi}
\begin{defi}\label{def:weight-monoid}
    A \emph{weight monoid} in $X^+$ is a submonoid $\L\subset X^+$ that satisfies the following conditions:
    \begin{itemize}
        \item $\L$ is finitely generated, saturated and $\L^{\gp}=X$ (see Definition \ref{def:monoid-properties} for these notions);
        \item $\L$ is downward closed in $X^+$.
    \end{itemize}
\end{defi}
For example, $X^+$ itself is a weight monoid.\par 

Let $\L\subset X^+$ be a weight monoid. Let $C_\L\coloneqq\QQ_{\ge0}\L$ be the rational convex polyhedral cone in $X_{\QQ}\coloneqq X\otimes_\ZZ\QQ$ generated by $\L$. In particular, when $\L=X^+$ we get the Weyl chamber which we simply denote by $C\coloneqq \QQ_{\ge0}X^+$.\par 
Among the hyperplanes in $X_\QQ$ defining the faces of $C_\L$, we pick out those that are not root hyperplanes and denote them by $H_1,\dots,H_s$. Let $H_1^+,\dots,H_s^+$ be the corresponding half-spaces in $X_\QQ$ containing $C_\L$. We consider the following rational convex polyhedral cones in $X_{\QQ}$:
\[K_\L\coloneqq\bigcap_{i=1}^sH_i^+,\quad \tilde{K}_{\L}\coloneqq\bigcap_{w\in W}w(K_\L)=\bigcap_{w\in W, 1\leq i\leq s}w(H_i^+).\]

\begin{lem}\label{lem:K-L-properties}
    With notations as above, the following properties hold:
    \begin{enumerate}
    \item The cone $K_\L$ is the largest among rational convex polyhedral cones $K'\subset X_\QQ$ satisfying $K'\cap X^+=\L$. In particular, $\L=K_\L\cap X^+$.
    \item The cone $K_\L$ contains all the negative roots.
    \item $\tilde{K}_\L\cap C=K_\L\cap C=C_\L$.
    \end{enumerate}
\end{lem}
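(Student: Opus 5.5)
The plan rests on convex geometry in $X_\QQ$, using the facet description of $C_\L$. I first record that $C_\L$ is full-dimensional, since $\L^{\gp}=X$. Hence $C_\L$ is the intersection of its facet half-spaces: the root-hyperplane ones $\{\langle\alpha_i^\vee,\cdot\rangle\ge0\}$ for certain $i$, and $H_1^+,\dots,H_s^+$. Since $C_\L\subset C$, adding the remaining root half-spaces changes nothing. So
\[C_\L=K_\L\cap C,\]
which is the second equality of (3).

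For (1): by saturation, $\L=C_\L\cap X$. Points of $X^+$ lie in $C$, so $K_\L\cap X^+=K_\L\cap C\cap X=C_\L\cap X=\L$.

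For maximality, let $K'$ be a rational polyhedral cone with $K'\cap X^+=\L$. Then $K'\cap C$ is a rational cone whose lattice points are exactly $\L$. Rational points of $K'\cap C$ have multiples in $\L$, so $K'\cap C=C_\L$. It remains to show $K'\subset H_j^+$ for each $j$. Suppose $x\in K'$ with $x\notin H_j^+$. Pick $y$ in the relative interior of the facet $F_j=C_\L\cap H_j$. Because $H_j$ is not a root hyperplane, $y$ lies in the interior of $C$: the facet $F_j$ spans $H_j$, and $H_j$ is not contained in any wall. The segment from $y$ to $x$ then starts inside $C$ and immediately leaves $C_\L$ through $H_j$. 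For small $t>0$, the point $y+t(x-y)$ lies in $K'\cap C$ (by convexity) but not in $C_\L$. This is a contradiction. The step needing care is exactly the claim that the relative interior of $F_j$ meets the interior of $C$; I would argue it by noting that $F_j\subset C$ is $(r-1)$-dimensional, so it could lie in the boundary of $C$ only if it lay in some wall, which would force $H_j$ to equal that wall.

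For (2), I use downward closedness. Let $\beta$ be a positive root. It suffices to show $-\beta\in H_j^+$, i.e. $\ell_j(-\beta)\ge0$, where $\ell_j$ is the linear form with $H_j^+=\{\ell_j\ge0\}$. Take $y\in\L$ lying in the relative interior of $F_j$ and in the interior of $C$; this is possible after scaling rational points. Scaling $y$ by a large integer $N$ keeps it in $\L$. For large $N$, the weight $Ny-\beta$ is dominant, because $Ny$ is regular enough. It is also $\le Ny$, so by downward closedness $Ny-\beta\in\L\subset H_j^+$. This gives $\ell_j(Ny)-\ell_j(\beta)\ge0$. Since $\ell_j(y)=0$, we get $\ell_j(-\beta)\ge0$. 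So every negative root lies in $K_\L$, and then so does every element of $-\QQ_{\ge0}^\Delta$.

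For the first equality of (3), one inclusion is immediate: $\tilde K_\L\cap C\subset K_\L\cap C$ because $\tilde K_\L\subset K_\L$. For the converse, take $x\in C_\L$ and $w\in W$. I need $w^{-1}x\in K_\L$, i.e. $x\in w(K_\L)$. Since $x$ is dominant, $w^{-1}x=x-\nu$ for some $\nu\in\QQ_{\ge0}^\Delta$. By (2) and convexity, $x-\nu\in K_\L+(-\QQ_{\ge0}^\Delta)\subset K_\L$. Hence $x\in\tilde K_\L$, which finishes (3).
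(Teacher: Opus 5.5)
Your proposal is correct and follows essentially the same route as the paper: for (2) you use the same downward-closedness trick at a strictly dominant point of $\L\cap H_j$ (the paper uses $2x-\alpha$ with $\alpha$ simple where you use $Ny-\beta$), and for (3) the same $x-w(x)\in\QQ_{\ge0}^\Delta$ argument. Your segment argument for maximality in (1), and your facet-description justification of $K_\L\cap C=C_\L$, supply details that the paper only asserts. In particular the paper simply states that each $H_i^+$ must appear among the half-spaces cutting out $K'$.
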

\begin{proof}
    (1) By definition we have $K_\L\cap C=C_\L$. Intersecting both sides with $X$ we get that $K_\L\cap X^+=C_\L\cap X=\L$. Let $K'\subset X_\QQ$ be a rational convex polyhedral cone such that $K'\cap X^+=\L$. By definition $K'$ is the intersection of finitely many half spaces of $X_\QQ$. The condition $K'\cap X^+=\L$ is equivalent to $K'\cap C=C_\L$, and implies that the half spaces $H_i^+$, $1\le i\le s$ defining $C_\L$ (and distinct from half-spaces defined by the roots) occur among the half spaces defining $K'$. Therefore $K'\subset K_\L$ and this proves the maximal property of $K_\L$. \par
    (2) It suffices to show that $K_\L$ contains all the negative simple roots. Suppose on the contrary that there exists a simple root $\a$ and $1\leq i\leq s$ such that $-\a\notin H_i^+$. Let $\ell_i\in X_\QQ^*$ be a linear functional that is non-negative on $H_i^+$ and vanishes on $H_i$. Since $H_i$ is a defining hyperplane of $C_\L$ and is not a root hyperplane, we can find an element $x\in C_\L\cap H_i$ that does not lie on any root hyperplane. After multiplying by a positive scalar, we may assume that $x\in\L\cap H_i$ and is strictly dominant, i.e. lies in the interior of the Weyl chamber. Let $y\coloneqq 2x-\a$. Since $x\in X^+$ is strictly dominant, we have $\langle\alpha_j^\vee,x\rangle\ge1$ and hence $\langle\alpha_j^\vee,y\rangle\ge0$ for all $j\in\Delta$ (use that $\langle\alpha_j^\vee,\alpha\rangle\le0$ if $\alpha_j\ne\alpha$ and $\langle\alpha^\vee,\alpha\rangle=2$). This shows that $y\in X^+$. Since $y\leq 2x$ and $\L$ is downward closed in $X^+$, we deduce that $y\in\L\subset H_i^+$ and hence $\ell_i(y)\geq 0$. On the other hand, recall that $x\in H_i$ and $-\alpha\notin H_i^+$ by assumption, so we have $\ell_i(y)=\ell_i(2x)-\ell_i(\a)=-\ell_i(\a)<0$ and this is a contradiction.\par 
    (3) The inclusion $\subseteq$ is clear. For the converse, let $x\in K_\L\cap C$ and $w\in W$. We want to prove $w\cdot x\in K_\L$. Since $x\in C=\QQ_{\ge0}X^+$, we have $x-w(x)\in\QQ_{\ge0}^\Delta$. Then by (2) we get $w(x)-x\in K_\L$ and so $w\cdot x\in K_\L$.
\end{proof}

\begin{cor}\label{cor:WL}
    Let $\L\subset X^+$ be a weight monoid and let $W\cdot \L\coloneqq\bigcup_{w\in W}w(\L)$. Then we have: 
    \begin{enumerate}
        \item $W\cdot C_\L=\tilde{K}_\L$ and $W\cdot\L=\tilde{K}_\L\cap X$,
        \item $W\cdot\L$ is a finitely generated saturated submonoid of $X$ and generates it as an abelian group. 
    \end{enumerate}
\end{cor}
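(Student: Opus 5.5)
The plan is to derive everything from Lemma \ref{lem:K-L-properties}(3) and the fact that the Weyl chamber $C$ is a fundamental domain for the $W$-action on $X_\QQ$: every $x\in X_\QQ$ has a $W$-conjugate in $C$.

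For (1), start with $W\cdot C_\L=\tilde K_\L$. Since $C_\L\subset\tilde K_\L$ by Lemma \ref{lem:K-L-properties}(3), and $\tilde K_\L=\bigcap_{w}w(K_\L)$ is $W$-stable by construction, we get $W\cdot C_\L\subset\tilde K_\L$. Conversely, given $x\in\tilde K_\L$, choose $w\in W$ with $w(x)\in C$. Then $w(x)\in\tilde K_\L\cap C=C_\L$ by $W$-stability and Lemma \ref{lem:K-L-properties}(3), so $x\in W\cdot C_\L$. The lattice statement follows the same way, using that $W$ preserves $X$. Indeed, $W\cdot\L\subset\tilde K_\L\cap X$ is clear. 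Conversely, for $x\in\tilde K_\L\cap X$ pick $w$ with $w(x)\in X^+$. Then $w(x)\in\tilde K_\L\cap C\cap X=C_\L\cap X$, and $C_\L\cap X=\L$ because $\L=K_\L\cap X^+$ (Lemma \ref{lem:K-L-properties}(1)) and $K_\L\cap C=C_\L$. Hence $x\in W\cdot\L$.

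For (2), we use (1): $W\cdot\L=\tilde K_\L\cap X$, where $\tilde K_\L$ is a rational convex polyhedral cone, being a finite intersection of rational half-spaces.
\begin{itemize}
\item \emph{Submonoid.} The intersection of a convex cone with a lattice is closed under addition and contains $0$.
\item \emph{Saturated.} If $nx\in\tilde K_\L$ with $x\in X$ and $n\ge1$, then $x\in\tilde K_\L$.
\item \emph{Finitely generated.} This is Gordan's lemma for the rational polyhedral cone $\tilde K_\L$ and the lattice $X$, in the form recorded in Appendix \ref{sec:appendix-monoid}.
\item \emph{Generates $X$.} We have $W\cdot\L\supset\L$ and $\L^{\gp}=X$ by the definition of a weight monoid.
\end{itemize}

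I expect no real obstacle. The only points needing care are the identity $C_\L\cap X=\L$, which uses saturation of $\L$ through Lemma \ref{lem:K-L-properties}(1), and the standard fact that each $W$-orbit meets the dominant chamber.
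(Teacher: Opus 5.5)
Your proposal is correct and follows essentially the same route as the paper. Both arguments combine three facts: $\tilde K_\L$ is $W$-stable, $\tilde K_\L\cap C=C_\L$ by Lemma \ref{lem:K-L-properties}(3), and every $W$-orbit meets $C$; part (2) is then read off from $W\cdot\L=\tilde K_\L\cap X$. You simply make explicit the step $C_\L\cap X=\L$ that the paper leaves implicit in ``intersecting with $X$'', and you get generation of $X$ from $\L\subset W\cdot\L$, whereas the paper uses that $\tilde K_\L$ spans $X_\QQ$.
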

\begin{proof}
    (1) From Lemma \ref{lem:K-L-properties}(3) and the fact that $\tilde{K}_\L$ is $W$-stable we get
    \[W\cdot C_\L=W\cdot(\tilde{K}_\L\cap C)=\tilde{K}_\L\cap W\cdot C=\tilde{K}_\L.\]
    After taking the intersection of both sides with $X$ we get the second equality.\par
    (2) follows from (1) and the fact that $\tilde{K}_\L$ is a convex polyhedral cone in $X_\QQ$ that spans $X_\QQ$. 
\end{proof}

\begin{cor}\label{cor:W-stable-cone}
    The map $\L\mapsto\QQ_{\ge0}W\cdot\L$ defines a bijection between weight monoids in $X^+$ and $W$-stable convex polyhedral cones in $X_\QQ$ that span $X_\QQ$, with the inverse map given by $K\mapsto K\cap X^+$. 
\end{cor}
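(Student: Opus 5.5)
We must show that $\L\mapsto\QQ_{\ge0}W\cdot\L$ and $K\mapsto K\cap X^+$ are mutually inverse, and that each lands in the claimed set. The plan is to check the two round trips in turn, with one small verification for each.

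\emph{Forward map.} Let $\L$ be a weight monoid. By Corollary \ref{cor:WL}(1), $\QQ_{\ge0}W\cdot\L=W\cdot C_\L=\tilde K_\L$. This is a finite intersection of rational half-spaces, so it is a $W$-stable rational convex polyhedral cone. It spans $X_\QQ$ because $\L^{\gp}=X$.

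\emph{First round trip.} We need $\tilde K_\L\cap X^+=\L$. By Lemma \ref{lem:K-L-properties}(3) we have $\tilde K_\L\cap C=C_\L$. Intersecting with $X$ gives $\tilde K_\L\cap X^+=C_\L\cap X$. This equals $\L$ because $\L$ is finitely generated and saturated with $\L^{\gp}=X$, as already used in Lemma \ref{lem:K-L-properties}(1).

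\emph{Backward map.} Let $K$ be a $W$-stable rational convex polyhedral cone spanning $X_\QQ$, and put $\L\coloneqq K\cap X^+=(K\cap C)\cap X$. Since $K\cap C$ is a rational polyhedral cone, Gordan's lemma shows $\L$ is a finitely generated submonoid, and it is clearly saturated.

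For $\L^{\gp}=X$, note that $K$ spans $X_\QQ$ and $K=W\cdot(K\cap C)$, since every $W$-orbit meets $C$ and $K$ is $W$-stable. Hence $K\cap C$ has nonempty interior in $X_\QQ$, so $\L$ contains a full-rank set of lattice points. Moreover $\L$ contains all lattice points of a translate of $C$ deep inside $K\cap C$, and differences of such points produce all of $X$.

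Downward closedness is the key point. Take $\mu\le\la$ with $\la\in\L$ and $\mu\in X^+$. It is classical that a dominant $\mu\le\la$ lies in the convex hull of $W\la$. Since $K$ is convex and $W$-stable, it follows that $\mu\in K$, hence $\mu\in\L$. So $\L$ is a weight monoid.

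\emph{Second round trip.} We need $\QQ_{\ge0}W\cdot\L=K$. Since $K$ is rational polyhedral, $K\cap C=\QQ_{\ge0}(K\cap C\cap X)=\QQ_{\ge0}\L$. Applying $W$ gives $W\cdot\QQ_{\ge0}\L=W\cdot(K\cap C)=K$.

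The main obstacle is the convex hull fact used for downward closedness. If it is not taken as known, I would prove it by induction on $\la-\mu$. If $\mu<\la$ are both dominant, some simple root $\alpha_i$ gives $\mu'=\mu+\alpha_i$ (or a suitable Weyl-conjugate of it) with $\mu\le\mu'\le\la$. Then $\mu$ lies on the segment joining $\mu'$ to $s_i\mu'$, provided $\langle\alpha_i^\vee,\mu'\rangle\ge1$. Choosing such an $i$ is the delicate step.
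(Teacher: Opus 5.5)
Your argument is correct and is essentially the paper's: Corollary~\ref{cor:WL} gives the forward map, the classical fact that a dominant $\mu\le\la$ lies in $\mathrm{conv}(W\cdot\la)$ gives downward closedness of $K\cap X^+$ (the paper also just quotes this fact, so your inductive fallback is not needed), and you spell out the two round trips that the paper calls clear. One small slip: $K\cap C$ need not contain a translate of $C$ (for $\mathrm{GL}_2$ with $K=\QQ_{\ge0}^2$, $C$ is a half-plane), but $\L^{\gp}=X$ still follows by writing any $y\in X$ as $(y+nx_0)-nx_0$, where $x_0$ is an interior lattice point of $K\cap C$ and $n\gg0$.
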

\begin{proof}
    According to Corollary \ref{cor:WL}, for any weight monoid $\L\subset X^+$ we have $\QQ_{\ge0}W\cdot\L=\tilde{K}_\L$, which is a convex polyhedral cone by definition. So the map is well-defined. Let $K\subset X_\QQ$ be a $W$-stable convex polyhedral cone that spans $X_\QQ$. The intersection $\L_K\coloneqq K\cap X^+$ is clearly a saturated submonoid of $X^+$ that generates $X$ and it remains to show that it is downward closed. Let $\la\in\L_K$ and $\mu\in X^+$ with $\mu\le\la$. For any $w\in W$ we have $w(\mu)\le\mu\le\la$ and therefore $\mu$ lies in the intersection $\bigcap\limits_{w\in W}w(\la-\QQ_{\ge0}^\Delta)$, which is well-known to be equal to the convex hull $\mathrm{conv}(W\cdot\la)$ of the $W$-orbit of $\la$. Since $K$ is $W$-stable, we have $\mathrm{conv}(W\cdot\la)\subset K$ and so $\mu\in K\cap X^+=\L_K$. This shows that $\L_K$ is downward closed in $X^+$ and so the map $K\mapsto\L_K$ is well-defined. It is clear that the two maps are inverse to each other.
\end{proof}
The following Lemma will be used in the proof of Proposition \ref{prop:very-flat-characterisation}.
\begin{lem}\label{lem:L-X-der+surj}
    Let $\L\subset X^+$ be a weight monoid. Then the natural projection $X\to X/X_{\ab}=X_{\der}$ restricts to a surjection from $\L$ to $X_{\der}^+$. 
\end{lem}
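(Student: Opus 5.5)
The plan is to lift a given dominant weight of $X_{\der}$ to a dominant weight of $X$. We then correct the lift by a suitable character in $X_{\ab}$ so that it lies below an element of $\L$, and conclude using downward closedness. We will only use that $\L$ is a downward closed submonoid of $X^+$ with $\L^{\gp}=X$; saturation should not be needed.

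\emph{Step 1 (lifting).} Let $\bar\mu\in X_{\der}^+$ and choose any lift $\mu\in X$. The coroots $\alpha_i^\vee$ lie in $X_*(T_{\der})$, and $X_{\ab}=X^*(G_{\ab})$ pairs trivially with them. Hence $\langle\alpha_i^\vee,\mu\rangle=\langle\alpha_i^\vee,\bar\mu\rangle\ge0$ for all $i\in\Delta$, so $\mu\in X^+$. The same computation shows that $\mu+\chi\in X^+$ for every $\chi\in X_{\ab}$, and it has the same image $\bar\mu$. It therefore suffices to find $\chi\in X_{\ab}$ with $\mu+\chi\in\L$.

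\emph{Step 2 (writing $\mu$ via $\L$).} Since $\L^{\gp}=X$, we can write $\mu=a-b$ with $a,b\in\L$. The image $\bar b\in X_{\der}$ is dominant for the semisimple root datum of $G_{\der}$. A dominant weight of a semisimple root datum is a nonnegative rational combination of the simple roots, because the fundamental weights are. So there is an integer $N\ge1$ with $N\bar b=\bar\nu$ for some $\nu\in\NN^\Delta$. Then $Nb-\nu$ maps to $0$ in $X/X_{\ab}$, so $\chi\coloneqq Nb-\nu\in X_{\ab}$. We must be careful here to use the identification $X_{\der}\cong X/X_{\ab}$ and the fact that the root lattice maps isomorphically onto the root lattice of $T_{\der}$. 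This ensures that $\nu$ can be taken in $\NN^\Delta\subset X$ itself. This bookkeeping is the only point requiring care.

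\emph{Step 3 (downward closedness).} Consider $\la\coloneqq a+(N-1)b$, which lies in $\L$ because $\L$ is a monoid. We have
\[\la=\mu+Nb=(\mu+\chi)+\nu,\]
so $\mu+\chi\le\la$. By Step 1, $\mu+\chi\in X^+$. Since $\L$ is downward closed in $X^+$, it follows that $\mu+\chi\in\L$, and its image in $X_{\der}$ is $\bar\mu$.

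The resulting map lands in $X_{\der}^+$ because dominance is preserved under projection, again since coroots pair trivially with $X_{\ab}$. This proves the surjectivity $\L\twoheadrightarrow X_{\der}^+$. No step looks like a serious obstacle; the main thing to double-check is Step 2, namely the compatibility of root lattices and dominance under $X\to X_{\der}$.
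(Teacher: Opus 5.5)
Your argument is correct, and it takes a genuinely different route from the paper.

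\textbf{The paper's route.} The paper works with the cone $K_\L$ of \S\ref{sec:cones}. It lifts the given weight to some $\tilde\mu\in X^+$. It then picks a central character $\nu$ on which all the non-root facet functionals $l_1,\dots,l_s$ of $K_\L$ are strictly positive. The existence of such a $\nu$ implicitly uses Lemma~\ref{lem:K-L-properties}(2), that $K_\L$ contains the negative roots. With this $\nu$, one gets $\tilde\mu+n\nu\in K_\L\cap X^+$ for $n\gg0$. The paper concludes via $K_\L\cap X^+=\L$ from Lemma~\ref{lem:K-L-properties}(1). That step relies on saturation of $\L$, and on finite generation so that $C_\L$ is polyhedral and $K_\L$ makes sense.

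\textbf{Your route.} Your proof is purely combinatorial:
\begin{itemize}
\item you write the lift as $a-b$ with $a,b\in\L$;
\item you clear denominators so that $Nb\equiv\nu$ modulo $X_{\ab}$ for some $\nu\in\NN^\Delta$;
\item you observe that $a+(N-1)b=(\mu+\chi)+\nu$ and conclude by downward closedness.
\end{itemize}
This uses only that $\L$ is a downward closed submonoid of $X^+$ with $\L^{\gp}=X$. So it proves the lemma under weaker hypotheses and avoids the polyhedral machinery entirely.

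\textbf{What each buys.} The paper's argument gives a single central direction $\nu$ that works uniformly for every weight of $X_{\der}^+$: the fiber over it contains $\tilde\mu+n\nu$ for all large $n$. This fits its cone-theoretic viewpoint but is more than surjectivity requires.

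The bookkeeping you flag in Step~2 is harmless. You only need two facts: the images of the $\alpha_i$ form a $\QQ$-basis of $X_{\der,\QQ}$, and dominant weights of $X_{\der}$ are nonnegative rational combinations of them, by nonnegativity of the inverse Cartan matrix. The injectivity of $\ZZ^\Delta\to X_{\der}$ is true but is not needed.
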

\begin{proof}
    Let $p\colon X\epic X_{\der}$ be the natural projection. Take an element $\mu\in X_{\der}^+$. Since $p$ restricts to a surjection $X^+\epic X_{\der}^+$, there exists $\tilde{\mu}\in X^+$ with $p(\tilde{\mu})=\mu$. For each $1\le i\le s$, let $l_i\in X^*_\QQ$ be a linear functional that is non-negative on $K_\L$ and vanishes on the hyperplane $H_i$. Then each $l_i$ restricts to a nonzero linear functional on $X_{Z,\QQ}$ since its vanishing loci $H_i$ is not any root hyperplane. Thus $K_\L\cap X_{Z,\QQ}$ spans $X_{Z,\QQ}$ and so we can find an element $0\ne\nu\in K_\L\cap X_Z$ such that $l_i(\nu)>0$ for any $i=1,\dotsc,s$. Then we can choose $n\in\ZZ_{>0}$ sufficiently large such that $n\nu+\tilde{\mu}\in K_\L\cap X^+=\L$ and $p(n\nu+\tilde{\mu})=\mu$.
\end{proof}

\subsubsection{Reductive monoid associated to a weight monoid}
\begin{theo-def}\label{theo:M(L)-def}
    For any weight monoid $\L\subset X^+$ and any base scheme $S$, the scheme 
    \[\M(\L)_S\coloneqq\Spec({}_\ZZ\mathbf{O}(\L)\otimes_\ZZ\O_S)\] 
    is a reductive monoid scheme over $S$ with unit group $G$.\par 
    If $S=\Spec(\k)$ is an affine scheme, we also write $\M(\L)_\k=\M(\L)_S$. The subscripts ``$S$" or ``$\k$" will often be omitted when they are clear from the context. 
\end{theo-def}
\begin{proof}
    We may assume $S=\Spec(\k)$ is affine. For simplicity denote $\M\coloneqq\Spec({}_\k\mathbf{O}(\L))$. By Proposition \ref{Prop:O(P)-subalgebra} and Proposition \ref{prop:downward-closed}, the assumption on $\L$ implies that the free $\k$-algebra ${}_\k\bfO(\L)$ is a sub-bialgebra of $\k[G]$. By Corollary \ref{cor:grO-finite}, it is the base change of a finitely generated $\ZZ$-algebra, hence finitely presented. Therefore $\M$ is an affine flat monoid over $\k$ of finite presentation and the natural homomorphism $G\to\M$ corresponding to the embedding ${}_k\bfO(\L)\subset\k[G]$ is $G\times G$-equivariant. By Theorem \ref{theo:O(L)-normal}, all geometric fibers of $\M\to S$ are integral and normal. According to Remark \ref{rqe:equivalence reductive monoid}, it remains to show that the morphism $G\to\M$ induced by the inclusion ${}_\k\bfO(\L)\subset\k[G]$ is a $G\times G$-equivariant open embedding. Since $G$ and $\M$ are flat and of finite presentation over $\Spec(\k)$, it suffices to check it at the level of geometric fibers by \cite[Corollaire 17.9.5]{EGAIV16a23}. So we may and do assume that $\k=k$ is an algebraically closed field.\par  
    Let $\lambda\in X^+$ and $f\in k[G]_{\leq\lambda}$. Since $\L$ generates $X$ as an abelian group, there exists $\mu,\mu'\in\L$ such that $\lambda=\mu-\mu'$. From Lemma \ref{lem:filtration-O-multiplicative} and the fact that $\L$ is downward closed, we deduce that $1_{\mu'}^*\cdot f\in k[G]_{\leq \mu}\subset{}_k\bfO(\L)$ and hence $f\in\mathrm{Frac}({}_k\bfO(\L))$. This shows that $k[G]_{\leq \lambda}\subset\mathrm{Frac}({}_k\bfO(\L))$ for any $\lambda \in X^+$ and so $k[G]\subset\mathrm{Frac}({}_k\bfO(\L))$. From this we get that the natural map $G\to\M$ is a birational monomorphism. Since this map clearly factors through a homomorphism of group schemes $G\to\M^\times$ over $k$, we see that $G=\M^\times$. So we conclude by Lemma \ref{lem:quotient-flat} that $G\to\M$ is an open embedding and the proof is finished.
\end{proof}

\begin{lem-def}
    Let $\M$ be a reductive monoid over a scheme $S$ with unit group $G$. For any closed subgroup scheme $H\subset G$, we define $\M_H$ to be the schematic closure of $H$ in $\M$. Then $\M_H$ is a sub monoid scheme of $\M$ which is commutative if $H$ is commutative. 
\end{lem-def}
\begin{proof}
    We may assume that $S=\Spec(\k)$ is affine, since the formation of scheme theoretic image commutes with any flat base change (see  \cite[\href{https://stacks.math.columbia.edu/tag/081I}{Lemma 081I}]{Stacks}). Let $I\subset\k[G]$ be the Hopf ideal such that $\k[H]=\k[G]/I$. Then we have $\k[\M_H]=\k[\M]/I\cap\k[\M]$ by \cite[\href{https://stacks.math.columbia.edu/tag/056A}{Example 056A}]{Stacks}. From the fact that $I$ is a Hopf ideal of $\k[G]$ we deduce that the coproduct on $\k[\M]$ induces a coproduct on $\k[\M_H]$, which means that $\M_H$ is a sub monoid scheme of $\M$. Moreover, since $\k[\M_H]$ is a subalgebra of $\k[H]$, the commutativity of $H$ (equivalently, the cocommutativity of $\k[H]$) would imply the commutativity of $\M_H$.
\end{proof}

\begin{theo}\label{theo:properties AM MT} 
Let $S$ be a scheme and let $\L\subset X^+$ be a weight monoid that defines the reductive monoid $\M\coloneqq\M(\L)_S$ with abelianisation map $\alpha_\M\colon\M\to A_{\M}$.
\begin{enumerate}
    \item For any split sub-torus $i\colon H\hookrightarrow T$, corresponding to a surjection of character groups $i^*\colon X\to X^*(H)$, the schematic closure $\M_H$ is the reductive monoid over $S$ with unit group $H$ associated to the weight monoid $i^*( W\cdot \L )\subset X^*(H)$. In particular, the formation of $\M_H$ commutes with arbitrary base change and we have:
    \[\M_T\cong\Spec(\O_S[W\cdot\L]).\]
    \item The restriction of the abelianisation map to $\M_{Z}$ induces an isomorphism $\M_{Z}//Z_{\der}\cong A_{\M}$ and realizes $A_\M$ as the reductive monoid over $S$ with unit group $G_{\ab}\cong Z/Z_{\der}$ associated to the weight monoid $\L_{\ab}=\L\cap X_{\ab}$. In particular we have 
    \[A_\M\cong\Spec(\O_S[\L_{ab}])\] 
    and hence the formation of the abelianisation $A_\M$ commutes with arbitrary base change.
    \item We have natural isomorphisms of group schemes 
    \[\M\times_{A_{\M}}S\cong G_{\der},\quad \M\times_{A_{\M}}G_{\ab}\cong G\]
    where the map $S\to A_{\M}$ in the first fiber product corresponds to the unit element of $A_\M(S)^*=G_{ab}(S)$.
\end{enumerate}
\end{theo}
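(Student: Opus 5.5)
The plan is to reduce everything to explicit computations with the dual canonical basis, working over $S=\Spec(\ZZ)$ and then base changing. The whole argument rests on the fact that ${}_\ZZ\bfO(\L)$ is a free $\ZZ$-module with basis $\dot{\bfB}[\L]^*$, compatible with the $T\times T$-weight decomposition.

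\textbf{Part (1).} Restriction to $T$ on coordinate rings is the projection $\pi^0\colon{}_\ZZ\bfO\to\ZZ[X]$, which sends $1_\zeta^*\mapsto e^\zeta$ and kills all other basis elements. Since ${}_\ZZ\bfO(\L)$ is spanned by a subset of $\dot{\bfB}^*$, its image is spanned by the $e^\zeta$ with $1_\zeta\in\dot{\bfB}[\L]$. By Corollary~\ref{cor:B[la]-W-la}, these are exactly the $\zeta\in W\cdot\L$. The same description holds after any base change $\ZZ\to\k$, because the basis is compatible with base change. The scheme theoretic image of $T_\k\to\M_\k$ is cut out by the kernel of ${}_\k\bfO(\L)\to\k[X]$, whose image is $\k[W\cdot\L]$. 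Hence $\M_T\cong\Spec(\O_S[W\cdot\L])$, and its formation commutes with base change.

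For a subtorus $H\subset T$, the restriction map factors as ${}_\k\bfO(\L)\to\k[W\cdot\L]\to\k[X^*(H)]$, so its image is $\k[i^*(W\cdot\L)]$. By Corollary~\ref{cor:WL}, $i^*(W\cdot\L)$ is finitely generated, generates $X^*(H)$, and is saturated, being the image of a rational cone intersected with the lattice. Saturation still needs a check; I would deduce it from $\tilde K_\L$ containing all roots up to sign and $W$-stability, or else avoid it. One also needs that the schematic closure of $H$ in $\M$ equals its closure in $\M_T$; this is formal.

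\textbf{Part (2).} This is where I expect the main difficulty. I would compute $\O[\M]^{G_\der\times G_\der}$ using Lemma~\ref{lem:invariant-description} together with Proposition~\ref{prop:U-invariant} and Corollary~\ref{cor:U-inv-O(P)}, applied to the root datum of $G_\der$.

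First, the $U^-\times U$-invariants of ${}_\k\bfO(\L)$ are $\bigoplus_{\la\in\L}\k 1_\la^*$. The $T_\der\times T_\der$-invariants among these are the $1_\la^*$ with $\la$ trivial on $T_\der$, that is $\la\in X_\ab$. Invariance under $G_\der$ is equivalent to invariance under $U$, $U^-$ and $T_\der$, since these generate the group. So it remains to show that each such $1_\chi^*$, $\chi\in\L_\ab$, is $G_\der\times G_\der$-invariant. By Lemma~\ref{lem:B-dot-X-ab}, $1_\chi^*$ is a group-like character of $G$ trivial on $G_\der$, and this gives the invariance. Thus the invariant ring is $\O_S[\L_\ab]$, a free module compatible with base change.

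The isomorphism $\M_Z/\!/Z_\der\cong A_\M$ then follows by comparison. By part (1) applied to $H=Z$, $\k[\M_Z]$ is $\k$ of the image of $W\cdot\L$ in $X_Z$. Its $Z_\der$-invariants are the characters in $X_\ab$. One then checks that the composite $\k[\L_\ab]\to\k[\M_Z]^{Z_\der}$ is an isomorphism, using that $W$ acts trivially on $X_\ab$ and that $W\cdot\L\cap X_\ab=\L\cap X_\ab$.

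\textbf{Part (3).} The fiber of $\alpha_\M$ over $G_\ab\subset A_\M$ is the open subscheme where all $1_\chi^*$ with $\chi\in\L_\ab$ are invertible. Since $\L$ generates $X$, Lemma~\ref{lem:L-X-der+surj} supplies enough such $\chi$: $\L_\ab$ generates $X_\ab$ (I would verify this from $\L^\gp=X$ and saturation). Inverting $1_\chi^*$ for $\chi\in\L_\ab$ and using the bijections $\dot{\bfB}[\la]^*\cong\dot{\bfB}[\la+\chi]^*$ of Lemma~\ref{lem:B-dot-X-ab}, the localization contains every $\dot{\bfB}[\mu]^*$ with $\mu\in X^+$. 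Indeed, given $\mu$, choose $\chi\in\L_\ab$ with $\mu+\chi\in\L$; this is possible because $K_\L\cap X_Z$ has interior points as in the proof of Lemma~\ref{lem:L-X-der+surj}. So the localization equals $\k[G]$, giving $\M\times_{A_\M}G_\ab\cong G$. Base changing along the unit section $S\to G_\ab$ then yields $G\times_{G_\ab}S=G_\der$.
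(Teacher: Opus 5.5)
Your computation of $\M_T$ matches the paper's: the projection to $\k[X]$ kills every dual basis element except the $1_\zeta^*$, and Corollary~\ref{cor:B[la]-W-la} identifies the surviving $\zeta$ with $W\cdot\L$. It also gives $\O_S[\M_H]=\O_S[i^*(W\cdot\L)]$, compatibly with base change, for every split subtorus $H$. The step that fails is normality of $\M_H$ for general $H$, and it cannot be repaired. Neither of your reasons works. The image of the lattice points of a cone need not be the set of lattice points of the image cone. And $\tilde K_\L$ need not contain any root, even up to sign: for $\Mat_2$ one has $W\cdot\L=\NN^2$.

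In fact saturation is false in general. Take $G=\GL_2$, $X=\ZZ^2$, $\alpha=(1,-1)$ and $\L=\{(a,b)\in\ZZ^2\mid a\ge b,\ 3b\ge a\}$. This is a weight monoid: it contains the basis $(1,1),(2,1)$, and $(a,b)\mapsto(a-1,b+1)$ preserves $3b\ge a$. For $H=Z$ the map $i^*$ is $(a,b)\mapsto a+b$, and $i^*(W\cdot\L)=i^*(\L)=\NN\setminus\{1\}$. So $\M_Z\cong\Spec\k[t^2,t^3]$ is a cusp, not a reductive monoid. The paper's own justification (``since $\L$ is saturated in $X^+$'') has exactly this defect. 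What is true is the coordinate-ring description for all $H$, together with the full claim for $H=T$, where $W\cdot\L=\tilde K_\L\cap X$ is saturated by Corollary~\ref{cor:WL}. That is all part (2) and the rest of the paper use, so you should restrict the normality assertion accordingly rather than try to prove it.

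Parts (2) and (3) are correct and partly take a different route. In (2) the paper identifies $G_\der\times G_\der$-invariants with $U^-T_\der\times UT_\der$-invariants via Lemma~\ref{lem:G-invariant}. You instead bound the invariants by the Borel-type ones and check directly that each $1_\chi^*$ with $\chi\in\L_\ab$ is invariant. That check is fine once you note that $\{\chi\}$ is downward closed: by Proposition~\ref{prop:downward-closed}, $\k\cdot 1_\chi^*$ is then a subcoalgebra, so $1_\chi^*$ is a character of $G$ trivial on $G_\der$.

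Two small points need downward closedness, not only $\L^{\gp}=X$ and saturation.
\begin{enumerate}
\item $\L_\ab^{\gp}=X_\ab$: project $C_\L$ to $X_{\ab,\QQ}$ along $\QQ^\Delta$ and use $-\QQ_{\ge0}^\Delta\subset K_\L$.
\item The isomorphism $\k[\M_Z]^{Z_\der}\cong\k[\L_\ab]$ requires $i^*(\L)\cap X_\ab=\L_\ab$ inside $X_Z$, not $W\cdot\L\cap X_\ab=\L\cap X_\ab$. If $\la\in\L$ and $\chi\in X_\ab$ have the same restriction to $Z$, then $\la-\chi$ is dominant in $X\cap\QQ^\Delta$. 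Hence $N(\la-\chi)\in\NN^\Delta$ for some $N\ge1$, and downward closedness plus saturation give $\chi\in\L$.
\end{enumerate}

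In (3) the paper reduces to algebraically closed fields and cites Vinberg and Rittatore. Your localization argument is self-contained and works directly over any base. It inverts the $1_\chi^*$, uses $1_\chi^*\cdot\dot{\bfB}[\mu]^*=\dot{\bfB}[\mu+\chi]^*$ from Lemma~\ref{lem:B-dot-X-ab}, and takes an interior point of $\L_\ab$ as in Lemma~\ref{lem:L-X-der+surj}.
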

\begin{proof}
    (1) Since the formation of scheme theoretic image commutes with any flat base change (see  \cite[\href{https://stacks.math.columbia.edu/tag/081I}{Lemma 081I}]{Stacks}), we can assume that $S=\Spec(\k)$ is affine. By definition, we have a diagram of commutative monoids 
    \[\xymatrix{H\ar[r]^i\ar[d] & T \ar[d]\\ \M_H\ar[r] & \M_T}\] where the horizontal maps are closed immersions and the vertical ones are scheme theoretically dominant.\par 
    First we show that $\M_H=\Spec(\O_S[i^*(W\cdot\L)])$. Interpreting it at the level of character groups, we can easily deduce the desired result from the case $H=T$. Hence, it suffices to prove that the image of the composition $\k[\M]\to\k[G]\to\k[T]\simeq \k[X]$ (the group algebra of $X$) equals to the subalgebra $\k[W\cdot \L]\subset\k[X]$. In terms of the canonical bases of $\k[\M]={}_\k\bfO(\L)$, the composition sends $1_\la^*$ to $e^\la$ for any $\la\in X$, and sends any other member of the canonical bases to $0$. By Corollary \ref{cor:B[la]-W-la}, we have $1_{\la}^*\in \dot{\bfB}[\mathcal{L}]$ if and only if $\la\in W\cdot\L$. This implies the desired result.\par
    Therefore $\M_H$ is an affine monoid scheme that is flat and finitely presented over $S=\Spec(\k)$ with integral geometric fibers. From \cite[\href{https://stacks.math.columbia.edu/tag/01RG}{Lemma 01RG}]{Stacks} we know that $H$ is an open subscheme of $\M_H$. Thus $\M_H$ is a monoid scheme with unit group $H$ by Proposition \ref{prop:monoid-scheme-equiv-condition}. Moreover, the monoid $i^*(W\cdot\L)$ is saturated in $X^*(H)$ since $\L$ is saturated in $X^+$, and so the geometric $S$-fibers of $\M_H$ are normal. We conclude that $\M_H$ is a reductive monoid over $S$ with unit group $H$ and weight monoid $i^*(W\cdot\L)$.\par 
    (2) We can again assume that the base $S=\Spec(\k)$ is affine. From Corollary \ref{cor:U-inv-O(P)} and Lemma \ref{lem:G-invariant} we deduce that
    \[\k[A_{\M}]=({}_\k\bfO(\L))^{G_{\der}\times G_{\der}}=({}_\k\bfO(\L))^{U^-T_{\der}\times UT_{\der}}\cong\k[\L_{ab}].\]
    In particular, the formation of $A_\M$ commutes with arbitrary base change. On the other hand, by (1) we have 
    \[\M_Z\cong\Spec(\k[\L\cap X_Z])\] 
    and hence
    \[\M_{Z}//Z_{\der}=\Spec(\k[\L\cap X_Z]^{Z_{\der}})=\Spec(\k[\L_{ab}])\cong A_\M.\]
    (3) By construction we have natural homomorphisms $G_{\der}\to\M\times_{A_\M}S$ and $G\to\M\times_{A_\M}G_{ab}$. Since the formation of all the involved monoid schemes commutes with arbitrary base change, to show that they are isomorphisms we may assume that $S=\Spec(k)$ where $k$ is an algebraically closed field. Then the result follows from \cite[Theorem 3]{Vinberg1995} when $k$ has characteristic $0$ and \cite[Theorem 12]{RittatoreThese} in general.
\end{proof}

\subsubsection{Embedding in matrix monoids}
The simplest reductive monoids are the matrix monoids, i.e. the multiplicative monoids of matrix algebras. After imposing some assumptions on the base schemes, we will show that all reductive monoids can be realized concretely inside a product of matrix monoids. \par
For any dominant weight $\la\in X^+$, recall that the Weyl module ${}_\ZZ\La_\la$ is a free $\ZZ$-module of finite rank and is a representation of $G_\ZZ$ (see Proposition \ref{prop:Lambda is Weyl}). For any scheme $S$, let $\La_{\la,S}$ be the finite free $\O_S$-module obtained as the base change of ${}_\ZZ\La_\la$. When $S=\Spec(\k)$ is an affine scheme, we have $\La_{\la,\Spec(\k)}={}_\k\La_\la$, in the notation we introduced before. Let $\underline{\mathrm{End}}(\La_{\la,S})$ be the $S$-scheme of endomorphisms of $\La_{\la,S}$ and let $\underline{\mathrm{GL}}(\La_{\la,S})$ be its open subscheme of invertible endomorphisms. Then $\underline{\mathrm{End}}(\La_{\la,S})$ is a monoid scheme whose unit group scheme is $\underline{\mathrm{GL}}(\La_{\la,S})$. Let $\rho_{\la,S}\colon G_S\to\underline{\mathrm{GL}}(\La_{\la,S})$ be the group scheme homomorphism defined by the representation. 

\begin{prop}\label{prop:lifting hatU module to hatUM module}
    For any $\la\in\L$, the composition of $\rho_\la$ with the open embedding $\underline{\mathrm{GL}}(\La_{\la,S})\subset\underline{\mathrm{End}}(\La_{\la,S})$ extends uniquely to a homomorphism of monoid schemes over $S$: 
    \[\M(\L)_S\to\underline{\mathrm{End}}(\La_{\la,S}).\]
\end{prop}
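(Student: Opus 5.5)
Since $\M(\L)_S$ is the base change of $\M(\L)_\ZZ$, and $\Lambda_{\la,S}$ and $\underline{\mathrm{End}}(\La_{\la,S})$ are base changes from $\ZZ$, we reduce to $S=\Spec(\ZZ)$; in fact the argument below works over any $\k$.

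\emph{Uniqueness.} The coordinate ring ${}_\k\bfO(\L)$ embeds into $\k[G]$ by Lemma \ref{lem:quotient-flat}(ii), so $G$ is schematically dense in $\M(\L)_\k$. Any two extensions therefore agree.

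\emph{Existence.} The map $\rho_\la$ corresponds to the coaction $\mu\colon \La_\la\to \k[G]\otimes\La_\la$. Choose the basis $\{b\eta_\la: b\in\bfB(\la)\}$ (Theorem \ref{theo:Weyl module}). Then the matrix coefficients of $\rho_\la$ are the functions
\[
f_{x,y}\colon \xi\mapsto \langle y^\vee,\ \xi\cdot x\rangle,
\]
which via Proposition \ref{prop:G-rep-equiv} we write in the dual canonical basis as
\[
f_{x,y}=\sum_{c\in\dot{\bfB}}\langle y^\vee, c\,x\rangle\, c^*.
\]
By Lemma \ref{lem:characterization subset Blambda}, the inclusion \eqref{eq:Blambda inside leq lambda}, $c\La_\la\neq0$ forces $c\in\dot{\bfB}[\le\la]$. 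Hence every matrix coefficient lies in $\k[G]_{\le\la}={}_\k\bfO(X^+_{\le\la})$. Since $\la\in\L$ and $\L$ is downward closed in $X^+$, we have $X^+_{\le\la}\subset\L$, so each matrix coefficient lies in ${}_\k\bfO(\L)$.

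The comorphism $\k[\underline{\mathrm{End}}(\La_\la)]=\mathrm{Sym}(\End(\La_\la)^\vee)\to\k[G]$ is the algebra map generated by these coefficients. It therefore factors through the subalgebra ${}_\k\bfO(\L)$ (Proposition \ref{Prop:O(P)-subalgebra}), which gives a morphism $\M(\L)_\k\to\underline{\mathrm{End}}(\La_\la)$.

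\emph{Monoid compatibility.} Compatibility with multiplication and with the unit amounts to identities between maps $\k[\underline{\mathrm{End}}]\to {}_\k\bfO(\L)\otimes{}_\k\bfO(\L)$, respectively to $\k$. These identities hold after composing with the injection into $\k[G]\otimes\k[G]$, because $\rho_\la$ is a group homomorphism and ${}_\k\bfO(\L)$ is a sub-bialgebra. The injectivity of ${}_\k\bfO(\L)\otimes{}_\k\bfO(\L)\to\k[G]\otimes\k[G]$ follows because both are free with compatible dual canonical bases.

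The only substantive step is identifying the matrix coefficients with elements of $\k[G]_{\le\la}$, which is supplied by Lemma \ref{lem:characterization subset Blambda}. It still has to be checked that the $\hat{\bfU}$-action on ${}_\k\La_\la$ coming from $\rho_\la$ (Proposition \ref{prop:Lambda is Weyl}) matches the $\dot{\bfU}$-action used in that lemma. This is true by Proposition \ref{prop:uni-mod-equiv}.
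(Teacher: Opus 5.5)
Your proposal is correct and follows essentially the paper's argument: reduce to $\Spec(\ZZ)$ and show the matrix coefficients of $\tilde{\rho}_\la$ lie in ${}_\ZZ\bfO(\L)$. The key point in both is that a canonical basis element $b\in\dot{\bfB}[\mu]$ acting nontrivially on ${}_\ZZ\La_\la$ forces $\mu\le\la$ (Lemma \ref{lem:characterization subset Blambda}), hence $\mu\in\L$ by downward closedness. Your explicit checks of uniqueness (schematic density of $G$) and of compatibility with multiplication and unit (${}_\k\bfO(\L)\otimes{}_\k\bfO(\L)$ being a direct summand of $\k[G]\otimes\k[G]$) fill in details the paper leaves implicit.
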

\begin{proof}
    Since everything is obtained by base change from $\ZZ$, we may assume that $S=\Spec(\ZZ)$. Let $\tilde{\rho}_\la\colon G_\ZZ\to\underline{\mathrm{End}}({}_\ZZ\La_{\la})$ be the composition of $\rho_\la$ with the natural open embedding and let \[\tilde{\rho}_\la^*\colon\mathrm{Sym}({}_\ZZ\La_{\la}\otimes{}_\ZZ\La_\la^*)\to{}_\ZZ\bfO\]
    be the corresponding homomorphism between coordinate rings, where ${}_\ZZ\La_\la^*\coloneqq\Hom_\ZZ({}_\ZZ\La_\la,\ZZ)$. It suffices to show that the image of $\tilde{\rho}_\la$ lies in the subring ${}_\ZZ\bfO(\L)$ if $\la\in\L$. Since the map $\tilde{\rho}_\la^*$ is given by taking matrix coefficients, this is equivalent to showing that for any $\mu\in X^+\setminus\L$, any canonical bases element $b\in\dot{\bfB}[\mu]$ acts by $0$ on ${}_\ZZ\La_\la$. Suppose this is not the case, then $\mu\le\la$ by Lemma \ref{lem:characterization subset Blambda}. But this contradicts with the fact that $\L$ is downward closed and we are done. 
\end{proof}
\begin{theo}\label{theo:embed-G-in-matrix}
    Let $\lambda_1, \dots, \lambda_r\in X^+\backslash\{0\}$ be elements that generate the abelian group $X$. Then the natural action morphism 
    \begin{equation}\label{eq:embedding into highest weights representations}
        G\to \prod_{1\leq i\leq r}\underline{\mathrm{End}}(\Lambda_{\lambda_i, S}) 
    \end{equation} 
    is an immersion of schemes.
\end{theo}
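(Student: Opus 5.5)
We first reduce to $S=\Spec(\ZZ)$: the morphism $\rho\colon G\to E\coloneqq\prod_i\underline{\mathrm{End}}(\Lambda_{\lambda_i,S})$ is the base change of the corresponding morphism over $\ZZ$, and being an immersion is stable under base change. Since $G$ is a group scheme of finite presentation and $\rho$ is a homomorphism into $\prod_i\underline{\mathrm{GL}}(\Lambda_{\lambda_i})$, which is open in $E$, it suffices to show that $\rho\colon G\to H\coloneqq\prod_i\underline{\mathrm{GL}}(\Lambda_{\lambda_i})$ is a closed immersion. A homomorphism of flat affine group schemes of finite type over $\ZZ$ whose source has connected fibers and which is a monomorphism is a closed immersion (the fibre case is classical; over $\ZZ$ one uses that a monomorphism of finite presentation between smooth group schemes is a closed immersion once the scheme-theoretic kernel is trivial, e.g. via SGA3 VI$_\text{B}$/XVI criteria for reductive sources). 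So the key is to prove that $\ker(\rho)$ is trivial, i.e. that $\rho$ is injective on $\k$-points for every ring $\k$ (as a functor).

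To prove triviality of the kernel, take $g\in G(\k)$ acting trivially on every ${}_\k\Lambda_{\lambda_i}$. I would use the big cell decomposition only fppf-locally, so it is cleaner to argue directly with $\hat{\bfU}$. First I show the kernel $K$ is contained in $B^-$: the highest weight line $\k\eta_{\lambda_i}$ is fixed by $g$. Rather than this, the intended path is via Proposition \ref{prop:unip-element-trivial}: decompose $g$ (after passing to a fppf/Zariski cover, on the open big cell $U^-TU$, which is an fppf neighbourhood of the identity; a normal subgroup scheme of $G$ with trivial intersection with the big cell is trivial, since the big cell is a fibrewise dense open neighbourhood of $1$ and $K$ is a subgroup) as $g=xtu$ with $x\in U^-$, $t\in T$, $u\in U$. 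Apply $g$ to $\eta_{\lambda_i}$: $u\eta=\eta$, $t\eta=\lambda_i(t)\eta$, and $x\eta=\sum_b x_b\, b\eta_{\lambda_i}$ with $b\in\bfB(\lambda_i)$. Since $b\eta_{\lambda_i}$, $b\in\bfB(\lambda_i)$, form a basis (Theorem \ref{theo:Weyl module}), $g\eta=\eta$ forces $\lambda_i(t)=1$ and $x_b=0$ for $b\in\bfB(\lambda_i)\setminus\{1\}$. As the $\lambda_i$ generate $X$, $t=1$, and Proposition \ref{prop:unip-element-trivial} gives $x=1$. Then $u$ acts trivially on all $\Lambda_{\lambda_i}$; applying the involution $\omega$ (equivalently using lowest weight vectors $\xi$ of ${}^\omega$-twisted modules, or the dual statement of Proposition \ref{prop:unip-element-trivial} for $U^+$ with lowest weight vectors of $\Lambda_{\lambda_i}$, whose weights $w_0\lambda_i$ also generate $X$) yields $u=1$.

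The main obstacle is the passage from "trivial kernel" to "closed immersion" over a general base, together with the reduction of the kernel computation to the big cell. I would handle the latter by noting $K\cap\Omega$ is computed above for points of $\Omega=U^-TU$, so $K\cap\Omega$ is the trivial subgroup; since $K$ is a closed subgroup and $\Omega$ is fibrewise dense open with $g\Omega\cap\Omega$ covering, $K=\{1\}$. For the former, over each geometric fibre a homomorphism of algebraic groups with trivial scheme-theoretic kernel is a closed immersion; then the fibrewise criterion for closed immersions of finitely presented flat schemes (EGA IV 17.9.5-type, applied to the proper monomorphism) or SGA3 XVI Cor.~1.5 (a monomorphism from a reductive group scheme into an affine group scheme of finite presentation is a closed immersion) finishes the argument.
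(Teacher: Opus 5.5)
Your proposal follows the paper's proof: reduction to $\Spec(\ZZ)$, passage to $\rho\colon G\to\prod_i\underline{\mathrm{GL}}({}_\ZZ\Lambda_{\lambda_i})$, and the criterion of SGA3 XVI 1.5 (a monomorphism from a reductive group scheme to a finitely presented separated group scheme is a closed immersion), which reduces everything to $K=\ker\rho$ being trivial. On the big cell you then use the highest weight vectors $\eta_{\lambda_i}$, the basis $\{b^-1_{\lambda_i}\eta_{\lambda_i}\}_{b\in\bfB(\lambda_i)}$ and Proposition~\ref{prop:unip-element-trivial} to get $t=1$ and $u^-=1$. Two steps are only gestured at, and the paper settles both using the normality of $K$.

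For the leftover factor $u\in U(\k)\cap K(\k)$, your route via $\omega$ or lowest weight vectors needs inputs not established in the paper. For instance, you would need that $\omega$ preserves the group-like elements of ${}_\k\hat{\bfU}$. You would also need that the lowest weight vector $\xi$ of ${}_\ZZ\Lambda_{\lambda_i}$ gives a $\ZZ$-basis $\{b^+\xi\}_{b\in\bfB(-w_0\lambda_i)}$, i.e.\ that ${}_\ZZ\Lambda_{\lambda_i}\cong{}^\omega_\ZZ\Lambda_{-w_0\lambda_i}$ integrally. It is simpler to do what the paper does: conjugate $u$ by a lift of $w_0$ into $U^-(\k)\cap K(\k)$ and rerun the same argument.

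Likewise, density of $\Omega=U^-TU$ is not what upgrades $K\cap\Omega=\{1\}$ to $K=\{1\}$. The statement $K\cap\Omega=\{1\}$ only makes the unit section an open immersion into $K$. Surjectivity needs more: for $k$ algebraically closed, every element of $G(k)$ is conjugate into $B(k)\subset\Omega(k)$. Since $K(k)$ is normal, it follows that $K(k)=\{1\}$.
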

\begin{proof}
    We may assume that $S=\Spec(\ZZ)$. Since (\ref{eq:embedding into highest weights representations}) factorizes as 
    \[\rho\colon G\to \prod_{1\leq i\leq r}\underline{\mathrm{GL}}({}_{\ZZ}\Lambda_{\lambda_i})\]
    followed by the open embedding 
    \[\prod_{1\leq i\leq r}\underline{\mathrm{GL}}({}_{\ZZ}\Lambda_{\lambda_i}) \to \prod_{1\leq i\leq r}\underline{\mathrm{End}}({}_{\ZZ}\Lambda_{\lambda_i}),\] 
    it suffices to prove that the former map $\rho$ is a closed embedding. Since $G$ is a reductive group scheme and $\prod\limits_{1\leq i\leq r}\underline{\mathrm{GL}}({}_{\ZZ}\Lambda_{\lambda_i})$ is a finitely presented and separated group scheme, according to \cite[XVI, 1.5(a)]{SGA3VIIIaXVIII} or \cite[Theorem 5.3.5]{ConradReductiveGroupScheme}, it suffices to prove that $\rho$ is a monomorphism, or equivalently, that its scheme theoretic kernel $K\coloneqq\mathrm{Ker} \rho$ is trivial. We prove that the unit section $\Spec\ZZ\hookrightarrow K$ is a surjective open immersion.\par
    First we show that the unit section $\Spec\ZZ\hookrightarrow K$ is an open immersion. It suffices to show that the intersection $(U^-\times T \times U^+)\times_G K $ of $K$ with the open big cell $U^-\times T \times U \hookrightarrow G$ coincides with the unit section. So let $\k$ be a commutative ring (viewed as $\A$-algebra via $v\mapsto1$) and $\xi=u^-tu^+ \in ((U^-\times T \times U)\times_GK)(\k)\subset {}_\k\hat{\bfU}$. Viewing $u^-\in U^-(\k)$ as an element of ${}_\k\hat{\mathbf{U}}^-$ (see \eqref{eq:subalgebra-U-hat}), we can write
    \[u^-=\sum_{b\in \mathbf{B},\lambda \in X}x_b\cdot b^{-}1_{\lambda}\]
    where $x_b\in\k$, $b\in\mathbf{B}$ and also $x_1=1$. For any dominant $\lambda$, we let $\eta_{\lambda}\in{}_{\ZZ}\Lambda_{\lambda}$ be a highest weight vector of weight $\lambda$. Then for any $i$ we have 
    $$\eta_{\lambda_i}=\xi\cdot \eta_{\lambda_i}=u^-t\cdot (u^+\cdot \eta_{\lambda_i})=u^-t\cdot \eta_{\lambda_i}=\lambda_i(t)u^-\cdot \eta_{\lambda_i}=\sum_{b\in \mathbf{B}(\lambda_i)}\lambda_i(t)x_bb^-1_{\lambda_i}\cdot \eta_{\lambda_i}.$$ 
    Since the elements $\{b^-1_{\lambda_i}\cdot \eta_{\la_i},b\in\bfB(\la_i)\}$ form a basis of ${}_\ZZ\La_{\la_i}$ and $x_1=1$, we have $\lambda_i(t)=1$ and $x_b=0$ for any $b\in \bfB(\lambda_i)\setminus\Set{1}$. By assumption, the elements $\{\lambda_i, 1\leq i\leq r\}$ generate $X$, so they separate the elements of $T(\k)$ and we have $t=1$. Moreover, by Proposition \ref{prop:unip-element-trivial}, we also deduce that $u^-=1$. Therefore, we have $\xi=u^+\in U(\k)$. Since $K$ is normal in $G$ and $U$ is conjugated to $U^-$, there is a $G(\k)$-conjugate of $\xi$ that lies in $U^-(\k)\cap K(\k)$. Applying the arguments above to this element we conclude that $\xi=1$. Hence $(U^-\times T \times U)\times_GK $ coincides with the unit section.\par
    It remains to prove the surjectivity of the unit section $\Spec(\ZZ)\hookrightarrow K$. Since $K$ is of finite type over $\Spec(\ZZ)$, it suffices to prove that the group of $k$-points $K(k)$ is trivial for any algebraically closed field $k$. Since $G(k)$ is covered by the conjugates of ${B}(k)$ and admits $K(k)$ as a normal subgroup, it suffices to show that $B(k)\cap K(k)$ is trivial. But this follows from the previous arguments.
\end{proof}

\begin{rqe}
    If either the characteristic of the base field $k$ is not equal to $2$ or the group $G$ has no direct factor $\mathrm{SO}_{2n+1}$ for any $n\ge1$, then any normal unipotent subgroup of $G_k$ is automatically trivial by \cite{Vasiu}. In such cases, we do not need Proposition \ref{prop:unip-element-trivial} in the proof above.  
\end{rqe}
\begin{prop}\label{prop:closure-in-matrix-algebra}
    If the base scheme $S$ is integral and normal, the monoid scheme $\M(\L)_{S}$ is the normalization of the scheme theoretic image of (\ref{eq:embedding into highest weights representations}), in which $\lambda_1,\dotsc,\la_r$ are any set of generators of $\L$. 
\end{prop}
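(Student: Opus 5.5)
We want to prove that $\M(\L)_S$ is the normalization of the scheme theoretic image $\overline{\rho(G)}$ of the morphism \eqref{eq:embedding into highest weights representations}, where $\la_1,\dotsc,\la_r$ generate $\L$. The plan is to build a finite birational morphism $\M(\L)_S\to\overline{\rho(G)}$ and then use normality of $\M(\L)_S$.

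\textbf{Step 1: the morphism.} Since $\L$ is a weight monoid, the generators $\la_i$ generate $X=\L^{\gp}$. After discarding any $\la_i=0$, which contribute only a trivial factor, Theorem \ref{theo:embed-G-in-matrix} applies and $\rho$ is an immersion. By Proposition \ref{prop:lifting hatU module to hatUM module}, each $\rho_{\la_i}$ extends to a monoid morphism $\M(\L)_S\to\underline{\mathrm{End}}(\La_{\la_i,S})$. Taking the product gives a morphism
\[\tilde\rho\colon\M(\L)_S\to\prod_i\underline{\mathrm{End}}(\La_{\la_i,S})\]
extending $\rho$.

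\textbf{Step 2: factoring through the image.} Reduce to $S=\Spec(\k)$ affine with $\k$ a normal domain. The matrix coefficient map $\tilde\rho^*$ has image a subalgebra $A\subset{}_\k\bfO(\L)$, and $\overline{\rho(G)}=\Spec(A)$, because $\k[G]$ contains ${}_\k\bfO(\L)$ (Theorem \ref{theo:M(L)-def}). So $\tilde\rho$ factors as $\M(\L)_S\to\Spec(A)$ through a dominant map, and it restricts to the immersion $\rho$ on the open subscheme $G$.

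\textbf{Step 3: integrality (the main obstacle).} The key claim is that $A\subset{}_\k\bfO(\L)$ is an integral extension.

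Consider the matrix coefficients of $\La_{\la_i}$. By Lemma \ref{lem:characterization subset Blambda} and Theorem \ref{theo:Weyl module}, the image of $\dot{\bfB}[\la_i]$ in $\mathrm{End}(\La_{\la_i})$ is a basis over $\k$. Moreover, $\tilde\rho^*$ maps the coordinate functions onto a $\k$-span that contains the elements $b^*$ modulo ${}_\k\bfO_{<\la_i}$.

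The finite generating set $\Gamma_\L^\pm$ from Corollary \ref{cor:grO-finite} consists of elements of $\dot{\bfB}_{0,\la_i}$ and $\sigma(\dot{\bfB}_{\la_i,0})$. These lie in $\dot{\bfB}[\la_i]$, so they are matrix coefficients of $\La_{\la_i}$ up to lower filtration terms. Concretely, they are the coefficients along the highest weight vector and its dual.

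First attempt: show that $A$ surjects onto $\gr({}_\k\bfO(\L))$ in the top graded pieces and conclude with the filtration induction of Corollary \ref{cor:grO-finite}. If that fails, I would argue integrality on geometric fibers via the torus. The image of $A$ in $\k[T]=\k[X]$ contains $e^{w\la_i}$ for all $i$ and all $w\in W$, because the extremal weight vectors of $\La_{\la_i}$ have rank one. Hence $\k[W\cdot\L]$ is integral over this image, since every element of $W\cdot\L$ is a positive rational combination of the $w\la_i$ and so has a multiple in the monoid they generate. From this one deduces properness of $\M(\L)_S\to\overline{\rho(G)}$ through a $G\times G$ valuative argument: any limit point is a $G\times G$-translate of a torus limit.

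\textbf{Step 4: conclusion.} Assuming Step 3, $\M(\L)_S\to\overline{\rho(G)}$ is finite and birational, being an isomorphism over $G$. Since $\k$ is a normal domain, Theorem \ref{theo:O(L)-normal} shows that ${}_\k\bfO(\L)$ is a normal domain. Therefore ${}_\k\bfO(\L)$ equals the integral closure of $A$ in their common fraction field, that is, $\M(\L)_S$ is the normalization of $\overline{\rho(G)}$.
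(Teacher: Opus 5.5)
Your Steps 1, 2 and 4 are fine, but Step 3 is the entire content of the proposition and it is not established. Your first route rests on the claim that $\dot{\bfB}[\la_i]$ maps to a $\k$-basis of $\End({}_\k\La_{\la_i})$, so that matrix coefficients recover every $b^*$ modulo ${}_\k\bfO_{<\la_i}$. That claim holds only over $\QQ(v)$. Over $\ZZ$ the image of $\dot{\bfB}[\la_i]$ spans a non-saturated sublattice. In characteristic $p$ it becomes linearly dependent: for $SL_2$, the element $\theta^+1_0=1_\alpha\theta^+\in\dot{\bfB}[\alpha]$ sends $\theta^-\eta_\alpha$ to $2\eta_\alpha$ and kills the other basis vectors, so it acts by zero on ${}_{\mathbb{F}_2}\La_\alpha$.

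More fundamentally, no argument of this type can work, because it would prove $A={}_\k\bfO(\L)$, i.e.\ that the scheme-theoretic image is already normal, and that is false in general. Take $G=\GG_m\times SL_2$ and $\L=\{(z,n)\mid 2z\ge n\ge 0\}$ with generators $(1,0),(1,1),(1,2)$ over $\ZZ$, and write points as $(t,g)$ with $g=\left(\begin{smallmatrix}a&b\\ c&d\end{smallmatrix}\right)$. Every generator of $A$ has central weight one, so the central-weight-one part of $A$ is just the $\ZZ$-span of the matrix coefficients of $t$, $t\otimes V$ and $t\otimes\Gamma^2V$ (the last being the Weyl module of highest weight $2$). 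These are $t\cdot\{1,a,b,c,d,a^2,ac,c^2,2ab,ad+bc,2cd,b^2,bd,d^2\}$. This span contains $2tab$ but not $tab$, even though $tab\in{}_\ZZ\bfO(\L)$ and $(tab)^2=(ta^2)(tb^2)$. So only integrality can be proved, not surjectivity onto graded pieces, and the normalization in the statement is genuinely needed.

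Your fallback points in a workable direction, but as written it says nothing about $G$: knowing that the image of $A$ in $\k[T]$ is $\k[W\cdot\L]$ only controls the closure of $T$. The sentence \emph{any limit point is a $G\times G$-translate of a torus limit} is precisely what must be proved, and it needs two ingredients.
\begin{itemize}
\item[(i)] A reduction of the integrality of ${}_\k\bfO(\L)$ over $A$ to valuation rings $R\supset A$ of $\k(G)$, with the tautological point of $G$ as generic point (for noetherian $A$, DVRs suffice by Mori--Nagata).
\item[(ii)] The Cartan decomposition $G(K)=G(R)T(K)G(R)$ for such $R$.
\end{itemize}
With these, integrality of $\rho(g)$ for $g=k_1\mu(\varpi)k_2$ forces $\langle\mu,w\la_i\rangle\ge0$ for all $w$ and $i$. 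Hence $\mu(\varpi)\in\M_T(R)$ and so $g\in\M(\L)(R)$. None of this is supplied, and both (i) for a non-noetherian normal $\k$ and (ii) beyond DVRs need care.

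The paper closes the gap with invariant theory instead. The highest-weight matrix coefficient of ${}_\k\La_{\la_i}$ equals $1_{\la_i}^*$, so the normalization $\widetilde{G}$ satisfies $\k[\widetilde G]^{U^-\times U}=\k[\L]={}_\k\bfO(\L)^{U^-\times U}$. Hence $\k[\widetilde G]$ contains the $G\times G$-subalgebra generated by ${}_\k\bfO(\L)^{U^-\times U}$. By Grosshans' theorem, in van der Kallen's form over a base ring, ${}_\k\bfO(\L)$ is integral over that subalgebra, and normality of $\k[\widetilde G]$ then gives equality.
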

\begin{proof}
    We may assume that $S=\Spec(\k)$ is affine, where $\k$ is a normal integral domain. To simplify notations, in the following proof we will omit the subscripts $()_{\k}$. Let $\overline{G}\hookrightarrow\prod_{1\leq i\leq r}\underline{\mathrm{End}}(\Lambda_{\lambda_i})$ be the scheme-theoretic image of (\ref{eq:embedding into highest weights representations}), and let $\widetilde{G}\to \overline{G}$ be its normalization. Note that the submonoid structure of $G$ inside $\prod\limits_{1\leq i\leq r}\underline{\mathrm{End}}(\Lambda_{\lambda_i})$ extends naturally to $\overline{G}$, which in turns extends by the universal property of normalization to $\widetilde{G}$. Since (\ref{eq:embedding into highest weights representations}) is a quasicompact immersion of schemes, the natural map $G\hookrightarrow \overline{G}$ is a scheme-theoretically dominant open immersion. Since $G$ is normal, this map also factorizes through the normalization as a scheme-theoretic dominant open immersion ${G}\hookrightarrow \widetilde{G}$. Besides, according Proposition \ref{prop:lifting hatU module to hatUM module}, the immersion (\ref{eq:embedding into highest weights representations}) factorizes as $G\hookrightarrow \M(\L)\to \prod_{1\leq i\leq r}\underline{\mathrm{End}}(\Lambda_{\lambda_i})$. Since the first map is scheme-theoretically dominant, the second one factorizes through $\overline{G}\hookrightarrow \prod_{1\leq i\leq r}\underline{\mathrm{End}}(\Lambda_{\lambda_i})$ and even through $\widetilde{G}\to \prod_{1\leq i\leq r}\underline{\mathrm{End}}(\Lambda_{\lambda_i})$ since $\M(\L)$ is normal. To summarize, we have a commutative diagram of monoids and monoid morphisms
    \[\xymatrix{& \M(\L)\ar[d]\ar[rdd]\\
    & \widetilde{G} \ar[d]\ar[dr]\\
    G\ar[r]\ar[ur]\ar[uur]& \overline{G}\ar[r] & \prod\limits_{1\leq i\leq r}\underline{\mathrm{End}}(\Lambda_{\lambda_i}). 
    }\]
    For any $1\le i\le r$, the matrix coefficient of the highest weight space of ${}_\k\La_{\la_i}$ defines a function in $\k[G]^{U^-\times U}$ that must equal to $1_{\la_i}^*$ by Corollary \ref{cor:U-inv-O(P)}. In particular we have $1_{\la_i}^*\in\k[\widetilde{G}]^{U^-\times U}$ for all $1\le i\le r$ and since the set $\{\la_1,\dotsc,\la_r\}$ generates the monoid $\L$ we get that $\k[\L]\subset\k[\widetilde{G}]^{U^-\times U}$. On the other hand, since $\k[\widetilde{G}]\subset{}_\k\mathbf{O}(\L)$, we deduce from Corollary \ref{cor:U-inv-O(P)} that
    \[\k[\widetilde{G}]^{U^-\times U}\subset{}_\k\mathbf{O}(\L)^{U^-\times U}=\k[\L].\]
    Thus we conclude that $\k[\widetilde{G}]^{U^-\times U}={}_\k\mathbf{O}(\L)^{U^-\times U}$. Consequently the smallest $G\times G$-invariant subalgebra of ${}_\k\bfO(\L)$ containing ${}_\k\mathbf{O}(\L)^{U^-\times U}$ lies inside $\k[\widetilde{G}]$. Since ${}_\k\mathbf{O}(\L)$ is integral over the former by \cite[Theorem 6]{Kallen2014} (which generalizes \cite[Theorem 5]{Grosshans1992}), we deduce that ${}_\k\mathbf{O}(\L)$ is integral over $\k[\widetilde{G}]$. Since $\k[\widetilde{G}]$ is integrally closed (by normality of $\widetilde{G}$), we conclude that ${}_\k\bfO(\L)=\k[\widetilde{G}]$ and so the map $\M(\L)\to\widetilde{G}$ is an isomorphism.
\end{proof}

\subsection{Adjoint quotients}\label{subsec:adjoint-quotient}
We retain the notations from the previous subsection. Let $\L\subset X^+$ be a closed saturated submonoid and let $\M\coloneqq\M(\L)$ be the associated reductive monoid scheme over $S$ with unit group $G$ (cf. Definition \ref{theo:M(L)-def}). In this section we study the GIT quotient of $\M$ under the adjoint action of $G$:
\[\M//\Ad(G)\coloneqq\Spec(\O_S[\M]^{\Ad(G)}).\]
Steinberg \cite{Steinberg-regular}  established many foundational results on the GIT quotient $G//\Ad(G)$ for semisimple groups over a field. Later Lee\cite{LeeAdjointQuotient} extends many of Steinberg's result on $G//\Ad(G)$ to split reductive groups over a general base scheme. Our study is based on these works and generalizes some ingredients therein. Since the formation of GIT quotients commute with any flat base change, we may assume that $S=\Spec(\k)$ is affine. \par 
Let us first recall some results in \cite{LeeAdjointQuotient} that can be easily adapted to our context.  Arguing as in \cite[Lemma 3.2, 3.3, 3.4]{LeeAdjointQuotient}, we have the following two lemma:
\begin{lem}\label{lem:basis for invariants VT}
    For any weight $\mu\in X$ corresponding to an element $e^\mu\in\ZZ[X]$, we denote:
    $$\mathrm{Sym}(e^{\mu})\coloneqq\sum_{w\in W/W_{\mu}}e^{w\cdot \mu}\in \ZZ[X]^W$$ 
    where $W_{\mu}$ is the stabilizer of $\mu$ under the action of the Weyl group $W$.
    \begin{enumerate}
        \item The set $\Set{\mathrm{Sym} (e^{\lambda})| \lambda \in \L}$ form a $\ZZ$-basis of the ring of $W$-invariants $\ZZ[W\cdot \L]^W$. In particular, the formation of the GIT quotient $\M_T//W$ commutes with arbitrary base change. 
        \item For any family of integers $(a_{\mu}^{\lambda})\in \ZZ^{X\times X}$, the set 
        $$\Set{\mathrm{Sym}(e^{\lambda})+\sum_{\mu<\lambda,\mu\in X^+ }a_{\mu}^{\lambda}\mathrm{Sym}(e^{\mu})| \lambda \in \L}$$ 
        forms a $\ZZ$-basis of $\ZZ[W\cdot \L]^W$
    \end{enumerate}
\end{lem}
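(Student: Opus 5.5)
The plan is to work entirely inside the group algebra $\ZZ[X]$ with its $\ZZ$-basis $\{e^\mu,\mu\in X\}$, on which $W$ acts by permuting basis elements. By Corollary \ref{cor:WL}, $W\cdot\L$ is a $W$-stable submonoid of $X$, so $\ZZ[W\cdot\L]$ is the $\ZZ$-span of $\{e^\mu,\mu\in W\cdot\L\}$ and is a $W$-stable subring. The proof of both parts reduces to a triangularity statement with respect to the dominance order.

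For (1), take $f=\sum_{\mu\in W\cdot\L}c_\mu e^\mu$ in $\ZZ[W\cdot\L]^W$. Since $W$ permutes the basis, $W$-invariance is equivalent to $c_{w\mu}=c_\mu$ for all $w$ and $\mu$, i.e. $c$ is constant on $W$-orbits. Every $W$-orbit in $X$ meets $X^+$ in exactly one point, and the orbits contained in $W\cdot\L$ are exactly those of the elements $\la\in W\cdot\L\cap X^+=\L$ (here I use $\tilde K_\L\cap C=C_\L$ from Lemma \ref{lem:K-L-properties}(3) together with saturation). Hence $f=\sum_{\la\in\L}c_\la\,\mathrm{Sym}(e^\la)$ uniquely, where distinct $\la$ give orbit sums with disjoint supports. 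This yields the basis. For the base change claim, the same orbit argument works verbatim over any ring $\k$. Thus $\k[W\cdot\L]^W$ is free on $\{\mathrm{Sym}(e^\la)\}$, and in particular $\ZZ[W\cdot\L]^W\otimes\k\to\k[W\cdot\L]^W$ is an isomorphism. Since $\M_T=\Spec\O_S[W\cdot\L]$ by Theorem \ref{theo:properties AM MT}(1), this says $\M_T//W$ commutes with base change. The same conclusion holds over a general $S$ by working locally.

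For (2), the sum should be read over $\mu\in X^+$ with $\mu<\la$. Since $\L$ is downward closed in $X^+$, each such $\mu$ lies in $\L$, so the proposed elements do lie in $\ZZ[W\cdot\L]^W$. The transition matrix from $\{\mathrm{Sym}(e^\la)\}$ to the new family is unitriangular with respect to the partial order $\le$ on $\L$. To invert it I need each $\la$ to have only finitely many dominant $\mu\le\la$. This holds because such $\mu$ lie in $\mathrm{conv}(W\cdot\la)\cap X$, a finite set; alternatively one can use $h$ from Lemma \ref{lem:from Xplus to NN} to get an $\NN$-grading compatible with $<$. Induction on $h(\la)$ then expresses each $\mathrm{Sym}(e^\la)$ as an integral combination of the new elements, so they span. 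For linear independence, I would take the leading term with $h$ maximal in any nontrivial relation; its coefficient of $\mathrm{Sym}(e^\la)$ cannot be cancelled by any other term.

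The only mildly delicate points are the identification $W\cdot\L\cap X^+=\L$ and the finiteness of $\{\mu\in X^+:\mu\le\la\}$. Both follow from results already established in the paper, so I expect no real obstacle.
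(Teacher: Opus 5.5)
Your proposal is correct and is essentially the argument the paper has in mind: the paper gives no separate proof but defers to Lee's Lemmas 3.2--3.4. Those lemmas consist of the orbit-sum basis (valid over any coefficient ring, which gives the base change statement) and the unitriangularity argument with respect to $\le$ on dominant weights, exactly as you propose. As a small simplification, the identity $W\cdot\L\cap X^+=\L$ needs no cone geometry, since each $W$-orbit contains a unique dominant weight and $\L\subset X^+$.
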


Recall that for any $\lambda \in X^+$, we have the Weyl module $\La_{\la,S}$ and the associated representation $\rho_{\la,S}\colon G\to\underline{\mathrm{GL}}(\La_{\la,S})$. The \emph{character for $\La_{\la,S}$} is the element $\chi_\la\in\O_S[G]^{\Ad(G)}$ defined as the pull-back of the trace function along $\rho_{\la,S}$. We can deduce as in \cite[Corollary 3.6]{LeeAdjointQuotient} the following result:
\begin{lem}\label{lem:explicit form restriction chi}
    Assume that the base scheme $S=\Spec(\k)$ is affine. For any $\lambda\in\L$, the restriction of the character $\chi_{\lambda}$ along the natural embedding $T\hookrightarrow G$, seen as an element of $\k[T]\cong\k[X]$ (the group ring of $X$), has the form 
    \[\mathrm{Sym}(e^{\lambda})+\sum_{\mu<\lambda,\mu\in\L}a_\mu\cdot\mathrm{Sym}(e^{\mu}),\quad a_\mu\in\k,\ \forall\mu.\] 
    The same holds for any product $\chi_{\lambda_1}\chi_{\lambda_2}...\chi_{\lambda_m}$ in which $\lambda_1,\dotsc,\lambda_m\in\L$ satisfies $\lambda=\lambda_1+\dotsm+\lambda_m$. 
\end{lem}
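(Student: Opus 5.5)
We compute the restriction of $\chi_\la$ to $T$ directly: it is the formal character of the Weyl module, and its weights are controlled by Theorem \ref{theo:Weyl module}(3). Pulling back the trace along $\rho_{\la}$ and then restricting to $T$ gives
\[\chi_\la|_T=\sum_{\nu\in X}\mathrm{rk}\big(({}_\k\La_\la)_\nu\big)\,e^\nu\in\k[X],\]
because $T$ acts on the weight space $1_\nu\,{}_\k\La_\la$ through the character $\nu$. Each weight space is free, since $\La_\la$ has the basis $\{b\eta_\la\}$ of weight vectors from Theorem \ref{theo:Weyl module}(1), so these ranks are independent of $\k$. We may therefore work over $\ZZ$ and then base change.

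\emph{Symmetrizing.} The multiplicities are $W$-invariant: by \cite[Proposition 5.2.7]{Lusztig-IntroQuantumGroup}, $W$ permutes the weights of an integrable module preserving dimensions, and the dimensions agree over $\QQ(v)$ and over $\ZZ$ because of the canonical basis. Hence
\[\chi_\la|_T=\sum_{\mu\in X^+}m_\mu\,\mathrm{Sym}(e^\mu),\]
with $m_\mu$ the rank of the $\mu$-weight space. By Theorem \ref{theo:Weyl module}(3), $m_\la=1$ and $m_\mu\neq0$ only if $\mu\le\la$. Every dominant $\mu\le\la$ lies in $\L$ because $\L$ is downward closed (Definition \ref{def:weight-monoid}). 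This gives the asserted shape, with $a_\mu=m_\mu$ restricted to $\mu<\la$, $\mu\in\L$.

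\emph{Products.} Restriction to $T$ is a ring homomorphism, so $\prod_j\chi_{\la_j}|_T=\prod_j\big(\mathrm{Sym}(e^{\la_j})+\text{lower}\big)$. Expanding, every monomial $e^{\nu}$ that occurs has $\nu=\sum_j\nu_j$ with each $\nu_j$ a weight of $\La_{\la_j}$. Each $\nu_j$ lies in $\mathrm{conv}(W\la_j)$, so $\nu\in\mathrm{conv}(W\la)$, where $\la=\sum_j\la_j$. Consequently the dominant representative of $W\nu$ is $\le\la$. The product is $W$-invariant, so it is a $\ZZ$-combination of $\mathrm{Sym}(e^\mu)$ with $\mu\in X^+$ and $\mu\le\la$, and again downward closedness puts all such $\mu$ in $\L$.

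\emph{Leading coefficient.} The only step needing care is to show that $e^\la$ occurs with coefficient exactly $1$. Since $\nu_j\le\la_j$ for each $j$, the equality $\sum_j\nu_j=\la$ forces $\nu_j=\la_j$ for all $j$. Each of these weights has multiplicity one in $\La_{\la_j}$, so the coefficient of $e^\la$ is $1$, and therefore so is the coefficient of $\mathrm{Sym}(e^\la)$.
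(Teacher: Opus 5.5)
Your argument is correct and takes essentially the paper's route: the paper simply defers to \cite[Corollary 3.6]{LeeAdjointQuotient}, namely that the restriction of $\chi_\lambda$ to $T$ is the $W$-invariant formal character of the Weyl module, whose weights are $\le\lambda$ with $\lambda$ of multiplicity one, and then downward closedness of $\L$ places all the lower terms in $\L$. The convex-hull detour in your treatment of products is unnecessary, and it also silently uses $\nu\in\lambda+\ZZ^\Delta$ to pass from $\le_\QQ$ to $\le$; your own observation that $\nu_j\le\lambda_j$ for each $j$ already gives $\nu\le\lambda$ for every monomial that occurs.
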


\begin{theo}[\cite{LeeAdjointQuotient}]
    Let $G$ be a split reductive group scheme over a general base scheme $S$. Let $T\subset G$ be a split maximal torus and let $W=N_G(T)/T$ be the Weyl group. Then the natural embedding $T\into G$ induces an isomorphism $T//W\cong G//\Ad(G)$.
\end{theo}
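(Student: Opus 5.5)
We reduce to $S=\Spec(\k)$ affine, since GIT quotients by the reductive groups $\Ad(G)$ and $W$ commute with flat base change. In fact everything will be obtained by base change from $\ZZ$. The claim is then that restriction of functions along $T\into G$ gives an isomorphism $r\colon\k[G]^{\Ad(G)}\to\k[T]^W=\k[X]^W$.

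\emph{Well-definedness.} First, $r$ lands in $\k[X]^W$: the Weyl group is represented by $N_G(T)(\k)$ locally on $\k$, and conjugation-invariant functions are invariant under it.

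\emph{Surjectivity.} This comes from characters. The $\chi_\la$ for $\la\in X^+$ lie in $\k[G]^{\Ad(G)}$. By the argument of Lemma \ref{lem:explicit form restriction chi}, applied with $\L=X^+$, their restrictions have the triangular form
\[
\mathrm{Sym}(e^\la)+\sum_{\mu<\la}a_\mu\,\mathrm{Sym}(e^\mu).
\]
Hence, by Lemma \ref{lem:basis for invariants VT}(2) with $\L=X^+$ (so that $W\cdot\L=X$), the elements $r(\chi_\la)$ form a $\ZZ$-basis of $\ZZ[X]^W$. This basis persists after tensoring with $\k$, since $\k[X]^W=\ZZ[X]^W\otimes\k$ by Lemma \ref{lem:basis for invariants VT}(1).

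\emph{Injectivity.} This is the step I expect to be the main obstacle over a general ring. I would first reduce to $\k=\ZZ$, or more generally to $\k$ for which the invariants behave well. Over an algebraically closed field, semisimple conjugacy classes are dense: regular semisimple elements form a dense open set, and each is conjugate into $T$. Therefore a class function vanishing on $T$ vanishes on a dense subset, hence is zero. Over a reduced noetherian flat $\ZZ$-algebra one deduces injectivity fiberwise, since $\k[G]$ is free and functions are detected on geometric points.

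For arbitrary $\k$ this fails because of nilpotents. I would therefore show that $\k[G]^{\Ad(G)}$ is spanned by the $\chi_\la$. One approach is to use a good filtration of $\ZZ[G]$ under the conjugation action, following the argument of Lee. Equivalently, one can use the canonical filtration of Definition \ref{def:O-le-lambda} restricted to the conjugation action. The goal is to prove that
\[
(\k[G]_{\le\la}/\k[G]_{<\la})^{\Ad(G)}
\]
is free of rank one, generated by the image of $\chi_\la$. This is done via the description $\mathrm{gr}\cong$ degree-zero part of $\k[G/U]\otimes\k[U^-\backslash G]$ in Proposition \ref{prop:grO-decription}, together with invariants $\Hom_G(W_\la,S_\la)=\k$, which holds over any ring by \cite{Kallen2010}. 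The exactness of invariants along this filtration then makes formation of $\k[G]^{\Ad(G)}$ commute with base change, and also shows it is free with basis $\{\chi_\la\}$. Combined with the triangularity above, $r$ maps a basis to a basis, hence is an isomorphism.
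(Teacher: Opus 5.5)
Your argument is correct in outline, but it takes a genuinely different route from the paper. The paper's proof is essentially by citation. It first treats $S=\Spec(k)$ with $k$ algebraically closed, using Steinberg's theorem for semisimple groups and extending to reductive $G$ via $G\cong(Z\times G_{\der})/Z_{\der}$. It then invokes the argument of \cite[Lemma 4.2]{LeeAdjointQuotient} to see that the quotients commute with arbitrary base change, and finally quotes \cite[Theorem 3.1]{LeeAdjointQuotient} over $\Spec(\ZZ)$.

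You instead stay inside the canonical-basis framework and prove the stronger statement that $\k[G]^{\Ad(G)}$ is free on $\{\chi_\la\}_{\la\in X^+}$ for every ring $\k$. Combined with triangularity, this gives the isomorphism and base change at once. Your density and reduced-ring paragraph, and Steinberg's theorem, then become unnecessary.

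Exactness of invariants along the filtration is not an additional input. Set $\gr_\la\coloneqq\k[G]_{\le\la}/\k[G]_{<\la}$. Once $\gr_\la^{\Ad(G)}=\k\cdot\bar\chi_\la$, the invariant lift $\chi_\la$ already makes $\k[G]_{\le\la}^{\Ad(G)}\to\gr_\la^{\Ad(G)}$ surjective, so no cohomology vanishing is needed.

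To complete the sketch you must supply four points the paper does not state:
(i) The map $\varphi_P$ of Proposition~\ref{prop:grO-decription} is $G\times G$-equivariant, not merely an algebra isomorphism. This holds because the elements of $\dot{\bfB}[\la]$ lying in $\dot{\bfU}1_\la$ (resp.\ $1_\la\dot{\bfU}$) are exactly $\dot{\bfB}_{0,\la}$ (resp.\ $\sigma(\dot{\bfB}_{\la,0})$). Every other canonical basis element there lies in the two-sided ideal $\dot{\bfU}[>\la]$, which pairs to zero with $\k[G]_{\le\la}$.
(ii) The factors $(\bfO_{G/U})_\la$ and $(\bfO_{U^-\backslash G})_\la$ are $W_\la^*$ and $S_\la$ under the left and right regular actions respectively. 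Hence $\gr_\la\cong\Hom_\k(W_\la,S_\la)$ under $\Ad$, and its invariants are $\Hom_G(W_\la,S_\la)\cong\k$.
(iii) $\bar\chi_\la$ is a generator of this rank-one module, not a non-unit multiple of one. This follows because its coefficient on $\overline{1_\la^*}$ is $\mathrm{tr}(1_\la\mid\La_\la)=1$.
(iv) The poset-indexed filtration must be refined to a chain. You can do this by adding one maximal weight at a time, or by using $h$ from Lemma~\ref{lem:from Xplus to NN}.

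The paper's route is shorter but rests on Steinberg's and Lee's results. Yours is self-contained, and applied verbatim to ${}_\k\bfO(\L)$ it would also give Theorem~\ref{theo:Chevalley iso} directly.
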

\begin{proof}
    First we assume $S=\Spec(k)$ where $k$ is an algebraically closed field. When $G$ is semisimple, this is \cite[Corollary 6.4]{Steinberg-regular} (see also \cite[Theorem 2.1]{LeeAdjointQuotient}). In general we have an isomorphism $G\cong(Z\times G_{\der})/Z_{\der}$ that induces isomorphisms 
    \[G//\Ad(G)\cong (Z\times G_{\der}//\Ad(G_{\der}))//Z_{\der},\quad T//W\cong (Z\times T_{\der}//W)//Z_{\der}.\]
    Then the isomorphism $T_{\der}//W\cong G_{\der}//\Ad(G_{\der})$ induces the isomorphism $T//W\cong G//\Ad(G)$. \par 
    Then the argument in the proof of \cite[Lemma 4.2]{LeeAdjointQuotient} shows that the formation of the GIT quotients commute with arbitrary base change and hence it remains to prove the theorem in the case where $S=\Spec(\ZZ)$. This case is \cite[Theorem 3.1]{LeeAdjointQuotient} 
\end{proof}
We are now ready to prove the following:
\begin{theo}\label{theo:Chevalley iso} 
    Let $T\subset G$ be a split maximal torus and let $\M_T$ be the schematic closure of $T$ in $\M$. Then the natural embedding $\M_T\hookrightarrow\M$ induces an isomorphism 
    $$\M_T//W\cong\M//\Ad(G)$$
    and the formation of these GIT quotients commute with arbitrary base change. In the case where the monoid $\L$ is freely generated by $\lambda_1,\dots,\lambda_s\in X^+$, the characters $\chi_{\la_i}$ induces an isomorphism \[\M//\Ad(G)\cong\AA^s.\]
\end{theo}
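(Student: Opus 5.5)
We reduce to $S=\Spec(\k)$ affine, using that GIT quotients commute with flat base change. Then we mimic Lee's argument over $\ZZ$, with the characters $\chi_\la$ for $\la\in\L$ in the role of a basis of invariants. The restriction map $r\colon\k[\M]^{\Ad(G)}\to\k[\M_T]^W=\k[W\cdot\L]^W$ is well defined, because $\M_T$ is the closure of $T$ and $N_G(T)$ normalizes it. We must show that $r$ is bijective for every $\k$.

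\emph{Injectivity.} We work first over $\ZZ$, or more generally over a flat $\k$, where $\k[\M]\subset\k[G]$. An $\Ad(G)$-invariant function on $\M$ restricts to an invariant function on $G$, and by Lee's theorem $\k[G]^{\Ad(G)}\to\k[T]^W$ is injective. Hence its restriction to $\M_T\supset T$ determines it.

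\emph{Surjectivity.} For $\la\in\L$ we have $\chi_\la\in\k[\M]$ by Proposition \ref{prop:lifting hatU module to hatUM module}: it is the pullback of the trace along $\M\to\underline{\End}(\La_\la)$. It is $\Ad(G)$-invariant. By Lemma \ref{lem:explicit form restriction chi}, $r(\chi_\la)=\mathrm{Sym}(e^\la)+\sum_{\mu<\la,\mu\in\L}a_\mu\mathrm{Sym}(e^\mu)$. By Lemma \ref{lem:basis for invariants VT}(2), the elements $r(\chi_\la)$ for $\la\in\L$ form a $\k$-basis of $\k[W\cdot\L]^W$, so $r$ is surjective.

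Over $\ZZ$ this gives $\ZZ[\M]^{\Ad(G)}\cong\ZZ[W\cdot\L]^W$, which is free with basis $\{\chi_\la\}_{\la\in\L}$.

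\emph{Base change.} This is the step I expect to be the main obstacle: invariants need not commute with base change. I would argue as follows. The elements $\chi_\la\otimes 1$ lie in $\k[\M]^{\Ad(G)}$ and their restrictions form a basis of $\k[W\cdot\L]^W$, which itself commutes with base change by Lemma \ref{lem:basis for invariants VT}(1). So $\k[\M]^{\Ad(G)}=\bigoplus_\la \k\chi_\la\oplus\ker(r)$, and it remains to show $\ker r=0$ for arbitrary $\k$. Since $\k[\M]={}_\k\bfO(\L)$ is a free $\k$-module that is a $G$-stable direct summand of $\k[G]={}_\k\bfO$ (spanned by part of the dual canonical basis), we have $\k[\M]^{\Ad(G)}=\k[\M]\cap\k[G]^{\Ad(G)}$. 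Lee shows that $\k[G]^{\Ad(G)}\cong\k[T]^W$ for every $\k$, with formation commuting with base change. An element of $\ker r$ is therefore an invariant on $G$ whose restriction to $T$ vanishes, and so it vanishes by Lee's result. This gives the isomorphism together with its compatibility with base change.

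\emph{Free case.} If $\L$ is freely generated by $\la_1,\dots,\la_s$, then the monomials $\chi_{\la_1}^{n_1}\dotsm\chi_{\la_s}^{n_s}$ restrict, again by Lemma \ref{lem:explicit form restriction chi}, to $\mathrm{Sym}(e^{\sum n_i\la_i})$ plus lower terms. By Lemma \ref{lem:basis for invariants VT}(2) they form a basis, so $\k[\chi_{\la_1},\dots,\chi_{\la_s}]\to\k[\M]^{\Ad(G)}$ is an isomorphism from a polynomial ring, and $\M//\Ad(G)\cong\AA^s$.
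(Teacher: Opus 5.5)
Your proposal is correct and follows essentially the same route as the paper: the direct-summand property gives $\k[\M]^{\Ad(G)}=\k[\M]\cap\k[G]^{\Ad(G)}$, Lee's isomorphism over an arbitrary $\k$ gives injectivity, the characters $\chi_\la$ together with Lemmas \ref{lem:basis for invariants VT} and \ref{lem:explicit form restriction chi} give surjectivity, and compatibility with base change is inherited from that of $\M_T//W$. The only difference is organizational. Since $\k[\M]={}_\k\bfO(\L)\subset\k[G]$ for every $\k$, your preliminary restriction to flat $\k$ in the injectivity step is unnecessary. The paper proves injectivity for arbitrary $\k$ in a single step, and that step is exactly the argument of your ``base change'' paragraph.
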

\begin{proof}
    We may assume that the base scheme $S=\Spec(\k)$ is affine. Recall that $\k[\M]={}_\k\bfO(\L)$ is a direct summand of $\k[G]$ (as $\k$-modules) by construction. Then we have $\k[\M]^{\Ad(G)}=\k[G]^{\Ad(G)}\cap\k[\M]$ and the Steinberg isomorphism $\k[G]^{\Ad(G)}\xrightarrow{\sim}\k[T]^W$ induces an injective homomorphism
    \[\iota\colon\k[\M]^{\Ad(G)}\into\k[\M_T]^W=\k[W\cdot\L]^W.\]
    For any $\la\in\L$, the character $\chi_\la$ extends to a regular function on $\M$ by Proposition \ref{prop:lifting hatU module to hatUM module}. In other words, we have $\chi_\la\in\k[\M]^{\Ad(G)}$ for all $\la\in\L$. Then it follows from Lemma \ref{lem:basis for invariants VT} and Lemma \ref{lem:explicit form restriction chi} that $\iota$ is surjective and hence an isomorphism.\par
    By Lemma \ref{lem:basis for invariants VT}, the formation of $\M_T//W$ commutes with arbitrary base change. Then from the isomorphism $\iota$ above we deduce that the formation of the adjoint quotient $\M//\Ad(G)$ also commutes with arbitrary base change. \par
    Now assume that $\L$ is freely generated by the elements $\lambda_1,\dots,\lambda_s\in X^+$. We want to prove that the class functions $\chi_{i}\coloneqq\chi_{\lambda_i}\colon\M//\Ad(G)\to \AA^1$ for $1\leq i\leq s$ give rise to an isomorphism $\M//\Ad(G)\simeq \AA^{s}$. It suffices to prove that the restriction through $\M_T//W\xrightarrow[]{\sim} \M//\Ad(G)$ of all the monomials that can be produced with this family is a linear base for the ring of regular functions $\k[\M_T//W]\simeq\k[W\cdot \L]^W$. But according to Lemma \ref{lem:explicit form restriction chi}, for any nonnegative integers $n_1,\dots,n_s$, the class function $\prod_{1\leq i\leq s}\chi_{i}^{n_i}$ restricts to an element of $\mathrm{Sym}(e^{\lambda})+\bigoplus_{\mu<\lambda,\mu\in X^+}\k\cdot\mathrm{Sym}(e^{\mu})$ where $\lambda=\sum_{1\leq i\leq s}n_i\lambda_i$. Since the set $\{\lambda_1,\dotsc,\lambda_s\}$ is a basis of the free monoid $\L$, these elements form a $\k$-base of $\k[\M_T]^W=\k[W\cdot\L]^W$ by Lemma \ref{lem:basis for invariants VT}.
\end{proof}

\section{Reductive monoid schemes: classification}\label{sec:classification}
In this section we state and prove our main theorem on the classification of reductive monoids. We fix a based root datum $\Psi$ and keep the notations from \S\ref{sec:notation-reductive-group}.

\subsection{The weight monoid of a reductive monoid}
\begin{defi}
Let $\k$ be a commutative ring and let $V$ be a $G_{\k}\times G_{\k}$-module. Restricting the $G_\k\times G_\k$ action to $T_\k\times T_\k$, we get a $X\oplus X$-grading on $V$. For any weights $\la,\mu\in X$, let $V_{(\la,\mu)}$ be the corresponding homogeneous component and let
\[V^{(\la,\mu)}\coloneqq V_{(\la,\mu)}\cap V^{U^-\times U}.\]
In other words, $V^{(\la,\mu)}$ is the $B_\k^-\times B_\k$-eigenspace of $V$ of weight $(\la,\mu)$, or equivalently, the $(\la,\mu)$-component of the $X\times X$-graded $\k$-module $V^{U^-\times U}$. We set
$$\L(V)\coloneqq\Set{\mu \in X^+| V^{(-\mu,\mu)}\neq 0}=\Set{\mu \in X^+| V_{(-\mu,\mu)}\cap V^{U^-\times U}\neq0}.$$ 
If $S$ is an affine $\k$-scheme equipped with a $G_{\k}\times G_{\k}$-action , we set 
$$\L(S)\coloneqq\L(\k[S]).$$
\end{defi}

For the following result, we recall that a commutative ring $\k$ has weak dimension $\le1$ if any $\k$-module has Tor dimension $\le1$. See \cite[\href{https://stacks.math.columbia.edu/tag/092A}{Section 092A}]{Stacks}, especially \cite[\href{https://stacks.math.columbia.edu/tag/092S}{Lemma 092S}]{Stacks} for characterizations of such rings. The fact most relevant to us is that over such rings, any nonzero submodule of a flat module is flat. The following are basic examples of rings with weak dimension $\le1$: fields, valuation rings, Dedekind domains. 
\begin{lem}\label{lem:B-eigenspace}
    Let $\k$ be a commutative ring of weak dimension $\le1$ and suppose $\Spec(\k)$ is connected. Let $W\subset\k[G]$ be a $G_\k\times G_\k$-submodule such that $\k[G]/W$ is flat over $\k$. Then we have 
    \[\L(W)=\{\la\in X^+\mid 1_\la^*\in W\}\]
    and moreover $\L(W)=\L(W\otimes_\k A)$ for any $\k$-algebra $A$. 
\end{lem}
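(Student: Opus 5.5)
The plan is to reduce everything to the structure of $\k[G]^{U^-\times U}$ given by Corollary \ref{cor:U-inv-O(P)} with $P=X^+$: this is the free $\k$-module $\bigoplus_{\la\in X^+}\k\cdot 1_\la^*$, and its $(-\la,\la)$-component is $\k\cdot 1_\la^*$. Since $W^{U^-\times U}=W\cap \k[G]^{U^-\times U}$, we get $W^{(-\la,\la)}=W\cap \k 1_\la^*$. The inclusion $\{\la\mid 1_\la^*\in W\}\subset\L(W)$ is then immediate. The substance of the lemma is the converse: if $W\cap\k 1_\la^*\neq 0$, then $1_\la^*\in W$.

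For the converse, consider the ideal $J\coloneqq\{a\in\k\mid a1_\la^*\in W\}$, so that $W\cap \k1_\la^*=J\cdot 1_\la^*\cong J$. Because $\k 1_\la^*$ is a direct summand of $\k[G]$, the module $\k/J\cong \k1_\la^*/(W\cap\k1_\la^*)$ injects into $\k[G]/W$, which is flat by assumption. Over a ring of weak dimension $\le1$, submodules of flat modules are flat, so $\k/J$ is flat. A cyclic flat module $\k/J$ satisfies $J=J^2$ locally; more precisely $J\otimes \k/J=0$, so $J_\p$ is $0$ or $\k_\p$ at every prime $\p$. Moreover, for flat finitely presented modules the support is open and closed; here $\k/J$ need not be finitely presented, so I would instead argue as follows. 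The key extra input is that $W$ is a $G\times G$-submodule. Acting by $U^-\times U$-lowering operators, I would show that $J$ is finitely generated, for instance by exhibiting $J$ as the image of a map out of a finite piece of $W$ (the $G\times G$-submodule generated by an element of $\k[G]_{\le\la}$, which is finite free). With $J$ finitely generated and $\k/J$ flat, $J$ is generated by an idempotent $e$. Connectedness of $\Spec(\k)$ forces $e\in\{0,1\}$, and since $J\neq0$ we get $J=\k$, so $1_\la^*\in W$.

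For base change, $\k[G]/W$ is flat, so $W\otimes_\k A\to \k[G]\otimes_\k A=A[G]$ is injective with flat cokernel, and $W\otimes A$ is again a $G_A\times G_A$-submodule. The condition $1_\la^*\in W$ gives $1_\la^*\in W\otimes A$. Conversely, if $1_\la^*\in W\otimes A$ but $\la\notin\L(W)$, then $J=0$ by the above, so $\k1_\la^*\to\k[G]/W$ is injective. Here I must show that it stays injective after $\otimes A$. I would use that the $(-\la,\la)$-isotypic $U^-\times U$-invariants commute with flat quotients: since $\k 1_\la^*\cap W=0$ and $W$ is $G\times G$-stable, the $(-\la,\la)$ weight space of $W$ lies in $\k[G]_{<\la}$-type summands. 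Taking weight components (which commute with base change), $W_{(-\la,\la)}\otimes A\subset A[G]_{(-\la,\la)}$ still avoids $1_\la^*$ modulo the complementary basis.

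The main obstacle is the converse step: obtaining $J$ finitely generated, or otherwise directly producing an idempotent. If that fails, the fallback is to work locally at primes of $\k$, where weak dimension $\le1$ makes $\k_\p$ a valuation ring. There flatness of $\k_\p/J_\p$ forces $J_\p\in\{0,\k_\p\}$, and then the loci where $J_\p=\k_\p$ are shown to be open and closed using $G\times G$-stability.
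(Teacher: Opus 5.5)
Your reduction is the same as the paper's. You write $W^{(-\la,\la)}=W\cap\k1_\la^*=J\cdot1_\la^*$, and $\k/J\hookrightarrow\k[G]/W$ is flat, i.e.\ $J$ is a pure ideal: every $x\in J$ satisfies $x=xy$ for some $y\in J$. You are right that this alone does not give $J\in\{0,\k\}$; the paper simply asserts it from connectedness.

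\textbf{First gap.} Your repair cannot work, because $G_\k\times G_\k$-stability puts no constraint on $J$. For \emph{any} pure ideal $J$, the constants $W=J\cdot1_0^*$ form a $G_\k\times G_\k$-submodule. Its quotient $\k[G]/W\cong\k/J\oplus(\text{free})$ is flat, and $W^{(0,0)}=J$. So at $\la=0$ the first assertion is equivalent to a purely ring-theoretic statement: $\k$ has no pure ideals other than $0$ and $\k$. No finite-generation or clopen-locus argument via $G\times G$ can produce this.

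\begin{itemize}
\item It holds when $\k$ is a domain or local, directly from $x(1-y)=0$ with $y\in J$. This covers every use in the paper (valuation rings and fields), and then your argument closes immediately without any finite generation.
\item It fails for some connected rings of weak dimension $\le1$. If $a\neq0$ is a zero-divisor, then $\mathrm{Ann}(a)$ is a nontrivial pure ideal, since $\k/\mathrm{Ann}(a)\cong a\k$ is flat.
\item Connected non-domains of weak dimension $\le1$ exist, e.g.\ $C(X,\mathbb{R})$ for a connected compact $F$-space $X$ such as $\beta[0,\infty)\setminus[0,\infty)$. This is a B\'ezout ring whose localizations are valuation domains.
\end{itemize}
So an extra hypothesis such as ``$\k$ is a domain'' is genuinely needed for the first claim.

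\textbf{Second gap.} Your base-change paragraph has an independent gap, and there the statement fails even for $\k=\ZZ_2$. Your claim that $W\cap\k1_\la^*=0$ forces $W_{(-\la,\la)}$ into the span of the other basis vectors is false. Here is a counterexample.

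\begin{itemize}
\item Take $G=\mathrm{SL}_2$. Let $E$ be the projection of $\QQ_2[G]$ onto its $G\times G$-invariants (the constants), and put $W\coloneqq\ZZ_2[G]\cap\ker E$.
\item $W$ is $G\times G$-stable, and $\ZZ_2[G]/W\hookrightarrow\QQ_2$ is torsion-free, hence flat. Also $W\cap\ZZ_2\cdot 1=0$, so $0\notin\L(W)$.
\item The function $f=ad+bc$ is a matrix coefficient of $\mathrm{Sym}^2$ of the standard representation, so $f\in W$. Its $1_0^*$-coefficient is its value at the identity matrix, namely $1$.
\item Since $f=1+2bc$, we get $1\in W\otimes\mathbb{F}_2\subset\mathbb{F}_2[G]$, so $0\in\L(W\otimes\mathbb{F}_2)$.
\end{itemize}

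The paper's proof of this part uses the sequence $0\to W/W^{U^-\times U}\to\k[G]/\k[G]^{U^-\times U}\to\k[G]/W\to0$. Its third term should be $\k[G]/(W+\k[G]^{U^-\times U})$. In this example that module contains the nonzero $2$-torsion class of $ad$, since $E(ad)=\tfrac12$.

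The same $W$ contradicts Proposition~\ref{prop:L(W)}(i) for general $\W$. For $\W=\O_S[\M]$ over a valuation ring $V$ with fraction field $K$, one can argue differently. Flatness and the classification over $K$ give $V[\M]=V[G]\cap K[\M]={}_V\bfO(\L)$, and both fibres of this visibly have weight monoid $\L$.
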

\begin{proof}
    For any $\la\in X^+$, the $\k$-module $\k[G]^{(-\la,\la)}=\k[G]^{U^-\times U}\cap\k[G]_{(-\la,\la)}$ is free of rank $1$ with basis $1_\la^*$ by Corollary \ref{cor:U-inv-O(P)}. Thus we have an isomorphism of $\k$-modules $\k/I\cong\k[G]^{(-\la,\la)}/W^{(-\la,\la)}$ for some ideal $I\subset\k$. On the other hand, the quotient $\k[G]^{(-\la,\la)}/W^{(-\la,\la)}$ is a submodule of the flat module $\k[G]/W$, hence also a flat $\k$-module by \cite[\href{https://stacks.math.columbia.edu/tag/092S}{Lemma 092S}]{Stacks} since $\k$ has weak dimension $\le1$. On the other hand, since $\Spec(\k)$ is connected, the quotient $\k/I$ is a nonzero flat $\k$-module if and only if $I=0$. Therefore we have $W^{(-\la,\la)}\ne0$ if and only if $W^{(-\la,\la)}=\k[G]^{(-\la,\la)}$, if and only if $1_\la^*\in W$. This implies that $\L(W)=\{\la\in X^+\mid 1_\la^*\in W\}$.\par 
    Now let $A$ be a $\k$-algebra. By Corollary \ref{cor:U-inv-O(P)}, the quotient $\k[G]/\k[G]^{U^-\times U}$ is a free $\k$-module and hence its submodule $W/W^{U^-\times U}$ is a flat $\k$-module by the assumption that $\k$ has weak dimension $\le1$. This implies that the natural map $W^{U^-\times U}\otimes_\k A\to W\otimes_\k A$ is injective and we have an isomorphism 
    \[\frac{W\otimes_\k A}{W^{U^-\times U}\otimes_\k A}\cong(W/W^{U^-\times U})\otimes_\k A.\]
    Next consider the short exact sequence
    \[0\to W/W^{U^-\times U}\to\k[G]/\k[G]^{U^-\times U}\to\k[G]/W\to0.\]
    By assumption $\k[G]/W$ is a flat $\k$-module, so the sequence above remains exact after tensoring $A$ and we get that the natural map
    \[(W/W^{U^-\times U})\otimes_\k A\to A[G]/A[G]^{U^-\times U}\]
    is injective. Combined with the isomorphism above we get that
    \[W^{U^-\times U}\otimes_\k A=(W\otimes_k A)\cap A[G]^{U^-\times U}=(W\otimes_\k A)^{U^-\times U}\]
    where the intersection is taken inside $A[G]$. All $A$-modules in the above identity are $X\times X$-graded. Taking the $(-\la,\la)$ component (where $\la\in X^+$) we get
    \[W^{(-\la,\la)}\otimes_\k A=(W\otimes_\k A)^{(-\la,\la)}\]
    and this implies that $\L(W)=\L(W\otimes_\k A)$.
\end{proof}

\begin{prop}\label{prop:L(W)}
    Let $S$ be a noetherian scheme and let $\W\subset\O_S[G]$ be a quasi-coherent $\O_S$-submodule that is $G\times_S G$-stable (under the left and right regular representation). Suppose that the quotient sheaf $\O_S[G]/\W$ is a flat $\O_S$-module. 
    \begin{enumerate}
        \item[(i)] The map that sends any point $s\in S$ to $\L(\W_s)$ is locally constant under the Zariski topology of $S$. In particular if $S$ is connected, $\L(\W_s)$ is independent of the choice of $s\in S$ and we denote it simply by $\L(\W)$.
        \item[(ii)] Suppose moreover that $S=\Spec(\k)$ is connected, where $\k$ is a noetherian reduced ring. Let $W\coloneqq H^0(S,\W)$ be the $\k$-module of global sections of $\W$. Then we have
        \[\L(\W)=\L(W)=\{\la\in X^+\mid 1_\la^*\in W\}.\]
    \end{enumerate}
\end{prop}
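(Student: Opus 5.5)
The strategy is to reduce everything to Lemma \ref{lem:B-eigenspace}, which already handles rings of weak dimension $\le1$, by passing to local rings and then to discrete valuation rings (or fields). Here $\W_s$ means the base change $\W\otimes_{\O_S}\kappa(s)$, a $G\times G$-submodule of $\kappa(s)[G]$. By flatness of $\O_S[G]/\W$, this base change injects into $\kappa(s)[G]$ with flat quotient. Applying Lemma \ref{lem:B-eigenspace} over the field $\kappa(s)$ gives
\[\L(\W_s)=\{\la\in X^+\mid 1_\la^*\in\W_s\},\]
and this set is also unchanged under field extension.

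For (i), it suffices to show that $\L(\W_s)$ is constant along specializations. Since $S$ is noetherian, a constructibility argument plus constancy under generization and specialization then gives local constancy; it is enough to treat the case where one point specializes to another. Given $s\rightsquigarrow s'$, choose a discrete valuation ring $\k$ (a Dedekind ring suffices) and a morphism $\Spec\k\to S$ whose generic point maps to $s$ and closed point to $s'$. This is possible by \cite[\href{https://stacks.math.columbia.edu/tag/054F}{Lemma 054F}]{Stacks}, using that $S$ is noetherian. Pull $\W$ back to $W_\k\subset\k[G]$. The quotient $\k[G]/W_\k$ is still flat, and $\Spec\k$ is connected of weak dimension $\le1$. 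Lemma \ref{lem:B-eigenspace} then gives
\[\L(W_\k)=\L(W_\k\otimes K)=\L(W_\k\otimes\kappa),\]
where $K$ is the fraction field and $\kappa$ the residue field. These are $\L(\W_s)$ and $\L(\W_{s'})$ up to field extension, which does not change them by the first paragraph.

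For local constancy itself, I will argue directly rather than through constructibility. Fix $\la$; the condition $1_\la^*\in\W_s$ depends only on the $(-\la,\la)$-weight part. Using Corollary \ref{cor:U-inv-O(P)}, the $(-\la,\la)$-component of the $U^-\times U$-invariants of $\O_S[G]$ is free of rank one with generator $1_\la^*$. Its image in the flat quotient $\O_S[G]/\W$ is therefore a quotient of $\O_S$ by an ideal sheaf $\I_\la$. By the field case, $\la\in\L(\W_s)$ exactly when the image of $1_\la^*$ vanishes at $s$. I expect this step to be the main obstacle: one must ensure that the locus where $\la\in\L(\W_s)$ is open and closed, uniformly over the infinitely many $\la$. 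The specialization argument above shows that each locus is stable under specialization and generization. In a noetherian scheme, such a set that is also constructible is open and closed, and constructibility follows because $\O_S/\I_\la$ is coherent; I would check that flatness makes the support behave well.

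Uniformity in $\la$ comes for free. The connected components of a noetherian scheme are finitely many and open, and on each component every locus is all or nothing, so the whole function $s\mapsto\L(\W_s)$ is constant on each component.

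For (ii), take $S=\Spec\k$ with $\k$ connected, noetherian and reduced. If $1_\la^*\in W$, then $1_\la^*\in\W_s$ for every $s$, so $\la\in\L(\W)$. Conversely, suppose $\la\in\L(\W)$. Then the image of $1_\la^*$ in $M:=\k[G]/W$ vanishes in $M\otimes\kappa(s)$ for all $s$. The relevant weight component of $M$ is a flat cyclic module $\k/I$, which is finite flat and hence projective of rank $0$ or $1$ at each point. Since its rank vanishes at every point, the module is zero, so $1_\la^*\in W$. Reducedness is available here if needed to conclude from vanishing at all fibres, though flatness already suffices. Finally, the equality $\L(W)=\{\la\mid1_\la^*\in W\}$ follows exactly as in the first paragraph of the proof of Lemma \ref{lem:B-eigenspace}, since only the flatness of the cyclic module $\k/I$ was used there.
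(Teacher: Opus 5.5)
Your part (i) is essentially the paper's argument. Along a specialization you pass to a rank-one valuation ring: you take a DVR via Stacks 054F, while the paper uses a valuation ring dominating $\O_{S,x}/\p_y$. You then apply Lemma~\ref{lem:B-eigenspace} there, and use invariance under field extension.

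The constructibility detour is unnecessary, and you have not actually justified it. Vanishing of the image of $1_\la^*$ in $(\O_S[G]/\W)\otimes\kappa(s)$ is not obviously a support condition on $\O_S/\I_\la$, since that module is not known to be flat. It suffices to note the following. A function constant along specializations on a noetherian space is constant on each of the finitely many irreducible components. Hence it is constant on connected components, which are open.

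\textbf{The genuine gap is in (ii).} You assert that the cyclic module $N=\k\cdot1_\la^*/(W\cap\k\cdot1_\la^*)\cong\k/I$ is flat. You then argue via its rank. For the equality $\L(W)=\{\la\mid 1_\la^*\in W\}$ you say that only this flatness was used in Lemma~\ref{lem:B-eigenspace}. But in that lemma the flatness of $\k/I$ was \emph{derived} from weak dimension $\le1$, because $N$ is a submodule of the flat module $M=\k[G]/W$.

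Over a general noetherian reduced ring such as $\ZZ[x]$, submodules of flat modules need not be flat. Taking $U^-\times U$-invariants is also not exact. So nothing you have established makes $N$ flat, and the argument is circular at this point. Even granting flatness, ``rank zero at every point'' requires $N\otimes\kappa(s)\to M\otimes\kappa(s)$ to be injective. That needs control of $M/N$, which you do not have.

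Your side remark that flatness alone would suffice without reducedness is also unfounded. An element of a flat module can vanish in every fibre without being zero, for instance a nilpotent scalar in $M=\k$.

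The missing idea is to use reducedness directly, as the paper does.
\begin{itemize}
\item $\k$ embeds in its total ring of fractions $K=\prod_i K_i$, where the $K_i$ are the residue fields at the minimal primes.
\item Flatness of $M$ gives $M\hookrightarrow M\otimes_\k K$. Equivalently, $W=\k[G]\cap(W\otimes_\k K)$ inside $K[G]$.
\item If $\la\in\L(\W)$, then (i) together with the field case of Lemma~\ref{lem:B-eigenspace} gives $1_\la^*\in W\otimes_\k K_i$ for every $i$. Hence $1_\la^*\in W$.
\item Conversely, suppose $0\ne a\,1_\la^*\in W$. By reducedness $a\neq0$ in some $K_i$. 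Since $K_i$ is a localization of $\k$, $W\otimes_\k K_i\subset K_i[G]$. Therefore $\la\in\L(W\otimes_\k K_i)=\L(\W)$.
\end{itemize}
This yields $\L(\W)=\L(W)=\{\la\mid 1_\la^*\in W\}$ without any flatness of $\k/I$.
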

\begin{proof}
    (i) Let $x,y\in S$ be two points such that $x\in\overline{\{y\}}$. Let $\p_y\subset\O_{S,x}$ be prime ideal corresponding to $y$ and let $A\coloneqq\O_{S,x}/\p_y$. Then $A$ is a local integral domain with maximal ideal $\mathfrak{m}$ (corresponding to $x$) and fraction field $\kappa(y)$, the residue field of $S$ at $y$. According to \cite[\href{https://stacks.math.columbia.edu/tag/00IA}{Lemma 00IA}]{Stacks}, there exists a valuation ring $A'$ in $\kappa(y)$ with maximal ideal $\mathfrak{m}'$ dominating $A$, i.e. $A\subset A'$ and $\mathfrak{m}'\cap A=\mathfrak{m}$. By the flatness of $\O_S[G]/\W$,  the natural map $\W\otimes_{\O_S}A'\to A'[G]$ is injective. Moreover $A'$, being a valuation ring, has weak dimension $\le1$ and hence we can apply Lemma \ref{lem:B-eigenspace} to $\W\otimes_{\O_S}A'$ and deduce that $\L(\W_x)=\L(\W_y)$. This shows the local constancy by the noetherian assumption on $S$.\par
    (ii) Let $K\coloneqq\mathrm{Frac}(\k)$ be the total ring of fractions. Then $K=\prod_{i=1}^t K_i$ is a product of finitely many fields (corresponding to the minimal prime ideals of $\k$) and $\k$ embeds as a subring of $K$, since $\k$ is noetherian and reduced. See \cite[\href{https://stacks.math.columbia.edu/tag/02LV}{Section 02LV}]{Stacks} for these facts. By Lemma \ref{lem:B-eigenspace} we have $\L(W\otimes_\k K_i)=\{\la\in X^+\mid 1_\la^*\in W\otimes_\k K_i\}$ for each factor $K_i$ of $K$. Since $\Spec(\k)$ is connected, this set is equal to $\L(\W)$ for all $i$ by part (i). Thus we have 
    \[\L(\W)=\L(W\otimes_\k K)=\{\la\in X^+\mid 1_\la^*\in W\otimes_\k K\}.\]
    For any $\la\in X^+$, the $B_\k^-\times B_\k$-eigenspace $W^{(-\la,\la)}$ is a sub $\k$-module of $(W\otimes_\k K)^{(-\la,\la)}$. Since $\k[G]/W$ is flat, the natural map $\k[G]/W\to K[G]/W\otimes_\k K$ is injective and hence $(W\otimes_\k K)\cap\k[G]=W$ (intersection inside $K[G]$). Then for any $\la\in\L(\W)$, we have $1_\la^*\in (\W\otimes_\k K)\cap\k[G]=W$ and hence 
    $\L(\W)=\L(W)=\{\la\in X^+\mid 1_\la^*\in W\}$. 
\end{proof}

\begin{cor}\label{cor:independance L}
    Let $\M$ be a reductive monoid over a scheme $S$ with unit group $G$. 
    \begin{enumerate}
        \item[(i)] The map that sends any point $s\in S$ to $\L(\M_s)$ is locally constant under the Zariski topology of $S$. In particular when $S$ is connected, $\L(\M_s)$ is independent of the choice of $s\in S$ and we denote it simply by $\L(\M)$ and call it the \emph{weight monoid of $\M$}.
        \item[(ii)] If moreover $S=\Spec(\k)$ is connected and reduced, then we have
        \[\L(\M)=\{\la\in X^+\mid 1_\la^*\in\k[\M]\}.\]
    \end{enumerate}
\end{cor}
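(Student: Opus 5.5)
The plan is to deduce both parts from Proposition \ref{prop:L(W)}, applied to $\W\coloneqq\O_S[\M]$ viewed as a $G\times_S G$-stable quasi-coherent $\O_S$-submodule of $\O_S[G]$ via the open embedding $j\colon G\into\M$. Lemma \ref{lem:quotient-flat} supplies the two inputs that proposition needs. By (ii) of that lemma, $\O_\M\to j_*\O_G$ is universally injective, so $\W\subset\O_S[G]$, and for every $s\in S$ the fiber $\W_s=\kappa(s)[\M_s]$ sits inside $\kappa(s)[G_s]$. By (iii), the quotient $\O_S[G]/\W$ is $\O_S$-flat. The $G\times_S G$-stability of $\W$ follows from the equivariance of $j$. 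Also $\L(\M_s)=\L(\W_s)$ by definition, and passing from $\kappa(s)$ to a geometric point over $s$ does not change $\L$, by Lemma \ref{lem:B-eigenspace} applied over the field $\kappa(s)$.

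The one gap is that Proposition \ref{prop:L(W)} assumes $S$ noetherian. I would remove this by noetherian approximation. Local constancy is Zariski-local, so I may take $S=\Spec(\k)$ affine. Since $\M$ is of finite presentation, it descends to a finitely generated $\ZZ$-subalgebra $\k_0\subset\k$. Concretely, $\k[\M]$ is generated by finitely many elements of $\k[G]=\bfO\otimes\k$, and these involve only finitely many dual canonical basis elements and coefficients. I would descend the subalgebra $\W_0\subset\k_0[G]$ with $\W_0\otimes\k_0\to\k$ recovering $\k[\M]$, and choose $\k_0$ large enough that $\k_0[G]/\W_0$ is flat and $\W_0$ is $G\times G$-stable. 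This uses the standard limit results: flatness and the bialgebra structure spread out after enlarging $\k_0$.

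With this in place, (i) follows. For $s\in\Spec\k$ with image $s_0\in\Spec\k_0$, the fiber $\W_s$ is the base change of $(\W_0)_{s_0}$ along a field extension. Lemma \ref{lem:B-eigenspace} then gives $\L(\W_s)=\L((\W_0)_{s_0})$. The function $s_0\mapsto\L((\W_0)_{s_0})$ is locally constant by Proposition \ref{prop:L(W)}(i), and pulling it back shows that $s\mapsto\L(\M_s)$ is locally constant.

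For (ii), suppose $S=\Spec\k$ is connected and reduced. Set $\L=\L(\M)$. Let $\la\in X^+$. The dual basis element $1_\la^*$ has a well-defined image in the fiber $\kappa(s)[G]$ for every point $s$, and Lemma \ref{lem:B-eigenspace} applied at each $s$ over the field $\kappa(s)$ shows that $\la\in\L$ if and only if $1_\la^*$ lies in $\kappa(s)[\M_s]$ for all $s$. What remains is to show that this fiberwise membership forces $1_\la^*\in\k[\M]$.

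Consider the image $\bar f$ of $1_\la^*$ in $Q\coloneqq\k[G]/\k[\M]$. By hypothesis $\bar f$ vanishes in $Q\otimes\kappa(s)$ for all $s$. Because $Q$ is flat, there is an injection $Q\into Q\otimes\prod_{\p}\k_\p/\p\k_\p$ for reduced $\k$, and one would like to conclude $\bar f=0$. I expect this to be the main obstacle, since this injection is not automatic for arbitrary flat modules. I would instead argue directly. After noetherian approximation, $\bar f$ already comes from $\k_0[G]/\W_0$, where I may take $\k_0$ reduced and, after localizing to a connected piece, connected. Then Proposition \ref{prop:L(W)}(ii) over $\k_0$ gives $1_\la^*\in\W_0$, and hence $1_\la^*\in\k[\M]$.
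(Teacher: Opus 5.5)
Your proposal is correct and follows the paper's proof. The paper likewise reduces to noetherian $S$ by finite presentation and then applies Proposition \ref{prop:L(W)} to $\W=\O_S[\M]\subset\O_S[G]$, with Lemma \ref{lem:quotient-flat} supplying the inclusion and the flatness of the quotient. Your extra care in (ii) is sound and simpler than you fear: a finitely generated subring $\k_0\subset\k$ is automatically reduced and connected, since its idempotents are idempotents of $\k$. Also, flatness of $\k_0[G]/\W_0$ is best obtained not by ``spreading out'' flatness of this non-finitely-presented module, but by descending $\M$ itself to a reductive monoid over $\k_0$ and applying Lemma \ref{lem:quotient-flat} again.
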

\begin{proof}
     Since $\M$ is finitely presented, we may and do assume that $S$ is noetherian. Then the result follows from Proposition \ref{prop:L(W)}.
\end{proof}

\begin{cor}
    Assume that $G$ is semisimple. Then for any base scheme $S$, any reductive monoid with unit group $G$ is isomorphic to $G$.
\end{cor}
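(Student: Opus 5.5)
The plan is to show that for semisimple $G$ the open embedding $j\colon G\into\M$ is an isomorphism. The key input is that $X^+$ contains no nonzero $W$-stable cone of "torus directions" when $X_{\ab}=0$. I would attack this through the closure of the maximal torus, avoiding the full classification (which comes only later). The formation of $\M^\times$ and the geometric-fiber conditions are local and compatible with base change. An open immersion between flat finitely presented $S$-schemes that is an isomorphism on geometric fibers is an isomorphism by the fiberwise criterion \cite[Corollaire 17.9.5]{EGAIV16a23}. So I reduce to $S=\Spec(k)$ with $k$ algebraically closed.

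Over $k$, set $\L=\L(\M)$, with $k[\M]\subset k[G]$ a $G\times G$-stable subalgebra. Let $\lambda\in\L$. Then $k[\M]^{(-\lambda,\lambda)}\ne0$, and this eigenspace is spanned by $1_\lambda^*$ (Corollary \ref{cor:U-inv-O(P)}), so $1_\lambda^*\in k[\M]$. Because $k[\M]$ is a $G\times G$-submodule of $k[G]$, it contains the whole $G\times G$-submodule generated by $1_\lambda^*$. Restricting to $T$, the torus closure $\M_T$ then contains the characters $e^{w\lambda}$ for all $w\in W$. Here I use that the image of this submodule in $k[T]$ contains $e^{w\lambda}$, which follows by translating $1_\lambda^*$ by representatives of $w$ in $N_G(T)$, or equivalently from Corollary \ref{cor:B[la]-W-la}.

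Next I show that $\L$ generates $X$ as a group. Since $G$ is open dense in the integral scheme $\M$, we have $\mathrm{Frac}(k[\M])=\mathrm{Frac}(k[G])$. Passing to $U^-\times U$-invariant eigenvectors, a rational $B^-\times B$-eigenfunction $1_\mu^*$ on $G$ is a quotient $f/g$ of elements of $k[\M]$. One may take $f$ and $g$ to be eigenvectors, since the eigen-denominator ideal is $B^-\times B$-stable and Lie–Kolchin gives an eigenvector in it. This forces $\mu\in\L-\L$.

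Now the point of semisimplicity: $\lambda\in\L$ implies $w_0\lambda\in W\cdot\L\subset X(\M_T)$. The monoid of characters of $T$ extending to $\M_T$ is therefore a submonoid containing $\lambda+w_0\lambda$ and $-(\lambda+w_0\lambda)$ is not automatic. Instead I argue directly. The monoid of extendable characters contains $W\cdot\L$, and $\sum_{w}w\lambda=0$ since $X^W=0$ (semisimple). Hence $-\lambda=\sum_{w\ne1}w\lambda$ is extendable as well. So every element of $\L$ has its inverse extendable on $\M_T$. Combined with the fact that $\L$ generates $X$, every character of $T$ extends to $\M_T$, that is, $k[\M_T]=k[X]$ and $\M_T=T$.

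Finally I deduce $\M=G$. The standard route is: $\M=\overline{G\,\M_T\,G}$, because the $G\times G$-orbits of a reductive monoid all meet $\overline{T}$ (Renner/Rittatore over algebraically closed fields, \cite{RittatoreMonoidArticle}). Since $\M_T=T\subset G$, every orbit is $G$, so $\M=G$. I expect this last step to be the main obstacle, as it imports orbit theory. An alternative that stays within the paper is to show that each $1_\lambda^*$ becomes invertible in $k[\M]$, using the argument above with $\lambda$ replaced by $-w_0\lambda$. This would give $X^+\subset\L$ and then $k[\M]\supset{}_k\bfO(X^+)=k[G]$. There the delicate point is justifying that $1_{-w_0\lambda}^*$ lies in $k[\M]$.
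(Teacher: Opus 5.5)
Your proof is correct. Like the paper, you reduce to a geometric fiber; your version of the reduction (the open immersion $j\colon G\into\M$ is an isomorphism once it is so on geometric fibers) is more self-contained, since it needs neither connectedness of $S$ nor any information about weight monoids.

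The difference is in the case of an algebraically closed field, where the paper simply cites \cite[Corollary 2]{RittatoreMonoidArticle} and you argue through the torus closure. Write $k[\M_T]=k[\Sigma]$ for a submonoid $\Sigma\subset X$. Then:
\begin{itemize}
\item $W$-translates of $1_\lambda^*$ give $W\cdot\L\subset\Sigma$;
\item $\sum_{w\in W}w\lambda\in X^W=0$ gives $-\L\subset\Sigma$;
\item $\L-\L=X$ (your Lie--Kolchin argument is fine) forces $\Sigma=X$, i.e.\ $\M_T=T$.
\end{itemize}
The sentence about $\lambda+w_0\lambda$ is garbled, but what follows it supersedes it.

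The step you flagged as importing orbit theory can be done inside the paper. The cocharacter argument in the proof of Lemma~\ref{lem:ideal-downward-closed} (Noether normalization plus the Cartan decomposition, independent of the classification) gives, for any $y\in\M(k)$, a cocharacter $\mu$ of $T$ extending to $\AA^1\to\M$ with $\mu(0)\in G\,y\,G$. Since $\mu(\AA^1)\subset\M_T=T\subset G$, you get $y\in G$.

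Your route thus stays within the paper's toolkit and shows concretely how semisimplicity collapses $\overline{T}$, but it is much longer than needed. A short direct proof of the field case: $G$ is a principal open subset $\M_f$ by \cite[II, \S2, Corollaire 3.6]{DemazureGabriel}, so $f|_G\in k[G]^\times$. By Rosenlicht's lemma $f|_G$ is a scalar times a character, hence constant for semisimple $G$. Therefore $\M=\M_f=G$.
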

\begin{proof}
    We may assume that $S$ is connected. By Corollary \ref{cor:independance L} we may further assume that $S=\Spec(\k)$ where $\k$ is an algebraically closed field. Then the result follows from \cite[Corollary 2]{RittatoreMonoidArticle}.
\end{proof}

\subsection{Invariant valuations} 
Our goal in this subsection is Proposition \ref{prop:LM is closed saturated}, which shows that the weight monoid $\L(\M)$ of a reductive monoid scheme $\M$ defined in Corollary \ref{cor:independance L} is really a weight monoid in the sense of Definition \ref{def:weight-monoid}. Throughout this subsection we let $G$ be a reductive group over an algebraically closed field $k$.\par 
Let $\la\colon\GG_m\to G$ be a one-parameter subgroup. Let $L(\la)$ be the centralizer of $\la$ and let 
\[U(\la)\coloneqq\{g\in G\mid\lim_{t\to0}\la(t)g\la(t)^{-1}=1\}.\]
Then $P(\la)\coloneqq U(\la)L(\la)$ is a parabolic subgroup of $G$. Let
\[U(\la)^-\coloneqq\{g\in G\mid\lim_{t\to0}\la(t)^{-1}g\la(t)=1\}\]
and let $P(\la)^-\coloneqq L(\la)U(\la)^-$ be the opposite parabolic subgroup. Consider the morphism 
\[a_\la\colon G\times G\times\GG_m\to G\]
defined by $a_\la(g_1,g_2,t)=g_1^{-1}\la(t)g_2$. It induces an embedding between the fields of rational functions 
\[a_\la^*\colon k(G)\into k(G\times G\times\GG_m)\subset k(G\times G)((t)).\] 
Let $v_\la$ be the discrete valuation on $k(G)$ defined as the composition of $a_\la^*$ with the $t$-adic valuation on $k(G\times G)((t))$. Let $\O_\la\coloneqq\{f\in k(G)\mid v_\la(f)\ge0\}$ be the valuation ring of $v_\la$ and let $\mathfrak{m}_\la\subset\O_\la$ be its maximal ideal. Since the $t$-adic valuation is invariant under left translation by $G\times G$, the valuation $v_\la$ on $k(G)$ is $G\times G$-invariant. In particular, the valuation $v_\la$ only depends on the $G$-conjugacy class of $\la$.\par  
For any rational function $f\in k(G)$ we can write
\[a_\la^*(f)=\sum_{n\in\ZZ} f_n(g_1,g_2)t^n\in k(G\times G)((t))\]
where $f_n(g_1,g_2)\in k(G\times G)$ and it vanishes for all but finitely many $n<0$. If $f\ne0$, then the valuation $v_\la(f)$ equals to the minimal $n\in\ZZ$ such that $f_n(g_1,g_2)\ne0$.\par
We view $a_\la^*(f)$ as a rational function on $G\times G\times\AA^1$. Let $\Omega_f$ be the maximal open subset of $G\times G\times\AA^1$ on which $a_\la^*(f)$ is defined and let $\Omega_0\coloneqq\Omega_f\cap (G\times G\times\{0\})$. Then $\Omega_0$ is nonempty if and only if $f\in\O_\la$, and in this case $f_0\in k[\Omega_0]\subset k(G\times G)$. More precisely, a pair $(g_1,g_2)\in G\times G$ lies in $\Omega_0$ if and only if $(g_1,g_2,t)\in\Omega_f$ for all $t$ in an open neighborhood of $0$. Then we can write
\[f_0(g_1,g_2)=\lim_{t\to0}f(g_1^{-1}\la(t)g_2).\]
This means that for fixed $(g_1,g_2)\in\Omega_0$, the expression $f(g_1^{-1}\la(t)g_2)$, viewed as a function on $\GG_m$, extends to $\AA^1$ and its value at $0$ equals to $f_0(g_1,g_2)$.\par
Define the subgroup 
\[G(\la)\coloneqq\Delta(L(\la))(U(\la)^-\times U(\la))\subset G\times G\]
where $\Delta(L(\la))$ is the image of $L(\la)$ under the diagonal embedding $\Delta\colon G\to G\times G$. 

\begin{lem}
    Suppose $f\in\O_\la$. Then $\Omega_0$ is stable under left multiplication by $G(\la)$ and $f_0$ is left $G(\la)$-invariant. 
\end{lem}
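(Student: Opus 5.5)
The plan is to show that every element $(h_1,h_2)\in G(\la)$ satisfies a limit identity. Write $(g_1,g_2)\mapsto(h_1g_1,h_2g_2)$ for the left action. The claim is that for $(g_1,g_2)\in\Omega_0$ the function $t\mapsto f(g_1^{-1}h_1^{-1}\la(t)h_2g_2)$ extends regularly to $t=0$, with the same value $f_0(g_1,g_2)$ there. Both assertions of the lemma then follow from the description of $\Omega_0$ and $f_0$ via limits given just before the statement. By the group structure of $G(\la)$, it suffices to check the two types of generators separately: $\Delta(l)$ with $l\in L(\la)$, and $(u^-,u)$ with $u^-\in U(\la)^-$, $u\in U(\la)$.

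For $\Delta(l)$: since $l$ commutes with $\la(t)$, we have $l^{-1}\la(t)l=\la(t)$. So the function of $t$ is literally unchanged, and there is nothing to prove.

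For $(u^-,u)$ the key is to rewrite
\[(u^-)^{-1}\la(t)u=\bigl((u^-)^{-1}\la(t)u^-\la(t)^{-1}\bigr)\,\la(t)\,\bigl(\la(t)^{-1}u\la(t)\bigr)\cdot(\text{correction}).\]
The cleaner route is to factor
\[(u^-)^{-1}\la(t)u=\bigl(\la(t)(u^-_t)^{-1}\bigr)\cdots\]
Concretely, set $\alpha(t)=\la(t)^{-1}(u^-)^{-1}\la(t)$. This is the conjugate that tends to $1$ as $t\to0$, by the definition of $U(\la)^-$. Also set $\beta(t)=\la(t)u\la(t)^{-1}$, which tends to $1$ by the definition of $U(\la)$. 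Then
\[(u^-)^{-1}\la(t)u=\la(t)\alpha(t)\la(t)^{-1}\la(t)\la(t)^{-1}\beta(t)\la(t)=\la(t)\alpha(t)\la(t)^{-1}\beta(t)\la(t).\]
I would reorganize this so that it reads $\gamma_1(t)^{-1}\la(t)\gamma_2(t)$ with $\gamma_i\colon\AA^1\to G$ morphisms satisfying $\gamma_i(0)=1$. The appropriate identity is $(u^-)^{-1}\la(t)=\la(t)\alpha(t)$ on one side, combined with the limit in the correct direction on the other. I expect the main obstacle to be matching the sign conventions in the definitions of $U(\la)$ and $U(\la)^-$ with the formula $a_\la(g_1,g_2,t)=g_1^{-1}\la(t)g_2$. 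If the conventions are as stated, the left factor should be pushed through to the right, or the right factor to the left, whichever direction makes the conjugate extend over $t=0$. Once the decomposition holds, the argument for $(u^-,u)$ is as follows.

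Given this decomposition, consider the map
\[\Phi\colon(g_1,g_2,t)\mapsto\bigl(\gamma_1(t)g_1,\ \gamma_2(t)g_2,\ t\bigr).\]
This is an automorphism of $G\times G\times\AA^1$ that restricts to the identity on $t=0$, and it satisfies $a_\la^*(f)\circ\Phi=(\text{the translated function})$. Hence the translated function is defined on the open set $\Phi^{-1}(\Omega_f)$, which has the same fibre $\Omega_0$ over $t=0$, and its value there is $f_0$. It follows that $(u^-,u)\Omega_0\subset\Omega_0$, with equality by applying the same argument to the inverse element, and that $f_0$ is invariant under $(u^-,u)$.

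Combining the two cases over the generators of $G(\la)$ gives the lemma.
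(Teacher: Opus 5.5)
Your reduction to $\Delta(L(\la))$ and $U(\la)^-\times U(\la)$ is fine, the $L(\la)$ case is fine, and so is the device $\Phi(g_1,g_2,t)=(\gamma_1(t)g_1,\gamma_2(t)g_2,t)$. Since $\Phi$ is the identity on $t=0$, once $a_\la\circ\Phi$ equals the translated map the conclusion follows exactly as you say. This is a tidy way to formalize the paper's ``for $t$ near $0$'' argument.

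The gap is the factorization $(u^-)^{-1}\la(t)u=\gamma_1(t)^{-1}\la(t)\gamma_2(t)$ with $\gamma_i(0)=1$. It is the entire content of the lemma, and you never produce it; you defer it as an ``obstacle'' depending on conventions. Your last display does not help: $\la(t)\alpha(t)\la(t)^{-1}\beta(t)\la(t)=(u^-)^{-1}\beta(t)\la(t)$ still has $(u^-)^{-1}$ on the left. Pushing one factor across $\la(t)$ is not enough on its own either. Pushing $(u^-)^{-1}$ to the right gives $\la(t)\alpha(t)u$, and $\alpha(t)u\to u\neq 1$. As written, the argument is conditional on an unproved identity.

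The conventions are consistent, and the gap closes easily. As in the paper, split $(u^-,u)=(u^-,1)(1,u)$.
\begin{itemize}
\item For $u\in U(\la)$: $g_1^{-1}\la(t)ug_2=(\beta(t)^{-1}g_1)^{-1}\la(t)g_2$, so take $\gamma_1=\beta^{-1}$, $\gamma_2=1$.
\item For $u^-\in U(\la)^-$: $g_1^{-1}(u^-)^{-1}\la(t)g_2=g_1^{-1}\la(t)\alpha(t)g_2$, so take $\gamma_1=1$, $\gamma_2=\alpha$.
\end{itemize}
If you prefer to treat the pair at once, conjugate the leftover $\alpha(t)$ past $u$:
\[(u^-)^{-1}\la(t)u=\la(t)\alpha(t)u=\la(t)u\cdot u^{-1}\alpha(t)u=\beta(t)\,\la(t)\,\bigl(u^{-1}\alpha(t)u\bigr).\]
That is, $\gamma_1=\beta^{-1}$ and $\gamma_2=u^{-1}\alpha u$, both extending to morphisms $\AA^1\to G$ with value $1$ at $0$. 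With either choice your $\Phi$ argument goes through unchanged.
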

\begin{proof}
    For any $(g_1,g_2)\in\Omega_0$ and $u\in U(\la)$, we have
    \[g_1^{-1}\la(t)ug_2=(\la(t)u^{-1}\la(t)^{-1}g_1)^{-1}\la(t)g_2,\quad\forall t\in k^\times.\]
    Since $u^{-1}\in U(\la)$, we have $\lim\limits_{t\to0}\la(t)u^{-1}\la(t)^{-1}=1$. Then for all $t$ in an open neighborhood $0$ we have $(\la(t)u^{-1}\la(t)^{-1}g_1,g_2)\in\Omega_0$ and after shrinking the neighborhood we also have 
    \[(\la(t)u^{-1}\la(t)^{-1}g_1,g_2,t)\in\Omega_f.\]
    Then the identity above implies that $(g_1,ug_2)\in\Omega_0$ and that $f_0(g_1,g_2)=f_0(g_1,ug_2)$.\par
    By a similar argument, for any $u\in U(\la)^-$ we deduce that $(ug_1,g_2)\in\Omega_0$ and $f_0(g_1,g_2)=f_0(ug_1,g_2)$, using the following identity:
    \[g_1^{-1}u^{-1}\la(t)g_2=g_1^{-1}\la(t)\cdot\la(t)^{-1}u^{-1}\la(t)g_2,\quad\forall t\in k^\times, u\in U(\la)^-.\]
    Finally it is clear that $\Omega_0$ is stable under left multiplication by $\Delta(L(\la))$ and $f_0$ is left invariant under  $\Delta(L(\la))$.
\end{proof}

Now we study valuations of elements in $k(G)^{B^-\times B}=k[B^-B]^{B^-\times B}$. For any $\chi\in X$, let $\zeta_\chi$ be the corresponding eigenfunction in $k(G)$ of weight $(-\chi,\chi)$, normalized such that $\zeta_\chi(1)=1$. \par 
Let $\la^\dagger\in X_*(T)$ be the \emph{anti-dominant} representative in the $G$-conjugacy class of $\la$. 
\begin{lem}\label{lem:val-coweight}
    There exists a positive rational number $r\in\QQ_{>0}$ such that $v_\la(\zeta_\chi)=r\langle\la^\dagger,\chi\rangle$ for all $\chi\in X$. 
\end{lem}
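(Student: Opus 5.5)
We need to prove that $v_\la(\zeta_\chi)=r\langle\la^\dagger,\chi\rangle$ for some positive rational $r$.

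The first step is to reduce to $\la\in Y$ anti-dominant. Since $v_\la$ depends only on the $G$-conjugacy class of $\la$, we may replace $\la$ by $\la^\dagger$ and assume $\la=\la^\dagger\in X_*(T)$ is anti-dominant. Next, the map $\chi\mapsto v_\la(\zeta_\chi)$ is a group homomorphism $X\to\ZZ$. Indeed, $\zeta_\chi\zeta_{\chi'}$ is a $B^-\times B$-eigenfunction of weight $(-\chi-\chi',\chi+\chi')$ with value $1$ at $1$. Because $k(G)^{(-\mu,\mu)}$ is one-dimensional (the eigenfunctions are determined on the dense open $B^-B$), this gives $\zeta_\chi\zeta_{\chi'}=\zeta_{\chi+\chi'}$, and valuations are additive. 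So it suffices to compute $v_\la(\zeta_\chi)$ directly; we will actually get $r=1$.

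For the computation, write $f=\zeta_\chi$, a regular function on the big cell $U^-TU$ given by $f(u^-tu)=\chi(t)$, up to the sign conventions of the weight $(-\chi,\chi)$. We have
\[a_\la^*(f)(g_1,g_2,t)=f(g_1^{-1}\la(t)g_2).\]
Restrict to the open dense subset where $g_1^{-1}=u_1^-t_1$ and $g_2=t_2u_2$ with $u_1^-\in U^-$, $u_2\in U$. There,
\[g_1^{-1}\la(t)g_2=u_1^-\,(t_1\la(t)t_2)\,u_2,\]
and since $T$ is commutative,
\[f=\chi(t_1)\chi(t_2)\,t^{\langle\la,\chi\rangle}.\]
This is a nonzero element of $k(G\times G)$ times $t^{\langle\la,\chi\rangle}$, so the $t$-adic valuation is exactly $\langle\la,\chi\rangle=\langle\la^\dagger,\chi\rangle$. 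Hence $r=1$ works; any normalization of $v_\la$ coming from the embedding $k(G\times G\times\GG_m)\subset k(G\times G)((t))$ only rescales by a positive rational, which the statement allows.

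The main obstacle is making the embedding $k(G\times G\times\GG_m)\into k(G\times G)((t))$ precise. We need the restriction to the open set $(B^-)^{-1}\times B\times\GG_m$, identified birationally with $U^-\times T\times T\times U\times\GG_m$, to be compatible with computing the $t$-adic valuation. This holds because the valuation is computed generically in $(g_1,g_2)$. One should also check that anti-dominance is not actually needed for the formula itself on $T$; it only fixes the representative. I would verify that the coefficient of $t^{\langle\la,\chi\rangle}$ is a nonzero rational function, which is clear since $\chi(t_1)\chi(t_2)$ is invertible.
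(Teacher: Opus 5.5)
Your proposal has a genuine gap: the set you compute on is not open, and the anti-dominance you set aside is exactly what makes the statement true.

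\textbf{The gap.} It is in the step ``restrict to the open dense subset where $g_1^{-1}=u_1^-t_1$ and $g_2=t_2u_2$''. That set is $(B^-)^{-1}\times B$, a closed subgroup of $G\times G$ of dimension $\dim G+\dim T<2\dim G$ (unless $G$ is a torus). So it is not open, and $U^-\times T\times T\times U\times\GG_m$ is not birational to $G\times G\times\GG_m$.

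You say yourself that the valuation is computed generically. Indeed, $v_\la(f)$ is the least $n$ with $f_n\neq 0$ in $k(G\times G)$. Restricting to a proper closed subvariety can kill that leading coefficient. So for dominant $\chi$, where $a_\la^*(\zeta_\chi)\in k[G\times G][t,t^{-1}]$, your computation only gives $v_\la(\zeta_\chi)\le\langle\la,\chi\rangle$.

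A concrete case: take $G=\mathrm{GL}_2$, $\zeta_{\omega_1}(g)=g_{11}$ and $\la(t)=\mathrm{diag}(t^a,t^b)$. Then $(g_1^{-1}\la(t)g_2)_{11}=(g_1^{-1})_{11}(g_2)_{11}t^a+(g_1^{-1})_{12}(g_2)_{21}t^b$. The second term vanishes identically on your subset. Generically, however, the valuation is $\min(a,b)$, which is not $\langle\la,\omega_1\rangle=a$ when $a>b$, i.e.\ when $\la$ is not anti-dominant.

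Your remark that anti-dominance ``is not actually needed'' is the symptom. Your argument would give $v_\la(\zeta_\chi)=\langle\la,\chi\rangle$ for every cocharacter. But $v_\la$ depends only on the conjugacy class of $\la$, while $\langle w\la,\chi\rangle\neq\langle\la,\chi\rangle$ in general.

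\textbf{How to repair it.} What is missing is a genuinely dense open set on which the leading coefficient can be seen to be nonzero, and anti-dominance is what provides it.

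The paper's route shows that for anti-dominant $\la$ the set $G(\la)(B^-\times B)$ is open in $G\times G$. The leading coefficient $f_0$ is left $G(\la)$-invariant and a right $B^-\times B$-eigenfunction. Hence $f_0\neq0$ if and only if $f_0(1,1)=\lim_{t\to0}\zeta_\chi(\la(t))\neq 0$. This gives $v_\la(\zeta_\chi)\ge0\iff\langle\la,\chi\rangle\ge0$, and proportionality follows.

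Your direct computation can also be rescued, with $r=1$ as you predicted, on the open dense set where $g_1^{-1}=b^-u$ and $g_2=u^-b$, with $b^-\in B^-$, $u\in U$, $u^-\in U^-$, $b\in B$.
\begin{itemize}
\item Write $u\la(t)=\la(t)\cdot\la(t)^{-1}u\la(t)$. Anti-dominance makes $\la(t)^{-1}u\la(t)$ extend over $t=0$, because it rescales each positive root subgroup by $t^{-\langle\la,\alpha\rangle}$ and these exponents are non-negative.
\item Then $\zeta_\chi(g_1^{-1}\la(t)g_2)=\chi(b^-)\chi(b)\,t^{\langle\la,\chi\rangle}\,\zeta_\chi(\la(t)^{-1}u\la(t)u^-)$.
\item The last factor is a unit near $t=0$ for generic $(u,u^-)$, since the point lands in the big cell $B^-B$ (it does when $u=1$).
\end{itemize}
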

\begin{proof}
    After $G$-conjugation (which does not change the valuation $v_\la$), we may and do assume that $\la=\la^\dagger$ is \emph{anti-dominant}. Then we claim that $G(\la)(B^-\times B)$ is an open subset of $G\times G$. It is equivalent to show that the $G(\la)$ orbit of the base point on the flag variety $G/B^-\times G/B$ (for $G\times G$) is open. Since $\la$ is anti-dominant, the $P(\la)^-\times P(\la)$-orbit of the base point in $G/B^-\times G/B$:
    \[U(\la)^-L(\la)B^-/B^-\times U(\la)L(\la)B/B\]
    is open and it is a $U(\la)^-\times U(\la)$-torsor over the flag variety
    \[L(\la)/(L(\la)\cap B^-)\times L(\la)/(L(\la)\cap B)\]
    for $L(\la)\times L(\la)$. Then the $G(\la)$-orbit is the inverse image of the diagonal $L(\la)$-orbit and hence open.\par 
    Now we take $\chi\in X$ such that $f\coloneqq\zeta_\chi$ belongs to the valuation ring $\O_\la$. Recall the open subset $\Omega_0\subset G\times G$ consisting of $(g_1,g_2)\in G\times G$ such that $f$ is defined at $g_1^{-1}\la(t)g_2$ for all $t$ in a neighborhood of $0$. We have shown that it is stable under left multiplication by $G(\la)$. Since $f$ is an eigenfunction for $B^-\times B$, the set $\Omega_0$ is also stable under right multiplication by $B^-\times B$.
    If $(1,1)\notin \Omega_0$, then the open set $G(\la)\cdot(B^-\times B)$ is in the complement of the open set $\Omega_0$, which implies that $\Omega_0=\varnothing$, contradicting the assumption that $f\in\O_\la$. So we have $(1,1)\in\Omega_0$ and hence $\zeta_\chi(\la(t))=t^{\langle\la,\chi\rangle}\in k[[t]]$. Therefore $\langle\la,\chi\rangle\ge0$. If moreover $v_\la(f)>0$, then $f_0=0$ and $\lim\limits_{t\to0}\zeta_\chi(\la(t))=f_0(1,1)=0$ so we have $\langle\la,\chi\rangle>0$. On the other hand, if $v_\la(f)=0$, then $f_0\ne0$. Since $f_0$ is an eigen-function under right $B^-\times B$-action and left $G(\la)$-invariant, we must have $\lim_{t\to0}\zeta_\chi(\la(t))=f_0(1,1)\ne0$ (otherwise $f_0$ would vanish on the open subset $G(\la)\cdot(B^-\times B)\subset G\times G$), so $\langle\la,\chi\rangle=0$. \par 
    Conversely if $\zeta_\chi\notin\O_\la$, then $\zeta_\chi^{-1}=\zeta_{-\chi}\in\mathfrak{m}_\la$ and by what we just proved we get $\langle\la,\chi\rangle<0$. Thus we conclude that $v_\la(\zeta_\chi)\ge0$ if and only if $\langle\la,\chi\rangle\ge0$. Since the functions $\chi\mapsto v_\la(\zeta_\chi)$ and $\chi\mapsto \langle\la,\chi\rangle$ are both $\ZZ$-valued and $\ZZ$-linear, they differ by a positive rational scalar. 
\end{proof}
The following result is a consequence of \cite[Proposition 9]{RittatoreMonoidArticle}, which identifies the valuation cone of $G$ with the anti-dominant Weyl chamber in $X_*(T)_\QQ$. We give an alternative and straightforward proof.
\begin{lem}\label{lem:ideal-downward-closed}
    Let $\M$ be a reductive monoid over $k$ with unit group $G$ and let $\p\subset k[\M]$ be a $G\times G$-stable prime ideal. There exists a $G\times G$-invariant discrete valuation $v$ on the rational function field $k(G)$ satisfying the following conditions:
    \begin{enumerate}
        \item[(i)] $\p=\{f\in k[\M]\mid v(f)>0\}$;
        \item[(ii)] for any $\la_1,\la_2\in X^+$ with $\la_1\ge\la_2$ we have 
        $v(1_{\la_1}^*)\le v(1_{\la_2}^*)$.
    \end{enumerate}
    In particular, combining (ii) with Proposition \ref{prop:L(W)} we get that $\L(\p)\coloneqq\{\la\in X^+\mid\p^{(-\la,\la)}\ne0\}$ is downward closed. 
\end{lem}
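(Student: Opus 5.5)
Since the valuations $v_\la$ built just above from one-parameter subgroups are $G\times G$-invariant and discrete, the natural plan is to take $v=v_\la$ for a suitable $\la\colon\GG_m\to\M$, and then read off both (i) and (ii) from Lemma \ref{lem:val-coweight}.

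\emph{Step 1: an invariant valuation with centre $V(\p)$.} Let $Z\coloneqq V(\p)\subset\M$. It is an irreducible $G\times G$-stable closed subvariety, so it contains a dense (open) $G\times G$-orbit $\mathcal{O}$. Consider the normalized blow-up of $\M$ along $\p$, or along an invariant ideal whose radical is $\p$. Take an irreducible component of the exceptional divisor that dominates $Z$. Its divisorial valuation $v'$ is $G\times G$-invariant, because $G\times G$ is connected and permutes these components. It satisfies $v'\ge0$ on $k[\M]$, and its centre is $Z$, so $\p=\{f\in k[\M]\mid v'(f)>0\}$.

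\emph{Step 2: $v'$ is of the form $v_\la$ up to a positive multiple.} The plan is to show that the centre contains a point of the form $\lim_{t\to0}\la(t)$ with $\la\in X_*(T)$. By Theorem \ref{theo:properties AM MT}(1), $\M_T$ is the toric variety of $W\cdot\L$. Every $G\times G$-orbit of $\M$ meets $\M_T$; this is the standard fact that $\M=G\M_TG$, proved for instance via the $B^-\times B$-stable open chart and a Bruhat-type argument. Hence $\mathcal{O}$ contains a $T$-fixed limit $e=\lim_{t\to0}\la(t)$ for a cocharacter $\la$ lying in the cone dual to $W\cdot\L$. Then $v_\la$ is $G\times G$-invariant, nonnegative on $k[\M]$ (since $g_1^{-1}\la(t)g_2$ extends over $t=0$ in $\M$), and $\{v_\la>0\}\cap k[\M]$ is the ideal of functions vanishing on $\overline{G e G}=Z$, which is $\p$ because $\p$ is prime. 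So we simply take $v\coloneqq v_\la$, which is discrete by construction. This gives (i) directly and bypasses Step 1. Step 1 is only a fallback, in case the orbit-meets-torus fact is unavailable.

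\emph{Step 3: monotonicity.} By Lemma \ref{lem:val-coweight} we have $v(\zeta_\chi)=r\langle\la^\dagger,\chi\rangle$ with $r>0$ and $\la^\dagger$ anti-dominant. The eigenfunction $1_\mu^*$ is $B^-\times B$-semi-invariant of weight $(-\mu,\mu)$ (Corollary \ref{cor:U-inv-O(P)}) and takes value $1$ at $1$, so $1_\mu^*=\zeta_\mu$. If $\la_1\ge\la_2$ then $\la_1-\la_2\in\NN^\Delta$, and $\langle\la^\dagger,\alpha_i\rangle\le0$ for every simple root, so $v(1_{\la_1}^*)-v(1_{\la_2}^*)=r\langle\la^\dagger,\la_1-\la_2\rangle\le0$. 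This is (ii).

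The final assertion then follows. By Proposition \ref{prop:L(W)} applied to $W=\p$, we have $\L(\p)=\{\mu\mid 1_\mu^*\in\p\}=\{\mu\in\L\mid v(1_\mu^*)>0\}$; this uses that $k[\M]/\p$ is flat, which is automatic over a field. If $\mu\le\la$ with $\la\in\L(\p)$, then $\mu\in\L$ because $\L$ is downward closed, and $v(1_\mu^*)\ge v(1_\la^*)>0$. Note that, via Proposition \ref{prop:L(W)}, the set $\L(\p)$ is taken inside $\L$; this is the reading in which the final claim is asserted.

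The main obstacle is Step 2, namely producing the cocharacter: showing that every $G\times G$-orbit closure in $\M$ meets $\M_T$ in a point $\lim\la(t)$ over an arbitrary algebraically closed field. I would first try to deduce this from Rittatore's orbit description, and otherwise fall back on Step 1 together with Luna--Vust/Knop, which identify invariant valuations with the anti-dominant chamber.
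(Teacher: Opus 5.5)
Your proof follows the paper's proof except in how the cocharacter is obtained. In both, you take $v=v_\la$ for a cocharacter $\la$ that extends to $\AA^1\to\M$ with $\la(0)$ in the dense $G\times G$-orbit of $V(\p)$. Then (i) follows from $f_0(g_1,g_2)=f(g_1^{-1}\la(0)g_2)$, and (ii) follows from Lemma \ref{lem:val-coweight} together with $1_\mu^*=\zeta_\mu$.

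You import the fact $\M=G\M_TG$ and then use the orbit structure of the toric variety $\M_T$. The paper constructs $\la$ directly:
\begin{itemize}
\item it picks $y$ in the open orbit;
\item it uses Noether normalization to produce an arc $g\in\M(k[[t]])\cap G(k((t)))$ with $g(0)=y$;
\item it applies the Cartan decomposition $g\in G(k[[t]])\,\la(t)\,G(k[[t]])$. Since $\M$ is a monoid, $\la(t)=k_1^{-1}gk_2^{-1}\in\M(k[[t]])$, so $\la$ extends to $\AA^1\to\M$ with $\la(0)\in G(k)yG(k)$.
\end{itemize}
That argument is a few lines and works in every characteristic. It also proves the statement you flag as your main obstacle, since it exhibits a point of $\M_T$ in the orbit.

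Your version is fine provided you cite the classical theorem of Putcha and Renner that $M=G\overline{T}G$ for reductive monoids over an algebraically closed field. The phrase ``via the big cell and a Bruhat-type argument'' is not a proof. You also may not use Corollary \ref{cor:orbit decomposition over algebraically closed field}, which depends on this lemma. Your Step 1 fallback would still need the Luna--Vust/Knop description of invariant valuations to get (ii); that is exactly the route through Rittatore's valuation cone which the paper bypasses.

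Two of your justifications are circular at this point of the paper and should be replaced.
\begin{itemize}
\item Theorem \ref{theo:properties AM MT}(1) describes $\M(\L)_S$. Knowing $\M\cong\M(\L(\M))$ requires the classification, which rests on Proposition \ref{prop:LM is closed saturated}, which in turn uses this lemma. You only need that $\M_T=\Spec k[S]$ for some finitely generated $S\subset X$ (a priori possibly non-saturated). In any such toric variety each point is $s\cdot x_F$ with $s\in T(k)$ and $x_F=\lim_{t\to0}\la(t)$ for a suitable $\la$, which is elementary.
\item In the last clause, ``$\mu\in\L$ because $\L$ is downward closed'' uses Proposition \ref{prop:LM is closed saturated}. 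Its proof of downward closedness applies (ii) of this very lemma to the boundary divisors. Either read the claim as downward closedness inside $\L$, so that $\mu\in\L$ is a hypothesis, or obtain $1_\mu^*\in k[\M]$ directly from (ii) applied to those divisorial valuations.
\end{itemize}
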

\begin{proof}
    Let $Y=\Spec(k[\M]/\p)$ be the closed $G\times G$-stable subscheme of $\M$ defined by $\p$. First we claim that there is a coweight $\mu\in X_*(T)$ that extends to a morphism $\AA^1\to\M$ (which we still denote by $\mu$) such that $\mu(0)$ lies in the open $G\times G$-orbit of $Y$. Choose a point $y$ in the open $G\times G$-orbit in $Y$. By Noether normalization there is a finite map $\M\to\AA^d$ sending $y$ to the origin, where $d=\dim(\M)$. We can find a morphism $\Spec(k[[s]])\to\AA^d$ such that the generic point lies in the image of $G$ and the special point is sent to the origin. Then there is a finite cover of $\Spec(k[[s]])$, which must be isomorphic to $\Spec(k[[t]])$, mapping to $\M$ that sends the generic point into $G$ and the special point to $y$. In other words, we can find an element $g\in\M(k[[t]])\cap G(k((t)))$ such that $g(0)=y$. By Cartan decomposition, there exists $\mu\in X_*(T)$ such that $g\in G(k[[t]])\mu(t)G(k[[t]])$. Then $\mu$ extends to a morphism $\AA^1\to\M$ such that $\mu(0)$ lies in the $G\times G$-orbit of $y$, i.e. the open $G\times G$-orbit in $Y$. This proves the claim.\par
    Then for any $f\in k[\M]$ we have 
    \[f(g_1^{-1}\mu(t)g_2)=\sum_{n\in\NN}f_n(g_1,g_2)t^n\in k[G\times G][[t]]\]
    and in particular $v_\mu(f)\ge0$. 
    Let $t\to0$ we get
    \[f(g_1^{-1}\mu(0)g_2)=f_0(g_1,g_2)\in k[G\times G]\]
    Note that $f\in\p$ if and only if $f$ vanishes on the $G\times G$-orbit of $\mu(0)$. This is equivalent to the condition $f_0=0$, i.e. $v_\mu(f)>0$. Thus (i) holds with $v=v_\mu$ and then (ii) follows from Lemma \ref{lem:val-coweight}. 
\end{proof}
The following Proposition is proved in \cite{BaoSongAffineEmbeddings} for the more general setting of embedding of symmetric spaces. We give an independent proof in our setting.
\begin{prop}\label{prop:LM is closed saturated}
    Let $\M$ be a reductive monoid with unit group $G$. Then $\L(\M)$ is a weight monoid in $X^+$ (see Definition \ref{def:weight-monoid}).
\end{prop}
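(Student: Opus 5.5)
By Corollary \ref{cor:independance L} and the fact that $\M$ is obtained by base change from a finitely presented situation, I will reduce to $S=\Spec(k)$ with $k$ algebraically closed, so $\M$ is an irreducible normal affine monoid over $k$ with unit group $G$. By Proposition \ref{prop:L(W)}(ii) (applied to $W=k[\M]\subset k[G]$, whose quotient is flat by Lemma \ref{lem:quotient-flat}), I will use the description
\[\L(\M)=\{\la\in X^+\mid 1_\la^*\in k[\M]\}.\]
Each of the conditions of Definition \ref{def:weight-monoid} will then be checked directly on this set.

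\emph{Submonoid, saturation, finite generation, group.} Since $1_0^*$ is the unit and $1_\la^*1_\mu^*=1_{\la+\mu}^*$ for dominant $\la,\mu$ (Lemma \ref{lem:filtration-O-multiplicative}(ii)), $\L(\M)$ is a submonoid. If $n\la\in\L(\M)$, then $1_\la^*\in k[G]$ satisfies $(1_\la^*)^n\in k[\M]$; normality of $k[\M]$ and $\mathrm{Frac}(k[\M])=k(G)$ then give $1_\la^*\in k[\M]$, so $\L(\M)$ is saturated. By Corollary \ref{cor:U-inv-O(P)}, $k[\M]^{U^-\times U}$ is the span of the $1_\la^*$ with $\la\in\L(\M)$, i.e.\ $k[\L(\M)]$, which is finitely generated by Theorem \ref{theo:Kallen-finiteness}; hence the monoid $\L(\M)$ is finitely generated. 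For $\L(\M)^{\gp}=X$: since $G\subset\M$ is open, any $\la\in X^+$ gives $1_\la^*\in k[G]$, which is a regular function on the open $G$. I will choose $f\in k[\M]$ vanishing on $\M\setminus G$ with $f^m1_\la^*\in k[\M]$, and replace $f$ by a $U^-\times U$-invariant eigenfunction; a $G\times G$-stable divisor complement argument produces such an $f$, e.g.\ $f=1_\mu^*$ with $\mu\in\L(\M)$. The $T\times T$-weight component of $f^m1_\la^*$ then gives $1_{m\mu+\la}^*\in k[\M]$, so $\la=(m\mu+\la)-m\mu\in\L(\M)^{\gp}$. To get $f$, I will use that $k[\M]^{U^-\times U}$ contains a nonzero element vanishing on the boundary, namely the highest-weight part of a $G\times G$-module generated by a boundary-vanishing function.

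\emph{Downward closedness.} This is the main obstacle, and I will derive it from Lemma \ref{lem:ideal-downward-closed}. Write $\M\setminus G$ as a union of $G\times G$-stable prime divisors $D_1,\dots,D_m$; these are finitely many, and each defines a $G\times G$-stable height-one prime $\p_j$. By normality, $k[\M]=\{f\in k[G]\mid v_j(f)\ge0\ \forall j\}$, where $v_j$ is the discrete valuation of $D_j$. By Lemma \ref{lem:ideal-downward-closed}, each $\p_j$ is cut out by a $G\times G$-invariant valuation $v$ satisfying (ii); since $\p_j$ has height one, this $v$ is a positive multiple of $v_j$, which I will justify by $v\ge0$ on $k[\M]$ and by $\mathcal{O}_{v}$ dominating the DVR $k[\M]_{\p_j}$. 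Now take $\la\in\L(\M)$ and $\mu\in X^+$ with $\mu\le\la$. Then $v_j(1_\mu^*)\ge v_j(1_\la^*)\ge0$ for all $j$. This controls only the highest-weight function, so a remaining step is that a function of $k[G]$ lies in $k[\M]$ if it is nonnegative for all $G\times G$-invariant $v_j$; this holds since $1_\mu^*\in k[G]$ and its divisor on $\M$ is supported on the $D_j$. Hence $1_\mu^*\in k[\M]$, so $\mu\in\L(\M)$.
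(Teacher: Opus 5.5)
Your proposal is correct and follows essentially the same route as the paper: finite generation of $\L(\M)$ from Grosshans' theorem applied to $k[\M]^{U^-\times U}\cong k[\L(\M)]$, saturation from normality of $k[\M]$, downward closedness from the $G\times G$-stable boundary divisors combined with Lemma \ref{lem:ideal-downward-closed}, and generation of $X$ from a nonzero $U^-\times U$-invariant $1_\gamma^*$ in the ideal of $\M\setminus G$. Two of your steps are small refinements rather than a different route. First, you explicitly identify the valuation produced by Lemma \ref{lem:ideal-downward-closed} with a positive multiple of the divisorial valuation $v_j$, using that its valuation ring dominates the DVR $k[\M]_{\p_j}$; the paper leaves this identification implicit. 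Second, your localization at $1_\mu^*$ is just a reformulation of the paper's valuation bound $v_i(\zeta_\mu)+n\,v_i(\zeta_\gamma)\ge 0$.
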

\begin{proof}
    Let $k(G)$ be the common fraction field of $k[\M]$ and $k[G]$. For $\mu',\mu''\in X^+$, the fraction $1_{\mu'}^*/1_{\mu''}^*\in k(G)$ depends only on the difference $\lambda=\mu'-\mu''$ by Lemma \ref{lem:filtration-O-multiplicative} and we denote it by $\zeta_{\lambda}$. In particular, if $\la\in X^+$ we have $\zeta_\la=1_\la^*$. \par
    By Corollary \ref{cor:independance L} we have $k[\M]^{U^-\times U}\cong k[\L(\M)]$. Combined with Theorem \ref{theo:Kallen-finiteness} we get that $k[\L(\M)]$ is a finitely generated $k$-algebra and hence $\L(\M)$ is a finitely generated monoid.\par
    Next we show that $\L(\M)$ is saturated. Let $\mu\in X^+$ and $n\in\ZZ_{\ge1}$ be such that $n\mu\in\L(\M)$. Then we have $1_{n\mu}^*=(1_\mu^*)^n\in k[\M]$ by Lemma \ref{lem:B-eigenspace} and Lemma \ref{lem:filtration-O-multiplicative}. Since $k[\M]$ is integrally closed by assumption, we get that $1_\mu^*\in k[\M]$ and hence $\mu\in\L(\M)$.\par   
    Now let us show that $\L(\M)$ is downward closed. Suppose $\lambda\in X^+$ and $\mu\in\L(\M)$ satisfy $\lambda\leq\mu$. According to \cite[II, \S2, Corollaire 3.6]{DemazureGabriel}, the unit group $G$ is a principal affine open subset of $\M$. Thus the complement $D\coloneqq\M\setminus G$ is of pure codimension $1$ by \cite[21.12.7]{EGAIV16a23}. Let $D_1,\dots, D_s$ be the irreducible components of $D$ equipped with their reduced subscheme structure and let $v_1,\dots , v_s$ be the associated $G\times G$-invariant valuations $k(G)\to\ZZ$. Since $\M$ is normal, for any $f\in k[G]$, one has $f\in k[\M]$ if and only if $v_i(f)\geq 0$ for any $1\leq i\leq s$. Then we conclude by Lemma \ref{lem:ideal-downward-closed} that $\L(\M)$ is downward closed.\par
    Finally, we show that $\L(\M)$ generates the abelian group $X$. It suffices to prove that for $\mu \in X$, there exists $\mu',\mu''\in \L(\M)$ such that $\mu=\mu'-\mu''$. Let $I(D)\subset k[\M]$ be the defining ideal of the closed subscheme $D$. By \cite[Expos\'e XVII, Proposition 3.2]{SGA3VIIIaXVIII} we have $I(D)^{U^-\times U}\neq 0$ and hence there exists $\gamma\in \L(\M)$ such that $\zeta_{\gamma}\in I(D)$. Then  $v_i(\zeta_{\gamma})>0$ for any $1\leq i\leq s$, and there exists $n\in \NN$ such that $v_i(\zeta_{\mu})+nv_i(\zeta_{\gamma})\geq 0$ for all $1\leq i \leq s$. Then $\mu=(\mu+n\gamma)-n\gamma$ is a desired decomposition.
\end{proof}

\subsection{The main classification theorem}
Now let $G$ be a split reductive group scheme over a general base scheme $S$.
\begin{theo}\label{theo:monoid-classification}
When the base scheme $S$ is connected, there is an equivalence of (poset) categories 
\begin{align*}
    &\Set{\text{Weight monoids\footnotemark in }X^+}^{\mathrm{op}}\\ 
    \simeq&\Set{\text{Reductive monoids over $S$ with unit group }G}
\end{align*}
given by the contravariant functors $\L\mapsto \M(\L)_S$ and
$\M\mapsto\L(\M)$. Here we consider the morphisms of reductive monoids that restrict to the identity on the unit group $G$.
\end{theo}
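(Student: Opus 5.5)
The plan is to prove that the two functors are well defined, order-reversing and mutually inverse. Well-definedness is already in hand. Theorem \ref{theo:M(L)-def} shows that $\M(\L)_S$ is a reductive monoid with unit group $G$. Corollary \ref{cor:independance L} together with Proposition \ref{prop:LM is closed saturated} shows that $\L(\M)$ is a weight monoid, and connectedness of $S$ makes it well defined.

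For functoriality, note that a morphism of reductive monoids restricting to the identity on $G$ corresponds to an inclusion $\O_S[\M']\subset\O_S[\M]$ inside $j_*\O_G$ (Lemma \ref{lem:quotient-flat}). Such an inclusion gives $\L(\M')\subset\L(\M)$ by the definition of $\L$. Conversely, if $\L'\subset\L$ then ${}_\ZZ\bfO(\L')\subset{}_\ZZ\bfO(\L)$, and base change gives the morphism. Both posets are thin, since the monoid structure is determined by that of $G$ via the open dense embedding. Hence it suffices to prove the object-level bijection.

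The identity $\L(\M(\L)_S)=\L$ is direct. Pass to a geometric fibre, where $\L(\M(\L)_s)$ is read off from $({}_k\bfO(\L))^{U^-\times U}=\bigoplus_{\la\in\L}k\,1_\la^*$ by Corollary \ref{cor:U-inv-O(P)}.

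The substantive part is uniqueness: every reductive monoid $\M$ with $\L(\M)=\L$ satisfies $\O_S[\M]={}_\ZZ\bfO(\L)\otimes\O_S$ inside $\O_S[G]$. I would reduce to $S=\Spec(\k)$ affine, noetherian (finite presentation) and connected.

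First I treat $\k$ an algebraically closed field. By Proposition \ref{prop:L(W)}, $1_\la^*\in k[\M]$ for $\la\in\L$. Since $k[\M]$ is $G\times G$-stable, it contains the $G\times G$-submodule generated by these elements. That submodule contains ${}_k\bfO(\L)$: the span of $\dot{\bfB}[\la]^*$ is obtained from $1_\la^*$ by applying $\rho_l(b_1^-)\rho_r(\sigma(b_2)^+)$, using the triangularity of $\beta_\la$ in Proposition \ref{prop:parametrize-B[la]} and downward closedness. For the reverse inclusion, $k[\M]^{U^-\times U}=k[\L]={}_k\bfO(\L)^{U^-\times U}$. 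Consider the $G\times G$-module $k[\M]/(k[\M]\cap{}_k\bfO(\L))$ inside $k[G]/{}_k\bfO(\L)$. The latter has $U^-\times U$-invariants only in weights $(-\mu,\mu)$ with $\mu\notin\L$, by Corollary \ref{cor:U-inv-exact}. A nonzero $G\times G$-submodule has a nonzero $U^-\times U$-invariant, which would force $\mu\in\L(\M)=\L$, a contradiction. Hence equality holds on geometric fibres. Equivalently, one can use the Grosshans integrality argument of Proposition \ref{prop:closure-in-matrix-algebra} together with normality.

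Next I spread out to general $\k$, which I expect to be the main obstacle. The aim is to show that $N\coloneqq\k[\M]$ and ${}_\k\bfO(\L)$ coincide as submodules of $\k[G]$, where $\k[G]/N$ is flat by Lemma \ref{lem:quotient-flat}. For $\k$ reduced, the same argument works with Proposition \ref{prop:L(W)}(ii): it gives $1_\la^*\in N$, hence ${}_\k\bfO(\L)\subset N$. Then $N/{}_\k\bfO(\L)$ is a submodule of the flat module $\k[G]/{}_\k\bfO(\L)$ that vanishes on every fibre. Here I would use flatness of $\k[G]/N$ and fibrewise equality to conclude that $N={}_\k\bfO(\L)$, via the fibrewise criterion \cite[11.3.10]{EGAIV8a15}.

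For non-reduced $\k$, let $I$ be the nilradical, which is nilpotent. Apply Proposition \ref{prop:nilpotent-deformation} with $P=X^+$, $Q=\L$ and this $N$. Condition (iii) is the reduced case applied to $\M\times_\k\k/I$. Condition (ii) follows from (iii), since $N\subset N+I\k[G]$ and $N\otimes\k/I={}_{\k/I}\bfO(\L)$. Condition (i) is flatness. The proposition then yields $N={}_\k\bfO(\L)$.
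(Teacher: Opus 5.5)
Your overall scheme (field, then reduced noetherian, then nilpotent thickening via Proposition \ref{prop:nilpotent-deformation} with $P=X^+$, $Q=\L$, $N=\k[\M]$, $I$ the nilradical) is the paper's. Your check of the hypotheses of that proposition in the last step is correct. The gap is in how you obtain ${}_k\bfO(\L)\subset k[\M]$.

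You assert that the $G\times G$-submodule generated by the $1_\la^*$, $\la\in\L$, contains ${}_k\bfO(\L)$, with $\dot{\bfB}[\la]^*$ reached from $1_\la^*$ modulo lower cells. This is false in positive characteristic. The quotient ${}_k\bfO_{\le\la}/{}_k\bfO_{<\la}$ is a costandard-type $G\times G$-module (cf.\ Proposition \ref{prop:grO-decription}). The image of $1_\la^*$ in it is the $U^-\times U$-invariant line, and that line generates only the simple socle. The unitriangularity you would need is that of the contravariant form on the Weyl module, which degenerates mod $p$.

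Summing over all $\la$ does not rescue the claim. Take $G=\mathrm{SL}_2$ in characteristic $2$, where $\L=X^+$ and ${}_k\bfO(\L)=k[G]$. Here the $1_{m\omega}^*$ are the powers $a^m$ of a matrix entry. Every $G\times G$-translate of $a^{2n}$ has the form $(\ell^2)^n$ for a linear form $\ell$ in the entries. By degree parity, the even part of the module generated by all the $1_\mu^*$ therefore consists of squares. But $ab$ is even and is not a square: restrict to $c=0$.

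What is really needed is normality. Over a field, this is the Grosshans argument you list as an ``equivalent'' alternative. It is not equivalent to your submodule argument; it is the actual proof, and it is the one the paper uses. Both $k[\M]$ and ${}_k\bfO(\L)$ are normal domains in $k[G]$ with the same $U^-\times U$-invariants. Hence both equal the integral closure of the smallest $G\times G$-stable subalgebra containing those invariants.

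Your reduced step repeats the same false deduction (``$1_\la^*\in N$, hence ${}_\k\bfO(\L)\subset N$''). No Grosshans-type substitute is available over an arbitrary reduced ring. The paper instead passes to the total ring of fractions $K=\prod_i K_i$:
\begin{itemize}
\item connectedness gives $\L(\M_{K_i})=\L$ by Corollary \ref{cor:independance L}, so $K[\M]={}_K\bfO(\L)$ by the field case;
\item flatness of $\k[G]/\k[\M]$ (Lemma \ref{lem:quotient-flat}) gives $\k[\M]=\k[G]\cap K[\M]$;
\item freeness of $\k[G]/{}_\k\bfO(\L)$ gives ${}_\k\bfO(\L)=\k[G]\cap{}_K\bfO(\L)$.
\end{itemize}

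This also replaces your fibrewise step. A module all of whose fibres vanish need not vanish, and \cite[11.3.10]{EGAIV8a15} is a flatness criterion, not that statement. Your $N/{}_\k\bfO(\L)$ vanishes only because it sits inside a free module and can be tested at the generic points. That is exactly the intersection argument above.
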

\stepcounter{footnote}
\footnotetext{See Definition \ref{def:weight-monoid}.}
\begin{proof}
    A morphism $\M\to \M'$ between reductive monoids that restricts to the identity map on the unit group $G$ induces an $\O_S$-algebra homomorphism $\O_S[\M']\to \O_S[\M]$ that commutes with the natural homomorphisms to $\O_S[G]$, which are embeddings of subalgebras by Lemma \ref{lem:quotient-flat}. Hence such a morphism is unique if it exists. So both categories in the statement are poset categories. By Theorem \ref{theo:M(L)-def}, the assignment $\L\mapsto\M(\L)_S$ defines a contravariant functor between the two categories. By Corollary \ref{cor:independance L}, the assignment $\M\mapsto\L(\M)$ defines a contravariant functor in the reverse direction. We need to prove that these two functors are mutually inverses. Since reductive monoids over $S$ are finitely presented, we may and do assume that $S=\Spec(\k)$ is a connected noetherian affine scheme. By Corollary \ref{cor:U-inv-O(P)} we have
    $\L(\M(\L)_S)=\L$.\par 
    Now let $\M$ be a reductive monoid with unit group $G$. Let $\L\coloneqq\L(\M)$ and define $\M'\coloneqq\M(\L(\M))$. By the previous paragraph we have $\L(\M')=\L$. It remains to show that $\M=\M'$. \par
    First we consider the case where $\k$ is a field\footnote{Actually the argument in this step also works if $\k$ is a noetherian normal domain, but we don't need this extra generality for later reductions.}. Then $\k[\M]$ and $\k[\M']$ are both normal domains contained in $\k[G]$ (the former by assumption, the latter by Theorem \ref{theo:M(L)-def}). Moreover we have $\k[\M]^{U^-\times U}=\k[\M']^{U^-\times U}$ by Corollary \ref{cor:U-inv-O(P)} and Corollary \ref{cor:independance L}.
    By \cite[Theorem 5]{Grosshans1992} we get that $\k[\M]=\k[\M']$ since they both coincide with the integral closure of the smallest $G_\k\times G_\k$-stable subalgebra of $\k[G]$ containing $\k[\M]^{U^-\times U}$. \par
    Next we assume that $\k$ is a reduced noetherian ring. Let $K=\prod\limits_{i=1}^mK_i$ be the ring of total fractions of $\k$ where $K_i$'s are residue fields at the (finitely many) minimal prime ideals. Then $\k$ is a subring of $K$. Since $\Spec(\k)$ is connected, we have $\L(\M_{K_i})=\L(\M)$ for all $i$ by Corollary \ref{cor:independance L}. From the case of fields already proved we obtain 
    \[K[\M]=\prod_{i=1}^m K_i[\M]=\prod_{i=1}^m K_i[\M']=K[\M'].\]
    Since $\k[G]/\k[\M]$ is a flat $\k$-module by Lemma \ref{lem:quotient-flat}, the natural homomorphism 
    \[\k[G]/\k[\M]\to K[G]/K[\M]\]
    is injective and hence $\k[\M]=\k[G]\cap K[\M]$ (intersection inside $K[G]$). Similarly we have $\k[\M']=\k[G]\cap K[\M']$ and then we conclude that $\k[\M]=\k[\M']$.\par 
    Finally we treat the case of general noetherian base ring $\k$.
    Let $\mathfrak{N}$ be the nilradical of $\k$ and let $\k_{\mathrm{red}}\coloneqq\k/\mathfrak{N}$. From the case already proved we have $\k_{\mathrm{red}}[\M]=\k_{\mathrm{red}}[\M']$ and it follows that
    \[\k[\M]\subset\k[\M']+\mathfrak{N}\cdot\k[G].\]
    Moreover, since $\k[G]/\k[\M]$ is $\k$-flat by Lemma \ref{lem:quotient-flat}, the conditions of Proposition \ref{prop:nilpotent-deformation}
    are satisfied (with $Q=\L$, $P=X^+$, $N=\k[\M]$, $I=\mathfrak{N}$) and we deduce that $\k[\M]=\k[\M']$.
\end{proof}

\section{A study of orbit closures}\label{sec:orbit}
In this section we study $G\times G$-orbits in a reductive monoid $\M$ with unit group $G$ over a general base scheme. It suffices to do this when the base is an affine scheme.  Over an algebraically closed field of characteristic $\ne2$, some of our results are also proved in \cite{BaoSongAffineEmbeddings} in the more general setting of affine embeddings of symmetric varieties.\par 
Throughout this section we let $\Psi$ be a based root datum and retain the notations from \S\ref{sec:notation-reductive-group}. Let $\M$ be a reductive monoid over a base scheme $S$ with unit group $G$ and weight monoid $\L=\L(\M)$, which is a saturated downward closed submonoid of $X^+$ that generates the group $X$. 

\subsection{Classification of orbit closures}
\begin{lem}\label{lem:M(L/J)}
Let $\k$ be a commutative ring and let $\J$ be a downward closed prime ideal of $\L$. Then ${}_\k\bfO(\J)$ is a finitely generated $G\times G$-stable ideal of ${}_\k\bfO(\L)$. Moreover, the affine scheme 
\[\M(\L/\J)\coloneqq\Spec({}_\k\bfO(\L)/{}_\k\bfO(\J))\]
is flat over $\Spec(\k)$ and has integral normal geometric fibers.
\end{lem}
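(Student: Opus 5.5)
The plan is to reduce everything to the associated graded picture of \S\ref{sec:finite-coordinate-ring}. Everything is defined over $\ZZ$ (even over $\A$) by base change, and ${}_\k\bfO(\L)/{}_\k\bfO(\J)$ is the free $\k$-module on the images of $\dot{\bfB}[\L\setminus\J]^*$. Flatness of $\M(\L/\J)$ is therefore automatic once we know ${}_\k\bfO(\J)$ is an ideal, and the formation of the quotient commutes with base change. So the real content is three-fold: the ideal property, finite generation, and integrality and normality of the geometric fibres.

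\emph{Ideal and $G\times G$-stability.} Since $\J$ is downward closed in $\L$ and $\L$ is downward closed in $X^+$, the set $\J$ is downward closed in $X^+$. Proposition \ref{prop:downward-closed} and Corollary \ref{cor:U-inv-O(P)} then make ${}_\k\bfO(\J)$ a sub-coalgebra, hence a $G\times G$-submodule. For the ideal property, take $a\in\dot{\bfB}[\la]$ with $\la\in\J$ and $b\in\dot{\bfB}[\mu]$ with $\mu\in\L$. Write $a^*b^*=\sum\hat m_c^{ab}c^*$. I need every $c$ with $\hat m_c^{ab}\neq0$ and $c\in\dot{\bfB}[\nu]$, $\nu\in\L$, to satisfy $\nu\in\J$. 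By Lemma \ref{lem:filtration-O-multiplicative}, $\nu\le\la+\mu$. The pair $(a,b)$ acts nontrivially on $\La_\la\otimes\La_\mu$, and one expects $c$ to act nontrivially on a constituent $\La_{\nu'}$ with $\nu\le\nu'$. The clean route is instead the graded one: the top piece $\nu=\la+\mu$ lies in $\J$ because $\J$ is an ideal of the monoid. For lower $\nu$ I use that $\L\setminus\J$ is a downward closed face-type submonoid. Primality says $\L\setminus\J$ is a submonoid, and together with downward closedness of $\J$ this makes $\L\setminus\J=\L\cap F$ for a face $F$ of the cone $C_\L$ (as in the Appendix). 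A linear functional $\ell\ge0$ on $C_\L$ vanishing exactly on $F$ is positive on roots' contributions only if $F$ is compatible with $\le$; downward closedness of $\J$ gives $\ell(\la+\mu-\nu)\ge0$. Hence $\ell(\nu)\le\ell(\la+\mu)$. That inequality points the wrong way, so I would instead argue $\nu\notin\J$, i.e.\ $\nu\in F$, forces $\la+\mu\in F$ via $\nu\le\la+\mu$ and $F$ being upward closed within $\L$. Equivalently, $\L\setminus\J$ is upward closed in $\L$, which is just the downward closedness of $\J$. But $\la+\mu\in\J$, so I do not immediately get a contradiction, and I must check this step carefully; I expect this to be the main obstacle. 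Finite generation of the ideal follows since ${}_\k\bfO(\L)$ is noetherian over noetherian $\k$ (Corollary \ref{cor:grO-finite}), after reducing to $\k=\ZZ$.

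\emph{Fibres.} Over an algebraically closed field $k$, I filter the quotient by $h$ as in Lemma \ref{lem:grh}. Its associated graded is $\gr({}_k\bfO(\L))/\gr({}_k\bfO(\J))$, and by Proposition \ref{prop:grO-decription} this is the degree-zero part of ${}_k\bfO_{G/U}(\L\setminus\J)\otimes{}_k\bfO_{U^-\backslash G}(\L\setminus\J)$. Here $\L\setminus\J=\L\cap F$ is a saturated submonoid, and the description of Proposition \ref{prop:basic-affine-space} identifies these rings with degree-zero subalgebras of $k[\L\cap F]\otimes k[G/U]$. These are normal domains, by Lemma \ref{lem:monoid-algebra-property} and the argument of Theorem \ref{theo:O(L)-normal}. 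Then Lemma \ref{lem:filtered-ring-normal} gives that the quotient is a normal domain.
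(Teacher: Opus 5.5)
Your overall route is the paper's: sub-coalgebra (hence $G\times G$-stability) from Proposition \ref{prop:downward-closed}, flatness from the quotient being free on the images of $\dot{\bfB}[\L\setminus\J]^*$, finite generation by reducing to $\ZZ$ and using noetherianity of ${}_\ZZ\bfO(\L)$ (Corollary \ref{cor:grO-finite}), and normality of the geometric fibres by running the argument of Theorem \ref{theo:O(L)-normal} for the saturated submonoid $\L\setminus\J$. Your fibre step makes explicit that the $h$-filtration on the quotient has associated graded $\gr({}_k\bfO(\L))/\gr({}_k\bfO(\J))\cong\gr({}_k\bfO(\L\setminus\J))$. That step is correct, and it is exactly what justifies the paper's terse identification of the quotient with ${}_k\bfO(\L\setminus\J)$.

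\textbf{The gap: the ideal property.} You leave this open and call it the main obstacle. It is not an obstacle, and you have already written down everything needed.

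\emph{Direct argument.} Take $c\in\dot{\bfB}[\nu]$ with $\hat{m}_c^{ab}\neq0$.
\begin{itemize}
\item Lemma \ref{lem:filtration-O-multiplicative} gives $\nu\le\la+\mu$.
\item $\la+\mu\in\J$ because $\J$ is a monoid ideal.
\item $\J$ is downward closed in $X^+$ (your own first sentence), so $\nu\in\J$.
\end{itemize}
This is precisely the paper's one-line argument ${}_\k\bfO_{\le\la}\cdot{}_\k\bfO_{\le\mu}\subset{}_\k\bfO_{\le\la+\mu}\subset{}_\k\bfO(\J)$.

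\emph{Your reformulation also closes.} Suppose $\nu\notin\J$. Then $\nu\in\L\setminus\J$; note $\nu\in\L$ since $\L$ is downward closed in $X^+$. Upward closedness of $\L\setminus\J$ in $\L$ then forces $\la+\mu\in\L\setminus\J$, contradicting $\la+\mu\in\J$. So the fact $\la+\mu\in\J$ is what \emph{produces} the contradiction, not what prevents it.

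\emph{The face detour.} Passing through a face $F$ and a functional $\ell$ is unnecessary for this step, and the sentence about $\ell$ is not coherent as written. The description $\L\setminus\J=\L\cap F$ holds for any prime ideal by Lemma \ref{lem:face-prime-ideal}, with no use of downward closedness. It is only needed later, in the fibre step, to see that $\L\setminus\J$ is finitely generated and saturated.

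As submitted, the proposal does not establish that ${}_\k\bfO(\J)$ is an ideal. Everything else depends on this: the quotient ring itself, its finite generation, and the fibre argument. With the direct argument above inserted, the proof is complete.
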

\begin{proof}
    Since $\J$ is downward closed, the subset ${}_\k\bfO(\J)\subset{}_\k\bfO(\L)$ is a sub-coalgebra due to Proposition \ref{prop:downward-closed} and hence also a $G_{\k}\times G_{\k}$-submodule. Since $\J$ is an ideal of $\L$, for any $\lambda\in \L$ and $\mu\in \J$ we have \[{}_\k\bfO_{\leq\lambda}\cdot{}_\k\bfO_{\leq \mu}\subset{}_\k\bfO_{\leq \lambda+\mu}\subset{}_\k\bfO(\J)\]
     by Lemma \ref{lem:filtration-O-multiplicative}. Therefore ${}_\k\bfO(\J)$ is an ideal of ${}_\k\bfO(\L)$. The quotient ring ${}_\k\bfO(\L)/{}_\k\bfO(\J)$ is a free $\k$-module with basis $\{b^*,b\in\bigcup\limits_{\la\in\L\setminus\J}\dot{\bfB}[\la]\}$ and in particular flat over $\k$. To show that the ideal ${}_\k\bfO(\J)$ is finitely generated, we may assume that $\k=\ZZ$. Then ${}_\ZZ\bfO(\L)$ is a finitely generated $\ZZ$-algebra by Corollary \ref{cor:grO-finite} and hence noetherian, so ${}_\ZZ\bfO(\J)$ is a finitely generated ideal.\par 
     It remains to show that when $\k=k$ is an algebraically closed field, then ${}_k\bfO(\L)/{}_k\bfO(\J)$ is a normal integral domain. Since $\L$ is saturated and $\J$ is a prime ideal, the complement $\L\setminus\J$ is a saturated submonoid of the free abelian group $X$. We have a natural $k$-algebra isomorphism \[{}_k\bfO(\L)/{}_k\bfO(\J)\cong{}_k\bfO(\L\setminus\J)\]
     which implies that ${}_k\bfO(\L)/{}_k\bfO(\J)$ is a normal integral domain by Theorem \ref{theo:O(L)-normal}.
\end{proof}
\begin{rqe}
    In the previous setting, the closed $G\times G$-stable subscheme $\M(\L/\J)\hookrightarrow \M(\L)$ is a sub-semigroup of $\M(\L)$.
\end{rqe}
\begin{lem}\label{lem:GxG invariant prime ideal}
    Let $\k$ be a commutative ring such that $\Spec(\k)$ is connected. Let $\mathfrak{p}\subset\k[\M]={}_\k\bfO(\L)$ be a $G_\k\times G_\k$-stable finitely generated ideal such that the structure morphism $\Spec({}_\k\bfO(\L)/\mathfrak{p})\to\Spec(\k)$ is flat with integral geometric fibers. Choose any geometric point $s\in\Spec(\k)$ with residue field $\kappa(s)$. Let $\p_s\coloneqq\p\otimes_\k\kappa(s)$ and 
    \[\J\coloneqq\L(\p_s)=\{\la\in X^+\mid\p_s^{(-\la,\la)}\ne0\}\]
    Then $\J$ is a downward closed saturated prime ideal of $\L$ and we have $\mathfrak{p}={}_\k\bfO(\J)$. 
\end{lem}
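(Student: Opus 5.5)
The plan is to handle the field case first, pass to a reduced base by the total-fraction argument of Theorem \ref{theo:monoid-classification}, and finish with Proposition \ref{prop:nilpotent-deformation}. Because $\k[\M]/\mathfrak{p}$ is $\k$-flat and ${}_\k\bfO(\L)$ is a direct summand of $\k[G]$ with free quotient, the quotient $\k[G]/\mathfrak{p}$ is also flat. So $\mathfrak{p}$ is a $G\times G$-stable submodule of $\k[G]$ with flat quotient. As $\mathfrak{p}$ is finitely generated, we may reduce to $\k$ noetherian and connected, and Proposition \ref{prop:L(W)}(i) then shows that $\J=\L(\mathfrak{p}_s)$ does not depend on $s$.

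\textbf{Field case.} Let $\k=k$ be algebraically closed. Since $\mathfrak{p}$ is prime and $G\times G$-stable, Lemma \ref{lem:ideal-downward-closed} shows that $\J$ is downward closed. By Lemma \ref{lem:B-eigenspace} we have $\J=\{\la\in\L\mid 1_\la^*\in\mathfrak{p}\}$, and Lemma \ref{lem:filtration-O-multiplicative} gives $1_\la^*1_\mu^*=1_{\la+\mu}^*$. Primality of $\mathfrak{p}$ then makes $\J$ a prime ideal of $\L$, and the identity $(1_\la^*)^n=1_{n\la}^*$ makes it saturated.

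It remains to prove $\mathfrak{p}={}_k\bfO(\J)$.
\begin{itemize}
\item[(a)] To show ${}_k\bfO(\J)\subset\mathfrak{p}$, take $b\in\dot{\bfB}[\la]$ with $\la\in\J$. Following the argument of Proposition \ref{prop:nilpotent-deformation}, apply left and right translation operators that move $b^*$ to $1_\la^*$ modulo lower terms. More directly: $\mathfrak{p}$ is a $G\times G$-submodule containing $1_\la^*$. It therefore contains the $G\times G$-module generated by $1_\la^*$, which is the image of ${}_k\La_\la\otimes{}_k\La_\la^{*}$ under matrix coefficients. Inducting downward on $\la$ in the order $\le$, and using that $\J$ is downward closed, this fills out ${}_k\bfO_{\le\la}$.
\item[(b)] For the reverse inclusion, both $\mathfrak{p}$ and ${}_k\bfO(\J)$ are $G\times G$-stable and have the same $U^-\times U$-invariants, namely the span of the $1_\la^*$ with $\la\in\J$, by Corollary \ref{cor:U-inv-O(P)}. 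Consider $\mathfrak{p}/{}_k\bfO(\J)$ inside ${}_k\bfO(\L\setminus\J)$. This is a $G\times G$-submodule of a module whose $U^-\times U$-invariants have weights $(-\la,\la)$ with $\la\notin\J$. A nonzero $G\times G$-submodule has nonzero $U^-\times U$-invariants by \cite[XVII, 3.2]{SGA3VIIIaXVIII}, and Corollary \ref{cor:U-inv-exact} lifts such invariants back to $\mathfrak{p}$. This would produce some $1_\la^*\in\mathfrak{p}$ with $\la\notin\J$, a contradiction.
\end{itemize}

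\textbf{General base.} In the reduced case, (a) and (b) give $\mathfrak{p}\otimes K_i={}_{K_i}\bfO(\J)$ over each factor $K_i$ of the total ring of fractions. Flatness of $\k[G]/\mathfrak{p}$ gives $\mathfrak{p}=\k[G]\cap(\mathfrak{p}\otimes K)$, and the same holds for ${}_\k\bfO(\J)$, so the two ideals agree. For nonreduced noetherian $\k$, write $\mathfrak{N}$ for the nilradical. The reduced case gives $\mathfrak{p}\subset{}_\k\bfO(\J)+\mathfrak{N}\k[G]$ and $\mathfrak{p}\otimes\k/\mathfrak{N}={}_{\k/\mathfrak{N}}\bfO(\J)$, using flatness to compare. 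Proposition \ref{prop:nilpotent-deformation}, with $Q=\J$, $P=X^+$, $N=\mathfrak{p}$ and $I=\mathfrak{N}$, then yields $\mathfrak{p}={}_\k\bfO(\J)$.

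The main obstacle is step (b), the containment $\mathfrak{p}\subset{}_k\bfO(\J)$ over a field. It requires showing that a $G\times G$-submodule of $k[\M]$ is determined by its highest-weight vectors with respect to the dual canonical filtration. I would attempt this with the support-induction argument used in the proof of Proposition \ref{prop:nilpotent-deformation}. Take $\varphi\in\mathfrak{p}$ outside ${}_k\bfO(\J)$, and choose a maximal $\mu\notin\J$ in the support of $\varphi$. Translating $\varphi$ produces an element of $\mathfrak{p}_{(-\mu,\mu)}$ that is nonzero at $1_\mu$. This contradicts the description of $\mathfrak{p}^{(-\mu,\mu)}$, once one checks that the translated element is $U^-\times U$-invariant modulo ${}_k\bfO(\J)$; that check is where the exactness of Corollary \ref{cor:U-inv-exact} enters.
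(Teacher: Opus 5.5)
Your reductions agree with the paper: flatness of $\k[G]/\mathfrak{p}$, independence of $s$, the proof that $\J$ is a downward closed saturated prime ideal, the total-fraction step, and the final use of Proposition \ref{prop:nilpotent-deformation}. The field case, however, has a genuine gap in step (a), and step (b) inherits it.

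\textbf{Step (a) fails in positive characteristic.} The $G\times G$-span of $1_\la^*$ is \emph{not} the image of ${}_k\La_\la\otimes{}_k\La_\la^*$ there. The element $1_\la^*$ is the highest-weight matrix coefficient of ${}_k\La_\la$, and it factors through the simple quotient $L(\la)$. So its $G\times G$-span is the coefficient space of $L(\la)$, of dimension $(\dim L(\la))^2$, usually much smaller than $\#\dot{\bfB}[\la]=(\dim\La_\la)^2$. For example, take $G=\mathrm{SL}_2$ in characteristic $2$. Then $1_2^*=(1_1^*)^2$ is the square of a matrix entry, and its $G\times G$-span is spanned by the four squared entries. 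By contrast, ${}_k\bfO_{\le 2}$ is $10$-dimensional, and even adding the coefficient spaces of all ${}_k\La_\mu$ with $\mu\le 2$ gives only $7$ dimensions. Thus the $G\times G$-module generated by $\{1_\mu^*\}_{\mu\in\J}$ has the same $U^-\times U$-invariants as ${}_k\bfO(\J)$ but is strictly smaller. Consequently, $G\times G$-stability together with $1_\mu^*\in\mathfrak{p}$ cannot give ${}_k\bfO(\J)\subset\mathfrak{p}$; you need the primality of $\mathfrak{p}$, which you do not use at this point.

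\textbf{Step (b) presupposes (a).} As written, it starts from $\mathfrak{p}/{}_k\bfO(\J)$. Corollary \ref{cor:U-inv-exact} lifts an invariant of ${}_k\bfO(\L)/{}_k\bfO(\J)$ to $1_\mu^*$ modulo ${}_k\bfO(\J)$. That lift lies in $\mathfrak{p}$ only if ${}_k\bfO(\J)\subset\mathfrak{p}$. Your fallback translation argument does produce an element of $\mathfrak{p}_{(-\mu,\mu)}$ that is nonzero at $1_\mu$. But here nothing forces $\mathfrak{p}_{(-\mu,\mu)}=0$; in Proposition \ref{prop:nilpotent-deformation} that vanishing came from the nilpotent hypothesis. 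Nor need the translate be invariant modulo ${}_k\bfO(\J)$.

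\textbf{The missing idea.} The paper works over a \emph{perfect} field of characteristic exponent $p$ and combines Lemma \ref{lem:power-surjective} with the primality of both $\mathfrak{p}$ and ${}_k\bfO(\J)$ (Lemma \ref{lem:M(L/J)}).
\begin{enumerate}
\item Let $\bar x$ be a homogeneous $U^-\times U$-invariant in $\mathfrak{p}/(\mathfrak{p}\cap{}_k\bfO(\J))$, with lift $x\in\mathfrak{p}$.
\item Power-surjectivity for $k[\M]\to k[\M]/(\mathfrak{p}\cap{}_k\bfO(\J))$ lifts $\bar x^{p^m}$ to an invariant $y\in\mathfrak{p}$, which is necessarily $y=c\,1^*_{p^m\la}$.
\item Saturation gives $\la\in\J$, and perfectness lets you normalize $c=1$.
\item Then $(1_\la^*-x)^{p^m}=1^*_{p^m\la}-x^{p^m}\in\mathfrak{p}\cap{}_k\bfO(\J)$. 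Primality of ${}_k\bfO(\J)$ gives $1_\la^*-x\in\mathfrak{p}\cap{}_k\bfO(\J)$, so $\bar x=\overline{1_\la^*}=0$.
\item Hence $\mathfrak{p}/(\mathfrak{p}\cap{}_k\bfO(\J))$ has no $U^-\times U$-invariants, and it vanishes by \cite[Expos\'e XVII, Proposition 3.2]{SGA3VIIIaXVIII}.
\end{enumerate}
The same argument with the roles swapped, now using primality of $\mathfrak{p}$, gives ${}_k\bfO(\J)\subset\mathfrak{p}$.

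Because this argument needs perfectness, in the reduced case you should apply the field case over the algebraic closures $\overline{K}_i$ and descend. Applying it directly to the $K_i$ is not justified, since your field case was only proved for algebraically closed $k$.
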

\begin{proof}
    To prove the first statement we may assume that $\k=k$ is an algebraically closed field (an algebraic closure of $\kappa(s)$). Then $\p$ is a prime ideal of $k[G]$ by assumption and we deduce that $\L(\p)$ is a prime ideal of $\L$ by Proposition \ref{prop:L(W)}. Suppose $n\la\in\L(\mathfrak{p})$ for some $\la\in X^+$ and $n\in\ZZ_{\ge1}$, then $1_{n\la}^*=(1_\la^*)^n\in\p$ and hence $1_\la^*\in\mathfrak{p}$ since $\p$ is a prime ideal. Thus $\la\in\L(\mathfrak{p})$ and we see that $\L(\p)$ is saturated. Finally $\L(\p)$ is downward closed thanks to Lemma \ref{lem:ideal-downward-closed}.\par
    Now we prove the second statement that $\mathfrak{p}={}_\k\bfO(\J)$. First we assume that $\k=k$ is a perfect field with characteristic exponent $p$, i.e. $p=\mathrm{char}(k)$ if $\mathrm{char}(k)>0$ and $p=1$ if $\mathrm{char}(k)=0$.\par 
    We claim that the natural map $\p^{U\times U^-}\to(\p/\p\cap{}_k\bfO(\J))^{U^-\times U}$ is surjective.  Let $\bar{x}\ne0$ be an element in target with weight $(\mu,\la)$ where $\mu,\la\in X$, and let $x\in\p$ be an arbitrary lift of $\bar{x}$. By Lemma \ref{lem:power-surjective} there exists an integer $m\ge0$ such that $\bar{x}^{p^m}$ lifts to an element $0\ne y\in\p^{U^-\times U}$. Since the reduction map preserves the gradings, after picking out a homogeneous component we may and do assume that $0\ne y\in\p^{(p^m\mu,p^m\la)}\subset k[G]^{(p^m\mu,p^m\la)}$. Then we must have $\la\in X^+$ and $\mu=-\la$, so $y=c 1_{p^m\la}^*$ for some $c\in k^\times$. After replacing $\bar{x}$ by $c^{-1/p^m}\bar{x}$ (recall that $k$ is assumed to be perfect) we may and do assume that $y=1_{p^m\la}^*$. In particular we have $p^m\la\in\J\subset\L$ and since $\L$ and $\J$ are saturated (by the first statement we just proved) we deduce that $\la\in\J$. Therefore $1_\la^*\in\p$ and we have $(1_\la^*-x)^{p^m}=1_{p^m\la}^*-x^{p^m}\in\p\cap{}_k\bfO(\J)$, where the equality follows from Lemma \ref{lem:filtration-O-multiplicative}. Since ${}_k\bfO(\J)$ is a prime ideal, this implies that $1_\la^*-x\in\p\cap{}_k\bfO(\J)$ and hence $\bar{x}$ lifts to $1_\la^*\in\p^{U^-\times U}$. This proves the claim.\par 
    Consequently we get a short exact sequence
    \[0\to(\p\cap{}_k\bfO(\J))^{U^-\times U}\to\p^{U^-\times U}\to(\p/\p\cap{}_k\bfO(\J))^{U^-\times U}\to0.\]
    By assumption we have $\p^{U^-\times U}={}_k\bfO(\J)^{U^-\times U}=k[\J]$ (the semi-group algebra) and hence
    \[(\p/\p\cap{}_k\bfO(\J))^{U^-\times U}=0.\] 
    Then by \cite[Expos\'e XVII, Proposition 3.2]{SGA3VIIIaXVIII} we have $\p/\p\cap{}_k\bfO(\J)=0$ and hence $\p\subset{}_k\bfO(\J)$. Since ${}_k\bfO(\J)$ is a prime ideal in $k[\M]$ thanks to Lemma \ref{lem:M(L/J)}, by similarly arguments we can prove the opposite inclusion and conclude that $\p={}_k\bfO(\J)$.\par 
    Next we assume that $\k$ is a noetherian and reduced ring. Let $K$ be the ring of total fractions of $\k$. Then $K=\prod\limits_{i=1}^m K_i$ is a finite product of fields and $\k$ is a subring of $K$. Let $\overline{K}_i$ be an algebraic closure of $K_i$ for each $i$ and let $\overline{K}\coloneqq\prod_{i=1}^m\overline{K}_i$. 
    Since $\Spec(\k)$ is connected, we have $\L(\p\otimes_\k K_i)=\J$ for all $i$ by Corollary \ref{cor:independance L}. Then by the case already proved we get that $\p\otimes_\k\overline{K}={}_{\overline{K}}\bfO(\J)$. Since $\k[\M]/\p$ is a flat $\k$-module by assumption, the natural map $\k[\M]/\p\to\overline{K}[\M]/(\p\otimes_\k\overline{K})$ is injective and hence $\p=\k[\M]\cap(\p\otimes_\k\overline{K})$ (intersection inside $\overline{K}[\M]$). Similarly we have ${}_\k\bfO(\J)=\k[\M]\cap{}_{\overline{K}}\bfO(\J)$ and we conclude that $\p={}_\k\bfO(\J)$. \par
    Finally we treat the case of general base ring. Since $\M$ is finitely presented over $\k$ and the ideal $\p$ is finitely generated, we may and do assume that $\k$ is noetherian. Let $\mathfrak{N}\subset\k$ be the nil-radical and let $\k_{\mathrm{red}}=\k/\mathfrak{N}$ be the reduced quotient ring. By the case already proved, we have 
    $\p\otimes_\k\k_{\mathrm{red}}={}_{\k_{\mathrm{red}}}\bfO(\J)$. Then we deduce that 
    \[\p\subset{}_\k\bfO(\J)+\mathfrak{N}\cdot\k[\M]\quad\text{ and }\quad{}_\k\bfO(\J)\subset\p+\mathfrak{N}\cdot\k[\M].\]
    Moreover $\k[\M]/\p$ is $\k$-flat by assumption, so the conditions of Proposition \ref{prop:nilpotent-deformation} are satisfied (with $Q=\J$, $P=\L$, $N=\p$ and $I=\mathfrak{N}$) and we deduce that $\p={}_\k\bfO(\J)$.
\end{proof}

\begin{theo}\label{theo:parametrization GxG orbits}
    Let $\M$ be a reductive monoid over a connected scheme $S$ with unit group $G$. Then the map $\J\mapsto \M(\L/\J)$ defines an order-reversing bijection between the following pre-ordered sets:
    \begin{enumerate}
        \item Downward closed prime ideals of $\L(\M)$;
        \item $G\times G$-stable closed subschemes of $\M$ that are flat with integral geometric fibers over $S$;
    \end{enumerate}
    and the inverse map sends a closed subscheme  $\mathcal{Y}\subset\M$ to $\L(\p_{\mathcal{Y}_s})$, where $s\colon\Spec(k)\to S$ is a geometric point and $\p_{\mathcal{Y}_s}\subset k[\M_s]$ is the ideal of $\mathcal{Y}_s\subset\M_s$. Moreover, the geometric fibers over $S$ of the closed subschemes in $(\mathrm{2})$ are normal. 
\end{theo}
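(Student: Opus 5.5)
The plan is to assemble the two preceding lemmas, after first reducing to an affine base. The formation of ${}_\k\bfO(\L)$, ${}_\k\bfO(\J)$ and of the closed subschemes involved is compatible with base change. By Theorem \ref{theo:monoid-classification} we have $\M\cong\M(\L)_S$ with $\L=\L(\M)$, so $\O_S[\M]={}_\ZZ\bfO(\L)\otimes_\ZZ\O_S$. Since the ideals ${}_\ZZ\bfO(\J)\otimes\O_S$ are defined globally over $\ZZ$, the map $\J\mapsto\M(\L/\J)_S\coloneqq\Spec(\O_S[\M]/{}_\ZZ\bfO(\J)\otimes_\ZZ\O_S)$ makes sense on any $S$. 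By Lemma \ref{lem:M(L/J)}, applied on affine opens, it lands in the set (2). It is order reversing because $\J\subset\J'$ gives ${}_\ZZ\bfO(\J)\subset{}_\ZZ\bfO(\J')$. Normality of the geometric fibers of the schemes in (2) will then follow from surjectivity onto (2), again using Lemma \ref{lem:M(L/J)}.

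For injectivity I compute $\L$ of the ideal of a fiber. Take a geometric point $s\colon\Spec(k)\to S$. The ideal of $\M(\L/\J)_s$ in $k[\M_s]={}_k\bfO(\L)$ is ${}_k\bfO(\J)$, and Corollary \ref{cor:U-inv-O(P)} (applied to the downward closed set $\J$) gives ${}_k\bfO(\J)^{U^-\times U}=\bigoplus_{\la\in\J}k\,1_\la^*$. Hence $\L(\p_{\M(\L/\J)_s})=\J$, so $\J$ is recovered from $\M(\L/\J)$ and the map is injective. This also shows that the proposed inverse is a left inverse.

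For surjectivity, let $\mathcal Y\subset\M$ be as in (2) and let $\J\coloneqq\L(\p_{\mathcal Y_s})$. I first check that $\J$ does not depend on $s$. Work on an affine open $\Spec(\k)\subset S$ with $\k$ connected. The ideal $\p$ of $\mathcal Y$ is finitely generated, because $\M$ is of finite presentation and $\mathcal Y$ is flat of finite presentation (after a noetherian reduction). Lemma \ref{lem:GxG invariant prime ideal} then says that $\J$ is a downward closed saturated prime ideal of $\L$ and that $\p={}_\k\bfO(\J)$.

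The main obstacle is globalizing this. The statement of Lemma \ref{lem:GxG invariant prime ideal} already shows that, over connected affine $\Spec(\k)$, $\J$ is the same for all geometric points, so $s\mapsto\J_s$ is locally constant on $S$. It is therefore constant because $S$ is connected. Alternatively, one can apply Proposition \ref{prop:L(W)}(i) to the $G\times G$-stable submodule $\p\subset\O_S[G]$: the quotient $\O_S[G]/\p$ is an extension of $\O_S[G]/\O_S[\M]$, which is flat by Lemma \ref{lem:quotient-flat}, by $\O_S[\M]/\p$, which is flat by assumption, and so it is flat. Since the ideals agree with ${}_\ZZ\bfO(\J)\otimes\O_S$ on each affine open of a cover, they glue to give $\mathcal Y=\M(\L/\J)_S$. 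This shows that the two maps are mutually inverse.
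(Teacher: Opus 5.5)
Your route is the paper's. After reducing to an affine base, the paper combines Lemma~\ref{lem:M(L/J)} with Lemma~\ref{lem:GxG invariant prime ideal}, and your direct injectivity check via Corollary~\ref{cor:U-inv-O(P)} is a harmless addition.

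The step that does not hold as written is the finite generation of the ideal $\p$ of $\mathcal{Y}$ ``because $\mathcal{Y}$ is flat of finite presentation''. Finite presentation of $\mathcal{Y}$ is not among the hypotheses in (2), and it does not follow from them when $S$ is not locally noetherian. Here is a counterexample.
\begin{itemize}
\item Let $\k=C([0,1],\mathbb{R})$; its spectrum is connected.
\item Let $I\subset\k$ be the ideal of functions vanishing on a neighbourhood of $0$. It is pure, so $\k/I$ is flat, but it is not finitely generated.
\item Take $G=\GG_m$, $\L=\NN$, $\M=\AA^1_\k=\Spec\k[t]$ and $\mathcal{Y}=\Spec\k[t]/(I\cdot t\k[t])$.
\end{itemize}
Then $\k[t]/(I\cdot t\k[t])\cong\k\oplus\bigoplus_{n\ge1}(\k/I)t^n$ is flat. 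The ideal is graded, hence $G\times G$-stable. The geometric fibres are $\AA^1$ over $V(I)$ and the origin over its complement, all integral and normal. Yet $\mathcal{Y}$ is neither $\M=\M(\L/\varnothing)$ nor the origin $\M(\L/\NN_{>0})$. Moreover, $s\mapsto\L(\p_{\mathcal{Y}_s})$ is not locally constant on the connected scheme $\Spec\k$.

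So Lemma~\ref{lem:GxG invariant prime ideal} cannot be invoked from the hypotheses as stated: it assumes $\p$ finitely generated precisely in order to reduce to a noetherian base. The paper's one-line proof relies on it tacitly in the same way. The theorem should therefore be read with $\mathcal{Y}$ of finite presentation over $S$, which is automatic if $S$ is locally noetherian.

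With that hypothesis added, your argument is complete, modulo one adjustment. A connected scheme need not be covered by connected affine opens, so do not derive local constancy of $s\mapsto\L(\p_{\mathcal{Y}_s})$ from connected affine neighbourhoods. Instead use noetherian approximation followed by Proposition~\ref{prop:L(W)}(i), which is your second argument and matches the proof of Corollary~\ref{cor:independance L}.
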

\begin{proof}
    One easily reduces this to the case where $S$ is affine. Then the result follows by combining Lemma \ref{lem:M(L/J)} and Lemma \ref{lem:GxG invariant prime ideal}.
\end{proof}

\subsection{Idempotents}
In this subsection we study some idempotents in each $G\times G$-orbit orbit of $\M$. Let $\L=\L(\M)$ as before.
\begin{defi}
    For any prime ideal $\I\subset W\cdot \L$, we define an idempotent $e_\I\in\M_T(S)\subset\M(S)$ corresponding to the homomorphism of $\O_S$-algebras 
    \[e_\I\colon \O_S[W\cdot\L]\to\O_S,\quad e^\la\mapsto\begin{cases}1&\text{ if }\la\in W\cdot\L\setminus\I,\\ 0,&\text{ if }\la\in\I.\end{cases}\]
    Its image under the abelianisation map $\M\to A_{\M}$ is the idempotent $e_{\I}^{ab}\in A_{\M}(S)$ corresponding to the $\O_S$-algebra homomorphism
    \[e_{\I}^{ab}\colon \O_S[\L_{\ab}]\to\O_S,\quad e^\la\mapsto\begin{cases}
        1& \text{ if }\la\in\L_{\ab}\setminus\I\cap\L_{\ab},\\
        0& \text{ if }\la\in\I\cap\L_{\ab}.
    \end{cases}\]
\end{defi}

\begin{lem}\label{lem:restriction prime ideal of WL}
For any prime ideal $\I\subset W\cdot \L$, the following holds:
\begin{enumerate}
     \item The intersection $\I\cap\L$ is downward closed in $\L$.
     \item The intersection $(\bigcap_{w\in W}w\cdot\I)\cap\L$ is a downward closed prime ideal of $\L$.
     \item If $\I$ is downward closed in $W\cdot \L$, then $(\bigcap_{w\in W}w\cdot \I)\cap\L=\I\cap\L$.
\end{enumerate}
Moreover, for any two prime ideals $\I,\I'$ of $W\cdot\L$ such that 
\[(\bigcap_{w\in W}w\cdot(\I))\cap\L=(\bigcap_{w\in W}w\cdot(\I'))\cap\L,\] 
there exists $w\in W$ such that $\I=w(\I')$.
\end{lem}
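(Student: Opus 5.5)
The argument is pure convex geometry of the $W$-stable cone $\tilde K_\L=\QQ_{\ge0}W\cdot\L$ from Corollary~\ref{cor:WL}. Prime ideals of the saturated monoid $W\cdot\L$ correspond to faces: for a prime ideal $\I$ there is a face $F$ of $\tilde K_\L$ with $\I=(W\cdot\L)\setminus F$. Equivalently, there is a linear functional $\ell$, non-negative on $\tilde K_\L$, with $\I=\{\la\mid \ell(\la)>0\}$ (Appendix~\ref{sec:appendix-monoid}). We use this description throughout, and the key input is Lemma~\ref{lem:K-L-properties}(2): the cone contains all negative roots.

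For (1), let $\la\in\I\cap\L$ and $\mu\in\L$ with $\mu\le\la$. We write $\mu=\la-\nu$ with $\nu\in\NN^\Delta$. Suppose $\mu\notin\I$, i.e.\ $\mu\in F$. Then $\la=\mu+\nu$, and we want to show $\la\in F$. The relevant fact is that $\mu\le\la$ with both dominant places $\mu$ in $\mathrm{conv}(W\la)$ (as in the proof of Corollary~\ref{cor:W-stable-cone}), so $\mu=\sum c_w w\la$ is a convex combination. This direction is the wrong one for our purpose, so we argue instead with the functional $\ell$: $\ell(\la)=\ell(\mu)+\ell(\nu)$. Here $\ell(\mu)=0$, while $\ell(-\alpha_i)\ge0$ because $-\alpha_i\in\tilde K_\L$. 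Hence $\ell(\nu)\le0$ and $\ell(\la)\le0$, which forces $\ell(\la)=0$ and contradicts $\la\in\I$. So $\mu\in\I$.

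For (2), the set $\I_W:=\bigcap_w w\I$ is an ideal of $W\cdot\L$, and its complement $\bigcup_w w(F)\cap W\cdot\L$ meets $\L$ in a submonoid. To see this, note that $\bigcup_w wF\cap C$ equals $F'\cap C$, where $F'$ is the minimal face of $\tilde K_\L$ containing $W$-translates of $F$ meeting $C$. For dominant $\la,\mu$ lying in $w_1F$ and $w_2F$ respectively, we show that $\la+\mu$ lies in some $wF$. We choose $w$ with $w^{-1}$ moving $F$ so that $\ell\circ w^{-1}$ is maximally dominant-friendly. Concretely, replace $\ell$ by $\ell'=\ell\circ w^{-1}$ chosen so that $\ell'$ is anti-dominant on $C$, i.e.\ $\ell'(\alpha_i)\le0$ for all $i$; this is possible by $W$-conjugating the functional into the anti-dominant chamber of the dual. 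For such $\ell'$, on dominant weights $\la$ we have $\ell'(\la)\le\ell'(w\la)$ for all $w$, since $\la-w\la\in\NN^\Delta$. Thus a dominant $\la$ lies in $\bigcup_w wF$ if and only if $\ell'(\la)=0$, and this condition is additive. Downward closedness then follows from (1) applied to the ideal $w\I$ cut out by $\ell'$, which shows $\I_W\cap\L=\{\la\in\L\mid\ell'(\la)>0\}$.

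For (3), $\I$ downward closed together with $w\la\le\la$ for dominant $\la$ gives the following: if $\la\in\I\cap\L$, then every $w\la\in\I$, so $\la\in w^{-1}\I$ for all $w$. The reverse inclusion is trivial.

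For the final uniqueness statement, by the normalization in (2), $\I_W\cap\L$ determines the face $\{\ell'=0\}\cap C$ of $C_\L$, where $\ell'$ is the anti-dominant representative. The main obstacle is to show that the face $F_0=\{\ell'=0\}\cap\tilde K_\L$ of the $W$-stable cone is determined by its intersection with $C$, up to the stabilizer. We would argue as follows. Since $\ell'$ is anti-dominant, $F_0$ is stable under the parabolic subgroup $W_J$ with $J=\{i\mid\ell'(\alpha_i)=0\}$, and $F_0=W_J\cdot(F_0\cap C)$; this is the standard fact that a face of a $W$-stable cone meeting $C$ in a face is generated by the $W_J$-orbit. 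Hence two prime ideals giving the same $\I_W\cap\L$ have anti-dominant representatives defining the same face, and therefore $\I=w(\I')$.
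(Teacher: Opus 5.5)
\textbf{The gap is in (1).} Your argument rests on the claim $-\alpha_i\in\tilde K_\L$, which is false in general.

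Lemma~\ref{lem:K-L-properties}(2) is a statement about $K_\L$, not about $\tilde K_\L=\bigcap_{w\in W}w(K_\L)$. Since $\tilde K_\L$ is $W$-stable, $-\alpha_i\in\tilde K_\L$ would give $\alpha_i=s_i(-\alpha_i)\in\tilde K_\L$. The whole root space would then lie in $\tilde K_\L\cap(-\tilde K_\L)$, which fails whenever $\M$ has a zero and $\Delta\neq\varnothing$.

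Concretely, for $\Mat_n$ one has $W\cdot\L=\NN^n$. The face $\{x_1=0\}$ of $\QQ_{\ge0}^n$ is cut out only by positive multiples of $\ell=e_1^*$, and $\ell(e_1-e_2)=1>0$, where $e_1-e_2$ is a simple root for the upper triangular Borel. So for an arbitrary prime ideal $\I$ no defining functional need satisfy $\ell(\alpha_i)\le0$. The inequality $\ell(\la-\mu)\le0$ is exactly what (1) asks you to prove, and it has to come from $\la,\mu\in\L$ together with the $W$-stability of $\tilde K_\L$.

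The missing idea is a reflection argument. By induction, using \cite[Corollary 2.7]{Stembridge} and downward closedness of $\L$, reduce to $\la=\mu+\alpha$ with $\alpha$ a positive root. With $m=\langle\alpha^\vee,\mu\rangle\ge0$ one has $(2+m)\mu=(1+m)(\mu+\alpha)+s_\alpha(\mu+\alpha)$, and both summands lie in $W\cdot\L\subset\tilde K_\L$. Then $\ell(\mu)=0$ together with $\ell\ge0$ on $\tilde K_\L$ forces $\ell(\mu+\alpha)=0$. This is how the paper proves (1).

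\textbf{The rest of your route is sound.} For (2) it is more economical than the paper's. Once you pass to an anti-dominant $\ell'=\ell\circ w^{-1}$, the computation you attempted in (1) becomes valid verbatim, because $\ell'(-\alpha_i)\ge0$ now holds by anti-dominance. Downward closedness and primality of $(\bigcap_w w\cdot\I)\cap\L=\{\la\in\L\mid\ell'(\la)>0\}$ therefore follow without the general case of (1). The paper's proof of (2), by contrast, applies (1) to a translated face $vF$. So just prove downward closedness directly there instead of citing (1). Part (3) is fine.

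\textbf{The uniqueness part needs two more steps.} First, the equality $F_0=W_J\cdot(F_0\cap C)$ is true but needs an argument: write $u\in W$ as an element of $W_J$ times a minimal-length representative of its coset, and use that $\ell'<0$ on positive roots outside $\ZZ^J$. Second, two anti-dominant functionals, possibly with different $J$, that cut out the same $F_0\cap C$ must define the same face. One way: $W_J$ moves a relative interior point of $F_0$ into $C$, so $F_0$ is the smallest face of $\tilde K_\L$ containing $F_0\cap C$.

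The paper avoids this with a summation trick. Suppose $F\cap W\cdot\L\not\subset w(F')$ for every $w$, and let $x$ be the sum of witnesses $x_w$. Then $x\in F\cap W\cdot\L$, so by the hypothesis (applied to its dominant translate) $x$ lies in some face $u(F')$. Since $u(F')$ is a face, this forces $x_u\in u(F')$, a contradiction.
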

\begin{proof} 
    We will freely use notations from \S\ref{sec:cones}. Let $F$ be the face of the cone $\tilde{K}\coloneqq W\cdot \QQ_{\ge0}\L$ such that $W\cdot\L\setminus F=\I$.\par
    (1) We need to show that $F\cap \L$ is upward closed in $\L$. Since $F$ is an intersection of some hyperplanes among the $W$-orbits of $H_1,\dotsc,H_s$ and the intersections of any upward closed subsets of $\L$ remain upward closed, we may assume that $F=w(H)$ where $H=H_i$ for some $1\le i\le s$ and $w\in W$.\par
    Let $x<y$ with $x\in w(H)\cap \L$ and $y\in\L$. We want to show that $ y\in w(H)$. Since $\L$ is downward closed in $X^+$, we may assume that $y$ is minimal among the dominant characters greater than (but not equal to) $x$. Then by \cite[Corollary 2.7]{Stembridge} we have $y=x+\a$ where $\a$ is a positive root. Let $m\coloneqq\langle\alpha^\vee, x\rangle$, a non-negative integer. Since $w^{-1}(x)\in H$ by assumption, we have
    \[(2+m)w^{-1}(x)=(1+m)w^{-1}(x+\a)+w^{-1}(x-(1+m)\a)\in H.\]
    Since $\L\subset\tilde{K}$, we deduce that the two summands $(1+m)w^{-1}(x+\a)$ and $w^{-1}s_{\a}(x+\a)=w^{-1}(x-(1+m)\a)$ both belong to $K$. Then by convexity of $K$, both summands belong to $H$ and hence $w^{-1}(y)=w^{-1}(x+\a)\in H$.\par
    (2) It is clear that the intersection is an ideal and it is downward closed by (1). To show that it is a prime ideal, it remains to verify that its complement $\bigcup_{w\in W}(w(F)\cap\L)$ is stable under addition. Take any $w,w'\in W$ and $x\in wF\cap \L$, $y\in w'F\cap \L$. Then we have $w^{-1}\cdot x+w'^{-1}\cdot y\in F\cap W\cdot\L$ and so there exists $v\in W$ such that $v\cdot (w^{-1}\cdot x+w'^{-1}\cdot y)\in vF\cap\L$. Since $v\cdot (w^{-1}\cdot x+w'^{-1}\cdot y)\leq x+y$ and $x+y\in \L$, we get $x+y\in vF$ by (1). \par
    (3) By assumption $F\cap W\cdot\L$ is upward closed in $W\cdot \L$. For any $w\in W$ and $x\in \L$ such that $w^{-1}\cdot x\in F$, we have $w^{-1}\cdot x\leq x$, so that $ x\in F$ by the upward closed property.
    This shows that $wF\cap\L\subset F$ for any $w\in W$ and the result follows.\par 
    It remains to prove the last statement. Let $F'$ be the face of $\tilde{K}$ such that $W\cdot\L\setminus F'=\I'$. By assumption we have $\bigcup_{w\in W}w(F)\cap\L=\bigcup_{w\in W}w(F')\cap \L$. To finish the proof it is enough to show that there exists $w\in W$ such that 
    \[F\cap W\cdot\L\subset w(F')\cap W\cdot\L.\]
    Indeed, this would imply that $\I\supset w(\I')$ and by symmetry we get that $\I\supset w(\I')\supset w'(\I)$ for some $w'\in W$. Then we must have $\I=w'(\I)$ and hence $I=w(\I')$. \par 
    Suppose the inclusion above does not hold. Then for any $w\in W$, there exists an element $x_w\in F\cap W\cdot\L\setminus w(F')\subset\tilde{K}$. Then $x\coloneqq\sum_{w\in W}x_w$ belongs to $F\cap W\cdot\L$ and so there exists $v\in W$ such that $x\in F\cap v(\L)$. Then we have $x\in v^{-1}(F)\cap\L$ and our assumption implies that $x\in u(F')$ for some $u\in W$. Since $u(F')$ is a face of $\tilde{K}$, the convexity of $\tilde{K}$ implies that $x_u\in u(F')$, which is a contradiction and we are done.
\end{proof} 

\begin{lem}\label{lem:some properties WL and J}
    Let $\J$ be a downward closed ideal of $\L$. Then the downward closure \[\overline{\J}^{\leq}\coloneqq\{\la\in W\cdot\L\mid \la\le\mu\text{ for some }\mu\in\J\}\] 
    of $\J$ in $W\cdot\L$ is an ideal of $W\cdot \L$ and $\overline{\J}^{\leq}\cap\L=\J$.
\end{lem}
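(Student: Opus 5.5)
The claim splits into two parts: $\overline{\J}^{\leq}$ is an ideal of $W\cdot\L$, and $\overline{\J}^{\leq}\cap\L=\J$. I would do the second first, since it is easy. The inclusion $\J\subset\overline{\J}^{\leq}\cap\L$ is clear, because $\J\subset\L\subset W\cdot\L$ and every $\mu\in\J$ satisfies $\mu\le\mu$. For the reverse inclusion, take $\la\in\L$ with $\la\le\mu$ for some $\mu\in\J$. Then $\la,\mu\in\L\subset X^+$, and since $\J$ is downward closed in $\L$ we get $\la\in\J$.

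For the ideal property, take $\la\in\overline{\J}^{\leq}$ with $\la\le\mu$, $\mu\in\J$, and take $\nu\in W\cdot\L$. We need an element $\mu'\in\J$ with $\la+\nu\le\mu'$. Write $\nu=w(\nu^+)$ with $\nu^+\in\L$ the dominant representative in the $W$-orbit of $\nu$; it lies in $\L$ because $W\cdot\L\cap X^+=\L$ (Corollary \ref{cor:WL} together with Lemma \ref{lem:K-L-properties}). The standard fact $w(\nu^+)\le\nu^+$ for dominant $\nu^+$ then gives
\[\la+\nu\le\mu+\nu^+.\]
Since $\J$ is an ideal of $\L$ and $\nu^+\in\L$, the element $\mu'\coloneqq\mu+\nu^+$ lies in $\J$.

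It remains to check that $\la+\nu$ lies in $W\cdot\L$ at all; this is the only step needing a little care. It follows because $W\cdot\L$ is a submonoid of $X$ (Corollary \ref{cor:WL}(2), being $\tilde K_\L\cap X$ for a convex cone $\tilde K_\L$), and both $\la$ and $\nu$ lie in it. Hence $\la+\nu\in\overline{\J}^{\leq}$, which shows $\overline{\J}^{\leq}$ is an ideal of $W\cdot\L$.
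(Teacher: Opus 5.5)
Your proof is correct and follows essentially the same route as the paper: bound $\la+\nu\le\mu+\nu^+$ using $w(\nu^+)\le\nu^+$ and the ideal property of $\J$, then get the intersection from downward closedness of $\J$ in $\L$. The only cosmetic differences are that you could take $\nu^+=x'$ directly from writing $\nu=w'(x')$ with $x'\in\L$, with no need to cite $W\cdot\L\cap X^+=\L$, and that you explicitly check $\la+\nu\in W\cdot\L$, which the paper leaves implicit.
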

\begin{proof}
    Take any element $w\cdot x\in\overline{\J}^{\leq}$ with $w\in W$ and $x\in\L$. Then there exists $y\in\J$ such that $w\cdot x\leq y$. For any $w'\in W$ and $x'\in\L$ we have $w\cdot x+w'\cdot x'\leq y+w'\cdot x'\leq y+x'$. Since $\J$ is an ideal of $\L$ we have $y+x'\in\J$ and hence $w\cdot x+w'\cdot x'\in \overline{\J}^{\leq}$. This shows that $\overline{\J}^{\leq}$ is an ideal of $W\cdot\L$. \par 
    To show the last equality, we suppose moreover that $w\cdot x\in\L$. Since $w\cdot x\leq y$, $y\in\J$ and $\J$ is downward closed in $\L$, we get that $w\cdot x\in \J$. Therefore $\overline{\J}^{\leq}\cap\L=\J$.
\end{proof}

\begin{prop}\label{prop:bijection closed prime L WL}
    Any $W$-orbit of prime ideals in $W\cdot \L$ contains a downward closed prime ideal, and we have a bijection 
    $$\Set{\text{prime ideals of } W\cdot \L}/W \xrightarrow[]{\sim} \Set{\text{downward closed prime ideals of } \L}$$
    given by  $$\I\mapsto(\bigcap_{w\in W}w\cdot\I)\cap\L.$$
\end{prop}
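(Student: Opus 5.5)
The plan is to assemble Proposition \ref{prop:bijection closed prime L WL} from Lemma \ref{lem:restriction prime ideal of WL} and Lemma \ref{lem:some properties WL and J}, in three steps: well-definedness, injectivity, surjectivity.

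\emph{Well-definedness.} Let $\I$ be a prime ideal of $W\cdot\L$. By Lemma \ref{lem:restriction prime ideal of WL}(2), the set $(\bigcap_{w\in W}w\cdot\I)\cap\L$ is a downward closed prime ideal of $\L$. The expression $\bigcap_{w}w\cdot\I$ does not change when $\I$ is replaced by $u\cdot\I$ with $u\in W$. Hence the map descends to $W$-orbits.

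\emph{Injectivity.} This is exactly the last statement of Lemma \ref{lem:restriction prime ideal of WL}: if two prime ideals $\I,\I'$ have the same image, then $\I=w(\I')$ for some $w\in W$.

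\emph{Surjectivity, and the claim about downward closed representatives.} Let $\J$ be a downward closed prime ideal of $\L$. I would produce a downward closed prime ideal $\I$ of $W\cdot\L$ with $\I\cap\L=\J$. Lemma \ref{lem:restriction prime ideal of WL}(3) then gives $(\bigcap_w w\cdot\I)\cap\L=\I\cap\L=\J$.

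The natural candidate is $\overline{\J}^{\le}$. By Lemma \ref{lem:some properties WL and J}, it is an ideal of $W\cdot\L$ with $\overline{\J}^{\le}\cap\L=\J$, and it is downward closed by construction. The main obstacle is showing it is \emph{prime}, i.e. that its complement $C:=W\cdot\L\setminus\overline{\J}^{\le}$ is closed under addition.

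Here is what I would try first. Set $\L':=\L\setminus\J$; this is a submonoid of $\L$ and is upward closed in $\L$. For $x\in W\cdot\L$, write $x^+$ for its dominant $W$-conjugate. Then $x^+\in\L$, and $x\le x^+$. I claim that $x\in\overline{\J}^{\le}$ if and only if $x^+\in\J$. One direction: if $x^+\in\J$, then $x\le x^+$ puts $x$ in $\overline{\J}^{\le}$. The other direction: if $x\le\mu$ with $\mu\in\J$, use the standard fact that $x\le\mu$ with $\mu$ dominant implies $x^+\le\mu$ (since $x^+\in\mathrm{conv}(W x)$ and $\mathrm{conv}(W\mu)=\bigcap_w w(\mu-\QQ_{\ge0}^\Delta)$ as in Corollary \ref{cor:W-stable-cone}). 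Downward closedness of $\J$ then gives $x^+\in\J$.

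With this claim, primality reduces as follows. Take $x,y\in C$, so $x^+,y^+\in\L'$. Since $(x+y)^+\le x^+ + y^+$, I would need $(x+y)^+\in\L'$. This is an \emph{upward} statement about $\L'$, which does not follow from the definitions. So I would instead pass to faces. Let $F$ be the face of $C_\L=\QQ_{\ge0}\L$ with $\L'=F\cap\L$; it exists because $\J$ is a prime ideal of the saturated monoid $\L$ (Appendix \ref{sec:appendix-monoid}). Since $\J$ is downward closed, $F$ is not contained in a root hyperplane side only. Take $\I:=W\cdot\L\setminus(W\cdot F)$ modified so as to be prime: concretely, pick the face $\tilde F$ of $\tilde K_\L$ with $\tilde F\cap C=F$ and set $\I:=W\cdot\L\setminus\tilde F$. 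This is prime because $\tilde F$ is a face.

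Then $\I\cap\L=\L\setminus F=\J$. Downward closedness of $\I$ should follow as in the proof of Lemma \ref{lem:restriction prime ideal of WL}(1), using convexity and the fact that $K_\L$ contains the negative roots (Lemma \ref{lem:K-L-properties}(2)). Verifying that such a face $\tilde F$ exists is the step I expect to be delicate. I would do it by taking a linear functional $\ell$ cutting out $F$ in $C_\L$ that vanishes on no root hyperplane direction outside $F$, and checking that it is nonnegative on $\tilde K_\L$ using $W$-averaging and the inequality $w(x)\le x$ for dominant $x$.
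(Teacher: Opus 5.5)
Your well-definedness and injectivity steps are the paper's, and so is the reduction of surjectivity to finding a downward closed prime $\I\subset W\cdot\L$ with $\I\cap\L=\J$. You should say explicitly that the first assertion of the proposition then follows by injectivity. The problems are all in producing $\I$.

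First, the ``standard fact'' in your abandoned branch is false. The relation $x\le\mu$ with $\mu$ dominant does not give $x^+\le\mu$, since $\mu-\QQ_{\ge0}^\Delta$ is only one of the cones whose intersection is $\mathrm{conv}(W\mu)$. Take $G=\mathrm{GL}_2$, $\L=\{(a,b)\mid a\ge b\ge0\}$ (so $W\cdot\L=\NN^2$) and $\J=\{(a,b)\in\L\mid b>0\}$. Then $(0,2)\le(1,1)\in\J$ although $(0,2)^+=(2,0)\notin\J$. Moreover $\overline{\J}^{\le}=\{(c,d)\in\NN^2\mid d>0\}\setminus\{(0,1)\}$ is not prime.

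The genuine gap is in the face route. The step you call delicate is the whole content, and your sketch never uses that $\J$ is downward closed, which is indispensable. In the same example, the face $F=\{a=b\}\cap C_\L$ has $\L\setminus F=\{(a,b)\in\L\mid a>b\}$, which is not downward closed. Correspondingly, $F=\tilde F\cap C$ for no face $\tilde F$ of $\tilde K_\L=\QQ_{\ge0}^2$. Even when $\tilde F\cap C=F$ holds, $W\cdot\L\setminus\tilde F$ need not be downward closed; Lemma \ref{lem:restriction prime ideal of WL}(1) only concerns $\L$. For instance, with $F=\{0\}$ the face $\tilde F=\{a=0\}$ meets $C$ in $F$, yet $\{a>0\}$ is not downward closed since $(0,1)\le(1,0)$. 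Nor can $W$-averaging help: a $W$-invariant functional kills all roots, so it cannot cut out $\{b=0\}\cap C_\L$.

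What is missing is the extra requirement $\ell(\alpha_i)\le0$ for all $i\in\Delta$. That is, $\ell$ should expose a face of $C_\L-\QQ_{\ge0}^\Delta$ meeting $C_\L$ exactly in $F$. Downward closedness is exactly what makes this possible.
\begin{itemize}
\item Let $G$ be the smallest face of $C_\L-\QQ_{\ge0}^\Delta$ containing $F$, and let $f\in\mathrm{relint}(F)$, so $f$ lies in the relative interior of $G$.
\item For $y\in G\cap C_\L$ there is $\epsilon>0$ with $f-\epsilon y=c-q$, where $c\in C_\L$ and $q\in\QQ_{\ge0}^\Delta$. Hence $f\le_\QQ c+\epsilon y$.
\item After clearing denominators, upward closedness of $F\cap\L=\L\setminus\J$ in $\L$ forces $c+\epsilon y\in F$. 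Since $F$ is a face, $y\in F$.
\item Thus $G\cap C_\L=F$, and any $\ell$ exposing $G$ works.
\end{itemize}
For $x\in C_\L$ one then has $\ell(w(x))=\ell(x)-\ell(x-w(x))\ge\ell(x)\ge0$. So $\tilde F\coloneqq\ell^{-1}(0)\cap\tilde K_\L$ is a face with $\tilde F\cap C=F$ by Lemma \ref{lem:K-L-properties}(3). The set $\I\coloneqq\{x\in W\cdot\L\mid\ell(x)>0\}$ is a prime ideal with $\I\cap\L=\J$. It is downward closed, since $y\le x$ implies $\ell(y)\ge\ell(x)$.

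With this inserted your route is complete and genuinely different from the paper's. The paper takes the largest downward closed ideal of $W\cdot\L$ meeting $\L$ in $\J$ and shows by contradiction that it is prime. In that argument, the reduction to $x,y\in\L$ does not preserve $x+y\in\I$. The justification given would equally make $\overline{\J}^{\le}$ above prime, so your corrected functional argument is the more robust of the two.
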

\begin{proof}
    According to Lemma \ref{lem:restriction prime ideal of WL}, the map is well defined and injective. It remains to show surjectivity. Of course the ideal $\L$ itself is the image of $W\cdot\L$. Now let $\J\subset\L$ be a \emph{proper} downward closed prime ideal of $\L$. Let $\Omega_\J$ be the set of downward closed ideals $\I\subset W\cdot\L$ such that $\I\cap\L=\J$. We first note that $\Omega_\J$ is nonempty since it contains the downward closure $\overline{\J}^{\leq}$ by Lemma \ref{lem:some properties WL and J}. Since the union of two ideals in $\Omega_\J$ also lies in $\Omega_\J$, the set $\Omega_\J$ has a unique maximal element $\I$.\par
    We claim that $\I$ is a prime ideal of $W\cdot\L$. Suppose this is not the case. Then there exists $x,y\in W\cdot\L\setminus\I$ such that $x+y\in\I$. Since $\I$ is downward closed, its complement $W\cdot\L\setminus\I$ is upward closed. So after replacing $x$ and $y$ by the dominant representatives in their $W$-orbits, we may assume that $x,y\in\L$. Consider the downward closures $\overline{\langle x\rangle}^{\leq},\overline{\langle y\rangle}^{\leq}\subset W\cdot\L$ of the principal ideals 
    \[\langle x\rangle\coloneqq x+W\cdot\L,\quad \langle y\rangle\coloneqq y+W\cdot\L.\]
    Since $\J=\I\cap\L$ is a proper ideal of $\L$, we have $0\nin \I$ and so either $0\notin\overline{\langle x\rangle}^{\leq}$ or $0\notin\overline{\langle y\rangle}^{\leq}$. Let us assume without loss of generality that $0\notin\overline{\langle x\rangle}^{\leq}$. Then the union $\I'\coloneqq\I\cup \overline{\langle x\rangle}^{\leq}$ is a proper downward closed ideal of $W\cdot \L$. For any element $z\in\overline{\langle x\rangle}^{\leq}\cap\L$, there exists $a\in\L$ such that $z\leq x+a$ and so $z+y\leq x+y+a$. Since $\J$ is downward closed and $x+y+a\in \I\cap \L=\J$, we get $z+y\in \J$. This shows that
    $\overline{\langle x\rangle}^{\leq}\cap\L\subset \J$ and hence $\I'\cap\L=\J$. This contradicts with the maximality of $\I$. So we verified the claim that $\I\subset W\cdot\L$ is a prime ideal. \par 
    Finally, from Lemma \ref{lem:restriction prime ideal of WL} we get that
    \[(\bigcap_{w\in W}w\cdot\I)\cap\L=\I\cap\L=\J\]
    and this finishes the proof.
\end{proof}

\begin{cor}\label{cor:orbit closure}
  Let $\I$ be a prime ideal of $W\cdot \L$. Then the $G\times G$-orbit of the section $e_{\I}\in\M(S)$ is representable by a locally closed subscheme of $\M$ and its schematic closure equals to 
  \[\M(\L/(\bigcap_{w\in W}w\cdot \I)\cap \L).\] 
  In particular, the formation of this schematic closure commutes with arbitrary base change.
\end{cor}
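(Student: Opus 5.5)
Set $\J\coloneqq(\bigcap_{w\in W}w\cdot\I)\cap\L$, which is a downward closed prime ideal of $\L$ by Lemma \ref{lem:restriction prime ideal of WL}(2). By Lemma \ref{lem:M(L/J)}, $\M(\L/\J)$ is a $G\times G$-stable closed subscheme of $\M$, flat over $S$ with integral normal geometric fibers, and its formation commutes with base change since it is defined by base change from ${}_\ZZ\bfO(\L)/{}_\ZZ\bfO(\J)$. The plan has three steps. First show $e_\I\in\M(\L/\J)(S)$. Then show that the orbit map $G\times G\to\M(\L/\J)$, $(g_1,g_2)\mapsto g_1e_\I g_2^{-1}$, has image an open subscheme $\mathcal{O}\subset\M(\L/\J)$ that represents the orbit. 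Finally show $\mathcal{O}$ is schematically dense.

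For the first step, work over $\ZZ$ and show that every $b^*$ with $b\in\dot{\bfB}[\mu]$, $\mu\in\J$, vanishes at $e_\I$. Since $e_\I\in\M_T(S)$ and the restriction $\k[\M]\to\k[\M_T]=\k[W\cdot\L]$ sends $1_\la^*\mapsto e^\la$ and all other dual basis elements to $0$ (proof of Theorem \ref{theo:properties AM MT}(1)), only the $1_\la^*$ with $1_\la\in\dot{\bfB}[\mu]$ matter. By Corollary \ref{cor:B[la]-W-la} these have $\la\in W\mu$. Now $\mu\in\J$ means $\mu\in w\cdot\I$ for all $w$, so $W\mu\subset\I$ and $e_\I(e^\la)=0$.

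For the second and third steps, reduce to geometric fibers. Over an algebraically closed field $k$, $\M(\L/\J)_k$ is irreducible, and the orbit of $e_\I$ is locally closed (Borel). We then need the orbit closure $Y$ to equal $\M(\L/\J)_k$. The ideal of $Y$ is a $G\times G$-stable prime, so by Lemma \ref{lem:GxG invariant prime ideal} it equals ${}_k\bfO(\J')$ for the downward closed prime ideal $\J'=\{\la\in\L: 1_\la^*\in\p_Y\}$. Since $1_\la^*$ is a $U^-\times U$ eigenfunction, $1_\la^*$ vanishes on $Y$ iff it vanishes at $e_\I$ — this is where I expect care: $1_\la^*(g_1e_\I g_2)$ should be computed via the big cell, using that $e_\I$ is $T$-fixed and that $1_\la^*$ is $B^-\times B$-semi-invariant, together with the density of $B^-\times B$-translates of $N(T)\times N(T)$-translates of $e_\I$; vanishing on the whole orbit forces $1_\la^*(\dot w e_\I)=e_\I(e^{w^{-1}\la})=0$ for all $w$, i.e. $\la\in\bigcap_w w\I$. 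Hence $\J'=\J$, so $Y=\M(\L/\J)_k$ fiberwise. This fiberwise identification is the main obstacle.

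To pass to $S$, use that the orbit map $G\times G\to\M(\L/\J)$ is a morphism of flat finitely presented $S$-schemes, fiberwise with open image (the dense orbit of a homogeneous action on an integral variety is open). By the fiberwise flatness/openness criteria in EGA IV (as in Lemma \ref{lem:quotient-flat}), its image is an open subscheme $\mathcal{O}$ representing the fppf orbit sheaf (quotient of $G\times G$ by the flat stabilizer). It is fiberwise dense in a flat $S$-scheme with integral fibers, hence schematically dense (EGA IV 11.10.10). Its closure is therefore $\M(\L/\J)$, which is locally closed in $\M$ and whose formation commutes with base change.
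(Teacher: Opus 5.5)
Your plan follows the paper's: you show $e_\I\in\M(\L/\J)(S)$, identify the orbit closure on geometric fibers through the classification of $G\times G$-stable primes (Lemma \ref{lem:GxG invariant prime ideal}), and then spread out using the EGA IV fiberwise criteria. Step 1 is correct. You also skip the paper's preliminary replacement of $\I$ by a downward closed $W$-conjugate (Proposition \ref{prop:bijection closed prime L WL} and Lemma \ref{lem:restriction prime ideal of WL}(3)); the paper does this so that $\I\cap\L=\J$ and the single point $e_\I$ suffices, but skipping it is a legitimate variant.

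However, the fiber step as you wrote it rests on false claims. Neither $1_\la^*(\dot w e_\I)=e_\I(e^{w^{-1}\la})$ nor ``$1_\la^*$ vanishes on $Y$ iff it vanishes at $e_\I$'' holds in general. Take $\M=\Mat_2$, $\L=\{(a,b)\mid a\ge b\ge 0\}$ and $\I=\{(a,b)\in\NN^2\mid a>0\}$. Then $e_\I=\mathrm{diag}(0,1)$, and with $B$ upper triangular $1_{(1,0)}^*=g_{11}$. This function vanishes at $e_\I$ and at $\dot w e_\I$, but not on the rank-one locus, while $e_\I(e^{(0,1)})=1$.

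The fix is to evaluate at the conjugate $\dot w e_\I\dot w^{-1}=e_{w(\I)}$, which lies both in the orbit and in $\M_T$. There $1_\la^*(e_{w(\I)})=e_{w(\I)}(e^\la)$, which vanishes iff $\la\in w(\I)$. Hence $1_\la^*\in\p_Y$ forces $\la\in\bigcap_w w(\I)$, i.e. $\J'\subset\J$, and Step 1 gives $\J\subset\J'$. No big-cell or density argument is needed for what you called the main obstacle.

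In the spreading-out step, ``fiberwise with open image'' is not an input to any EGA criterion. The fiberwise flatness criterion (EGA IV 11.3.10) requires each $\mu_{\I,s}\colon (G\times G)_s\to\M(\L/\J)_s$ to be \emph{flat}. The flatness of the stabilizer you invoke is a consequence of the flatness of $\mu_\I$, not an independent input.

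Supply fiberwise flatness as the paper does. Over an algebraically closed field, $\mu_{\I,s}$ is dominant onto the integral scheme $\M(\L/\J)_s$, so by generic flatness it is flat on a nonempty open subset of $G\times G$. $G\times G$-equivariance then makes it flat everywhere.

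It follows that $\mu_\I$ is flat and of finite presentation, hence open, so its image $\mathcal{O}$ is open in $\M(\L/\J)$. The map $G\times G\to\mathcal{O}$ is an fppf covering, so $\mathcal{O}$ represents the orbit, and the stabilizer, being the fiber over $e_\I$, is flat. EGA IV 11.10.9 then gives schematic density. With these two repairs your argument is complete.
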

\begin{proof}
    For any downward closed prime ideal $\J\subset \L$, the formation of $\M(\L/\J)$ commutes with arbitrary base change by definition. So the last assertion follows from the remaining ones and we may assume that $S=\Spec(\k)$ is affine.\par
    For any $w\in W$, we choose a lift $\dot{w}\in N_G(T)(\k)\subset G(\k)$. Since the automorphism of $T$ induced by $w$ extends uniquely to $\M_T$, we have by definition $e_{w(\I)}=\dot{w}e_{\I}\dot{w}^{-1}$. In particular $e_{\I}$ and $e_{w(\I)}$ belong to the same $G\times G$-orbit. So by Proposition \ref{prop:bijection closed prime L WL} we may assume that $\I$ is downward closed in $W\cdot\L$. Then the intersection
    \[\J\coloneqq(\bigcap_{w\in W}w\cdot \I)\cap \L=\I\cap \L\]
    is a downward closed prime ideal of $\L$, where the equality follows from Lemma \ref{lem:restriction prime ideal of WL}. Since $\I$ is downward closed and $\J\subset X^+$, we have $W\cdot \J\subset \I$ and hence the ideal ${}_\k\bfO(\J)\subset\k[\M]={}_\k\bfO(\L)$ lies inside the kernel of the homomorphism $e_{\I}\colon {}_\k\bfO(\L)\to\k$ corresponding to $e_\I$. This shows that $e_{\I}\in\M(\L/\J)(\k)$ and the orbit map for $e_{\I}$ factors through a morphism 
    \[\mu_{\I}\colon G\times G\to \M(\L/\J),\] 
    whose image (as fppf sheaf on $S$) is identified with the orbit $\mathrm{Orb}_{\I}$ of $e_{\I}$ (defined as an fppf quotient of $G\times G$). 
    It remains to show that the natural morphism $\mathrm{Orb}_{\I}\to\M(\L/\J)$ is an open embedding and its schematic closure (or equivalently, the scheme theoretic image of $\mu_\I$) equals to $\M(\L/\J)$.\par
    Let us first assume that $S=\Spec(k)$ where $k$ is an algebraically closed field. Let $Z$ be the scheme theoretic image of $\mu_{\I}$. According to Theorem \ref{theo:parametrization GxG orbits}, there exists a downward closed prime ideal $\J'\subset\L$ such that $Z=\M(\L/\J')$. Since $Z\subset\M(\L/\J)$ we have $\J\subset\J'$. For any $\la\in\J'$, from the fact that $e_\I\in Z$ we deduce that $1_{\lambda}^*$ lies in the kernel of $e_{\I}\colon{}_k\bfO(\L)\to k$ and so $\la\in\I\cap\L=\J$. Therefore we have $\J=\J'$ and $Z=\M(\L/\J)$. This shows that $\mu_\I$ is scheme theoretically dominant. Combined with the fact that $\M(\L/\J)$ is reduced, we deduce that there is a nonempty open subset $U\subset G\times G$ such that the restriction of $\mu_\I$ to $U$ is flat by the generic flatness theorem \cite[Theorem 6.9.1]{EGAIV2a7}. Since $\mu_\I$ is $G\times G$-equivariant, by translation we see that $\mu_\I$ is flat.\par 
    In general, for any geometric point $s\in S$, since the formation of $\M(\L/\J)$ commutes with arbitrary base change, the base change $\mu_{\I,s}$ of the map $\mu_\I$ to $s$ is scheme-theoretically dominant and flat by what we just proved. Since $G$ and $\M(\L/\J)$ are of finite presentation and flat over $S$, the map $\mu_{\I}$ is flat by the fiberwise criteria for flatness. Then we deduce that the natural map $\mathrm{Orb}_{\I}\to\M(\L/\J)$ is an open embedding by \cite[Theorem 17.9.1]{EGAIV16a23}. On the other hand, according to \cite[11.10.9]{EGAIV8a15} (which is a consequence of \cite[Proposition 11.9.17]{EGAIV8a15}) the natural map $\mu_\I$ is scheme-theoretically dominant. Thus $\M(\L/\J)$ is the schematic closure of $\mathrm{Orb}_{\I}$ by \cite[11.10.3(iv)]{EGAIV8a15}.
\end{proof}

\begin{cor}\label{cor:orbit decomposition over algebraically closed field}
    Suppose $S=\Spec(k)$ where $k$ is an algebraically closed field. Then all the $G\times G$-orbit closures in $\M$ are normal $k$-varieties. Moreover, we have a decomposition:
    \[\M(k)=\bigsqcup_{[\I]\in \{\text{prime ideals of } W\cdot \L\}/W}G(k)e_{\I}G(k).\]
\end{cor}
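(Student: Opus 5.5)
The plan is to combine Theorem \ref{theo:parametrization GxG orbits}, Corollary \ref{cor:orbit closure} and Proposition \ref{prop:bijection closed prime L WL}. Over $S=\Spec(k)$ with $k$ algebraically closed, any $G\times G$-orbit closure $Z\subset\M$, taken with its reduced structure, is an irreducible, reduced, $G\times G$-stable closed subvariety. It is therefore flat with integral fiber over $\Spec(k)$. By Theorem \ref{theo:parametrization GxG orbits}, $Z=\M(\L/\J)$ for a unique downward closed prime ideal $\J\subset\L$, and that theorem (equivalently Lemma \ref{lem:M(L/J)}) shows $Z$ is normal. This proves the first assertion.

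For the decomposition, let $x\in\M(k)$, and let $Z=\overline{G x G}$ be the closure of its orbit. Write $Z=\M(\L/\J)$ as above. By Proposition \ref{prop:bijection closed prime L WL}, there is a prime ideal $\I\subset W\cdot\L$, unique up to $W$, with $(\bigcap_w w\I)\cap\L=\J$. Corollary \ref{cor:orbit closure} then says the orbit $G e_\I G$ is open in $\M(\L/\J)$ and dense there, since its closure is $\M(\L/\J)$. The orbit $GxG$ is also open and dense in $Z$, because an orbit is open in its closure. Two dense open subsets of the irreducible variety $Z$ intersect, so $GxG=Ge_\I G$. Hence every $k$-point lies in some $G(k)e_\I G(k)$.

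It remains to check disjointness and the indexing by $W$-orbits. If $Ge_\I G=Ge_{\I'}G$, their closures agree, so $\M(\L/\J)=\M(\L/\J')$ with $\J,\J'$ the associated downward closed primes. The bijection of Theorem \ref{theo:parametrization GxG orbits} then gives $\J=\J'$. The bijection of Proposition \ref{prop:bijection closed prime L WL} (its injectivity comes from the last part of Lemma \ref{lem:restriction prime ideal of WL}) gives $\I'=w(\I)$ for some $w\in W$. Conversely, $e_{w(\I)}=\dot w e_\I\dot w^{-1}$ lies in the same orbit as $e_\I$, as in the proof of Corollary \ref{cor:orbit closure}. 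So orbits correspond bijectively to $W$-orbits of prime ideals.

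The only mildly delicate step is identifying the reduced orbit closure with a closed subscheme of the type in Theorem \ref{theo:parametrization GxG orbits}(2). Over an algebraically closed field this is immediate: $Z$ is an irreducible reduced variety, so it is integral, and flatness over a field is automatic. The argument also needs $Z$ to be $G\times G$-stable, which holds because it is the closure of a $G\times G$-stable set.
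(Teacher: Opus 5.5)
Your proposal is correct and follows essentially the same route as the paper: Theorem \ref{theo:parametrization GxG orbits} together with Lemma \ref{lem:M(L/J)} gives the normality of orbit closures, and Proposition \ref{prop:bijection closed prime L WL} together with Corollary \ref{cor:orbit closure} gives the orbit decomposition. You additionally spell out steps the paper leaves implicit: that the reduced orbit closure satisfies the hypotheses of the theorem, that two dense open orbits in the same irreducible closure must coincide, and that distinct $W$-orbits of prime ideals give distinct $G\times G$-orbits.
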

\begin{proof}
By Theorem \ref{theo:parametrization GxG orbits}, the $G\times G$ orbit-closures in $\M$ are the schemes $\M(\L/\J)$, where $\J$ is any downward closed prime ideal of $\L$. Lemma \ref{lem:M(L/J)} gives then the first assertion. By Proposition \ref{prop:bijection closed prime L WL}, we have $\J=(\bigcap_{w\in W}w\cdot \I)\cap \L$ for $\I$ in a unique $W$-orbit of prime ideals of $W\cdot \L$. From Corollary \ref{cor:orbit closure}, we deduce that the idempotents $e_{\I}$, for $\I$ ranging over a set of representatives of such $W$-orbits, form a representative set for the $G\times G$-orbits of $\M$. That implies the desired decomposition on $k$-points.
\end{proof}

\subsection{The minimal orbit}
Let $X_{\ab}\coloneqq X^*(G_{\ab})$, $X_{\der}\coloneqq X^*(T_{\der})$ and $X_{\der}^+\coloneqq X_{\der}\cap X^+$. 
Then we have 
\[X_{\ab}=\{\la\in X,\langle\alpha_i^\vee,\la\rangle=0,\forall i\in I\}=X^+\cap(-X^+).\] 
In fact, we have a monoid decomposition $X^+_\QQ=X_{\der,\QQ}^+\oplus X_{\ab,\QQ}$.\par 
Let $\L^*\coloneqq\L\cap(-\L)$ be the subgroup of invertible elements in $\L$. Then $\L^*\subset X_{\ab}$ is a saturated sub-lattice (since $\L$ is saturated in $X^+$). In particular we have $\L^*=\L_{\ab}^*$ where $\L_{\ab}\coloneqq X_{\ab}\cap\L$. Choose a sub-lattice $X_{\ab}^{(2)}\subset X_{\ab}$ such that $X_{\ab}=\L^*\oplus X_{\ab}^{(2)}$.
\begin{lem}\label{lem:decomposition I1 I2}
    There is a disjoint union decomposition of the set of simple roots $\Delta=\Delta_1\sqcup\Delta_2$ such that $\QQ_{\ge0}\L\cap X_{\der,\QQ}=\QQ^{\Delta_1}\cap X_{\der,\QQ}^+$ and $\langle\alpha_i^\vee,\alpha_j\rangle=0$ for all $i\in\Delta_1$ and $j\in\Delta_2$.
\end{lem}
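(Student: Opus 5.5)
The plan is to work entirely in the rational cone $C_\L\coloneqq\QQ_{\ge0}\L$ and project it to $X_{\der,\QQ}$. Write $p\colon X_\QQ\to X_{\der,\QQ}$ for the projection along $X_{\ab,\QQ}$. Using $X^+_\QQ=X^+_{\der,\QQ}\oplus X_{\ab,\QQ}$, I identify $X_{\der,\QQ}$ with the span of the roots (equivalently of the fundamental weights), so that $X^+_{\der,\QQ}$ is the simplicial cone spanned by the fundamental weights $\omega_i$, $i\in\Delta$. The set $C'\coloneqq C_\L\cap X_{\der,\QQ}$ is then a convex subcone of $X^+_{\der,\QQ}$. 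The goal is to show that $C'$ is a face of $X^+_{\der,\QQ}$. Faces of this simplicial cone are exactly the cones $\sum_{i\in\Delta_1}\QQ_{\ge0}\omega_i$, and these equal $\QQ^{\Delta_1}\cap X^+_{\der,\QQ}$ only when $\Delta_1$ is a union of connected components of the Dynkin diagram. So I will in fact show that $C'$ is spanned by the $\omega_i$ for $i$ in a union $\Delta_1$ of components, and then set $\Delta_2\coloneqq\Delta\setminus\Delta_1$; orthogonality of $\Delta_1$ and $\Delta_2$ is then automatic.

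\textbf{Step 1: $C'$ is a face.} Let $x,y\in X^+_{\der,\QQ}$ with $x+y\in C'$. Since $y\ge_\QQ0$ (a dominant element of the root span is a nonnegative rational combination of simple roots), we have $x\le_\QQ x+y$. After scaling to integral points and adding a common $X_{\ab}$-shift, downward closedness of $\L$ in $X^+$ gives $x\in C_\L$. Here I use that $\L\cap X_{\ab}=\L_{\ab}$ contains some element, possibly after translating by $\L^*$; the cleaner route is to use Lemma~\ref{lem:K-L-properties}(2), which says $K_\L\supset-\QQ_{\ge0}^\Delta$, together with $C_\L=K_\L\cap C$, so that $x=(x+y)-y\in K_\L\cap C=C_\L$. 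Hence $C'$ is a face, and there is a subset $\Delta_1$ with $C'=\sum_{i\in\Delta_1}\QQ_{\ge0}\omega_i$, where the $\omega_i$ are the fundamental weights in $X_{\der,\QQ}$.

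\textbf{Step 2 (main point): $\Delta_1$ is a union of components.} Take $i\in\Delta_1$ and a neighbour $j$ with $a_{ji}<0$. I want to show $j\in\Delta_1$. Since $\omega_i\in C'$, Lemma~\ref{lem:K-L-properties}(2) gives $N\omega_i-\alpha_i\in K_\L$ for all $N$. Its pairings with the coroots are $\langle\alpha_i^\vee,\cdot\rangle=N-2$, $\langle\alpha_j^\vee,\cdot\rangle=-a_{ji}>0$, and $-a_{ki}\ge0$ for every other $k$. So for $N\ge2$ this element is dominant, lies in $X_{\der,\QQ}$, and hence belongs to $K_\L\cap C\cap X_{\der,\QQ}=C'$. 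But its $\omega_j$-coordinate is $-a_{ji}>0$, which forces $j\in\Delta_1$. Iterating over neighbours, $\Delta_1$ is closed under adjacency and is therefore a union of connected components. Consequently $\sum_{i\in\Delta_1}\QQ_{\ge0}\omega_i=\QQ^{\Delta_1}\cap X^+_{\der,\QQ}$, because for a union of components the fundamental weights $\omega_i$, $i\in\Delta_1$, lie in $\QQ^{\Delta_1}$, and conversely.

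Step 2 is the main obstacle. The delicate points are the identification of $X_{\der,\QQ}$ with the root span, and checking that $N\omega_i-\alpha_i$ is dominant, which holds because $-\alpha_i$ pairs nonnegatively with $\alpha_k^\vee$ for every $k\ne i$.
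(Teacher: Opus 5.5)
Your proof is correct. Its first half is essentially the paper's argument. Both show that $C'=\QQ_{\ge0}\L\cap X_{\der,\QQ}$ is a face of the simplicial cone $\bigoplus_{i\in\Delta}\QQ_{\ge0}\omega_i$, using only $-\QQ_{\ge0}^\Delta\subset K_\L$ and $C_\L=K_\L\cap C$. The paper writes $\omega_i=c_i^{-1}(x-\sum_{j\ne i}c_j\omega_j)$ where you write $x=(x+y)-y$. Your opening ``scaling to integral points and adding an $X_{\ab}$-shift'' sentence in Step~1 is unnecessary and can be dropped.

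You genuinely differ in Step~2.

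The paper argues globally. Suppose $\omega_i\in K$. Every simple root $\alpha_j$ of the connected component $\Delta(i)$ occurs in $\omega_i$ with strictly positive coefficient, and $-\alpha_k\in K$ for all $k$. Hence $\alpha_j\in K$ for all $j\in\Delta(i)$, so the whole subspace $\QQ^{\Delta(i)}$ lies in $K$. All $\omega_j$ with $j\in\Delta(i)$ then lie in $C'$ at once.

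You argue locally, along edges of the Dynkin diagram. For $N\ge 2$, the element $N\omega_i-\alpha_i$ is dominant and lies in $C'$. Its $\omega_j$-coordinate is $-\langle\alpha_j^\vee,\alpha_i\rangle>0$ for each neighbour $j$. The face description from Step~1 then forces $j\in\Delta_1$, and you propagate by connectivity.

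Each route has something to offer.
\begin{itemize}
\item Yours needs only the sign pattern of the Cartan matrix, beyond the nonnegativity of the inverse Cartan matrix that both proofs already use in Step~1.
\item The paper's relies on strict positivity of the inverse Cartan matrix on each component. In exchange it reaches a whole component in one step rather than by propagation. It also gives the slightly stronger byproduct $\QQ^{\Delta_1}\subset K_\L$, which matches the shape of the statement.
\end{itemize}
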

\begin{proof}
    Recall that $\QQ_{\ge0}\L=K\cap X_\QQ^+$ where $K\subset X_\QQ$ is a convex cone such that $-\alpha_i\in K$ for all $i\in\Delta$. Then we have 
    \[-X_{\der,\QQ}^+=-\left(\bigoplus_{i\in\Delta}\QQ_{\ge0}\omega_i\right)\subset-\left(\bigoplus_{i\in\Delta}\QQ_{\ge0}\alpha_i\right)\subset K\]
    where $\omega_i$ are the fundamental weights. On the other hand we have 
    \[\QQ_{\ge0}\L\cap X_{\der,\QQ}=K\cap X_{\der,\QQ}^+\subset X_{\der,\QQ}^+=\bigoplus_{i\in\Delta}\QQ_{\ge0}\omega_i.\]
    For any element $x=\sum_{i\in\Delta}c_i\omega_i\in\QQ_{\ge0}\L\cap X_{\der,\QQ}$, from the first inclusion above we deduce that for any $i\in\Delta$ such that $c_i>0$,
    \[\omega_i=c_i^{-1}(x-\sum_{j\in\Delta,j\ne i}c_j\omega_j)\in K\cap X_{\der,\QQ}^+=\QQ_{\ge0}\L\cap X_{\der,\QQ}.\]
    So if $\QQ_{\ge0}\L\cap X_{\der,\QQ}$ contains an interior point of some face of $X_{\der,\QQ}^+$, it contains the whole face. Therefore $\QQ_{\ge0}\L\cap X_{\der,\QQ}=K\cap X_{\der,\QQ}^+$ is a face of $X_{\der,\QQ}^+$.\par 
    Now suppose that $\omega_i\in K$ for some $i\in\Delta$ and write $\omega_i=\sum_{j\in\Delta}\langle\alpha_j^\vee,\omega_i\rangle\alpha_j$. Then we deduce that for any $j\in\Delta$ in the same connected component of $i$ in the Dynkin diagram (equivalently, $\langle\alpha_j^\vee,\omega_i\rangle>0$), we have  
    \[\alpha_j=\langle\alpha_j^\vee,\omega_i\rangle^{-1}(\omega_i-\sum_{k\in\Delta,k\ne j}\langle\alpha_k^\vee,\omega_i\rangle\alpha_k)\in K.\]
    Thus we have $\QQ^{\Delta(i)}\subset K$ where $\Delta(i)\subset\Delta$ is the connected component of $i$, and hence $\omega_j\in K\cap X_{\der,\QQ}^+$ for all $j\in\Delta(i)$. As a conclusion, there is a union of connected components $\Delta_1\subset\Delta$ such that 
    \[\QQ_{\ge0}\L\cap X_{\der,\QQ}=K\cap X_{\der,\QQ}^+=\QQ^{\Delta_1}\cap X_{\der,\QQ}^+.\]
\end{proof}
Define the following sub-lattices of $X$:
\[X_1\coloneqq(\L^*_\QQ\oplus\QQ^{\Delta_1})\cap X,\quad X_2\coloneqq(X_{\ab,\QQ}^{(2)}\oplus\QQ^{\Delta_2})\cap X.\]
Then we have $X_{1,\QQ}\oplus X_{2,\QQ}=X_\QQ$. For $i=1,2$, we have the dominant weight monoids $X_i^+\coloneqq X_i\cap X^+$ and the associated convex polyhedral cones $X_{i,\QQ}^+\coloneqq X_{i,\QQ}\cap X^+_\QQ$. We also have a monoid decomposition $X_\QQ^+=X_{1,\QQ}^+\oplus X_{2,\QQ}^+$. We remark that $X_1$ is canonically associated to $\L$ while the definition of $X_2$ depends on the choice of the splitting $X_{\ab}=\L^*\oplus X_{\ab}^{(2)}$.
\begin{lem}\label{lem:basic properties X_1}
    With notations as above, we have the following equalities:
    \begin{enumerate}
        \item $X_{1}^+ \cap \L_{\ab}=\L_{\ab}\cap(-\L_{\ab})=\L^*$,
        \item $W\cdot X_1^+=X_1$,
        \item $\QQ_{\ge0}\L\cap X_{1,\QQ}=X_{1,\QQ}^+$ and $\L\cap X_1=X_1^+$
    \end{enumerate}
\end{lem}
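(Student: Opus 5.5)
The plan is to work rationally, inside $X_\QQ$, and use Lemma \ref{lem:decomposition I1 I2} together with the decomposition $X^+_\QQ=X^+_{\der,\QQ}\oplus X_{\ab,\QQ}$. Then I intersect with the lattice and use saturation of $\L$ and of $X_1$, which holds because $X_1$ is the intersection of a rational subspace with $X$.

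For (1), the key input is $X_{1,\QQ}\cap X_{\ab,\QQ}=\L^*_\QQ$. This holds because $\QQ^{\Delta_1}\subset X_{\der,\QQ}$, which I identify with the span of the roots, and this span meets $X_{\ab,\QQ}$ trivially. Hence $X_1^+\cap\L_{\ab}\subset\L^*_\QQ\cap X_{\ab}$. Since $\L^*$ is a saturated sublattice of $X_{\ab}$, this intersection equals $\L^*$. The reverse inclusion $\L^*\subset X_1\cap X^+\cap\L_{\ab}$ is immediate. The middle equality $\L_{\ab}\cap(-\L_{\ab})=\L^*$ is a tautology, since $\L^*\subset X_{\ab}$.

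For (2), I use that $X_{1,\QQ}=\L^*_\QQ\oplus\QQ^{\Delta_1}$ is $W$-stable. Indeed, $W$ fixes $X_{\ab}$, and each $s_i$ preserves $\QQ^{\Delta_1}$: for $i\in\Delta_1$ this is clear, and for $i\in\Delta_2$ the reflection $s_i$ acts trivially on $\QQ^{\Delta_1}$ because $\langle\alpha_i^\vee,\alpha_j\rangle=0$. Consequently $X_1$ is $W$-stable. Every element of $X_1$ is $W$-conjugate to a dominant element, and that element lies in $X_1\cap X^+=X_1^+$. This gives $X_1\subset W\cdot X_1^+$, and the other inclusion follows from $W$-stability.

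For (3), the main step is to show $X^+_{1,\QQ}\subset\QQ_{\ge0}\L$. Write $x\in X^+_{1,\QQ}$ as $x=a+d$ with $a\in\L^*_\QQ$ and $d\in\QQ^{\Delta_1}$. Here $d$ is the $X_{\der,\QQ}$-component of a dominant element, so $d$ is dominant and lies in $\QQ^{\Delta_1}\cap X^+_{\der,\QQ}$. By Lemma \ref{lem:decomposition I1 I2}, $d\in\QQ_{\ge0}\L$. Also $\L^*_\QQ\subset\QQ_{\ge0}\L$, since $\L^*$ is a group contained in $\L$. So $x\in\QQ_{\ge0}\L$, giving one inclusion. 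Conversely, let $y\in\QQ_{\ge0}\L\cap X_{1,\QQ}$. It is dominant, so it lies in $X^+_{1,\QQ}$.

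Intersecting with $X$ then gives $\L\cap X_1=\QQ_{\ge0}\L\cap X\cap X_1$, using that $\L$ is saturated, that is, $\L=\QQ_{\ge0}\L\cap X$. By the rational statement this equals $X^+_{1,\QQ}\cap X=X_1^+$.

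I expect the only delicate point to be identifying the $X_{\der,\QQ}$-component of $x$ with $d$. This needs the description $X_{\der,\QQ}\cong\QQ^\Delta$ as a complement of $X_{\ab,\QQ}$ compatible with the decomposition used in Lemma \ref{lem:decomposition I1 I2}. I would check this via the pairing with the $\alpha_i^\vee$.
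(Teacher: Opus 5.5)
Your proposal is correct and follows essentially the same route as the paper. In both, (2) comes from the $W$-stability of $X_1$, which rests on the orthogonality of $\Delta_1$ and $\Delta_2$. Likewise, (3) comes from the splitting $X_{1,\QQ}^+=\L^*_\QQ\oplus(\QQ^{\Delta_1}\cap X^+_\QQ)\subset\QQ_{\ge0}\L$ via Lemma \ref{lem:decomposition I1 I2}. Your argument for (1), and your derivation of $\L\cap X_1=X_1^+$ from saturation, spell out steps the paper leaves implicit.

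The point you flag is harmless. The paper identifies $X_{\der,\QQ}$ with $\QQ^\Delta\subset X_\QQ$. In any case $d=x-a$ is dominant, because $\langle\alpha_i^\vee,a\rangle=0$ for $a\in\L^*_\QQ\subset X_{\ab,\QQ}$.
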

\begin{proof}
    (1) is clear from the definition of $X_1$. We have a decomposition $W=W_1\times W_2$ corresponding to the decomposition $\Delta=\Delta_1\sqcup\Delta_2$ in Lemma \ref{lem:decomposition I1 I2}, in which $W_1$ (resp. $W_2$) is the subgroup generated by the simple reflections associated to simple reflections in $\Delta_1$ (rep. $\Delta_2$). In particular, $W_2$ acts trivially on $X_1$ and hence $X_1$ is $W$-invariant. This implies (2).\par
    Lastly, we have $\QQ_{\ge0}\L\cap X_{1,\QQ}\subset X_\QQ^+\cap X_{1,\QQ}=X_{1,\QQ}^+$, and on the other hand
    \[X_{1,\QQ}^+=(\L^*_\QQ\oplus\QQ^{\Delta_1})\cap X_\QQ^+=\L^*_\QQ\oplus(\QQ^{\Delta_1}\cap X_\QQ^+)\subset\QQ_{\ge0}\L\cap X_{1,\QQ}\]
    where the middle equality follows from the fact that $\L^*\subset X^+$ and the last inclusion follows from the definition of $\Delta_1$. Thus (3) follows.
\end{proof}

\begin{prop}\label{prop:max-closed-prime-ideal}
    With notations as above, then $X_1^+$ is a submonoid of $\L$ and the complement $\L\setminus X_1^+$ is the maximal proper downward closed prime ideal of $\L$.
\end{prop}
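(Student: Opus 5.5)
By Lemma~\ref{lem:basic properties X_1}(3) we have $X_1^+=\L\cap X_1$, so $X_1^+$ is a submonoid of $\L$; it is saturated, being the intersection of $\L$ with a saturated sublattice. The plan is to show successively that $\J_0\coloneqq\L\setminus X_1^+$ is a proper prime ideal, that it is downward closed, and that it contains every proper downward closed prime ideal of $\L$.

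\emph{$\J_0$ is a proper prime ideal.} Since $0\in X_1^+$, the set $\J_0$ is proper, and its complement $X_1^+$ is a submonoid. It remains to check that $\J_0$ is an ideal, i.e.\ that $X_1^+$ is a face of $\L$: if $x,y\in\L$ and $x+y\in X_1$, then $x,y\in X_1$. Write $\QQ_{\ge0}\L=\QQ_{\ge0}\L_{\der}\oplus(\text{the part in }X_{\ab,\QQ})$. More precisely, we use the decomposition $X^+_\QQ=X^+_{\der,\QQ}\oplus X_{\ab,\QQ}$ together with Lemma~\ref{lem:decomposition I1 I2}. The subspace $X_{1,\QQ}$ is the span of the face $\L^*_\QQ\oplus(\QQ^{\Delta_1}\cap X^+_{\QQ})$ of the cone $\QQ_{\ge0}\L$. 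To confirm that this is a face, I would exhibit a linear functional $\ell$ that is nonnegative on $\QQ_{\ge0}\L$ and whose zero locus in the cone is exactly $X_{1,\QQ}^+$. Such an $\ell$ is built from two pieces. The first is the fundamental coweight coordinates $\langle\varpi_j^\vee,-\rangle$ for $j\in\Delta_2$; these are nonnegative on $X^+$, vanish on $X_1$, and on $\QQ^{\Delta_1}\cap X^+_\QQ$ detect the $\Delta_2$-part. The second is a functional on $X_{\ab,\QQ}$ that cuts out the lineality space $\L^*_\QQ$ of the pointed quotient cone. Since the decomposition of Lemma~\ref{lem:decomposition I1 I2} gives $\QQ_{\ge0}\L\cap X_{\der,\QQ}=\QQ^{\Delta_1}\cap X^+_{\der,\QQ}$, the face condition should follow. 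An alternative is to argue directly: if $x+y\in X_1$, then the $\Delta_2$-fundamental-weight coordinates of $x$ and $y$ are nonnegative and sum to zero, and similarly for the $X^{(2)}_{\ab}$-components modulo the cone structure.

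\emph{$\J_0$ is downward closed.} Let $\mu\le\la$ with $\la\in\J_0$ and $\mu\in\L$; we must show $\mu\notin X_1$. Suppose instead that $\mu\in X_1$. Then $\la=\mu+\nu$ with $\nu\in\NN^\Delta$, and $\nu$ has a nonzero $\Delta_2$-part because $\la\notin X_1$. Write $\nu_2$ for the $\Delta_2$-component of $\nu$. The image of $\la$ in $X_\QQ/X_{1,\QQ}$ then equals the image of $\nu_2$, and it has to be dominant for the $\Delta_2$ root system. On the other hand, $\la\in\QQ_{\ge0}\L$. My first attempt to get a contradiction would be to use $W$-invariance and convexity of $\tilde K_\L$ to produce an element of $\L$ of the form $\mu+c\,\omega_j$ with $j\in\Delta_2$; this would force $\omega_j\in\QQ_{\ge0}\L$ up to $X_1$, contradicting Lemma~\ref{lem:decomposition I1 I2}. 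I expect this step to be the main obstacle. The cleaner alternative is to use $W_2$-invariance. The set $\mathrm{conv}(W_2\la)$ lies in $\tilde K_\L$ and contains points of the form $\mu+\nu_1+t\nu_2$; averaging over $W_2$ kills the $\Delta_2$-root part, so one obtains $\mu+\nu_1$ plus a $\Delta_2$-weight that lies in the cone. Running the same argument with $\la$ replaced by $\la+n\mu$ for large $n$ should yield a strictly dominant $\Delta_2$-direction inside $\QQ_{\ge0}\L$, again contradicting Lemma~\ref{lem:decomposition I1 I2}.

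\emph{Maximality.} Let $\J$ be a proper downward closed prime ideal. It suffices to show $X_1^+\cap\J=\varnothing$. Because $\L\setminus\J$ is a face containing $0$, and $\J$ is downward closed, it is enough to prove that any $x\in X_1^+$ lies below some $y$ with $y\in\L\setminus\J$. Inside $X_1$ we have $W_1\cdot X_1^+=X_1$ by Lemma~\ref{lem:basic properties X_1}(2), so $-x\in W\cdot X_1^+$. Hence for $x\in X_1^+$ there is $x'\in X_1^+$ (the dominant representative of $-x$, namely $-w_0x$) with $x+x'\in X_1^+$. If $x\in\J$ then $x+x'\in\J$. But $0\le x+x'$: indeed $x+x'=x-w_0x\in\NN^\Delta$ plus an abelian part, which is zero since $x\in X_1$ has abelian component in $\L^*$ and $x'$ has the opposite one. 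Since $\J$ is downward closed, $x+x'\in\J$ and $0\le x+x'$ give $0\in\J$, contradicting properness. Therefore $\J\subset\J_0$.
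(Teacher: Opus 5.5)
Your downward-closedness step for $\L\setminus X_1^+$ has a genuine gap, as you suspected. Averaging $\la$ over $W_2$ just gives $\mu+\nu_1\in X_{1,\QQ}$, which contradicts nothing, and replacing $\la$ by $\la+n\mu$ is a hope, not an argument. No Weyl group or convexity is needed. The missing idea is to use the units of $\L$ to move $\la$ into the root span:
\begin{itemize}
\item Write $\mu=a+b$ with $a\in\L^*_\QQ$ and $b\in\QQ^{\Delta_1}$.
\item Since $a$ lies in the lineality space $\QQ_{\ge0}\L\cap(-\QQ_{\ge0}\L)$, the element $\la-a$ still lies in $\QQ_{\ge0}\L$.
\item Also $\la-a=\nu+b\in\QQ^\Delta=X_{\der,\QQ}$, so Lemma~\ref{lem:decomposition I1 I2} gives $\la-a\in\QQ^{\Delta_1}$.
\item Hence $\nu\in\QQ^{\Delta_1}$, i.e.\ $\nu_2=0$, a contradiction.
\end{itemize}
The paper does exactly this after scaling: it subtracts some $\la_0\in\L^*$ and uses $-\la_0\in\L$ to get $\la-\la_0\in\L\cap X_{\der,\QQ}$.

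Your face argument in the first step is also incomplete, on the abelian side. The functionals $\langle\varpi_j^\vee,-\rangle$ for $j\in\Delta_2$ correctly handle the derived components. A functional on $X_{\ab,\QQ}$, composed with the projection $p_{\ab}\colon X_\QQ\to X_{\ab,\QQ}$ along $\QQ^\Delta$, only helps if you control $p_{\ab}(\QQ_{\ge0}\L)$. For a pointed cone in $X^+_\QQ$ that is not downward closed, this image can be all of $X_{\ab,\QQ}$. What you need is the following:
\begin{itemize}
\item For $x\in\QQ_{\ge0}\L$, the element $x-p_{\ab}(x)$ is dominant and lies in $\QQ^\Delta$, hence in $\QQ^\Delta_{\ge0}$.
\item By downward closedness, $p_{\ab}(x)\in\QQ_{\ge0}\L$. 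So $p_{\ab}(\QQ_{\ge0}\L)=\QQ_{\ge0}\L\cap X_{\ab,\QQ}$, whose lineality space is $\L^*_\QQ$ because $\L$ is saturated.
\item Then $x+y\in X_1$ gives $-p_{\ab}(x)=p_{\ab}(y)-p_{\ab}(x+y)\in\QQ_{\ge0}\L$, which forces $p_{\ab}(x)\in\L^*_\QQ$.
\end{itemize}
Your opening formula $\QQ_{\ge0}\L=\QQ_{\ge0}\L_{\der}\oplus(\cdots)$ already fails for $\Mat_2$. The paper packages these facts as $\QQ_{\ge0}\L=X^+_{1,\QQ}\oplus(\QQ_{\ge0}\L\cap X_{2,\QQ})$, with strictly convex second summand.

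Your maximality step is correct. It uses $-w_0x\in X_1^+$ and $0\le x-w_0x$, and it is a neat alternative to the paper's argument via $\L^*\subset\L\setminus\J$ and saturation of $\L\setminus\J$.
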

\begin{proof}
    By the discussion above we have $X_1^+\subset X_{1,\QQ}^+=\QQ_{\ge0}\L\cap X_{1,\QQ}\subset\QQ_{\ge0}\L$. Since $\L$ is saturated in $X^+$, we deduce that $X_1^+\subset\L$.\par 
    Next we claim that 
    \[\QQ_{\ge0}\L=X_{1,\QQ}^+\oplus(\QQ_{\ge0}\L\cap X_{2,\QQ}^+)=X_{1,\QQ}^+\oplus(\QQ_{\ge0}\L\cap X_{2,\QQ}).\]
    Indeed, the second equality is clear since $\QQ_{\ge0}\L\subset X_\QQ^+$. Now suppose that an element $x\in\QQ_{\ge0}\L$ decomposes as $x=x_1+x_2$ where $x_1\in X_{1,\QQ}^+$ and $x_2\in X_{2,\QQ}^+$. Then $x_1\in\QQ_{\ge0}\L$ by the inclusion $X_{1,\QQ}^+\subset\QQ_{\ge0}\L\cap X_{1,\QQ}$ we just proved. To show the first equality it remains to show that $x_2\in\QQ_{\ge0}\L\cap X_{2,\QQ}^+$. After adding an element in $\L^*_\QQ$, we may assume that $x_1\in\QQ^{\Delta_1}\cap X_\QQ^+\subset\QQ_{\ge0}^{\Delta_1}$. Since $\QQ_{\ge0}\L$ is downward closed, we get that $x_2=x-x_1\in\QQ_{\ge0}\L\cap X_{2,\QQ}^+$. This proves the claim.\par
    Since $\L^*=\L\cap(-\L)\subset X_1^+$, the cone $\QQ_{\ge0}\L\cap X_{2,\QQ}$ is strictly convex. Then from the decomposition above we deduce that $\L\setminus X_1^+$ is an ideal in $\L$, which is moreover a prime ideal since $X_1^+$ is a submonoid of $\L$.\par 
    Next we show that $\L\setminus X_1^+$ is downward closed. This is equivalent to showing that $X_1^+$ is upward closed in $\L$. Suppose $\la_1\in X_1^+$ and $\la\in\L$ with $\la_1\le\la$. After multiplying by some positive integer, we may assume that $\la_1\in\L^*\oplus\ZZ^{\Delta_1}$. Then there exists $\la_0\in\L^*$ with $\la_0\le\la_1\le\la$. Since $-\la_0\in\L$, we have $\la-\la_0\in\L\cap X_{\der,\QQ}\subset\QQ^{\Delta_1}$ and hence $\la\in X_{1,\QQ}\cap\L\subset X_1^+$. \par 
    Finally, let $\J\subset\L$ be a proper downward closed prime ideal and we need to show that $\J\subset\L\setminus X_1^+$, or equivalently that $X_1^+\subset\L\setminus\J$. Since $\J\ne\L$ we must have $\L^*\subset\L\setminus\J$. (Otherwise there would exist $\la\in\L^*\cap\J$, then $-\la\in\L$ and so $0=\la+(-\la)\in\J$ and hence $\L=0+\L\subset\J$, a contradiction.) Since $\J$ is downward closed, the complement $\L\setminus\J$ is upward closed and hence 
    $(\L^*+\NN^\Delta)\cap\L\subset\L\setminus\J$. Clearly $\L\setminus\J$ is saturated in $\L$ (since $\J$ is a submonoid of $\L$), so we have $(\L^*+\QQ_{\ge0}^{\Delta})\cap\L\subset\L\setminus\J$. On the other hand, we have $X_1^+\subset(\L^*+\QQ_{\ge0}^\Delta)\cap\L$ and so we conclude that $X_1^+\subset\L\setminus\J$. 
\end{proof}

According to the previous Proposition, we can form the following $G\times G$-stable subscheme of $\M$: 
\[\O_{\min}\coloneqq\M(\L/(\L\setminus X_1^+))=\Spec({}_\ZZ\bfO(X_1^+)\otimes_\ZZ\O_S).\]

\begin{lem}\label{lem:Imax}
    The set $\I_{\max}\coloneqq W\cdot \L\setminus X_1$ is the largest proper prime ideal of $W\cdot \L$. It is $W$-invariant, downward closed, and equals to the downward closure $\overline{\L\setminus X_1^+}^{\leq}$ of $\L\setminus X_1^+$ inside $W\cdot \L$ and satisfies $\I_{\max}\cap \L=\L\setminus X^+_1$.
\end{lem}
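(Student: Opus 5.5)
We split the statement into its separate assertions and reduce each one to what was shown for $\L$ in Proposition \ref{prop:max-closed-prime-ideal} and Lemma \ref{lem:basic properties X_1}.

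\textbf{Step 1: $\I_{\max}$ is a $W$-stable prime ideal.} By Lemma \ref{lem:basic properties X_1}(2), $X_1=W\cdot X_1^+$ is a $W$-stable sublattice. Hence $\I_{\max}=W\cdot\L\setminus X_1$ is $W$-invariant. Its complement $(W\cdot\L)\cap X_1$ is the intersection of a submonoid with a subgroup, so it is a submonoid. To see that $\I_{\max}$ is an ideal, take $x\in W\cdot\L$ and $y\in W\cdot\L$ with $x+y\in X_1$, and show $x,y\in X_1$. It suffices to show that $(W\cdot\L)\cap X_1$ is a face of $W\cdot\L$.

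The intended argument is on cones. By Lemma \ref{lem:basic properties X_1}(3) and the decomposition $\QQ_{\ge0}\L=X_{1,\QQ}^+\oplus(\QQ_{\ge0}\L\cap X_{2,\QQ})$ from the proof of Proposition \ref{prop:max-closed-prime-ideal}, there is a linear functional $\ell$ with the following properties: it is trivial on $X_{1,\QQ}$, strictly positive on $(\QQ_{\ge0}\L\cap X_{2,\QQ})\setminus 0$, and $W$-invariant. Such an $\ell$ can be taken to factor through $X_{\ab,\QQ}^{(2)}$, since $W$ acts trivially on $X_{\ab}$ and $\QQ^{\Delta_2}$ is killed after averaging.

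The step I expect to be the main obstacle is positivity on the $\QQ^{\Delta_2}$ directions. The cone $\QQ_{\ge0}\L\cap X_{2,\QQ}$ is strictly convex, but it contains points with nonzero $\QQ^{\Delta_2}$-part. I would handle this by applying Lemma \ref{lem:decomposition I1 I2}: $\QQ_{\ge0}\L\cap X_{\der,\QQ}\subset\QQ^{\Delta_1}$, so any nonzero element of $\QQ_{\ge0}\L\cap X_{2,\QQ}$ has nonzero image in $X_{\ab,\QQ}^{(2)}$. That image lies in a strictly convex cone (the projection of $\QQ_{\ge0}\L$ modulo $\L^*_\QQ$ meets $X^{(2)}_{\ab,\QQ}$ strictly convexly because $\L^*$ is the full unit group), so we can choose $\ell$ positive there.

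Then $\ell\ge 0$ on $\tilde K_\L=W\cdot\QQ_{\ge0}\L$, and $\ell^{-1}(0)\cap\tilde K_\L=X_{1,\QQ}\cap\tilde K_\L$. Therefore $X_{1,\QQ}\cap\tilde K_\L$ is a face, and $(W\cdot\L)\cap X_1$ is its intersection with the lattice, which makes $\I_{\max}$ a prime ideal. It is proper because $0\notin\I_{\max}$.

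\textbf{Step 2: $\I_{\max}$ is the largest proper prime ideal.} Let $\I$ be any proper prime ideal of $W\cdot\L$. Units cannot lie in a proper ideal, so $\L^*\cap\I=\varnothing$, and its complement $F$ is a face containing $\L^*$. Now let $x\in X_1\cap W\cdot\L$. Then $x=w(x_1)$ with $x_1\in X_1^+\subset\L^*_\QQ+\QQ^{\Delta_1}$.

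I would show that some multiple of $x$ lies in $\L^*$ plus a sum of elements $\pm\alpha$ with $\alpha\in\Delta_1$, using that $\QQ^{\Delta_1}\cap X^+\subset\QQ_{\ge0}\L$. Then I would argue that the face $F$ contains both $\pm\alpha$ directions, which forces $x\in F$. This is the analogue of the last paragraph of Proposition \ref{prop:max-closed-prime-ideal}. A cleaner route is to note that $X_{1,\QQ}\cap\tilde K_\L=X_{1,\QQ}$, i.e.\ it is a linear subspace, because $W_1\cdot X_{1,\QQ}^+=X_{1,\QQ}$. A face containing $\L^*$ must also contain the minimal face of $\tilde K_\L$. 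Since $X_{1,\QQ}$ is the lineality space of $\tilde K_\L$ by Step 1, it is the minimal face, so $F\supset X_1\cap W\cdot\L$ and hence $\I\subset\I_{\max}$.

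\textbf{Step 3: downward closure and intersection with $\L$.} The equality $\I_{\max}\cap\L=\L\setminus X_1^+$ is immediate from $\L\cap X_1=X_1^+$ (Lemma \ref{lem:basic properties X_1}(3)).

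For the inclusion $\overline{\L\setminus X_1^+}^{\le}\subset\I_{\max}$, take $y\le\mu$ with $\mu\in\L\setminus X_1$. Then $\ell(y)=\ell(\mu)>0$, because $\ell$ kills roots, so $y\notin X_1$. The same computation shows that $\I_{\max}$ is downward closed.

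For the reverse inclusion, take $x\in\I_{\max}$. Its dominant representative $x^+$ lies in $\L\setminus X_1^+$, by $W$-invariance of $X_1$, and $x\le x^+$. Hence $x\in\overline{\L\setminus X_1^+}^{\le}$.
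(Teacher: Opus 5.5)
Your argument is correct, but it takes a different route from the paper's.

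\textbf{The paper's route.} The paper argues purely order-theoretically.
\begin{itemize}
\item It invokes Proposition \ref{prop:bijection closed prime L WL} and Lemma \ref{lem:restriction prime ideal of WL} to produce a downward closed proper prime ideal $\I'$ of $W\cdot\L$ with $\I'\cap\L=\L\setminus X_1^+$.
\item It shows that the maximal proper prime ideal $\I_0$ is $W$-stable.
\item It uses the maximality statement of Proposition \ref{prop:max-closed-prime-ideal}, together with downward closedness of $\I'$, to get $\I_0=\I'$.
\item It closes the chain $\I_{\max}\subset\overline{\L\setminus X_1^+}^{\le}\subset\I'\subset\I_{\max}$.
\end{itemize}

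\textbf{Your route.} You build a single $W$-invariant functional $\ell$, factoring through $X_{\ab,\QQ}/\L^*_\QQ$ and hence killing all roots. It cuts out $X_{1,\QQ}$ as the face $\ell^{-1}(0)\cap\tilde K_\L$, which is in fact the lineality space of $\tilde K_\L$. Primality, maximality and downward closedness then all follow. Downward closedness is automatic because $\ell$ is constant along $\le$.

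\textbf{What each buys.} Your approach bypasses the bijection machinery. It also gives a more conceptual description: $\I_{\max}$ is the complement of the unit group $(W\cdot\L)^*=X_1$, and $\ell$ acts as an explicit height function. The cost is that you reach into the proof, not the statement, of Proposition \ref{prop:max-closed-prime-ideal}, for the decomposition $\QQ_{\ge0}\L=X_{1,\QQ}^+\oplus(\QQ_{\ge0}\L\cap X_{2,\QQ})$ and the strict convexity of the second summand. The paper's route instead reuses tools it needs anyway for Corollary \ref{cor:orbit closure}.

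\textbf{Two small points to tighten.}
\begin{itemize}
\item Replace the parenthetical justification of strict convexity of the image by the direct argument. Suppose $\pi(a)=-\pi(b)$ with $a,b\in\QQ_{\ge0}\L\cap X_{2,\QQ}$, where $\pi$ is the projection to $X^{(2)}_{\ab,\QQ}$. Then $a+b$ lies in that cone and in $\QQ^{\Delta_2}$, so it is $0$ by Lemma \ref{lem:decomposition I1 I2}. Hence $a=b=0$ by strict convexity.
\item Drop the first, unfinished sketch in Step 2. The point is simply that $X_1=W\cdot X_1^+\subset W\cdot\L$ is a group, so it consists of units, and no proper ideal can contain a unit.
\end{itemize}
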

\begin{proof}
    According to Lemma \ref{lem:restriction prime ideal of WL} and Proposition \ref{prop:bijection closed prime L WL}, there exists a proper downward closed prime ideal $\I'$ of $W\cdot \L$ such that 
    \[\L\setminus X_1^+=(\bigcap_{w\in W}w\cdot \I')\cap \L=\I'\cap\L.\] 
    On the other hand, let $\I_0\subset W\cdot\L$ be the maximal proper prime ideal. Then for any $w\in W$, the union $\I_0\cup w(\I_0)$ is also a proper prime ideal of $W\cdot\L$ and so $\I_0\cup w(\I_0)\subset\I_0$. This shows that $\I_0$ is $W$-stable. Then for any element $u(x)\in\I_0\subset W\cdot\L$ with $u\in W$ and $x\in\L$, we have $x\in u^{-1}(\I_0)\cap\L=(\bigcap_{w\in W}w(\I_0))\cap\L$. Since the latter set is a proper downward closed prime ideal of $\L$ by Lemma \ref{lem:restriction prime ideal of WL}, we must have $x\in  \L\setminus X^+_1$ by Proposition \ref{prop:max-closed-prime-ideal}. Then by the above equality we have $x\in\I'$. Since $\I'$ is downward closed and $u(x)\le x$ (because $x\in\L\subset X^+$), we get $u(x)\in\I'$. This shows that $\I_0\subset\I'$ and by the maximality of $\I_0$ we get that $\I_0=\I'$. In summary, we have shown that $\I'$ is the maximal proper prime ideal of $W\cdot\L$ and it is $W$-stable and downward closed.\par 
    Let $\I''\coloneqq\overline{\L\setminus X_1^+}^{\leq}$ be the downward closure of $\L\setminus X_1^+$ inside $W\cdot\L$. All that remains is to prove the three inclusions 
    \[\I_{\max}\subset \I''\subset\I'\subset \I_{\max}.\]
    The middle inclusion is clear by the definition of $\I'$. Let $w\in W$, $x\in \L$. Recall that $X_1\cap \L=X_1^+$ and $X_1=W\cdot X_1^+$ by Lemma \ref{lem:basic properties X_1}. If $w\cdot x\in \I_{\max}$, then $x\in \L\setminus X_1^+$. Hence $w\cdot x\in \I''$ because $w\cdot x\leq x$. This proves the first inclusion. On the other hand, if $w\cdot x\in \I'$, then since $\I'$ is $W$-stable we get $w\in\I'\cap\L=\L\setminus X_1^+$. Therefore $w(x)\in W\cdot\L\setminus X_1=\I_{\max}$. This proves the last inclusion and we are done. 
\end{proof}
\begin{defi}
    The \emph{minimal idempotent} of $\M$ is the element 
    \[e_{\min}\coloneqq e_{\I_{\max}}\in\O_{\min}(S)\subset\M(S)\] 
    associated to $\I_{\max}=W\cdot\L\setminus X_1$.
\end{defi}

\begin{cor}\label{cor:min-orbit bis}
    The scheme $\O_{\min}$ represents the orbit of the minimal idempotent $e_{\min}$ and is the smallest closed $G\times G$-stable subscheme of $\M$ that is flat over $S$ with integral geometric fibers. If $S=\Spec(k)$ where $k$ is an algebraically closed field, it is thus the minimal $G\times G$ orbit in $\M$.
\end{cor}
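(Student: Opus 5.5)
The plan is to deduce the corollary from Corollary \ref{cor:orbit closure}, Theorem \ref{theo:parametrization GxG orbits} and Proposition \ref{prop:max-closed-prime-ideal}, with Lemma \ref{lem:Imax} identifying the relevant prime ideals.

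First, I apply Corollary \ref{cor:orbit closure} to the prime ideal $\I=\I_{\max}=W\cdot\L\setminus X_1$ of $W\cdot\L$. By Lemma \ref{lem:Imax}, $\I_{\max}$ is $W$-invariant and $\I_{\max}\cap\L=\L\setminus X_1^+$, so
\[(\bigcap_{w\in W}w\cdot\I_{\max})\cap\L=\I_{\max}\cap\L=\L\setminus X_1^+.\]
Corollary \ref{cor:orbit closure} then gives two things. The $G\times G$-orbit of $e_{\min}$ is representable by a locally closed subscheme $\mathrm{Orb}_{\min}$ of $\M$. Its schematic closure is $\M(\L/(\L\setminus X_1^+))=\O_{\min}$.

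Second, I show that this orbit is already closed, i.e. the open embedding $\mathrm{Orb}_{\min}\into\O_{\min}$ from the proof of Corollary \ref{cor:orbit closure} is an isomorphism. Since both sides are flat and of finite presentation over $S$, it suffices to check surjectivity on geometric fibers, by the fiberwise criterion \cite[Corollaire 17.9.5]{EGAIV16a23}. So let $S=\Spec(k)$ with $k$ algebraically closed.
\begin{itemize}
\item The complement $\O_{\min}\setminus\mathrm{Orb}_{\min}$ is a closed $G\times G$-stable subset.
\item If it is nonempty, it contains a $G\times G$-orbit, whose closure is some $\M(\L/\J)$ by Theorem \ref{theo:parametrization GxG orbits}. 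Here $\J$ is a downward closed prime ideal of $\L$.
\item By order reversal, this orbit closure is strictly smaller than $\O_{\min}$, so $\J\supsetneq\L\setminus X_1^+$.
\item If $\J$ is proper, this contradicts the maximality in Proposition \ref{prop:max-closed-prime-ideal}. If $\J=\L$, then $\M(\L/\J)$ is empty, which is absurd since it contains an orbit.
\end{itemize}
Hence the complement is empty, and $\O_{\min}$ represents the orbit.

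Third, minimality. Let $\mathcal Y\subset\M$ be any closed $G\times G$-stable subscheme that is flat over $S$ with integral geometric fibers. I reduce to $S$ connected, since the statement is local. By Theorem \ref{theo:parametrization GxG orbits}, $\mathcal Y=\M(\L/\J)$ for a downward closed prime ideal $\J$. Integral fibers are nonempty, so $\J$ is proper. Proposition \ref{prop:max-closed-prime-ideal} then gives $\J\subset\L\setminus X_1^+$, and order reversal gives $\O_{\min}\subset\mathcal Y$.

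Over an algebraically closed field, every orbit closure is such a $\mathcal Y$ (Theorem \ref{theo:parametrization GxG orbits}, or Corollary \ref{cor:orbit decomposition over algebraically closed field}). Therefore $\O_{\min}$ is contained in every orbit closure, and it is itself an orbit; this makes it the minimal orbit.

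The main obstacle is the second step, namely that the locally closed orbit is genuinely closed. This relies on the fiberwise surjectivity argument together with the maximality of $\L\setminus X_1^+$. One also has to keep careful track of the direction of the order-reversing bijection.
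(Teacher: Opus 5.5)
Your proposal is correct and follows essentially the paper's own route. Corollary \ref{cor:orbit closure} combined with Lemma \ref{lem:Imax} makes the orbit of $e_{\min}$ representable as an open, scheme-theoretically dense subscheme of $\O_{\min}$; surjectivity is checked on geometric fibers; and minimality comes from Theorem \ref{theo:parametrization GxG orbits} together with Proposition \ref{prop:max-closed-prime-ideal}. The only difference is the ordering: the paper establishes minimality first and then uses it for the fiberwise surjectivity, whereas you repeat the same maximality argument inside the fiberwise step.
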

\begin{proof}
    According to Theorem \ref{theo:parametrization GxG orbits} and Proposition \ref{prop:max-closed-prime-ideal}, the second assertion concerning the minimum property is clear and the third follows from the two previous one. Let $\O_{\min}'$ be the orbit of the section $e_{\min}$. According to Corollary \ref{cor:orbit closure} and the previous Lemma \ref{lem:Imax}, $\O_{\min}'$ is representable and we have a universal scheme-theoretically dominant open immersion $\O_{\min}'\hookrightarrow \O_{\min}$. It suffices to prove it is surjective. Since the formation of $\O_{\min}'$ and $\O_{\min}$ commute with any base change, we can assume that $S=\Spec(k)$ where $k$ is an algebraically closed field. Then the result follows by the minimality of $\O_{\min}$. 
\end{proof}

Let $Y_1=\mathrm{Hom}_\ZZ(X_1,\ZZ)$. Then we have a homomorphism $Y\to Y_1$
that induces an embedding $\Delta_1\into Y_1$, and we get a based root datum $\Psi_1\coloneqq(\Delta_1,X_1, Y_1,\{\alpha_i,i\in\Delta_1\},\{\alpha_i^\vee,i\in\Delta_1\})$. 
\begin{cor}\label{cor:min-orbit}
    The scheme $\O_{\min}$ is a split reductive group scheme over $S$ with based root datum $\Psi_1$. Its unit section is given by $e_{\min}$ and the natural embedding $\O_{\min}\into\M$ is a homomorphism of semi-group schemes. 
\end{cor}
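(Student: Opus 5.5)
We plan to identify $\O_{\min}=\Spec({}_\ZZ\bfO(X_1^+)\otimes_\ZZ\O_S)$ directly with Lusztig's group scheme for $\Psi_1$, using the functorial results of \S\ref{subsec:functorial-properties}. Note that $\ZZ^{\Delta_1}\subset X_1$ and $X_1\subset\ZZ^{\Delta_2,\perp}$, because $\L^*\subset X_{\ab}$ pairs trivially with all coroots and $\langle\alpha_j^\vee,\alpha_i\rangle=0$ for $i\in\Delta_1,j\in\Delta_2$ by Lemma \ref{lem:decomposition I1 I2}. So Proposition \ref{prop:iota} and Lemma \ref{lem:map p} apply to $\Psi_1$. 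The projection $p\colon\dot{\bfU}\to\dot{\bfU}_{\Psi_1}$ is an algebra map compatible with coproducts. Its transpose $p^*\colon{}_\A\bfO_{\Psi_1}\to{}_\A\bfO$ is therefore a coalgebra map and, dually, an algebra map; it is at least multiplicative and compatible with comultiplication, though it need not respect units.

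By Proposition \ref{prop:iota}, $\iota$ maps $\dot{\bfB}_{\Psi_1}[\la]$ bijectively onto $\dot{\bfB}[\la]$ for $\la\in X_1^+$, and $p\circ\iota=\mathrm{id}$. We will check that $p$ kills every $b\in\dot{\bfB}[\mu]$ with $\mu\notin X_1^+$. For this we compare actions: any $\dot{\bfU}_{\Psi_1}$-Weyl module $\La_\la$ with $\la\in X_1^+$ is also the $\dot{\bfU}$-Weyl module, and $b$ acts on it through $p$. Indeed, on $\La_\la$ the elements $\theta_j^\pm$ with $j\in\Delta_2$ act by zero and the weights lie in $X_1$. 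Then Lemma \ref{lem:characterization subset Blambda} shows that $p(b)$ acts by zero on all $\dot{\bfU}_{\Psi_1}$-Weyl modules, and hence $p(b)=0$.

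Consequently $p^*$ sends $\dot{\bfB}_{\Psi_1}^*$ bijectively onto $\dot{\bfB}[X_1^+]^*$, so it is an $\A$-module isomorphism ${}_\A\bfO_{\Psi_1}\cong{}_\A\bfO(X_1^+)$. We then transport structure. The subspace ${}_\A\bfO(X_1^+)$ is a subalgebra by Proposition \ref{Prop:O(P)-subalgebra}, since $X_1^+$ is a submonoid. It is not a subcoalgebra of ${}_\A\bfO$, because $X_1^+$ is not downward closed. However, it is the quotient ${}_\A\bfO(\L)/{}_\A\bfO(\L\setminus X_1^+)$ as a bialgebra, and we will show that this quotient comultiplication agrees with the one transported through $p^*$. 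The unit $1_0^*$ of $\bfO_{\Psi_1}$ maps to $\sum$-free $1_0^*$, which is not the unit of $\O_{\min}$ as a subset of $\M$; the unit of the quotient is the image of $\sum_{\chi}$, namely $e_{\min}$. After specializing $v\mapsto1$ and using Theorem \ref{theo:Chevalley-Lusztig group scheme}, we obtain $\O_{\min}\cong G_{\Psi_1}$ as monoid schemes, with unit $e_{\min}$.

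The main obstacle is the compatibility of the quotient coalgebra structure on ${}_\ZZ\bfO(\L)/{}_\ZZ\bfO(\L\setminus X_1^+)$ with $\Delta$ transported via $p$. This requires showing that the structure constants $m_{ab}^c$ of $\dot{\bfU}$, for $a,b,c\in\iota(\dot{\bfB}_{\Psi_1})$, agree with those of $\dot{\bfU}_{\Psi_1}$, and that products landing outside $\dot{\bfU}[X_1^+]$ are discarded. This follows because $\iota$ is multiplicative and $p$ kills $\dot{\bfU}[X^+\setminus X_1^+]$.

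Once this is established, $\O_{\min}\to\M$ being a closed immersion of schemes compatible with multiplication makes it a semigroup homomorphism. The counit of $G_{\Psi_1}$ corresponds to $\varphi\mapsto\sum_{\zeta\in X_1}\varphi(1_\zeta)$. This matches evaluation at $e_{\I_{\max}}$ on $\M_T$, which sends $e^\la$ to $1$ exactly for $\la\in X_1$ (Lemma \ref{lem:Imax}), so the unit section is $e_{\min}$.
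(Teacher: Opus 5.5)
Your argument is the paper's proof in dual form. The paper uses $\iota^*\colon{}_\k\bfO(\L)/{}_\k\bfO(\L\setminus X_1^+)\to{}_\k\bfO_{\Psi_1}$, which on the relevant span is the inverse of your $p^*$. It gets compatibility with comultiplication from the multiplicativity of $\iota$ together with Proposition \ref{prop:iota}, and multiplicativity from Lemma \ref{lem:map p}, exactly as you do. Your counit check against $e_{\min}$ also matches the paper. Your explicit claim that $p$ annihilates $\dot{\bfB}\setminus\iota(\dot{\bfB}_{\Psi_1})$ makes visible a point the paper uses tacitly. The paper needs it when it identifies the coefficient of $a\otimes b$ in $(p\otimes p)\Delta(\iota(c))$ with the coefficient of $\iota(a)\otimes\iota(b)$ in $\Delta(\iota(c))$.

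However, your justification of that claim has a hole, and you assert the opposite of the fact it needs. Lemma \ref{lem:characterization subset Blambda} only gives: if $b\in\dot{\bfB}[\mu]$ acts nontrivially on $\La_\la$, then $\mu\le\la$. To conclude that $b$ kills every $\La_\la$ with $\la\in X_1^+$ when $\mu\notin X_1^+$, you need $X_1^+$ to be downward closed in $X^+$, and you state that it is not. In fact it is. Take $\la\in X_1^+$ and $\mu\in X^+$, and write $\la-\mu=\nu_1+\nu_2$ with $\nu_k\in\NN^{\Delta_k}$. For $j\in\Delta_2$ one has $0\le\langle\alpha_j^\vee,\mu\rangle=-\langle\alpha_j^\vee,\nu_2\rangle$, because $\la\in X_1\subset\ZZ^{\Delta_2,\perp}$ and $\Delta_1\perp\Delta_2$. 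Hence $\nu_2\cdot\nu_2=\sum_{j\in\Delta_2}(\nu_2)_jd_j\langle\alpha_j^\vee,\nu_2\rangle\le0$, and positive-definiteness forces $\nu_2=0$. So $\mu\in\la-\NN^{\Delta_1}\subset X_1$, i.e. $\mu\in X_1^+$.

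Consequently ${}_\A\bfO(X_1^+)$ is a subcoalgebra of ${}_\A\bfO$ by Proposition \ref{prop:downward-closed}. Its coalgebra structure coincides with that of the quotient ${}_\A\bfO(\L)/{}_\A\bfO(\L\setminus X_1^+)$, so your ``main obstacle'' disappears.

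You should also justify the passage from ``$p(b)$ kills all $\dot{\bfU}_{\Psi_1}$-Weyl modules'' to $p(b)=0$. Take a cell $\la$ minimal in the support of $p(b)$. By Lemma \ref{lem:characterization subset Blambda}, only that cell contributes on $\La_\la$, and $\dot{\bfB}_{\Psi_1}[\la]$ maps to a basis of $\End(\La_\la)$, so $p(b)$ would act nontrivially there.

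Finally, your sentence about the unit conflates two different things. The multiplicative unit $1_0^*$ is preserved by $p^*$ and is the unit of both ${}_\A\bfO(X_1^+)$ and the quotient ring. The unit section of $\O_{\min}$ is instead encoded by the counit $\varphi\mapsto\sum_{\zeta\in X_1}\varphi(1_\zeta)$. Your last paragraph treats the unit section correctly.
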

\begin{proof}
    We may assume that $S=\Spec(\k)$ is affine. Recall the embedding of the (non-unital) $\QQ(v)$-algebras $\iota\colon\dot{\bfU}_{\Psi_1}\into\dot{\bfU}$ $(\ref{eq:morphism iota})$. According to Proposition \ref{prop:iota}, $\iota$ induces bijections $\dot{\bfB}_{\Psi_1}[\lambda]\xrightarrow[]{\sim}\iota(\dot{\bfB}_{\Psi_1}[\la])=\dot{\bfB}[\la]$ for all $\la\in X_1^+$. Thus the dual of $\iota$ induces a bijection 
    \[\iota^*\colon{}_\k\bfO(\L)/{}_\k\bfO(\L\setminus X_1^+)={}_\k\bfO(X_1^+)\xrightarrow[]{\sim}{}_\k\bfO_{\Psi_1}\] 
    which is compatible with the co-multiplication of ${}_\k\bfO(\L)/{}_\k\bfO(\L\backslash X_1^+)$ (induced from that of ${}_\k\bfO(\L)$) at the source and the one of the Hopf algebra ${}_\k\bfO_{\Psi_1}$ at the target. Moreover, by definition, $\iota$ fixes the elements $1_{\lambda}$ for $\lambda \in X_1$. Hence pre-composing by $\iota^*$ transforms the co-unit ${}_\k\bfO_{\Psi_1}\to\k$ of ${}_\k\bfO_{\Psi_1}$ into the map ${}_\k\bfO(\L)/{}_\k\bfO(\L\setminus X_1^+)\to\k$ corresponding to $e_{\min}$, as these two ring morphisms are informally given by the formula $$\varphi\mapsto \sum_{\lambda \in X_1}\varphi(1_{\lambda}).$$
    Therefore, it remains to show that $\iota^*$ is a ring homomorphism. For this we need to show that for any $a,b,c\in\dot{\bfB}_{\Psi_1}$, we have $\langle\iota^*(\iota(a)^*\iota(b)^*),c\rangle=\langle a^*b^*,c\rangle$, or equivalently
    \[\langle\iota(a)^*\otimes\iota(b)^*,\Delta(\iota(c))\rangle=\langle a^*\otimes b^*,\Delta_{\Psi_1}(c)\rangle.\]
    Recall the projection $p\colon\dot{\bfU}\to\dot{\bfU}_{\Psi_1}$. By Lemma \ref{lem:map p} we have $\Delta_{\Psi_1}(c)=(p\otimes p)\circ\Delta(\iota(c))$ and hence the right hand side above equals to the coefficient of $\iota(a)\otimes\iota(b)$ in the expression of $\Delta(\iota(c))$ in terms of the bases $\dot{\bfB}\otimes\dot{\bfB}$, which is exactly the left hand side. This finishes the proof.
\end{proof}

\begin{lem}\label{lem:Stembridge}
    Let $P\subset X^+$ be a downward closed subset. Then the following are equivalent:
    \begin{enumerate}
        \item[(i)] $P$ does not contain any root;
        \item[(ii)] $P\cap\NN^\Delta=\{0\}$;
        \item[(iii)] $P\setminus\{0\}$ is downward closed.
    \end{enumerate}
\end{lem}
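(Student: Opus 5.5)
We prove the cycle (i)$\Rightarrow$(ii)$\Rightarrow$(iii)$\Rightarrow$(i). The main tool is Stembridge's result \cite[Corollary 2.7]{Stembridge}, already used in the proof of Lemma \ref{lem:restriction prime ideal of WL}. It says that if $\mu<\la$ are dominant weights and $\mu$ is maximal among dominant weights strictly below $\la$, then $\la-\mu$ is a positive root.

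For (i)$\Rightarrow$(ii), take $\nu\in P\cap\NN^\Delta$ with $\nu\ne0$. Since $P$ is downward closed and $0\le\nu$ (both dominant), $0\in P$. The set of dominant weights strictly below $\nu$ is nonempty (it contains $0$) and finite, so it has a maximal element $\mu$. By Stembridge, $\nu-\mu$ is a positive root $\alpha$, and $\mu\in P$ by downward closedness. We would like to conclude that $P$ contains a dominant root, which contradicts (i).

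To get there, iterate: replace $\nu$ by $\mu$ and keep descending through dominant weights in $\NN^\Delta$ (note $\mu\ge0$, so $\mu\in\NN^\Delta$). The last nonzero step $\nu_k$ has $0$ as a maximal dominant weight below it, so $\nu_k=\nu_k-0$ is a positive root lying in $P$. This contradicts (i).

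For (ii)$\Rightarrow$(iii), let $\la\in P\setminus\{0\}$ and $\mu\in X^+$ with $\mu\le\la$. Then $\mu\in P$. If $\mu=0$, then $\la\ge0$ gives $\la\in P\cap\NN^\Delta=\{0\}$, contradicting $\la\ne0$. Hence $\mu\ne0$, so $P\setminus\{0\}$ is downward closed.

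For (iii)$\Rightarrow$(i), suppose $P$ contains a root. Since $P\subset X^+$, that root is dominant, hence a positive root $\alpha$ with $\alpha\ge0$ and $\alpha\ne0$. By (iii), $\alpha\in P\setminus\{0\}$ together with $0\le\alpha$ and $0$ dominant forces $0\in P\setminus\{0\}$, which is absurd.

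The only nontrivial input is the Stembridge step in (i)$\Rightarrow$(ii). The point to check there is that the descending chain stays in $\NN^\Delta$ and terminates. Termination holds because the finitely many dominant weights lying between $0$ and $\nu$ form a finite set, and $\mathrm{tr}$ strictly decreases along the chain.
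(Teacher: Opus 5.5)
Your proof is correct and uses the same key input as the paper: Stembridge's covering result, applied to a dominant weight that covers $0$, produces a positive root in $P$. The paper simply reaches that weight in one step as a minimal element of $\{\mu\in X^+\mid 0<\mu\le\lambda\}$, so your descending chain is unnecessary. Like the paper, your (i)$\Rightarrow$(ii) only proves $P\cap\NN^\Delta\subseteq\{0\}$. That is the intended reading: equality also requires $0\in P$, which holds for the monoids $\L$ to which the lemma is applied.
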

\begin{proof}
    (i)$\Leftrightarrow$(ii): The implication ``$\Leftarrow$" is obvious. Now assume (i) and suppose that there exists $\lambda\in P\cap\NN^\Delta$ and $\la\ne0$. Then in particular $0<\la$. The nonempty finite partially ordered set $\set{\mu\in X^+|0<\mu\leq \lambda}$ has a minimal element which must be a root by \cite[Corollary 2.7]{Stembridge} and also must lies in $P\subset X^+$ by the fact that $P$ is downward closed. But this contradicts (i).\par
    The equivalence (ii)$\Leftrightarrow$(iii) is clear by definition.
\end{proof}

\begin{prop}\label{prop:characterization zero of reductive monoid} 
Let $\M$ be a reductive monoid scheme over a base scheme $S$ with unit group $G$ and weight monoid $\L=\L(\M)$. The following conditions are equivalent:
\begin{enumerate}
    \item[(i)] the monoid scheme $\M$ has a zero;
    \item[(ii)] The minimal orbit coincides with the minimal idempotent: $\O_{\min}=e_{\min}$;
    \item[(iii)] $\L\setminus\{0\}$ is a downward closed ideal of $\L$;
    \item[(iii')] $\L\cap(-\L)=\{0\}$ and $\L\cap\NN^\Delta=\{0\}$;
    \item[(iv)] $W\cdot\L\cap(-W\cdot\L)=0$;
    \item[(v)] the monoid scheme $\M_T$ has a zero;
\end{enumerate}
If these conditions are satisfied, then $e_{\min}=\O_{\min}$ is the zero of $\M$ and its image under the abelianisation map is the zero for $A_{\M}$.
\end{prop}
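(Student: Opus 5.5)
The plan is to organise the equivalences around the combinatorics of $X_1$ from Lemma \ref{lem:decomposition I1 I2}, Lemma \ref{lem:basic properties X_1} and Proposition \ref{prop:max-closed-prime-ideal}, together with the identification $\O_{\min}\cong G_{\Psi_1}$ of Corollary \ref{cor:min-orbit}. The central observation is the chain of equivalences
\[X_1=0\iff \L^*=0 \text{ and } \Delta_1=\varnothing\iff \text{(iii')}.\]
Indeed, $X_1=(\L^*_\QQ\oplus\QQ^{\Delta_1})\cap X$. Moreover, $\Delta_1=\varnothing$ if and only if $\QQ_{\ge0}\L$ contains no $\omega_i$. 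The proof of Lemma \ref{lem:decomposition I1 I2} shows this happens if and only if $\L$ contains no nonzero element of $\NN^\Delta$: an element of $\L\cap\NN^\Delta$ lies in $\L\cap X_{\der,\QQ}$, hence in $\QQ^{\Delta_1}$. Conversely, if $\Delta_1\neq\varnothing$, then a suitable multiple of a positive combination of the roots in $\Delta_1$ lies in $X^+\cap\QQ^{\Delta_1}\cap\NN^\Delta\subset\L$, using saturation and Lemma \ref{lem:basic properties X_1}(3).

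The algebraic equivalences come next. For (iii)$\Leftrightarrow$(iii'), note that $\L\setminus\{0\}$ is an ideal if and only if $\L^*=0$. Lemma \ref{lem:Stembridge}, applied to $P=\L$, shows that downward closedness of $\L\setminus\{0\}$ is equivalent to $\L\cap\NN^\Delta=\{0\}$. For (iii')$\Leftrightarrow$(iv): by Corollary \ref{cor:WL}, $W\cdot\L=\tilde K_\L\cap X$, and (iv) says that the cone $\tilde K_\L$ is strictly convex. If $X_1\neq0$, then $X_1=W\cdot X_1^+\subset W\cdot\L$ is a nonzero group, contradicting (iv). Conversely, if $x,-x\in W\cdot\L$ with $x\neq0$, then $\pm x\notin\I_{\max}$ because $\I_{\max}$ is a proper prime ideal (Lemma \ref{lem:Imax}). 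Hence $x\in X_1$, so $X_1\neq0$.

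For the geometric conditions, (iv)$\Leftrightarrow$(v) is the toric statement. By Theorem \ref{theo:properties AM MT}(1), $\M_T=\Spec\O_S[W\cdot\L]$, and an affine toric monoid has a zero exactly when its monoid has no nonzero units. I would check this directly: a zero is an $\O_S$-point $e$ with $e(e^\la)e(e^\mu)=e(e^{\la+\mu})$ absorbing all characters, which forces $e(e^\la)=0$ for every nonunit $\la$ and $e(e^\la)$ invertible for units. So a zero exists only if the units are $\{0\}$; in that case $e_{\I}$ with $\I=W\cdot\L\setminus\{0\}$ is the zero. For (ii)$\Leftrightarrow$ ``$X_1=0$'': by Corollary \ref{cor:min-orbit}, $\O_{\min}$ is a reductive group with root datum $\Psi_1$, so it equals the section $e_{\min}$ if and only if $X_1=0$.

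It remains to link with (i). For (ii)$\Rightarrow$(i), $\O_{\min}$ is a $G\times G$-stable closed subscheme equal to the single section $e_{\min}$. Hence $ge_{\min}h=e_{\min}$ for all $g,h$, and by schematic density of $G$ in $\M$ (Lemma \ref{lem:quotient-flat}) we get $xe_{\min}=e_{\min}x=e_{\min}$, so $e_{\min}$ is a zero. For (i)$\Rightarrow$(v), I expect the main obstacle. I would show that the zero $0_\M$ lies in $\M_T$: it is fixed by conjugation by $T$, and $0_\M=\lim t\cdot\ldots$. The cleaner route is to use that $0_\M=0_\M\cdot e$ for any idempotent $e$, for instance $0_\M=e_{\min}0_\M e_{\min}\in e_{\min}\M e_{\min}$. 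Over geometric fibres this reduces to $0\in\O_{\min}$, so $\O_{\min}=\{0\}$ by minimality (Corollary \ref{cor:min-orbit bis}), giving (ii). That suffices, since (ii) is already equivalent to (iii') through $X_1$.

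Finally, since $\O_{\min}=e_{\min}$ and the abelianisation map is a $G_{\ab}$-equivariant monoid morphism, its image is a point fixed by $G_{\ab}$ and absorbing. It is therefore the zero of $A_\M$, which is consistent with $\L_{\ab}\cap(-\L_{\ab})=\L^*=0$.
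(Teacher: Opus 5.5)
Your argument is correct, but it is organised differently from the paper's.

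\textbf{Your route.} You make $X_1=0$ the hub of all the equivalences:
\begin{itemize}
\item (iii')$\Leftrightarrow X_1=0$ via Lemma \ref{lem:decomposition I1 I2};
\item (iv)$\Leftrightarrow X_1=0$ via $X_1=W\cdot X_1^+\subset W\cdot\L$ and the primality of $\I_{\max}$ (Lemma \ref{lem:Imax});
\item (ii)$\Leftrightarrow X_1=0$ via Corollary \ref{cor:min-orbit}.
\end{itemize}
This uses more of the machinery of \S\ref{sec:orbit}, but it gives a uniform picture: every condition says the minimal orbit is the trivial group.

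\textbf{The paper's route.} It proves (iii')$\Leftrightarrow$(iv) by bare root combinatorics. One direction uses $\mu-w(\mu)\in\L\cap\ZZ^\Delta=\L\cap\NN^\Delta$; the other uses $s_\alpha(\alpha)=-\alpha$ for a root $\alpha\in\L$. It gets (ii)$\Rightarrow$(iii) from Proposition \ref{prop:max-closed-prime-ideal}. It deduces (v)$\Rightarrow$(iv) by applying (i)$\Rightarrow$(iii') to the torus monoid $\M_T$, rather than computing directly.

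\textbf{On (i)$\Rightarrow$(ii).} The paper's argument is shorter than yours. The zero is a $G\times G$-stable section, hence a closed orbit, so it equals $\O_{\min}$ by Corollary \ref{cor:min-orbit bis}. Your detour through $e_{\min}\M e_{\min}$ tacitly needs that $\O_{\min}$ is a two-sided ideal of $\M$. This is true, since $\O_{\min}$ is closed and $G\times G$-stable and $G$ is dense, but it deserves a line.

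\textbf{On (ii)$\Rightarrow$(i).} Your density argument here is a real gain. The paper's written chain is (i)$\Rightarrow$(ii)$\Rightarrow$(iii)$\Leftrightarrow$(iii')$\Leftrightarrow$(iv)$\Leftrightarrow$(v), and it never explicitly returns to (i). Your step, combined with (iii')$\Rightarrow X_1=0\Rightarrow$(ii), is what actually closes the cycle.

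\textbf{A small wording fix in the toric step.} Absorption gives $z(e^\la)(e^\la-e^0)=0$ in the free module $\O_S[W\cdot\L]$. Hence $z(e^\la)=0$ for \emph{every} $\la\neq0$, units included. It is this, not a nonunit/unit dichotomy, that contradicts $z(e^\la)z(e^{-\la})=1$ for a nonzero unit $\la$.
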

\begin{proof}
    The equivalence (iii)$\Leftrightarrow$(iii') follows from Lemma \ref{lem:Stembridge}. To prove the other implications we may assume that $S=\Spec(\k)$ is affine.\par 
    (i)$\Rightarrow$(ii): The zero of $\M$ is a $G\times G$-stable section of the structure morphism and hence coincides with $\O_{\min}$ and also $e_{\min}$. \par 
    (ii)$\Rightarrow$(iii): By the definition of $\O_{\min}$, condition (ii) implies that $X_1^+=\{0\}$ and then we conclude by Proposition \ref{prop:max-closed-prime-ideal}.\par 
    (iii')$\Rightarrow$(iv): Suppose that $W\cdot \L \cap (-W\cdot \L)\ne\{0\}$. Then there exists $\la,\mu\in\L\setminus\{0\}$ and $w_1,w_2\in W$ such that $w_1(\la)=-w_2(\mu)$. Let $w\coloneqq w_1^{-1}w_2\in W$. Then we have $w(\mu)=-\la$ and hence $\mu-w(\mu)=\mu+\la\in\L$. On the other hand we have $\mu-w(\mu)\in\ZZ^\Delta$ and hence $\mu-w(\mu)\in\L\cap\ZZ^\Delta=\L\cap\NN^\Delta=\{0\}$. Thus we have $\mu=w(\mu)=-\la$ so that $0\ne\mu\in\L\cap(-\L)$, contradicting (iii').\par
    (iv)$\Rightarrow$(iii'): Clearly (iv) implies that $\L\cap(-\L)=0$. Suppose $\L\cap\NN^\Delta\ne\{0\}$, then by Lemma \ref{lem:Stembridge} there exists a root $\alpha\in\L$. Let $s_\alpha\in W$ be the associated reflection. Then we have $s_\alpha(\alpha)=-\alpha\in W\cdot\L$ which means that $\alpha\in W\cdot\L\cap(-W\cdot\L)$ and contradicts with (iv).\par
    (iv)$\Rightarrow$(v): By Theorem \ref{theo:properties AM MT} we have $\M_T\cong\Spec(\k[W\cdot\L])$ where $\k[W\cdot\L]$ is the monoid algebra of $W\cdot\L$. Condition (iv) implies that $W\cdot\L\setminus\{0\}$ is a prime ideal of $W\cdot\L$ and spans an ideal of $\k[W\cdot\L]$ that defines an element $m\in\M_T(\k)$. Recall that the product on $\M_T$ corresponds to the co-product $\k[W\cdot\L]\to\k[W\cdot\L]\otimes_\k\k[W\cdot\L]$ sending $e^\chi\mapsto e^\chi\otimes e^\chi$ for all $\chi\in W\cdot\L$. From this we easily deduce that $m$ is a zero for $\M_T$.\par 
    The implication (v)$\Rightarrow$(iv) is a special case of the implication (i)$\Rightarrow$(iii') which has already been proved.\par 
    It remains to show the last statement. We already noticed during the proof of (i)$\Rightarrow$(ii) that $e_{\min}=\O_{\min}$ is the zero of $\M$. By definition we have $e_{\min}=e_{W\cdot \L\setminus\set{0}}\in\M_T(S)\subset\M(S)$ and also, 
    \[\L_{\ab}\cap(\L\setminus\{0\})=\L_{\ab}\cap(W\cdot\L\setminus\{0\})=\L_{\ab}\setminus\{0\}.\]
    Thus the image of $e_{\min}$ in the abelianisation $A_{\M}=\Spec(\k[\L_{\ab}])$ is defined by the ideal $\k[\L_{\ab}\setminus\{0\}]$ and so is the zero of $A_\M$.
\end{proof}

\begin{cor}\label{cor:monoid-decomposition}
    Let $\M$ be a reductive monoid over a connected scheme $S$ with unit group $G$. Then there exists:
    \begin{itemize}
        \item split reductive groups $\widetilde{G}_1$, $\widetilde{G}_2$ equipped with a central isogeny $\widetilde{G}_1\times\widetilde{G}_2\to G$ with kernel $\widetilde{Z}$;
        \item a reductive monoid $\M_2$ over $S$, with unit group $\widetilde{G}_2$, that has a zero;
    \end{itemize}
    such that we have an isomorphism of monoid schemes
    \[\M\cong(\widetilde{G}_1\times\M(\L_2))/\widetilde{Z}.\]
\end{cor}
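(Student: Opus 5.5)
The plan is to use the decomposition of \S\ref{sec:orbit} (the minimal orbit) and then invoke the classification Theorem~\ref{theo:monoid-classification}. The corollary's notation $\M(\L_2)$ is meant to be $\M_2=\M(\L_2)_S$, built for the root datum of $\widetilde{G}_2$.

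First, set up the lattices. Let $\L=\L(\M)$ and let $\Delta=\Delta_1\sqcup\Delta_2$ be as in Lemma~\ref{lem:decomposition I1 I2}, with $X_1,X_2$ as defined before Lemma~\ref{lem:basic properties X_1}. Then $X_1\oplus X_2$ is a finite-index sublattice of $X$. Let $\widetilde{X}\supset X$ be the lattice $\widetilde{X}_1\oplus\widetilde{X}_2$, where $\widetilde{X}_i$ is the projection of $X$ to $X_{i,\QQ}$ along the rational decomposition $X_\QQ=X_{1,\QQ}\oplus X_{2,\QQ}$. Each $\widetilde{X}_i$ carries a root datum with simple roots $\Delta_i$: the coroots pair trivially with the other summand, since $\langle\alpha_i^\vee,\alpha_j\rangle=0$ across the two sets and $X_{\ab}$ is killed by all coroots. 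Let $\widetilde{G}_i$ be the split reductive group with weight lattice $\widetilde{X}_i$.

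The inclusion $X\subset\widetilde{X}=\widetilde{X}_1\oplus\widetilde{X}_2$ is the identity on roots and has finite cokernel. It therefore yields a central isogeny $\widetilde{G}_1\times\widetilde{G}_2\to G$ whose kernel $\widetilde{Z}$ is the diagonalizable group with character group $\widetilde{X}/X$. This is the functoriality of Lemma~\ref{lem:B-dot-functorial}, or the isogeny theorem of SGA3.

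Next, define $\L_2\coloneqq p_2(\L)\subset\widetilde{X}_2^+$, where $p_2$ is the projection. Using the claim established inside Proposition~\ref{prop:max-closed-prime-ideal}, namely $\QQ_{\ge0}\L=X_{1,\QQ}^+\oplus(\QQ_{\ge0}\L\cap X_{2,\QQ})$ with $X_{1,\QQ}^+=\L^*_\QQ\oplus(\QQ^{\Delta_1}\cap X^+_\QQ)$, I expect
\[\QQ_{\ge0}\L=\widetilde{X}_{1,\QQ}^+\oplus C_2,\qquad C_2\coloneqq\QQ_{\ge0}\L\cap X_{2,\QQ}.\]
Take $\L_2\coloneqq C_2\cap\widetilde{X}_2$. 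This is finitely generated, saturated, downward closed, and generates $\widetilde{X}_2$, because $C_2$ is full-dimensional in $X_{2,\QQ}$ and $\L$ generates $X$. Thus $\L_2$ is a weight monoid, and I set $\M_2\coloneqq\M(\L_2)_S$. The cone $C_2$ is strictly convex and contains no root of $\Delta_2$: an element of $\QQ^{\Delta_2}\cap X^+_\QQ$ lying in $\QQ_{\ge0}\L$ would lie in $\QQ^{\Delta_1}$ by Lemma~\ref{lem:decomposition I1 I2}, hence would be zero. So condition (iii') of Proposition~\ref{prop:characterization zero of reductive monoid} holds and $\M_2$ has a zero.

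Now compare $\M$ with $\widetilde{\M}\coloneqq\widetilde{G}_1\times\M_2$. This is a reductive monoid with unit group $\widetilde{G}_1\times\widetilde{G}_2$, and its weight monoid is $\widetilde{\L}=\widetilde{X}_1^+\oplus\L_2$. To see this, use the tensor decomposition $\dot{\bfU}=\dot{\bfU}_{\Psi_1}\otimes\dot{\bfU}_{\Psi_2}$ from the proof of Proposition~\ref{prop:iota}, under which the two-sided cells factor, so that ${}_\ZZ\bfO(\widetilde{\L})={}_\ZZ\bfO_{\widetilde{G}_1}\otimes{}_\ZZ\bfO(\L_2)$. The $\widetilde{Z}$-invariants of $\O_S[\widetilde{\M}]$ are the span of the dual canonical basis elements lying in the cells $\widetilde{\L}\cap X$, with $\widetilde{Z}$ acting through $1_\zeta$-weights. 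By Lemma~\ref{lem:B-dot-functorial}, applied to the isogeny, these match the span of $\dot{\bfB}[\widetilde{\L}\cap X]^*$ inside $\O_S[G]$.

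It remains to show $\widetilde{\L}\cap X=\L$. From the cone decomposition, $\widetilde{\L}\cap X=\QQ_{\ge0}\L\cap X$, which equals $\L$ by saturation. Hence $\widetilde{\M}/\widetilde{Z}=\Spec(\O_S[\widetilde{\M}]^{\widetilde{Z}})$ is $\M(\L)_S$, and $\M(\L)_S\cong\M$ by Theorem~\ref{theo:monoid-classification}.

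The main obstacle is this last identification: checking that taking $\widetilde{Z}$-invariants commutes with the canonical-basis description over a general base, and that the quotient monoid has unit group exactly $G$. For the latter I would rely on Remark~\ref{rqe:equivalence reductive monoid} together with $\widetilde{G}/\widetilde{Z}=G$, checking open immersion fiberwise as in Theorem~\ref{theo:M(L)-def}.
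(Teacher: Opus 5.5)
Your proposal is correct and follows the paper's proof. You use the same lattices $\widetilde{X}_k$ and central isogeny, the same weight monoid $\L_2=\QQ_{\ge0}\L\cap\widetilde{X}_2$, and the same verification of a zero via condition (iii') of Proposition~\ref{prop:characterization zero of reductive monoid}; the paper simply asserts the final isomorphism, which you justify via the $\widetilde{Z}$-weights of the two-sided cells, Lemma~\ref{lem:B-dot-functorial}, and $\widetilde{\L}\cap X=\QQ_{\ge0}\L\cap X=\L$. Your remaining worry about the unit group is moot, since once $\O_S[\widetilde{\M}]^{\widetilde{Z}}$ is identified with ${}_\ZZ\bfO(\L)\otimes_\ZZ\O_S$ inside $\O_S[G]$, Theorem~\ref{theo:M(L)-def} already says the quotient is a reductive monoid with unit group $G$; also, the downward closedness of $\L_2$, which you state without reason, follows because $K_\L$ contains the negative roots (Lemma~\ref{lem:K-L-properties}).
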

\begin{proof}
    For $k=1,2$, let $\widetilde{X}_k$ be the image of $X$ under the projection $X_\QQ\to X_{k,\QQ}$. From $\widetilde{X}_k$ we get root datum with simple roots $\Delta_k$ and the associated reductive group scheme $\widetilde{G}_k$. Moreover, since $X\subset\widetilde{X}_1\oplus\widetilde{X}_2$ is a sub-lattice of finite index, we have a central isogeny $\widetilde{G}_1\times\widetilde{G}_2\to G$ whose kernel is $\widetilde{Z}\coloneqq\Spec(\k[(\widetilde{X}_1\oplus\widetilde{X}_2)/X])$. Let $\L_2\coloneqq\QQ_{\ge0}\L\cap\widetilde{X}_2$. Then by construction we have $\L_2\cap(-\L_2)=\{0\}$ and $\L_2\cap\NN^{\Delta_2}=\{0\}$.  Let $\M_2\coloneqq\M(\L_2)$ be the reductive monoid with unit group $\widetilde{G}_2$ associated to $\L_2$. Then $\M_2$ has a zero and we have $\M=\M(\L)\cong(\widetilde{G}_1\times\M(\L_2))/\widetilde{Z}$ where the center acts diagonally. 
\end{proof}

\section{Very flat reductive monoids}\label{sec:very-flat}
In this section we study the important class of very flat reductive monoids. We will first give a characterization of them in terms of the associated weight monoid, and then introduce the Vinberg monoid and its universal property. Finally we will compare our construction of the Vinberg monoid with other definitions in the literature.  

\subsection{Characterization}
Let $\L\subset X^+$ be a weight monoid (see Definition \ref{def:weight-monoid}). Let $S$ be a base scheme and $\M=\M(\L)_S$ be the associated reductive monoid scheme over $S$.\par 
Recall that $\L_{\ab}\coloneqq\L\cap X_{\ab}$ is a submonoid of $\L$ and $\L_{\ab}^*\coloneqq\L_{\ab}\cap(-\L_{\ab})$ is the maximal subgroup of $\L_{\ab}$. Apply Definition \ref{def:preorder-monoid-set} to $\L$, viewed as an $\L_{\ab}$-set, we get a pre-order $\preceq_{ab}$ and an equivalence relation $\sim_{ab}$ on $\L$ described as follows: for any $\la_1,\la_2\in\L$, we have
\begin{itemize}
    \item $\la_1\preceq_{ab}\la_2$ if $\la_2-\la_1\in\L_{ab}$;
    \item $\la_1\sim_{ab}\la_2$ if $\la_1\preceq_{ab}\la_2$ and $\la_2\preceq_{ab}\la_1$, or equivalently $\la_1-\la_2\in\L_{ab}^{\pm}$.
\end{itemize}
Let $\mathfrak{M}=\L_{\min}$ denote the set of minimal elements of $\L$ with respect to $\preceq_{\ab}$. Then $\mathfrak{M}$ is a union of $\L_{\ab}^*$ orbits. Moreover, we have $\L_{\ab}^*\subset\fM$. Indeed, for $\mu\in\L_{\ab}^*$ and $\la\in\L$ with $\la\preceq_{\ab}\mu$, we have $\nu\coloneqq\mu-\la\in\L_{\ab}$ so that $\la=\mu-\nu\in\L\cap X_{\ab}=\L_{\ab}$. On the other hand, we have $-\la=\nu-\mu\in\L_{\ab}$ and hence $\la\in\L_{\ab}^*$. Thus $\la\sim_{\ab}\mu$ which means that $\mu\in\fM$. Consequently, we have a natural inclusion
\begin{equation}\label{eq:L_ab-inclusion}
    \L_{\ab}^*\subset(\fM-\fM)\cap(\L_{\ab}-\L_{\ab})
\end{equation}
and an equality
\[\L=\fM+\L_{\ab}.\]
We highlight two extreme cases: 
\begin{itemize}
    \item If $\L_{\ab}^{*}=\L_{\ab}\cap(-\L_{\ab})=\{0\}$, then the preorder $\preceq_{\ab}$ becomes a partial order and the equivalence relation $\sim_{\ab}$ become equality.
    \item If $X_{\ab}\subset\L$ so that $\L_{\ab}=X_{\ab}$, then the preorder $\preceq_{\ab}$ coincides with the equivalence relation $\sim_{\ab}$. In this case we have $\fM=\L$.
\end{itemize}

\begin{lem}\label{lem:abelization-flat}
    The abelianisation map $\M\to A_\M$ is flat if and only if $\L$ is a free $\L_{\ab}$-set (cf. Definition \ref{def:M-set}). In this case, the $\O_S[A_\M]=\O_S[\L_{\ab}]$-module $\O_S[\M]$ is free and for any set of representatives $\fM_1\subset\fM$ of the quotient $\fM/\L_{\ab}^*$, the set $\bigcup\limits_{\la\in\fM_1}\dot{\bfB}[\la]^*$ forms an $\O_S[A_\M]$-basis of $\O_S[\M]$.
\end{lem}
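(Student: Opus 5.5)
The plan is to make the $\O_S[\L_{\ab}]$-module structure on $\O_S[\M]={}_\ZZ\bfO(\L)\otimes\O_S$ completely explicit in the dual canonical basis via Lemma \ref{lem:B-dot-X-ab}, and then to translate flatness into a purely combinatorial condition on the $\L_{\ab}$-set $\L$. We may assume $S=\Spec(\k)$ is affine. By Theorem \ref{theo:properties AM MT}(2), $\k[A_\M]=\k[\L_{\ab}]$ sits inside $\k[\M]$ via $e^\chi\mapsto 1_\chi^*$. Lemma \ref{lem:B-dot-X-ab} shows that multiplication by $1_\chi^*$ for $\chi\in\L_{\ab}$ sends $\dot{\bfB}[\la]^*$ bijectively onto $\dot{\bfB}[\la+\chi]^*$. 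The product is again a basis element, with no lower-order terms.

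The first step is to read off the module structure from this. Since $\dot{\bfB}[\L]^*$ is a $\k$-basis of $\k[\M]$, we get
\[\k[\M]\cong\bigoplus_{\la\in\L}\k\cdot\dot{\bfB}[\la]^*,\]
and $\L_{\ab}$ acts by translating the index while identifying the cells. Fix, for each $\la$, compatible identifications of the finite sets $\dot{\bfB}[\la]^*$ along $\L_{\ab}$-orbits. These exist by Lemma \ref{lem:B-dot-X-ab}, which gives a canonical bijection that is transitive because $1_\chi^*1_{\chi'}^*=1_{\chi+\chi'}^*$. With these choices, $\k[\M]$ is isomorphic as a $\k[\L_{\ab}]$-module to a finite direct sum, over representatives, of copies of the monoid-set module $\k[\L]$. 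More precisely, it is $\k[\L]\otimes_\ZZ$ (a multiplicity space), split along $\L_{\ab}$-orbits. One should check that the cell sizes are constant along orbits; Lemma \ref{lem:B-dot-X-ab} guarantees this.

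The second step is the combinatorial input from Appendix \ref{sec:appendix-monoid}. For an $\L_{\ab}$-set $E$, the module $\k[E]$ is flat over $\k[\L_{\ab}]$ if and only if $E$ is free. Here free means that $E=\fM_1+\L_{\ab}$ with $\L_{\ab}$ acting freely, i.e. $E$ is a disjoint union of copies of $\L_{\ab}$ indexed by $\fM_1$ (up to the unit group $\L_{\ab}^*$, absorbed by choosing representatives of $\fM/\L_{\ab}^*$). I expect this to be either an appendix lemma (following \cite{Ogus-Log}) or provable by Kato's criterion. Flatness forces the ideal-theoretic relations $\mu+\chi=\mu'+\chi'$ to factor through a common element, which excludes non-free orbits. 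The ``if'' direction is immediate: if $\L=\bigsqcup_{\la\in\fM_1}(\la+\L_{\ab})$, then $\bigcup_{\la\in\fM_1}\dot{\bfB}[\la]^*$ is a basis, by the bijections of the first step.

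The main obstacle is the ``only if'' direction over an arbitrary base, where we need to descend from flatness of $\k[\M]$ to freeness of $\L$. Since $\k[\M]$ is a direct sum of summands $\k[\L]$ twisted by multiplicity, it contains $\k[\L]$ as a $\k[\L_{\ab}]$-module direct summand. For instance, the span of $\{1_\la^*:\la\in\L\}$ should be a direct summand, via a basis-index projection compatible with the action. This projection is compatible because multiplication by $1_\chi^*$ permutes basis elements. Flatness of $\k[\M]$ therefore gives flatness of $\k[\L]$. Base-changing to a field $\k\to\kappa$, which is allowed since flatness is preserved, reduces the problem to the monoid-set statement over a field, and there the appendix criterion applies.
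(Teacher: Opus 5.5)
Your proposal is correct and follows essentially the paper's own argument. The span of the $1_\la^*$ (the $U^-\times U$-invariants $\k[\L]$) is a $\k[\L_{\ab}]$-direct summand of $\k[\M]$ by Lemma~\ref{lem:B-dot-X-ab}, so flatness passes to $\k[\L]$; one then reduces to a residue field and applies Lemma~\ref{lem:free-set-flat}, and the basis statement follows from Lemma~\ref{lem:B-dot-X-ab}.

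The one point to tighten is that ``$\k[E]$ flat $\Leftrightarrow$ $E$ free'' is false for arbitrary $\L_{\ab}$-sets (e.g.\ $\ZZ$ over $\NN$), so you must invoke Lemma~\ref{lem:free-set-flat} with its hypotheses. These hold here: $\L$ and $\L_{\ab}$ are fine, and $\L_{\ab}\into\L$ is injective and local because $\L^*\subset X^+\cap(-X^+)=X_{\ab}$.
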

\begin{proof}
    The ``if" direction is clear. Now assume that the abelianisation map is flat. By Corollary \ref{cor:U-inv-O(P)}, the $U^-\times U$-invariant subalgebra is described as $\bfO(\L)_S^{U^-\times U}=\bigoplus_{\la\in\L}\O_S\cdot 1_\la^*$ and is identified with the monoid algebra $\O_S[\L]$. In particular, it is a direct summand of $\O_S[\M]=\bfO(\L)_S$ as an $\O_S[\L_{\ab}]$-module. Since $\bfO(\L)_S$ is $\O_S[\L_{\ab}]$-flat by assumption, the algebra $\O_S[\L]$ is a flat $\O_S[\L_{\ab}]$-algebra. After taking a point of $S$ and consider its residue field, we see that there exists a field $k$ such that $k[\L]$ is a flat $k[\L_{\ab}]$-module. Clearly $\L$ and $\L_{\ab}$ are fine monoids and the inclusion $\L_{\ab}\into\L$ is a local homomorphism since $\L_{\ab}=\L\cap X_{\ab}$. By Lemma \ref{lem:free-set-flat} we conclude that $\L$ is a free $\L_{\ab}$-set and any choice of representative $\fM_1$ of $\fM/\L_{\ab}^*$ form an $\L_{\ab}$-basis of $\L$. Then the last statement follows from Lemma \ref{lem:B-dot-X-ab}.
\end{proof}

\begin{prop}\label{prop:very-flat-characterisation}
    With notations as above, the following conditions are equivalent:
    \begin{enumerate}
        \item $\M$ is very flat in the sense of Definition \ref{def:very flat}.
        \item $\L$ is a free $\L_{\ab}$-set (cf. Definition \ref{def:M-set}) and $\fM$ is a submonoid of $\L$.
        \item There exists a homomorphism $\theta\colon Z\to T_{\der}$ that restricts to the identity on $Z_{\der}$ such that $\L=\L_{\ab}\oplus\fM_1$ where
        \[\fM_1\coloneqq\{(\theta^*(\la),\la)\in X^+\subset X_Z\oplus X_{\der}^+\mid\la\in X_{\der}^+\}\]
        is a submonoid of $\L$.
    \end{enumerate}
    If these conditions are satisfied, then $\O_S[\M]$ is a free $\O_S[\L_{\ab}]$-module with basis: 
    \[\bigsqcup_{\la\in X^+_{\der}}\dot{\bfB}[(\theta^*(\la),\la)]^*.\]
\end{prop}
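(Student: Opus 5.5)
The plan is to establish (1)$\Leftrightarrow$(2) geometrically and (2)$\Leftrightarrow$(3) combinatorially, and then read off the basis from Lemma \ref{lem:abelization-flat}. Every statement is compatible with base change from $\ZZ$, and Theorem \ref{theo:properties AM MT} says the formation of $A_\M$ commutes with base change. Hence flatness of $\alpha_\M$ can be tested via the fiberwise flatness criterion, and the integrality of the geometric fibers of $\alpha_\M$ can be checked over geometric points $\Spec(k)\to S$. Flatness of $\alpha_\M$ is equivalent to $\L$ being a free $\L_{\ab}$-set, by Lemma \ref{lem:abelization-flat}. So it remains to show that, once flatness holds, integrality of the geometric fibers of $\alpha_\M$ is equivalent to $\fM$ being a submonoid.

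For this I pass to $U^-\times U$-invariants. Since $\O_S[\M]$ is free over $\O_S[\L_{\ab}]$ with basis $\bigcup_{\la\in\fM_1}\dot{\bfB}[\la]^*$, the fiber of $\alpha_\M$ over the zero-like point of $A_\M$ (the idempotent $e$ killing $\L_{\ab}\setminus\L_{\ab}^*$) has coordinate ring spanned by $\dot{\bfB}[\fM]^*$ modulo $\L_{\ab}^*$-translation. Its $U^-\times U$-invariants are $k[\fM]$ with the product induced from $k[\L]$ modulo the ideal spanned by $\L_{\ab}\setminus\L_{\ab}^*$, so $e^{\la}e^{\mu}$ is $e^{\la+\mu}$ if $\la+\mu\in\fM$ and is $0$ otherwise. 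If the fiber is integral, this invariant ring is a domain, which forces $\fM+\fM\subset\fM$. Conversely, if $\fM$ is a submonoid, then for any geometric point $a$ of $A_\M$ I would identify the fiber $\alpha_\M^{-1}(a)$ with a torsor-twist of $\Spec$ of the ``$\fM$-part''. Concretely, using Lemma \ref{lem:B-dot-X-ab} one can write $\O_S[\M]=\O_S[\L_{\ab}]\otimes\bfO(\fM_1)$ as a ring, provided $\fM_1$ is a submonoid, so that each fiber is isomorphic to $\Spec({}_k\bfO(\fM_1)')$ for a suitably defined normal domain. Integrality then follows from the graded-algebra argument of Theorem \ref{theo:O(L)-normal} (Proposition \ref{prop:grO-decription} and Lemma \ref{lem:filtered-ring-normal}), which only needs the index set to be a saturated monoid. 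I expect the main obstacle to be this step: showing that when $\fM$ is a monoid, the filtration argument still applies even though $\fM_1$ need not be downward closed in $X^+$. My first attempt would be to work with the associated graded ring directly via Corollary \ref{cor:grO-description}, where only the monoid structure matters.

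For (2)$\Leftrightarrow$(3), Lemma \ref{lem:L-X-der+surj} says the projection $p\colon\L\to X_{\der}^+$ is surjective. Freeness of the $\L_{\ab}$-set together with $\fM$ being a monoid shows that $p$ restricts to a bijection $\fM_1\to X_{\der}^+$, and in the case $\L_{\ab}^*\ne0$ this bijection is obtained after choosing representatives compatibly via a splitting. A monoid section of $X_{\der}^+\to X^+$ that is the identity modulo $X_{\ab}$ extends linearly, since $X_{\der}^+$ generates $X_{\der}$ and $X_{\der}^+$ is saturated. Viewing $X\subset X_Z\oplus X_{\der}$ up to isogeny, such a section is exactly $\la\mapsto(\theta^*(\la),\la)$ for a homomorphism $\theta\colon Z\to T_{\der}$ that restricts to the identity on $Z_{\der}$; the compatibility on $Z_{\der}$ comes from the requirement that the image lies in $X$. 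The converse direction (3)$\Rightarrow$(2) is immediate. The final basis statement is Lemma \ref{lem:abelization-flat} with $\fM_1$ taken as the set of representatives.
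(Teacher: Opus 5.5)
Your arguments for (1)$\Rightarrow$(2) and (2)$\Leftrightarrow$(3) essentially match the paper's. The direction (2)$\Rightarrow$(1), however, has a genuine gap.

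The identification $\O_S[\M]\cong\O_S[\L_{\ab}]\otimes\bfO(\fM_1)$ holds only as $\O_S[\L_{\ab}]$-modules (Lemma \ref{lem:B-dot-X-ab}), not as rings. In general $\bfO(\fM_1)$ is not a subring, because $\fM_1$ is not downward closed. By Lemma \ref{lem:filtration-O-multiplicative}, a product of elements of $\dot{\bfB}[(\theta^*(\la),\la)]^*$ and $\dot{\bfB}[(\theta^*(\mu),\mu)]^*$ can have components in $\dot{\bfB}[\nu]^*$ with $\nu=\theta^*(\beta)+(\theta^*(\la+\mu-\beta),\la+\mu-\beta)$ and $0\ne\beta\in\NN^\Delta$. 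Here $\theta^*(\beta)\in\L_{\ab}$ is typically not invertible. So the multiplication genuinely involves the coordinates on $A_\M$, and the fibers of $\alpha_\M$ are not all isomorphic.

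For example, take $\Mat_2\xrightarrow{\det}\AA^1$, the Vinberg monoid of $\mathrm{SL}_2$. At least one of $x_{11}x_{22}$ and $x_{12}x_{21}$ has a nonzero $\det$-component. The fiber over $1$ is the smooth group $\mathrm{SL}_2$, while the fiber over $0$ is the singular cone of matrices of rank $\le1$ (the asymptotic semigroup). Hence no ``suitably defined'' ring can make every fiber $\Spec$ of a single algebra. The obstacle you anticipate, downward closedness in the filtration argument, is not the real difficulty.

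What is missing is a way to reduce integrality of all geometric fibers to a single fiber. The paper does this as follows, over $k$ algebraically closed:
\begin{enumerate}
\item The fibers of $\alpha_\M$ are connected, being fibers of a GIT quotient by the connected group $G_{\der}\times G_{\der}$. So it suffices that their local rings be domains.
\item The locus $E\subset\M$ where this holds is open (EGA IV, 12.1.1) and $G\times G$-stable.
\item Every nonempty closed $G\times G$-stable subset of $\M$ contains $\O_{\min}\ni e_{\min}$ (Corollary \ref{cor:min-orbit bis}), and $\alpha_\M(e_{\min})=e_0$.
\item Hence it suffices that $\M_{e_0}$ be integral. This follows because $k[\M_{e_0}]^{U^-\times U}\cong k[\fM/\L_{\ab}^*]$ is a domain when $\fM$ is a submonoid (Lemma \ref{lem:monoid-algebra-property}), combined with Grosshans' theorem.
\end{enumerate}
If you prefer to keep your fiberwise approach, you need a separate argument for the fibers over the other idempotents of $A_\M$, one for each face of $\QQ_{\ge0}\L_{\ab}$. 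For instance, a degeneration or filtration relating each of them to $\M_{e_0}$, as in the first half of Lemma \ref{lem:filtered-ring-normal}. You would also need to identify the ring of $\M_{e_0}$ itself as a domain. The proposal contains none of this.
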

\begin{proof}
    (1)$\Rightarrow$(2): By Lemma \ref{lem:abelization-flat} it remains to show that $\fM$ is a submonoid of $\L$. After choosing a geometric point on $S$ we may assume that $S=\Spec(k)$ where $k$ is an algebraically closed field. Let $e_0\in A_\M(k)$ be the idempotent corresponding to the $k$-algebra homomorphism $k[\L_{\ab}]\to k$ defined by 
    \[e^\chi\mapsto\begin{cases}
        0, &\text{ if }\chi\in\L_{\ab}\setminus\L_{\ab}^*\\
        1,&\text{ if }\chi\in\L_{\ab}^*.
    \end{cases}\]
    and let $\M_{e_0}$ be the fiber of the abelianisation map over $e_0$. The function ring $k[\M_{e_0}]$ is the quotient of $k[\M]={}_k\bfO(\L)$ by the ideal generated by the elements $\{1_\chi^*,\;\chi\in\L_{\ab}\setminus\L_{\ab}^*\}$ and $\{1_\chi^*-1,\;\chi\in\L_{\ab}^*\}$. Then for any $\la\in\fM$, the image of $1_\la^*\in{}_k\bfO(\L)$ in $k[\M_{e_0}]$ is nonzero. By assumption the scheme $\M_{e_0}$ is integral and hence its function ring $k[\M_{e_0}]$ is an integral domain. Then we deduce that $\fM\subset\L$ is a submonoid. \par  
    (2)$\Rightarrow$(1): By Lemma \ref{lem:abelization-flat} we know that the abelianisation map $\pi\colon\M\to A_{\M}$ is flat and it remains to show that all its geometric fibers are integral. Since the abelianisation commutes with any base change, we may and do assume that $S=\Spec(k)$ where $k$ is an algebraically closed field. Since the abelianisation is a GIT quotient through the connected reductive group $G_{\der}\times G_{\der}$, its geometric fibers are connected. They are also noetherian and it suffices to prove that for any point $x\in \M$, the local rings of the geometric fiber $\M_{\pi(x)}$ are integral. This is equivalent to prove that the subset 
    $$E=\set{x\in \M|\text{ the local ring }\O_{\M_{{\pi(x)}},x} \text{ is integral}}$$ 
    equals to $\M$. According to \cite[12.1.1(viii)]{EGAIV8a15}, the set $E$ is open in $\M$. Clearly it is also invariant under the action of $G\times G$. Thus it suffices to show that $e_{\min}\in E$ (then by Corollary \ref{cor:min-orbit bis} the set $E$ will contain the minimal orbit $\O_{\min}$ which lies in the closure of any $G\times G$-orbit of $\M$, so the complement of $E$, being a closed $G\times G$-invariant subset of $\M$, will be empty). Note that  $\pi(e_{\min})=e_0$ is the idempotent considered above. So it suffices to show that the geometric fiber $\M_{e_0}$ is integral. By the discussion above, we know that $k[\M_{e_0}]^{U^-\times U}=k[\fM/\L^*_{\ab}]$ is the monoid algebra of $\fM/\L^*_{\ab}$, which is a submonoid of the free abelian group $X_{\ab}/\L_{\ab}^*$. Hence the $k$-algebra $k[\M_{e_0}]^{U^-\times U}$ is an integral domain by Lemma \ref{lem:monoid-algebra-property} and we conclude that $\M_{e_0}$ is integral by \cite[Theorem 12]{Grosshans1992}. \par 
    (2)$\Rightarrow$(3): By condition (2) and Lemma \ref{lem:L-X-der+surj}, the natural homomorphism $X\to X_{\der}$ restricts to a surjection of free abelian groups 
    \[p\colon\fM^{\gp}=\fM-\fM\to X_{\der}\]
    which further induces a surjection $\fM\epic X_{\der}^+$ and an isomorphism $\fM^{\gp}/\L_{\ab}^*\cong X_{\der}$. Choose a section $q\colon X_{\der}\to\fM^{\gp}$ of $p$ and let $\fM_1\coloneqq q(X_{\der}^+)$. Then the submonoid $\fM_1\subset\fM$ is a set of representatives of the quotient $\fM/\L_{\ab}^*$. Moreover, there exists a homomorphism homomorphism of group schemes $\theta\colon Z\to T_{\der}$ such that $q(\la)=(\theta^*(\la),\la)\in X\subset X_Z\oplus X_{\der}$ for all $\la\in X_{\der}$. In particular, we have $\theta^*(\la)|_{Z_{\der}}=\la|_{Z_{\der}}$ and hence $\theta$ restricts to the identity on $Z_{\der}$.\par 
    (3)$\Rightarrow$(2): From the decomposition $\L=\L_{\ab}\oplus\fM_1$ we see that $\L$ is a free $\L_{\ab}$-set and $\fM=\L_{\ab}^*\oplus\fM_1$ is a submonoid of $\L$. 
\end{proof}

\begin{rqe}
    In the language of cones, condition (3) above can be reformulated as follows. Let $D_\L\coloneqq\QQ_{\ge0}\L_{\ab}\subset X_{\ab,\QQ}$ be the convex cone associated to $\L_{\ab}$. Then $D$ satisfies $D-D=X_{\ab,\QQ}$ and $\theta^*(\alpha_i)\in D$ for all $i\in\Delta$ and the cone $C_\L\coloneqq\QQ_{\ge0}\L\subset X_\QQ$ has the form
    \[C_\L=\{(\chi,\la)\in X_{\ab,\QQ}\oplus X_{\der,\QQ}=X_\QQ\mid \chi-\theta^*(\la)\in D_\L\}.\]
\end{rqe}

\begin{lem}\label{lem:pullback-abelization}
    Keep notations as above and suppose that the reductive monoid $\M$ over $S$ is very flat. Let $A'\to A_\M$ be a morphism of reductive monoid schemes where $A'$ is commutative. Then the fiber product $\M'\coloneqq A'\times_{A_\M}\M$ is a very flat reductive monoid scheme over $S$ with unit group $G'\coloneqq(A')^\times\times_{G_{\ab}}G$ and its weight monoid $\L(\M')$ equals to the image of $\L(A')\oplus\L$ in the quotient $X'\coloneqq (X'_{\ab}\oplus X)/X_{\ab}$, where $X'_{\ab}$ is the character group of the torus $(A')^\times$ and $X_{\ab}$ embeds anti-diagonally.
\end{lem}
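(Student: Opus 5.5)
We reduce to $S=\Spec(\k)$ affine with $S$ connected, since all formations involved (fiber products, abelianisation by Theorem \ref{theo:properties AM MT}(2), and $\M(\L)$) commute with base change. By Proposition \ref{prop:very-flat-characterisation}(3), write $\L=\L_{\ab}\oplus\fM_1$ with $\fM_1=\{(\theta^*(\la),\la)\mid\la\in X^+_{\der}\}$. By the last statement of that proposition, $\k[\M]$ is a free $\k[\L_{\ab}]$-module with basis $\bigsqcup_{\la\in X^+_{\der}}\dot{\bfB}[(\theta^*(\la),\la)]^*$. Hence
\[\k[\M']=\k[\L(A')]\otimes_{\k[\L_{\ab}]}\k[\M]\]
is a free $\k[\L(A')]$-module on the same set. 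It follows that $\M'\to A'$ is flat, and $\M'\to S$ is affine, flat and of finite presentation. The geometric fibers of $\M'\to A'$ are geometric fibers of $\M\to A_\M$, hence integral. For the monoid structure and the unit group, note that $\M'$ is a fiber product of monoid schemes, so it is a monoid scheme. Its unit group is $(A')^\times\times_{A_\M^\times}\M^\times=(A')^\times\times_{G_{\ab}}G=G'$. This $G'$ is a reductive group: it is an extension of the torus $(A')^\times$ by $G_{\der}$, using Theorem \ref{theo:properties AM MT}(3). Its character group is $X'=(X'_{\ab}\oplus X)/X_{\ab}$.

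The heart of the argument is to identify $\M'$ with $\M(\L')$, where $\L'$ denotes the image of $\L(A')\oplus\L$ in $X'$. Our plan is to compare bases directly. First, $\L'$ is the image of $\L(A')\oplus\fM_1$, and the map $\L(A')\oplus\fM_1\to X'$ is injective, since $\fM_1\cap X_{\ab}=0$. We then check that $\L'$ is again of the form in Proposition \ref{prop:very-flat-characterisation}(3) for $G'$. Its abelian part is $\L(A')$, because $X'_{\ab}=X'^*((A')^\times)$. Its $\fM_1$-part is the image of $\fM_1$, with $\theta'$ obtained by composing $\theta$ with the map induced by $A'\to A_\M$ on centers.

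Next we use Lemma \ref{lem:B-dot-functorial} applied to $\varphi\colon G'\to G$, whose morphism of root data is $f\colon X\to X'$. This lemma gives a bijection $\dot{\bfB}[\la]^*\xrightarrow{\sim}\dot{\bfB}'[f(\la)]^*$. Combining it with Lemma \ref{lem:B-dot-X-ab} for $G'$, which provides the translation by $1_\chi^*$ for $\chi\in X'_{\ab}$, we find that $\k[\M(\L')]$ is spanned by $1_\chi^*\cdot\varphi^*(b^*)$ with $\chi\in\L(A')$ and $b\in\dot{\bfB}[\fM_1]$. This is exactly the image of the basis of $\k[\M']$ under the natural map $\k[\M']\to\k[G']$. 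That map is the tensor product of $\k[\L(A')]\to\k[X'_{\ab}]$ with $\varphi^*$ on $\k[\M]$, so it sends basis to basis injectively. Therefore $\k[\M']\cong\k[\M(\L')]$ as subalgebras of $\k[G']$.

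By Theorem-Definition \ref{theo:M(L)-def}, $\M'\cong\M(\L')$ is then a reductive monoid, and in particular its geometric fibers are normal. By Theorem \ref{theo:monoid-classification} its weight monoid is $\L'$, and it is very flat by Proposition \ref{prop:very-flat-characterisation}. We expect the main obstacle to be the identification step just described: verifying that $\L'$ is saturated, downward closed, generates $X'$, and satisfies $\L'\cap X'_{\ab}=\L(A')$, and that the basis of $\k[\M']$ maps exactly onto the dual canonical basis of $\dot{\bfB}'[\L']$. If the lattice bookkeeping proves awkward, a fallback is to check normality fiberwise. The fibers of $\M'\to A'$ are integral, with normal fiber at the generic point because there it is $G'$-like, and $A'$ is normal; one could then argue via Lemma \ref{lem:filtered-ring-normal} applied to the filtration by $h\circ$ projection to $X^+_{\der}$.
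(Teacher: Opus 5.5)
Your route is essentially the paper's. You split $\L=\L_{\ab}\oplus\fM_1$ via Proposition~\ref{prop:very-flat-characterisation}, then use Lemma~\ref{lem:B-dot-functorial} and Lemma~\ref{lem:B-dot-X-ab} to identify $\k[\M']$ inside $\k[G']\cong\k[X'_{\ab}]\otimes_{\k[X_{\ab}]}\k[G]$ with ${}_\k\bfO'(\L')$. The paper gets $\k[\M']\subset\k[G']$ from Lemma~\ref{lem:quotient-flat}, after obtaining integral geometric fibres from EGA IV 4.6.5. You instead read injectivity off the free $\k[\L(A')]$-basis, and then get integrality and normality together from $\M'\cong\M(\L')$. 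That variation is fine, and it makes explicit the normality that the paper leaves implicit.

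What you leave undone, however, is precisely the step the paper writes out, and your conclusion depends on it. Theorem~\ref{theo:M(L)-def} and Theorem~\ref{theo:O(L)-normal} apply only once $\L'$ is known to be a weight monoid, in particular saturated. This is not a real obstacle.

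\begin{itemize}
\item The lattice $\fM_1^{\gp}=\{(\theta^*\la,\la)\mid\la\in X_{\der}\}$ is a complement of $X_{\ab}$ in $X$. Hence $X'\cong X'_{\ab}\oplus\fM_1^{\gp}$, and under this $\L'$ becomes $\L(A')\oplus\fM_1$. This uses that $A'\to A_\M$ sends $\L_{\ab}$ into $\L(A')$.
\item The same splitting gives the injectivity of $\L(A')\oplus\fM_1\to X'$. Note this needs $\fM_1^{\gp}\cap X_{\ab}=0$, not merely $\fM_1\cap X_{\ab}=0$ as you wrote.
\item $\L(A')$ is saturated in $X'_{\ab}$, and $\fM_1\cong X_{\der}^+$ is saturated in $\fM_1^{\gp}\cong X_{\der}$. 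So $\L'$ is saturated, generates $X'$, and satisfies $\L'\cap X'_{\ab}=\L(A')$.
\item Downward closedness is immediate, because coroots only see the $X$-component. If the class of $(\chi,\la)$ minus some $\nu\in\NN^\Delta$ is dominant, then $\la-\nu\in X^+$, hence $\la-\nu\in\L$.
\end{itemize}

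With this in place, the fallback via Lemma~\ref{lem:filtered-ring-normal} is unnecessary.
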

\begin{proof}
    We may assume that $S=\Spec(\k)$ is affine. By checking on functor of points, it is clear that $\M'$ is a monoid scheme over $S$ with unit group $G'$. Since $X/X_{\ab}\cong X_{\der}$ is torsion free, the quotient $X'=(X'_{\ab}\oplus X)/X_{\ab}$ is a free abelian group of finite rank. Then the based root datum of $G'$ has weight lattice $X'$ and the simple roots/coroots for $G'$ are identified with those for $G$. In particular, the dominant integral weight monoid of $G'$ is $X'^+=(X'_{\ab}\oplus X^+)/X_{\ab}$, which is the image of $X'_{\ab}\oplus X^+$ in $X'$.\par 
    Let $\L'$ be the image of $\L(A')\oplus\L$ in $X'$. We claim that $\L'$ is a closed saturated submonoid of $X'^+$. The fact that $\L\subset X^+$ is downward closed immediately implies that $\L'$ is downward closed and it remains to show that $\L'$ is saturated. Let $\alpha\in X'_{\ab},\beta\in X$ represent an element $(\alpha,\beta)\in X'$. Suppose there exists an integer $n\ge1$ and elements $\mu\in\L(A'),\la\in\L$ such that $n(\alpha,\beta)=(\mu,\la)$ in $X'$. Then by definition there exists $x\in X_{\ab}$ such that $\mu+x=n\alpha$ and $\la-x=n\beta$. Since $\M$ is very flat, we have $\L=\L_{\ab}\oplus\fM_1$ where $\fM_1\subset\L$ is a submonoid. This induces a decomposition $X=X_{\ab}\oplus\fM_1^{\gp}$. Under these decompositions we write $\beta=\beta_0+\beta_1$ with $\beta_0\in X_{\ab}$ and $\beta_1\in\fM_1^{\gp}$ and also $\la=\la_0+\la_1$ with $\la_0\in\L_{\ab}$ and $\la_1\in\fM_1$. Then we have $x=\la_0-n\beta_0$ and $\la_1=n\beta_1$. Since $\L$ is saturated in $X$, the direct factor $\fM_1$ is also saturated in $X$ and therefore $\beta-\beta_0=\beta_1\in\fM_1\subset\L$. On the other hand, we have $\mu+\la_0=n(\alpha+\beta_0)\in\L(A')$ and from the fact that $\L(A')$ is saturated in $X'_{\ab}$ we get that $\alpha+\beta_0\in\L(A')$. Since $(\alpha,\beta)=(\alpha+\beta_0,\beta-\beta_0)$ in $X'$, we conclude that $(\alpha,\beta)\in\L'$ and hence $\L'$ is saturated.\par     
    Let ${}_\k\bfO'=\k[G']$ (resp. $\dot{\bfB}'^*$) be the ring of regular functions on $G'$ (resp. its dual canonical bases as $\k$-module). By assumption $\M'$ is flat over $A'$ and hence flat over $S$. Moreover, the structure morphism $\M'\to S$ has integral geometric fibers by \cite[Proposition 4.6.5(ii)]{EGAIV2a7}. Thus $\M'$ is a reductive monoid over $S$ with unit group $G'$. Then Lemma \ref{lem:quotient-flat} implies that $\k[\M']=\k[\L'_{ab}]\otimes_{\k[\L_{\ab}]}\k[\M]$ is a subalgebra of ${}_\k\bfO'=\k[G']$. By Lemma \ref{lem:B-dot-functorial} and Lemma \ref{lem:B-dot-X-ab}, we have a canonical isomorphism $\k[X'_{\ab}]\otimes_{\k[X_{\ab}]}{}_\k\bfO\cong{}_\k\bfO'$ that maps $e^{\chi'}\otimes\dot{\bfB}[\la]^*$ bijectively onto $\dot{\bfB}'[(\chi',\la)]^*$ for each $\chi'\in X'_{\ab}$ and $\la\in X^+$. Under this isomorphism the subring $\k[\M']$ is mapped onto the subring ${}_\k\bfO'(\L')$ spanned by the elements $\bigsqcup_{\la'\in\L'}\dot{\bfB'}[\la']^*$. Thus we get $\L(\M')=\L'$ as desired. Moreover, by construction $A'$ is the abelianisation of $\M'$ and $\M'$ is very flat.
\end{proof}

\subsection{The Vinberg monoid}
In the case where $Z=T_{\der}$, $\theta=\mathrm{Id}$ and $\L_{\ab}=\NN^{\Delta}$ we obtain a distinguished very flat reductive monoid that has a certain universal property.\par  
Let $G_+\coloneqq(T_{\der}\times G_{\der})/Z_{\der}$ where $Z_{\der}$ embeds anti-diagonally. Similarly let $B_+\coloneqq(T_{\der}\times UT_{\der})/Z_{\der}$ and $T_+\coloneqq (T_{\der}\times T_{\der})/Z_{\der}$. Then $(G_+,B_+,T_+)$ is a pinned split reductive group scheme over the base scheme $S$, whose based root datum has the same set of simple roots/coroots as $G$ and has weight lattice
\[X^*(T_+)=\{(\mu,\la)\in X_{\der}\oplus X_{\der}\mid \mu-\la\in\ZZ^\Delta\subset X_{\der}\}.\]
Let $r$ be the cardinality of $\Delta$ and identify $\Delta=\{1,\dotsc,r\}$.
\begin{defi}\label{defi:Vinberg monoid}
    The \emph{Vinberg monoid}, or \emph{universal enveloping monoid} for $G$, denoted $\mathrm{Vin}_G$, is the reductive monoid over $S$ with unit group $G_+$ associated to the commutative monoid
    \[\L^{\mathrm{Vin}}\coloneqq\{(\mu,\la)\in X_{\der}\times X_{\der}^+\mid\mu\ge\la\}.\]
\end{defi}
\begin{prop}
    The Vinberg monoid $\mathrm{Vin}_G$ is a very flat reductive monoid with unit group $G_+$, and has a zero. Its abelianisation map is identified with the map
    \[\mathrm{Vin}_G\to A_{\mathrm{Vin}_G}\cong\AA^r\]
    induced by the characters $\chi_{(\alpha_i,0)}=(\alpha_i,0)\colon G_+\to\GG_m$, $1\le i\le r$. If moreover $G_{\der}$ is simply connected, then the abelianisation map together with the characters $\chi_{(0,\omega_i)},1\le i\le r$, induce an isomorphism
    \[\mathrm{Vin}_G//\Ad(G)\cong\AA^{2r}.\]
\end{prop}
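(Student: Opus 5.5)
The plan is to reduce every assertion to combinatorics of the weight monoid $\L^{\mathrm{Vin}}$ inside $X^*(T_+)$, and then invoke the general results already established. These are Theorem \ref{theo:properties AM MT} for the abelianisation, Proposition \ref{prop:very-flat-characterisation} for very flatness, Proposition \ref{prop:characterization zero of reductive monoid} for the zero, and Theorem \ref{theo:Chevalley iso} for the adjoint quotient.

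\textbf{Step 1: set-up and the weight monoid.} First I record the root datum of $G_+$. The roots and coroots are those of $G_{\der}$ placed in the second factor, so a weight $(\mu,\la)\in X^*(T_+)$ is dominant iff $\la\in X_{\der}^+$. The center of $G_+$ is $Z=(T_{\der}\times Z_{\der})/Z_{\der}\cong T_{\der}$. Consequently $X_{\ab}(G_+)=\{(\mu,0)\}\cap X^*(T_+)=\ZZ^\Delta\times 0$.

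Next I check that $\L^{\mathrm{Vin}}$ is a weight monoid in the sense of Definition \ref{def:weight-monoid}. The key identity is the decomposition
\[(\mu,\la)=(\mu-\la,0)+(\la,\la),\qquad \mu-\la\in\NN^\Delta,\ \la\in X_{\der}^+.\]
It exhibits $\L^{\mathrm{Vin}}=\L_{\ab}\oplus\fM_1$ with $\L_{\ab}=\NN^\Delta\times0$ and $\fM_1=\{(\la,\la)\mid\la\in X_{\der}^+\}$. From this the required properties follow directly:
\begin{itemize}
\item finite generation, because $X_{\der}^+$ is finitely generated;
\item generation of the lattice $X^*(T_+)$, because $\fM_1^{\gp}+\ZZ^\Delta\times0$ is everything;
\item saturation, because both summands are saturated and the sum is direct in $X^*(T_+)$;
\item downward closedness, because lowering by a positive root $(0,\alpha)$ preserves $\mu\ge\la$.
\end{itemize}

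\textbf{Step 2: abelianisation, very flatness, and the zero.} By Theorem \ref{theo:properties AM MT}(2) we have
\[A_{\mathrm{Vin}_G}=\Spec\O_S[\L_{\ab}]=\Spec\O_S[\NN^\Delta]\cong\AA^r.\]
Its coordinates are the functions $1^*_{(\alpha_i,0)}$, and these are exactly the characters $\chi_{(\alpha_i,0)}$ of $G_+$, which are one-dimensional Weyl modules.

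Very flatness is condition (3) of Proposition \ref{prop:very-flat-characterisation}, taking $Z=T_{\der}$ and $\theta=\mathrm{Id}$. With these choices the monoid $\fM_1$ there is exactly the diagonal monoid above.

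For the zero I verify condition (iii') of Proposition \ref{prop:characterization zero of reductive monoid}.
\begin{itemize}
\item If $(\mu,\la)$ and $-(\mu,\la)$ both lie in $\L^{\mathrm{Vin}}$, then $\la$ and $-\la$ are both dominant. Hence $\la=0$, and then $\mu\ge0$ and $-\mu\ge0$ force $\mu=0$. So $\L^{\mathrm{Vin}}\cap(-\L^{\mathrm{Vin}})=\{0\}$.
\item Elements of the positive root monoid of $G_+$ have the form $(0,\nu)$ with $\nu\in\NN^\Delta$. Membership in $\L^{\mathrm{Vin}}$ requires $0\ge\nu$, so $\nu=0$. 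So $\L^{\mathrm{Vin}}\cap\NN^\Delta=\{0\}$.
\end{itemize}

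\textbf{Step 3: the adjoint quotient.} If $G_{\der}$ is simply connected, then $X_{\der}^+=\bigoplus_i\NN\omega_i$. The decomposition of Step 1 then shows that $\L^{\mathrm{Vin}}$ is \emph{freely} generated by the $2r$ elements $(\alpha_i,0)$ and the diagonal generators $(\omega_i,\omega_i)$.

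I read $\chi_{(0,\omega_i)}$ in the statement as the character of the Weyl module for the $i$-th diagonal generator $(\omega_i,\omega_i)$. This is the paper's shorthand; it should be checked against the convention for $X^*(T_+)$ as a quotient by the anti-diagonal $Z_{\der}$. With that reading, the last part of Theorem \ref{theo:Chevalley iso} gives
\[\mathrm{Vin}_G//\Ad(G)\cong\AA^{2r}\]
via these $2r$ characters. The characters $\chi_{(\alpha_i,0)}$ are precisely the abelianisation coordinates.

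\textbf{Expected obstacle.} I expect the only genuinely delicate points to be bookkeeping. The first is the lattice $X^*(T_+)$ (pairs modulo the anti-diagonal $Z_{\der}$) and hence the matching of $(0,\omega_i)$ with the diagonal generator. The second is verifying saturation of $\L^{\mathrm{Vin}}$ inside this lattice rather than inside $X_{\der}\oplus X_{\der}$. Everything else is a direct citation of earlier results.
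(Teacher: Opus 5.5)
Your proposal is correct and follows the paper's own argument step for step. The paper likewise obtains the abelianisation from Theorem \ref{theo:properties AM MT} with $\L^{\mathrm{Vin}}_{\ab}=\NN^\Delta$, very flatness from the splitting $\L^{\mathrm{Vin}}=\L^{\mathrm{Vin}}_{\ab}\oplus\{(\la,\la)\mid\la\in X_{\der}^+\}$ via Proposition \ref{prop:very-flat-characterisation}, the zero via condition (iii') of Proposition \ref{prop:characterization zero of reductive monoid}, and the adjoint quotient by feeding the free basis $\{(\alpha_i,0)\}\cup\{(\omega_i,\omega_i)\}$ into Theorem \ref{theo:Chevalley iso}; your extra check that $\L^{\mathrm{Vin}}$ is a weight monoid is something the paper takes for granted. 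One correction to your bookkeeping remark: since $T_+$ is a quotient of $T_{\der}\times T_{\der}$, its character lattice is the sublattice $\{(\mu,\la)\in X_{\der}\oplus X_{\der}\mid\mu-\la\in\ZZ^\Delta\}$ (which is what you actually use), not a quotient, so $(0,\omega_i)$ is in general not even a weight of $T_+$ and the paper's proof indeed uses the generators $(\omega_i,\omega_i)$, as you guessed.
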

\begin{proof}
    By definition we have $\L^{\mathrm{Vin}}_{\ab}=\NN^\Delta\subset X_{\der}$ and the description of the abelianisation is clear. Also $\L^{\mathrm{Vin}}$ is a free $\L^{\mathrm{Vin}}_{\ab}$-set with basis $\{(\mu,\mu)\mid\mu\in X_{\der}^+\}$. So by Proposition \ref{prop:very-flat-characterisation} we see that $\mathrm{Vin}_G$ is very flat. Since $\L^{\mathrm{Vin}}\cap(-\L^{\mathrm{Vin}})=\{0\}$ and $\L^{\mathrm{Vin}}\cap X^*(T_+)_{\pos}=\L^{\mathrm{Vin}}\cap(0\oplus\NN^\Delta)=\{0\}$, we get that $\mathrm{Vin}_G$ has a zero by Proposition \ref{prop:characterization zero of reductive monoid}.\par 
    Now assume that $G_{\der}$ is simply connected. Then $X_{\der}^+$ is a free monoid with basis the fundamental weights $\omega_1,\dotsc,\omega_r$. Therefore $\L^{\mathrm{Vin}}$ is a free monoid with basis $\{(\alpha_i,0)\mid 1\le i\le r\}\cup\{(\omega_i,\omega_i)\mid 1\le i\le r\}$ and the last statement follows from Theorem \ref{theo:Chevalley iso}.
\end{proof}

\begin{theo}\label{theo: characterization veryflat}
  Let $\M$ be a reductive monoid with unit group $G$ and let $A_\M$ be the abelianisation of $\M$. Then $\M$ is very flat if and only if there exists a morphism of monoid schemes $A_{\M}\to A_{\mathrm{Vin}_G}$ such that $\M\simeq A_{\M}\times_{A_{\mathrm{Vin}_G}}\mathrm{Vin}_{G}$.
\end{theo}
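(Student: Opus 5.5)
The ``if'' direction will come from Lemma \ref{lem:pullback-abelization}, and the ``only if'' direction from the weight-monoid description of very flat monoids in Proposition \ref{prop:very-flat-characterisation}, combined with the classification Theorem \ref{theo:monoid-classification}.

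\emph{If.} Suppose $\M\simeq A_\M\times_{A_{\mathrm{Vin}_G}}\mathrm{Vin}_G$ for some morphism $A_\M\to A_{\mathrm{Vin}_G}$. Here $\mathrm{Vin}_G$ is very flat by the preceding proposition, and $A_\M$ is a commutative reductive monoid by Theorem \ref{theo:properties AM MT}(2). Lemma \ref{lem:pullback-abelization} then says that the fiber product is a very flat reductive monoid, so $\M$ is very flat.

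There is one notational point to settle first. $\mathrm{Vin}_G$ has unit group $G_+$ and abelianisation $A_{\mathrm{Vin}_G}=\Spec\O_S[\NN^\Delta]$. For the statement to make sense, the map $A_\M\to A_{\mathrm{Vin}_G}$ should restrict on unit groups to a homomorphism $G_{\ab}\to T_{\der}/Z_{\der}=T_{\ad}$ compatible with $G_+\to T_{\ad}$. This forces $(A_\M)^\times\times_{T_\ad}G_+\cong G$, and I take that identification as part of the hypothesis.

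\emph{Only if.} Assume $\M$ is very flat with $\L=\L(\M)$. By Proposition \ref{prop:very-flat-characterisation}(3) there is a homomorphism $\theta\colon Z\to T_{\der}$, restricting to the identity on $Z_{\der}$, such that $\L=\L_{\ab}\oplus\fM_1$ with $\fM_1=\{(\theta^*\la,\la)\}$. The argument then has three steps.

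\begin{enumerate}
\item Build the base-change map. Composing $\theta$ with $T_\der\to T_\ad$ and passing to the quotient by $Z_\der$ gives $\bar\theta\colon G_{\ab}=Z/Z_{\der}\to T_{\ad}$. On characters this is $\bar\theta^*\colon\ZZ^\Delta\to X_{\ab}$. Since $\fM_1\subset\L$ and $\L$ is downward closed, $(\theta^*\la,\la)-\alpha_i\in\L$ for suitable $\la$, which gives $\theta^*(\alpha_i)\in\L_{\ab}$. (This is the cone condition $\theta^*(\alpha_i)\in D_\L$ in the remark following Proposition \ref{prop:very-flat-characterisation}.) Hence $\bar\theta^*(\NN^\Delta)\subset\L_{\ab}$, and so $\bar\theta$ extends to a monoid morphism $A_\M=\Spec\O_S[\L_{\ab}]\to\Spec\O_S[\NN^\Delta]=A_{\mathrm{Vin}_G}$.
\item Compute the weight monoid of the pullback. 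Set $\M'\coloneqq A_\M\times_{A_{\mathrm{Vin}_G}}\mathrm{Vin}_G$. By Lemma \ref{lem:pullback-abelization}, $\M'$ is very flat with unit group $G'=G_{\ab}\times_{T_\ad}G_+$, and $\L(\M')$ is the image of $\L_{\ab}\oplus\L^{\mathrm{Vin}}$ in $X'=(X_{\ab}\oplus X^*(T_+))/\ZZ^\Delta$.
\item Identify with $\M$. Show that $G'\cong G$, meaning that $X'\cong X$ compatibly with roots. The map should be $(\chi,(\mu,\la))\mapsto\chi+\theta^*(\mu-\la)+(\theta^*\la,\la)$, which amounts to $\theta^*\mu$ in the $X_Z$-component and $\la$ in the $X_{\der}$-component. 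Under this map the image of $\L_{\ab}\oplus\L^{\mathrm{Vin}}$ is $\L_{\ab}+\theta^*(\NN^\Delta)+\fM_1=\L_{\ab}\oplus\fM_1=\L$. Theorem \ref{theo:monoid-classification} then gives $\M'\cong\M$.
\end{enumerate}

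The main obstacle is the lattice bookkeeping in step 3. One must check that $X'\to X$ is an isomorphism and not merely a finite-index inclusion, i.e.\ that $G'\to G$ is an isomorphism of group schemes. I would do this using $X=X_Z\times_{X_Z/\ZZ\text{-}\mathrm{restr}}X_\der$ together with the condition $\theta|_{Z_\der}=\mathrm{id}$. The morphism of root data then yields $G'\cong G$ over $S$, and the classification applies with unit group $G$.
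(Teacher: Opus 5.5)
Your proposal is correct and follows essentially the same route as the paper. Sufficiency comes from Lemma \ref{lem:pullback-abelization}. For necessity, you take $\theta$ from Proposition \ref{prop:very-flat-characterisation}, identify $X\cong(X_{\ab}\oplus X^*(T_+))/\ZZ^\Delta$ via $(\chi,(\mu,\la))\mapsto(\chi+\theta^*\mu,\la)$, and check that this carries the image of $\L_{\ab}\oplus\L^{\mathrm{Vin}}$ onto $\L$ before invoking the classification. Your explicit verification that $\theta^*(\alpha_i)\in\L_{\ab}$, using downward closedness and the unique decomposition $\L=\L_{\ab}\oplus\fM_1$, spells out why $\bar\theta$ extends to a monoid morphism $A_\M\to A_{\mathrm{Vin}_G}$; the paper leaves this point implicit.
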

\begin{proof}
    The sufficiency is clear since $\mathrm{Vin}_{G}$ is very flat, so is any of its base change. Now we assume that $\M$ is very flat. Let $\L\coloneqq\L(\M)\subset X^+$ be the commutative monoid associated to $\M$ and let $\L_{\ab}\coloneqq\L\cap X_{ab}$. By Proposition \ref{prop:very-flat-characterisation} there exists a homomorphism $\theta\colon Z\to T_{\der}$ that restricts to the identity on $Z_{\der}$ such that
    \[\L=\{(\nu,\lambda)\in X^*(T)^+\subset X_Z\times X_{\der}^+\mid \nu-\theta^*(\lambda)\in\mathcal{L}_{\ab}\}.\]
    We have a commutative diagram
    \[\xymatrix@C=2cm{
    G=(Z\times G_{\der})/Z_{\der}\ar[r]^{\theta\times\mathrm{Id}_{G_{\der}}}\ar[d] & G_+=(T_{\der}\times G_{\der})/Z_{\der}\ar[d]\\
    G_{\ab}=Z/Z_{\der}\ar[r]^{\theta} & G_{+,\ab}=T_{\der}/Z_{\der}
    }\]
    realizing $G$ as the fiber product $G_{\ab}\times_{G_{+,\ab}}G_+$. Accordingly we have an isomorphism of character lattice $X\cong (X_{\ab}\oplus X^*(T_+))/\ZZ^{\Delta}$ where $\ZZ^{\Delta}=X^*(T_{\der}/Z_{\der})$ embeds anti-diagonally. By Lemma \ref{lem:pullback-abelization}, it remains to show that this isomorphism induces an isomorphism of monoids
    $\L\cong\L_{\ab}\oplus_{\NN^\Delta}\L^{\mathrm{Vin}}$. The map sends $(\nu,\la)\in\L$ to (the image of) $(\nu-\theta^*(\la),(\la,\la))$ and the inverse map sends $(\chi,(\mu,\la))$ with $\chi\in\L_{\ab}$ and $(\mu,\la)\in\L^{\mathrm{Vin}}$ to $(\theta^*(\mu)+\chi,\la)\in\L$. It is straightforward to check that these maps are well-defined and inverse bijections of each other.
\end{proof}

\begin{defi}
    A homomorphism of reductive monoids $\M\to \mathcal{N}$ is called \emph{excellent} if the following commutative diagram is cartesian:
    \[\xymatrix{\M\ar[r]\ar[d]& \mathcal{N}\ar[d]\\
    A_{\M}\ar[r] & A_{\mathcal{N}}}\]
    where the vertical arrows are the abelianisation maps.
\end{defi}

\begin{cor}
     Let $\M$ be a very flat reductive monoid scheme over $S$ with unit group $G$. Then there exists an excellent homomorphism $\M\to \mathrm{Vin}_G$ that restricts to the identity on $G_{\der}$ and such a homomorphism is uniquely determined by the induced homomorphism between abelianisations $A_{\M}\to A_G$. If moreover $\M$ has a zero, then there is a unique such homomorphism.
\end{cor}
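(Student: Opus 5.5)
Existence comes straight from Theorem \ref{theo: characterization veryflat}. Since $\M$ is very flat, that theorem gives a morphism $A_\M\to A_{\mathrm{Vin}_G}$ and an isomorphism $\M\cong A_\M\times_{A_{\mathrm{Vin}_G}}\mathrm{Vin}_G$. The second projection is then a monoid homomorphism $\M\to\mathrm{Vin}_G$. It is excellent by construction, because $A_\M\to A_{\mathrm{Vin}_G}$ is the map it induces on abelianisations. Indeed, the abelianisation of the fiber product is $A_\M$, by Lemma \ref{lem:pullback-abelization}. On unit groups it is the map $G=(Z\times G_{\der})/Z_{\der}\to G_+$ given by $\theta\times\mathrm{Id}$, so it restricts to the identity on $G_{\der}$. (Here I read $A_G$ in the statement as $A_{\mathrm{Vin}_G}$.)

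For uniqueness given the map on abelianisations, let $\phi\colon\M\to\mathrm{Vin}_G$ be excellent and restrict to the identity on $G_{\der}$. The cartesian square identifies $\M$ with $A_\M\times_{A_{\mathrm{Vin}_G}}\mathrm{Vin}_G$, and under this identification $\phi$ is the second projection. A map into a fiber product is determined by its two components. The first component is $\alpha_\M$, and the second is forced, so $\phi$ is pinned down once $A_\M\to A_{\mathrm{Vin}_G}$ is fixed. Concretely, the restriction of $\phi$ to $G$ is the homomorphism $G\to G_+$ given by $G_{\der}\xrightarrow{\mathrm{Id}}G_{\der}$ together with the map $G_{\ab}\to G_{+,\ab}$. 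Since $G\to\M$ is schematically dense (Lemma \ref{lem:quotient-flat}) and $\mathrm{Vin}_G$ is separated, $\phi$ is determined by this restriction.

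For the last claim, assume $\M$ has a zero. The map $A_\M\to A_{\mathrm{Vin}_G}$ is a monoid homomorphism of commutative reductive monoids. On unit groups it is a homomorphism $\vartheta\colon G_{\ab}\to T_{\der}/Z_{\der}$, and by Theorem \ref{theo:monoid-classification} applied to tori, extending to the monoids amounts to $\vartheta^*(\NN^\Delta)\subset\L_{\ab}$. The requirement that the map restrict to the identity on $G_{\der}$ and be compatible with $G\to G_+$ forces $\vartheta$ to come from some $\theta\colon Z\to T_{\der}$ with $\theta|_{Z_{\der}}=\mathrm{Id}$. Moreover $\theta$ must be the one for which $\L$ has the form in Proposition \ref{prop:very-flat-characterisation}(3), because $\phi^*$ must carry $\L^{\mathrm{Vin}}$ into $\L$ via Lemma \ref{lem:B-dot-functorial}.

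I expect this step to be the main obstacle: showing that the splitting $\theta$ is unique when $\M$ has a zero. Two such maps differ by a homomorphism $Z/Z_{\der}\to T_{\der}$ corresponding to $\delta\colon X_{\der}\to X_{\ab}$, and $\fM_1$ is replaced by $\{(\theta^*\la+\delta\la,\la)\}$. Both versions of $\fM_1$ must consist of minimal elements for $\preceq_{\ab}$, so $\fM$ must contain both. By Proposition \ref{prop:characterization zero of reductive monoid}, $\L^*_{\ab}=\L\cap(-\L)=0$, which makes $\fM$ a set of representatives of the free $\L_{\ab}$-set $\L$. Hence $\fM=\fM_1$ is canonical, and it follows that $\delta=0$. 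Uniqueness of $\theta$ then gives uniqueness of the map on abelianisations, and the previous paragraph gives uniqueness of $\phi$.
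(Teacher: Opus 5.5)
Your existence argument matches the paper's. For uniqueness, the sentence ``a map into a fiber product is determined by its two components'' is circular, because the identification $\M\cong A_\M\times_{A_{\mathrm{Vin}_G}}\mathrm{Vin}_G$ is itself made through $\phi$. What actually works is your second argument (restrict to the schematically dense $G$), which is essentially the paper's route. It still needs one step you skipped: a homomorphism $G\to G_+$ that is the identity on $G_{\der}$ is determined by its map $G_{\ab}\to G_{+,\ab}$. Two such restrictions $\theta,\theta'\colon Z\to T_{\der}$ agree on $Z_{\der}$ and modulo $Z_{\der}$. So $\theta'\theta^{-1}$ is a homomorphism $Z/Z_{\der}\to Z_{\der}$, which is trivial because on characters it is a map from a finite group into the lattice $X_{\ab}$. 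The paper phrases this as an automorphism of $X_Z$ fixing the finite-index subgroup $X_{\ab}$.

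The genuine gap is in the zero case. Your argument needs $\L=\L_{\ab}\oplus\fM_1(\theta)$ for the $\theta$ attached to \emph{each} excellent $\phi$. Your justification, that $\phi^*$ carries $\L^{\mathrm{Vin}}$ into $\L$, only gives $\L_{\ab}+\fM_1(\theta)\subset\L$, and that does not make $\fM_1(\theta)$ consist of $\preceq_{\ab}$-minimal elements. For example, take $\M=\Mat_2$, so $G=\GL_2$ and $\mathrm{Vin}_G\cong\Mat_2$. The monoid map $g\mapsto\det(g)\,g$ is the identity on $\mathrm{SL}_2$, is $z\mapsto z^3$ on the centre, and carries $\L^{\mathrm{Vin}}$ into $\L$. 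In the coordinates $X\subset X_Z\oplus X_{\der}=\ZZ^2$ (a diagonal weight $(a,b)$ goes to $(a+b,a-b)$), one has $\L_{\ab}=\NN(2,0)$ and $\fM_1(\theta)=\{(3m,m)\mid m\ge0\}=\{(m,m)+m(2,0)\}$. These elements are not minimal, and this map is not the identity. It is not excellent, and since your argument never uses excellence, as written it would prove a false statement.

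The missing step is the paper's. Excellence gives $\M\cong A_\M\times_{A_{\mathrm{Vin}_G}}\mathrm{Vin}_G$ built from this $\theta$, so Lemma~\ref{lem:pullback-abelization} yields $\L=\{(\nu,\la)\mid\nu-\theta^*(\la)\in\L_{\ab}\}$, i.e.\ equality. Once you have this for both $\theta$ and $\theta'$, your uniqueness-of-basis argument using $\L_{\ab}^*=0$ is correct. It is a repackaging of the paper's substitution $\mu=\theta^*(\la)$ and $\mu=\theta'^*(\la)$, which gives $\pm(\theta^*-\theta'^*)(\la)\in\L_{\ab}$.
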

\begin{proof}
    The existence is a direct consequence of Theorem \ref{theo: characterization veryflat}. Let $u,u'\colon \M\to \mathrm{Vin}_G$ be two excellent homomorphisms that restrict to the identity on $G_{\der}$ and induce the same map between abelianisations. Then $u$ and $u'$ differ by an automorphism of the monoid scheme $\M$ that commutes with the abelianisation map $\M\to A_{\M}$. We need to prove that such an automorphism is the identity. Since $G\to \M$ is scheme-theoretically dominant, it suffices to prove that the automorphism restricts to the identity on $G$. Since it restricts to the identity on the derived group $G_{\der}$ we only need to prove that it restricts to the identity on the central torus $Z$. The induced automorphism of $Z$ restricts to the identity on $Z_{\der}$ and induces the identity map on the quotient $G_{\ab}=Z/Z_{\der}$ which is the common unit group of $A_{\M}$ and $A_{\mathrm{Vin}_G}$. On character groups we thus get an automorphism of $X_Z$ that restricts to the identity on the finite index subgroup $X_{\ab}$ and hence must be the identity map. \par
    Now assume that $\M$ has a zero and let again $u,u'\colon \M\to \mathrm{Vin}_G$ be two excellent homomorphisms that restrict to the identity on $G_{\der}$. Let $\theta,\theta'\colon Z\to T_{\der}$ be the restrictions of $u$ and $u'$ to the central torus. Since they restrict to the identity on $Z_{\der}$, we have cartesian diagrams 
    \[\xymatrix{Z\ar[r]^{\theta} \ar[d]& T_{\der}\ar[d]\\
    Z/Z_{\der}\ar[r] & T_{\der}/Z_{\der}}\quad \xymatrix{Z\ar[r]^{\theta'} \ar[d] & T_{\der}\ar[d]\\
    Z/Z_{\der}\ar[r] & T_{\der}/Z_{\der}}\] where the two bottom horizontal maps are restrictions of the two homomorphisms $A_{\M}\to A_{\mathrm{Vin}_G}$ induced by $u$ and $u'$. It suffices to prove $\theta=\theta'$. Let 
    $$\L\coloneqq\{(\mu,\lambda)\in X^+\subset X_Z\oplus X_{\der}^+\mid \mu-\theta^*(\lambda)\in\L_{\ab}\},$$
    $$\L'\coloneqq\{(\mu,\lambda)\in X^+\subset X_Z\oplus X_{\der}^+\mid \mu-\theta'^*(\lambda)\in\L_{\ab}\}.$$ 
    According to Lemma \ref{lem:pullback-abelization}, the excellent morphisms $u$, $u'$ induce isomorphisms of reductive monoids
    $$\M(\L)\xleftarrow[\sim]{}\M\xrightarrow[\sim]{} \M(\L')$$ which commute with the abelianisation maps. By the previous paragraph they restrict to the identity on $G$. Thus we have $\L=\L'$ and for any $(\mu,\lambda)\in X^*(T)^+$,
    $$\mu-\theta^*(\lambda)\in \L_{\ab} \Leftrightarrow \mu-\theta'^*(\lambda) \in \L_{\ab}.$$ 
    Replacing $\mu$ with $\theta^*(\lambda)$ or $\theta'^*(\lambda)$ we deduce that $\theta^*(\lambda)-\theta'^*(\lambda)$ and $\theta'^*(\lambda)-\theta^*(\lambda)$ both belong to $\L_{\ab}$. Since $\M$ has a zero, by Proposition \ref{prop:characterization zero of reductive monoid} we get $\L_{\ab}\cap(-\L_{\ab})=0$ and hence $\theta'^*(\lambda)=\theta^*(\lambda)$ for all $\la\in X_{\der}^+$. Since $X^+_{\der}$ generates $X_{\der}$, we conclude that $\theta=\theta'$. 
\end{proof}

\begin{defi}
    Let $I,J$ be two subsets of $\Delta$. We set 
    \[D_I\coloneqq\bigoplus_{i\in I}\QQ_{\geq 0}\a_i\subset X_{\der,\QQ},\quad C_J\coloneqq\bigoplus_{j\in J}\QQ_{\geq 0}\w_j\subset X_{\der,\QQ}\] 
    and
    \[F_{I,J}\coloneqq\set{(\mu,\lambda)\in X_{\der,\QQ}\oplus X_{\der,\QQ}\mid\mu-\lambda \in D_I,\lambda\in C_J}.\]
    We say that the pair $(I,J)$ is \emph{essential} if, at the level of the Dynkin diagram, $I$ does not contain any connected component of the complement of $J$.
\end{defi}
\begin{prop}
    We have the following parametrization of (downward closed) prime ideals of $\L^{\mathrm{Vin}}_{\ab}$ and $\L^{\mathrm{Vin}}$.
   \begin{enumerate}
       \item The map $I\mapsto\NN^\Delta\setminus D_I=\L^{\mathrm{Vin}}_{\ab}\setminus D_I$ induces a bijection between subsets of $\Delta$ and prime ideals of $\L^{\mathrm{Vin}}_{\ab}$. 
       \item The map $(I,J)\mapsto \L^{\mathrm{Vin}}\setminus F_{I,J}$ induces a bijection between essential pairs of subsets of $\Delta$ and downward closed prime ideals of $\L^{\mathrm{Vin}}$.
   \end{enumerate} 
\end{prop}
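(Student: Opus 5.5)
Part (1) uses the face description of prime ideals in a fine saturated monoid (Appendix \ref{sec:appendix-monoid}): prime ideals of $\L$ correspond to faces $F$ of the cone $\QQ_{\ge0}\L$, via $\J=\L\setminus F$. Since $\L^{\mathrm{Vin}}_{\ab}=\NN^\Delta$ is free with basis $\{\alpha_i\}$, the faces of $\QQ_{\ge0}^\Delta$ are exactly the $D_I$ for $I\subset\Delta$. This gives (1) at once.

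For (2), again every prime ideal of $\L^{\mathrm{Vin}}$ has the form $\L^{\mathrm{Vin}}\setminus F$ for a face $F$ of the cone
\[C^{\mathrm{Vin}}=\{(\mu,\la)\mid \mu-\la\in D_\Delta,\ \la\in C_\Delta\}.\]
Under the linear change of coordinates $(\mu,\la)\mapsto(\mu-\la,\la)$, this cone is the product $D_\Delta\times C_\Delta$ of two simplicial cones. Its faces are therefore exactly products of faces, namely the sets $F_{I,J}$ for arbitrary $I,J\subset\Delta$, and distinct pairs give distinct faces. So the whole task is to decide when the complement of $F_{I,J}$ is downward closed, or equivalently when $F_{I,J}\cap\L^{\mathrm{Vin}}$ is upward closed in $\L^{\mathrm{Vin}}$.

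The partial order on $X^*(T_+)$ is given by the simple roots of $G_+$, which are $(0,\alpha_i)$. Thus $(\mu',\la')\ge(\mu,\la)$ means $\mu'=\mu$ and $\la'-\la\in\NN^\Delta$. I then reduce to covering relations, as in Lemma \ref{lem:restriction prime ideal of WL}(1), using \cite[Corollary 2.7]{Stembridge}: it suffices to check steps $\la\mapsto\la+\beta$ with $\beta$ a positive root and both endpoints dominant. Writing $\la=\sum c_j\omega_j$ and $\mu-\la=\sum d_i\alpha_i$, such a step changes $\mu-\la$ by $-\beta$ and $\la$ by $\beta=\sum_j\langle\alpha_j^\vee,\beta\rangle\omega_j$.

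Starting from a point of $F_{I,J}$, I ask that the new point stays in $F_{I,J}$ whenever it lies in $\L^{\mathrm{Vin}}$. The condition $\mu-\la-\beta\in D_I$ holds automatically once $d\ge\beta$ coordinatewise, because $\mathrm{supp}(\beta)\subset I$ is forced from $\mathrm{supp}(d)\subset I$. The condition $\la+\beta\in C_J$ needs $\langle\alpha_j^\vee,\beta\rangle\le0$ for $j\notin J$, whenever $c_j=0$ for $j\notin J$.

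\emph{Essential $\Rightarrow$ downward closed.} Take $\beta$ with support in $I$ such that $\la+\beta$ is dominant, and suppose $\langle\alpha_j^\vee,\beta\rangle>0$ for some $j\notin J$. Then $j\in\mathrm{supp}(\beta)\subset I$. Consider the connected component $K$ of $\Delta\setminus J$ containing $j$. I would show that $\mathrm{supp}(\beta)\supset K$, contradicting essentiality. To do this, use dominance of $\la+\beta$ at the vertices $k\in K$, where $\langle\alpha_k^\vee,\la\rangle=0$: this gives $\langle\alpha_k^\vee,\beta\rangle\ge0$ for all $k\in K$. Then apply a standard fact about the support of a root on a connected subdiagram: a positive root $\beta$ with $\langle\alpha_k^\vee,\beta\rangle\ge0$ on $\mathrm{supp}(\beta)\cap K$ must have support containing all of $K$ (positive-definiteness of the Cartan matrix restricted to $K$). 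The conclusion is $K\subset I$.

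\emph{Not essential $\Rightarrow$ not downward closed.} If $K\subset I$ for some component $K$ of $\Delta\setminus J$, choose $\beta$ to be the highest root of $K$ (which is dominant for $K$ and nonzero), and take $\la$ deep in the interior of $C_J$ with $\mu-\la$ large in $D_I$. Then $(\mu,\la)\in F_{I,J}$ but $(\mu,\la+\beta)\notin F_{I,J}$, since $\langle\alpha_k^\vee,\beta\rangle>0$ for some $k\in K$ and $k\notin J$. Hence $F_{I,J}\cap\L^{\mathrm{Vin}}$ is not upward closed.

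I expect the main obstacle to be the root-support argument in the ``essential $\Rightarrow$ downward closed'' step.
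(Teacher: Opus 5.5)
Your proposal is correct and is essentially the paper's proof. You identify the faces of $\QQ_{\ge0}\L^{\mathrm{Vin}}\cong D_\Delta\oplus C_\Delta$ with the $F_{I,J}$, you derive upward-closedness of $F_{I,J}$ from essentiality by an argument on the Dynkin diagram, and for the converse you use the highest root of a component $K\subset I$ of $\Delta\setminus J$.

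The one thing to repair is the ``standard fact'' at the step you flagged. As you state it, with nonnegativity assumed only on $\mathrm{supp}(\beta)\cap K$, it is false: take $K$ of type $A_2$ and $\beta=\alpha_1$. Positive-definiteness is also not what makes the argument work.

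What you actually have, and need, is $\langle\alpha_k^\vee,\beta\rangle\ge0$ for \emph{all} $k\in K$. The proof is then an elementary adjacency argument. Write $\beta=\sum_i b_i\alpha_i$ and suppose $K\not\subset\mathrm{supp}(\beta)$. Since $K$ is connected, there is $k\in K\setminus\mathrm{supp}(\beta)$ adjacent to some $i\in\mathrm{supp}(\beta)\cap K$. Then $\langle\alpha_k^\vee,\beta\rangle\le b_i\langle\alpha_k^\vee,\alpha_i\rangle<0$, a contradiction.

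This argument uses only $0\ne\beta\in\NN^\Delta$, so your reduction to covering relations via Stembridge is unnecessary. The paper runs the same argument directly on $\beta=\la'-\la$ for arbitrary $(\mu,\la')\ge(\mu,\la)$ in $\L^{\mathrm{Vin}}$.
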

\begin{proof} 
    (1) Note that the sets $D_I$, when $I$ runs over subsets of $\Delta$, are exactly the faces of $\QQ_{\ge0}\L^{\mathrm{Vin}}_{\ab}=\QQ_{\ge0}^{\Delta}$. Then the result follows from Lemma \ref{lem:monoid-cone}.\par
    (2) By definition we have an isomorphism of convex polyhedral cones
    \[\QQ_{\ge0}\L^{\mathrm{Vin}}\cong\QQ_{\ge0}X_{\pos}\oplus\QQ_{\ge0}X^+=D_\Delta\oplus C_\Delta\]
    defined by sending $(\mu,\la)$ to $(\mu-\la,\la)$. Then we see that the faces of $\QQ_{\ge0}\L^{\mathrm{Vin}}$ are exactly $F_{I,J}$ for pairs $(I,J)$ of subsets of $\Delta$. By Lemma \ref{lem:monoid-cone} it remains to show that the pair $(I,J)$ is essential if and only if the prime ideal $\L\setminus F_{I,J}$ is downward closed, or equivalently $F_{I,J}$ is upward closed.\par 
    First suppose that $(I,J)$ is an essential pair. Let $(\mu,\lambda)\leq(\mu',\lambda')$ be elements in $X^*(T_+)$ with $(\mu,\lambda)\in F_{I,J}$. Then we have $\mu=\mu'$ and the following inequalities hold:
    \[0\leq \lambda'-\lambda\leq \mu-\lambda,\quad 0\le\mu'-\lambda'\leq \mu-\lambda.\]
    The latter inequality forces $\mu'-\lambda'\in D_I$ whereas the former implies that $\la'-\la\in D_I$ and hence $\lambda'\in (D_I+C_J)\cap X^+$. Then there exists a subset $I'\subset I$ and positive rational numbers $n_i>0$, $i\in I'$ such that $\lambda'\in \sum_{i\in I'}n_i\a_i + C_J$. We claim that $I'\subset J$. Suppose on the contrary that $I'\cap(\Delta\setminus J)\ne\emptyset$. Let $K$ be a connected component of $\Delta\setminus J$ such that $I'\cap K\ne\emptyset$. Since the pair $(I,J)$ is essential, $K$ is not contained in $I'$. So there exists $i'\in I'$ and $k\in K\setminus I'\subset\Delta\setminus(I'\cup J)$ such that $i'$ and $k$ are connected by an edge in the Dynkin diagram. Then we have $\langle\a_k^\vee,\a_{i'}\rangle<0$ and hence 
    \[\langle\a_k^\vee,\lambda'\rangle =\sum_{i\in I'}n_i\langle\a_k^\vee, \a_{i}\rangle\leq n_{i'}\langle\a_k^\vee,\a_{i'}\rangle<0.\] 
    But this contradicts with the fact that $\la'\in X^+$. Consequently we have $I'\subset J$ and hence $\la'\in (D_J+C_J)\cap X^+$. For any $k\in\Delta\setminus J$ (if this set is nonempty), since the coroot $\alpha_k^\vee$ is orthogonal to elements in $C_J$ and is non-positive on $D_J$, we have $\langle\alpha_k^\vee,\la'\rangle\le0$. Moreover, since $\la\in X^+$ we must have $\langle\alpha_k^\vee,\la'\rangle=0$ and hence $\lambda'\in C_J$. Therefore $(\mu',\lambda')\in F_{I,J}$ and we conclude that $F_{I,J}$ is upward closed.\par 
    Conversely let $(I,J)$ be any pair of subsets of $\Delta$ and suppose that the prime ideal $\L\setminus F_{I,J}$ is downward closed. Suppose that $(I,J)$ is not an essential pair, that is, there exists a connected component $\tilde{I}$ of $\Delta\setminus J$ such that $\tilde{I}\subset I$. Let $G_{\tilde{I}}$ be the Levi subgroup of $G$ whose root system has simple roots $\tilde{I}$. Let $\alpha$ be the highest root of $G_{\tilde{I}}$. Then we have 
    \[\alpha=\sum_{i\in\tilde{I}}m_i\alpha_i=\sum_{i\in\Delta}l_i\omega_i\]
    where $m_i>0$ for all $i\in\tilde{I}$ and 
    \[l_i\begin{cases}
        \ge0&\text{ for }i\in\tilde{I},\\
        \le0&\text{ for }i\in J,\\
        =0&\text{ for }i\notin \tilde{I}\cup J. 
    \end{cases}\]
    Moreover, there exists $i\in\tilde{I}$ such that $l_i>0$. Consider the element $\la\coloneqq\sum_{i\in\tilde{I}}l_i\omega_i$. Then we have
    \[\la\in C_\Delta\setminus C_J,\quad\La\coloneqq(\la,\la)\in\QQ_{\ge0}\L^{\mathrm{Vin}}\setminus F_{I,J}.\]
    On the other hand, we have 
    \[\La-\alpha=(\la,\la-\alpha)=(\la,-\sum_{j\in J}l_j\omega_j)\in F_{I,J}\]
    since $l_j\le0$ for all $j\in J$ and $\alpha\in D_I$. We can find a positive integer $n>0$ such that $n\la\in X^+$. Then we have $n\La\in\L^{\mathrm{Vin}}\setminus F_{I,J}=\J$ and $n\La-n\alpha\in\L^{\mathrm{Vin}}\cap F_{I,J}=\L\setminus\J$ which contradicts the fact that $\J$ is downward closed in $\L$. Thus we conclude that the pair $(I,J)$ is essential and the proof is finished.
\end{proof}

\begin{cor}
    Suppose that the base scheme is $S=\Spec(k)$ where $k$ is an algebraically closed field. Then we have order preserving bijections
    \[\Set{ \text{subsets of }\Delta}\xleftrightarrow{1:1}\Set{T_{\ad}\text{ orbits in } {A_{\mathrm{Vin}_G}}},\]
    \[\Set{ \text{essential pairs of subsets of }\Delta}\xleftrightarrow{1:1}\Set{G_{+}\times G_{+} \text{-orbits in } {\mathrm{Vin}_{G}}}.\]
\end{cor}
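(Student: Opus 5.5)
The first bijection will come from applying the already established machinery to the commutative reductive monoid $A_{\mathrm{Vin}_G}$, whose unit group is the torus $G_{+,\ab}=T_{\der}/Z_{\der}=T_{\ad}$ and whose weight monoid is $\L^{\mathrm{Vin}}_{\ab}=\NN^\Delta$. Here the Weyl group is trivial and the downward closedness condition is vacuous: for a torus there are no roots, so $\le$ is equality. Corollary \ref{cor:orbit decomposition over algebraically closed field}, applied to this torus monoid (with $G\times G$ replaced by $T_{\ad}\times T_{\ad}$, whose orbits coincide with $T_{\ad}$-orbits since the torus is commutative), will give a bijection between prime ideals of $\NN^\Delta$ and $T_{\ad}$-orbits in $A_{\mathrm{Vin}_G}$. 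The bijection sends an ideal $\I$ to the orbit of $e_\I$. Combining this with part (1) of the preceding Proposition, $I\mapsto \NN^\Delta\setminus D_I$, yields the first bijection. For order preservation, I will use Theorem \ref{theo:parametrization GxG orbits}, which says that orbit closures $\M(\L/\J)$ correspond in an order-reversing way to ideals $\J$. Under $I\mapsto \NN^\Delta\setminus D_I$, inclusion of subsets $I\subset I'$ reverses to inclusion of ideals, so the composite map is order preserving for the closure order on orbits.

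For the second bijection I will proceed in the same way with $\M=\mathrm{Vin}_G$, unit group $G_+$ and weight monoid $\L^{\mathrm{Vin}}$. By Theorem \ref{theo:parametrization GxG orbits} over $k$, the $G_+\times G_+$-stable closed integral subschemes correspond order-reversingly to downward closed prime ideals of $\L^{\mathrm{Vin}}$. Corollary \ref{cor:orbit decomposition over algebraically closed field}, via Corollary \ref{cor:orbit closure} and Proposition \ref{prop:bijection closed prime L WL}, identifies the orbits with their closures, each closure containing a unique open orbit. Part (2) of the preceding Proposition identifies these ideals with essential pairs via $(I,J)\mapsto \L^{\mathrm{Vin}}\setminus F_{I,J}$. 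Again, inclusion of faces $F_{I,J}\subset F_{I',J'}$ (equivalently $I\subset I'$ and $J\subset J'$) reverses the ideal inclusion, so the composite bijection preserves order.

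The only point needing care is that every orbit closure is one of the subschemes in item (2) of Theorem \ref{theo:parametrization GxG orbits}. Over an algebraically closed field, an orbit closure with its reduced structure is integral, because $G_+\times G_+$ is connected, and it is trivially flat over $k$. The order "orbit $O\le O'$ iff $O\subset\overline{O'}$" therefore matches the inclusion of closures, and the argument introduces no further obstacle.
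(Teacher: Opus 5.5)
Your proposal is correct and follows the paper's route: the paper deduces the corollary from the preceding Proposition (subsets of $\Delta$ parametrize the prime ideals of $\L^{\mathrm{Vin}}_{\ab}=\NN^\Delta$, and essential pairs parametrize the downward closed prime ideals of $\L^{\mathrm{Vin}}$) together with Theorem~\ref{theo:parametrization GxG orbits}, applied to the torus monoid $A_{\mathrm{Vin}_G}$ and to $\mathrm{Vin}_G$. What you add is the detail the paper leaves implicit: the identification of orbits with their closures (via Corollaries~\ref{cor:orbit closure} and~\ref{cor:orbit decomposition over algebraically closed field}), and the check that composing the two order reversals, faces to ideals and $\J\mapsto\M(\L/\J)$, gives an order-preserving bijection.
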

\begin{proof}
        This follows from the previous Proposition and Theorem \ref{theo:parametrization GxG orbits}.
\end{proof}

\begin{defi}
    The \emph{Asymptotic semi-group} of $G$ is the semi-group scheme over $S$ defined by 
    \[\As(G)\coloneqq\Spec(\gr\O_S[G])\]
    where the semi-group structure is induced from the co-algebra structure on $\gr(\O_S[G])$, see Definition \ref{def:grO}. 
\end{defi}

\begin{prop}
    Let $\M\coloneqq\M(\L)$ be a very flat reductive monoid over $S$ with unit group $G$ and weight monoid $\L$. Let $\theta\colon Z\to T_{\der}$ be a homomorphism as in Proposition \ref{prop:very-flat-characterisation} and let $p\colon \L\to\L_{\ab}$ be the map defined by $(\mu,\lambda)\mapsto \mu-\theta^*(\lambda)$. Let $\I\subset \L_{\ab}$ be a prime ideal. Then $p^{-1}(\I)\subset\L$ is a downward closed prime ideal and we have a canonical isomorphism:
    $$\M(\L_{\ab}/\I)\times_{A_{\M}}\M\cong\M(\L/p^{-1}(\I)).$$ 
    Moreover, if $\M$ has a zero $S\to\M$, then the zero fiber of the abelianisation map is identified with the asymptotic semi-group of $G_{\der}$:
    $$S\times_{A_{\M}}\M\cong\Spec(\mathrm{gr}\O_S[G_{\der}]).$$
\end{prop}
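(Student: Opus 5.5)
The proof has three parts: first show that $p^{-1}(\I)$ is a downward closed prime ideal, then compare coordinate rings over $A_\M$ using the free basis from Proposition \ref{prop:very-flat-characterisation}, and finally specialize to the zero section.

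\textbf{Step 1: the ideal.} By Proposition \ref{prop:very-flat-characterisation} we have $\L=\L_{\ab}\oplus\fM_1$, where $\fM_1=\{(\theta^*(\la),\la)\}$. In these coordinates $p$ is the projection onto the first factor, so $p$ is a monoid homomorphism $\L\to\L_{\ab}$. Hence $p^{-1}(\I)$ is an ideal, and it is prime because its complement $p^{-1}(\L_{\ab}\setminus\I)$ is a submonoid.

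For downward closedness, take $\la'\le\la$ in $\L$ with $\la\in p^{-1}(\I)$. Then $\la-\la'\in\NN^\Delta$. Writing $\la=(\mu,\nu)$ and $\la'=(\mu',\nu')$, we need $p(\la)-p(\la')\in\L_{\ab}$. Since $\mu-\mu'$ is the $X_Z$-component of $\la-\la'$, we get
\[
p(\la)-p(\la')=\theta^*\text{-twisted image of }(\la-\la'),
\]
and for a simple root $\alpha_i=(\theta^*(\alpha_i),\alpha_i)$ this image is $0$. So I expect $p(\la)=p(\la')$, and then $\la'\in p^{-1}(\I)$ because $\I$ is an ideal (indeed upward closed suffices). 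The key identity is that $p$ kills the root lattice, which holds because $\alpha_i\in\fM_1^{\gp}$. This has to be checked carefully, since the decomposition $X=X_{\ab}\oplus\fM_1^{\gp}$ puts $\ZZ^\Delta$ inside $\fM_1^{\gp}$.

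\textbf{Step 2: the isomorphism.} Work over affine $S=\Spec\k$. By Lemma \ref{lem:abelization-flat}, $\k[\M]$ is a free $\k[\L_{\ab}]$-module with basis $\bigsqcup_{\la\in X^+_{\der}}\dot{\bfB}[(\theta^*\la,\la)]^*$. Tensoring over $\k[\L_{\ab}]$ with
\[
\k[\L_{\ab}]/\k[\I]=\k[\M(\L_{\ab}/\I)]
\]
gives $\k[\M]/\k[\I]\k[\M]$.

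Lemma \ref{lem:B-dot-X-ab} says that for $\chi\in\L_{\ab}$, multiplication by $1_\chi^*$ sends $\dot{\bfB}[\la]^*$ bijectively onto $\dot{\bfB}[\la+\chi]^*$. Therefore the ideal $\k[\I]\k[\M]$ is spanned by the $b^*$ with $b\in\dot{\bfB}[\la]$ and $\la\in\I+\fM_1=p^{-1}(\I)$, that is, by ${}_\k\bfO(p^{-1}(\I))$. This yields the isomorphism
\[
\M(\L_{\ab}/\I)\times_{A_\M}\M\cong\M(\L/p^{-1}(\I)),
\]
compatibly with the algebra structures.

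\textbf{Step 3: the zero fiber.} Suppose $\M$ has a zero. By Proposition \ref{prop:characterization zero of reductive monoid}, the zero of $A_\M$ corresponds to $\I=\L_{\ab}\setminus\{0\}$, so $p^{-1}(\I)=\L\setminus\fM_1$. Hence $\k[S\times_{A_\M}\M]={}_\k\bfO(\L)/{}_\k\bfO(\L\setminus\fM_1)$, which has basis $\dot{\bfB}[\fM_1]^*$.

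Multiplying two basis elements gives $\hat m$-expansions. These land in $\dot{\bfB}[\le\la+\la']$ by Lemma \ref{lem:filtration-O-multiplicative}, and $\dot{\bfB}[\la'']$ with $\la''<\la+\la'$ in $\L$ has $p(\la'')\neq0$ unless $\la''-(\la+\la')$ is a root-lattice element with zero $p$. The latter is exactly the situation in $\gr$: by Step 1, $p$ is constant along $\le$, which seems to conflict. I will resolve this by redoing Step 1: in fact, for $\alpha_i\in X$ viewed in $X_Z\oplus X_{\der}$, the $X_Z$-part is $0$, so $p$ drops by $-\theta^*(\alpha_i)\in-\L_{\ab}$. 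With this correction, downward closedness holds because $\I$ is upward closed in $\L_{\ab}$ when $\theta^*(\alpha_i)\in\L_{\ab}$. The same computation makes lower terms vanish in the quotient, so the product equals the top-degree part.

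It remains to identify this with $\gr\k[G_{\der}]$. Via the bijection $\fM_1\cong X_{\der}^+$ and Lemma \ref{lem:B-dot-functorial} for $G_{\der}\to G$, the product matches the formula $\bar a^*\bar b^*=\sum_{c\in\dot{\bfB}[\la+\mu]}\hat m_c^{ab}\bar c^*$. Pinning down the sign of $\theta^*(\alpha_i)$ is the step I expect to be the main obstacle, together with checking that the functoriality of Lemma \ref{lem:B-dot-functorial} preserves the structure constants $\hat m$.
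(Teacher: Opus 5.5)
\textbf{Steps 1 and 2.} After your own mid-course correction, these are the paper's argument: $p$ is the projection in $\L=\L_{\ab}\oplus\fM_1$, $p^{-1}(\I)=\I+\fM_1$, and Lemma~\ref{lem:B-dot-X-ab} identifies $\k[\I]\cdot{}_\k\bfO(\L)$ with ${}_\k\bfO(\I+\fM_1)$.

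Your first version of Step 1 is wrong. In $X_Z\oplus X_{\der}$ one has $\alpha_i=(0,\alpha_i)$, so $p$ does not kill $\ZZ^\Delta$. Instead $p(\la')=p(\la)+\theta^*(\la-\la')$ for $\la'\le\la$. The corrected version still needs $\theta^*(\alpha_i)\in\L_{\ab}$, which you leave conditional. It is easy to supply: pick $\nu\in X^+_{\der}$ with $\nu-\alpha_i$ dominant. Then $(\theta^*\nu,\nu)-\alpha_i\le(\theta^*\nu,\nu)\in\fM_1$ is dominant, so it lies in $\L$ by downward closedness, and its $p$-value is $\theta^*(\alpha_i)$.

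\textbf{The gap is in Step~3.} To discard the lower terms you need $p(\la'')=\theta^*(\la+\la'-\la'')\neq0$ for every $\la''<\la+\la'$. Since each $\theta^*(\alpha_j)\in\L_{\ab}$ and $\L_{\ab}$ is sharp when $\M$ has a zero, this amounts to $\theta^*(\alpha_i)\neq0$ for all $i$. Having a zero does not imply this.

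A counterexample is $G=\GL_3$, $\M=\Mat_3$:
\begin{itemize}
\item $\L=\{a\ge b\ge c\ge0\}$ and $\L_{\ab}=\NN(1,1,1)$, with abelianisation $\det$;
\item $\fM_1=\{c=0\}$ and $\theta(t)=\mathrm{diag}(t,t,t^{-2})$;
\item consequently $\theta^*(\alpha_1)=0$.
\end{itemize}
For $\la=\la'=(1,0,0)$ the term $\la''=(1,1,0)=\la+\la'-\alpha_1\in\fM_1$ survives in the quotient. Concretely, $x_{11}x_{22}-x_{12}x_{21}\neq0$ in $k[\Mat_3]/(\det)$, whereas $\bar x_{11}\bar x_{22}-\bar x_{12}\bar x_{21}=0$ in $\gr k[\mathrm{SL}_3]$, because the $2\times2$ minor lies in $k[\mathrm{SL}_3]_{\le\omega_2}$ and $\omega_2=2\omega_1-\alpha_1$.

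So this step cannot be closed, and the asserted identification is false here. The $\mathrm{SL}_3\times\mathrm{SL}_3$-orbit counts differ: $\det^{-1}(0)$ has three orbits (ranks $0,1,2$), while $\As(\mathrm{SL}_3)$ has four. Conceptually, $\Mat_3$ is the base change of $\mathrm{Vin}_{\GL_3}$ along $t\mapsto(1,t)$, so its zero fibre lies over $(1,0)$, not over the origin of $\AA^2$.

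The paper's proof has the same hole. Its last displayed equality identifies $\bigoplus_\mu\k[G_{\der}]_{\le\mu}/\k[G_{\der}]_{<\mu}$ with $\gr\k[G_{\der}]$ as algebras, and that also requires $\theta^*(\alpha_i)\neq0$.

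The second assertion therefore needs this extra hypothesis, equivalently that $A_\M\to A_{\mathrm{Vin}_G}$ sends $0$ to $0$. This holds for $\mathrm{Vin}_G$ itself and for $\Mat_2$. Under that hypothesis your Step~3 goes through. Lemma~\ref{lem:B-dot-functorial} then transports the top-cell structure constants, because $\k[G]\to\k[G_{\der}]$ is a ring map that is injective on the span of the cells $\dot{\bfB}[(\mu'',\la'')]^*$ with fixed $X_Z$-component $\mu''$. All cells occurring in $a^*b^*$ share the same $X_Z$-component.
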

\begin{proof}
    We may assume that $S=\Spec(\k)$ is affine. Let $\fM_1\coloneqq\{(\theta^*(\la),\la)\mid\la\in X_{\der}^+\}$. Then $\fM_1$ is a submonoid of $\L$ and by Proposition \ref{prop:very-flat-characterisation} we have $\L=\L_{\ab}\oplus\fM_1$. Combined with Lemma \ref{lem:B-dot-X-ab} we deduce that $\k[\I]\cdot{}_\k\bfO(\L)={}_\k\bfO(\I+\fM_1)$ and then the first statement follows from the observation $p^{-1}(\I)=\I+\fM_1$.\par
    Now suppose that $\M$ has a zero. By Proposition \ref{prop:characterization zero of reductive monoid}, the complement $\L\setminus\{0\}$ is a downward closed prime ideal of $\L$. Thus $\L_{\ab}\setminus\set{0}$ is a prime ideal of $\L_{\ab}$ and determines a zero of $A_{\M}$.\par  
    The $\k$-algebra $\k[\M]={}_\k\bfO(\L)$ admits a natural grading by the submonoid $\theta^*(X_{\der}^+)+\L_{\ab}\subset X_Z$:
    \[\k[\M]=\bigoplus_{\mu\in\theta^*(X_{\der}^+)+\L_{\ab}}\k[\M]_\mu\]
    where $\k[\M]_{\mu}$ is the $\k$-submodule of $\k[G]={}_\k\bfO$ spanned by the following subset of the dual canonical bases:
    \[\bigsqcup_{\substack{\la\in X_{\der}^+\\ \mu-\theta^*(\la)\in\L_{\ab}}}\dot{\bfB}[(\mu,\la)]^*.\]
    Let $\dot{\bfB}_{\der}^*$ denote the dual canonical bases for $\k[G_{\der}]$ and for all $\la\in X_{\der}^+$ let $\dot{\bfB}_{\der}[\la]^*$ be the corresponding two-sided cell. For any $\mu\in\theta^*(X_{\der}^+)+\L_{\ab}$, let $\k[G_{\der}]_{\le\mu}$ (resp. $\k[G_{\der}]_{<\mu}$) be the $\k$-submodule of $\k[G_{\der}]$ spanned by the following subset of the dual canonical bases:
    \[\bigsqcup_{\substack{\la\in X_{\der}^+\\ \mu-\theta^*(\la)\in\L_{\ab}}}\dot{\bfB}_{\der}[\la]^*\quad\quad(\text{resp. }\bigsqcup_{\substack{\la\in X_{\der}^+\\ \mu-\theta^*(\la)\in\L_{\ab}\setminus\{0\}}}\dot{\bfB}_{\der}[\la]^*).\]
    Since $\theta^*(\NN^\Delta)\subset\L_{\ab}$, the indexing set $\{\la\in X_{\der}^+\mid \mu-\theta^*(\la)\in\L_{\ab}\}$ is a downward closed subset of $X_{\der}^+$. Then each $\k[G_{\der}]_{\le\mu}$ is a sub-coalgebra of $\k[G_{\der}]$ by Proposition \ref{prop:downward-closed} and for any $\mu_1,\mu_2\in\theta^*(X_{\der}^+)+\L_{\ab}$ we have \[\k[G_{\der}]_{\le\mu_1}\cdot\k[G_{\der}]_{\le\mu_2}\subset\k[G_{\der}]_{\le\mu_1+\mu_2}\] 
    by Lemma \ref{lem:filtration-O-multiplicative}. In this way $\k[G_{\der}]$ becomes a $(\theta^*(X_{\der}^+)+\L_{\ab})$-filtered $\k$-algebra. \par
    Recall that the surjective homomorphism $X\to X_{\der}$ induced by the natural embedding $G_{\der}\into G$ is given by $(\mu,\la)\mapsto\la$ where $\mu\in X_Z$ and $\la\in X_{\der}$. Then by Lemma \ref{lem:B-dot-functorial}, the natural $\k$-algebra homomorphism ${}_\k\bfO=\k[G]\to\k[G_{\der}]$ restricts to canonical bijections $\dot{\bfB}[(\mu,\la)]^*\xrightarrow{\sim}\dot{\bfB}_{\der}[\la]^*$ for all $(\mu,\la)\in X^+$ where $\mu\in X_Z$ and $\la\in X_{\der}^+$. Consequently we have a canonical isomorphism of $(\theta^*(X_{\der}^+)+\L_{\ab})$-graded $\k$-algebras
    \[\k[\M]=\bigoplus_{\mu\in \theta^*(X_{\der}^+)+\L_{\ab}}\k[\M]_\mu\xrightarrow{\sim}\bigoplus_{\mu\in \theta^*(X_{\der}^+)+\L_{\ab}}\k[G_{\der}]_{\le\mu}\]
    where the right-hand side is the Rees algebra associated to the $(\theta^*(X_{\der}^+)+\L_{\ab})$-filtered algebra $\k[G_{\der}]$.  By Lemma \ref{lem:B-dot-X-ab}, this isomorphism maps the ideal $\k[\L_{\ab}\setminus\{0\}]\cdot\k[\M]$ onto 
    \[\bigoplus_{\mu\in \theta^*(X_{\der}^+)+\L_{\ab}}\k[G_{\der}]_{<\mu}\] 
    and so induces an isomorphism
    \begin{align*}
        \frac{\k[\M]}{\k[\L_{\ab}\backslash\{0\}]\cdot\k[\M]}\xrightarrow{\sim}&\bigoplus_{\mu\in \theta^*(X_{\der}^+)+\L_{\ab}}\frac{\k[G_{\der}]_{\le\mu}}{\k[G_{\der}]_{<\mu}}\\
        &=\bigoplus_{\mu\in\theta^*(X_{\der}^+)}\bigoplus_{\substack{\la\in X_{\der}^+\\ \theta^*(\la)=\mu}}\frac{\k[G_{\der}]_{\le\la}}{\k[G_{\der}]_{<\la}}=\mathrm{gr}(\k[G_{\der}]).
    \end{align*}
    This gives the description of the zero fiber of the abelianisation map.
\end{proof}

\subsection{Comparison with other definitions}\label{sec:comparison}
In this section we work over a field $k$ and compare our construction of the Vinberg monoid with other constructions in the literature. 
\subsubsection{Rees algebra for the canonical filtration}
Recall from \cite{XiaoZhu2019} that for any $G$-module $V$, the \emph{canonical filtration} on $V$, labeled by the monoid $X^+_{\pos}\coloneqq X^++\NN^\Delta$ generated by the dominant weights and the positive roots, is defined as follows: for any $\mu\in X^+_{\pos}$, we let $V_{\le\mu}$ be the maximal $G$-submodule of $V$ such that the $\nu$-weight space $V_{\le\mu}(\nu)\ne0$ implies $\nu\le\mu$ for any $\nu\in X$. For the $G\times G$-module $k[G]$ we have its canonical filtration $k[G]_{\le(\mu_1,\mu_2)}$ where $(\mu_1,\mu_2)\in X^+_{\pos}\times X^+_{\pos}$ is a weight for $G\times G$. For any $\la\in X^+_{\pos}$, define $k[G]_{\preccurlyeq\la}\coloneqq k[G]_{\le(-w_0(\la),\la)}$. Then we get a $X^+_{\pos}$-filtration on $k[G]$. We show that this coincides with the filtration in Definition \ref{def:O-le-lambda} given by the canonical bases. 

\begin{lem}
    With notations as above, we have $k[G]_{\preccurlyeq\la}={}_k\bfO_{\le\la}$ for any $\la\in X^+_{\pos}$. 
\end{lem}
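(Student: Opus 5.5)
We will prove the two inclusions by identifying the $G\times G$-weights occurring in ${}_k\bfO_{\le\la}$ and using the fact that each two-sided cell spans a matrix-coefficient space of a Weyl module. Throughout we use the dictionary between $G\times G$-submodules of $k[G]$ and two-sided ideals of $\dot{\bfU}$ from Proposition \ref{prop:downward-closed}: ${}_k\bfO_{\le\la}={}_k\bfO(X^+_{\le\la})$ is a sub-coalgebra, hence a $G\times G$-submodule, and its orthogonal is ${}_k\dot{\bfU}[X^+\setminus X^+_{\le\la}]$.

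\emph{Inclusion ${}_k\bfO_{\le\la}\subset k[G]_{\preccurlyeq\la}$.} It suffices to check that every $G\times G$-weight of ${}_k\bfO_{\le\la}$ is $\le(-w_0(\la),\la)$. A basis element $b^*$ with $b\in\dot{\bfB}[\mu]\cap 1_{\zeta}\dot{\bfU}1_{\zeta'}$ has weight $(-\zeta,\zeta')$, up to the sign conventions fixed by $\rho_l,\rho_r$. By Lemma \ref{lem:characterization subset Blambda}, $b$ acts nontrivially on $\La_\mu$, so $\zeta,\zeta'$ are weights of $\La_\mu$. By Theorem \ref{theo:Weyl module}(3) these lie between $w_0(\mu)$ and $\mu$. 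Hence $\zeta'\le\mu\le\la$ and $-\zeta\le-w_0(\mu)\le -w_0(\la)$, the last step because $-w_0$ preserves $\le$ on dominant weights. Since ${}_k\bfO_{\le\la}$ is a $G\times G$-submodule whose weights satisfy the bound, maximality of $k[G]_{\preccurlyeq\la}$ gives the inclusion.

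\emph{Inclusion $k[G]_{\preccurlyeq\la}\subset{}_k\bfO_{\le\la}$.} Put $V=k[G]_{\preccurlyeq\la}$, a $G\times G$-submodule of $k[G]$. First we show $V\subset{}_k\bfO(P)$ for some downward closed $P$, so that $V$ is a comodule supported on finitely many cells. Suppose $f\in V$ has a nonzero coefficient on some cell $\dot{\bfB}[\mu]$ with $\mu\not\le\la$, and choose $\mu$ maximal among such cells in the support of $f$. Now apply the argument from the proof of Proposition \ref{prop:nilpotent-deformation}. Pick $\beta_\mu(b_1,b_2)$ with $f(\beta_\mu(b_1,b_2))\ne0$ and act by $\rho_l(S^{-1}(b_1^-1_\mu))\rho_r(1_\mu\sigma(b_2)^+)$. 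This produces an element $\tilde f\in V$ of weight $(-\mu,\mu)$ with $\tilde f(1_\mu)\ne0$. Here maximality of $\mu$ kills the contribution of $\dot{\bfU}[>\mu]$, where we also use that $f$ has finite support and that cells $\mu'>\mu$ occurring in the support must satisfy $\mu'\le\la$.

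Since $V$ has the weight $(-\mu,\mu)$, we get $\mu\le\la$ directly from the defining bound, a contradiction. So all cells in the support of elements of $V$ have $\mu\le\la$, i.e.\ $V\subset{}_k\bfO_{\le\la}$.

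The main obstacle is the maximality step. The paper's argument there concludes by contradiction with a vanishing weight space, while here we must read off $\mu\le\la$ from the weight of $\tilde f$. We also have to be careful that the sign convention for the $T\times T$-weights of $b^*$ matches the normalization $(-w_0(\la),\la)$, possibly after interchanging the roles of left and right (so the relevant bound could be $\mu\le\la$ on one factor and $-w_0\mu\le -w_0\la$ on the other, which are equivalent). We should also check that over an arbitrary field this uses no semisimplicity. Only the dual canonical basis and Lemma \ref{lem:characterization subset Blambda} enter, so it does not.
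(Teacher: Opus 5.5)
Your proof is correct, and for the reverse inclusion it takes a genuinely different route from the paper.

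\emph{First inclusion.} The paper handles ${}_k\bfO_{\le\la}\subset k[G]_{\preccurlyeq\la}$ just as you do: it is a $G\times G$-submodule whose weights obey the bound. Note that what you actually need is that $-w_0$ preserves $\le$ on all of $X$, since $\la\in X^+_{\pos}$ need not be dominant; this is true.

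\emph{Reverse inclusion.} Here the paper uses a dimension count. It quotes Xiao--Zhu's formula $\dim_k k[G]_{\preccurlyeq\la}=\sum_{\mu\in X^+,\mu\le\la}(\dim_k W_\mu)^2$. It then compares this with $\#\dot{\bfB}[\mu]=(\dim\La_\mu)^2$ from Lusztig, together with $W_\mu\cong{}_k\La_\mu$. You instead take any $f\in V$ supported on a cell $\mu\nleq\la$. Applying the operator $\rho_l(S^{-1}(b_1^-1_\mu))\rho_r(1_\mu\sigma(b_2)^+)$ from the proof of Proposition~\ref{prop:nilpotent-deformation}, you extract a nonzero $\tilde f\in V$ of weight $(-\mu,\mu)$. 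The second coordinate of the defining bound then forces $\mu\le\la$.

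\emph{Comparison.} The paper's route is shorter, but it depends on an external dimension formula and on finite-dimensional spaces over a field. Yours stays inside the canonical-basis formalism and never uses that $k$ is a field. It also yields the Xiao--Zhu dimension formula as a corollary rather than using it as an input.

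\emph{One sentence to fix.} The reason $f$ vanishes on $\dot{\bfU}[>\mu]$ is not that support cells $\mu'>\mu$ satisfy $\mu'\le\la$. That would be useless, since $f$ need not vanish on such cells. The reason is that there are none. The set $\{\mu'\in X^+\mid\mu'\nleq\la\}$ is upward closed, so a support cell $\mu'>\mu$ would again satisfy $\mu'\nleq\la$, contradicting the maximality of $\mu$.

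Your preliminary step of first placing $V$ inside some ${}_k\bfO(P)$ with $P$ downward closed is unnecessary. The extraction argument gives $V\subset{}_k\bfO_{\le\la}$ directly.
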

\begin{proof}
    Recall that ${}_k\bfO_{\le\la}$ is a sub-coalgebra of $k[G]$ and hence also a $G\times G$-submodule. Then by definition we have ${}_k\bfO_{\le\la}\subset k[G]_{\preccurlyeq\la}$. By \cite[Lemma 3.2.1]{XiaoZhu2019} we have 
    \[\dim_k(k[G]_{\preccurlyeq\la})=\sum_{\mu\in X^+,\mu\le\la}(\dim_k W_\la)^2.\]
    On the other hand, by \cite[Proposition 29.2.2]{Lusztig-IntroQuantumGroup} we have
    \[\dim_k({}_k\bfO_{\le\la})=\sum_{\mu\in X^+,\mu\le\la}\#\dot{\bfB}[\mu]=\sum_{\mu\in X^+,\mu\le\la}\dim_k(\La_\la).\]
    Then from Proposition \ref{prop:Lambda is Weyl} we get $\dim_k(k[G]_{\preccurlyeq\la})=\dim_k({}_k\bfO_{\le\la})$ and hence $k[G]_{\preccurlyeq\la}={}_k\bfO_{\le\la}$.
\end{proof}
Consequently we have 
\[k[\mathrm{Vin}_G]=\bigoplus_{\la\in X^+_{\pos}}{}_k\bfO_{\le\la}=\bigoplus_{\la\in X^+_{\pos}}k[G]_{\preccurlyeq\la}\] 
and so our definition of the Vinberg monoid coincides with the definition in \cite{XiaoZhu2019} (which is over a general field) and \cite{Zhu2020} (which is over $\ZZ$). Moreover, if $k$ is a field of characteristic $0$, from the definition of the canonical filtration it is easy to see that under the decomposition \eqref{eq:Peter-Weyl} we have
\[k[G]_{\preccurlyeq\la}=\bigoplus_{\mu\in X^+,\mu\le\la}V_\mu\otimes V_\mu^*.\]
Thus over a characteristic $0$ field our definition of $\mathrm{Vin}_G$ coincides with those in \cite{Vinberg1995,Ganev,BZv-Ganev}.

\subsubsection{Description of cones}
Let us first spell out the various cones in \S\ref{sec:cones} for the Vinberg monoid. Let $C\coloneqq\QQ_{\ge0}X_{\der}^+\subset X_{\der,\QQ}$ be the dominant Weyl chamber and let $D\coloneqq\sum_{i\in\Delta}\QQ_{\ge0}\alpha_i\subset X_{\der,\QQ}$ be the positive root cone. Then we have $C\subset D$. In the notation of \S\ref{sec:cones} we have
\[C_{\L^{\mathrm{Vin}}}\coloneqq\QQ_{\ge0}\L^{\mathrm{Vin}}=\Set{(\mu,\la)\in X_{\der,\QQ}\oplus X_{\der,\QQ}\mid\la\in C\cap(\mu-D)}.\]
It is the intersection of the dominant weight cone $X_{\der,\QQ}\oplus C$ with
\[K_{\L^{\mathrm{Vin}}}\coloneqq\Set{(\mu,\la)\in X_{\der,\QQ}\oplus X_{\der,\QQ}\mid\la\in\mu-D}.\]
Let $\mathrm{Vin}_T$ be the schematic closure of $T_+$ in $\mathrm{Vin}_G$. It is the normal affine embedding of $T_+$ associated to the $W$-stable cone 
\[\tilde{K}_{\L^{\mathrm{Vin}}}=\Set{(\mu,\la)\in X_{\der,\QQ}\oplus X_{\der,\QQ}\mid\mu\in D,\la\in\bigcap_{w\in W}w(\mu-D)}.\]
This is the cone for $\mathrm{Vin}_G$ in Renner's classification. \par 
Next we consider the dual cones. In \cite{RittatoreMonoidArticle,RittatoreVeryflat,RittatoreThese}, Rittatore applied the Luna-Vust theory of spherical embeddings to classify reductive monoids over a field and in particular defines the Vinberg monoid (which is called ``universal enveloping monoid" in \emph{loc. cit.}). Recall from  \cite[Definition 3]{RittatoreVeryflat} that the \emph{universal enveloping monoid} $\mathrm{Env}(G_{\der})$ (for the semisimple group $G_{\der}$) is the affine spherical embedding of $G_+$, viewed as a spherical $G_+\times G_+$-homogeneous space, whose colored cone $\mathcal{C}_{\mathrm{Env}(G_{\der})}$ is the cone in $X_*(T_+)_\QQ=X_*(T_{\der})_\QQ\oplus X_*(T_{\der})_\QQ$ generated by the elements\footnote{We warn the reader that according to our convention, the first factor in $X_*(T_+)_\QQ$ corresponds to the center and the second factor corresponds to the derived group. This is opposite to Rittatore's convention. Also, Rittatore has made a non-canonical identification between $X_*(T_+)_\QQ$ and $X^*(T_+)_\QQ$.} $\Set{(\omega_i^\vee,-\omega_i^\vee)\mid 1\le i\le r}$ and $\{(0,\alpha_i^\vee)\mid 1\le i\le r\}$ (here $\omega_i^\vee$ are the fundamental coweights) in which the colors are elements in the latter set (they correspond to all the Bruhat divisors in $G_+$). Then the dual of $\mathcal{C}_{\mathrm{Env}(G_{\der})}$ is the cone
\[\mathcal{C}_{\mathrm{Env}(G_{\der})}^\vee\coloneqq\{(\mu,\la)\in X^*(T_+)_\QQ\mid\langle\mu-\la,\omega_i^\vee\rangle\ge0\text{ and }\langle\la,\alpha_i^\vee\rangle\ge0,\ \forall 1\le i\le r\}\]
which is exactly $\QQ_{\ge0}\L^{\mathrm{Vin}}$ by definition. Then according to \cite[Proposition 13]{RittatoreMonoidArticle} we get 
\[\L(\mathrm{Env}(G_{\der}))=\L^{\mathrm{Vin}}\] 
and therefore $\mathrm{Env}(G_{\der})\cong\mathrm{Vin}_{G}$ by Theorem \ref{theo:monoid-classification}.

\subsubsection{Closure in matrix monoids}
As a consequence of Proposition \ref{prop:closure-in-matrix-algebra}, we have a down-to-earth construction of the Vinberg monoid as the normalization of closure of $G_+$ in a product of matrix monoids. The description is especially simple when $G_{\der}$ is simply connected. Under this assumption, the monoid $X_{\der}^+$ is freely generated by the fundamental weights $\omega_1,\dotsc,\omega_r$ and hence $\L^{\mathrm{Vin}}$ is a free monoid (i.e. free as $\NN$-set) with bases $(\alpha_1,0),\dotsc,(\alpha_r,0),(\omega_1,\omega_1),\dotsc,(\omega_r,\omega_r)$. For $1\le i\le r$, let $V_i$ be the Weyl module of $G_{\der}$ with highest weight $\omega_i$. This extends to a representation
\[\rho_i^+\colon G_+\to\mathrm{End}(V_i)\]
by requiring that the central torus $T_{\der}\subset G_+$ acts via $\omega_i$. The resulting representation is the Weyl module of $G_+$ with highest weight $(\omega_i,\omega_i)$. These representations, together with the one-dimensional representations $(\alpha_i,0)$, induce a locally closed embedding
\[G_+\to\AA^r\times\prod_{i=1}^r\mathrm{End}(V_i)\]
and the normalization of the closure of $G_+$ in the target is isomorphic to $\mathrm{Vin}_G$ by Proposition \ref{prop:closure-in-matrix-algebra}. Thus our definition of the Vinberg monoid coincides with the definitions in \cite{Ngo-PS,Bouthier-Springer,Chi-KV,Wang-FL}. 

\appendix
\section{Recollection on commutative monoids}\label{sec:appendix-monoid}
We summarize some basic facts on commutative monoids that are used throughout the article. The main reference is \cite[Chapter I]{Ogus-Log}.
\begin{defi}
    A \emph{monoid} is a triple $(\sM,*,e_\sM)$ consisting of a set $\sM$, an associative binary operation $*$ on $\sM$ and a two-sided identity element $e_\sM\in\sM$ for $*$. The monoid is \emph{commutative} if the binary operation $*$ is commutative. For a commutative monoid $\sM$, we will use the additive notation ``$+$" to denote the binary operation and use $0=0_\sM$ to denote the identity element. We have obvious notions of submonoids and homomorphisms of monoids. The collection of monoids form a category $\mathrm{Mon}$ and the commutative monoids form a subcategory $\mathrm{CMon}\subset\mathrm{Mon}$.
\end{defi}
The forgetful functor $\mathrm{Ab}\to\mathrm{CMon}$ from the category of abelian groups to the category of commutative monoids has a left adjoint $\sM\mapsto\sM^{\gp}$. The associated natural transformation $\la_\sM\colon\sM\to\sM^{\gp}$ is a universal monoid homomorphism from $\sM$ to a group. The forgetful functor also has a right adjoint $\sM\mapsto\sM^*$ where $\sM^*$ is the maximal subgroup of $\sM$, i.e. the group of invertible elements in $\sM$.
\begin{defi}\label{def:monoid-properties}
    A monoid $\sM$ is 
    \begin{itemize}
        \item \emph{finitely generated} if there exists a finite subset $S\subset\sM$ such that the only submonoid of $\sM$ containing $S$ is $\sM$ itself.
        \item \emph{integral} if it is commutative and whenever any elements $m,m',m''\in\sM$ satisfy $m+m'=m+m''$, we have $m'=m''$. 
        \item \emph{fine} if it is integral and finitely generated.
        \item \emph{saturated} if it is integral and if whenever $nx\in\sM$ for some $x\in\sM^{\gp}$ and some positive integer $n$, then $x\in\sM$.
    \end{itemize}
\end{defi}

When $\sM$ is integral, the map $\la_\sM\colon\sM\to\sM^{\gp}$ is injective and identifies $\sM$ with a submonoid of the abelian group $\sM^{\gp}$. In this case, we also denote $\sM^{\gp}\coloneqq\sM-\sM$ and view its elements as formal differences between elements in $\sM$. Moreover we have $\sM^*=\sM\cap(-\sM)$. 

\begin{defi}\label{def:monoid-ideal}
    Let $\sM$ be a monoid. An \emph{ideal} of $\sM$ is a subset $\sI\subset\sM$ such that $m+x\in\sI$ for all $x\in\sI$ and all $m\in\sM$. A \emph{prime ideal} of $\sM$ is a \emph{proper} ideal $\sI\subset\sM$ such that the complement $\sM\setminus\sI$ is a sub-monoid of $\sM$, i.e. $m_1+m_2\notin\sI$ for all $m_1,m_2\in\sM\setminus\sI$. (We must also have $0\in\sM\setminus\sI$ since otherwise we would get $\sI=\sM$.)
\end{defi}
We note that arbitrary unions of (prime) ideals of $\sM$ are (prime) ideals. Also, arbitrary intersections of ideals are ideals. But an intersection of prime ideals may not be a prime ideal.

\begin{defi}\label{def:M-set}
    Let $\sM$ be a monoid. An \emph{$\sM$-set} is a set $S$ with an action of $\sM$, i.e. a monoid homomorphism $\sM\to\mathrm{End}(S)$. In particular, the monoid $\sM$ is an $\sM$-set over itself. For each element $m\in\sM$, the endomorphism on an $\sM$-set $S$ induced by $m$ will be denoted by $s\mapsto m\cdot s,\forall s\in S$. \par   
    An $\sM$-set $S$ is \emph{free} (or more precisely, \emph{$\sM$-free}) if there exists a subset $T\subset S$ such that the action map $\sM\times T\to S$ defined by $(m,t)\mapsto m\cdot t$ is bijective. In this case, we say that $T$ is a \emph{basis} (or more precisely, an \emph{$\sM$-basis}) for $S$. More generally, for any $\sM$-set $S$, we say that a subset $T\subset S$ \emph{generates $S$} if the action map $\sM\times T\to S$ is surjective.\par 
    An $\sM$-set $S$ is \emph{finitely generated} if there exists a finite subset $T\subset S$ that generates $S$. It is \emph{noetherian} if any of its $\sM$-stable subsets is finitely generated. The monoid $\sM$ is \emph{noetherian} if it is noetherian as an $\sM$-set over itself. 
\end{defi}
For example, $\sM$ itself is an $\sM$-set and any ideal of $\sM$ is an $\sM$-set.
\begin{defi}\label{def:preorder-monoid-set}
    Let $\sM$ be a monoid and let $S$ be an $\M$-set. We define a pre-order $\preceq$ and an equivalence relation $\sim$ on $S$ by:
    \begin{itemize}
        \item $s_1\preceq s_2$ if there exists $m\in\sM$ such that $m\cdot s_1=s_2$,
        \item $s_1\sim s_2$ if $s_1\preceq s_2$ and $s_2\preceq s_1$. 
    \end{itemize}
    Let $S_{\min}\subset S$ be the subset of minimal elements under $\preceq$. In other words, $s\in S_{\min}$ if whenever an element $s'\in S$ satisfies $s'\preceq s$ we have $s'\sim s$. Then $S_{\min}$ is a union of $\sM^*$-orbits. 
\end{defi}
For example, when $S=\sM$ we have $\sM_{\min}=\sM^*$. In general $S_{\min}$ might be empty (for instance when $\sM=\NN$ and $S=\ZZ$). However, it will be nonempty under some finiteness assumptions. 
\begin{lem}
    Let $\sM$ be a finitely generated commutative monoid and let $S$ be a nonempty finitely generated $\sM$-set. Then $S_{\min}$ is nonempty and the quotient $S_{\min}/\sM^*$ is finite. Moreover, any set of representatives for $S_{\min}/\sM^*$ generates the $\sM$-set $S$, and form an $\sM$-basis if $S$ is free.
\end{lem}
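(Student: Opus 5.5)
The plan is to reduce everything to the finite set of generators via the preorder $\preceq$. Fix a finite generating set $T=\{t_1,\dotsc,t_n\}$ of the $\sM$-set $S$, so every $s\in S$ has the form $m\cdot t_j$ for some $m\in\sM$ and some $j$, and in particular $t_j\preceq s$. Since $S$ is nonempty, $T$ is nonempty. The preorder $\preceq$ restricted to the finite set $T$ has a minimal element up to $\sim$. More precisely, I pick $t\in T$ such that for any $t'\in T$, $t'\preceq t$ implies $t\preceq t'$. This is possible because a finite preordered set always has minimal elements: descend along strict relations, which cannot cycle, since a cycle would force equivalence.

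\textbf{Step 1: minimal generators lie in $S_{\min}$.} Let $t$ be as above and let $s'\in S$ with $s'\preceq t$. Writing $s'=m\cdot t'$ with $t'\in T$, we get $t'\preceq s'\preceq t$, hence by transitivity $t'\preceq t$, and therefore $t\preceq t'\preceq s'$. So $s'\sim t$, and $t\in S_{\min}$; in particular $S_{\min}\neq\varnothing$. Conversely, for any $s\in S_{\min}$, write $s=m\cdot t'$, so $t'\preceq s$. Minimality of $s$ gives $t'\sim s$, hence $t'\in S_{\min}$. Thus every element of $S_{\min}$ is $\sim$-equivalent to one of the finitely many $t_j\in T\cap S_{\min}$.

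\textbf{Step 2: finiteness of $S_{\min}/\sM^*$.} This is where commutativity and finite generation of $\sM$ should enter, and I expect it to be the main obstacle. The difficulty is that $s\sim t$ only gives $m\cdot t=s$ and $m'\cdot s=t$, so $(m'+m)\cdot t=t$, and one must upgrade this to $s\in\sM^*\cdot t$. If the action is cancellative (as in all applications, where $S\subset\sM^{\gp}$-torsor-like sets inside a lattice), then $m'+m=0$, so $m\in\sM^*$ and each $\sim$-class is one $\sM^*$-orbit. In that case $S_{\min}/\sM^*$ is in bijection with a subset of $T/\!\sim$, which is finite. In general I would first try to show that each $\sim$-class of $S_{\min}$ splits into finitely many $\sM^*$-orbits. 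For this I would use that $\{m: m\cdot t\sim t\}$ is a face of the finitely generated monoid $\sM$, and would look for a finite generation argument for that face acting on $t$. If this fails, I would add the cancellativity (integrality of the action) hypothesis, which holds for $\L$ as an $\L_{\ab}$-set in Lemma \ref{lem:abelization-flat}.

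\textbf{Step 3: representatives generate, and form a basis when $S$ is free.} Let $R$ be a set of representatives for $S_{\min}/\sM^*$. Every $t_j\in T\cap S_{\min}$ lies in $\sM^*\cdot r$ for some $r\in R$. Every non-minimal $t_j$ is $\succeq$ some minimal $t$ by the descent of Step 1. Hence every $s\in S$ is $m\cdot r$ for some $m\in\sM$ and $r\in R$, so $R$ generates $S$.

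If $S$ is free with basis $T_0$, then every element of $T_0$ is minimal: if $b'=m\cdot b$ with $b'\in S$, writing $b'$ in the basis and using uniqueness of $(m,t)\in\sM\times T_0$ forces $\preceq$-comparisons among basis elements to occur only with invertible $m$. Minimal elements $m\cdot b$ must have $m\in\sM^*$. So $S_{\min}=\sM^*\cdot T_0$, and any representative set $R$ is of the form $\{u_b\cdot b: b\in T_0\}$ with $u_b\in\sM^*$. Such a set is again a basis, since $(m,u_b\cdot b)\mapsto m\cdot u_b\cdot b$ is the basis bijection precomposed with the bijection $m\mapsto m+u_b$.
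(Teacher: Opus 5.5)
The gap is Step~2. Your Steps~1 and~3 are correct, and they replace the paper's citation of Ogus with an elementary argument. They show that $S_{\min}\neq\varnothing$, and that every element of $S_{\min}$ is $\sim$-equivalent to one of the finitely many generators in $T\cap S_{\min}$, so $S_{\min}/\!\sim$ is finite. They also show that any set of representatives generates $S$, and they handle the free case. But you never pass from $\sim$-classes to $\sM^*$-orbits. The face-based finite-generation argument you sketch for this cannot work, because in the stated generality the finiteness claim is false. Take $\sM=\NN^2$ and $S=\ZZ$ with
\[
(a,b)\cdot n\coloneqq n+a-b .
\]
Then $S$ is generated by $\{0\}$ and any two elements of $S$ are $\sim$-equivalent, so $S_{\min}=\ZZ$. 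Since $\sM^*=\{0\}$, the quotient $S_{\min}/\sM^*\cong\ZZ$ is infinite. Here the face $\{m\mid m\cdot 0\sim 0\}$ is all of $\NN^2$, and it sweeps out infinitely many points of a single $\sim$-class.

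So an extra hypothesis is genuinely needed, and your fallback is the right repair. It suffices that the stabilizer in $\sM$ of every element of $S_{\min}$ lies in $\sM^*$. Then $s=m\cdot t$ and $t=m'\cdot s$ give $(m+m')\cdot t=t$, hence $m+m'\in\sM^*$ and so $m\in\sM^*$. Thus each $\sim$-class in $S_{\min}$ is a single $\sM^*$-orbit, and Step~1 yields finiteness.

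This condition holds when $\sM$ is a submonoid of an abelian group $A$ acting by translation on an $\sM$-stable subset $S\subset A$. That is the situation of $\L$ as an $\L_{\ab}$-set inside $X$, which is where the paper uses these facts (via Lemma~\ref{lem:free-set-flat} in Lemma~\ref{lem:abelization-flat}). It also holds on $S_{\min}=\sM^*\cdot T_0$ in the free case, so nothing downstream is affected.

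One small cleanup in the free case: minimality of $b\in T_0$ is immediate. If $n\cdot(m'\cdot t')=b$, uniqueness in $\sM\times T_0$ forces $t'=b$, whence $b\preceq m'\cdot t'$.
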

\begin{proof}
    All the references in the following proof are from \cite[Chapter I]{Ogus-Log}. The assumptions imply that $S$ is a noetherian $\sM$-set (see the paragraph above Proposition 2.1.5). Then the first statement follows from Proposition 2.1.5 (4). By the argument of the first paragraph on Page 25 (which is in the proof of Proposition 2.1.5), any set of representatives of $S_{\min}/\sM^*$ generates $S$. If $S$ is free, let $T\subset S$ be a basis so that the action map $\sM\times T\to S$ is bijective. Then clearly $T$ is a set of representative for the quotient $S_{\min}/\sM^*$.
\end{proof}

\begin{defi}
    Let $\k$ be a commutative ring and $\sM$ be a commutative monoid. The \emph{monoid algebra (with coefficients in $\k$)} is the free $\k$-module with basis $\{e^m,m\in\sM\}$, endowed with the unique $\k$-algebra structure whose multiplication is determined by the rule $e^{m_1}\cdot e^{m_2}=e^{m_1+m_2}$ for all $m_1,m_2\in M$. In particular, the multiplicative unit is $1\coloneqq e^0\in\k[\sM]$. \par 
    For any $\sM$-set $S$, we let $\k[S]$ be the free $\k$-module with basis $S$ and endowed with the unique $\k[\sM]$-module structure compatible with the action of $\sM$ on $S$. In particular, for any ideal $\sI\subset\sM$, the $\k$-submodule $\k[\sI]\subset\k[\sM]$ spanned by $\sI$ is an ideal of $\k[\sM]$. 
\end{defi}

\begin{lem}\label{lem:monoid-algebra-property}
    Let $\sM$ be an integral monoid such that $\sM^{\gp}$ is torsion free and let $\k$ be an integral domain.
    \begin{enumerate}
        \item The monoid algebra $\k[\sM]$ is an integral domain and for any prime ideal $\sI\subset\sM$, the ideal $\k[\sI]\subset\k[\sM]$ is a prime ideal.
        \item If moreover $\k$ is normal and $\sM$ is saturated and finitely generated, then $\k[\sM]$ is normal.
    \end{enumerate}
\end{lem}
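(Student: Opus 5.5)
The plan is to reduce everything to the Laurent polynomial ring $\k[\sM^{\gp}]$. Since $\sM$ is integral, $\la_\sM$ identifies $\sM$ with a submonoid of $\sM^{\gp}$, so $\k[\sM]\subset\k[\sM^{\gp}]$. The group $\sM^{\gp}$ is torsion free. It is also a filtered union of finitely generated free abelian groups, and in the finitely generated case (which is all part (2) needs) it is simply $\ZZ^n$. Hence $\k[\sM^{\gp}]$ is a filtered union of Laurent polynomial rings $\k[x_1^{\pm1},\dotsc,x_n^{\pm1}]$ over the domain $\k$, and is therefore a domain. Consequently $\k[\sM]$ is a domain.

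For the prime ideal claim in (1), I will use the $\k$-linear retraction $\pi\colon\k[\sM]\to\k[\sM\setminus\sI]$ that kills $e^m$ for $m\in\sI$. Here $\k[\sM\setminus\sI]$ is the monoid algebra of the submonoid $\sM\setminus\sI$. The key step is to check that $\pi$ is a ring homomorphism. It suffices to test this on basis elements $e^{m_1}e^{m_2}=e^{m_1+m_2}$. If both $m_i$ lie outside $\sI$, then so does $m_1+m_2$, because $\sM\setminus\sI$ is a submonoid. If one of the $m_i$ lies in $\sI$, then $m_1+m_2\in\sI$, because $\sI$ is an ideal. So $\pi$ is multiplicative, and its kernel is exactly $\k[\sI]$. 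The submonoid $\sM\setminus\sI$ is integral and its group is a subgroup of $\sM^{\gp}$, hence torsion free. By the first paragraph, the target of $\pi$ is a domain, so $\k[\sI]$ is prime. It is proper because $0\notin\sI$.

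For (2), first embed $\sM^{\gp}\cong\ZZ^n$. Since $\sM$ is finitely generated and saturated, Gordan's lemma lets me write $\sM=\sigma\cap\sM^{\gp}$ for a rational polyhedral cone $\sigma=\bigcap_{j}\{\ell_j\ge0\}$, where $\ell_1,\dotsc,\ell_s$ are finitely many linear forms. Then $\k[\sM]=\bigcap_{j}\k[\sM_j]$, intersected inside $\k[\ZZ^n]$, where $\sM_j\coloneqq\{\ell_j\ge0\}\cap\ZZ^n$. This equality holds because all these rings are spanned by monomials. After a change of basis of $\ZZ^n$ making $\ell_j$ a positive multiple of the first coordinate, each $\sM_j$ becomes $\NN\times\ZZ^{n-1}$. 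Thus $\k[\sM_j]\cong\k[x_1,x_2^{\pm1},\dotsc,x_n^{\pm1}]$ is a localization of a polynomial ring over the normal domain $\k$, and is therefore normal. An intersection of normal domains with a common fraction field is normal, which gives (2).

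The step requiring the most care is the cone description $\sM=\sigma\cap\sM^{\gp}$. Here one needs saturation together with finite generation to apply the standard duality of rational polyhedral cones. Alternatively, one can simply cite \cite[Chapter I]{Ogus-Log} for this fact.
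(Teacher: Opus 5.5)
Your proof is correct. The prime-ideal part of (1) is exactly the paper's argument. The paper observes $\k[\sM]/\k[\sI]\cong\k[\sM\setminus\sI]$, which is your retraction $\pi$, and then applies the domain statement to the integral monoid $\sM\setminus\sI$, whose group is torsion free.

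The difference is in the other two claims. The paper simply cites Ogus (Chapter I, Proposition 3.4.1) for the facts that $\k[\sM]$ is a domain and, under the extra hypotheses, normal. You prove both directly:
\begin{itemize}
\item the domain property, by writing $\k[\sM^{\gp}]$ as a filtered union of Laurent polynomial rings over $\k$;
\item normality, by expressing $\k[\sM]$, inside $\k[\sM^{\gp}]$, as the intersection of the rings $\k[x_1,x_2^{\pm1},\dotsc,x_n^{\pm1}]$ attached to the finitely many half-spaces cutting out $\QQ_{\ge0}\sM$.
\end{itemize}
This is the standard toric argument. It makes visible where each hypothesis enters: torsion-freeness for the domain property, finite generation for the finite half-space description, and saturation for $\sM=\sigma\cap\sM^{\gp}$. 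The citation buys brevity.

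One small correction of attribution. The equality $\sM=\sigma\cap\sM^{\gp}$, with $\sigma$ cut out by finitely many linear inequalities, does not come from Gordan's lemma. It comes from two facts together:
\begin{itemize}
\item Minkowski--Weyl applied to the finitely generated cone $\sigma=\QQ_{\ge0}\sM$, which is the equivalence of the two descriptions of convex polyhedral cones recalled in the paper's appendix;
\item saturation, after clearing denominators.
\end{itemize}
Gordan's lemma is the converse finiteness statement.
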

\begin{proof}
    (i) The first statement follows from \cite[Proposition 3.4.1, part 1]{Ogus-Log}. For any prime ideal $\sI\subset\sM$, the submonoid $\sM\setminus\sI$ is also integral and $(\sM\setminus\sI)^{\gp}$ is a subgroup of $\sM^{\gp}$ and hence torsion free by assumption. Thus we can apply the first statement to the quotient ring $\k[\sM]/\k[\sI]$, which is canonically isomorphic to $\k[\sM\setminus\sI]$, and conclude that $\k[\sI]$ is a prime ideal.\par 
    (ii) follows from \cite[Proposition 3.4.1, part 2]{Ogus-Log}.
\end{proof}

\begin{defi}
    A homomorphism of monoids $\theta\colon\sP\to\sQ$ is called:
    \begin{itemize}
        \item \emph{local} if $\theta^{-1}(\sQ^*)=\sP^*$. 
        \item \emph{integral} if $\sP$ and $\sQ$ are both integral monoids and the following condition is satisfied: whenever $q_1,q_2\in\sQ$ and $p_1,p_2\in\sP$ satisfy $\theta(p_1)+q_1=\theta(p_2)+q_2$, there exists $q'\in\sQ$ and $p_1',p_2'\in\sP$ such that 
        \[q_1=\theta(p_1')+q',\quad q_2=\theta(p_2')+q',\quad p_1+p_1'=p_2+p_2'.\]
    \end{itemize}
\end{defi}

\begin{lem}\label{lem:free-set-flat}
    Let $\theta\colon\sP\to\sQ$ be an injective and local homomorphism of fine monoids and view $\sQ$ as a $\sP$-set via $\theta$. Then the following conditions are equivalent:
    \begin{enumerate}
        \item $\sQ$ is a free $\sP$-set.
        \item $\ZZ[\sQ]$ is a free $\ZZ[\sP]$-module.
        \item There exists a field $k$ such that $k[\sQ]$ is a flat $k[\sP]$-module.  
    \end{enumerate}
    Moreover, if these conditions are satisfied, then any set of representatives of $\sQ_{\min}/\sP^*$ form a $\sP$-basis of $\sQ$, where $\sQ_{\min}$ is the set of $\sP$-minimal elements in $\sQ$. 
\end{lem}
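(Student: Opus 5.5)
The plan is to prove the cycle (1)$\Rightarrow$(2)$\Rightarrow$(3)$\Rightarrow$(1) and then read off the basis statement. The implication (1)$\Rightarrow$(2) is immediate: if $T\subset\sQ$ is a $\sP$-basis, then $\ZZ[\sQ]=\bigoplus_{t\in T}\ZZ[\sP]e^t$ is free on $\{e^t\}$, since the action map $\sP\times T\to\sQ$ is bijective. For (2)$\Rightarrow$(3), base change along $\ZZ[\sP]\to k[\sP]$ shows that $k[\sQ]=\ZZ[\sQ]\otimes_{\ZZ[\sP]}k[\sP]$ is free, hence flat, for any field $k$ (for instance $\QQ$).

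The substantial implication is (3)$\Rightarrow$(1), and I expect the main obstacle to lie in it. I would first reduce flatness of monoid algebras to a combinatorial statement: by the results in \cite[Chapter I, \S4]{Ogus-Log}, flatness of $k[\sQ]$ over $k[\sP]$ for fine monoids is equivalent to integrality of $\theta$. Concretely, one checks the equational criterion of flatness on the relation $e^{\theta(p_1)}e^{q_1}=e^{\theta(p_2)}e^{q_2}$ and uses the $\sQ$-grading to extract monomial solutions, which is exactly the integrality condition. Having integrality, I would choose a set of representatives $T$ of $\sQ_{\min}/\sP^*$. By the preceding lemma, $\sQ_{\min}$ is nonempty, the quotient is finite, and $T$ generates $\sQ$ as a $\sP$-set; the input for that lemma is that $\sQ$ is a finitely generated $\sP$-set, which holds because $\theta$ is local and injective of fine monoids, so that $\sQ$ is generated over $\sP$ by finitely many elements.

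It remains to show injectivity of $\sP\times T\to\sQ$. Suppose $\theta(p_1)+t_1=\theta(p_2)+t_2$ with $t_i\in T$. Integrality gives $q'$ and $p_i'$ with $t_i=\theta(p_i')+q'$ and $p_1+p_1'=p_2+p_2'$. Minimality of $t_i$ forces $q'\sim t_i$, so $t_1\sim t_2$ and hence $t_1=t_2$ by the choice of representatives. Cancellation in the integral monoid $\sQ$ together with injectivity of $\theta$ then gives $p_1=p_2$. I would guard carefully here on the point that $t\sim q'$ with $t=\theta(p')+q'$ forces $p'\in\sP^*$; this is where locality of $\theta$ is used. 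This proves freeness and simultaneously the final assertion, since the argument works for an arbitrary set of representatives.
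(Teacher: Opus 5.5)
\textbf{The generation step has a gap.} You assert that $\sQ$ is a finitely generated $\sP$-set ``because $\theta$ is local and injective of fine monoids''. You then use this to invoke the preceding lemma, which requires a finitely generated $\sP$-set, to conclude that $\sQ_{\min}/\sP^*$ is finite and that $T$ generates $\sQ$.

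That claim is false. Take $\{0\}\to\NN$, or $\NN\to\NN^2$ with $n\mapsto(n,0)$. Each is injective and local between fine monoids, and $\sQ$ is free with the infinite basis $\NN$ (resp.\ $\{0\}\times\NN$). It also fails exactly where the paper uses the lemma (Lemma \ref{lem:abelization-flat}): for $\M=G$ with $G$ semisimple one has $\L=X^+$ and $\L_{\ab}=0$, so $\sQ_{\min}/\sP^*=X^+$ is infinite. As written, surjectivity of $\sP\times T\to\sQ$ (half of freeness) is unproved, and the finiteness claim must be dropped.

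\textbf{The repair is well-foundedness of $\preceq$.} What you actually need is that every $q\in\sQ$ dominates an element of $\sQ_{\min}$, i.e.\ there is no infinite strictly descending chain $q_0\succ q_1\succ\cdots$.
\begin{itemize}
\item Write $q_i=\theta(p_i)+q_{i+1}$. Strictness forces $p_i\notin\sP^*$, and locality then gives $\theta(p_i)\notin\sQ^*$.
\item Since $\sQ$ is fine, there is a homomorphism $h\colon\sQ\to\NN$ with $h^{-1}(0)=\sQ^*$. Take a linear form on $\sQ^{\gp}\otimes\QQ$ that is $\ge 0$ on the cone $C$ spanned by $\sQ$, vanishes on $C$ exactly along $C\cap(-C)$, and is scaled to be integral on generators. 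If $h(q)=0$ then $-\bar q\in C$, which forces $nq+q''=0$ in $\sQ$ for some $n\ge1$ and $q''\in\sQ$, so $q\in\sQ^*$.
\item Hence $h(q_{i+1})<h(q_i)$, chains have length at most $h(q_0)$, and every $q$ is $\succeq$ some element of $\sQ_{\min}$, which lies in the $\sP^*$-orbit of some $t\in T$. So $T$ generates.
\end{itemize}
This is where locality is genuinely needed. The inclusion $\NN\hookrightarrow\ZZ$ is injective, integral and flat, yet $\sQ_{\min}=\varnothing$ and $\ZZ$ is not a free $\NN$-set. By contrast, your step ``$t\sim q'$ forces $p'\in\sP^*$'' needs only cancellation in $\sQ$ and injectivity of $\theta$.

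Your reduction flat $\Rightarrow$ integral and your injectivity argument are fine. With the repaired generation step, your direct argument is a valid and more elementary alternative to the paper's proof. The paper simply quotes \cite[Chapter I]{Ogus-Log}: Proposition 4.6.8 for integrality and Corollary 4.6.11 for freeness. It gets the basis statement from Proposition 4.6.9 and Remark 4.6.10: each connected component of the $\sP$-set $\sQ$ is a copy of $\sP$ generated by a $\sP$-minimal element.
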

\begin{proof}
    All the references in the following proof are from \cite[Chapter I]{Ogus-Log}. The implications $(1)\Rightarrow(2)\Rightarrow(3)$ are obvious. Assume (3) holds. By Proposition 4.6.8, the homomorphism $\theta$ is integral and then (1) follows from Corollary 4.6.11. By Proposition 4.6.9, each connected component of the $\sP$-set $\sQ$ is isomorphic to $\sP$ and by Remark 4.6.10 the generator of each component is $\sP$-minimal. This proves the last statement.    
\end{proof}

Finally let us elaborate on the close connection between monoids and convex cones. For us, ``cones" always means ``rational cones", i.e. nonempty subsets of $\QQ$-vector spaces that are stable under scalar multiplication by $\QQ_{\ge0}$.\par   
Let $X$ be a free abelian group of finite rank and let $Y\coloneqq\Hom(X,\ZZ)$ be the dual abelian group. Let $X_\QQ\coloneqq X\otimes_\ZZ\QQ$ and $Y_\QQ\coloneqq Y\otimes_\ZZ\QQ$ be the $\QQ$-vector spaces they generate. Recall that a \emph{convex polyhedral cone} in $X_\QQ$ can be defined in two equivalent ways:
\begin{itemize}
    \item as a finitely generated convex cone in $X_\QQ$, i.e.a set of the form
    \[C=\Set{\sum_{i=1}^na_ix_i\mid a_1,\dotsc,a_n\in\QQ_{\ge0}}\subset X_\QQ\]
    where $x_1,\dotsc,x_n\in X_\QQ$;
    \item as a finite intersection of half-spaces of $X_\QQ$, where by a ``half-space" we mean a subset of $X_\QQ$ of the form $y^{-1}(\QQ_{\ge0})$ for some $0\ne y\in Y_\QQ$. By convention, $X_\QQ$ itself is viewed as an empty intersection of half-spaces. 
\end{itemize}
For the equivalence of the two definitions, see for example \cite[Theorem 19.1]{Rock-convex}. Let $C\subset X_\QQ$ be a convex polyhedral cone. The \emph{dual} of  $C$ is the convex polyhedral cone in $Y_\QQ$ defined by
\[C^\vee\coloneqq\Set{y\in Y_\QQ\mid y(C)\subset\QQ_{\ge0}}.\]
A \emph{face} of $C$ is a subset of $C$ of the form $y^{-1}(0)\cap C$ where $y\in C^\vee$. In particular, taking $y=0$ we get $C$ as a face of itself. The \emph{dimension} of a face $F\subset C$, denoted $\dim(F)$, is the dimension of the $\QQ$-vector space spanned by it. The \emph{codimension} of a face $F\subset C$ is the difference $\dim(C)-\dim(F)$. A face of $C$ of codimension $1$ is often called a \emph{facet} in the literature. 

\begin{lem}\label{lem:monoid-cone}
    The maps $\sM\mapsto\QQ_{\ge0}\sM$ and $C\mapsto C\cap X$ define mutually inverse bijections between finitely generated saturated submonoids of $X$ and convex polyhedral cones in $X_\QQ$. Here $\QQ_{\ge0}\sM$ is the set of all finite linear combinations of elements in $\sM$ with coefficients in $\QQ_{\ge0}$. 
\end{lem}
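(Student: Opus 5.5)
The plan is to prove that the two maps are well defined and then that both composites are the identity; everything reduces to standard convex geometry in $X_\QQ$ together with Gordan's lemma.

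For $C\mapsto C\cap X$: let $C$ be a convex polyhedral cone, $C=\QQ_{\ge0}x_1+\dotsb+\QQ_{\ge0}x_n$. After rescaling we may take $x_i\in X$. Then $C\cap X$ is clearly a submonoid of $X$. It is integral because it lies inside a group, and saturated because $nx\in C$ with $n>0$ forces $x\in C$. Finite generation is Gordan's lemma. To prove it, put $K=\{\sum a_ix_i\mid 0\le a_i\le1\}$, which is bounded, so $K\cap X$ is finite. Any $x\in C\cap X$ can be written $x=\sum\lfloor a_i\rfloor x_i+\sum\{a_i\}x_i$, and the second summand lies in $K\cap X$. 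Hence $\{x_i\}\cup(K\cap X)$ generates $C\cap X$.

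For $\sM\mapsto\QQ_{\ge0}\sM$: if $\sM$ is generated by $m_1,\dotsc,m_k$, then $\QQ_{\ge0}\sM=\sum\QQ_{\ge0}m_i$ is finitely generated, hence polyhedral. Here I only use the first of the two equivalent definitions of a convex polyhedral cone, so there is no need to pass to half-spaces.

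Now the composites. First, $\QQ_{\ge0}(C\cap X)=C$. The inclusion $\subseteq$ is clear. For $\supseteq$, each $x_i$ lies in $C\cap X$ after rescaling, so $C$ is contained in $\QQ_{\ge0}(C\cap X)$. Second, $(\QQ_{\ge0}\sM)\cap X=\sM$. Again $\supseteq$ is clear. For $\subseteq$, take $x\in X$ with $x=\sum q_im_i$ and $q_i\in\QQ_{\ge0}$, and clear denominators: $Nx=\sum(Nq_i)m_i\in\sM$ for some $N>0$. We have $x\in X$, but saturation of $\sM$ is phrased with respect to $\sM^{\gp}$, not $X$.

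The main obstacle is exactly this point: the argument needs $x\in\sM^{\gp}$, which does not follow from $x\in X$ when $\sM^{\gp}\subsetneq X$. I would handle it by reading ``saturated submonoid of $X$'' as saturated in $X$, meaning $nx\in\sM$ with $x\in X$ implies $x\in\sM$; this is the sense used in the paper, for instance for $W\cdot\L$ and $\L\subset X^+$. Under that reading the inclusion $\subseteq$ follows immediately, and I would state this interpretation explicitly. With the other reading the lemma is false, e.g.\ $\sM=2\ZZ\subset\ZZ$.
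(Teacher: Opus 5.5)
Your proof is correct and follows the same route as the paper. The paper argues that $\QQ_{\ge0}\sM$ is finitely generated and hence polyhedral, that $C\cap X$ is a finitely generated saturated monoid by Gordan's lemma (which you prove directly via the bounded parallelepiped, where the paper cites Ogus), and that the two composites are the identity. Your caveat is also right: the inclusion $(\QQ_{\ge0}\sM)\cap X\subset\sM$ needs saturation relative to $X$ rather than the intrinsic notion of Definition~\ref{def:monoid-properties}, which the paper's ``clearly mutually inverse'' uses silently, and $2\ZZ\subset\ZZ$ (or $2\NN\subset\ZZ$) shows the literal statement fails otherwise.
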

\begin{proof}
    If $\sM\subset X$ is a finitely generated submonoid, then $\QQ_{\ge0}\sM$ is a finitely generated convex cone and hence a convex polyhedral cone by the remarks above. Conversely if $C\subset X_\QQ$ is a convex polyhedral cone, then the intersection $C\cap X$ is obviously a saturated submonoid and it is finitely generated by Gordon's lemma, see for example \cite[Theorem 2.3.19]{Ogus-Log}. Thus the two maps are well defined and they are clearly mutually inverse to each other. 
\end{proof}

\begin{lem}\label{lem:face-prime-ideal}
    Let $\sM$ be a finitely generated saturated submonoid of $X$ and let $C_\sM\coloneqq\QQ_{\ge0}\sM$ be the corresponding convex polyhedral cone. Then the map $F\mapsto\sM\setminus F$ defines an order-reversing bijection between faces of $C_\sM$ and prime ideals of $\sM$.
\end{lem}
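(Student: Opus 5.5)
The approach is to reduce everything to the standard correspondence between faces of a rational polyhedral cone and prime ideals of its saturated lattice monoid, as in \cite[Chapter I]{Ogus-Log}, and to make the two maps explicit. Given a face $F=y^{-1}(0)\cap C_\sM$ with $y\in C_\sM^\vee$, I first check that $\sM\setminus F=\{m\in\sM\mid y(m)>0\}$ is a prime ideal. Since $y\ge0$ on $\sM$, adding any $m'\in\sM$ to an element with $y(m)>0$ keeps it positive, so this set is an ideal. Its complement $\sM\cap F$ is exactly the set where $y=0$, which is closed under addition and contains $0$, so the ideal is prime. Properness holds because $0\in F$.

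Conversely, let $\sI$ be a prime ideal. Put $\sQ\coloneqq\sM\setminus\sI$, a submonoid, and let $F\coloneqq\QQ_{\ge0}\sQ$. The first task is to show that $F$ is a face of $C_\sM$ and that $F\cap\sM=\sQ$. To see that $F$ is a face, I show it is an ``extremal'' subcone: if $a,b\in C_\sM$ and $a+b\in F$, then $a,b\in F$. Clearing denominators reduces this to $m_1,m_2\in\sM$ with $n(m_1+m_2)=q_1-\dotsb$; more precisely, some multiple $N(m_1+m_2)$ is a sum $\sum q_i$ of elements of $\sQ$ times nonnegative integers, hence lies in $\sQ$. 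Since $\sI$ is an ideal, $N(m_1+m_2)\notin\sI$ forces $m_1,m_2\notin\sI$, so both lie in $\sQ$. For polyhedral cones, being extremal in this sense is equivalent to being a face, by the standard fact that each extremal subset is cut out by a supporting functional \cite[\S18--19]{Rock-convex}. The equality $F\cap\sM=\sQ$ follows the same way: if $m\in\sM$ and $Nm\in\sQ$, then $m\notin\sI$ because $\sI$ is an ideal.

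For bijectivity, the composite starting from a prime ideal $\sI$ returns $\sM\setminus(F\cap\sM)=\sI$ by the previous paragraph. Starting from a face $F$, I need $\QQ_{\ge0}(F\cap\sM)=F$. This holds because a face of a cone generated by finitely many lattice points of $\sM$ is generated by those generators lying in it; by Lemma \ref{lem:monoid-cone} the cone $C_\sM$ is generated by finitely many elements of $\sM$. Order reversal is immediate from the definitions.

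I expect the main obstacle to be the step showing that an extremal subcone is a face of the form $y^{-1}(0)\cap C_\sM$ for some $y\in C_\sM^\vee$. I would cite this from convexity theory rather than prove it. Alternatively, I would construct $y$ as a sum of functionals defining the facets that contain $F$.
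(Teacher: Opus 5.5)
Your proof is correct. It uses the same two maps as the paper: $F\mapsto F\cap\sM$, whose complement is your prime ideal, and $\sQ\mapsto\QQ_{\ge0}\sQ$. Where it differs is in how the key step is established.

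The paper notes that $\sI\mapsto\sM\setminus\sI$ is tautologically a bijection from prime ideals to ``face submonoids'', i.e.\ submonoids $f$ with $m_1+m_2\in f\Rightarrow m_1,m_2\in f$. Your $\sQ$ is exactly such an $f$. The paper then simply cites \cite[Chapter I]{Ogus-Log} for the bijection $F\mapsto F\cap\sM$ between faces of $C_\sM$ and face submonoids.

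You prove that bijection by hand instead:
\begin{itemize}
\item the ideal property of $\sI$ gives both the extremality of $\QQ_{\ge0}\sQ$ and the equality $F\cap\sM=\sQ$;
\item finite generation gives $\QQ_{\ge0}(F\cap\sM)=F$;
\item the only outside input is that extremal subcones of a polyhedral cone are exposed.
\end{itemize}
This makes the argument self-contained and shows that saturation of $\sM$ is never used. The paper's version is shorter but hides all of this in the citation.

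Two small repairs are worth making.
\begin{itemize}
\item The denominator-clearing sentence should read: $a=m_1/N$, $b=m_2/N$, $a+b=q/N'$ with $q\in\sQ$, hence $N'(m_1+m_2)=Nq\in\sQ$.
\item Your alternative construction of $y$ is cleaner than quoting Rockafellar. The cones here live in $X_\QQ$, while Rockafellar works over the reals, and the direct construction stays over $\QQ$.
\end{itemize}

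Concretely, the construction goes as follows. Write $C_\sM=\bigcap_j\ell_j^{-1}(\QQ_{\ge0})$ with finitely many rational $\ell_j$. Let $J$ be the set of $j$ with $\ell_j|_F=0$, and put $y=\sum_{j\in J}\ell_j$, which lies in $C_\sM^\vee$.

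Pick $x_0\in F$ with $\ell_j(x_0)>0$ for all $j\notin J$; a sum of suitable elements of $F$ works. Now take $z\in C_\sM$ with $y(z)=0$. For small rational $\epsilon>0$ one has $x_0-\epsilon z\in C_\sM$. Extremality applied to $x_0=(x_0-\epsilon z)+\epsilon z$ then gives $z\in F$, so $F=y^{-1}(0)\cap C_\sM$.
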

\begin{proof}
    Let us define a \emph{face submonoid} of $\sM$ to be a submonoid $f\subset\sM$ such that for any $m_1,m_2\in\sM$ such that $m_1+m_2\in f$ we have $m_1\in f$ and $m_2\in f$. In \cite[Definition 1.4.1]{Ogus-Log} such a submonoid is simply called a ``face" of $\sM$. By definition it is clear that the map $f\mapsto \sM\setminus f$ defines an order-reversing bijection between face submonoids of $\sM$ and prime ideals of $\sM$. On the other hand, by \cite[Proposition 2.2.1, or Theorem 2.3.12(2)]{Ogus-Log}, the map $F\mapsto F\cap\sM$ defines an order-preserving bijection between faces of $C_\sM$ and face submonoids of $\sM$, with the inverse map given by $f\mapsto\QQ_{\ge0}f$.
\end{proof}

\end{document}